	\newcommand{\Ext}{\ensuremath{\operatorname{Ext}}}
	\newcommand{\multialg}[1]{\mathcal{M}(#1)\xspace}
	\newcommand{\corona}[1]{\mathcal{Q}(#1)\xspace}	
	\newcommand{\Prim}{\ensuremath{\operatorname{Prim}}}
	\newcommand{\CPC}{\mathrm{\CPC}}
	\newcommand{\nuc}{\mathrm{nuc}}
	\newcommand{\Ad}{\ensuremath{\operatorname{Ad}}\xspace}
	\newcommand{\Hom}{\mathrm{Hom}}
	\newcommand{\as}{\mathrm{as}}
	\newcommand{\oh}{\mathrm{oh}}
	\newcommand{\h}{\mathrm{h}}
	\newcommand{\op}{\mathrm{op}}
	\newcommand{\Aut}{\mathrm{Aut}}
	\newcommand{\id}{\mathrm{id}}
	\newcommand{\CP}{\mathrm{CP}}
	\newcommand{\Cu}{\mathrm{Cu}}
	\newcommand{\Ann}{\mathrm{Ann}\,}
	\newcommand{\Annn}{\mathrm{Ann}}
	\newcommand{\ev}{\mathrm{ev}}
	\newcommand{\I}{\mathrm{I}}
	\newcommand{\aMvN}{\mathrm{aMvN}}
	\newcommand{\asMvN}{\mathrm{asMvN}}
	\newcommand{\au}{\mathrm{au}}
	\newcommand{\asu}{\mathrm{asu}}
	\newcommand{\st}{\mathrm{st}}
	\theoremstyle{plain}
	\newtheorem{thma}{Theorem}
	\newtheorem{introtheorem}[thma]{Theorem}
	\newtheorem{introcorollary}[thma]{Corollary}
	\theoremstyle{plain}
	\newtheorem{thm}{Theorem}[section]
	\newtheorem{lemma}[thm]{Lemma}
	\newtheorem{theorem}[thm]{Theorem}
	\newtheorem{proposition}[thm]{Proposition}
	\newtheorem{corollary}[thm]{Corollary}
	\theoremstyle{definition}
	\newtheorem{definition}[thm]{Definition}
	\newtheorem{remark}[thm]{Remark}
	\newtheorem{notation}[thm]{Notation}
	\newtheorem{example}[thm]{Example}
	\newtheorem{question}[thm]{Question}
	\newtheorem{introquestion}[thma]{Question}
	\newtheorem{observation}[thm]{Observation}
	\numberwithin{equation}{section}
	\numberwithin{figure}{section}
\begin{document}
	\title[Classification of $\mathcal O_\infty$-stable $C^\ast$-algebras]{Classification of $\mathcal O_\infty$-stable $C^\ast$-algebras}
	\author{James Gabe}
	\address{Current: Department of Mathematics and Computer Science, University of Southern Denmark, Campusvej 55, DK-5230 Odense M, Denmark}
	\address{Former: School of Mathematics and Applied Statistics, University of Wollongong, Wollongong NSW 2522, Australia}
        \email{gabe@imada.sdu.dk}
        \subjclass[2010]{46L35, 46L80}
        \keywords{Purely infinite $C^\ast$-algebras, classification of non-simple $C^\ast$-algebras, the Kirchberg--Phillips theorem}
	\thanks{This work was funded by the Carlsberg Foundation through an Internationalisation Fellowship. }

\begin{abstract}
I present a proof of Kirchberg's classification theorem: two separable, nuclear, $\mathcal O_\infty$-stable $C^\ast$-algebras are stably isomorphic if and only if they are ideal-related $KK$-equivalent. In particular, this provides a more elementary proof of the Kirchberg--Phillips theorem which is isolated in the paper to increase readability of this important special case. 
\end{abstract}

\maketitle
\setcounter{tocdepth}{1}
\tableofcontents

\section{Introduction and main results}

The Kirchberg--Phillips theorem -- proved independently by Kirchberg \cite{Kirchberg-simple} and Phillips \cite{Phillips-classification} in the mid 90s -- was one of the first major breakthroughs in the Elliott classification programme, constituting a complete $K$-theoretic classification of separable, nuclear, simple, purely infinite $C^\ast$-algebras satisfying the universal coefficient theorem (UCT). The simple, stably finite side of the classification programme took 20 years longer but was finally resolved by the work of many hands -- a highly incomplete list of references being \cite{GongLinNiu-classZ-stable}, \cite{GongLinNiu-classZ-stable2}, \cite{ElliottGongLinNiu-classfindec}, \cite{TikuisisWhiteWinter-qdnuc} -- leading to a complete classification of all separable, nuclear, unital, simple, $\mathcal Z$-stable $C^\ast$-algebras satisfying the UCT using $K$-theory and traces.\footnote{Here the main result of \cite{CETWW-nucdim} was used to reduce from finite nuclear dimension to $\mathcal Z$-stability.}

The concept of pure infiniteness for simple $C^\ast$-algebras was first introduced by Cuntz in \cite{Cuntz-K-theoryI} as a $C^\ast$-algebraic analogue of Type $\mathrm{III}$ von Neumann algebra factors. In this paper it was proved that the Cuntz algebras $\mathcal O_n$ introduced in \cite{Cuntz-simple} enjoy this property. Purely infinite, simple $C^\ast$-algebras arise naturally from various constructions, such as simple Cuntz--Krieger algebras arising from symbolic dynamics \cite{CuntzKrieger-MarkovI}, and the reduced crossed product $C(\partial G)\rtimes_r G$ of a non-elementary hyperbolic group $G$ acting naturally on its Gromov  boundary $\partial G$ \cite{AD-purelyinfdyn}. In these cases, the $C^\ast$-algebras fall into the class covered by the Kirchberg--Phillips classification, and their isomorphism class is therefore determined by their $K$-theory.

A major leap forward in the study of purely infinite, simple $C^\ast$-algebras came from deep work of Kirchberg announced in \cite{Kirchberg-ICM}, see \cite{KirchbergPhillips-embedding} for published proofs. He proved his iconic $\mathcal O_2$-embedding theorem -- that all separable, exact $C^\ast$-algebras embed into $\mathcal O_2$ -- and showed that a separable, nuclear, simple $C^\ast$-algebra is purely infinite if and only if it is isomorphic to its tensor product with the Cuntz algebra $\mathcal O_\infty$. A $C^\ast$-algebra satisfying this latter property is called $\mathcal O_\infty$-stable. The class of separable, nuclear, simple, purely infinite $C^\ast$-algebras are now commonly known as Kirchberg algebras, while Kirchberg refers to the unital ones as pi-sun algebras (\underline{p}urely \underline{i}nfinite (simple),\footnote{Because when Kirchberg did this ground breaking work, pure infiniteness was not defined outside the simple setting.} \underline{s}eparable, \underline{u}nital, \underline{n}uclear). These results of Kirchberg were key ingredients in proving the Kirchberg--Phillips theorem. 

While Elliott's original classification results for stably finite $C^\ast$-algebras such as for AF algebras \cite{Elliott-AFclass} and certain AH algebras \cite{Elliott-classrr0} revolved around classifying not necessarily simple $C^\ast$-algebras, most subsequent classification results dealt with simple $C^\ast$-algebras.
On the purely infinite side, around the turn of the millennium, Kirchberg vastly generalised his and Phillips' classification results by classifying all separable, nuclear, $\mathcal O_\infty$-stable $C^\ast$-algebras up to stable isomorphism, using an ideal-related version of Kasparov's $KK$-theory as the classifying invariant. This resulted in a published manuscript \cite{Kirchberg-non-simple} outlining the strategy of the proof, and a complete proof is to appear as the headline result in a yet unpublished book by Kirchberg \cite{Kirchberg-book}. The goal of this paper is to present a proof of this highly influential work of Kirchberg thus closing this massive gap in the published literature. The proof presented here is very different from the one outlined by Kirchberg, and, dare I argue, quite a lot simpler.

In \cite{KirchbergRordam-purelyinf} and \cite{KirchbergRordam-absorbingOinfty}, Kirchberg and Rørdam generalised the notion of pure infiniteness for simple $C^\ast$-algebras to the not necessarily simple case. This lead to three notions, namely weak pure infiniteness, pure infiniteness, and strong pure infiniteness, which are successively stronger conditions. While these are all equivalent for simple $C^\ast$-algebras, it is an open problem whether any or all of these notions are equivalent in general. Kirchberg and Rørdam showed that a separable, nuclear $C^\ast$-algebra is $\mathcal O_\infty$-stable if and only if it is strongly purely infinite,\footnote{Kirchberg and Rørdam proved this for stable $C^\ast$-algebras. One should appeal to \cite[Proposition 4.4(4,5)]{Kirchberg-Abel} or \cite[Corollary 3.2]{TomsWinter-ssa} for the general statement.} and hence these are the $C^\ast$-algebras classified by the above mentioned result -- which is also the main theorem of this paper. 

Strong pure infiniteness is rather hard to verify in examples, although significant progress was made in \cite{BrownClarkSierakowski-purelyinfgroupoids}, \cite{KirchbergSierakowski-spicrossed} and \cite{KirchbergSierakowski-filling}. Weak pure infiniteness and pure infiniteness are however easier to check. For instance, any $C^\ast$-algebra with finite nuclear dimension is weakly purely infinite if and only if it is traceless, in the sense that it has no non-trivial lower semicontinuous $[0,\infty]$-valued 2-quasitraces \cite[Theorem 5.2]{WinterZacharias-nucdim}.\footnote{A much easier proof is implicitly presented in \cite[Proposition 2.3]{RobertTikuisis-nucdimnonsimple}. In fact, if $A$ has nuclear dimension at most $m$, then its Cuntz semigroup has $(m+1)$-comparison for some $m<\infty$. It follows from tracelessness that the sum $a\oplus \cdots \oplus a$ ($m+1$ summands) of any positive element $a$ is properly infinite. Hence the $C^\ast$-algebra is weakly purely infinite.} There are conditions under which the three notions of pure infiniteness are known to be equivalent, for instance when the primitive ideal space is zero dimensional \cite{PasnicuRordam-purelyinfrr0} or Hausdorff \cite{BlanchardKirchberg-Hausdorff}. These results have been used to verify strong pure infiniteness of many more examples, see for instance \cite{HongSzymanski-purelyinfgraph}, \cite{Kwasniewski-crossedC_0}, \cite{KwasniewskiSzymanski-pureinffell}, \cite{BonickeLi-purelyinfample}.

In \cite{Gabe-O2class}, I presented a new proof of a special case of the main theorem of this paper: a complete classification of separable, nuclear, stable/unital $\mathcal O_2$-stable $C^\ast$-algebras. While some of the machinery developed in that paper is crucial for proving the main results of this paper, these tools have already had other applications: they were used to answer exactly when traceless $C^\ast$-algebras are AF embeddable \cite[Corollary C]{Gabe-AFemb}, when Connes and Higson's $E$-theory \cite{ConnesHigson-deformations} can be unsuspended \cite[Corollary E]{Gabe-AFemb}, and lead to the exact computation of the nuclear dimension and decomposition rank of separable, nuclear, $\mathcal O_\infty$-stable $C^\ast$-algebras \cite[Theorems A and B]{BGSW-nucdim}. The techniques presented in this paper should reach far beyond those in \cite{Gabe-O2class} and should be applicable in a much broader context.

\subsection*{Classification of $C^\ast$-algebras through classification of maps}

The main bulk of this paper is to provide existence and uniqueness results for $\ast$-homomorphisms satisfying certain properties. The purpose of this section is to explain why this is a natural thing to aim for, how it ties in with classification, and why it exactly becomes the class of separable, nuclear, $\mathcal O_\infty$-stable $C^\ast$-algebras which are classified by these results.

A classical method for classifying $C^\ast$-algebras is through an Elliott intertwining argument, see \cite[Section 2.3]{Rordam-book-classification}. A version of Elliott intertwining states that if $A$ and $B$ are separable $C^\ast$-algebras, and $\phi\colon A \to B$ and $\psi \colon B\to A$ are $\ast$-homo\-morphisms for which $\psi \circ \phi \sim_\au \id_A$ and $\phi \circ \psi \sim_\au \id_B$, where $\sim_\au$ denotes approximate unitary equivalence, then $A\cong B$. In particular, suppose one is given a functorial invariant $\mathrm{Inv}$, and a class $\mathscr C$ of separable $C^\ast$-algebras, such that one has the following:
\begin{itemize}
\item[$\exists$] \emph{Existence:} For all $A,B\in \mathscr C$ and any homomorphism $\rho \colon \mathrm{Inv}(A) \to \mathrm{Inv}(B)$, there is a $\ast$-homomorphism $\phi \colon A \to B$ such that $\mathrm{Inv}(\phi) =\rho$;
\item[$!$] \emph{Uniqueness:} For all $A,B \in \mathscr C$ and any $\ast$-homomorphisms $\phi, \psi \colon A \to B$ such that $\mathrm{Inv}(\phi) = \mathrm{Inv}(\psi)$, one has $\phi\sim_\au \psi$.
\end{itemize}
Then for every $A,B\in \mathscr C$, $A\cong B$ if and only if $\mathrm{Inv}(A) \cong \mathrm{Inv}(B)$. So the objects in $\mathscr C$ are completely classified by the invariant $\mathrm{Inv}$. 

To see that one obtains classification, fix an isomorphism $\rho\colon \mathrm{Inv}(A) \to \mathrm{Inv}(B)$. By the existence part above one may lift $\rho$ and $\rho^{-1}$ to $\phi$ and $\psi$ respectively. As $\mathrm{Inv}(\psi \circ \phi) = \mathrm{Inv}(\id_A)$, uniqueness applied to $\psi\circ \phi$ and $\id_A$ entails that $\psi \circ \phi \sim_\au \id_A$, and similarly $\phi\circ \psi \sim_\au \id_B$. By Elliott intertwining one obtains $A\cong B$.

In general, it is too much to hope for that one can find an invariant which classifies \emph{all} $\ast$-homomorphisms in this fashion. For instance, if the class $\mathscr C$ only consists of unital $C^\ast$-algebras it seems quite natural to only consider \emph{unital} $\ast$-homomorphisms. Or perhaps one wants to consider injective $\ast$-homomorphism. Either way, it makes sense to only consider existence and uniqueness for $\ast$-homomorphisms which satisfy a property $\mathscr P$, for which $\mathscr P$ is closed under taking compositions. There is one major caveat though: in order to run the classification argument above, one needs to know that $\id_A$ and $\id_B$ satisfy the property $\mathscr P$. 

An instance where this plays a notable role is \emph{nuclearity}. A map between $C^\ast$-algebras is said to be nuclear if it has the completely positive approximation property, and a $C^\ast$-algebra $A$ is nuclear exactly when $\id_A$ is nuclear. Nuclearity of $\ast$-homomorphisms is a fundamental part of most known existence and uniqueness theorems, and this explains why classification should only be expected to hold for nuclear $C^\ast$-algebras.

A major part of this paper will be to produce existence and uniqueness results for $\ast$-homomorphisms satisfying mainly two properties: they should be nuclear and \emph{strongly $\mathcal O_\infty$-stable}. Strong $\mathcal O_\infty$-stability of $\ast$-homomorphisms was introduced in \cite{Gabe-O2class} where it was proved that a separable $C^\ast$-algebra $A$ is $\mathcal O_\infty$-stable -- i.e.~$A \cong A\otimes \mathcal O_\infty$ -- if and only if $\id_A$ is strongly $\mathcal O_\infty$-stable. In particular, the separable $C^\ast$-algebras $A$ classified by these existence and uniqueness results are exactly the ones for which $\id_A$ is nuclear and strongly $\mathcal O_\infty$-stable, or equivalently, $A$ should be separable, nuclear, and $\mathcal O_\infty$-stable.

\subsection*{The main results -- the simple case}

Before presenting the general (not necessarily simple) classification results, the focus will be on the classical case leading to the Kirchberg--Phillips theorem. In this case, the classifying invariant will be $KK_\nuc$ as defined by Skandalis \cite{Skandalis-KKnuc}; a variation on Kasparov's $KK$-theory \cite{Kasparov-KKExt}. While all $\ast$-homomorphisms $\phi \colon A\to B$ give rise to elements $KK(\phi) \in KK(A,B)$, only nuclear $\ast$-homomorphisms $\phi$ give rise to elements $KK_\nuc(\phi) \in KK_\nuc(A,B)$. If either $A$ or $B$ is nuclear, then the natural map $KK_\nuc(A,B) \to KK(A,B)$ is an isomorphism, and moreover, every $\ast$-homomorphism $\phi\colon A\to B$ is nuclear.

Let $A$ be a separable $C^\ast$-algebra and $B$ a $\sigma$-unital $C^\ast$-algebra, i.e.~a $C^\ast$-algebra containing a strictly positive element. The properties satisfied by the $\ast$-homo\-morphisms $\phi \colon A \to B$ for the existence and uniqueness theorems are the following:
\begin{itemize}
\item \emph{Nuclear:} $\phi$ is a point-norm limit of maps factoring via completely positive maps through matrix algebras;
\item \emph{Strongly $\mathcal O_\infty$-stable:}\footnote{If one replaces the paths with sequences $(s_n^{(i)})_{n\in \mathbb N}$ which satisfy the analogous conditions, then one obtains the definition of $\phi$ being \emph{$\mathcal O_\infty$-stable} from \cite{Gabe-O2class}. It is an open problem, studied in more detail in Section \ref{s:stronglyOinfty}, whether every $\mathcal O_\infty$-stable map is strongly $\mathcal O_\infty$-stable.} there are continuous, bounded paths $(s_t^{(i)})_{t\in[0,\infty)}$ in $B$ for $i=1,2$ such that
\begin{equation}
\lim_{t\to \infty}\| s_t^{(i)}  \phi(a) - \phi(a) s_t^{(i)}\| = 0, \qquad \lim_{t\to \infty} \| ((s_t^{(i)})^\ast s_t^{(j)} - \delta_{i,j}) \phi(a) \| = 0,
\end{equation}
for every $a\in A$ and $i,j=1,2$, where $\delta_{i,j} = 1$ if $i=j$ and $\delta_{i,j}=0$ if $i\neq j$; 
\item \emph{Full:} $\phi(a)$ generates all of $B$ as a two-sided, closed ideal, for every non-zero $a\in A$.
\end{itemize}

These properties of $\ast$-homomorphisms translate into properties of $C^\ast$-algebras, in the sense that if $A$ is a separable $C^\ast$-algebra, then $\id_A$ is nuclear, strongly $\mathcal O_\infty$-stable, or full respectively, exactly when $A$ is nuclear, $\mathcal O_\infty$-stable, or simple respectively. Moreover, if $\phi \colon A \to B$ is a $\ast$-homomorphism and $A$ or $B$ is nuclear or $\mathcal O_\infty$-stable respectively, then $\phi$ is nuclear or strongly $\mathcal O_\infty$-stable respectively. Moreover, if $B$ is simple and $\phi$ is injective, then it is also full, or if $A$ is simple, then $\phi$ is full when corestricted to the two-sided, closed ideal generated its image.

Let
\begin{equation}
\Gamma_i \colon KK_\nuc(A,B) \to \Hom(K_i(A), K_i(B)),
\end{equation}
for $i=0,1$ be the canonical homomorphisms induced by the Kasparov product under the canonical identifications $KK_\nuc(\mathbb C, B) \cong K_0(B)$ and $KK_\nuc(C_0(\mathbb R), B) \cong K_1(B)$, and let
\begin{equation}
\Gamma \colon KK_\nuc(A,B) \to \Hom(K_\ast(A),K_\ast(B))
\end{equation} 
be the induced homomorphism. If $\phi \colon A\to B$ is a nuclear $\ast$-homomorphism, then $\Gamma_i(KK_\nuc(\phi)) = \phi_i \colon K_i(A) \to K_i(B)$ for $i=0,1$ is the induced map in $K$-theory.
So if $A$ and $B$ are unital $C^\ast$-algebras, and if $\phi \colon A\to B$ is a unital, nuclear $\ast$-homomorphism, then
\begin{equation}
\Gamma_0(KK_\nuc(\phi)) ([1_A]_0) = \phi_0([1_A]_0) = [1_B]_0.
\end{equation}

The following is the main existence part in the Kirchberg--Phillips theorem.

\begin{introtheorem}[Existence -- full case]\label{t:existsimple}
Let $A$ be a separable, exact $C^\ast$-algebra, let $B$ be a $\sigma$-unital $C^\ast$-algebra containing a full, properly infinite projection, and let $x\in KK_\nuc(A,B)$. Then there exists a nuclear, strongly $\mathcal O_\infty$-stable, full $\ast$-homo\-morphism $\phi \colon A \to B$ such that $KK_\nuc(\phi) = x$.

Moreover, if $A$ and $B$ are unital, then there exists a unital, nuclear, strongly $\mathcal O_\infty$-stable, full $\ast$-homomorphism $\phi \colon A \to B$ for which $KK_\nuc(\phi) = x$ if and only if 
\begin{equation}
\Gamma_0(x)([1_A]_0) = [1_B]_0.
\end{equation}
\end{introtheorem}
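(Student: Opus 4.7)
My proof would combine a Cuntz-type representation of $KK_\nuc(A,B)$ with a Weyl--von Neumann--Kirchberg absorption argument, adapted to the present setting where $B$ is not assumed $\mathcal O_\infty$-stable but only houses a full, properly infinite projection $p$. The non-unital statement is the main content; the unital refinement will then follow by conjugating $\phi(1_A)$ onto $1_B$ by a partial isometry in $B$.

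First, I would realize $x \in KK_\nuc(A,B)$ as a Cuntz pair. Because $A$ is separable exact and $B$ is $\sigma$-unital, $x$ can be written as a difference $[\pi_+] - [\pi_-]$, with $\pi_\pm \colon A \to \mathcal{M}(B \otimes \mathcal K)$ nuclear $\ast$-homomorphisms satisfying $\pi_+(a) - \pi_-(a) \in B \otimes \mathcal K$ for every $a \in A$.

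The crux is an absorption step. Proper infiniteness of $p$ yields an abundance of isometries in $pBp$, and hence an embedding $\mathcal O_\infty \hookrightarrow pBp$; combined with an $\mathcal O_2$-embedding of the exact algebra $A$ and the Morita equivalence between $B$ and $pBp$ provided by fullness of $p$, this lets one construct a ``universal absorbing'' nuclear $\ast$-homomorphism $\iota \colon A \to B$ that is full, strongly $\mathcal O_\infty$-stable, and $KK_\nuc$-trivial (for instance by factoring through a cone over an $\mathcal O_2$-embedding of $A$). I would then invoke a Weyl--von Neumann--Kirchberg absorption theorem of the kind developed in the earlier sections of the paper, in the form $\pi_- \oplus \iota \sim_{\au} \iota$ for any nuclear $\pi_-$. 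Transporting the implementing asymptotic unitaries across to $\pi_+ \oplus \iota$ and exploiting the degeneracy of the $\iota$-summand extracts a single nuclear $\ast$-homomorphism $\phi \colon A \to B$ with $KK_\nuc(\phi) = x$. By design, $\phi$ inherits strong $\mathcal O_\infty$-stability from the paths of $\mathcal O_\infty$-isometries embedded via $\iota$, and fullness from fullness of $\iota(A)$ in $B$; nuclearity is preserved throughout.

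For the unital case, necessity of $\Gamma_0(x)([1_A]_0) = [1_B]_0$ is immediate. For sufficiency, take the non-unital $\phi$ just produced: then $q := \phi(1_A)$ is a projection in $B$ with $[q]_0 = [1_B]_0$. The unit $1_B$ dominates a full properly infinite subprojection and is therefore itself properly infinite, so Cuntz--R\o rdam comparison in a unital $C^\ast$-algebra with properly infinite unit supplies a partial isometry $v \in B$ with $v^\ast v = q$ and $v v^\ast = 1_B$; the map $\mathrm{Ad}(v) \circ \phi$ is then a unital $\ast$-homomorphism with the same $KK_\nuc$-class, preserving nuclearity, strong $\mathcal O_\infty$-stability, and fullness. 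I expect the absorption step to be the main technical obstacle: in the absence of $\mathcal O_\infty$-stability on $B$ itself, the absorbing $\ast$-homomorphism $\iota$ has to be handcrafted from $p$, and the asymptotic unitary paths implementing $\pi_- \oplus \iota \sim_{\au} \iota$ must be chosen compatibly with the $\mathcal O_\infty$-generating isometries so that the resulting $\phi$ is genuinely strongly (and not merely $\mathcal O_\infty$-) stable.
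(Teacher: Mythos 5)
Your overall strategy---represent $x$ by a Cuntz pair, build a ``small'' full nuclear $\ast$-homomorphism $\iota$ with $KK_\nuc(\iota)=0$ via the $\mathcal O_2$-embedding theorem and Morita equivalence to $pBp$, absorb, and conjugate onto $1_B$ in the unital case---is indeed the skeleton of the paper's argument. The unital reduction at the end is exactly right. But there is a real gap in the middle, and you have mislocated it.

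Your sentence ``transporting the implementing asymptotic unitaries across to $\pi_+\oplus\iota$ and exploiting the degeneracy of the $\iota$-summand extracts a single nuclear $\ast$-homomorphism $\phi\colon A\to B$'' is the entire difficulty, and as stated it does not work. Here is the problem. Absorption of the sort developed in the paper gives, after passing to the infinite repeat $\iota_\infty\colon A\to\multialg{B\otimes\mathcal K}$, a weakly nuclear Cuntz pair $(\psi,\iota_\infty)$ representing $x$, with $\psi(a)-\iota_\infty(a)\in B\otimes\mathcal K$. The unitaries implementing $\pi_-\oplus\iota_\infty\sim\iota_\infty$ live in $\multialg{B\otimes\mathcal K}$, and conjugating $\pi_+\oplus\iota_\infty$ by them converges only \emph{modulo} $B\otimes\mathcal K$: the quantities $u_t^\ast\bigl((\pi_+-\pi_-)(a)\oplus 0\bigr)u_t$ have no reason to converge in norm, because $u_t^\ast b u_t$ need not converge for $b\in B\otimes\mathcal K$ when $u_t$ is merely a multiplier unitary. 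So you do not yet have a $\ast$-homomorphism landing in $B$, only one in $\multialg{B\otimes\mathcal K}$ agreeing with $\iota_\infty$ modulo ``compacts.''

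The paper closes this gap with a genuinely new construction (Section 7, Lemma 7.1 and Lemma 7.3): from a unital embedding $\mathcal O_2\hookrightarrow\multialg{B}\cap\theta(A)'$ it produces a norm-continuous path of unitaries $v_t$ in $\multialg{B\otimes\mathcal K}$ commuting with $\theta_\infty(A)$, with the two crucial extra properties that $v_t^\ast(1\otimes e_{1,1})v_t$ acts as an approximate identity on $B\otimes\mathcal K$ and that $v_t b v_t^\ast$ \emph{converges in norm} for every $b\in B\otimes\mathcal K$. Only because of these two properties does $v_t\psi(-)v_t^\ast$ converge point-norm to a map of the form $\phi\otimes e_{1,1}+\theta\otimes(1-e_{1,1})$ with $\phi\colon A\to B$; the Cuntz pair $(\psi,\theta_\infty)$ is then homotoped to $(\phi,\theta)\oplus(\theta_0,\theta_0)$, giving $KK_\nuc(\phi)=x$. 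Nothing in your sketch produces unitaries with these special convergence properties, and a generic absorbing unitary sequence will not have them. You flagged the wrong obstacle: the compatibility of the unitaries with the $\mathcal O_\infty$-isometries for strong $\mathcal O_\infty$-stability is handled cheaply at the end by replacing $\phi$ with a Cuntz sum $\phi\oplus_{s_1,s_0}\theta$ (using that $\theta$ is strongly $\mathcal O_\infty$-stable and approximately dominates $\phi$); the genuine obstacle is compressing a multiplier-algebra map into $B$, which is what the special unitary path construction solves.
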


The next goal is to present a uniqueness theorem to keep the above existence theorem company. In order to do so, one must first know which equivalence relation of $\ast$-homomorphisms to obtain uniqueness by. The obvious choice would be approximate unitary equivalence as this is what is needed for classification of $C^\ast$-algebras, but this does not imply agreement in $KK_\nuc$. The right notion should have more of a homotopic flavour.

Let $\phi ,\psi \colon A \to B$ be $\ast$-homomorphisms for which $A$ is separable. Say that $\phi$ and $\psi$ are \emph{asymptotically Murray--von Neumann equivalent}, written $\phi \sim_\asMvN \psi$, if there is a norm-continuous path $(v_t)_{t\in [0,\infty)}$ of contractions in the multiplier algebra $\multialg{B}$, such that
\begin{equation}
\lim_{t\to\infty} \| v_t^\ast \phi(a) v_t - \psi(a) \| = 0, \qquad \lim_{t\to \infty} \| v_t \psi(a) v_t^\ast - \phi(a)\| = 0
\end{equation}
for all $a\in A$. If each $v_t$ can be chosen to be a unitary, then $\phi$ and $\psi$ are said to be \emph{asymptotically unitarily equivalent}, written $\phi \sim_\asu \psi$.

In the special case where $A = \mathbb C$, then $\phi \sim_\asMvN \psi$ (respectively $\phi \sim_\asu \psi$) if and only if the projections $\phi(1_\mathbb{C})$ and $\psi(1_{\mathbb C})$ are Murray--von Neumann equivalent (respectively unitarily equivalent).
Asymptotic Murray--von Neumann equivalence and unitary equivalence are (for general $A$) connected in similar ways as Murray--von Neumann and unitary equivalence of projections are connected. For instance, $\phi \sim_\asMvN \psi$ if and only if $\phi \oplus 0 \sim_\asu \psi \oplus 0$.

It turns out that asympotitc Murray--von Neumann equivalence is exactly the equivalence relation of $\ast$-homomorphisms which is captured by $KK_\nuc$. However, this is not strong enough in general to obtain classification of $C^\ast$-algebras, see \cite[Remark 3.15]{Gabe-O2class} for an example. Luckily, asymptotic Murray--von Neumann equivalence and asymptotic unitary equivalence turn out to agree in many interesting cases, for instance whenever $B$ is stable, or whenever $A,B,\phi$ and $\psi$ are all unital. This explains why we obtain classification of stable and of unital $C^\ast$-algebras.
The following is the uniqueness theorem for full maps.

\begin{introtheorem}[Uniqueness -- full case]\label{t:uniquesimple}
Let $A$ be a separable $C^\ast$-algebra, and let $B$ be a $\sigma$-unital $C^\ast$-algebra. Suppose that $\phi, \psi\colon A \to B$ are nuclear, strongly $\mathcal O_\infty$-stable, full $\ast$-homomorphisms. The following are equivalent:
\begin{itemize}
\item[$(i)$] $KK_\nuc(\phi) = KK_\nuc(\psi)$;
\item[$(ii)$] $\phi$ and $\psi$ are asymptotically Murray--von Neumann equivalent.
\end{itemize}
Additionally, if either $B$ is stable, or if $A, B, \phi$ and $\psi$ are all unital, then $(i)$ and $(ii)$ are equivalent to
\begin{itemize}
\item[$(iii)$] $\phi$ and $\psi$ are asymptotically unitarily equivalent (with unitaries in the minimal unitisation).
\end{itemize}
\end{introtheorem}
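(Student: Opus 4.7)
The plan is to establish $(i)\Leftrightarrow (ii)$ first, and then derive $(iii)$ from this equivalence under the additional hypotheses. The direction $(ii)\Rightarrow (i)$ is relatively straightforward: given a path $(v_t)$ of contractions in $\multialg{B}$ implementing an asymptotic Murray--von Neumann equivalence, the usual $2\times 2$ trick inflates $(v_t)$ to a norm-continuous path of unitaries in $M_2(\multialg{B})\cong \multialg{M_2(B)}$ implementing an asymptotic unitary equivalence between $\phi\oplus 0$ and $\psi\oplus 0$. Asymptotic unitary equivalence gives an operator homotopy (via the standard reparametrization trick through the asymptotic algebra) through nuclear $\ast$-homomorphisms, and since $KK_\nuc$ is homotopy invariant, $KK_\nuc(\phi)=KK_\nuc(\phi\oplus 0)=KK_\nuc(\psi\oplus 0)=KK_\nuc(\psi)$.

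The direction $(i)\Rightarrow (ii)$ is the main content. The strategy I would adopt is an \emph{absorption argument}. The key intermediate result I aim to establish is: if $\phi\colon A\to B$ is any nuclear, strongly $\mathcal O_\infty$-stable, full $\ast$-homomorphism and $\mu\colon A\to B$ is any nuclear, strongly $\mathcal O_\infty$-stable $\ast$-homomorphism, then $\phi\oplus\mu\sim_\asu \phi$, where the direct sum is formed using the two isometries provided by the strong $\mathcal O_\infty$-stability of $\phi$. The isometries in $\multialg B$ obtained from strong $\mathcal O_\infty$-stability produce the ``room'' into which $\mu$ can be moved, while fullness provides the abundance of conjugating elements needed to match $\mu$ up with a subsummand of $\phi$ along a homotopy. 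This is essentially a relative variant of Kirchberg's $\mathcal O_\infty$-absorption, and I expect this to be the hardest single step.

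Granted the absorption theorem, the proof proceeds as follows. Apply the existence Theorem~\ref{t:existsimple} to the class $-KK_\nuc(\phi)\in KK_\nuc(A,B)$ (after tensoring with $\mathcal K$ if necessary so that $B$ contains the required full, properly infinite projection; at the end the stabilisation is removed by a standard destabilisation argument using fullness) to obtain a nuclear, strongly $\mathcal O_\infty$-stable, full $\ast$-homo\-morphism $\mu\colon A\to B$ with $KK_\nuc(\phi\oplus\mu)=0=KK_\nuc(\psi\oplus\mu)$. Using a standard characterisation of vanishing $KK_\nuc$-classes (via nuclear homotopy through trivial absorbing extensions, plus absorption), one concludes that $\phi\oplus\mu\sim_\asu\psi\oplus\mu$. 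Combining this with the absorption equivalences $\phi\sim_\asu\phi\oplus\mu$ and $\psi\sim_\asu\psi\oplus\mu$ produces an asymptotic Murray--von Neumann equivalence (rather than unitary equivalence, because passing between the ``doubled'' picture and $B$ generally only yields a path of contractions) between $\phi$ and $\psi$, giving $(ii)$.

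For the statement about $(iii)$, in the case where $B$ is stable, the path $(v_t)$ of contractions can be dilated to a path of unitaries in $\multialg B$ by the stability of $B$: two copies of $B$ are unitarily equivalent inside $\multialg B$, so the defect projections $1-v_t^\ast v_t$ and $1-v_tv_t^\ast$ are unitarily equivalent in a norm-continuous way, allowing us to extend each $v_t$ to a unitary while preserving the norm continuity. In the fully unital case, since $\phi(1_A)=\psi(1_A)=1_B$, the asymptotic Murray--von Neumann relations force $v_t^\ast v_t\to 1_B$ and $v_tv_t^\ast\to 1_B$; a functional calculus perturbation on $v_t=u_t|v_t|$ then replaces $(v_t)$ by a norm-continuous path of unitaries $(u_t)$ in the minimal unitisation implementing the equivalence. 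The main obstacle, as noted, is the absorption theorem: balancing fullness, nuclearity and strong $\mathcal O_\infty$-stability simultaneously to construct the conjugating unitary path is where the heart of the argument will lie.
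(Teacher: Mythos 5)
Your key intermediate claim --- that $\phi\oplus\mu\sim_\asu\phi$ for \emph{every} nuclear, strongly $\mathcal O_\infty$-stable $\mu$, provided $\phi$ is also full --- cannot be correct. Asymptotic unitary equivalence preserves $KK_\nuc$-classes (Lemma \ref{l:asMvNKKnuc}), and the Cuntz sum satisfies $KK_\nuc(\phi\oplus\mu)=KK_\nuc(\phi)+KK_\nuc(\mu)$, so the claim would force $KK_\nuc(\mu)=0$ for every such $\mu$. Taking $A=B=\mathcal O_\infty\otimes\mathcal K$ and $\phi=\mu=\id$ already contradicts this. The actual absorption result, Proposition \ref{p:piabsorbing}$(b)$, only removes a summand $\theta$ that is $\mathcal O_2$-stable --- precisely so that $KK_\nuc(\theta)=0$ and no $K$-theoretic obstruction arises. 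Your $\mu$ is constructed to have $KK_\nuc(\mu)=-KK_\nuc(\phi)$, which is generally nonzero, so it is not $\mathcal O_2$-stable and cannot be absorbed. In short, you have the arrows backwards: the summand whose $KK$-class you cancel against $\phi$ cannot also be the one you absorb.

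The paper's route is different in both auxiliary objects and the bridging lemma. One produces a full, nuclear $\theta\colon A\to B$ factoring through $\mathcal O_2$ (Lemma \ref{l:fullO2map}, via Kirchberg's $\mathcal O_2$-embedding theorem), so that $\theta$ is strongly $\mathcal O_2$-stable and $KK_\nuc(\theta)=0$. Fullness of $\phi$, $\psi$ and nuclearity of $\theta$ give approximate domination via Proposition \ref{p:fulldom}. The Dadarlat--Eilers stable uniqueness theorem (Theorem \ref{t:DE}) then yields an asymptotic unitary equivalence after stabilising by the \emph{multiplier} infinite repeat $\theta_\infty$. The paper's genuinely new step --- which your proposal does not address --- is the key lemma for uniqueness, Lemma \ref{l:keyuniqueness}, built on the explicit unitary path in $(\mathcal O_2\otimes\mathcal K)^\sim$ from Section \ref{s:unitary}; it converts the $\theta_\infty$-stabilised statement into $\phi\oplus\theta\sim_\asu\psi\oplus\theta$ with a single copy of $\theta$ landing in $B$. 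Only at that point does Proposition \ref{p:piabsorbing}$(b)$ (now correctly applicable, since $\theta$ is $\mathcal O_2$-stable) remove $\theta$ and give $\phi\sim_\asMvN\psi$. Your treatment of $(ii)\Leftrightarrow(iii)$ is in the right spirit, but the stable case needs more care: the defects $1-v_tv_t^\ast$ are generally not projections, so the perturbation to a unitary path is not a simple dilation; this is handled by Proposition \ref{p:MvNvsue}.
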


A nuclear, strongly $\mathcal O_\infty$-stable, full $\ast$-homomorphism $A \to B$ as above exists only when $A$ is exact, and $B$ contains a properly infinite, full projection. So the $C^\ast$-algebras for which Theorem \ref{t:uniquesimple} applies, are the same as those considered in Theorem \ref{t:existsimple}. 

The classical approach to the Kirchberg--Phillips theorem in \cite{Kirchberg-simple} and \cite{Phillips-classification} was also through existence and uniqueness results similar to Theorems \ref{t:existsimple} and \ref{t:uniquesimple}, see also \cite[Theorems 8.2.1 and 8.3.3]{Rordam-book-classification}. In Section \ref{ss:classical} it is explained why these existence and uniqueness results are special cases of Theorems \ref{t:existsimple} and \ref{t:uniquesimple}.

In order to apply existence and uniqueness results as above for classification, the identity maps of the $C^\ast$-algebras in question must also satisfy the relevant conditions on the maps. In the above case, this means that $\id_A$ and $\id_B$ should be nuclear, strongly $\mathcal O_\infty$-stable and full. These properties translate into properties of the $C^\ast$-algebras: $A$ and $B$ should be nuclear, $\mathcal O_\infty$-stable, and simple. Adding separability assumptions to the mix, one exactly arrives at the class of \emph{Kirchberg algebras}: separable, nuclear, $\mathcal O_\infty$-stable, simple $C^\ast$-algebras.

While pure infiniteness is usually considered in the definition of Kirchberg algebras in place of $\mathcal O_\infty$-stability, a by-now-classical theorem of Kirchberg \cite{Kirchberg-ICM} (see also \cite{KirchbergPhillips-embedding}) says that a separable, nuclear, simple $C^\ast$-algebra is $\mathcal O_\infty$-stable if and only if it is purely infinite. While this is of course a beautiful and highly applicable characterisation, the $\mathcal O_\infty$-stability (of maps) is really what drives all the proofs in the classification results.

The main result of the first part of the paper is the following classification result due to Kirchberg \cite{Kirchberg-simple} and Phillips \cite{Phillips-classification}. Recall that an element $x\in KK(A,B)$ is \emph{invertible}, if there exists $y\in KK(B,A)$ such that $y\circ x = KK(\id_A)$ and $x\circ y = KK(\id_B)$, and that $A$ and $B$ are \emph{$KK$-equivalent} if there exists an invertible element in $KK(A,B)$.

\begin{introtheorem}[Classification of Kirchberg algebras]\label{t:KP}
Let $A$ and $B$ be Kirchberg algebras (separable, nuclear, purely infinite, simple $C^\ast$-algebras).
\begin{itemize}
\item[$(a)$] If $A$ and $B$ are stable, then $A\cong B$ if and only if $A$ and $B$ are $KK$-equivalent. Moreover, for any invertible $x\in KK(A,B)$ there exists an isomorphism $\phi \colon A \xrightarrow \cong B$, unique up to asymptotic unitary equivalence (with unitaries in the minimal unitisation), such that $KK(\phi) = x$.
\item[$(b)$] If $A$ and $B$ are unital, then $A\cong B$ if and only if there is an invertible $x\in KK(A,B)$ such that $\Gamma_0(x)([1_A]_0) = [1_B]_0$. Moreover, for any such $x$ there is an isomorphism $\phi \colon A \xrightarrow \cong B$, unique up to asymptotic unitary equivalence, such that $KK(\phi) = x$.
\end{itemize}
\end{introtheorem}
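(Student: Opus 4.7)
The plan is a standard Elliott approximate intertwining argument, with all the genuine work absorbed into Theorems~\ref{t:existsimple} and~\ref{t:uniquesimple}. The preliminary observation is that for Kirchberg algebras $A$ and $B$, the hypotheses of those theorems are automatic: nuclearity identifies $KK_\nuc$ with $KK$ in both directions and makes every $\ast$-homomorphism nuclear; $\mathcal O_\infty$-stability of $A$ and $B$ makes every $\ast$-homomorphism strongly $\mathcal O_\infty$-stable; and simplicity forces every nonzero $\ast$-homomorphism to be full. In part~(a), any nonzero projection in a stable Kirchberg algebra is automatically full (by simplicity) and properly infinite (by pure infiniteness), supplying the target hypothesis of Theorem~\ref{t:existsimple}; in part~(b), the unit does the job.

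Given an invertible $x\in KK(A,B)$ with inverse $y\in KK(B,A)$, two applications of Theorem~\ref{t:existsimple} then produce nuclear, strongly $\mathcal O_\infty$-stable, full $\ast$-homomorphisms $\phi\colon A\to B$ and $\psi\colon B\to A$ with $KK(\phi)=x$ and $KK(\psi)=y$. In part~(b), functoriality of $\Gamma_0$ under the Kasparov product gives $\Gamma_0(y)([1_B]_0)=[1_A]_0$, so both $\phi$ and $\psi$ may be chosen unital. Since $A$ is nuclear and $\mathcal O_\infty$-stable and $\phi$ is injective (hence $\psi\circ\phi$ full), the composition $\psi\circ\phi$ is again nuclear, strongly $\mathcal O_\infty$-stable and full, with $KK$-class $y\circ x=KK(\id_A)$; symmetrically for $\phi\circ\psi$. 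Theorem~\ref{t:uniquesimple}(iii), which applies in both the stable case and the unital case, then yields $\psi\circ\phi\sim_\asu \id_A$ and $\phi\circ\psi\sim_\asu \id_B$.

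From here an Elliott approximate intertwining (as in \cite[Section~2.3]{Rordam-book-classification}) produces an isomorphism $\phi_\infty\colon A\xrightarrow{\cong}B$ which is asymptotically unitarily equivalent to $\phi$; in particular $KK(\phi_\infty)=x$, and in the unital case $\phi_\infty$ is automatically unital. Uniqueness of $\phi_\infty$ up to asymptotic unitary equivalence is Theorem~\ref{t:uniquesimple}(iii) applied directly to two such isomorphisms sharing the same $KK$-class, since every isomorphism between Kirchberg algebras automatically satisfies the nuclear, strongly $\mathcal O_\infty$-stable, full hypotheses. The converse directions of (a) and (b) are immediate, since any (unital) isomorphism induces an invertible element of $KK$ (sending $[1_A]_0$ to $[1_B]_0$).

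The only genuinely delicate point in this reduction is that the intertwining is driven by \emph{asymptotic unitary} equivalence rather than merely asymptotic Murray--von Neumann equivalence; the latter does not in general suffice to extract a genuine isomorphism with a prescribed $KK$-class. This is exactly what clause~(iii) of Theorem~\ref{t:uniquesimple} delivers, and it is the reason Theorem~\ref{t:KP} is organised into the stable and unital cases. Beyond this observation the argument is formal, and the real mathematical content lies entirely in Theorems~\ref{t:existsimple} and~\ref{t:uniquesimple}.
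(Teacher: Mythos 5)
The outline is right and matches the paper's strategy in broad strokes, but there is a genuine gap in the intertwining step, and it is precisely the gap that the paper's Section~6 exists to fill.

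You invoke the standard Elliott approximate intertwining from \cite[Section~2.3]{Rordam-book-classification} and assert that it ``produces an isomorphism $\phi_\infty\colon A\xrightarrow{\cong}B$ which is asymptotically unitarily equivalent to $\phi$; in particular $KK(\phi_\infty)=x$.'' Neither claim follows. The classical approximate intertwining works with a \emph{sequence} of unitaries (passing to subsequences of the given asymptotic paths), so what it yields --- at best, and even then only after the correction recorded in Remark~\ref{r:apint} about when the intertwining isomorphism is comparable to $\phi_0$ --- is \emph{approximate} unitary equivalence $\phi_\infty\sim_\au\phi$. This is not enough: approximate unitary equivalence only gives agreement in $KL$ (Proposition~\ref{p:aMvNKL}), not in $KK$. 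If the intertwining really gave \emph{asymptotic} unitary, or even asymptotic Murray--von Neumann, equivalence between $\phi_0$ and the constructed isomorphism, this would settle the open question the paper explicitly poses immediately after Proposition~\ref{p:asint}. So the claim cannot be dismissed as a formality.

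The paper circumvents this by a different mechanism. Lemma~\ref{l:asint} and Proposition~\ref{p:asint} develop an \emph{asymptotic intertwining}: starting from continuous unitary paths implementing $\psi_0\circ\phi_0\sim_\asu\id_A$ and $\phi_0\circ\psi_0\sim_\asu\id_B$, one builds a continuous family of unitaries and a \emph{homotopy} of $\ast$-homomorphisms from $\phi_0$ to the limiting isomorphism $\phi$. Since $KK$ is homotopy invariant, this gives $KK(\phi)=KK(\phi_0)=x$ on the nose. The footnote in the proof of Theorem~\ref{t:KP} spells this out: approximate intertwining alone only controls $\phi_\ast=\Gamma(x)$, not the full $KK$-class. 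Your reduction to Theorems~\ref{t:existsimple} and~\ref{t:uniquesimple} is otherwise correct, but you should replace the appeal to Rørdam's approximate intertwining with Proposition~\ref{p:asint}, and the conclusion ``homotopic to $\phi_0$'' in place of ``asymptotically unitarily equivalent to $\phi_0$.''
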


By a dichotomy result of Zhang \cite{Zhang-dichotomy}, any Kirchberg algebra is either stable or unital and hence the above theorem classifies all Kirchberg algebras.

Recall that a separable $C^\ast$-algebra $A$ \emph{satisfies the universal coefficients theorem (UCT)} of Rosenberg and Schochet \cite{RosenbergSchochet-UCT} (in $KK$-theory) if for any $\sigma$-unital $C^\ast$-algebra $B$ there is a (natural) short exact sequence
\begin{equation}\label{eq:UCT}
0 \to \Ext(K_{1-\ast}(A), K_\ast(B)) \to KK(A,B) \xrightarrow{\Gamma} \Hom(K_\ast(A), K_\ast(B)) \to 0.
\end{equation}
Rosenberg and Schochet proved that a separable $C^\ast$-algebra satisfies the UCT if and only if it is $KK$-equivalent to a separable $C^\ast$-algebra of Type I. A remarkable theorem of Tu \cite{Tu-BaumConnes} implies that the groupoid $C^\ast$-algebra of a locally compact, Hausdorff, second countable, amenable groupoid satisfies the UCT. It is an open problem, often referred to as the \emph{UCT problem}, whether every separable, nuclear $C^\ast$-algebra satisfies the UCT.

An elementary consequence of the UCT, see \cite[Proposition 7.3]{RosenbergSchochet-UCT}, is that the following holds whenever both $A$ and $B$ satisfy the UCT: if $\alpha_\ast \colon K_\ast(A) \to K_\ast(B)$ is an isomorphism and  $x\in KK(A,B)$ satisfies $\Gamma(x) = \alpha_\ast$ (such $x$ always exists by the UCT short exact sequence \eqref{eq:UCT}), then $x$ is invertible in $KK$. Hence $C^\ast$-algebras satisfying the UCT are strongly classified by $K$-theory up to $KK$-equiva\-lence.\footnote{\emph{Strong classification} means that one may lift an isomorphism on the invariant to an isomorphism.} The following is therefore immediately obtained from Theorem \ref{t:KP}.

\begin{introtheorem}[Classification of UCT Kirchberg algebras]\label{t:KPUCT}
Let $A$ and $B$ be Kirchberg algebras satisfying the UCT.
\begin{itemize}
\item[$(a)$] If $A$ and $B$ are stable, then $A\cong B$ if and only if $K_\ast(A) \cong K_\ast(B)$. Moreover, for any isomorphism $\alpha_\ast \colon K_\ast(A) \xrightarrow \cong K_\ast(B)$ there is an isomorphism $\phi \colon A \xrightarrow \cong B$ such that $\phi_\ast = \alpha_\ast$.
\item[$(b)$] If $A$ and $B$ are unital, then $A\cong B$ if and only if there exists an isomorphism $\alpha_\ast \colon K_\ast(A) \xrightarrow \cong K_\ast(B)$ such that $\alpha_0([1_A]_0) = [1_B]_0$. Moreover, for any such $\alpha_\ast$, there is an isomorphism $\phi \colon A \xrightarrow \cong B$ such that $\phi_\ast = \alpha_\ast$.
\end{itemize}
\end{introtheorem}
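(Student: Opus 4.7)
The plan is to reduce Theorem \ref{t:KPUCT} to Theorem \ref{t:KP} using the UCT as a lifting device. The forward direction in both parts is immediate: any isomorphism $\phi \colon A \xrightarrow{\cong} B$ induces an isomorphism $\phi_\ast \colon K_\ast(A) \xrightarrow{\cong} K_\ast(B)$, and if $\phi$ is unital then $\phi_0([1_A]_0) = [1_B]_0$.

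For the converse, fix an isomorphism $\alpha_\ast \colon K_\ast(A) \xrightarrow{\cong} K_\ast(B)$ (with $\alpha_0([1_A]_0) = [1_B]_0$ in the unital case). Since $A$ satisfies the UCT, surjectivity of $\Gamma$ in \eqref{eq:UCT} produces a lift $x \in KK(A,B)$ with $\Gamma(x) = \alpha_\ast$. The elementary UCT consequence already recalled in the excerpt -- originally \cite[Proposition 7.3]{RosenbergSchochet-UCT} -- asserts that whenever both $A$ and $B$ satisfy the UCT, any lift $x$ of a $K$-theoretic isomorphism is automatically invertible in $KK$. Hence $x$ is a $KK$-equivalence, and in the unital case it further satisfies $\Gamma_0(x)([1_A]_0) = \alpha_0([1_A]_0) = [1_B]_0$.

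Now Theorem \ref{t:KP} applies directly. In case (a), invertibility of $x$ yields an isomorphism $\phi \colon A \xrightarrow{\cong} B$ with $KK(\phi) = x$; in case (b), the unit-class condition on $x$ makes Theorem \ref{t:KP}(b) deliver a unital isomorphism $\phi$ with $KK(\phi) = x$. In either case $\phi_\ast = \Gamma(KK(\phi)) = \Gamma(x) = \alpha_\ast$, which is the strong classification statement. There is no real obstacle here: all the substantive work lives in Theorem \ref{t:KP}, and the present theorem is a purely formal consequence of that classification together with the standard UCT lifting lemma.
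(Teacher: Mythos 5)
Your proposal is correct and is exactly the argument the paper intends: lift $\alpha_\ast$ to $x \in KK(A,B)$ via the UCT surjection, invoke \cite[Proposition 7.3]{RosenbergSchochet-UCT} for invertibility of $x$, and then apply Theorem \ref{t:KP}, using $\Gamma(KK(\phi)) = \phi_\ast$ to conclude. The paper records this reasoning in the introduction and then declares Theorem \ref{t:KPUCT} an immediate corollary of Theorem \ref{t:KP}, so there is no difference in approach.
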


Note that since $K$-theory is a weaker invariant than $KK$-theory, one no longer has uniqueness of the isomorphisms in Theorem \ref{t:KPUCT} as opposed to Theorem \ref{t:KP}.

\subsection*{The main results -- the general case}

In order to explain the general classification results, one must consider $C^\ast$-algebras with specified (two-sided, closed) ideal structure, and a version of $KK$-theory which takes this extra information into account. This is completely analogous to considering group equivariant $KK$-theory, when the $C^\ast$-algebras come equipped with a group action.

To be more precise, for a $C^\ast$-algebra $A$ let $\mathcal I(A)$ denote the partially ordered set of two-sided, closed ideals in $A$. Similarly, for a topological space $X$ let $\mathcal O(X)$ denote the partially ordered set of open subsets of $X$. Both $\mathcal I(A)$ and $\mathcal O(X)$ are complete lattices. An \emph{action of $X$ on $A$} is an order preserving map $\Phi_A \colon \mathcal O(X) \to \mathcal I(A)$, and an \emph{$X$-$C^\ast$-algebra} is a $C^\ast$-algebra $A$ together with an action $\Phi_A$ of $X$ on $A$. Usually $\Phi_A$ is eliminated from the notation by defining $A(U) := \Phi_A(U)$ for $U\in \mathcal O(X)$. A map $\phi \colon A\to B$ is $X$-equivariant if $\phi(A(U)) \subseteq B(U)$ for all $U\in \mathcal O(X)$.

A special -- and very important -- type of action is that of the primitive ideal space $\Prim A$ on $A$, given by the canonical order isomorphism $\I_A \colon \mathcal O(\Prim A) \to \mathcal I(A)$, see \cite[Theorem 4.1.3]{Pedersen-book-automorphism}. More generally, an $X$-$C^\ast$-algebra $A$ is called \emph{tight} if the action $\Phi_A \colon \mathcal O(X) \to \mathcal I(A)$ is an order isomorphism. Note that any isomorphism $\phi \colon A \xrightarrow \cong B$ of $C^\ast$-algebras induces a tight action $\Phi_B$ of $\Prim A$ on $B$, such that $\phi \colon (A,\I_A) \xrightarrow \cong (B,\Phi_B)$ is an isomorphism of $\Prim A$-$C^\ast$-algebras. Hence it suffices to classify all tight $X$-$C^\ast$-algebras for different spaces $X$. 

In the special case where $X$ is a one-point space, tightness of $A$ means that it is simple, so classifying simple $C^\ast$-algebras is the same as classifying tight $X$-$C^\ast$-algebras in the case where $X$ is a one-point space. 

Whenever $A$ is a separable $X$-$C^\ast$-algebra and $B$ is a $\sigma$-unital $X$-$C^\ast$-algebra, one may construct groups $KK(X; A, B)$ and $KK_\nuc(X; A, B)$ analogously as one constructs $KK(A,B)$ and $KK_\nuc(A,B)$. In fact, just as when constructing $KK^G(A, B)$ for $G$-$C^\ast$-algebras where $G$ is a group, one constructs $KK(X)$ and $KK_\nuc(X)$ by only considering Kasparov modules which remember the $X$-$C^\ast$-algebra structure. As in the classical case, the Kasparov product induces a bilinear product, and it therefore makes sense to talk about invertible $KK(X)$-elements. This is all done in full detail in Section \ref{s:KK}. The main classification result will be in terms of the existence of invertible $KK(X)$-elements, as in Theorem \ref{t:KP}.

 Every $X$-equivariant $\ast$-homomorphism $\phi \colon A \to B$ induces an element $KK(X;\phi) \in KK(X; A, B)$, and if $A$ is exact and $\phi$ is nuclear, then one obtains $KK_\nuc(X;\phi) \in KK_\nuc(X; A, B)$.

There are canonical forgetful maps 
\begin{equation}
KK_\nuc(X; A, B) \to KK(X; A, B) \to KK(A,B),
\end{equation}
and thus every element $x\in KK_\nuc(X; A,B)$ induces a homomorphism $\Gamma_i(x) \colon K_i(A) \to K_i(B)$ for $i=0,1$ which is compatible with taking compositions of (nuclear) $X$-equivariant $\ast$-homomorphisms in the obvious way.

Just as in Theorems \ref{t:existsimple} and \ref{t:uniquesimple}, the focus will be on nuclear, strongly $\mathcal O_\infty$-stable $\ast$-homomorphisms. However, fullness in the ideal-related setting is a bit more delicate. The idea is the following: let $\phi \colon A \to B$ be an $X$-equivariant $\ast$-homomorphism between $X$-$C^\ast$-algebras. Suppose that for every $a\in A$ that there is a smallest open subset $U_a$ of $X$ such that $a\in A(U_a)$. Then $\phi(a)$ will have to be contained in $B(U_a)$ by $X$-equivariance of $\phi$. The map $\phi$ is called \emph{$X$-full} if $\phi(a)$ is full in $B(U_a)$ for every $a\in A$.

In the case that $X$ is a one-point set, and thus only have open subsets $\emptyset$ and $X$, and the action on $A$ (similarly $B$) is $A(\emptyset) = 0$ and $A(X) = A$, then for every $a\in A$, one has $U_a = X$ if $a\neq 0$ and $U_a = \emptyset$ if $a=0$. Hence one arrives at the usual definition of fullness: $\phi \colon A \to B$ is $X$-full exactly when $\phi(a)$ is full in $B = B(X)$ for every non-zero $a\in A$.

In the case where $A$ and $B$ are tight $X$-$C^\ast$-algebras, $\phi \colon A \to B$ being $X$-full means that if $U\in \mathcal O(X)$ and if $a\in A(U)$ is a full element, then $\phi(a)$ is full in $B(U)$. In particular, any isomorphism of tight $X$-$C^\ast$-algebras is $X$-full, analogously as to how isomorphisms of simple $C^\ast$-algebras are full.

It turns out that the existence of such a minimal open set $U_a$ for every $a\in A$ is equivalent to the action $\Phi_A \colon \mathcal O(X) \to \mathcal I(A)$ preserving all infima. In this case, the $X$-$C^\ast$-algebra $A$ is said to be \emph{lower semicontinuous}. Hence $X$-fullness of maps is only defined when the domain is lower semicontinuous.
Additionally, an $X$-$C^\ast$-algebra is said to be
\begin{itemize}
\item \emph{monotone continuous} if the action preserves infima and increasing suprema;
\item \emph{upper semicontinuous} if the action preserves suprema;
\item \emph{$X$-compact} if the action preserves compact containment (see Definition \ref{d:lattice}).
\end{itemize}

Note that a tight $X$-$C^\ast$-algebra satisfies all the above conditions, i.e.~it is lower semicontinuous, monotone continuous, upper semicontinuous, and $X$-compact.

\begin{introtheorem}[Existence]\label{t:irexistence}
Let $X$ be a topological space, let $A$ be a separable, exact, monotone continuous $X$-$C^\ast$-algebra, and let $B$ be an $\mathcal O_\infty$-stable, upper semicontinuous, $X$-compact $X$-$C^\ast$-algebra. For every element $x\in KK_\nuc(X; A,B)$ there exists a nuclear, $X$-full $\ast$-homomorphism $\phi \colon A \to B$ such that $KK_\nuc(X; \phi) = x$.

Moreover, if $A$ and $B$ are unital, then we may pick $\phi$ to also be unital if and only if the following conditions hold:
\begin{itemize}
\item[(1)] $B(U) = B$ for every $U\in \mathcal O(X)$ satisfying $A(U) = A$, and 
\item[(2)] $\Gamma_0(x)([1_A]_0) = [1_B]_0$ in $K_0(B)$.
\end{itemize}
\end{introtheorem}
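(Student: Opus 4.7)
The strategy is absorption: represent $x$ as a difference of classes of nuclear $X$-equivariant $*$-homomorphisms, then add a large $X$-full base model $\rho\colon A\to B$ to promote the positive part into a single $*$-homomorphism with the correct $KK_\nuc(X)$-class and the desired $X$-fullness. This mirrors the strategy of Theorem \ref{t:existsimple} in the simple case, but now carried out ideal by ideal.

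First I would construct the absorbing base model $\rho$: a nuclear, $X$-equivariant, $X$-full, strongly $\mathcal O_\infty$-stable $*$-homomorphism $A\to B$ which is $KK_\nuc(X)$-trivial. The hypotheses feed in here directly -- monotone continuity of $A$ makes the assignment $a\mapsto U_a$ well-defined, $X$-compactness and upper semicontinuity of $B$ let one produce full positive elements in each ideal $B(U)$, and $\mathcal O_\infty$-stability of $B$ supplies the isometries needed for strong $\mathcal O_\infty$-stability and for the Eilenberg swindle that kills $[\rho]$ in $KK_\nuc(X)$. Next, I would represent $x$ by a Cuntz pair $(\phi_+,\phi_-)$ of nuclear $X$-equivariant $*$-homomorphisms with $\phi_-$ arranged to be $KK_\nuc(X)$-trivial, so that $KK_\nuc(X;\phi_+)=x$. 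The ideal-related absorption theorem -- the $X$-equivariant analogue of Theorem \ref{t:uniquesimple} -- then yields $\phi_-\oplus\rho\sim_\asMvN\rho$. Setting $\phi:=\phi_+\oplus\rho$ produces a nuclear, $X$-full $*$-homomorphism with $KK_\nuc(X;\phi)=KK_\nuc(X;\phi_+)+KK_\nuc(X;\rho)=x+0=x$.

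For the unital refinement, condition (1) is forced: $\phi(1_A)=1_B$ and $X$-equivariance imply $1_B\in B(U)$ whenever $1_A\in A(U)$. Condition (2) is the standard $K_0$-obstruction. For sufficiency, I would run the above construction inside a corner $pBp$ for a full, properly infinite projection $p\in B$ with $[p]_0=[1_B]_0$, chosen compatibly with the $X$-action thanks to condition (1) and $\mathcal O_\infty$-stability of $B$. A standard path of partial isometries in $B$ then conjugates $p$ onto $1_B$, turning the corner-valued map into a unital one without disturbing nuclearity or $X$-fullness.

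The principal obstacle is the $X$-equivariant absorption theorem invoked in the second paragraph: upgrading Kirchberg's classical absorption for $\mathcal O_\infty$-stable targets to respect the ideal lattice $\mathcal I(B)$ via the $X$-action requires executing the absorption coherently over every ideal at once, and this is precisely what the hypotheses of monotone continuity on $A$, together with upper semicontinuity and $X$-compactness of $B$, are designed to enable. The Cuntz pair construction, the existence of the base model $\rho$, and the corner reduction for the unital case are comparatively routine once this absorption machinery is in hand.
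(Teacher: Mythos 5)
Your proposal has a genuine gap at the step where you ``represent $x$ by a Cuntz pair $(\phi_+,\phi_-)$ of nuclear $X$-equivariant $\ast$-homomorphisms with $\phi_-$ arranged to be $KK_\nuc(X)$-trivial, so that $KK_\nuc(X;\phi_+)=x$.'' In the Cuntz pair picture, the two legs of a Cuntz pair are weakly $X$-nuclear $\ast$-homomorphisms $A\to\multialg{B\otimes\mathcal K}$ agreeing modulo $B\otimes\mathcal K$ -- they take values in the multiplier algebra, not in $B$. If $\phi_+$ takes values in $\multialg{B\otimes\mathcal K}$, the expression $KK_\nuc(X;\phi_+)$ is not defined (only the class of the pair $[\phi_+,\phi_-]$ is), and $\phi_+\oplus\rho$ does not take values in $B$ or $M_2(B)$, so it cannot be the desired lift. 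If instead you mean $\phi_+\colon A\to B$, you are asserting precisely the conclusion of the theorem, not proving it. Moreover the absorption $\phi_-\oplus\rho\sim_{\asMvN}\rho$ that you invoke is irrelevant once you have such a $\phi_+$ -- it plays no role in killing $\phi_-$, whose class is already zero by hypothesis -- so it cannot be the mechanism that fills the gap.

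The missing idea is the ``key lemma for existence'' (Lemma \ref{l:keyexistence}), built on the explicit unitary path in $(\mathcal O_2\otimes\mathcal K)^\sim$ of Lemma \ref{l:unitarypath}. Starting from a Cuntz pair representative $(\psi,\theta_\infty)$ where $\theta_\infty$ is an infinite repeat of a nuclear $X$-full $\theta\colon A\to B$ admitting a unital $\mathcal O_2$ in the relative commutant, one conjugates $\psi$ by a norm-continuous unitary path $(v_t)$ inside $\multialg{B\otimes\mathcal K}\cap\theta_\infty(A)'$. The special properties of the path -- $v_t^\ast(1_{\multialg{B}}\otimes e_{1,1})v_t$ acts as an approximate unit on $B\otimes\mathcal K$, and $v_t b v_t^\ast$ converges in norm for each $b\in B\otimes\mathcal K$ -- force the limit $\lim_t v_t\psi(-)v_t^\ast$ to split as $\phi\otimes e_{1,1}+\theta\otimes(1_{\multialg{\mathcal K}}-e_{1,1})$ for an honest $\ast$-homomorphism $\phi\colon A\to B$. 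The path $t\mapsto(\Ad v_t\circ\psi,\theta_\infty)$ is then a homotopy of weakly $X$-nuclear Cuntz pairs showing $[\phi,\theta]=x$, and since $KK_\nuc(X;\theta)=0$ (the unital $\mathcal O_2$ in the commutant gives a swindle), $KK_\nuc(X;\phi)=x$. This extraction of a representative taking values in $B$, not in the multiplier algebra, is the structural heart of the proof and cannot be dispensed with. Once it is in hand, your remaining ingredients are essentially correct: the base model comes from Corollary \ref{c:irO2X} (your hypotheses on the actions are used exactly there), the reduction to a stable full hereditary subalgebra via the ideal-related Brown theorem (Proposition \ref{p:Brown}), absorption (Theorem \ref{t:infrepXabs}) to represent $x$ against $\theta_\infty$ (Lemma \ref{l:absCuntzpair2}), the Cuntz-sum upgrade $\phi\oplus\theta$ for $X$-fullness and strong $\mathcal O_\infty$-stability (Proposition \ref{p:piabsorbing}$(a)$), and the corner trick for the unital addendum -- but you misidentified the principal obstacle.
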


In Theorem \ref{t:existsimple} the target $C^\ast$-algebra $B$ was not assumed to be $\mathcal O_\infty$-stable, only to contain a properly infinite, full projection. By using the $\mathcal O_2$-embedding theorem, one may equivalently ask that there exists a full, nuclear, $\mathcal O_\infty$-stable $\ast$-homomorphism $A\to B$.  There is also a more general version of Theorem \ref{t:irexistence} with an analogous assumption -- the existence of an $X$-full, nuclear, $\mathcal O_\infty$-stable $\ast$-homomorphism $A\to B$ -- see Proposition \ref{p:existence}. One obtains the existence result above by combining that proposition with an ideal-related version of the $\mathcal O_2$-embedding theorem, Corollary \ref{c:irO2X}.

The uniqueness below is almost as general as Theorem \ref{t:uniquesimple}, although the asymptotic unitary equivalence is with multiplier unitaries instead of unitaries in the unitisation.

\begin{introtheorem}[Uniqueness]\label{t:iruniqueness}
Let $X$ be a topological space, let $A$ be a separable, exact, lower semicontinuous $X$-$C^\ast$-algebra, and let $B$ be a $\sigma$-unital $X$-$C^\ast$-algebra. Suppose that $\phi, \psi \colon A \to B$ are $X$-full, nuclear, strongly $\mathcal O_\infty$-stable $\ast$-homo\-morphisms. The following are equivalent:
\begin{itemize}
\item[$(i)$] $KK_\nuc(X; \phi) = KK_\nuc(X; \psi)$;
\item[$(ii)$] $\phi$ and $\psi$ are asymptotically Murray--von Neumann equivalent.
\end{itemize}
Additionally, if either $B$ is stable, or if $A,B,\phi$ and $\psi$ are all unital, then $(i)$ and $(ii)$ are equivalent to
\begin{itemize}
\item[$(iii)$] $\phi$ and $\psi$ are asymptotically unitarily equivalent (with multiplier unitaries in the stable case).
\end{itemize} 
\end{introtheorem}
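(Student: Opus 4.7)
The easier direction (ii)$\Rightarrow$(i) will go by promoting the implementing path $(v_t)_{t\in[0,\infty)}$ to an $X$-equivariant asymptotic morphism $a\mapsto v_t^\ast \phi(a) v_t$ from $A$ into $C_b([0,\infty),B)/C_0([0,\infty),B)$; this interpolates between the Kasparov $(X;A,B)$-modules defined by $\phi$ and by $\psi$, and the standard invariance of $KK_\nuc(X)$ under nuclear $X$-equivariant asymptotic homotopies then identifies $KK_\nuc(X;\phi) = KK_\nuc(X;\psi)$.

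For the hard direction (i)$\Rightarrow$(ii), the plan is a two-step ``stable-uniqueness-plus-$\mathcal O_\infty$-absorption'' argument in the spirit of the classical Kirchberg--Phillips proof, carried out entirely within the $X$-equivariant framework. The first step, stable uniqueness, will produce an auxiliary $X$-full, nuclear, strongly $\mathcal O_\infty$-stable $\ast$-homomorphism $\sigma\colon A\to B$ together with an asymptotic Murray--von Neumann equivalence $\phi\oplus\sigma \sim_\asMvN \psi\oplus\sigma$; here $\sigma$ is furnished by the existence theorem (Theorem \ref{t:irexistence}, or a finer proposition-level version), and the equivalence is extracted from the vanishing of $KK_\nuc(X;\phi)-KK_\nuc(X;\psi)$ by realising that class as an explicit operator homotopy of Kasparov $(X;A,B)$-modules and pulling the homotopy back to a norm-continuous path of multiplier contractions using the strongly $\mathcal O_\infty$-stabilising data of $\phi\oplus\sigma$ and $\psi\oplus\sigma$.

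The second step is an $\mathcal O_\infty$-absorption result: for every $X$-full, nuclear, strongly $\mathcal O_\infty$-stable $\phi\colon A\to B$ and every $X$-equivariant, nuclear $\ast$-homomorphism $\sigma\colon A\to B$ (note that $X$-fullness of $\phi$ automatically gives $\sigma(a)\in \overline{B\phi(a)B}$, since $\sigma(a)\in B(U_a)$ and $\phi(a)$ is full in $B(U_a)$), one has $\phi\oplus\sigma \sim_\asMvN \phi$. The implementing multiplier contractions will be assembled from the two paths $(s_t^{(1)}), (s_t^{(2)})$ witnessing strong $\mathcal O_\infty$-stability of $\phi$: asymptotically they provide a copy of the $\mathcal O_2$-generators in the asymptotic commutant of $\phi$, and a Weyl--von Neumann/reindexing argument converts these into the desired absorbing isometries. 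Chaining $\phi \sim_\asMvN \phi\oplus\sigma \sim_\asMvN \psi\oplus\sigma \sim_\asMvN \psi$ then closes out (i)$\Rightarrow$(ii).

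The addendum (ii)$\Rightarrow$(iii) under the extra hypotheses is a comparatively mechanical extraction. In the stable case the contractions $v_t$ are rotated to unitaries in $\multialg{B}$ using that $B\cong B\otimes \mathcal K$ leaves ample room to absorb the defects $1-v_t^\ast v_t$ and $1-v_t v_t^\ast$; in the unital case $\phi(1_A)=\psi(1_A)=1_B$ forces $v_t^\ast v_t, v_t v_t^\ast \to 1_B$, so $v_t$ can be perturbed to a genuine unitary in the minimal unitisation with negligible cost. I expect the main obstacle to be the $X$-equivariant absorption of the third paragraph: one has to arrange that the absorbing multiplier isometries respect every ideal $B(U)$ simultaneously, and it is precisely the combination of $X$-fullness with \emph{strong} (rather than mere) $\mathcal O_\infty$-stability that should make the Weyl--von Neumann patching go through in the ideal-related setting.
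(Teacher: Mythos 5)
Your overall architecture — reduce to the stable case, prove a stable uniqueness statement $\phi\oplus\sigma \sim \psi\oplus\sigma$, then absorb $\sigma$ into each of $\phi$ and $\psi$ — is indeed the skeleton the paper uses, and your treatment of $(ii)\Rightarrow(i)$ and $(ii)\Leftrightarrow(iii)$ is fine in spirit. But there is a genuine gap in Step 2, the absorption, and it is precisely the kind of gap that makes the Kirchberg--Phillips circle of ideas delicate.

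You claim that for every $X$-full, nuclear, strongly $\mathcal O_\infty$-stable $\phi$ and \emph{every} $X$-equivariant nuclear $\ast$-homomorphism $\sigma\colon A\to B$ one has $\phi\oplus\sigma\sim_\asMvN\phi$, and that the paths $(s_t^{(1)}),(s_t^{(2)})$ witnessing strong $\mathcal O_\infty$-stability of $\phi$ ``asymptotically provide a copy of the $\mathcal O_2$-generators in the asymptotic commutant of $\phi$''. Both statements are false. Strong $\mathcal O_\infty$-stability gives $(s_t^{(i)})^\ast s_t^{(j)}-\delta_{i,j}$ annihilating $\phi(A)$, i.e.\ Cuntz \emph{isometries} with orthogonal ranges, but \emph{no} relation $s_t^{(1)}(s_t^{(1)})^\ast+s_t^{(2)}(s_t^{(2)})^\ast=1$: you get a unital copy of $\mathcal O_\infty$ in $\frac{B_\as\cap\phi(A)'}{\Ann\phi(A)}$, not of $\mathcal O_2$. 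This is exactly the distinction between $\mathcal O_\infty$-stability and $\mathcal O_2$-stability of maps, and it matters: if $\sigma=\phi$ then your claimed absorption $\phi\oplus\phi\sim_\asMvN\phi$ would say $\phi$ is strongly $\mathcal O_2$-stable, which is certainly not implied by strong $\mathcal O_\infty$-stability (take $\phi=\id_{\mathcal O_\infty}$). More structurally, the absorption $\phi\oplus\sigma\sim_\asMvN\phi$ forces $KK_\nuc(X;\sigma)=0$ by $(ii)\Rightarrow(i)$, so it cannot hold for arbitrary $X$-equivariant nuclear $\sigma$.

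What the paper actually does — and what your argument needs — is to take $\sigma$ \emph{strongly $\mathcal O_2$-stable}. The relevant absorption, Proposition~\ref{p:piabsorbing}$(b)$, says: if $\phi$ is strongly $\mathcal O_\infty$-stable, $\theta$ is strongly $\mathcal O_2$-\emph{stable}, and $\phi$ approximately dominates $\theta$, then $\phi\oplus 0\sim_\asMvN\phi\oplus\theta$. The $\mathcal O_2$-stability of $\theta$ is used at the end of that proof to kill the $K_0$-class of $0\oplus 0\oplus 0\oplus 1_{\widetilde B}$ in the relevant relative commutant, which is exactly what fails for a merely $\mathcal O_\infty$-stable map. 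So you cannot furnish $\sigma$ from the existence theorem (Theorem~\ref{t:irexistence} or Proposition~\ref{p:existence}), which only gives strongly $\mathcal O_\infty$-stable lifts; you must instead invoke the ideal-related $\mathcal O_2$-embedding theorem through Lemma~\ref{l:theta}, which produces $\theta\colon A\to B\otimes\mathcal K$ nuclear and $X$-full with a \emph{unital} copy of $\mathcal O_2$ inside $\multialg{B\otimes\mathcal K}\cap\theta(A)'$. That unital $\mathcal O_2$ both makes $\theta$ strongly $\mathcal O_2$-stable (so it can be absorbed) and forces $KK_\nuc(X;\theta)=0$ (so the chain $\phi\sim\phi\oplus\theta\sim\psi\oplus\theta\sim\psi$ is $KK$-consistent), and it is also the input for the ``key lemma'' (Lemma~\ref{l:keyuniqueness}, built on the unitary path of Lemma~\ref{l:unitarypath}) that passes from the stable uniqueness statement for the \emph{infinite repeat} $\theta_\infty$ down to $\phi\oplus\theta\sim_\asu\psi\oplus\theta$; your ``Weyl--von Neumann/reindexing'' gesture is where that explicit unitary path construction has to live, and it cannot be replaced by the $\mathcal O_\infty$-data of $\phi$ alone.
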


As when classifying Kirchberg algebras, one can apply the above existence and uniqueness theorem for classification provided the identity maps satisfy the conditions on the maps. So $\id_A$ and $\id_B$ should be nuclear, strongly $\mathcal O_\infty$-stable and $X$-full. This translates into properties of the $X$-$C^\ast$-algebras: $A$ and $B$ must be nuclear, $\mathcal O_\infty$-stable, and the action of $X$ should be \emph{tight}, i.e.~the actions $\Phi_A \colon \mathcal O(X) \to \mathcal I(A)$ and $\Phi_B \colon \mathcal O(X) \to \mathcal I(B)$ should be order isomorphisms. Note that tightness is for $X$-$C^\ast$-algebras what simplicity is for $C^\ast$-algebras without an action of $X$.

As with usual $KK$, two separable $X$-$C^\ast$-algebras $A$ and $B$ are \emph{$KK(X)$-equivalent} if $KK(X;A, B)$ contains an invertible element $x$, i.e.~an element for which there exists $y\in KK(X;B,A)$ such that $y\circ x = KK(X;\id_A)$ and $x\circ y = KK(X; \id_B)$.

\begin{introtheorem}[Classification]\label{t:nonsimpleclass}
Let $X$ be a topological space, and suppose that $A$ and $B$ are separable, nuclear, $\mathcal O_\infty$-stable, tight $X$-$C^\ast$-algebras.
\begin{itemize}
\item[$(a)$] If $A$ and $B$ are stable, then $A$ and $B$ are isomorphic as $X$-$C^\ast$-algebras if and only if they are $KK(X)$-equivalent. Moreover, for any invertible $x\in KK(X; A, B)$ there exists an $X$-equivariant isomorphism $\phi \colon A \xrightarrow \cong B$, unique up to asymptotic unitary equivalence (with multiplier unitaries), such that $KK(X; \phi) = x$.
\item[$(b)$] If $A$ and $B$ are unital, then $A$ and $B$ are isomorphic as $X$-$C^\ast$-algebras if and only if there is an invertible $x\in KK(X; A, B)$ such that $\Gamma_0(x)([1_A]_0) = [1_B]_0$. Moreover, for any such $x$ there is an $X$-equivariant isomorphism $\phi \colon A \xrightarrow \cong B$, unique up to asymptotic unitary equivalence, such that $KK(X; \phi) =x$.
\end{itemize}
\end{introtheorem}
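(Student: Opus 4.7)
The plan is to combine the existence result (Theorem \ref{t:irexistence}) with the uniqueness result (Theorem \ref{t:iruniqueness}) via a two-sided Elliott intertwining argument, following the template outlined in the introduction. Since $A$ and $B$ are tight $X$-$C^\ast$-algebras, they are automatically lower semicontinuous, monotone continuous, upper semicontinuous and $X$-compact, so both theorems apply. Moreover, nuclearity, $\mathcal O_\infty$-stability, and tightness together ensure that $\id_A$ and $\id_B$ are themselves nuclear, strongly $\mathcal O_\infty$-stable, $X$-full $\ast$-homomorphisms. The ``only if'' directions are then immediate: any $X$-equivariant isomorphism $A \to B$ yields an invertible $KK(X)$-class, and in the unital case it sends $[1_A]_0$ to $[1_B]_0$.

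For the substantive direction of $(a)$, let $x \in KK(X;A,B)$ be invertible with inverse $y \in KK(X;B,A)$. Because $A$ and $B$ are nuclear, the forgetful maps $KK_\nuc(X; -, -) \to KK(X; -, -)$ are isomorphisms on the relevant hom-sets (the ideal-related analogue of the classical fact), so $x$ and $y$ lift to mutually inverse classes $\tilde x \in KK_\nuc(X;A,B)$ and $\tilde y \in KK_\nuc(X;B,A)$. Theorem \ref{t:irexistence} now produces nuclear, $X$-full $\ast$-homomorphisms $\phi \colon A \to B$ and $\psi \colon B \to A$ with $KK_\nuc(X;\phi) = \tilde x$ and $KK_\nuc(X;\psi) = \tilde y$; both are automatically strongly $\mathcal O_\infty$-stable since their targets are $\mathcal O_\infty$-stable. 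The compositions $\psi \circ \phi$ and $\id_A$ are both nuclear, $X$-full, strongly $\mathcal O_\infty$-stable, and agree in $KK_\nuc(X;A,A)$, so Theorem \ref{t:iruniqueness} gives $\psi \circ \phi \sim_\asu \id_A$ with multiplier unitaries (stable case) or ordinary unitaries (unital case); similarly $\phi \circ \psi \sim_\asu \id_B$. A standard Elliott two-sided intertwining argument then upgrades this data to an $X$-equivariant isomorphism $\Phi \colon A \xrightarrow{\cong} B$ with $\Phi \sim_\asu \phi$, whence $KK(X;\Phi) = x$. Uniqueness of $\Phi$ up to asymptotic unitary equivalence is Theorem \ref{t:iruniqueness} applied to any two such lifts.

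For $(b)$ the same scheme works, but one must correctly feed the unital case of Theorem \ref{t:irexistence} for both $x$ and its inverse $y$. Tightness of $A$ and $B$ forces the equivalences $A(U) = A \Leftrightarrow U = X \Leftrightarrow B(U) = B$, so condition $(1)$ of Theorem \ref{t:irexistence} holds in either direction. The $K_0$-compatibility is assumed for $x$; for $y$ one applies $\Gamma_0$ to the identity $y \circ x = KK(X;\id_A)$ to get $\Gamma_0(y)([1_B]_0) = [1_A]_0$, which is exactly what is needed to produce a unital $\psi$. The main technical content has been offloaded to the existence and uniqueness theorems, so the real obstacle here is a bookkeeping one: checking that tightness plus nuclearity plus $\mathcal O_\infty$-stability really does imply the full list of hypotheses ($X$-fullness, strong $\mathcal O_\infty$-stability, lower/upper semicontinuity, monotone continuity, $X$-compactness) required by both theorems, and that the passage $KK(X) \leftrightarrow KK_\nuc(X)$ is compatible with composition, so that the intertwining diagram genuinely closes up.
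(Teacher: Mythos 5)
Your overall architecture (existence + uniqueness + intertwining, with the bookkeeping about tightness implying $X$-fullness and strong $\mathcal O_\infty$-stability of identity maps, and the passage $KK(X) \leftrightarrow KK_\nuc(X)$ via nuclearity) matches the paper's proof and is correct in those respects. However, there is a real gap in your intertwining step: you assert that ``a standard Elliott two-sided intertwining argument then upgrades this data to an $X$-equivariant isomorphism $\Phi$ with $\Phi \sim_\asu \phi$''. This is precisely the pitfall the paper flags in Remark~\ref{r:apint}: when the unitaries implementing $\psi\circ\phi\sim_\asu\id_A$ and $\phi\circ\psi\sim_\asu\id_B$ live in multiplier algebras (rather than minimal unitisations), the classical Elliott intertwining does \emph{not} give that the limiting isomorphism is approximately --- let alone asymptotically --- unitarily equivalent to $\phi$. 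In fact, whether the isomorphism can be taken asymptotically Murray--von Neumann equivalent to $\phi$ is stated as an open question immediately after Proposition~\ref{p:asint}.

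The paper circumvents this via the tailored asymptotic intertwining, Proposition~\ref{p:asint}, which delivers an isomorphism $\Phi$ together with a \emph{homotopy} $(\phi_s)_{s\in[0,1]}$ from $\phi$ to $\Phi$ such that $\phi_s \sim_\aMvN \phi_t$ for all $s,t$. One then argues that approximate Murray--von Neumann equivalence preserves $X$-equivariance, so each $\phi_s$ (hence $\Phi$) is $X$-equivariant, and Lemma~\ref{l:XnucC(Y)} shows this homotopy is an $X$-equivariant homotopy, giving $KK(X;\Phi) = KK(X;\phi) = x$. Tightness of both $A$ and $B$ is then used to upgrade an $X$-equivariant $\ast$-isomorphism to an isomorphism of $X$-$C^\ast$-algebras. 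Without the appeal to Proposition~\ref{p:asint} and the homotopy/Lemma~\ref{l:XnucC(Y)} argument, your proof does not establish that the isomorphism you produce has the right $KK(X)$-class (nor even that it is $X$-equivariant), so this step needs to be reworked rather than cited as a ``standard'' argument.

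One further small point worth spelling out: you should verify that the composites $\psi\circ\phi$ and $\phi\circ\psi$ are $X$-full before invoking Theorem~\ref{t:iruniqueness}. This holds here, but it uses tightness of $B$ (respectively $A$): from $\mathcal I(\psi\circ\phi) = \mathcal I(\psi)\circ\mathcal I(\phi) = \Phi_A\circ\Psi_B\circ\Phi_B\circ\Psi_A$ and tightness giving $\Psi_B\circ\Phi_B = \id_{\mathcal O(X)}$, one gets $\mathcal I(\psi\circ\phi) = \Phi_A\circ\Psi_A$. The paper records this explicitly; it is not automatic for general $X$-$C^\ast$-algebras.
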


One can avoid the actions of topological spaces in the statement of Theorem \ref{t:nonsimpleclass} by introducing the following notation. Say that two separable $C^\ast$-algebras $A$ and $B$ are \emph{ideal-related $KK$-equivalent} if there exists an order isomorphism $\Phi \colon \mathcal I(A) \xrightarrow \cong \mathcal I(B)$ such that the induced tight $\Prim A$-$C^\ast$-algebras $(A, \I_A)$ and $(B, \Phi \circ \I_A)$ are $KK(\Prim A)$-equivalent. One immediately obtains the following corollary.

\begin{introcorollary}\label{c:class}
Let $A$ and $B$ be separable, nuclear, $\mathcal O_\infty$-stable $C^\ast$-algebras. Then $A$ and $B$ are stably isomorphic if and only if they are ideal-related $KK$-equivalent.
\end{introcorollary}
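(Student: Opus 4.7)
The plan is to deduce this directly from Theorem \ref{t:nonsimpleclass}(a) applied to the stabilizations $A \otimes \mathcal K$ and $B \otimes \mathcal K$, combined with Morita invariance of both the ideal lattice and of $KK(X)$-theory. The underlying tool for both directions is that for any $C^\ast$-algebra $C$, the assignment $D \mapsto D \otimes \mathcal K$ is an order isomorphism $\mathcal I(C) \xrightarrow{\cong} \mathcal I(C \otimes \mathcal K)$, under which the tight $\Prim C$-structure $\I_C$ on $C$ corresponds to the natural tight $\Prim C$-structure on $C \otimes \mathcal K$, and the corner embedding $a \mapsto a \otimes e_{11}$ becomes a $\Prim C$-equivariant $\ast$-homomorphism representing an invertible element of $KK(\Prim C; C, C \otimes \mathcal K)$.

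For the ``only if'' direction, fix an isomorphism $\sigma \colon A \otimes \mathcal K \xrightarrow{\cong} B \otimes \mathcal K$. Composing the Morita order isomorphisms with $\mathcal I(\sigma)$ produces an order isomorphism $\Phi \colon \mathcal I(A) \xrightarrow{\cong} \mathcal I(B)$, and $\sigma$ becomes $\Prim A$-equivariant once $B$ is equipped with the tight structure $\Phi \circ \I_A$ and the stabilizations carry the transported structures. Concatenating the Morita $KK(\Prim A)$-equivalences $A \sim A \otimes \mathcal K$ and $B \otimes \mathcal K \sim B$ with $KK(\Prim A; \sigma)$ exhibits $(A, \I_A)$ and $(B, \Phi \circ \I_A)$ as $KK(\Prim A)$-equivalent, which is the definition of ideal-related $KK$-equivalence.

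For the ``if'' direction, let $\Phi$ witness ideal-related $KK$-equivalence and give $B$ the tight action $\Phi \circ \I_A$. The stabilizations $A \otimes \mathcal K$ and $B \otimes \mathcal K$ are then separable, nuclear, $\mathcal O_\infty$-stable, stable, tight $\Prim A$-$C^\ast$-algebras. Composing the given $KK(\Prim A)$-equivalence with the Morita $KK(\Prim A)$-equivalences on each side yields a $KK(\Prim A)$-equivalence between $A \otimes \mathcal K$ and $B \otimes \mathcal K$. Theorem \ref{t:nonsimpleclass}(a) then produces a $\Prim A$-equivariant isomorphism $A \otimes \mathcal K \xrightarrow{\cong} B \otimes \mathcal K$, and forgetting the action gives the desired stable isomorphism.

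The main technical obstacle is the Morita $KK(\Prim C)$-equivalence invoked above: one must verify that the natural tight $\Prim C$-structure on $C \otimes \mathcal K$ makes the corner embedding $X$-equivariant and that this embedding is invertible in $KK(\Prim C)$. This should follow from the construction of $KK(X)$ developed in Section \ref{s:KK} in complete parallel with the classical case -- the inverse is represented by the $C$--$(C \otimes \mathcal K)$ imprimitivity bimodule, which is $\Prim C$-equivariant precisely because $D \mapsto D \otimes \mathcal K$ is an order isomorphism. Given that this Morita equivalence is set up, everything else in the argument is bookkeeping.
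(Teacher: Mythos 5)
Your argument is correct and is essentially what the paper intends (the paper simply says the corollary is ``immediately obtained'' from Theorem~\ref{t:nonsimpleclass}, and your reduction-via-stabilization is the natural way to make that precise). The one step you flag as a potential obstacle -- that the corner embedding $\iota_C\colon C\to C\otimes\mathcal K$ represents an invertible element of $KK(\Prim C;C,C\otimes\mathcal K)$ -- is in fact already supplied by the paper: Corollary~\ref{c:KKXstable} (itself a special case of Proposition~\ref{p:KKXfullher}) shows $KK(X;\iota_C)\circ-\colon KK(X;A,C)\to KK(X;A,C\otimes\mathcal K)$ is an isomorphism for every separable $X$-$C^\ast$-algebra $A$, and taking $A=C\otimes\mathcal K$ yields a right inverse $y$ of $KK(X;\iota_C)$, while taking $A=C$ gives injectivity of $KK(X;\iota_C)\circ-$, from which $y\circ KK(X;\iota_C)=KK(X;\id_C)$ follows. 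Alternatively, as you suggest, the inverse is implemented directly by the imprimitivity bimodule exhibited in the proof of Proposition~\ref{p:KKXfullher}. With that in hand, your bookkeeping of the transported tight $\Prim A$-structures, the passage through $A\otimes\mathcal K$ and $B\otimes\mathcal K$, and the application of Theorem~\ref{t:nonsimpleclass}(a) are all correct.
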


Of course the above corollary can be formulated in such a way that the classification is strong, i.e.~so that the ideal-related $KK$-equivalence can be lifted to an isomorphism of the stabilised $C^\ast$-algebras, and a unital version if $A$ and $B$ are unital.

\subsection*{$K$-theoretic classification}

One thing that makes the Kirchberg--Phillips theorem highly applicable is Theorem \ref{t:KPUCT}; that UCT Kirchberg algebras are classified not just by $KK$-theory, but even by $K$-theory. The first similar result in the non-simple, purely infinite case was due to Rørdam \cite{Rordam-classsixterm} who showed that separable, nuclear, stable, purely infinite $C^\ast$-algebras $A$ satisfying the UCT, which contain \emph{exactly one} non-zero, proper (two-sided, closed) ideal $I$, also satisfying the UCT, are classified by the six-term exact sequence
\begin{equation}
\xymatrix{
K_0(I) \ar[r]^{\iota_0} & K_0(A) \ar[r]^{\pi_0\,\,\,} & K_0(A/I) \ar[d]^{\partial_0} \\
K_1(A/I) \ar[u]^{\partial_1} & K_1(A) \ar[l]_{\quad \pi_1} & K_1(I). \ar[l]_{\,\,\, \iota_1}
}
\end{equation}

So essentially this classification applies when $\mathcal I(A) = \{ 0, I , A\}$  and all ideals satisfy the UCT, using the induced six-term exact sequence in $K$-theory as the classifying invariant.
The following very natural question arises.

\begin{introquestion}
Are separable, nuclear, $\mathcal O_\infty$-stable $C^\ast$-algebras for which every two-sided, closed ideal satisfies the UCT classified (up to stable isomorphism) by a $K$-theoretic invariant?
\end{introquestion}

By ``a $K$-theoretic invariant" one would hope for an invariant for which $K$-theory plays a dominating part, in the same spirit as the six-term exact sequence above.
By Corollary \ref{c:class}, it suffices to show that any isomorphism of the $K$-theoretic invariant in question lifts to an ideal-related $KK$-equivalence.  One does have the following partial solution to the question, which is immediately obtained by combining Theorem \ref{t:nonsimpleclass} with \cite[Theorem 6.2]{Gabe-cplifting} and \cite[Theorem 4.6]{DadarlatMeyer-E-theory}. This paper does not contain new proofs of the results from \cite{Gabe-cplifting} and \cite{DadarlatMeyer-E-theory}.

\begin{introtheorem}
Let $X$ be a topological space, suppose that $A$ and $B$ are separable, nuclear, $\mathcal O_\infty$-stable, stable, tight $X$-$C^\ast$-algebras for which all ideals satisfy the UCT, and let $\alpha \in KK(X; A, B)$. If $\alpha$ induces an isomorphism in $K$-theory $\alpha(U) \colon K_\ast (A(U)) \xrightarrow \cong K_\ast(B(U))$ for every $U\in \mathcal O(X)$, then $\alpha$ lifts to an $X$-equivariant isomorphism $A\xrightarrow \cong B$.
\end{introtheorem}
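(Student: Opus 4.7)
The plan is to reduce the statement to an application of Theorem \ref{t:nonsimpleclass}(a) by showing that $\alpha \in KK(X;A,B)$ is invertible. Once invertibility is in hand, Theorem \ref{t:nonsimpleclass}(a) immediately produces an $X$-equivariant isomorphism $A \xrightarrow{\cong} B$ lifting $\alpha$, since $A$ and $B$ are separable, nuclear, $\mathcal{O}_\infty$-stable, stable, and tight $X$-$C^\ast$-algebras. So the only thing to prove is invertibility of $\alpha$ in $KK(X;A,B)$.

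To establish invertibility, I would appeal to an ideal-related UCT. When every ideal of $A$ and $B$ satisfies the classical UCT, Dadarlat and Meyer \cite[Theorem 4.6]{DadarlatMeyer-E-theory} establish such a UCT in the framework of $E$-theory over the topological space $X$; in particular, an element inducing $K$-theoretic isomorphisms $K_\ast(A(U)) \xrightarrow{\cong} K_\ast(B(U))$ for every $U \in \mathcal{O}(X)$ is automatically invertible in the $X$-equivariant $E$-theory group. Since $A$ and $B$ are nuclear, this $E$-theoretic invertibility transfers back to $KK(X;A,B)$ via \cite[Theorem 6.2]{Gabe-cplifting}, which provides the required comparison between (nuclear) $KK(X)$ and $E$-theory over $X$. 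Applying these two inputs to the hypothesis on $\alpha$ yields invertibility of $\alpha$ in $KK(X;A,B)$, which, combined with Theorem \ref{t:nonsimpleclass}(a), completes the proof.

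The proof is thus purely an assembly of three inputs — Theorem \ref{t:nonsimpleclass}(a), \cite[Theorem 4.6]{DadarlatMeyer-E-theory}, and \cite[Theorem 6.2]{Gabe-cplifting} — and contains no new ideas. The genuine difficulty sits inside the cited ideal-related UCT: controlling the $\Ext$-obstructions that arise when the classifying functor is no longer a single $K$-theoretic invariant but rather a compatible family indexed by $\mathcal{O}(X)$. The hypothesis that \emph{every} ideal satisfies the UCT — not merely $A$ and $B$ themselves — is precisely what permits the inductive control of these obstructions along the lattice of open subsets in the Dadarlat--Meyer machinery. Internal to the present paper, the only point to verify is that the hypotheses of Theorem \ref{t:nonsimpleclass}(a) are satisfied, which they are verbatim from the assumptions made on $A$ and $B$.
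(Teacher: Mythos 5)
Your proposal matches the paper's own argument exactly: the paper states the result is ``immediately obtained by combining Theorem~\ref{t:nonsimpleclass} with \cite[Theorem 6.2]{Gabe-cplifting} and \cite[Theorem 4.6]{DadarlatMeyer-E-theory}'' and does not spell out the assembly further, while you correctly identify the intended reduction (invertibility of $\alpha$ via the ideal-related UCT in $E$-theory over $X$, transferred back to $KK(X)$ for nuclear $A$, then Theorem~\ref{t:nonsimpleclass}(a) to lift the invertible class to an $X$-equivariant isomorphism). Your commentary on the role of the ``all ideals satisfy the UCT'' hypothesis is also the right reading of why the Dadarlat--Meyer input needs it.
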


This is unfortunately not an optimal solution to the above question since one needs to know that such an $\alpha$ exists before one can apply the theorem. However, one would only need to lift isomorphisms on the $K$-theoretic invariants to some $KK(X)$-elements without worrying about whether the lift is invertible or not, since invertibility comes for free by the above theorem.

The most general and systematic attack on the above problem is due to Meyer and Nest \cite{MeyerNest-homalg}, \cite{Meyer-homalg2} where they study the category of separable $X$-$C^\ast$-algebras (plus extra assumptions on the actions) with morphism sets $KK(X; A, B)$. This category turns out to be triangulated, and the question above reduces to doing homological algebra in such triangulated categories. This allowed for partial solutions to the above question assuming the $C^\ast$-algebras have finitely many ideals, see \cite{MeyerNest-filtrated}, \cite{BentmannKohler-UCTfinite}, \cite{Bentmann-vanishingbdry}, \cite{ArklintRestorffRuiz-classrr0}, \cite{BentmannMeyer-generalclass}, and \cite{Meyer-generalclassII}. While most applications of the Meyer--Nest techniques are for $C^\ast$-algebras with finitely many ideals, the general machinery is applicable much more broadly, and was for instance used for classification of certain continuous fields of Kirchberg algebras in \cite{DadarlatMeyer-E-theory} and \cite{BentmannDadarlat-oneparKirchberg}.

A few other approaches have also been used for classification of non-simple, purely infinite $C^\ast$-algebras. Restorff showed \cite{Restorff-classCK}, using symbolic dynamics instead of the Meyer--Nest framework and Theorem \ref{t:nonsimpleclass}, that purely infinite Cuntz-Krieger algebras are classified by an invariant which essentially consists of the $K$-theory of all subquotients $J/I$ of the $C^\ast$-algebra, where $I\subseteq J$ are ideals. This classification is however internal in the sense that it can only be used if one knows that \emph{both} $C^\ast$-algebras in question are purely infinite Cuntz--Krieger algebras. A much more general, but still internal, classification result was recently obtained by Eilers, Restorff, Ruiz, and Sørensen \cite{ERRS-classunitalgraph} where all unital graph $C^\ast$-algebras are classified by a $K$-theoretic invariant. This remarkable result reaches far beyond the purely infinite case, and does not use Theorem \ref{t:nonsimpleclass} to obtain classification.

A different approach is due to Dadarlat and Pennig \cite{DadarlatPennig-DixmierDouady}, where they prove a Dixmier--Douady type classification for certain continuous fields over compact, metrisable spaces $X$ for which the fibres are $\mathcal D\otimes \mathcal K$ for a fixed strongly self-absorbing $C^\ast$-algebra $\mathcal D$. When $\mathcal D$ is purely infinite and $X$ is finite dimensional, the $C^\ast$-algebras classified are separable, nuclear, and $\mathcal O_\infty$-stable by \cite{Dadarlat-findim} and are thus covered by Theorem \ref{t:nonsimpleclass}, although Dadarlat and Pennig prove the classification via other methods. The classifying invariant in this case is an induced element in a generalised cohomology group $\overline{E}^1_{\mathcal D}(X)$, see \cite{DadarlatPennig-DixmierDouady} and \cite{DadarlatPennig-DixmierDouadyII} for more details. These results might suggest that a complete $K$-theoretic invariant for classification should also depend on cohomological data of the space $X$ when $X$ is not zero-dimensional.

\subsection*{Notation}

Let $\mathcal K := \mathcal K(\ell^2(\mathbb N))$ denote the compact operators, and let $e_{i,j}$ denote the standard $(i,j)$'th matrix unit in $\mathcal K$ for $i,j\in \mathbb N$. Similarly $e_{i,j}$ denotes the standard matrix unit in the matrix algebra $M_n(\mathbb C)$. A $C^\ast$-algebra $B$ is \emph{stable} if $B\cong B \otimes \mathcal K$. 

The multiplier algebra of a $C^\ast$-algebra $B$ is denoted $\multialg{B}$, and the induced corona algebra $\multialg{B}/B$ is denoted by $\corona{B}$. 

There are two types of unitisations which play role in the paper: the \emph{minimal unitisation} $\widetilde B$, which is $B$ if $B$ was already unital; and the \emph{forced unitisation} $A^\dagger$ which is $A\oplus \mathbb C$ if $A$ was already unital. When $\rho \colon A \to B$ is a contractive completely positive map, then $\rho^\dagger \colon A^\dagger \to \widetilde B$ (or $\rho^\dagger \colon A^\dagger \to \multialg{B}$) is the induced unital map given by $\rho^{\dagger}(a+ \mu 1_{A^\dagger}) = \rho(a) + \mu 1_{\widetilde{B}}$ for $a\in A$ and $\mu \in \mathbb C$. Then $\rho^\dagger$ is completely positive by \cite[Proposition 2.2.1 and Lemma 2.2.3]{BrownOzawa-book-approx}, and is a $\ast$-homomorphism whenever $\rho$ is a $\ast$-homomorphism.  As a rule of thumb, the forced unitisation $A^\dagger$ is used for domains of maps, whereas minimal unitisations $\widetilde B$ and multiplier algebras $\multialg{B}$  are used for codomains.

\subsection*{Acknowledgement}
This is the product of many years of work, and has greatly benefited from conversations with a lot of people. To this extent I thank Joan Bosa, Jorge Castillejos, Marius Dadarlat, Efren Ruiz, Chris Schafhauser, Aidan Sims, Gábor Szabó, Simon Wassermann, Stuart White, and Wilhelm Winter for fruitful discussions. A lot more people have definitely indirectly played an important role, and I thank you all.

Parts of this project were completed during a research visit at the
Mittag–Leffler Institute during the programme Classification of Operator Algebras: Complexity, Rigidity, and Dynamics, and during a visit at the CRM institute during the programme IRP Operator Algebras: Dynamics and Interactions. I am thankful for their hospitality during these visits.

While I was a PhD student I completed certain crucial steps for the overall proof, at which time I was supported by the Danish National Research Foundation through the Centre for Symmetry and Deformation (DNRF92). This research was also funded by the Carlsberg Foundation through an internationalisation fellowship.

Finally, I would like to thank the referee for many helpful comments and suggestions.

\section{Equivalence of $\ast$-homomorphisms}\label{s:hom}

Approximate and asymptotic unitary equivalence of $\ast$-homomorphisms is often too strong of an equivalence relation for obtaining classification (or uniqueness results) of $\ast$-homomorphisms, at least when working with non-unital $\ast$-homo\-morphisms. This motivated the notion of approximate and asymptotic Murray--von Neumann equivalence. 
In the following $\mathbb R_+ := [0,\infty)$.

\begin{definition}[{\cite[Definition 3.4]{Gabe-O2class}}]\label{d:MvN}
Let $A$ and $B$ be $C^\ast$-algebras, and let $\phi,\psi \colon A \to B$ be $\ast$-homomorphisms. Say that $\phi$ and $\psi$ are \emph{approximately Murray--von Neumann equivalent}, written $\phi \sim_{\aMvN} \psi$, if for any finite set $\mathcal F \subset A$ and any $\epsilon >0$, there exists a contraction $w\in \multialg{B}$ such that
\begin{equation}
\| w^\ast \phi(a) w - \psi(a)\| < \epsilon, \qquad \| w \psi(a) w^\ast - \phi(a)\| < \epsilon
\end{equation}
for all $a\in \mathcal F$. If one may always pick $w$ to be a unitary, then $\phi$ and $\psi$ are said to be \emph{approximately unitarily equivalent}, written $\phi \sim_{\au} \psi$.

If $A$ is separable, say that $\phi$ and $\psi$ are \emph{asymptotically Murray--von Neumann equivalent}, written $\phi \sim_{\asMvN} \psi$, if there is a norm-continuous path $(v_t)_{t\in \mathbb R_+}$ of contractions in $\multialg{B}$, such that
\begin{equation}
\lim_{t\to \infty} \| v_t^\ast \phi(a) v_t - \psi (a) \| = 0 , \qquad \lim_{t\to \infty} \| v_t \psi(a) v_t^\ast - \phi (a) \| = 0
\end{equation}
for all $a\in A$. If one may pick each $v_t$ to be a unitary, then $\phi$ and $\psi$ are said to be \emph{asymptotically unitarily equivalent}, written $\phi \sim_{\asu} \psi$.
\end{definition}

\begin{remark}
The definition of approximate and asymptotic Murray--von Neumann equivalence in \cite[Definition 3.4]{Gabe-O2class} was slightly different than the above definition, but the definition above is equivalent. 

In \cite[Definition 3.4]{Gabe-O2class} it was required that each $w$ and $v_t$ was in $B$ instead of in $\multialg{B}$. Clearly that definition implies the condition in Definition \ref{d:MvN}. If $v_t\in \multialg{B}$ as in the above definition, let $(e_t)_{t\in \mathbb R_+}$ be a continuous approximate identity in $A$.\footnote{Such an approximate identity exists in any $\sigma$-unital $C^\ast$-algebra. For instance, one could fix an approximate identity $(e_n)_{n\in \mathbb N}$ and let $e_{n+r} = (1-r) e_n + r e_{n+1}$ for $r\in [0,1]$ and $n\in \mathbb N$.} Then $w_t := \phi(e_t) v_t \psi(e_t) \in B$, and
\begin{equation}
\lim_{t\to \infty} \| w_t^\ast \phi(a) w_t - \psi (a) \| = 0 , \qquad \lim_{t\to \infty} \| w_t \psi(a) w_t^\ast - \phi (a) \| = 0
\end{equation}
for all $a\in A$, so the two definitions are equivalent. The same holds in the approximate case.
\end{remark}

By \cite[Lemma 3.5]{Gabe-O2class} one does not need to assume that $w$ in Definition \ref{d:MvN} is contractive. However, it is convenient to assume that $w$ and all $v_t$ are contractions so that Remark \ref{r:MvNelement} below is more readily applicable.

The following was essentially contained in \cite[Proposition 3.12]{Gabe-O2class}. An addition has been made in a special case for obtaining asymptotic and approximate unitary equivalence with unitaries in the minimal unitisation instead of the multiplier algebra.

\begin{proposition}\label{p:MvNvsue}
Let $A$ and $B$ be $C^\ast$-algebras with $A$ separable, and let $\phi, \psi \colon A \to B$ be $\ast$-homomorphisms. If either
\begin{itemize}
\item[$(a)$] $A,B,\phi$ and $\psi$ are all unital, or
\item[$(b)$] $B$ is stable,
\end{itemize}
then $\phi \sim_{\aMvN} \psi$ if and only if $\phi \sim_{\au} \psi$, and $\phi \sim_{\asMvN} \psi$ if and only if $\phi \sim_{\asu} \psi$.

Additionally, in case $(b)$, if $B$ is $\sigma$-unital and contains a full projection, then the unitaries implementing the approximate and asymptotic unitary equivalences may be taken in the minimal unitisation $\widetilde B$ of $B$.
\end{proposition}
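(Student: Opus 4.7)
The main equivalences $\sim_{\aMvN} \Leftrightarrow \sim_{\au}$ and $\sim_{\asMvN} \Leftrightarrow \sim_{\asu}$ under either hypothesis $(a)$ or $(b)$ are already established in \cite[Proposition 3.12]{Gabe-O2class}, so I would simply cite that paper for these. The new content is the ``additionally'' clause in case $(b)$: given unitaries in $\multialg{B}$ witnessing the approximate (resp.\ asymptotic) unitary equivalence, one can replace them by unitaries in $\widetilde{B}$, provided $B$ is $\sigma$-unital and contains a full projection $p$. The overall strategy is to cut the multiplier unitary down to a projection sitting inside $B$ and fill in trivially on the orthogonal complement.

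More precisely, for the approximate case, I would begin with a unitary $u \in \multialg{B}$ satisfying
\begin{equation*}
\| u^* \phi(a) u - \psi(a) \| < \epsilon/2 \quad \text{and} \quad \| u \psi(a) u^* - \phi(a) \| < \epsilon/2, \quad a \in \mathcal F.
\end{equation*}
Since $p$ is full and $B$ is stable, one can construct a properly infinite, full projection $q \in B$ with $\| q\phi(a) - \phi(a) \|$ and $\| q\psi(a) - \psi(a) \|$ less than some small $\delta$ for $a \in \mathcal F$; concretely, this is done by passing to a hereditary subalgebra containing enough of $\phi(\mathcal F) \cup \psi(\mathcal F)$ and using fullness of $p$ to absorb it into a sufficiently large projection. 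Then $q$ approximately commutes with $u$ on the image of $\phi$ and $\psi$, so $quq + (1-q) \in \widetilde{B}$ is close to a unitary; a continuous functional calculus perturbation produces a genuine unitary $\tilde u \in \widetilde{B}$ of the form $1 + b$ with $b \in B$, still implementing the approximate equivalence on $\mathcal F$ to tolerance $\epsilon$.

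For the asymptotic case, the same construction is carried out continuously in $t$ along the given path $(u_t)_{t \in \mathbb R_+}$, using an increasing sequence of finite sets $\mathcal F_n \uparrow A$ and tolerances $\epsilon_n \downarrow 0$, and then interpolating between the discretely chosen unitaries to produce a norm-continuous path $(\tilde u_t)$ in $\widetilde{B}$. The main technical obstacle is the construction of the projection $q$ with both the correct ``coverage'' of $\phi(\mathcal F) \cup \psi(\mathcal F)$ and a properly infinite complement in $qBq$, together with the continuous-in-$t$ version of this in the asymptotic setting; both rely crucially on the existence of the full projection $p$, which provides a rich supply of sufficiently large projections inside $B$ itself rather than only inside $\multialg{B}$, and is precisely what distinguishes the $\widetilde B$-level argument from its $\multialg B$-level counterpart.
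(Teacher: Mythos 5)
Your proposal correctly cites \cite[Proposition 3.12]{Gabe-O2class} for the main equivalences, and correctly identifies that the remaining work is the upgrade to unitaries in $\widetilde B$. However, the strategy you propose for that upgrade has a genuine gap.

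You aim to cut the multiplier unitary $u$ down by a projection $q \in B$ that nearly acts as the identity on $\phi(\mathcal F)\cup\psi(\mathcal F)$, and replace $u$ by $quq + (1-q) \in \widetilde B$. For this element to be close to a unitary you would need $ququ^*q \approx q$, equivalently that $q$ approximately commutes with $u$. There is no reason for that to hold: $q$ was chosen to approximately commute with the \emph{images} $\phi(\mathcal F)$ and $\psi(\mathcal F)$, which are ``small'' elements of the stable algebra $B$, whereas $u$ is a genuine multiplier unitary. Concretely, take $B=\mathcal K$, let $u\in\mathcal B(\ell^2)$ be a bilateral-shift-type unitary, and let $q$ be a finite-rank projection: then $quq$ is a strict contraction far from any unitary on $qBq$, no matter how large $q$ is. So the perturbation ``$quq+(1-q)$ is close to a unitary in $\widetilde B$'' is unjustified, and the same obstruction persists in the asymptotic/continuous variant.

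The paper takes a different route that never touches the multiplier unitary. Using Brown's stable isomorphism theorem one writes $B\cong pBp\otimes\mathcal K(H)$ and constructs an explicit norm-continuous path $(u_t)$ of unitaries in $\widetilde B$ (not $\multialg B$) for which $u_t s_1(\cdot)s_1^* u_t^*\to \id_B$ point-norm, where $s_1,s_2$ are the canonical $\mathcal O_2$-isometries coming from the stabilisation. This shows $\phi\sim_\asu s_1\phi(\cdot)s_1^*$ and $\psi\sim_\asu s_1\psi(\cdot)s_1^*$ with unitaries already in $\widetilde B$. One then passes through the isomorphism $\theta\colon B\xrightarrow{\cong}M_2(B)$, $\theta(b)=(s_i^*bs_j)_{i,j}$, which carries $s_1\phi(\cdot)s_1^*$ to $\phi\oplus 0$, and invokes the already-proved \cite[Proposition 3.10]{Gabe-O2class}: $\phi\sim_{\mathrm{a(s)MvN}}\psi$ implies $\phi\oplus 0\sim_{\mathrm{a(s)u}}\psi\oplus 0$ with unitaries in the \emph{minimal} unitisation of $M_2(B)$. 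Pulling back by $\theta^{-1}$ and chaining the equivalences gives the conclusion with unitaries in $\widetilde B$. The point is that the ``cutting down'' is done once and for all on the homomorphisms themselves, using an explicitly controlled path, rather than on an arbitrary unitary.
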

\begin{proof}
The first part is \cite[Proposition 3.12]{Gabe-O2class}. For the additional part, suppose that $B$ is $\sigma$-unital, stable with a full projection $p$. By Brown's stable isomorphism theorem \cite{Brown-stableiso}, $B\cong pBp \otimes \mathcal K(H)$ where $H$ is a separable, infinite dimensional Hilbert space. We assume without loss of generality that $B = pBp \otimes \mathcal K(H)$. Let $(\xi_n)_{n\in \mathbb N}$ be an orthonormal basis for $H$, and let $T_1,T_2\in \mathcal B(H)$ be given by $T_1 \xi_n = \xi_{2n-1}$ and $T_2 \xi_n = \xi_{2n}$ for $n\in \mathbb N$. In the proof of \cite[Lemma 3.12]{Gabe-O2class}, a norm-continuous unitary path $(U_t)_{t\in \mathbb R_+}$ in $\mathcal B(H)$ is constructed as follows: Let $(V_{k, t})_{t\in [k-1,k]}$ for $k\geq 2$ be a norm-continuous unitary path with $V_{k,k-1} = 1_H$, which restricts point-wise to the identity on $\mathrm{span}\{ \xi_k,\xi_{2k-1}\}^\perp$ and such that $V_{k,k}$ flips $\xi_k$ and $\xi_{2k-1}$. Defining $U_t := V_{k,t} V_{k-1,k-1} \cdots V_{2,2}$ for $t\in [k-1,k]$ and $k\geq 2$ gives a continuous unitary path, and it is shown in the proof of \cite[Lemma 3.12]{Gabe-O2class} that the path of endomorphisms $U_t T_1(-) T_1^\ast U_t^\ast$ on $\mathcal K(H)$ converges point-norm to $\id_{\mathcal K(H)}$. The construction shows that if $k\geq 2$ and $t\in [k-1,k]$, then $U_t$ decomposes as $W_t \oplus 1_{H_k^\perp}$ on $H_k \oplus H_k^\perp = H$, where $H_k = \mathrm{span}\{\xi_1, \dots ,\xi_{2k-1}\}$. Hence, since $\mathcal B(H_k \oplus 0) \subseteq \mathcal K(H)$, it follows for $t\in [k-1,k]$ that
\begin{equation}
u_t := p \otimes U_t = p \otimes ((W_t - 1_{H_k})\oplus 0) + p \otimes (1_{H_k}\oplus 1_{H_k^\perp}) \in (pBp \otimes \mathcal K(H))^\sim = \widetilde B.
\end{equation}
So $(u_t)_{t\in [1,\infty)}$ is a norm-continuous unitary path in $\widetilde B$. Let $s_i := p \otimes T_i \in \multialg{B}$ for $i=1,2$. Then $s_1,s_2$ are isometries for which $s_1s_1^\ast + s_2 s_2^\ast =1$. Note that $u_t s_1(-) s_1^\ast u_t^\ast\colon B \to B$ converges point-norm to $\id_B$. Hence $\phi$ and $\psi$ are asymptotically unitarily equivalent to $\phi_1 := s_1\phi(-)s_1^\ast$ and $\psi_1 := s_1 \psi(-) s_1^\ast$ respectively, with unitaries in $\widetilde B$. The isomorphism $\theta \colon B \to M_2(B)$ given by $\theta(b) = (s_i^\ast b s_j)_{i,j=1,2}$ satisfies $\theta(s_1 bs_1^\ast) = b \oplus 0 \in M_2(B)$ for every $b\in B$. Hence $\theta \circ \phi_1 = \phi \oplus 0$ and $\theta \circ \psi_1 = \psi \oplus 0$. By \cite[Proposition 3.10]{Gabe-O2class}, if $\phi \sim_\mathrm{a(s)MvN} \psi$ then $\phi \oplus 0 \sim_\mathrm{a(s)u} \psi \oplus 0$ with unitaries in the minimal unitisation of $M_2(B)$. Applying $\theta^{-1}$ (extended to the minimal unitisations) it follows that $\phi \sim_\asu \phi_1\sim_\mathrm{a(s)u} \psi_1\sim_\asu \psi$ with unitaries in $\widetilde B$.
\end{proof}

\begin{remark}\label{r:MvNelement}
For any $C^\ast$-algebra $B$, let 
\begin{equation}
B_\infty := \prod_\mathbb{N} B /\bigoplus_\mathbb N B, \quad \textrm{and} \quad B_\as := C_b(\mathbb R_+, B)/C_0(\mathbb R_+, B)
\end{equation}
 be the \emph{sequence algebra} and the \emph{path algebra} of $B$ respectively. Clearly $B$ embeds into $B_\infty$ and $B_\as$ as constant sequences and constant paths respectively. 

The following was observed in \cite[Observation 3.7]{Gabe-O2class} and will be used without reference: If $A$ is a separable $C^\ast$-algebra and $\phi, \psi \colon A \to B$ are $\ast$-homomorphisms then $\phi \sim_\aMvN \psi$ (respectively $\phi \sim_\asMvN \psi$) if and only if there is a contraction $v\in B_\infty$ (respectively $v\in B_\as$) such that
\begin{equation}
v^\ast \phi(a) v = \psi(a) , \quad \textrm{and} \quad v \psi(a) v^\ast = \phi(a)
\end{equation}
for all $a\in A$.

Such a contraction $v\in B_\infty$ (respectively $v\in B_\as$) will be said to implement the approximate (respectively asymptotic) Murray--von Neumann equivalence.
\end{remark}

The following lemma illustrates how Remark \ref{r:MvNelement} will be applied (with $D = B_\infty$ or $D = B_\as$).

\begin{lemma}[{\cite[Lemma 3.8]{Gabe-O2class}}]\label{l:conjhom}
Let $A$ and $D$ be $C^\ast$-algebras and let $\phi, \psi \colon A \to D$ be $\ast$-homomorphisms. Suppose that there is a contraction $v\in D$ such that $v^\ast \phi(-) v = \psi$. Then
\begin{itemize}
\item[$(a)$] $vv^\ast \in D \cap \phi(A)'$,
\item[$(b)$] $v^\ast v \psi(a) = \psi(a)$ for all $a\in A$,
\item[$(c)$] $\phi(a) v = v \psi(a)$ for all $a\in A$.
\end{itemize}
\end{lemma}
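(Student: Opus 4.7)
The plan is to extract all three statements from a single master identity:
\begin{equation*}
\phi(a) v = vv^\ast \phi(a) v \qquad \text{for all } a \in A, \qquad (\star)
\end{equation*}
which follows purely from the multiplicativity of $\psi$ together with a positivity argument. Once $(\star)$ is established, parts (a), (b), and (c) reduce to short algebraic manipulations.

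To derive $(\star)$, I would exploit that $\psi = v^\ast \phi(-) v$ is a $\ast$-homomorphism, so $\psi(a^\ast a) = \psi(a)^\ast \psi(a)$ for every $a \in A$. Expanding both sides via the hypothesis gives
\begin{equation*}
v^\ast \phi(a)^\ast \phi(a) v = v^\ast \phi(a)^\ast v v^\ast \phi(a) v.
\end{equation*}
Passing to the minimal unitisation $\widetilde{D}$ so that $1 - vv^\ast$ makes sense, this rearranges to $v^\ast \phi(a)^\ast (1 - vv^\ast) \phi(a) v = 0$. Since $v$ is a contraction, $1 - vv^\ast \geq 0$, so the left-hand side equals $\|(1-vv^\ast)^{1/2} \phi(a) v\|^2$. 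Hence $(1-vv^\ast) \phi(a) v = 0$, which is exactly $(\star)$.

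From $(\star)$ the three conclusions follow quickly. For (c), substitute $\psi(a) = v^\ast \phi(a) v$ into $v \psi(a)$ to obtain $vv^\ast \phi(a) v$, which by $(\star)$ equals $\phi(a) v$. For (b), multiply $(\star)$ on the left by $v^\ast$ to obtain $v^\ast v \cdot v^\ast \phi(a) v = v^\ast \phi(a) v$, i.e.~$v^\ast v\, \psi(a) = \psi(a)$. For (a), multiply $(\star)$ on the right by $v^\ast$ to get $\phi(a) vv^\ast = vv^\ast \phi(a) vv^\ast$; applying $(\star)$ to $a^\ast$ and taking adjoints yields the companion identity $vv^\ast \phi(a) = vv^\ast \phi(a) vv^\ast$. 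Comparing the two gives $\phi(a) vv^\ast = vv^\ast \phi(a)$, while obviously $vv^\ast \in D$.

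The only nontrivial ingredient in the whole argument is the derivation of $(\star)$, which uses the $C^\ast$-identity (positivity of $1-vv^\ast$) in an essential way; everything else is purely formal. The one technical nicety is that $1-vv^\ast$ is defined only in $\widetilde D$, but since the final identity $(1-vv^\ast)\phi(a)v = 0$ can equivalently be written as $\phi(a)v = vv^\ast \phi(a) v$ inside $D$, this plays no substantive role.
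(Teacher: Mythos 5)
The paper does not prove this lemma itself — it quotes it from \cite[Lemma 3.8]{Gabe-O2class} — so there is no in-paper proof to compare against. That said, your argument is correct, and it is the natural (and, to my knowledge, essentially the only) way to prove the statement: multiplicativity of $\psi$ applied to $a^\ast a$ together with the $C^\ast$-identity gives $(1-vv^\ast)^{1/2}\phi(a)v = 0$, hence the master identity $\phi(a)v = vv^\ast\phi(a)v$, from which (a)--(c) drop out algebraically. Two cosmetic remarks: the expression $v^\ast\phi(a)^\ast(1-vv^\ast)\phi(a)v$ is the \emph{element} $x^\ast x$ with $x=(1-vv^\ast)^{1/2}\phi(a)v$, not the scalar $\|x\|^2$ (one then uses $\|x\|^2 = \|x^\ast x\| = 0$); and "applying $(\star)$ to $a^\ast$ and taking adjoints" literally gives $v^\ast\phi(a) = v^\ast\phi(a)vv^\ast$, which must still be multiplied on the left by $v$ to produce the companion identity you state — a one-line gap, but worth closing. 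Neither affects the validity of the argument.
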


\begin{remark}
For any $\ast$-homomorphism $\phi \colon A \to B$ let $B_\infty \cap \phi(A)'$ denote the commutant of $\phi(A)\subseteq B \subseteq B_\infty$, and let
\begin{equation}
\mathrm{Ann}_{B_\infty} \phi(A) := \{ x \in B_\infty : x \phi(A) + \phi(A) x \subseteq \{0\} \}
\end{equation}
be the annihilator of $\phi(A)$ in $B_\infty$. When there is no cause of confusion, $\Ann \phi(A)$ will be written instead of $\mathrm{Ann}_{B_\infty}\phi(A)$. Clearly $\Ann\phi(A)$ is an ideal in $B_\infty \cap \phi(A)'$, and the quotient $(B_\infty \cap \phi(A)')/\Ann \phi(A)$ will play an important role in what follows. 

Similarly one obtains an ideal $\mathrm{Ann}_{B_\as} \phi(A) = \Ann\phi(A)$ in $B_\as \cap \phi(A)'$. 
\end{remark}

Relative commutants of the form $\frac{B_\infty \cap \phi(A)'}{\Ann \phi(A)}$ were studied extensively by Kirchberg in \cite{Kirchberg-Abel}. The following sums up some elementary properties of such relative commutants.

\begin{lemma}\label{l:relcombasic}
Let $A$ and $B$ be $C^\ast$-algebras for which $A$ is separable, and let $\phi, \psi \colon A \to B$ be $\ast$-homomorphisms.
\begin{itemize}
\item[$(a)$] If $C$ is a $C^\ast$-algebra containing $B$ as a hereditary $C^\ast$-subalgebra, then the inclusion $B \hookrightarrow C$ induces an isomorphism
\begin{equation}\label{eq:relcomBvsM(B)}
\frac{B_\infty \cap \phi(A)'}{\Ann\phi(A)} \xrightarrow{\quad\cong\quad} \frac{C_\infty \cap \phi(A)'}{\Ann\phi(A)}
\end{equation}
\item[$(b)$] The embedding $B \to M_2(B)$ into the (1,1)-corner, induces an isomorphism
\begin{equation}\label{eq:relcomcorner}
\frac{B_\infty \cap \phi(A)'}{\Ann\phi(A)} \xrightarrow{\quad \cong\quad} (1_{\tilde B}\oplus 0) \frac{M_2(B)_\infty \cap (\phi \oplus \psi)(A)'}{\Annn(\phi \oplus \psi)(A)} (1_{\tilde B}\oplus 0).
\end{equation}
\item[$(c)$] Suppose that $v\in B_\infty$ implements an approximate Murray--von Neumann equivalence $\phi \sim_\aMvN \psi$, i.e.~$v\in B_\infty$ is contraction for which $v^\ast \phi(-) v = \psi$ and $v\psi(-) v^\ast = \phi$. Then the c.p.~map $v^\ast(-) v \colon B_\infty \to B_\infty$ induces a (multiplicative) isomorphism
\begin{equation}
\frac{B_\infty \cap \phi(A)'}{\Ann\phi(A)} \xrightarrow{\quad \cong\quad} \frac{B_\infty \cap \psi(A)'}{\Ann\psi(A)}.
\end{equation}
The inverse is induced by $v(-)v^\ast$.
\end{itemize}
Moreover, the same statements hold if one replaces all sequence algebras with path algebras, and ``$\sim_\aMvN$'' with ``$\sim_\asMvN$''.
\end{lemma}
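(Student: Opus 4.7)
The plan is to treat (a), (b), (c) separately, noting that in each case the natural candidate map is well-defined at the chain level, and the only real work is to identify an inverse and verify descent to the quotient. Moreover, the arguments for sequence algebras $B_\infty$ transfer verbatim to the path algebras $B_\as$, so I will only describe the sequence-algebra case.

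For part (a), since $\phi(A) \subseteq B \subseteq C$, the inclusion $B_\infty \hookrightarrow C_\infty$ clearly sends commutants into commutants and annihilators into annihilators, and the identity $\Annn_{B_\infty}\phi(A) = B_\infty \cap \Annn_{C_\infty}\phi(A)$ gives injectivity at the quotient level. For surjectivity, I would fix a sequential contractive approximate unit $(e_n)$ of $B$ that is quasicentral for $\phi(A)$ (using separability of $A$), and write $e := [(e_n)] \in B_\infty$. Then $e\phi(a) = \phi(a) = \phi(a)e$ because $\phi(a) \in B$, and given $y \in C_\infty \cap \phi(A)'$ the element $x := eye$ lies in $B_\infty$ by hereditariness of $B$ in $C$. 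A short calculation using $(1-e)\phi(a) = 0$ in $B_\infty$ shows $\phi(a)x = \phi(a)y = x\phi(a)$, so $x \in B_\infty \cap \phi(A)'$ and $y - x \in \Ann \phi(A)$, exhibiting $[y] = [x]$.

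For part (b), the projection $p := 1_{\widetilde B} \oplus 0 \in \multialg{M_2(B)}$ commutes with $(\phi \oplus \psi)(A)$, and the $(1,1)$-corner embedding $x \mapsto x \oplus 0$ sends commutants (resp.~annihilators) into commutants (resp.~annihilators), with image landing in $p(\,\cdot\,)p$. Surjectivity is transparent: any element of the compression has the form $x \oplus 0$ for some $x \in B_\infty$, and its commutation or annihilation relation with $(\phi \oplus \psi)(A)$ reduces on the $(1,1)$-entry to the corresponding relation with $\phi(A)$ in $B_\infty$. This part is essentially bookkeeping.

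Part (c) is where I expect the main obstacle, as multiplicativity and the inverse computation both hinge on the full Kirchberg-style relative commutant framework. Define $\Psi(x) := v^\ast x v$. Applying Lemma \ref{l:conjhom} to both $v^\ast \phi(-) v = \psi$ and $v \psi(-) v^\ast = \phi$ yields the key identities $\phi(a) v = v \psi(a)$, $\psi(a) v^\ast = v^\ast \phi(a)$, together with the statements that $vv^\ast$ (resp.~$v^\ast v$) acts as a unit on $\phi(A)$ (resp.~$\psi(A)$). These give immediately $\Psi(x)\psi(a) = v^\ast x \phi(a) v = \psi(a)\Psi(x)$ for $x \in B_\infty \cap \phi(A)'$, and $\Psi(\Annn \phi(A)) \subseteq \Annn \psi(A)$, so $\Psi$ descends. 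The subtle point is multiplicativity: the discrepancy $\Psi(x_1 x_2) - \Psi(x_1)\Psi(x_2) = v^\ast x_1 (1 - vv^\ast) x_2 v$ annihilates $\psi(A)$ precisely because $(1 - vv^\ast) \phi(a) = 0$, and this is exactly why the quotient by $\Ann \psi(A)$ is forced on us. The candidate inverse $\Phi(y) := v y v^\ast$ is handled symmetrically, and $\Phi(\Psi(x)) = vv^\ast x vv^\ast$ agrees with $x$ modulo $\Ann \phi(A)$, again because $vv^\ast$ acts as the identity on $\phi(A)$. This completes the proof of (c), and the path-algebra version follows by the same argument with $B_\as$ in place of $B_\infty$.
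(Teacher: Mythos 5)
The arguments you give for parts (b) and (c) are correct and essentially the same as the paper's; in (c), your computation of the defect $\Psi(x_1 x_2) - \Psi(x_1)\Psi(x_2) = v^\ast x_1 (1-vv^\ast) x_2 v$ and the observation that this annihilates $\psi(A)$ is a clean repackaging of the paper's direct calculation, and your inverse argument matches. The issue lies in your part (a).

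You propose to "fix a sequential contractive approximate unit $(e_n)$ of $B$." But the lemma only assumes $A$ is separable; $B$ is an arbitrary $C^\ast$-algebra and need not be $\sigma$-unital, so a \emph{sequential} approximate unit of $B$ may not exist. (The quasicentrality you invoke is also beside the point here, since $\phi(A) \subseteq B$ already forces both one-sided convergences $e_n\phi(a) \to \phi(a)$ and $\phi(a)e_n \to \phi(a)$ once $(e_n)$ is an approximate unit for any $C^\ast$-algebra containing $\phi(A)$.) The fix is small: use instead a sequential approximate identity $(e_n)$ of $A$ (which exists since $A$ is separable, hence $\sigma$-unital) and set $f := [(\phi(e_n))] \in B_\infty$. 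This is exactly what the paper does. Then $f\phi(a) = \phi(a)f = \phi(a)$, and for $y \in C_\infty \cap \phi(A)'$ the element $fyf$ lies in $B_\infty$ because each $\phi(e_n) c \phi(e_n) \in B$ by hereditariness of $B$ in $C$; the computation $(y - fyf)\phi(a) = 0 = \phi(a)(y - fyf)$ then gives $[y] = [fyf]$. Equivalently one could take $(e_n)$ a sequential approximate unit of the separable $C^\ast$-algebra $\phi(A)$ itself; what one cannot do is take it in $B$. Everything else in your write-up of (a) — the identity $B_\infty \cap \Annn_{C_\infty}\phi(A) = \Annn_{B_\infty}\phi(A)$ for injectivity, and the use of hereditariness to place the cut-down back in $B_\infty$ — is correct.

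One more small point on part (b): to make sense of the projection $1_{\widetilde B} \oplus 0$ living in the relative commutant of $M_2(B)_\infty$, one should first use part (a) (applied with $C = \widetilde B$ or $M_2(\widetilde B)$) to reduce to the case where $B$ is unital; the paper makes this reduction explicit. Your "bookkeeping" summary elides this, though it is indeed routine once noted.
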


Note that using part $(a)$ above to replace $B$ with its minimal unitisation $\widetilde B$, it follows that the projection $1_{\tilde B}\oplus 0 \in \frac{M_2(B)_\infty \cap (\phi \oplus \psi)(A)'}{\Annn(\phi \oplus \psi)(A)}$ in part $(b)$ above is well-defined. 

\begin{proof}[Proof of Lemma \ref{l:relcombasic}]
We only prove the approximate version as the asymptotic version is virtually identical.

$(a)$: It is obvious that one gets a well-defined $\ast$-homo\-morphism in \eqref{eq:relcomBvsM(B)}, and it is injective since
\begin{equation}
B_\infty \cap \mathrm{Ann}_{C_\infty}\phi(A) = \mathrm{Ann}_{B_\infty}\phi(A).
\end{equation}
Let $x \in C_\infty \cap \phi(A)'$, and let $(e_n)_{n\in \mathbb N}$ be a countable approximate identity in $A$. Let $f\in B_\infty$ be the element induced by $\phi(e_n)_{n\in \mathbb N}$. Clearly $f \in B_\infty \cap \phi(A)'$, and $f \phi(a) = \phi(a) f = \phi(a)$ for all $a\in A$, so $1_{\widetilde C} - f \in \Ann\phi(A)$. Hence $f x f + \Ann\phi(A) = x + \Ann\phi(A)$. As $B_\infty$ is a hereditary $C^\ast$-subalgebra of $C_\infty$, it follows that $fxf \in B_\infty \cap \phi(A)'$ so the map \eqref{eq:relcomBvsM(B)} is surjective.

$(b)$: By part $(a)$ we may assume that $B$ is unital (otherwise replace $B$ by its unitisation $\widetilde B$). Clearly the $\ast$-homo\-morphism \eqref{eq:relcomcorner} is well-defined. If $b\in B_\infty \cap \phi(A)'$ satisfies that $b \oplus 0 \in \Annn(\phi\oplus \psi)(A)$, then $b\in \Ann\phi(A)$, so the map is injective. Finally, if $b = (b_{i,j})_{i,j=1,2} \in M_2(B)_\infty \cap (\phi \oplus \psi)(A)'$, then $(1_{\tilde B}\oplus 0) b (1_{\tilde B}\oplus 0) = b_{1,1} \oplus 0$, and clearly $b_{1,1} \in B_\infty \cap \phi(A)'$, so the map is surjective.

$(c)$: We first check that $v^\ast (B_\infty\cap  \phi(A)') v \subseteq B_\infty \cap \psi(A)'$, so let $b\in B_\infty \cap \phi(A)'$ and $a\in A$. By Lemma \ref{l:conjhom} we have $v\psi(a) = \phi(a) v$ and $v^\ast \phi(a) = \psi(a) v^\ast$. Hence
\begin{equation}
v^\ast b v \psi(a) = v^\ast b \phi(a) v = v^\ast \phi(a) b v = \psi(a) v^\ast b v.
\end{equation}
It follows that $v^\ast (-) v \colon B_\infty\cap  \phi(A)' \to B_\infty \cap \psi(A)'$ is a well-defined c.p.~map. If $x\in \Ann \phi(A)$, then
\begin{equation}
v^\ast x v \psi(a) = v^\ast x \phi(a) v = 0
\end{equation}
for any $a\in A$, and thus $v^\ast x v\in \Ann \psi(A)$. Hence $v^\ast(-) v$ induces a c.p.~map
\begin{equation}
\eta \colon \frac{B_\infty \cap \phi(A)'}{\Ann\phi(A)} \to \frac{B_\infty \cap \psi(A)'}{\Ann\psi(A)}.
\end{equation}
By Lemma \ref{l:conjhom}$(b)$ we have $vv^\ast \phi(a) = \phi(a)$ and $v^\ast v \psi(a) = \psi(a)$ for all $a\in A$. Hence if $b,c \in B_\infty \cap \phi(A)'$ and $a\in A$, then
\begin{eqnarray}
(v^\ast b v)(v^\ast c v) \psi(a) &=& v^\ast b vv^\ast c \phi(a) v  \nonumber\\
&=& v^\ast b v v^\ast\phi(a) c v  \nonumber\\
&=& v^\ast b \phi(a) c v  \nonumber\\
&=& (v^\ast b c v) \psi(a).
\end{eqnarray}
In particular, $(v^\ast b v)(v^\ast c v) - (v^\ast bc v) \in \Ann\phi(A)$, so $\eta$ is a $\ast$-homomorphism.

The exact same arguments as above show that $v(-)v^\ast$ induces a $\ast$-homo\-morphism
\begin{equation}
\rho \colon \frac{B_\infty \cap \psi(A)'}{\Ann\psi(A)} \to \frac{B_\infty \cap \phi(A)'}{\Ann\phi(A)}.
\end{equation}
Again by Lemma \ref{l:conjhom}$(b)$ we have $vv^\ast \phi(a) = \phi(a)$ for all $a\in A$. Thus, if $b\in B_\infty \cap \phi(A)'$ and $a\in A$ then
\begin{equation}
vv^\ast b vv^\ast \phi(a) = vv^\ast b \phi(a) = vv^\ast \phi(a) b = \phi(a) b = b \phi(a).
\end{equation}
Thus $vv^\ast b vv^\ast - b \in \Ann\phi(A)$, so the composition $\rho \circ \eta$ is the identity map. Similarly, the composition $\eta \circ \rho$ is the identity map, so the maps $\eta$ and $\rho$, which are induced by $v^\ast(-) v$ and $v(-)v^\ast$ respectively, are isomorphisms and each others inverses.
\end{proof}

The following is an extension of \cite[Proposition 3.10]{Gabe-O2class}.

\begin{proposition}\label{p:MvNeq}
Let $A$ and $B$ be $C^\ast$-algebras with $A$ separable, and let $\phi, \psi \colon A \to B$ be $\ast$-homomorphisms. The following are equivalent:
\begin{itemize}
\item[$(i)$] $\phi$ and $\psi$ are asymptotically Murray--von Neumann equivalent;
\item[$(ii)$] $\phi \oplus 0, \psi \oplus 0 \colon A \to M_2(B)$ are asymptotically unitarily equivalent;
\item[$(ii')$] $\phi \otimes e_{1,1} , \psi \otimes e_{1,1} \colon A \to B \otimes \mathcal K$ are asymptotically Murray--von Neumann equivalent;
\item[$(ii'')$] $\phi \otimes e_{1,1} , \psi \otimes e_{1,1} \colon A \to B \otimes \mathcal K$ are asymptotically unitarily equivalent;
\item[$(iii)$] The projections
\begin{equation}
\left( \begin{array}{cc} 1_{\widetilde B} & 0 \\ 0 & 0 \end{array} \right), \left( \begin{array}{cc} 0 & 0 \\ 0 & 1_{\widetilde B} \end{array} \right) \quad \in  \quad \frac{M_2(B)_\as \cap (\phi \oplus \psi)(A)'}{\Annn(\phi \oplus \psi)(A)}
\end{equation}
are Murray--von Neumann equivalent.\footnote{Here $\widetilde B$ is the minimal unitisation of $B$. These projections are well-defined by considering $C= \widetilde B$ in Lemma \ref{l:relcombasic}(a)}
\end{itemize}
The similar statement where one replaces ``asymptotically'' with ``approximately'', and ``$M_2(B)_\as$'' with ``$M_2(B)_\infty$'' also holds.
\end{proposition}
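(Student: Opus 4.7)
My plan is to reduce all equivalences to the one genuinely new ingredient, $(i) \Leftrightarrow (iii)$. The equivalence $(i) \Leftrightarrow (ii)$ is the cited \cite[Proposition 3.10]{Gabe-O2class}. For the remaining implications of the first block, I would use that $\phi \oplus 0$ and $\phi \otimes e_{1,1}$ coincide under the canonical embedding $M_2(B) \hookrightarrow B \otimes \mathcal K$: $(ii) \Leftrightarrow (ii')$ then follows by compressing/expanding the implementing element through the $(1,1)$-corner $(e_{1,1}(B \otimes \mathcal K) e_{1,1})_\as \cong B_\as$, and $(ii') \Leftrightarrow (ii'')$ is Proposition \ref{p:MvNvsue} applied to the stable $C^\ast$-algebra $B \otimes \mathcal K$. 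So the core of the proof is $(i) \Leftrightarrow (iii)$.

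For $(i) \Rightarrow (iii)$: Given a contraction $v \in B_\as$ implementing the equivalence via Remark \ref{r:MvNelement}, form
\begin{equation}
W := \begin{pmatrix} 0 & v \\ 0 & 0 \end{pmatrix} \in M_2(B)_\as.
\end{equation}
Lemma \ref{l:conjhom}(c) gives $\phi(a) v = v \psi(a)$, which is exactly the identity $[W,(\phi \oplus \psi)(a)] = 0$. Lemma \ref{l:conjhom}(b), applied once to the hypothesis $v^\ast \phi(-) v = \psi$ and once to the dual hypothesis $(v^\ast)^\ast \psi(-) v^\ast = \phi$, gives $v^\ast v \psi(a) = \psi(a)$ and $v v^\ast \phi(a) = \phi(a)$. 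Modulo $\Annn(\phi \oplus \psi)(A)$ the products $W^\ast W = (v^\ast v) \oplus 0$ and $W W^\ast = 0 \oplus (v v^\ast)$ thus reduce to $0 \oplus 1_{\widetilde B}$ and $1_{\widetilde B} \oplus 0$ respectively, so $W$ descends to a partial isometry in the quotient witnessing the Murray--von Neumann equivalence of the two projections.

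For $(iii) \Rightarrow (i)$: Lift an implementing partial isometry from the quotient to a contraction $W' = \begin{pmatrix} a & b \\ c & d \end{pmatrix} \in M_2(B)_\as \cap (\phi \oplus \psi)(A)'$; the quotient map is surjective by definition. Reading $(W')^\ast W' - E_{11}, W'(W')^\ast - E_{22} \in \Annn(\phi \oplus \psi)(A)$ entrywise and using that in any $C^\ast$-algebra a sum of positive elements annihilating a positive element forces each summand to, I obtain that $a, b, d$ all annihilate $\phi(A)$ and $\psi(A)$ on both sides, so the only substantive entry is $c$. The commutation $W'(\phi \oplus \psi)(a) = (\phi \oplus \psi)(a) W'$ yields $c\phi(a) = \psi(a) c$, while the support/range relations yield $c^\ast c \phi(a) = \phi(a) = \phi(a) c^\ast c$ and $c c^\ast \psi(a) = \psi(a) = \psi(a) c c^\ast$. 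A short computation then shows $v := c^\ast \in B_\as$ satisfies $v^\ast \phi(a) v = \psi(a)$ and $v \psi(a) v^\ast = \phi(a)$, and thus implements the required asymptotic Murray--von Neumann equivalence. The approximate case proceeds identically, replacing $B_\as$ with $B_\infty$ throughout.

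I anticipate the main obstacle being the bookkeeping in $(iii) \Rightarrow (i)$: tracking the entrywise annihilation conditions through the non-commutative matrix computations to isolate $c$ as the true implementing contraction, and verifying that the resulting identities on $c$ and $c^\ast$ hold exactly in $B_\as$ (as opposed to just modulo an ideal) so that Remark \ref{r:MvNelement} can be applied cleanly.
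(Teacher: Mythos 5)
Your proposal is essentially correct, but it does more work than the paper does, and the extra work contains a couple of slips worth flagging.

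The paper's proof of this proposition is short because it cites \cite[Proposition 3.10]{Gabe-O2class} for the \emph{three-way} equivalence $(i) \Leftrightarrow (ii) \Leftrightarrow (iii)$, not just $(i) \Leftrightarrow (ii)$ as you assume. The only new content in the paper's proof is $(ii') \Leftrightarrow (ii'')$ via Proposition \ref{p:MvNvsue}$(b)$ (which you also use), together with the short implications $(i) \Rightarrow (ii')$ (tensor the implementing path with $e_{1,1}$) and $(ii') \Rightarrow (i)$ (compress through the $(1,1)$-corner $\multialg{B} \otimes e_{1,1}$). Your route $(ii) \Leftrightarrow (ii')$ via the corner is the same trick, just wired through $(ii)$ rather than $(i)$ --- fine, but marginally less direct. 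So the genuinely different part of your proposal is that you reprove $(i) \Leftrightarrow (iii)$ from scratch. That is a reasonable thing to do if one wants a self-contained argument; note that your $(i) \Rightarrow (iii)$ step is essentially Lemma \ref{l:MvNsubeq} of this paper, and the full equivalence is the content of the cited Prop.\ 3.10.

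Two small corrections to your write-up. First, in $(i) \Rightarrow (iii)$ the intermediate products are swapped: with $W = \left(\begin{smallmatrix}0&v\\0&0\end{smallmatrix}\right)$ one has $W^\ast W = 0 \oplus (v^\ast v)$ and $WW^\ast = (vv^\ast)\oplus 0$, which then reduce to $0\oplus 1_{\widetilde B}$ and $1_{\widetilde B}\oplus 0$; your displayed products have the corners transposed even though your final conclusion is correct. Second, in $(iii) \Rightarrow (i)$ the ``sum of positives annihilating a positive'' trick on its own only yields \emph{one-sided} relations: from $\phi(z)(aa^\ast+bb^\ast)\phi(z)=0$ you get $a^\ast\phi(z)=0$ (equivalently $\phi(z)a=0$), not yet $a\phi(z)=0$. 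To kill the unwanted term $a^\ast a\,\phi(z)$ in the $(1,1)$-entry of $(W')^\ast W'$ you need $a\phi(z)=0$, and this comes from the commutation relation $W' \in (\phi\oplus\psi)(A)'$ (which gives $a\phi(z)=\phi(z)a$), which you invoke only for the $c$-entry. The same point applies to $d$ and $\psi$. Once the commutation relations are used across all entries, the argument closes exactly as you describe, and $v := c^\ast$ is indeed a contraction (it is the $(2,1)$-compression of the contraction $W'$) implementing the equivalence.
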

\begin{proof}
The proofs in the ``asymptotic'' and the ``approximate'' are virtually identical, so we only do the ``asymptotic'' version.

By \cite[Proposition 3.10]{Gabe-O2class}, $(i), (ii)$ and $(iii)$ are equivalent. By Proposition \ref{p:MvNvsue}$(b)$, $(ii')$ and $(ii'')$ are equivalent.

$(i) \Rightarrow (ii')$: If $v_t$ implements $\phi \sim_{\asMvN} \psi$, then $v_t \otimes e_{1,1}$ implements $\phi \otimes e_{1,1} \sim_{\asMvN} \psi \otimes e_{1,1}$.

$(ii') \Rightarrow (i)$: Note that $(1_{\multialg{B}} \otimes e_{1,1}) \multialg{B \otimes \mathcal K} (1_{\multialg{B}} \otimes e_{1,1}) = \multialg{B} \otimes e_{1,1} \cong \multialg{B}$. Thus, if $v_t\in \multialg{B \otimes \mathcal K}$ implements $\phi \otimes e_{1,1} \sim_{\asMvN} \psi \otimes e_{1,1}$, and $w_t \in \multialg{B}$ is such that $w_t \otimes e_{1,1} = (1_{\multialg{B}} \otimes e_{1,1}) v_t (1_{\multialg{B}} \otimes e_{1,1})$, then clearly $w_t$ implements $\phi \sim_{\asMvN} \psi$.
\end{proof}


\section{Approximate domination and nuclearity}

While one's aim might be to classify $\ast$-homomorphisms up to approximate or asymptotic Murray--von Neumann (or unitary) equivalence, one might want to aim lower to begin with. This is where approximate domination enters the picture.

\begin{definition}\label{d:approxdom}
Let $A$ and $B$ be $C^\ast$-algebras, let $\phi \colon A \to B$ be a $\ast$-homo\-morphism, and let $\rho \colon A \to B$ be a c.p.~map. Say that $\phi$ \emph{approximately dominates} $\rho$ if for any finite subset $\mathcal F \subset A$ and $\epsilon >0$ there are $n\in \mathbb N$ and $b_1, \dots, b_n \in B$ such that
\begin{equation}
\| \rho(a) - \sum_{i=1}^n b_i^\ast \phi(a) b_i \| < \epsilon, \qquad a\in \mathcal F.
\end{equation}
Say that $\phi$ \emph{approximately $1$-dominates} $\rho$ if the above holds with $n=1$.
\end{definition}

It is clear that if two $\ast$-homomorphisms are approximately Murray--von Neumann equivalent then they approximately (1-)dominate each other. So when one wants to prove that two $\ast$-homomorphisms are approximately Murray--von Neumann equivalent, a good starting point would be to prove that they approximately dominate each other. At first glance, even this seems like a very non-trivial task. However, this is exactly where nuclearity plays a fundamental role, see Corollary \ref{c:fulldom}.

\begin{remark}\label{r:approxdom}
Let $\mathscr C$ denote the set of all c.p.~maps which are approximately dominated by the $\ast$-homo\-morphism $\phi \colon A \to B$. It is clear that $\mathscr C$ is closed in the point-norm topology. Moreover, it follows immediately from the definition that if $\rho_1, \rho_2 \in \mathscr C$ then the c.p.~map $\rho_1 + \rho_2 $ is in $\mathscr C$. An easy consequence is that $\mathscr C$ is a point-norm closed, convex cone of completely positive maps. 
\end{remark}

Recall the following fundamental definition. 

\begin{definition}
A  map  between $C^\ast$-algebras is called \emph{nuclear} if it is a point-norm limit of maps factoring via c.p.~maps through matrix algebras. 
\end{definition}

Note that nuclear maps are automatically completely positive, so whenever a map is referred to as being nuclear it is implicit that it is completely positive. However, it is not necessarily assumed to be contractive, see Remark \ref{r:nuccontractive}.

\begin{observation}\label{o:nuccomp}
Clearly the composition of a nuclear map and a c.p.~map will again be nuclear. This elementary observation will be applied frequently without reference.
\end{observation}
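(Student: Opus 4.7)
The plan is to unwind the definition of nuclearity directly. Suppose $\phi\colon A \to B$ is nuclear, so there exist nets (or for separable domains, sequences) of c.p.~maps $\psi_\lambda \colon A \to M_{k_\lambda}$ and $\eta_\lambda \colon M_{k_\lambda} \to B$ such that $\eta_\lambda \circ \psi_\lambda \to \phi$ in the point-norm topology.

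For composition on the left with a c.p.~map $\theta \colon B \to C$, I would first note that any c.p.~map between $C^\ast$-algebras is automatically bounded, so point-norm convergence is preserved under post-composition with $\theta$. Consequently $\theta \circ \eta_\lambda \circ \psi_\lambda \to \theta \circ \phi$ in point-norm, and each approximant factors as $\psi_\lambda \colon A \to M_{k_\lambda}$ followed by the c.p.~map $\theta \circ \eta_\lambda \colon M_{k_\lambda} \to C$. Hence $\theta \circ \phi$ is a point-norm limit of maps factoring c.p.~through matrix algebras, i.e.~it is nuclear.

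For composition on the right with a c.p.~map $\rho \colon D \to A$, the argument is even more transparent: for any fixed $d \in D$ one has $\eta_\lambda \circ \psi_\lambda(\rho(d)) \to \phi(\rho(d))$ by point-norm convergence of $\eta_\lambda \circ \psi_\lambda$ at the element $\rho(d) \in A$. Each approximant $\eta_\lambda \circ (\psi_\lambda \circ \rho)$ factors as the c.p.~map $\psi_\lambda \circ \rho \colon D \to M_{k_\lambda}$ followed by $\eta_\lambda \colon M_{k_\lambda} \to B$, so $\phi \circ \rho$ is nuclear.

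There is really no obstacle here; the only (utterly minor) point worth flagging is the automatic boundedness of c.p.~maps, which ensures that post-composition commutes with point-norm limits. This is presumably why the author prefixes the statement with ``Clearly'' and relegates it to an observation rather than a lemma.
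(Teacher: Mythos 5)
Your argument is correct and is the standard unwinding of the definition; since the paper states this as an unproved ``Observation,'' there is no proof to compare against, and your two-sided treatment (pre- and post-composition) with the minor remark on automatic boundedness of c.p.\ maps is exactly the reasoning the author is implicitly relying on.
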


\begin{remark}
Similarly as in Remark \ref{r:approxdom}, the set of nuclear maps between two $C^\ast$-algebras is a point-norm closed, convex cone. This observation is folklore (or an elementary exercise left to the reader).
\end{remark}

\begin{remark}\label{r:nuccontractive}
The above definition of nuclear maps is slightly different than the one often found in the literature (e.g.~\cite[Definition 2.1.1]{BrownOzawa-book-approx}) where all maps in question -- including the ones going in and out of the matrix algebras -- are assumed to be contractive. The following folklore lemma implies that the two definitions in question are equivalent for contractive maps. As I have not been able to find a reference in the literature, I have included a proof for the readers convenience.
\end{remark}

If $\rho \colon A \to B$ is a c.p.~map and $(e_\lambda)$ is an approximate identity in $A$, then $\| \rho \| = \lim_\lambda \| \rho(e_\lambda)\|$. Hence if $a\in A$, then $\| \rho(a^\ast(-)a) \| = \| \rho(a^\ast a)\|$. This elementary fact will be used somewhat frequently throughout the paper without mentioning.

\begin{lemma}\label{l:nuccontractive}
Any contractive nuclear map is a point-norm limit of maps factoring via \emph{contractive} c.p.~maps through matrix algebras.
\end{lemma}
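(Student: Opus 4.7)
My plan is to reduce to the unital case via forced unitization, treat it by a spectral cutdown in the intermediate matrix algebra, and descend back to $(A,B)$ by restriction and compression. Let $\phi\colon A\to B$ be contractive and nuclear. First note that $\phi^\dagger\colon A^\dagger\to\widetilde{B}$, defined by $a+\mu 1\mapsto\phi(a)+\mu 1_{\widetilde{B}}$, is unital c.p.\ precisely because $\phi$ is contractive c.p.\ (as recalled in the Notation section). If $\beta_n\circ\alpha_n\to\phi$ point-norm with c.p.\ $\alpha_n\colon A\to M_{k_n}$ and $\beta_n\colon M_{k_n}\to B$, the extensions $\alpha_n^\dagger\colon A^\dagger\to M_{k_n}\oplus\mathbb{C}\hookrightarrow M_{k_n+1}$, $a+\mu 1\mapsto\alpha_n(a)\oplus\mu$, and $\beta_n^\dagger\colon M_{k_n}\oplus\mathbb{C}\to\widetilde{B}$, $(x,\mu)\mapsto\beta_n(x)+\mu 1_{\widetilde{B}}$, are c.p.\ and satisfy $\beta_n^\dagger\circ\alpha_n^\dagger\to\phi^\dagger$ point-norm, so $\phi^\dagger$ is itself nuclear in the (weak) sense of the paper.

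For the unital case, fix a finite $\mathcal{F}\subset A^\dagger$ containing $1_{A^\dagger}$ and closed under $a\mapsto a^*a$, and let $\epsilon>0$. Choose c.p.\ $\alpha\colon A^\dagger\to M_k$ and $\beta\colon M_k\to\widetilde{B}$ approximating $\phi^\dagger$ on $\mathcal{F}$ within some $\delta>0$, to be selected \emph{after} $\alpha,\beta$ are fixed. Put $T:=\alpha(1)\in M_k^+$, so $\|\beta(T)-1_{\widetilde{B}}\|<\delta$. Let $p:=\chi_{[\delta,\infty)}(T)$; then $(1-p)T(1-p)\leq\delta 1_{M_k}$ and $pTp$ is invertible in the corner $pM_kp\cong M_{k'}$. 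Define
\begin{equation*}
\tilde{\alpha}(a):=(pTp)^{-1/2}p\,\alpha(a)\,p(pTp)^{-1/2},
\end{equation*}
a unital c.p.\ map $A^\dagger\to M_{k'}$. Set $S:=\beta(pTp)\in\widetilde{B}^+$; since $\|S-\beta(T)\|\leq\|\beta\|\delta$, for $\delta$ small enough $S$ is within $(\|\beta\|+1)\delta$ of $1_{\widetilde{B}}$, hence invertible. Define
\begin{equation*}
\tilde{\beta}(y):=S^{-1/2}\beta\bigl((pTp)^{1/2}y(pTp)^{1/2}\bigr)S^{-1/2},
\end{equation*}
a unital c.p.\ map $M_{k'}\to\widetilde{B}$. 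A Stinespring computation for $\alpha$ gives $\alpha(a^*)(1-p)\alpha(a)\leq\delta\alpha(a^*a)$, and then Kadison--Schwarz for $\beta$ yields $\|\beta((1-p)\alpha(a))\|^2\leq\delta\|\beta(1)\|\|\beta(\alpha(a^*a))\|$. Choosing $\delta$ much smaller than $\epsilon^2/(\|\beta(1)\|\max_{\mathcal{F}}\|a\|^2+1)$ makes $\tilde\beta\circ\tilde\alpha$ approximate $\phi^\dagger$ on $\mathcal{F}$ to within $\epsilon$.

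Finally, descend to $(A,B)$. Restrict $\tilde{\alpha}$ to $A$, yielding a contractive c.p.\ map into $M_{k'}$. For the codomain, choose a contraction $b\in B_+$ from an approximate identity of the hereditary $C^*$-subalgebra generated by $\phi(\mathcal{F}\cap A)$ with $\|b^{1/2}\phi(a)b^{1/2}-\phi(a)\|<\epsilon$ for $a\in\mathcal{F}\cap A$; then $b^{1/2}\tilde{\beta}(\cdot)b^{1/2}$ is a contractive c.p.\ map $M_{k'}\to B$, and its composition with $\tilde{\alpha}|_A$ approximates $\phi$ on $\mathcal{F}\cap A$ to within $2\epsilon$. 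The main obstacle is the delicate interplay between the small parameters: because $\alpha$ and $\beta$ may individually have large norms, $\delta$ must be chosen only \emph{after} the initial c.p.\ approximation is fixed, and the Schwarz-type estimate exploiting $(1-p)T(1-p)\leq\delta 1_{M_k}$ must be verified carefully; once this ordering is respected the remaining rescalings and the descent step are routine.
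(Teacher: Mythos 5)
The reduction to the unital case contains a genuine error at its first step. The map $\alpha_n^\dagger\colon A^\dagger\to M_{k_n}\oplus\mathbb{C}$ given by $a+\mu 1\mapsto\alpha_n(a)\oplus\mu$ is \emph{not} completely positive --- it is not even positive. Take $0\leq f\leq 1_{\widetilde A}$ in $A$; then $1_{A^\dagger}-f\geq 0$ in $A^\dagger$, but your map sends it to $(-\alpha_n(f))\oplus 1$, whose $M_{k_n}$-coordinate is $\leq 0$. The problem is that the projection onto the $M_{k_n}$-summand of your map is simply $a+\mu 1\mapsto\alpha_n(a)$, which ignores $\mu$ and hence is not positive. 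Consequently the claimed factorization of $\phi^\dagger$ through matrix algebras by c.p.\ maps is not produced, and the assertion that $\phi^\dagger$ is nuclear --- the foundation of your entire reduction --- is unsupported. (Nuclearity of $\phi^\dagger$ is true, and is Lemma~\ref{l:unitisednuc} of the paper, but it is proved by the rather different trick of Lemma~\ref{l:unitisetrick}: approximating $\phi^\dagger$ in point-norm by the sums $\phi|_A(e_\lambda(-)e_\lambda)+\phi^\dagger(1_{A^\dagger}-e_\lambda^2)\,\pi(-)$ with $\pi$ the vanishing character, each summand being nuclear for a trivial reason. Your one-line unitization does not substitute for that argument.)

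There is a second, smaller problem: the parameter dependence is circular as written. You "choose $\alpha,\beta$ approximating $\phi^\dagger$ within $\delta$, with $\delta$ to be selected after $\alpha,\beta$ are fixed," but the choice of $\alpha,\beta$ is constrained by $\delta$. You flag this yourself at the end, and it is repairable by decoupling the two scales --- first fix the tolerance $\delta_0$ of the initial c.p.\ approximation (depending only on $\mathcal F$ and $\epsilon$), then pick the spectral cutoff level $\delta_1\leq\delta_0$ after $\beta$ is fixed, so that $\|\beta\|\delta_1$ is small --- but as written the estimates do not close.

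Finally, for comparison: the paper's proof avoids unitization entirely, works with a single tolerance parameter, and is considerably shorter. It replaces $\rho$ by $\rho(e_\lambda(-)e_\lambda)$ for an approximate identity $(e_\lambda)$ of $A$, picks $\psi\colon A\to M_n$, $\eta\colon M_n\to B$ with $\eta\psi$ within $\epsilon/2$ of $\rho(e_\lambda(-)e_\lambda)$ on $\mathcal F\cup\{1_{\widetilde A}\}$, and then normalizes directly: $\psi_0=y^{1/2}\psi(e_\lambda(-)e_\lambda)y^{1/2}$ with $y$ the inverse of $\psi(e_\lambda^2)$ in the hereditary subalgebra it generates (no spectral cutoff is needed, since in a matrix algebra this inverse always exists), and $\eta_0=(1+\epsilon/2)^{-1}\eta(\psi(e_\lambda^2)^{1/2}(-)\psi(e_\lambda^2)^{1/2})$. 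Both are contractive by inspection, and the composition is within $\epsilon$ of $\rho(e_\lambda(-)e_\lambda)$ with no further bootstrapping of parameters. Patching the gaps in your sketch would make it noticeably longer than that.
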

\begin{proof}
Let $\rho \colon A \to B$ be a contractive nuclear map and let $(e_\lambda)_{\lambda \in \Lambda}$ be an approximate identity in $A$. Then $\rho(e_\lambda(-)e_\lambda)$ defines a net of contractive nuclear maps\footnote{Each map $\rho(e_\lambda(-) e_\lambda)$ is nuclear by Observation \ref{o:nuccomp}.} converging point-norm to $\rho$, so it suffices to show that each $\rho(e_\lambda(-) e_\lambda)$ is a point-norm limit of maps factoring via contractive c.p.~maps through matrix algebras.

Given a finite set of contractions $\mathcal F \subset A$, and $\epsilon >0$, pick c.p.~maps $\psi \colon A \to M_n(\mathbb C)$ and $\eta \colon M_n(\mathbb C) \to B$ such that $\eta(\psi(e_\lambda a e_\lambda)) \approx_{\epsilon/2} \rho(e_\lambda a e_\lambda)$ for $a\in \mathcal F \cup \{ 1_{\widetilde{A}}\}$. 
Let $y$ be the inverse of $\psi(e_\lambda^2)$ in the hereditary $C^\ast$-subalgebra of $M_n(\mathbb C)$ that $\psi(e_\lambda^2)$ generates. Define c.p.~maps 
\begin{equation}
\psi_0 = y^{1/2} \psi(e_\lambda(-) e_\lambda) y^{1/2} \colon A \to M_n(\mathbb C)
\end{equation}
and
\begin{equation}
\eta_0 = \tfrac{1}{1+\epsilon/2} \eta(\psi(e_\lambda^2)^{1/2} (-) \psi(e_\lambda^2)^{1/2}) \colon M_n(\mathbb C) \to B.
\end{equation}
Then
\begin{equation}
\| \psi_0 \| = \| y^{1/2} \psi(e_\lambda^2) y^{1/2} \| = 1, \qquad \| \eta_0\| =  \| \tfrac{1}{1+\epsilon/2} \eta(\psi(e_\lambda^2))\| \leq 1,
\end{equation}
where the latter holds since $\| \eta(\psi(e_\lambda^2)) \| \leq \| \rho(e_\lambda^2)\| + \epsilon/2 \leq 1+\tfrac{\epsilon}{2}$. Moreover,
\begin{equation}
\eta_0 \circ \psi_0 (a) \approx_{\epsilon/2} (1+\tfrac{\epsilon}{2}) \eta_0 \circ \psi_0(a) = \eta \circ \psi(e_\lambda a e_\lambda) \approx_{\epsilon/2} \rho(e_\lambda a e_\lambda), \quad a\in \mathcal F. \qedhere
\end{equation}
\end{proof}

It was proved independently by Choi--Effros \cite{ChoiEffros-CPAP} and Kirchberg \cite{Kirchberg-CPAP} that a $C^\ast$-algebra $A$ is nuclear (i.e.~the canonical $\ast$-epimorphism $A \otimes_{\max{}} C \to A \otimes C$ is an isomorphism for any $C^\ast$-algebra $C$) if and only if $\id_A$ is nuclear. This is an immediate consequence of the following tensor product characterisation of nuclear maps.

\begin{proposition}[Cf.~Choi--Effros \cite{ChoiEffros-CPAP}, Kirchberg \cite{Kirchberg-CPAP}]\label{p:nuctensor}
Let $A$ and $B$ be $C^\ast$-algebras and let $\phi \colon A \to B$ be a c.p.~map. Then $\phi$ is nuclear if and only if for any $C^\ast$-algebra $C$ the induced c.p.~map $\phi \otimes \id_C \colon A \otimes_{\max{}} C \to B \otimes_{\max{}} C$ factors through the spatial tensor product $A \otimes C$.
\end{proposition}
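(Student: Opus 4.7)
The proof splits naturally into two directions of quite different character.

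For the forward direction I would use the definition to write $\phi$ as a point-norm limit of maps $\phi_n = \eta_n \circ \psi_n$ with $\psi_n \colon A \to M_{k_n}(\mathbb C)$ and $\eta_n \colon M_{k_n}(\mathbb C) \to B$ completely positive. Since $M_{k_n}(\mathbb C)$ is nuclear, the canonical surjection $M_{k_n}(\mathbb C) \otimes_{\max{}} C \to M_{k_n}(\mathbb C) \otimes C$ is an isomorphism, and so $\phi_n \otimes \id_C$ factors as
\[
A \otimes_{\max{}} C \xrightarrow{\pi} A \otimes C \xrightarrow{\psi_n \otimes \id_C} M_{k_n}(\mathbb C) \otimes C \xrightarrow{\eta_n \otimes \id_C} B \otimes_{\max{}} C,
\]
where $\pi$ is the canonical quotient map. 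Write $\tilde\phi_n$ for the composition of the last two arrows. The norms $\|\tilde\phi_n\|$ are dominated by $\|\phi_n\|$ and hence uniformly bounded, while on elementary tensors $\tilde\phi_n(a \otimes c) = \phi_n(a) \otimes c$ converges in $B \otimes_{\max{}} C$ to $\phi(a) \otimes c$. A standard uniform-boundedness argument on the dense subspace of finite linear combinations of elementary tensors then yields a point-norm limit $\tilde\phi \colon A \otimes C \to B \otimes_{\max{}} C$ with $\tilde\phi \circ \pi = \phi \otimes \id_C$, as required.

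For the reverse direction one must choose $C$ cleverly. Faithfully represent $B \subseteq \mathcal B(H)$ and take $C = B'$, the commutant of $B$ in $\mathcal{B}(H)$. Multiplication yields a $\ast$-homomorphism $m \colon B \otimes_{\max{}} B' \to \mathcal B(H)$, so $\Psi := m \circ (\phi \otimes \id_{B'})$ is a c.p.~map $A \otimes_{\max{}} B' \to \mathcal B(H)$ sending $a \otimes b' \mapsto \phi(a) b'$. By hypothesis $\phi \otimes \id_{B'}$ descends to the spatial tensor product $A \otimes B'$, and therefore so does $\Psi$. Fixing a faithful representation $A \subseteq \mathcal B(H_A)$, the spatial tensor product $A \otimes B'$ embeds into $\mathcal B(H_A \otimes H)$, and a standard slice-map / finite-rank approximation argument (compressing on the $H_A$-factor by finite-rank projections, then slicing against vector states) produces finite-rank completely positive maps $\Phi_\alpha \colon A \to \mathcal B(H)$ factoring through matrix algebras with $\Phi_\alpha \to \phi$ in the point-ultraweak topology. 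A Hahn--Banach/Mazur convexity argument then upgrades this point-ultraweak approximation to point-norm approximation by passing to convex combinations, and one arranges the resulting approximating maps to have image in $B$ by compressing with an approximate identity of $B$.

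The main obstacle is the final convergence step in the reverse direction: trading the point-ultraweak approximations produced by the spatial factorisation for genuine point-norm approximations factoring through matrix algebras and landing in $B$, rather than merely in $\mathcal B(H) \supseteq B$. The forward direction, by contrast, is essentially bookkeeping once one recognises that the nuclearity of matrix algebras lets each approximant factor through the spatial tensor product.
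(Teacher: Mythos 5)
Your forward direction is correct and matches the standard argument; do note that uniform boundedness of $\|\tilde\phi_n\|=\|\phi_n\|$ is not automatic from the definition of nuclearity, but is easily arranged by first normalising $\phi$ (cf.\ Lemma~\ref{l:nuccontractive}).

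The paper proves this proposition simply by citing \cite[Corollary 3.8.8]{BrownOzawa-book-approx} for the unital case and then normalising and unitising. You instead reprove that corollary via the commutant trick, which is the right underlying idea: taking $C=B'$, building the c.p.\ map $a\otimes b'\mapsto\phi(a)b'$, and closing with a Hahn--Banach/Mazur convexity argument is precisely the Choi--Effros/Kirchberg strategy.

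There is, however, a genuine gap at the end of the reverse direction. The step ``one arranges the resulting approximating maps to have image in $B$ by compressing with an approximate identity of $B$'' does not work: for $e\in B$ and $T\in\mathcal B(H)$ one has $eTe\notin B$ in general, so compression by $(e_\lambda)\subseteq B$ gives no control over the range of the approximants $\Phi_\alpha\colon A\to\mathcal B(H)$. The correct mechanism uses the commutant more seriously. After unitising, $\overline\Psi(1\otimes b')=b'$ for all $b'\in B'$, so the Arveson extension $\tilde\Psi\colon\mathcal B(H_A\otimes H)\to\mathcal B(H)$ restricts to the identity on $1\otimes B'$ and is therefore a $B'$-bimodule map by the multiplicative-domain lemma; in particular it carries $\mathcal B(H_A)\otimes 1$ into $B''$. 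Choosing the representation universal so that $B''=B^{**}$, the finite-rank approximants land in $B^{**}$, and one passes from $B^{**}$ to $B$ by Kaplansky density applied to the positive element of $M_n(B^{**})=M_n(B)^{**}$ encoding each second leg $M_n\to B^{**}$. Only then, with approximants already valued in $B$, does the Mazur argument yield point-norm convergence (the $\sigma$-weak topology of $\mathcal B(H)$ restricted to $B$ is the weak topology of $B$, where Mazur applies). The order you propose --- Mazur first, then a range correction --- does not close, because the range correction fails for the reason above. The paper sidesteps all of this by deferring to the textbook.
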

\begin{proof}
\cite[Corollary 3.8.8]{BrownOzawa-book-approx} is the result in the case where $A,B$ and $\phi$ are all unital. The general case can be obtained by normalising $\phi$ and unitising everything.
\end{proof}

\begin{remark}\label{r:nucemb}[Nuclear embeddability and exactness]
The symbol $\otimes$ denotes the \emph{spatial} (which is the minimal) tensor product of $C^\ast$-algebras. Recall that a $C^\ast$-algebra $A$ is called \emph{exact} if the functor $A \otimes -$ takes short exact sequences to short exact sequences. Say that $A$ is \emph{nuclearly embeddable} if there exists an injective, nuclear $\ast$-homomorphism $A \to B$ into some $C^\ast$-algebra $B$ (which may obviously be chosen to be $B = \mathcal B(\mathcal H)$).

By a remarkable result of Kirchberg \cite{Kirchberg-CAR} it follows that a $C^\ast$-algebra is exact if and only if it is nuclearly embeddable. See \cite[Section 3.9]{BrownOzawa-book-approx} for a more elementary proof. Also, if $A$ is a separable, exact $C^\ast$-algebra, one may always find an embedding $A \to \mathcal O_2$ by Kirchberg's $\mathcal O_2$-embedding theorem \cite{Kirchberg-ICM} (see also \cite{KirchbergPhillips-embedding}). Such an embedding is automatically nuclear by nuclearity of $\mathcal O_2$.

As the focus of this paper will mainly be on nuclear $\ast$-homomorphisms $A\to B$, it will essentially not be any loss of generality to assume that $A$ is exact, which will be done in most major results.
\end{remark}

Recall that an element $b\in B$ is called \emph{full} if it is not contained in any proper two-sided, closed ideal in $B$.

\begin{definition}
A $\ast$-homomorphism $\phi \colon A \to B$ between $C^\ast$-algebras is said to be \emph{full} if $\phi(a)$ is full in $B$ for every non-zero $a\in A$. 
\end{definition}

\begin{remark}\label{r:fullvssimple}
Fullness is for $\ast$-homomorphisms what simplicity is for $C^\ast$-algebras. In fact, if $A$ is a $C^\ast$-algebra then the identity map $\id_A$ is full if and only if $A$ is simple.
\end{remark}

The goal of this section is to prove the following proposition. 

\begin{proposition}\label{p:fulldom}
Let $A$ and $B$ be $C^\ast$-algebras and suppose that $\phi \colon A \to B$ is a full $\ast$-homomorphism. Then $\phi$ approximately dominates any nuclear map $\rho \colon A \to B$.
\end{proposition}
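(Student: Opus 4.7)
The plan is to use that the collection $\mathscr C$ of c.p.\ maps $A \to B$ approximately dominated by $\phi$ is a point-norm closed convex cone (Remark \ref{r:approxdom}), and that any nuclear map is a point-norm limit of c.p.\ maps factoring through matrix algebras; thus it will suffice to show that every composition $A \xrightarrow{\psi} M_n \xrightarrow{\eta} B$ of c.p.\ maps lies in $\mathscr C$.

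First I would apply the standard structure of c.p.\ maps out of a matrix algebra to write $\eta(x) = \sum_k W_k^* x W_k$ with $W_k \in M_{n,1}(B)$, and then apply Stinespring's theorem together with a cyclic decomposition of the resulting representation of $A$ to express the matrix entries $\psi(a)_{ij}$ (up to arbitrarily small point-norm error on a finite set $\mathcal F \subset A$) as finite sums $\sum_l \omega_l((c_i^{(l)})^* a c_j^{(l)})$ for states $\omega_l$ on $A$ and $c_i^{(l)} \in A$. Invoking the closed convex cone structure of $\mathscr C$, this reduces the problem to showing that every map of the form
\[
R(a) = \sum_{i,j=1}^n \omega(c_i^* a c_j)\, w_i^* w_j
\]
lies in $\mathscr C$, where $\omega$ is a state on $A$, $c_i \in A$, and $w_i \in B$. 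The Krein--Milman theorem then lets me assume that $\omega$ is a pure state, by another appeal to the cone structure.

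For pure $\omega$, I would invoke the Akemann--Anderson--Pedersen excision theorem to obtain, for any $\delta > 0$ and finite $\mathcal F' \subset A$, an element $h \in A_+$ with $\|h\|\leq 1$, $\omega(h^2) > 1/2$, and $\|hxh - \omega(x) h^2\| < \delta$ for $x \in \mathcal F'$. Noting that a full $\ast$-homomorphism is automatically injective (and hence isometric), $\phi(h^2) \in B_+$ is a nonzero full element. I would then pick a positive contraction $e \in B$ with $\|ew_j - w_j\|$ small for every $j$ via an approximate unit, use fullness of $\phi(h^2)$ to approximate $e$ by a positive combination $\sum_k d_k^* \phi(h^2) d_k$, and set $z_k := \sum_j \phi(c_j h) d_k w_j$. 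Expanding $\sum_k z_k^* \phi(a) z_k$ using the excision identity $\phi(h c_i^* a c_j h) \approx \omega(c_i^* a c_j) \phi(h^2)$ and rearranging would yield $\sum_k z_k^* \phi(a) z_k \approx R(a)$ for $a \in \mathcal F$.

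The hard part will be coordinating the three approximation parameters — the excision error, the positive-combination error, and the approximate-unit error — so that their combined contribution stays below a prescribed $\epsilon$. Since the size of $\sum_k \|d_k\|^2$ depends on the chosen $h$ through $\phi(h^2)$, I would first fix $e$ and compute the $d_k$'s for an initial choice of $h$, and then refine $h$ along the excising net while controlling the resulting error. A secondary technical point is the reduction step via Stinespring and cyclic decompositions, which needs uniform estimates over the finite set $\mathcal F$ rather than merely pointwise convergence.
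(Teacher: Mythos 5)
Your overall strategy matches the paper's: reduce via the point-norm closed convex cone $\mathscr C$ to a single summand of the form $R(a) = \sum_{i,j}\omega(c_i^\ast a c_j)w_i^\ast w_j$ with $\omega$ pure, then excise $\omega$ and use fullness of $\phi$ to dominate. Your reduction goes through Stinespring plus a cyclic decomposition, where the paper instead uses the affine bijection $(A\otimes M_n(\mathbb C))^\ast_+ \cong \CP(A,M_n(\mathbb C))$ (Proposition~\ref{p:*+CP}) to rewrite $\psi$ as $\check f$ for a positive linear functional $f$ on $A\otimes M_n(\mathbb C)$, which one then approximates by a finite sum of pure positive functionals; this avoids exactly the ``uniform estimates over $\mathcal F$'' concern you flag about the Stinespring route, and leaves you excising a pure state on $A\otimes M_n(\mathbb C)$ and working with the full map $\phi\otimes\id_{M_n}$ (Lemma~\ref{l:fulltensor}) rather than with $\phi(h^2)$ directly. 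Both reductions are viable; the paper's is somewhat tidier.

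The genuine gap is in the step you yourself identify as ``the hard part.'' After excising to get $h$ and then using fullness to write $e \approx \sum_k d_k^\ast\phi(h^2)d_k$, the excision error enters your final expression as $\sum_k d_k^\ast\phi\bigl(hc_i^\ast a c_j h - \omega(c_i^\ast a c_j)h^2\bigr)d_k$, which by the elementary estimate of Lemma~\ref{l:sumconjest} is bounded by $4\|\text{excision error}\|\cdot\|\sum_k d_k^\ast d_k\|$. So you must bound $\|\sum_k d_k^\ast d_k\|$, and your remedy---``refine $h$ along the excising net while controlling the resulting error''---does not work: the excising net decreases, so the $d_k$'s chosen for one $h$ fail for later $h$'s, and recomputing them gives no a priori norm bound; iterating does not converge. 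The paper resolves this in one shot with Lemma~\ref{l:sumidealgen}: if $b_0\in\overline{B(b-\delta)_+B}$ for \emph{every} $0<\delta<\|b\|$, one can pick $c_1,\dots,c_m$ with $\|\sum c_k^\ast b c_k - b_0\|<\epsilon$ \emph{and} $\|\sum c_k^\ast c_k\|\leq\|b_0\|/\|b\|$; the proof factors $c_k = (b-\delta)_+^{1/2}c_k'$, i.e.\ restricts to the top spectral part of $b$. Applied with $b=\phi^{(n)}(x^2)$ (where $\|x\|=1$, so $(x^2-\delta)_+\neq 0$ for $\delta<1$, and its image is full by fullness---not merely injectivity---of $\phi^{(n)}$), this gives $\|\sum c_k^\ast c_k\|\leq 1$, and the excision error is then bounded independently of everything else. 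That lemma is the ingredient your sketch is missing.
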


While a slight weakening of the above proposition (where $\phi$ is assumed to also be nuclear) would suffice for the purpose of proving the Kirchberg--Phillips theorem, and can be deduced almost immediately from \cite[Theorem 3.3]{Gabe-O2class}, I have chosen to take a longer yet more elementary approach for proving this. This approach should look familiar to those who are familiar with the completely positive approximation property for nuclear $C^\ast$-algebras.

Before proving the above result,  the following immediate corollary will be recorded for later use.

\begin{corollary}\label{c:fulldom}
Any two full, nuclear $\ast$-homomorphisms $\phi, \psi \colon A \to B$ approximately dominate each other.
\end{corollary}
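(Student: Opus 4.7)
The corollary is an immediate application of Proposition \ref{p:fulldom} twice, one for each direction. The plan is as follows.

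Given $\phi, \psi \colon A \to B$ both full and nuclear, I would first apply Proposition \ref{p:fulldom} with the full $\ast$-homomorphism taken to be $\phi$ and the nuclear map taken to be $\psi$; this yields that $\phi$ approximately dominates $\psi$. Then I would apply Proposition \ref{p:fulldom} a second time, now with the roles swapped: the full $\ast$-homomorphism is $\psi$ and the nuclear map is $\phi$, giving that $\psi$ approximately dominates $\phi$. Together these two applications are exactly the statement of the corollary.

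There is no real obstacle here, since the hard work is entirely absorbed into Proposition \ref{p:fulldom}. The only thing one might want to emphasise is that the hypothesis of Proposition \ref{p:fulldom} applied to $\phi$ requires only that $\phi$ is full (not nuclear) and that the map being dominated is nuclear, so nuclearity of $\phi$ is used only when $\phi$ plays the role of the dominated map, and vice versa for $\psi$. Thus both hypotheses on each map (fullness and nuclearity) are used, but in different invocations of Proposition \ref{p:fulldom}.
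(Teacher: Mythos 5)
Your proposal is correct and is exactly the argument the paper has in mind: the corollary is stated immediately after Proposition \ref{p:fulldom} as an immediate consequence, obtained by applying the proposition twice with the roles of $\phi$ and $\psi$ swapped. Your remark on which hypothesis (fullness vs.~nuclearity) is used in each invocation is accurate and a nice clarification.
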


In the following, $A^\ast_+$ denotes the convex cone of positive linear functionals on the $C^\ast$-algebra $A$ equipped with the weak$^\ast$-topology. Similarly, let $\CP(A,B)$ denote the convex cone of all c.p.~maps $A\to B$ equipped with the point-norm topology.

The following is well-known and can be proved essentially exactly like \cite[Proposition 1.5.14]{BrownOzawa-book-approx}.  The details are left for the reader.

\begin{proposition}\label{p:*+CP}
Let $A$ be a $C^\ast$-algebra and let $n\in \mathbb N$. Then there is an affine homeomorphism
\begin{equation}
(A \otimes M_n(\mathbb C))^\ast_+ \to \CP( A , M_n(\mathbb C) )
\end{equation} 
given by
\begin{equation}\label{eq:checkf}
f \mapsto \check f := \sum_{i,j=1}^n f(- \otimes e_{i,j}) e_{i,j}.
\end{equation}
\end{proposition}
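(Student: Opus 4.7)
The plan is to construct the inverse explicitly and then verify the correspondence directly. Given $\rho \in \CP(A, M_n(\mathbb C))$, I would define a linear functional $\hat\rho$ on the algebraic tensor product $A \odot M_n(\mathbb C)$ by $\hat\rho(a \otimes e_{ij}) := \langle \rho(a) e_j, e_i\rangle$ (the $(i,j)$-entry of $\rho(a)$), extended linearly. The first order of business is then to show that $\hat\rho$ is positive and bounded, so that it extends uniquely to a functional in $(A\otimes M_n(\mathbb C))^\ast_+$; this then provides a candidate two-sided inverse to $f \mapsto \check f$.

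The crux is the equivalence between positivity of $f$ on $A\otimes M_n(\mathbb C)$ and complete positivity of $\check f$. Since the target is $M_n(\mathbb C)$, complete positivity of $\check f$ is equivalent to $n$-positivity, i.e.\ to the requirement that for every $a_1,\dots,a_n\in A$ the matrix $(\check f(a_i^\ast a_j))_{i,j}\in M_n(M_n(\mathbb C))$ be positive. Evaluating against a vector in $\mathbb C^{n^2}$, this inequality rewrites as the statement that $f(x^\ast x)\geq 0$ for a suitable element $x\in A\odot M_n(\mathbb C)$ built from the $a_i$'s and the coefficients of the vector. Running the computation in the other direction, for any positive $z=\sum_k x_k^\ast x_k\in A\odot M_n(\mathbb C)$ one can read off $\hat\rho(z)$ as a sum of expressions of the form $\sum_{j,m}\langle \rho(c_j^\ast c_m)e_m,e_j\rangle$, which are nonnegative precisely because $\rho$ is $n$-positive. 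Thus the assignments are well-defined in both directions, and checking they are mutually inverse on elementary tensors is a direct calculation; affinity is immediate from the linearity of \eqref{eq:checkf} in $f$.

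For the topological statement, if $f_\alpha \to f$ weakly-$\ast$ then $\check f_\alpha(a) \to \check f(a)$ entrywise in $M_n(\mathbb C)$ for every $a\in A$, and since all norms on $M_n(\mathbb C)$ are equivalent this is norm convergence, yielding point-norm convergence in $\CP(A,M_n(\mathbb C))$. Conversely, if $\check f_\alpha\to \check f$ point-norm then by definition $f_\alpha(a\otimes e_{ij})\to f(a\otimes e_{ij})$ for every $a\in A$ and all $i,j$, so by linearity $f_\alpha \to f$ pointwise on the algebraic tensor product $A\odot M_n(\mathbb C)$; together with a uniform bound on $\|f_\alpha\|$ (obtained by evaluating the $\check f_\alpha$ on an approximate identity of $A$, using that point-norm convergent nets of c.p.\ maps to a finite-dimensional algebra are uniformly bounded on bounded subsets via the identity $\|\rho\|=\lim_\mu\|\rho(e_\mu)\|$), this extends to weak-$\ast$ convergence on all of $A\otimes M_n(\mathbb C)$. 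The main obstacle, and essentially the only non-routine piece, is the matrix-algebra bookkeeping in the second paragraph identifying $n$-positivity of $\check f$ with positivity of $f$; once this is established the rest is essentially formal.
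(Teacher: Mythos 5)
Your proof takes the same route the paper intends (the cited Brown--Ozawa Proposition 1.5.14 establishes exactly this positivity $\leftrightarrow$ complete positivity correspondence via the ``matrix trick''), and the core of your argument --- reducing complete positivity to $n$-positivity, and then converting both directions of the positivity claim into expressions of the form $f(x^\ast x)$ respectively $\langle\eta,(\rho(c_i^\ast c_j))_{ij}\,\eta\rangle$ --- is correct.

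One step in your treatment of the topology is, however, both unnecessary and unjustified as written. You invoke a uniform bound on $\|f_\alpha\|$ extracted from the claim that a point-norm convergent \emph{net} of c.p.\ maps into a finite-dimensional algebra is uniformly bounded; this is only true for sequences (via the uniform boundedness principle) and can fail for nets. Fortunately you do not need it: since $M_n(\mathbb C)$ is finite-dimensional, the algebraic tensor product $A\odot M_n(\mathbb C)$ already equals $A\otimes M_n(\mathbb C)\cong M_n(A)$ --- every matrix $(a_{ij})$ is literally $\sum_{i,j} a_{ij}\otimes e_{ij}$ --- so there is no completion to extend to. Pointwise convergence of $f_\alpha$ on elementary tensors, which you correctly deduce from point-norm convergence of $\check f_\alpha$, already gives convergence on all of $A\otimes M_n(\mathbb C)$ by linearity, i.e.\ weak-$\ast$ convergence. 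Deleting the uniform-boundedness clause makes the topological part immediate in both directions and removes the faulty step.

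For the same reason your remark at the outset about ``extending uniquely'' $\hat\rho$ from $A\odot M_n(\mathbb C)$ to the completed tensor product is vacuous (there is nothing to extend), though that one is harmless: once you know $\hat\rho(z)\geq 0$ for every positive $z\in M_n(A)$, it is automatically a bounded positive functional on the $C^\ast$-algebra $A\otimes M_n(\mathbb C)$.
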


A \emph{pure positive linear functional} on a $C^\ast$-algebra $A$ is a positive linear functional which is either zero or for which its normalisation is a pure state.

\begin{lemma}\label{l:nucsimpleapprox}
Let $A$ and $B$ be $C^\ast$-algebras and let $\rho \colon A \to B$ be a nuclear map. For any finite subset $\mathcal F \subset A$ and any $\epsilon >0$ there are $n,m\in \mathbb N$, pure positive linear functionals $f_1,\dots,f_m \in (A\otimes M_n(\mathbb C))^\ast_+$, and elements $b_{i,j} \in B$ for $i,j=1,\dots, n$, such that
\begin{equation}
\| \rho(a) - \sum_{l=1}^m \sum_{i,j,k=1}^n f_l(a\otimes e_{i,j}) b_{k,i}^\ast b_{k,j} \| < \epsilon , \qquad a\in \mathcal F.
\end{equation}
\end{lemma}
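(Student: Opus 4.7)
The plan is to approximate $\rho$ by a composition through a matrix algebra (using nuclearity), rewrite both factor maps in a form that exposes the pure-functional-plus-outer-element structure, and finally approximate a single positive functional by sums of pure positive functionals via Krein--Milman.

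First I would fix $\mathcal F$ and $\epsilon$, and use nuclearity of $\rho$ to choose c.p.~maps $\psi \colon A \to M_n(\mathbb C)$ and $\eta \colon M_n(\mathbb C) \to B$ with $\|\eta \circ \psi(a) - \rho(a)\| < \epsilon/2$ for $a \in \mathcal F$. By Proposition \ref{p:*+CP}, $\psi = \check f$ for some $f \in (A \otimes M_n(\mathbb C))^\ast_+$, so
\begin{equation}
\psi(a) = \sum_{i,j=1}^n f(a \otimes e_{i,j}) e_{i,j}.
\end{equation}
For $\eta$, I would use that the Choi matrix $(\eta(e_{i,j}))_{i,j} \in M_n(B)$ is positive; taking its square root inside $M_n(B)$ and denoting the entries by $b_{k,i}$ yields $\eta(e_{i,j}) = \sum_{k=1}^n b_{k,i}^\ast b_{k,j}$. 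Applying $\eta$ to $\psi(a)$ then gives
\begin{equation}
\eta \circ \psi(a) = \sum_{i,j,k=1}^n f(a \otimes e_{i,j})\, b_{k,i}^\ast b_{k,j}.
\end{equation}

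The remaining task is to replace the single functional $f$ by a sum $\sum_{l=1}^m f_l$ of pure positive functionals, with small error on the finite test set $\mathcal F' := \{a \otimes e_{i,j} : a \in \mathcal F, \, 1 \leq i,j \leq n\}$. Setting $C := \max_{i,j,k} \|b_{k,i}^\ast b_{k,j}\|$, it suffices to arrange $|f(x) - \sum_l f_l(x)| < \epsilon/(2 n^3 C)$ for $x \in \mathcal F'$. The mechanism for this is Krein--Milman applied to the state space of $D := A \otimes M_n(\mathbb C)$: positive linear functionals are positive scalar multiples of states, and the state space (when $D$ is unital) is the weak$^\ast$-closed convex hull of its pure states, whence any element of $D^\ast_+$ lies in the weak$^\ast$-closure of finite sums of pure positive functionals.

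The main obstacle is the non-unital case, since Krein--Milman requires the weak$^\ast$-compactness afforded by a unit. I would handle it by extending $f$ canonically to a positive functional $\tilde f$ on the unitization $\widetilde D$ (with $\tilde f(1) = \|f\|$), approximating $\tilde f$ weakly on $\mathcal F'$ by a finite sum $\sum_l \tilde f_l$ of pure positive functionals on $\widetilde D$, and then restricting each $\tilde f_l$ back to $D$. A pure state on $\widetilde D$ either extends a pure state on $D$ (in which case its restriction $f_l := \tilde f_l|_D$ is a pure positive functional on $D$, possibly zero) or is the unique character vanishing on $D$ (which restricts to $0$ and therefore contributes nothing on $\mathcal F' \subset D$). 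This produces the required $f_l$, and combining the two approximations yields the desired estimate.
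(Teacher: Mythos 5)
Your proposal is correct and follows essentially the same route as the paper: approximate $\rho$ through a matrix algebra via nuclearity, convert $\psi$ to a positive functional via Proposition \ref{p:*+CP}, decompose $\eta$ via the square root of its Choi matrix, and then approximate the single functional $f$ on the finite set $\{a \otimes e_{i,j}\}$ by a finite sum of pure positive functionals. The paper simply asserts the last step without justification, whereas you correctly supply the Krein--Milman argument (passing to the unitization and using the standard dichotomy for restrictions of pure states); the one small slip is the parenthetical ``possibly zero'' --- when a pure state on $\widetilde D$ is not the canonical character its restriction to $D$ is exactly a pure state, never zero --- but this is cosmetic and does not affect the argument.
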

\begin{proof}
We may find  $n\in \mathbb N$ and c.p.~maps $\psi \colon A \to M_n(\mathbb C)$ and $\eta \colon M_n(\mathbb C) \to B$ such that 
\begin{equation}\label{eq:rhoetapsihalf}
\| \rho (a) - \eta \circ \psi(a) \| < \epsilon /2  ,\qquad a\in \mathcal F.
\end{equation}
 By (the proof of) \cite[Proposition 1.5.12]{BrownOzawa-book-approx} there are $b_{i,j} \in B$ such that 
\begin{equation}
\eta(e_{i,j}) = \sum_{k=1}^n b_{k,i}^\ast b_{k,j}, \qquad i,j=1,\dots,n.\footnote{One proves that $(\eta(e_{i,j}))_{i,j=1}^n \in M_n(B)$ is positive, and $b_{i,j}$ is the $(i,j)$'th entry of the square root of $(\eta(e_{i,j}))_{i,j=1}^n$.}
\end{equation}
By Proposition \ref{p:*+CP} there is a positive linear functional $f$ on $A\otimes M_n(\mathbb C)$ such that $\check f = \psi$ (see \eqref{eq:checkf}). In particular,
\begin{equation}\label{eq:etapsicheckf}
\eta \circ \psi(a) = \eta\circ \check f (a) = \sum_{i,j=1}^n f(a\otimes e_{i,j}) \eta(e_{i,j}) = \sum_{i,j,k=1}^n f(a\otimes e_{i,j}) b_{k,i}^\ast b_{k,j}
\end{equation}
for all $a\in A$.
We may find $m\in \mathbb N$ and pure positive linear functionals $f_1,\dots, f_m$ on $A\otimes M_n(\mathbb C)$ such that $\sum_{l=1}^m f_i$ approximates $f$ well enough on $\{ a \otimes e_{i,j} : a\in \mathcal F, \; i,j=1,\dots, n\}$ such that
\begin{equation}\label{eq:sumijkfaeij}
\|  \sum_{i,j,k=1}^n f(a\otimes e_{i,j}) b_{k,i}^\ast b_{k,j} - \sum_{l=1}^m \sum_{i,j,k=1}^n f_l(a\otimes e_{i,j}) b_{k,i}^\ast b_{k,j} \| < \epsilon/2
\end{equation}
for $a\in \mathcal F$. The result now follows by combining \eqref{eq:rhoetapsihalf}, \eqref{eq:etapsicheckf}, and \eqref{eq:sumijkfaeij}.
\end{proof}

It is well-known that if $A$ and $B$ are simple $C^\ast$-algebras then the spatial tensor product $A\otimes B$ is also simple. The following is a generalisation for $\ast$-homomorphisms, cf.~Remark \ref{r:fullvssimple}.

\begin{lemma}\label{l:fulltensor}
Let $A,B,C$ and $D$ be $C^\ast$-algebras and suppose that $\phi \colon A \to B$ and $\psi \colon C \to D$ are full $\ast$-homomorphisms. Then $\phi \otimes \psi \colon A \otimes C \to B \otimes D$ is a full $\ast$-homomorphism.

In particular, if $\phi \colon A \to B$ is a full $\ast$-homomorphism and $D$ is a simple $C^\ast$-algebra, then $\phi \otimes \id_D \colon A \otimes D \to B  \otimes D$ is a full $\ast$-homomorphism.
\end{lemma}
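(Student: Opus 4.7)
\begin{skproof}
The ``in particular'' statement follows from the main one because $\id_D$ is full precisely when $D$ is simple (Remark \ref{r:fullvssimple}), so I focus on the main statement.

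My plan is to recast fullness of $\phi \otimes \psi$ as follows: I will show that for every non-zero $\ast$-homo\-morphism $\pi \colon B \otimes D \to E$, the composition $\sigma := \pi \circ (\phi \otimes \psi) \colon A\otimes C \to E$ is injective. This is equivalent to fullness of $\phi \otimes \psi$ because an element of $B\otimes D$ is full if and only if it has non-zero image under every non-zero $\ast$-homomorphism out of $B\otimes D$.

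Given a non-zero $\pi$, I first reduce to the case where $\sigma$ is non-degenerate by cutting down $E$ to the hereditary $C^\ast$-subalgebra generated by $\sigma(A\otimes C)$; this forces $\pi$ itself to be non-degenerate onto its image, and thus to extend strictly to a unital $\widetilde \pi \colon \multialg{B \otimes D} \to \multialg{E}$. Via the canonical commuting embeddings $B \hookrightarrow \multialg{B\otimes D}$, $b \mapsto b\otimes 1$, and $D \hookrightarrow \multialg{B\otimes D}$, $d \mapsto 1 \otimes d$, I set $\pi_B := \widetilde \pi|_B$ and $\pi_D := \widetilde\pi|_D$. These are commuting $\ast$-homomorphisms into $\multialg E$, and satisfy $\sigma(a\otimes c) = \pi_B(\phi(a))\,\pi_D(\psi(c))$. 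The relation $(b\otimes 1)(1\otimes d)=b\otimes d$ in $\multialg{B\otimes D}$ forces $\pi_B$ and $\pi_D$ to be non-zero, since otherwise $\pi$ itself would vanish.

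The core input is the following consequence of Takesaki's theorem on the uniqueness of the spatial tensor norm: a $\ast$-homomorphism $\sigma\colon A \otimes_{\min} C \to F$ with commuting restrictions $\sigma_A\colon A \to \multialg F$ and $\sigma_C\colon C \to \multialg F$ satisfying $\sigma(a\otimes c) = \sigma_A(a)\sigma_C(c)$ is injective whenever both $\sigma_A$ and $\sigma_C$ are injective. In our setting this applies with $\sigma_A = \pi_B \circ \phi$ and $\sigma_C = \pi_D \circ \psi$. Since $\phi$ is full and $\pi_B \neq 0$, every non-zero $a\in A$ has $\phi(a)$ full in $B$, which therefore cannot lie in the proper ideal $\ker \pi_B$; so $\sigma_A$ is injective, and likewise $\sigma_C$ is injective. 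Takesaki then delivers injectivity of $\sigma$, completing the proof. The main technical subtlety I expect is the strict multiplier extension needed to correctly formulate the Takesaki input when $A,B,C,D$ are not assumed to be unital.
\end{skproof}
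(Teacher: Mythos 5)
There is a genuine gap: the ``core input'' you attribute to Takesaki's theorem is false. What Takesaki's theorem says is that the spatial norm is the \emph{smallest} $C^\ast$-norm on $A\odot C$; it does \emph{not} say that every pair of commuting injective representations of $A$ and $C$ yields an injective representation of $A\otimes_{\min}C$. Here is a counterexample to your stated claim. Take $A=C=\mathbb C^2$, so $A\otimes_{\min}C\cong\mathbb C^4$ with coordinates indexed by $(i,j)\in\{1,2\}^2$, and let $\sigma\colon\mathbb C^4\to\mathbb C^3$ forget the $(2,2)$-coordinate. Then $\sigma_A(a_1,a_2)=(a_1,a_1,a_2)$ and $\sigma_C(c_1,c_2)=(c_1,c_2,c_1)$ are commuting, injective, and satisfy $\sigma(a\otimes c)=\sigma_A(a)\sigma_C(c)$, yet $\sigma$ kills the non-zero elementary tensor $(0,1)\otimes(0,1)$. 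Two commuting injective positive elements can very well have zero product, so there is no contradiction waiting to be harvested from ``$\sigma_A$ and $\sigma_C$ injective''. (Note in this example that $(0,1)$ is not full in $\mathbb C^2$, which is the point.)

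To repair the argument one must use the full strength of fullness rather than mere injectivity of $\sigma_A,\sigma_C$: since $\phi(a)$ is full in $B$ when $a\neq 0$, the element $\sigma_A(a)=\pi_B(\phi(a))$ is full in $\pi_B(B)$, and similarly $\sigma_C(c)$ is full in $\pi_D(D)$; then $\sigma_A(a)\sigma_C(c)=0$ forces $\pi_B(B)\pi_D(D)=\{0\}$ (commute and take two-sided ideals) and hence $\pi=0$, a contradiction. But this only shows $\sigma$ is non-zero on \emph{elementary} tensors; to deduce injectivity of $\sigma$ on all of $A\otimes C$ one still has to locate a non-zero elementary tensor in the ideal $\ker\sigma$, which is precisely Kirchberg's slice lemma. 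So the shortcut you were hoping for does not exist, and the argument collapses back to the paper's proof, which applies the slice lemma directly in $B\otimes D$ and then observes $\overline{(B\otimes D)(\phi(a)\otimes\psi(c))(B\otimes D)}\supseteq\overline{B\phi(a)B}\otimes\overline{D\psi(c)D}=B\otimes D$. A minor additional issue: to get the strict extension $\widetilde\pi$ you should cut $E$ down to the hereditary subalgebra generated by $\pi(B\otimes D)$, not by $\sigma(A\otimes C)$; cutting down by the latter does not make $\pi$ non-degenerate.
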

\begin{proof}
Let $x\in A \otimes C$ be a non-zero element. The (two-sided, closed) ideal in $A\otimes C$ generated by $x$ contains a non-zero elementary tensor $a\otimes c$ by Kirchberg's slice lemma \cite[Lemma 4.19]{Rordam-book-classification}, alternatively see \cite[Lemma 2.12$(ii)$]{BlanchardKirchberg-Hausdorff}. In particular, $(\phi \otimes \psi)(a\otimes c)$ is contained in the ideal generated by $(\phi \otimes \psi)(x)$. However, as $a$ and $c$ are non-zero elements, and as $\phi$ and $\psi$ are full, it follows that $(\phi \otimes \psi)(a\otimes c) = \phi(a) \otimes \psi(c)$ is a full element in $B\otimes D$. Hence $(\phi \otimes \psi)(x)$ is also full in $B\otimes D$, so $\phi\otimes \psi$ is a full $\ast$-homomorphism.

The ``in particular'' part follows since $\id_D$ is full provided $D$ is simple.
\end{proof}

A few elementary lemmas will be recorded for convenience. The main point of the following lemma is to obtain a norm estimate of a sum which does not dependent on the number of summands.

\begin{lemma}\label{l:sumconjest}
Let $B$ be a $C^\ast$-algebra and let $z,c_1,\dots,c_m\in B$. Then
\begin{equation}
\| \sum_{k=1}^m c_k^\ast z c_k \| \leq 4 \| z\| \| \sum_{k=1}^m c_k^\ast c_k\|.
\end{equation}
\end{lemma}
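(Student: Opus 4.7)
The plan is to reduce to the case where $z$ is positive and then use the obvious operator inequality. Specifically, I would write $z = (a_+ - a_-) + i(b_+ - b_-)$, where $a_\pm, b_\pm$ are the positive and negative parts of the self-adjoint elements $a = \tfrac{1}{2}(z + z^\ast)$ and $b = \tfrac{1}{2i}(z - z^\ast)$. Each of these four positive pieces has norm at most $\|z\|$, since $\|a\|, \|b\| \le \|z\|$ and $\|a_\pm\| \le \|a\|$ (similarly for $b$).

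For a positive element $y \in B$ with $\|y\| \le \|z\|$, the operator inequality $c_k^\ast y c_k \le \|y\|\, c_k^\ast c_k$ holds for each $k$. Summing over $k$ yields
\begin{equation}
0 \le \sum_{k=1}^m c_k^\ast y c_k \le \|y\| \sum_{k=1}^m c_k^\ast c_k,
\end{equation}
and since both sides are positive, taking norms gives $\|\sum_k c_k^\ast y c_k\| \le \|z\|\,\|\sum_k c_k^\ast c_k\|$.

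The last step is simply the triangle inequality applied to the decomposition of $z$: writing $\sum_k c_k^\ast z c_k$ as a sum of four terms of the form $\pm \sum_k c_k^\ast y c_k$ (with possible factor of $i$) where $y \in \{a_+, a_-, b_+, b_-\}$, each of which is bounded by $\|z\|\,\|\sum_k c_k^\ast c_k\|$, yields the factor of $4$. No obstacles are anticipated; this is a routine decomposition argument, and the constant $4$ is the standard price one pays for passing from arbitrary elements to positive ones via the Cartesian decomposition.
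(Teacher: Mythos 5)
Your proof is correct and is essentially the argument in the paper: decompose $z$ into four positive pieces via the Cartesian decomposition (the paper writes $z = \sum_{l=0}^3 i^l z_l$ with $z_l \ge 0$, $\|z_l\| \le \|z\|$, which is the same thing), then use $0 \le \sum_k c_k^\ast y c_k \le \|y\| \sum_k c_k^\ast c_k$ for positive $y$ and the triangle inequality. No gap.
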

\begin{proof}
We may find positive $z_0,\dots,z_3 \in B$ such that $\| z_l \| \leq \| z\|$ for $l=0,\dots,3$ and $z = \sum_{l=0}^3 i^l z_l$. Hence
\begin{equation}
\| \sum_{k=1}^m c_k^\ast z c_k \| \leq \sum_{l=0}^3 \| \sum_{k=1}^m c_k^\ast z_l c_k \| \leq 4 \| z \| \|\sum_{k=1}^m c_k^\ast c_k \|. \qedhere
\end{equation}
\end{proof}

It is well-known (e.g.~\cite[Corollary II.5.2.13]{Blackadar-book-opalg}) that if $b,b_0 \in B$ are positive elements and $b_0 \in \overline{B b B}$,\footnote{As is customary, $\overline{BbB}$ denotes the \emph{closed linear span} of $\{ xby : x,y\in B\}$, i.e.~it is the two-sided, closed ideal generated by $b$.} then for any $\epsilon >0$ there are $m\in \mathbb N$ and $c_1,\dots, c_m \in B$ such that 
\begin{equation}
\| \sum_{k=1}^m c_k^\ast b c_k - b_0 \| < \epsilon.
\end{equation}
While one can always arrange that $\| \sum_{k=1}^m c_k^\ast b c_k \| \leq \| b_0\|$ (by perhaps perturbing the $c_k$'s from above slightly), one will in general not have control of the norm of the element $\sum_{k=1}^m c_k^\ast c_k$. The following lemma gives a criterion for when this norm can be controlled.

For a positive element $b$ in a $C^\ast$-algebra and $\delta >0$, $(b-\delta)_+$ denotes the element obtained by applying the function $t\mapsto \max{}(t-\delta,0)$ to $b$ via functional calculus. Also, for elements $x,y$ in a $C^\ast$-algebra and $\gamma>0$, write $x\approx_\gamma y$ when $\| x-y\|<\gamma$.

\begin{lemma}\label{l:sumidealgen}
Let $B$ be a $C^\ast$-algebra and suppose that $b,b_0\in B$ are positive, non-zero elements such that $b_0 \in \overline{B (b-\delta)_+ B}$ for every $0< \delta < \| b\|$.\footnote{Note that the interesting information is when $\delta$ is close to $\|b\|$.} Then for any $\epsilon>0$ there exist $m\in \mathbb N$ and $c_1,\dots, c_m\in B$ such that
\begin{equation}
\| \sum_{k=1}^m c_k^\ast b c_k - b_0\| < \epsilon
\end{equation}
and $\| \sum_{k=1}^m c_k^\ast c_k \| \leq \|b_0\|/\| b\|$. 
\end{lemma}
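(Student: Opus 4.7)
The plan is to combine the hypothesis (with $\delta$ chosen very close to $\|b\|$) with a functional calculus trick that converts $(b-\delta)_+$ into an expression of the form $c^\ast b c$, and then to use a small rescaling at the end to hit the sharp norm bound $\|b_0\|/\|b\|$. After rescaling $b$ by a positive scalar I may assume $\|b\|=1$, and since the case $b_0 = 0$ is trivial I put $\alpha := \|b_0\|>0$.

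Given $\epsilon > 0$, first fix $\delta \in (0,1)$ close to $1$ and $\gamma > 0$ small, to be specified at the end. By hypothesis $b_0 \in \overline{B(b-\delta)_+ B}$, so by the standard approximation recalled just before the lemma there exist $d_1,\dots,d_m \in B$ with $\|\sum_{k=1}^m d_k^\ast (b-\delta)_+ d_k - b_0\| < \gamma$, and the triangle inequality then automatically gives $\|\sum_k d_k^\ast (b-\delta)_+ d_k\| \leq \alpha + \gamma$. Next define the continuous function $f\colon[0,1]\to[0,\infty)$ by $f(t) = ((t-\delta)_+/t)^{1/2}$ for $t>0$ and $f(0) = 0$; since $f(0)=0$ continuous functional calculus gives $f(b) \in B$, and by construction $f(b)^2 b = (b-\delta)_+$. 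Setting $\tilde c_k := f(b) d_k$ then yields $\tilde c_k^\ast b \tilde c_k = d_k^\ast (b-\delta)_+ d_k$, so $\sum_k \tilde c_k^\ast b \tilde c_k$ is within $\gamma$ of $b_0$ in norm.

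The main obstacle is that $\|\sum_k \tilde c_k^\ast \tilde c_k\|$ is not automatically bounded by $\alpha$. From the pointwise inequality $f(t)^2 = (t-\delta)_+/t \leq \delta^{-1}(t-\delta)_+$ on $[0,1]$, functional calculus yields the operator inequality $f(b)^2 \leq \delta^{-1}(b-\delta)_+$, whence $M := \|\sum_k \tilde c_k^\ast \tilde c_k\| \leq \delta^{-1}(\alpha + \gamma)$, which overshoots the target $\alpha$ by a factor of $1/\delta$. To reach the sharp bound I rescale: if $M \leq \alpha$ take $c_k := \tilde c_k$; otherwise take $c_k := \sqrt{\alpha/M}\,\tilde c_k$, which forces $\|\sum_k c_k^\ast c_k\| = \alpha$ exactly. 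A short calculation shows the rescaling contributes at most an additional error $(1-\alpha/M)\|\sum_k \tilde c_k^\ast b \tilde c_k\| \leq \alpha(1-\delta) + \gamma$ to the approximation of $b_0$, so the total error is bounded by $\alpha(1-\delta) + 2\gamma$. Choosing $\delta$ close enough to $1$ and $\gamma$ small enough (depending only on $\epsilon$ and $\alpha$) makes this less than $\epsilon$, completing the proof.
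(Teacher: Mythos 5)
Your argument is correct, and it trades the paper's approximation error for a rescaling error in a genuinely dual way. The paper's proof sets $c_k = (b-\delta)_+^{1/2} c_k'$, which makes the norm bound $\|\sum c_k^\ast c_k\| = \|\sum c_k'^\ast (b-\delta)_+ c_k'\| \le \|b_0\|$ exact on the nose, and the error then appears in passing from $\sum c_k^\ast b c_k$ to $\sum c_k'^\ast (b-\delta)_+ c_k'$; this is controlled via the cutoff function $g$ and the auxiliary Lemma~\ref{l:sumconjest}. You instead set $\tilde c_k = f(b) d_k$ with $f(t)^2 t = (t-\delta)_+$, making the identity $\sum \tilde c_k^\ast b \tilde c_k = \sum d_k^\ast (b-\delta)_+ d_k$ exact, and the error appears in the norm bound $\|\sum \tilde c_k^\ast \tilde c_k\| \le \delta^{-1}(\alpha + \gamma)$, which overshoots by a factor of $1/\delta$ and is fixed by rescaling. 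One small advantage of your version is that it bypasses Lemma~\ref{l:sumconjest} entirely: because the $b$-conjugation is exact, all your error estimates reduce to the triangle inequality and elementary scalar arithmetic, so the argument is a touch more self-contained. Minor points to spell out if you were writing this up: you should note explicitly that $f$ vanishes identically on $[0,\delta]$, which is why it is continuous at $0$ and $f(b) \in B$; and when you treat the case $M \le \alpha$ the output $c_k = \tilde c_k$ already satisfies both conclusions with total error $\gamma < \epsilon$, so the case distinction is harmless.
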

\begin{proof}
It suffices to prove the case $\| b\| = \| b_0 \| =1$. Also, we may assume that $\epsilon < 1$. Let $\delta = 1-\epsilon/8$, and let $g \colon [0,1] \to [0,1]$ be the continuous function
\begin{equation}
g(t) = \left\{ \begin{array}{ll}
0 , & t=0 \\
1, & t \in [\delta , 1] \\
\textrm{affine}, & \textrm{otherwise}
\end{array} \right. 
\end{equation}
for $t\in [0,1]$. In particular, $\| g(b) - b \| \leq \epsilon/8$, and $(b-\delta)_+ g(b) =  (b-\delta)_+$. The element $b_0$ is contained in the two-sided, closed ideal generated by $(b-\delta)_+$ by assumption, so we may find $c_1',\dots, c_m' \in B$ such that
\begin{equation}
\| \sum_{k=1}^m c_k'^\ast (b-\delta)_+ c_k' - b_0 \| < \epsilon /2,
\end{equation}
and $\| \sum_{k=1}^m c_k'^\ast(b-\delta)_+ c_k'\| \leq \| b_0\| = 1$. Let $c_k = (b-\delta)_+^{1/2} c_k'$ for $k=1,\dots, m$. Then
\begin{equation}\label{eq:sumckck}
\| \sum_{k=1}^m c_k^\ast c_k \| = \| \sum_{k=1}^m c_k'^\ast (b-\delta)_+ c_k'\|  \leq 1
\end{equation}
and
\begin{equation}
\sum_{k=1}^m c_k^\ast b c_k \stackrel{\eqref{eq:sumckck},~\textrm{Lem.~}\ref{l:sumconjest}}{\approx_{\epsilon/2}} \; \; \sum_{k=1}^m c_k^\ast g(b) c_k =  \sum_{k=1}^m c_k'^\ast (b-\delta)_+ c_k' \approx_{\epsilon/2} b_0
\end{equation}
as wanted.
\end{proof}

\begin{proof}[Proof of Proposition \ref{p:fulldom}]
By Remark \ref{r:approxdom} and Lemma \ref{l:nucsimpleapprox} it suffices to show that $\phi$ approximately dominates any map $\rho$ for which there is an $n\in \mathbb N$, a pure positive linear functional $f$ on $A\otimes M_n(\mathbb C)$, and $b_1,\dots, b_n\in B$ such that
\begin{equation}\label{eq:rhosumfaeij}
\rho(a) = \sum_{i,j=1}^n f(a \otimes e_{i,j}) b_{i}^\ast b_{j}, \qquad a\in A.
\end{equation}
If $f=0$ then $\rho = 0$ and the result obviously follows. Hence we may assume that $f$ is non-zero, and clearly also that $\| f\| =1$ so that $f$ is a pure state.

Let $\mathcal F \subset A$ be finite and let $\epsilon >0$. 
By \cite{AkemannAndersonPedersen-excision} we may excise $f$, so we may find a positive element $x \in A \otimes M_n(\mathbb C)$ of norm 1 such that
\begin{equation}\label{eq:excisionaeij}
\| x(a \otimes e_{i,j}) x -  f(a \otimes e_{i,j}) x^2\| < \frac{\epsilon}{8 n^2 \cdot \max\{ \| b_1\|^2, \dots, \| b_n\|^2\}}
\end{equation}
for $a\in \mathcal F$ and $i,j = 1,\dots, n$. Let 
\begin{equation}
\phi^{(n)} := \phi \otimes \id_{M_n(\mathbb C)} \colon A \otimes M_n(\mathbb C) \to B \otimes M_n(\mathbb C)
\end{equation}
which is full by Lemma \ref{l:fulltensor}. 
By fullness of $\phi^{(n)}$ Lemma \ref{l:sumidealgen} provides $c_1,\dots, c_m \in B\otimes M_n(\mathbb C)$ such that $\| \sum_{k=1}^m c_k^\ast c_k \| \leq 1$, and such that
\begin{align}\label{eq:sumfaeijbi}
& \bigg\| \sum_{i,j=1}^n f(a\otimes e_{i,j}) (b_i^\ast \otimes e_{1,1}) \left( \sum_{k=1}^m c_k^\ast \phi^{(n)} (x^2) c_k \right) (b_j \otimes e_{1,1}) \\
& \qquad - \sum_{i,j=1}^n f(a \otimes e_{i,j}) (b_i^\ast b_j) \otimes e_{1,1} \bigg\| < \epsilon /2 \nonumber
\end{align} 
for all $a\in \mathcal F$ (for instance by picking $\sum_{k=1}^m c_k^\ast \phi^{(n)} (x^2) c_k$ close to an approximate unit in $B\otimes M_n(\mathbb C)$).

Let $d_k\in B$ for $k=1,\dots, m$ be the unique element such that
\begin{equation}
d_k \otimes e_{1,1} = \sum_{j=1}^n \phi^{(n)}((1_{\widetilde A} \otimes e_{1,j} ) x) c_k (b_j \otimes e_{1,1}) \in B \otimes e_{1,1} \subseteq B \otimes M_n(\mathbb C).
\end{equation}

For $a\in \mathcal F$ we have (with computations in $B\otimes M_n(\mathbb C)$)
\begin{eqnarray}
&& \left(\sum_{k=1}^m d_k^\ast \phi(a) d_k \right) \otimes e_{1,1} \nonumber\\
&=& \sum_{k=1}^m (d_k \otimes e_{1,1})^\ast \phi^{(n)}(a \otimes e_{1,1}) (d_k \otimes e_{1,1}) \nonumber\\
&=&  \sum_{i,j=1}^n (b_i^\ast \otimes e_{1,1}) \left( \sum_{k=1}^m c_k^\ast \phi^{(n)} (x (a\otimes e_{i,j}) x) c_k \right) (b_j \otimes e_{1,1})\nonumber\\
&\stackrel{(\ast)}{\approx}_{\epsilon/2}& \sum_{i,j=1}^n (b_i^\ast \otimes e_{1,1}) \left( \sum_{k=1}^m c_k^\ast \phi^{(n)} ( f(a\otimes e_{i,j}) x^2) c_k \right) (b_j \otimes e_{1,1}) \nonumber\\
&\stackrel{\eqref{eq:sumfaeijbi} \; \;}{\approx_{\epsilon/2}}& \sum_{i,j=1}^n f(a\otimes e_{i,j})(b_i^\ast b_j) \otimes e_{1,1} \nonumber\\
&\stackrel{\eqref{eq:rhosumfaeij}}{=}& \rho(a) \otimes e_{1,1} 
\end{eqnarray}
where the estimate labeled $(\ast)$ above follows by applying Lemma \ref{l:sumconjest} with $z = x(a \otimes e_{i,j}) x -  f(a \otimes e_{i,j}) x^2$ and using \eqref{eq:excisionaeij}. Hence
\begin{equation}
\| \sum_{k=1}^m d_k^\ast \phi(a) d_k - \rho(a) \| < \epsilon
\end{equation}
for $a\in \mathcal F$ which finishes the proof.
\end{proof}


\section{$\mathcal O_2$-stable and $\mathcal O_\infty$-stable $\ast$-homomorphisms}\label{s:PIhom}

As a tool of getting from approximate domination to approximate 1-domination of maps, the following definition is very useful.

\begin{definition}[{\cite[Definition 3.16]{Gabe-O2class}}]
Let $\mathcal D$ be either $\mathcal O_2$ or $\mathcal O_\infty$. Let $A$ and $B$ be $C^\ast$-algebras with $A$ separable, and $\phi \colon A \to B$ be a $\ast$-homomorphism. Say that
\begin{itemize}
\item $\phi$ is \emph{$\mathcal D$-stable} if $\mathcal D$ embeds unitally in $B_\infty \cap \phi(A)' / \Ann \phi(A)$,
\item $\phi$ is \emph{strongly $\mathcal D$-stable} if $\mathcal D$ embeds unitally in $B_\as \cap \phi(A)' / \Ann \phi(A)$.
\end{itemize}
\end{definition}

\begin{remark}\label{r:Oinftypictures}
It is well-known that a unital $C^\ast$-algebra $D$ is properly infinite if and only if $\mathcal O_\infty$ embeds unitally into $D$, see \cite[Proposition 4.2.3]{Rordam-book-classification}.
Hence $\phi$ being $\mathcal O_\infty$-stable is equivalent to the induced sequential relative commutant being properly infinite. This means that there exist bounded sequences $(s_n^{(i)})_{n\in \mathbb N}$ in $B$ for $i=1,2$, such that
\begin{equation}
\lim_{n\to \infty} \| [s_n^{(i)}, \phi(a)] \| = 0, \qquad \lim_{n\to \infty} \| ((s_n^{(i)})^\ast s_n^{(j)} - \delta_{i,j}) \phi(a)\| = 0
\end{equation}
for all $i,j = 1,2$ and $a\in A$. Here $\delta_{i,j} = 1_{\widetilde{B}}$ if $i=j$ and $\delta_{i,j}=0$ if $i\neq j$. Similarly, $\phi$ is $\mathcal O_2$-stable exactly when one may additionally arrange that
\begin{equation}
\lim_{n\to \infty} \| (s_n^{(1)} (s_n^{(1)})^\ast + s_n^{(2)}(s_n^{(2)})^\ast - 1_{\widetilde{B}}) \phi(a) \| = 0
\end{equation}
for all $a\in A$. Similar characterisations hold for strongly $\mathcal O_2$-stable and strongly $\mathcal O_\infty$-stable $\ast$-homomorphisms, but with bounded, continuous paths $(s_t^{(i)})_{t\in \mathbb R_+}$. 
In particular, it follows that
\begin{equation}
\xymatrix{
\textrm{Strong $\mathcal O_2$-stability} \ar@{=>}[rr] \ar@{=>}[d] && \textrm{$\mathcal O_2$-stability} \ar@{=>}[d] \\
\textrm{Strong $\mathcal O_\infty$-stability} \ar@{=>}[rr] && \textrm{$\mathcal O_\infty$-stability}.
}
\end{equation}
\end{remark}

It was observed in \cite[Remark 3.24]{Gabe-O2class}, using the Kirchberg--Phillips theorem, that $\mathcal O_2$-stable $\ast$-homomorphisms are not necessarily strongly $\mathcal O_2$-stable. Also, the identity map on $\mathcal O_\infty$ is strongly $\mathcal O_\infty$-stable, but neither $\mathcal O_2$-stable nor strongly $\mathcal O_2$-stable. However, I do not know if $\mathcal O_\infty$-stability and strong $\mathcal O_\infty$-stability are equivalent. Thus, I emphasise the following question posed in \cite[Question 3.24]{Gabe-O2class}.

\begin{question}\label{q:Oinftyvsstrong}
Is every $\mathcal O_\infty$-stable $\ast$-homo\-mor\-phism also strong\-ly $\mathcal O_\infty$-stable?
\end{question}

In Section \ref{s:stronglyOinfty}, this question will be addressed. In particular, it is shown that if every unital, properly infinite $C^\ast$-algebra is $K_1$-injective -- which is another open problem, see \cite{BlanchardRohdeRordam-K1inj} -- then the question above has an affirmative answer.

\begin{observation}
Note that Lemma \ref{l:relcombasic}$(c)$ implies that (strong) $\mathcal D$-stabili\-ty is preserved by $\sim_{\mathrm{a(s)MvN}}$. This will be used without mentioning.
\end{observation}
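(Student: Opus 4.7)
The claim is that if $\phi, \psi \colon A \to B$ are $\ast$-homomorphisms with $A$ separable, and $\phi \sim_{\aMvN} \psi$ (respectively $\phi \sim_{\asMvN} \psi$), then $\phi$ is $\mathcal D$-stable (respectively strongly $\mathcal D$-stable) if and only if $\psi$ is.

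The plan is a one-liner: invoke Lemma \ref{l:relcombasic}$(c)$. Concretely, suppose $\phi$ is $\mathcal D$-stable, so that there is a unital $\ast$-homo\-mor\-phism
\begin{equation}
\iota \colon \mathcal D \hookrightarrow \frac{B_\infty \cap \phi(A)'}{\Ann\phi(A)}.
\end{equation}
By the assumed equivalence $\phi \sim_\aMvN \psi$, pick a contraction $v\in B_\infty$ with $v^\ast \phi(-) v = \psi$ and $v \psi(-) v^\ast = \phi$, as provided by Remark \ref{r:MvNelement}. Lemma \ref{l:relcombasic}$(c)$ then furnishes a (necessarily unital) $\ast$-iso\-mor\-phism
\begin{equation}
\eta \colon \frac{B_\infty \cap \phi(A)'}{\Ann\phi(A)} \xrightarrow{\; \cong \;} \frac{B_\infty \cap \psi(A)'}{\Ann\psi(A)},
\end{equation}
and $\eta \circ \iota$ is the required unital embedding of $\mathcal D$ witnessing $\mathcal D$-stability of $\psi$. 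By symmetry (applied to $v^\ast$ in place of $v$), the converse implication is identical. The strong version is handled verbatim by replacing sequence algebras with path algebras, as explicitly noted in Lemma \ref{l:relcombasic}.

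There is no real obstacle here: the observation is a direct corollary of the functoriality of the central relative commutant $\frac{B_\bullet \cap \phi(A)'}{\Ann\phi(A)}$ under $\mathrm{a(s)MvN}$-equivalence, which is precisely the content of Lemma \ref{l:relcombasic}$(c)$. The only thing worth flagging is that the isomorphism $\eta$ is unital, which is automatic since it is an isomorphism of unital $C^\ast$-algebras (the unit of $\frac{B_\infty \cap \phi(A)'}{\Ann\phi(A)}$ being the class of the image of an approximate identity of $A$ under $\phi$).
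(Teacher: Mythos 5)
Your proof is correct and is exactly the argument the paper has in mind; the paper states the fact as an observation without further proof precisely because composing the given unital embedding $\mathcal D \hookrightarrow \frac{B_\infty \cap \phi(A)'}{\Ann\phi(A)}$ with the isomorphism from Lemma \ref{l:relcombasic}$(c)$ (which is unital, being a $\ast$-isomorphism of unital $C^\ast$-algebras) immediately gives the conclusion. Nothing to add.
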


The following says, that a $\ast$-homomorphism is strongly $\mathcal D$-stable if it factors through a $\mathcal D$-stable $C^\ast$-algebra, i.e.~a $C^\ast$-algebra $C$ for which $C \otimes \mathcal D \cong C$. A converse was shown for $\mathcal D$-stable maps in \cite[Corollary 4.5]{Gabe-O2class}, using what is Theorem \ref{t:uniquesimple} of this paper, by showing that $\mathcal D$-stable $\ast$-homomorphisms have a certain McDuff type property a la \cite{McDuff-centralseq}. A similar proof can be used to give a similar McDuff type characterisation for strongly $\mathcal D$-stable maps by applying \cite[Proposition 1.3.7]{Phillips-classification} instead of \cite[Theorem 4.3]{Gabe-O2class}.

The following is an immediate consequence of \cite[Propositions 3.17, 3.18 and Lemma 3.19]{Gabe-O2class}.

\begin{proposition}\label{p:Oinftyfactor}
Let $\mathcal D$ be either $\mathcal O_2$ or $\mathcal O_\infty$. Let $A,B$ and $C$ be $C^\ast$-algebras with $A$ separable, and let $\eta \colon A \to C$ and $\rho \colon C \to B$ be $\ast$-homo\-morphisms. If $C$ is $\mathcal D$-stable, then $\rho \circ \eta$ is strongly $\mathcal D$-stable.

In particular, if either $A$ or $B$ is $\mathcal D$-stable then any $\ast$-homomorphism $\phi \colon A \to B$ is strongly $\mathcal D$-stable.
\end{proposition}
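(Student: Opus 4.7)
The plan is to build a unital $\ast$-homomorphism from $\mathcal D$ into $B_\as\cap(\rho\circ\eta)(A)'/\Ann(\rho\circ\eta)(A)$ by starting with $\mathcal D$ sitting asymptotically centrally inside $C$ and then transporting this embedding through $\rho$. The central input is that $\mathcal D\in\{\mathcal O_2,\mathcal O_\infty\}$ is strongly self-absorbing, so that $\mathcal D$-stability of $C$ upgrades to the existence of a unital $\ast$-homomorphism
\[
\iota\colon \mathcal D\longrightarrow \frac{C_\as\cap \eta(A)'}{\Ann \eta(A)}
\]
whenever $\eta(A)$ is a separable subset of $C$. This is the asymptotic version of the McDuff-type characterisation of $\mathcal D$-absorption in terms of central sequences, established for strongly self-absorbing $\mathcal D$ by Phillips in \cite[Proposition 1.3.7]{Phillips-classification}. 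If $C$ is not itself separable, one first replaces it by a separable $\mathcal D$-stable $C^\ast$-subalgebra containing $\eta(A)$, which exists by a standard L\"owenheim--Skolem argument reflecting the local character of $\mathcal D$-absorption.

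Granting $\iota$, the remainder is transport. The coordinatewise extension $\rho_\as\colon C_\as\to B_\as$ of $\rho$ sends $C_\as\cap \eta(A)'$ into $B_\as\cap (\rho\circ\eta)(A)'$, since $\rho_\as$ carries commutants of $\eta(A)$ to commutants of $\rho(\eta(A))=(\rho\circ\eta)(A)$, and it sends $\Ann \eta(A)$ into $\Ann (\rho\circ\eta)(A)$, since $x\eta(a)=\eta(a)x=0$ implies $\rho_\as(x)\rho(\eta(a))=\rho_\as(x\eta(a))=0$ and similarly on the other side. Consequently it descends to a $\ast$-homomorphism
\[
\bar\rho\colon \frac{C_\as\cap \eta(A)'}{\Ann \eta(A)}\longrightarrow \frac{B_\as\cap (\rho\circ\eta)(A)'}{\Ann (\rho\circ\eta)(A)}.
\]
To check that $\bar\rho$ is unital, pick a norm-continuous approximate unit $(c_t)_{t\in\mathbb{R}_+}$ for $C$ (available since we may assume $C$ is separable, hence $\sigma$-unital): its class in $C_\as\cap\eta(A)'$ represents the multiplicative identity of the left-hand quotient, by the same computation used in the proof of Lemma \ref{l:relcombasic}$(a)$, because $c_t\eta(a)\to\eta(a)$ for every $a\in A$. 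Applying $\rho_\as$, the path $(\rho(c_t))$ satisfies $\rho(c_t)(\rho\circ\eta)(a)\to (\rho\circ\eta)(a)$, so its class represents the unit of the right-hand quotient. Hence $\bar\rho\circ\iota\colon \mathcal D\to B_\as\cap(\rho\circ\eta)(A)'/\Ann(\rho\circ\eta)(A)$ is a unital $\ast$-homomorphism, which is exactly strong $\mathcal D$-stability of $\rho\circ\eta$.

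The ``in particular'' statement follows by taking $(C,\eta,\rho)=(A,\id_A,\phi)$ when $A$ is $\mathcal D$-stable, and $(C,\eta,\rho)=(B,\phi,\id_B)$ when $B$ is $\mathcal D$-stable. The main obstacle is the very first step: producing $\iota$ from the hypothesis that $C$ is $\mathcal D$-stable. This is where strong self-absorption of $\mathcal D$ enters essentially, since one must pass from sequences of approximately central $\mathcal D$-copies in $C$ to genuinely \emph{norm-continuous paths} of such copies; this is precisely what distinguishes strong $\mathcal D$-stability from $\mathcal D$-stability, and it is why the conclusion of the proposition is the ``strong'' form even though only $\mathcal D$-stability of $C$ is assumed.
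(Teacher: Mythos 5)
Your proof is correct, and it follows the route the paper delegates to: the paper gives no argument of its own beyond citing \cite[Propositions~3.17, 3.18 and Lemma~3.19]{Gabe-O2class}, which together carry out exactly your two steps — the unital embedding $\iota$ of $\mathcal D$ into $C_\as\cap\eta(A)'/\Ann\eta(A)$ coming from the McDuff-type characterisation, followed by the transport through $\rho$ via $\bar\rho$. One cosmetic point: Phillips' Proposition~1.3.7 is not itself a McDuff characterisation but a reindexing ingredient thereof; as the paper notes in the paragraph preceding the proposition, it replaces \cite[Theorem~4.3]{Gabe-O2class} inside the argument of \cite[Corollary~4.5]{Gabe-O2class} to yield the asymptotic characterisation you invoke, so your attribution is compressed but points at the right source, and the remaining details (unitality of $\bar\rho$ via a continuous approximate identity, the L\"owenheim--Skolem reduction to separable $C$) are handled correctly.
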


\begin{remark}
It should be empahsised that the proof of \cite[Propositions 3.17 and 3.18]{Gabe-O2class}, and therefore Proposition \ref{p:Oinftyfactor} above, rely on the fact that $\mathcal O_2$ and $\mathcal O_\infty$ are strongly self-absorbing.\footnote{A separable, unital $C^\ast$-algebra $\mathcal D$ is \emph{strongly self-absorbing}, as defined in \cite{TomsWinter-ssa}, if $\mathcal D \not \cong \mathbb C$, and if there exists an isomorphism $\mathcal D \xrightarrow \cong \mathcal D \otimes \mathcal D$ which is approximately unitarily equivalent to the embedding $\id_\mathcal{D} \otimes 1_{\mathcal D} \colon \mathcal D \to \mathcal D \otimes \mathcal D$.} This highly non-elementary fact is crucial for the methods in this paper to work and cannot be deduced from the Kirchberg--Phillips Theorem -- Theorem \ref{t:KP} -- without some sort of circular argument. That $\mathcal O_2$ is strongly self-absorbing follows from \cite[Theorems 5.1.1 and 5.2.1]{Rordam-book-classification}, and that $\mathcal O_\infty$ is strongly self-absorbing follows from \cite[Proposition 7.2.5 and Theorem 7.2.6]{Rordam-book-classification}.
\end{remark}

\begin{remark}\label{r:fullpropinfproj}
If $\phi \colon A \to B$ is an $\mathcal O_\infty$-stable $\ast$-homomorphism, then every non-zero positive element in the image $\phi(A)$ is properly infinite in the sense of \cite[Definition 3.2]{KirchbergRordam-purelyinf}, see for instance \cite[Lemma 4.4]{BGSW-nucdim}. In particular, if $\phi$ is also full, then $B$ contains a properly infinite, full projection, even when there are no non-zero projections in $\phi(A)$. This follows from \cite[Lemma 7.2]{BGSW-nucdim}, but was essentially proved by Pasnicu and Rørdam in \cite[Proposition 2.7]{PasnicuRordam-purelyinfrr0} by using tricks of Blackadar and Cuntz from \cite{BlackadarCuntz-stablealgsimple}.
\end{remark}

The following is a Stinespring type theorem for (strongly) $\mathcal O_\infty$-stable $\ast$-homo\-morphisms which was essentially obtained in \cite{Gabe-O2class}.

\begin{theorem}[Cf.~{\cite[Theorem 3.22]{Gabe-O2class}}]\label{t:Stinespring}
Let $A$ and $B$ be $C^\ast$-algebras with $A$ separable, let $\phi \colon A \to B$ be a $\ast$-homomorphism, and let $\rho \colon A \to B$ be a c.p.~map which is approximately dominated by $\phi$. 
\begin{itemize}
\item[$(a)$] If $\phi$ is $\mathcal O_\infty$-stable there is an element $v\in B_\infty$ of norm $\| \rho\|^{1/2}$ such that $\rho = v^\ast \phi(-) v$.
\item[$(b)$] If $\phi$ is strongly $\mathcal O_\infty$-stable there is an element $v\in B_\as$ of norm $\| \rho\|^{1/2}$ such that $\rho = v^\ast \phi(-) v$.
\end{itemize}
\end{theorem}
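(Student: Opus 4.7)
The plan is to build $v$ as the class in $B_\infty$ (respectively $B_\as$) of a sequence (respectively path) of elements of the form $v_k = \phi(e_k)\sum_{i=1}^{n_k}s_i^{(k)}b_i^{(k)}$, where: the $b_i^{(k)}\in B$ arise from approximate domination of $\rho$ by $\phi$ (Definition \ref{d:approxdom}); the $s_i^{(k)}\in B$ approximate, as $k\to\infty$, isometries with mutually orthogonal ranges commuting with $\phi(A)$ modulo $\Ann\phi(A)$, as furnished by (strong) $\mathcal{O}_\infty$-stability together with the unital embedding $\mathcal{O}_{n_k}\hookrightarrow\mathcal{O}_\infty$ (see Remark \ref{r:Oinftypictures}); and the $e_k\in A$ are positive contractions from an approximate identity, inserted to control the norm of $v_k$.

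Under the near-relations $(s_i^{(k)})^\ast s_j^{(k)}\approx\delta_{i,j}$ and $[s_i^{(k)},\phi(a)]\approx 0$ on the relevant finite sets, the two key computations are
\begin{equation*}
v_k^\ast\phi(a)v_k \approx \sum_i (b_i^{(k)})^\ast\phi(e_kae_k)b_i^{(k)} \approx \rho(e_kae_k) \approx \rho(a),
\end{equation*}
and
\begin{equation*}
\|v_k\|^2 = \|v_k^\ast v_k\| \approx \bigl\|\sum_i(b_i^{(k)})^\ast\phi(e_k^2)b_i^{(k)}\bigr\| \approx \|\rho(e_k^2)\| \leq \|\rho\|.
\end{equation*}
Fixing a dense increasing sequence of finite sets $\mathcal{F}_k\subset A$ and $\epsilon_k\searrow 0$, one chooses at stage $k$ the $b_i^{(k)}$ approximating $\rho$ to within $\epsilon_k$ on $\mathcal{F}_k\cup\{e_k^2\}\cup\{e_kae_k:a\in\mathcal{F}_k\}$, the $e_k$ so that $e_kae_k\approx_{\epsilon_k}a$ on $\mathcal{F}_k$, and the $s_i^{(k)}$ with the commutation and orthonormality relations holding within $\epsilon_k$ on $\phi(\mathcal{F}_k\cup\{e_k\})$. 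Then $v:=[(v_k)]\in B_\infty$ satisfies $v^\ast\phi(a)v=\rho(a)$ on a dense subset, hence on all of $A$, and $\|v\|\leq\|\rho\|^{1/2}$; the reverse inequality $\|v\|^2\geq\|\rho\|$ is automatic from $v^\ast\phi(-)v=\rho$, yielding $\|v\|=\|\rho\|^{1/2}$ exactly.

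The main obstacle lies in part $(b)$: promoting the above into a norm-continuous bounded path $(v_t)_{t\in[0,\infty)}$ representing $v\in B_\as$. Strong $\mathcal{O}_\infty$-stability supplies genuinely continuous paths $(s_i^{(t)})_t$ in $B$, and $e_t$ may be taken continuous in $t$; the difficulty is that the number $n_k$ of summands and the approximating elements $b_i^{(k)}$ produced by approximate domination are chosen at discrete stages. One approach is to exploit that $\mathcal{O}_\infty$ contains arbitrary amplifications of itself, enabling interpolation across the jump from stage $k$ to stage $k+1$ by a rotation-type path within extra isometries available from the $\mathcal{O}_\infty$-embedding. Alternatively, a path-algebra analogue of Kirchberg's $\epsilon$-test reduces the exact statement in $B_\as$ directly to the family of approximate statements already produced on each finite window.
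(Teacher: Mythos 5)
Part $(a)$ of your proposal is essentially correct: inserting the cut-off $\phi(e_k)$, forming $v_k=\phi(e_k)\sum_i s_i^{(k)}b_i^{(k)}$, and contracting the double sum via the near-relations of the $\mathcal O_\infty$-isometries is the right ``$\mathcal O_\infty$-trick'', and both displayed computations, as well as the remark that $\|v\|\geq\|\rho\|^{1/2}$ is automatic from $v^\ast\phi(\cdot)v=\rho$, are fine. One bookkeeping issue: demanding the $s$-relations hold ``within $\epsilon_k$'' is too weak, since the error in $\sum_{i,j}b_i^\ast(s_i^\ast\phi(c)s_j-\delta_{ij}\phi(c))b_j$ carries a factor of order $n_k^2\max_i\|b_i^{(k)}\|^2$; the tolerance for the $s$-relations at stage $k$ must therefore be chosen \emph{after} $n_k$ and the $b_i^{(k)}$ are fixed, which forces a reindexing of the representing sequence of the lifted isometries. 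This is a standard and easily repaired adjustment.

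The genuine gap is part $(b)$. You correctly identify norm-continuity as the obstruction, but neither of your two suggested remedies is carried out, and neither is a black box. The rotation idea can indeed be made to work: on a window $[T_k,T_{k+1}]$ one sets $v(t)=\cos\theta(t)\,s_{m_1}(t)w_k(t)+\sin\theta(t)\,s_{m_2}(t)w_{k+1}(t)$ with $w_k(t)=\phi(e_k)\sum_i s_i(t)b_i^{(k)}$ and $s_{m_1},s_{m_2}$ two spare isometry-paths; the cross terms vanish asymptotically because $s_{m_1}^\ast s_{m_2}$ near-annihilates $\phi(A)$, the identity $\cos^2+\sin^2=1$ controls the diagonal terms and the norm, and alternating the roles of two fixed spare indices glues the windows continuously. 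But one must still synchronize the stage-dependent quantities ($n_k$, $\max\|b_i^{(k)}\|$, $e_k$) against the times $T_k$ after which the $s$-relations are uniformly good, verify the norm estimate along the interpolation, and check that the gluing across windows is consistent; none of this appears in your write-up, and it is precisely where the content of $(b)$ lies. The alternative you mention, a ``path-algebra analogue of Kirchberg's $\epsilon$-test'', is not an off-the-shelf tool: the $\epsilon$-test applies to sequence algebras, and upgrading discretely-indexed approximate solutions to a norm-continuous asymptotic one is exactly the interpolation problem rather than a reduction of it. Since part $(b)$ is the version invoked downstream --- through Proposition \ref{p:piabsorbing} and Lemma \ref{l:keyuniqueness} in the uniqueness theorems --- this unfinished step is the load-bearing part of the proof and must be completed.
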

\begin{proof}
This is exactly what is proved in \cite[Theorem 3.22]{Gabe-O2class}. The first sentence of said proof reduces the statement in \cite[Theorem 3.22]{Gabe-O2class} to the statement above, which is subsequently proved.
\end{proof}

\begin{lemma}\label{l:MvNsubeq}
Let $\phi , \psi \colon A \to B$ be $\ast$-homomorphisms between $C^\ast$-algebras for which $A$ is separable. Suppose that there is a contraction $v\in B_\as$ such that $v^\ast \phi(-) v = \psi$. Then the element
\begin{equation}
w := \left( \begin{array}{cc} 0 & v \\ 0 & 0 \end{array} \right) + \Annn(\phi\oplus \psi)(A) \in \frac{M_2(B)_\as \cap (\phi \oplus \psi)(A)'}{\Annn(\phi\oplus \psi)(A)},
\end{equation}
satisfies $ww^\ast \leq 1_{\widetilde B}\oplus 0$ and $w^\ast w = 0\oplus 1_{\widetilde B}$.

In particular, the projection 
\begin{equation}
\left( \begin{array}{cc} 1_{\widetilde B} & 0 \\ 0 & 0 \end{array} \right) \in \frac{M_2(B)_\as \cap (\phi \oplus \psi)(A)'}{\Annn(\phi\oplus \psi)(A)}
\end{equation}
is full.

The same result holds when replacing $M_2(B)_\as$ with $M_2(B)_\infty$.
\end{lemma}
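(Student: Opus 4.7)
The plan is to verify the matrix identities for a specific representative in $M_2(B)_\as$ and then descend to the quotient, taking care to distinguish equalities that hold on the nose from those that hold only modulo the annihilator.

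First I would check that the lift $\tilde w := \bigl(\begin{smallmatrix} 0 & v \\ 0 & 0 \end{smallmatrix}\bigr) \in M_2(B)_\as$ literally commutes with $(\phi\oplus\psi)(A)$, so that the class $w$ is a bona fide element of the commutant quotient. A direct block calculation gives
\[
\tilde w\cdot (\phi\oplus\psi)(a) = \begin{pmatrix} 0 & v\psi(a) \\ 0 & 0 \end{pmatrix}, \qquad (\phi\oplus\psi)(a)\cdot \tilde w = \begin{pmatrix} 0 & \phi(a)v \\ 0 & 0 \end{pmatrix},
\]
which agree by the intertwining identity $\phi(a)v = v\psi(a)$ supplied by Lemma \ref{l:conjhom}$(c)$.

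Next I would compute
\[
\tilde w \tilde w^\ast = \begin{pmatrix} vv^\ast & 0 \\ 0 & 0 \end{pmatrix}, \qquad \tilde w^\ast \tilde w = \begin{pmatrix} 0 & 0 \\ 0 & v^\ast v \end{pmatrix}.
\]
Contractivity of $v$ gives $vv^\ast \leq 1_{\widetilde B}$, so $\tilde w\tilde w^\ast \leq 1_{\widetilde B}\oplus 0$ already in $M_2(\widetilde B)_\as$, and this inequality descends to the quotient. For the identity $w^\ast w = 0\oplus 1_{\widetilde B}$, Lemma \ref{l:conjhom}$(b)$ gives $v^\ast v\,\psi(a) = \psi(a)$, i.e.\ $(1_{\widetilde B}-v^\ast v)\psi(a) = 0$. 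Consequently the element $0\oplus (1_{\widetilde B}-v^\ast v)$ annihilates $(\phi\oplus\psi)(a)$ (the $\phi$-block vanishes trivially, the $\psi$-block by the previous identity), so it lies in $\Annn(\phi\oplus\psi)(A)$ and hence $w^\ast w = 0\oplus 1_{\widetilde B}$ in the quotient.

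For the ``in particular'' assertion, I would invoke Lemma \ref{l:relcombasic}$(a)$ with $C=\widetilde B$ to transport the argument to the unital algebra
\[
\frac{M_2(\widetilde B)_\as \cap (\phi\oplus\psi)(A)'}{\Annn(\phi\oplus\psi)(A)},
\]
whose unit is the class of $1_{M_2(\widetilde B)} = (1_{\widetilde B}\oplus 0) + (0\oplus 1_{\widetilde B})$. Since $ww^\ast \leq 1_{\widetilde B}\oplus 0$, the element $ww^\ast$ lies in the (two-sided, closed) ideal $J$ generated by $1_{\widetilde B}\oplus 0$; because $w^\ast w$ and $ww^\ast$ generate the same closed two-sided ideal in any $C^\ast$-algebra, $0\oplus 1_{\widetilde B} = w^\ast w$ also lies in $J$. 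Summing shows that the unit belongs to $J$, so $J$ is the whole algebra and $1_{\widetilde B}\oplus 0$ is full. The only point requiring attention is the bookkeeping between on-the-nose identities (from the commutation relation in $(c)$) and modulo-annihilator identities (from the absorbing relation in $(b)$), together with the passage to $\widetilde B$ needed to make sense of $1_{\widetilde B}\oplus 0$ inside the quotient; the same argument applies verbatim with $M_2(B)_\infty$ in place of $M_2(B)_\as$.
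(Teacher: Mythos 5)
Your proof is correct and follows essentially the same approach as the paper's, merely spelling out the matrix computations that the paper summarises as ``straightforward to check.'' The only minor presentational difference is in the ``in particular'' step: the paper directly exhibits the unit of the quotient algebra as $w^\ast(1_{\widetilde B}\oplus 0)w + (1_{\widetilde B}\oplus 0)$, whereas you route through the standard fact that $w^\ast w$ and $ww^\ast$ generate the same closed ideal — both yield the same conclusion by the same underlying mechanism.
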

\begin{proof}
Clearly the arguments given hold just as well for $M_2(B)_\infty$ as they do for $M_2(B)_\as$.

By Lemma \ref{l:conjhom}, $vv^\ast \in B_\as \cap \phi(A)'$, $v^\ast v \psi(a) = \psi(a)$ for $a\in A$, and $\phi(a) v = v \psi(a)$. Hence, it is straight forward to check, that $v\otimes e_{1,2}$ induces an element $w\in \frac{M_2(B)_\as \cap (\phi \oplus \psi)(A)'}{\Annn(\phi\oplus \psi)(A)}$, for which $ww^\ast \leq 1_{\widetilde B} \oplus 0$ and $w^\ast w = 0 \oplus 1_{\widetilde B}$.

``In particular'' is obvious, since $1_{\widetilde B}\oplus 1_{\widetilde B}= w^\ast (1_{\widetilde B}\oplus 0) w + 1_{\widetilde B}\oplus 0$ is the unit of $\frac{M_2(B)_\as \cap (\phi \oplus \psi)(A)'}{\Annn(\phi\oplus \psi)(A)}$.
\end{proof}

As a somewhat easy consequence, one obtains the following absorption result. This result is closely connected to Dadarlat's notion of \emph{absorbing} $\ast$-homomorphisms in \cite[Definition 2.2]{Dadarlat-htpygrpsKirchbergalg}.

Note that in part $(a)$ one wants $\theta$ to approximately dominate $\phi$, whereas $(b)$ requires that $\phi$ approximately dominates $\theta$. This is because part $(a)$ is used for existence theorems while part $(b)$ is used for uniqueness theorems.

\begin{proposition}\label{p:piabsorbing}
Let $A$ and $B$ be $C^\ast$-algebras with $A$ separable, and let $\phi, \theta \colon A \to B$ be $\ast$-homomorphisms.
\begin{itemize}
\item[$(a)$] If $\theta$ is (strongly) $\mathcal O_\infty$-stable and $\theta$ approximately dominates $\phi$, then $\phi \oplus \theta \colon A \to M_2(B)$ is (strongly) $\mathcal O_\infty$-stable,
\item[$(b)$] If $\phi$ is (strongly) $\mathcal O_\infty$-stable, if $\theta$ is (strongly) $\mathcal O_2$-stable, and if $\phi$ approximately dominates $\theta$, then $\phi\oplus 0 \sim_{\mathrm{a(s)MvN}} \phi \oplus \theta$ as maps $A \to M_2(B)$.
\end{itemize}
\end{proposition}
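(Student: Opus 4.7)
\begin{skproof}
For part $(a)$, Theorem \ref{t:Stinespring} applied to the (strongly) $\mathcal{O}_\infty$-stable map $\theta$, which approximately dominates $\phi$, produces $v \in B_\infty$ (resp.\ $v \in B_\as$) with $v^\ast \theta(-) v = \phi$. Lemma \ref{l:MvNsubeq} applied to the pair $(\theta, \phi)$ via $v$, after swapping the direct summands, shows that in the relative commutant quotient $Q := \frac{M_2(B)_\infty \cap (\phi \oplus \theta)(A)'}{\Annn(\phi \oplus \theta)(A)}$ (resp.\ with $M_2(B)_\as$) the projection $1_{\widetilde B} \oplus 0$ is Murray--von Neumann subequivalent to $0 \oplus 1_{\widetilde B}$. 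By Lemma \ref{l:relcombasic}$(b)$, the $(2,2)$-corner of $Q$ is isomorphic to $\frac{B_\infty \cap \theta(A)'}{\Annn \theta(A)}$ (resp.\ $B_\as$), which contains $\mathcal{O}_\infty$ unitally by the $\mathcal{O}_\infty$-stability of $\theta$; hence $0 \oplus 1_{\widetilde B}$ is properly infinite in $Q$. A routine doubling argument (two copies of $1_Q = (1_{\widetilde B} \oplus 0) + (0 \oplus 1_{\widetilde B})$ are dominated by two copies of $0 \oplus 1_{\widetilde B}$, which, being properly infinite, themselves fit inside $0 \oplus 1_{\widetilde B} \leq 1_Q$) shows $1_Q$ is properly infinite, so $\mathcal{O}_\infty$ embeds unitally in $Q$, and $\phi \oplus \theta$ is (strongly) $\mathcal{O}_\infty$-stable.

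For part $(b)$, Proposition \ref{p:MvNeq}$(iii)$ reduces the claim $\phi \oplus 0 \sim_\mathrm{a(s)MvN} \phi \oplus \theta$ to the Murray--von Neumann equivalence of the projections $e_{1,1}+e_{2,2}$ and $e_{3,3}+e_{4,4}$ in the quotient $Q$ of $M_4(B)_\infty$ (resp.\ $M_4(B)_\as$) relative to $\Phi := (\phi \oplus 0) \oplus (\phi \oplus \theta)$. Since $e_{2,2} \Phi(a) = 0$ for all $a$, one has $e_{2,2} = 0$ in $Q$, while the matrix unit $e_{3,1} \in M_4(\widetilde B)$ commutes with $\Phi$ and implements $e_{1,1} \sim e_{3,3}$; the task thus reduces to $e_{3,3} \sim e_{3,3}+e_{4,4}$. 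Applying Theorem \ref{t:Stinespring} to $\phi$ (which is (strongly) $\mathcal{O}_\infty$-stable and approximately dominates $\theta$) yields $v$ with $v^\ast \phi(-) v = \theta$; by Lemma \ref{l:conjhom} the element $u \in M_4(B)_\infty$ (resp.\ $M_4(B)_\as$) having $v$ in position $(3,4)$ and zeros elsewhere commutes with $\Phi$ and descends to a partial isometry in $Q$ with $u^\ast u = e_{4,4}$ and $u u^\ast \leq e_{3,3}$ (one verifies $vv^\ast$ is a projection in $\frac{B_\infty \cap \phi(A)'}{\Annn\phi(A)}$ directly from $v^\ast v \theta = \theta$). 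In particular $e_{4,4} \preceq e_{3,3}$, and choosing Cuntz isometries $S_1, S_2 \in e_{3,3} Q e_{3,3}$ from $\mathcal{O}_\infty$-stability of $\phi$, the element $W := S_1 + S_2 u$ satisfies $W^\ast W = e_{3,3}+e_{4,4}$ (since $S_j^\ast u = 0$ as $u \in e_{3,3} Q e_{4,4}$), proving the reverse subequivalence $e_{3,3}+e_{4,4} \preceq e_{3,3}$.

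The principal obstacle is upgrading these mutual subequivalences to a genuine Murray--von Neumann equivalence $e_{3,3}+e_{4,4} \sim e_{3,3}$, since Cantor--Bernstein for projections can fail in $\mathcal{O}_\infty$-like algebras due to a $K_0$-obstruction. This is precisely where the (strong) $\mathcal{O}_2$-stability of $\theta$ is indispensable: the $(4,4)$-corner of $Q$ then contains $\mathcal{O}_2$ unitally, furnishing Cuntz isometries $t_1, t_2$ with $t_1 t_1^\ast + t_2 t_2^\ast = e_{4,4}$. Replacing $u$ by the orthogonal pair $u t_1, u t_2$ (each a partial isometry from $e_{4,4}$ into a subprojection of $e_{3,3}$) and iterating the construction with additional $\mathcal{O}_\infty$-isometries of $\phi$ produces a refined partial isometry $W'$ with $(W')^\ast W' = e_{3,3}+e_{4,4}$ and $W'(W')^\ast = e_{3,3}$ exactly, reflecting that $\mathcal{O}_2$-stability forces the $K_0$-class of $e_{4,4}$ to vanish within its own corner. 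Applying Proposition \ref{p:MvNeq}$(iii)$ then completes the proof.
\end{skproof}
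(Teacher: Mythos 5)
Part $(a)$ of your proof is essentially the paper's argument, phrased in terms of the subequivalence $1_{\widetilde B}\oplus 0\preceq 0\oplus 1_{\widetilde B}$ rather than fullness of $0\oplus 1_{\widetilde B}$; the two are equivalent since fullness in Lemma \ref{l:MvNsubeq} is deduced precisely from that subequivalence. Your part $(b)$ also begins along the same lines as the paper: set up $\Phi=\phi\oplus 0\oplus\phi\oplus\theta$ in $M_4(B)$, observe $e_{2,2}=0$ and $e_{1,1}\sim e_{3,3}$ via $e_{3,1}$, and reduce to showing $e_{3,3}\sim e_{3,3}+e_{4,4}$ in the relative commutant quotient. (A minor slip: the vanishing of the cross terms in $W^\ast W$ for $W=S_1+S_2u$ comes from $S_1^\ast S_2=0$, not from ``$S_j^\ast u=0$'' --- the latter need not hold, since $u$'s range lies under $e_{3,3}$.)

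The genuine gap is the step upgrading the two subequivalences $e_{3,3}\preceq e_{3,3}+e_{4,4}$ and $e_{3,3}+e_{4,4}\preceq e_{3,3}$ to a Murray--von Neumann equivalence. You correctly identify this as the crux and that $\mathcal O_2$-stability of $\theta$ is what resolves it, but the proposed resolution --- ``replacing $u$ by the orthogonal pair $ut_1,ut_2$ and iterating the construction with additional $\mathcal O_\infty$-isometries of $\phi$ produces a refined partial isometry $W'$ with $W'(W')^\ast=e_{3,3}$ exactly'' --- is not an argument. As written, iterating $W=S_1+S_2u$ with more $\mathcal O_\infty$-isometries $S_3,S_4,\dots$ and the split $u\mapsto ut_1,ut_2$ produces elements whose range projection remains a \emph{proper} subprojection of $e_{3,3}$; nothing in the description forces equality. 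The paper closes this gap cleanly: since $e_{3,3}$ and $e_{3,3}+e_{4,4}$ are both properly infinite, full projections (the fullness coming from the Stinespring argument as in part $(a)$ together with $e_{1,1}\sim e_{3,3}$, $e_{2,2}=0$, $e_{4,4}\preceq e_{3,3}$), Cuntz's theorem \cite{Cuntz-K-theoryI} (see \cite[Proposition 4.1.4]{Rordam-book-classification}) reduces the equivalence to the $K_0$-equation $[e_{4,4}]_0=0$, which holds because the $(4,4)$-corner contains $\mathcal O_2$ unitally. If you prefer a direct construction avoiding the Cuntz theorem, one does exist but is not the one you describe: with $q:=e_{4,4}$, $q':=uu^\ast$, and $\sigma_i:=ut_iu^\ast$ (which are $\mathcal O_2$-isometries in the corner $q'Qq'\cong qQq$), the element $W':=\bigl(\sigma_1+(e_{3,3}-q')\bigr)+\sigma_2u$ satisfies $(W')^\ast W'=e_{3,3}+e_{4,4}$ and $W'(W')^\ast=e_{3,3}$. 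But either way, the step as you wrote it must be replaced by an actual argument.
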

\begin{proof}
Only the ``non-strong'' statements  will be proved, since the ``strong'' statements are virtually identical.

$(a)$: To simplify notation, let
\begin{equation}
D := \frac{M_2(B)_\infty \cap (\phi \oplus \theta)(A)'}{\Annn (\phi\oplus \theta)(A)}.
\end{equation}
As $(0\oplus 1_{\widetilde B})D (0\oplus 1_{\widetilde B}) \cong B_\infty \cap \theta(A)' /\Ann\theta(A)$ by Lemma \ref{l:relcombasic}$(b)$, and since this $C^\ast$-algebra is properly infinite by $\mathcal O_\infty$-stability of $\theta$, it follows that the projection $0 \oplus 1_{\widetilde B} \in D$ is properly infinite. As $\theta$ is $\mathcal O_\infty$-stable and approximately dominates $\phi$, it follows from Theorem \ref{t:Stinespring} and Lemma \ref{l:MvNsubeq} that $0\oplus 1_{\widetilde B}$ is a full projection. Thus $1_{\widetilde B}\oplus 1_{\widetilde B}$ is properly infinite, which implies that $\phi \oplus \theta$ is $\mathcal O_\infty$-stable. 

$(b)$: Again, to simplify notation, let
\begin{equation}
E:=  \frac{M_4(B)_\infty \cap (\phi \oplus 0 \oplus \phi \oplus \theta)(A)'}{\Annn (\phi \oplus 0 \oplus \phi \oplus \theta)(A)}.
\end{equation}
By Proposition \ref{p:MvNeq} it suffices to show that $1_{\widetilde B}\oplus 1_{\widetilde B}\oplus 0 \oplus 0 \sim 0\oplus 0\oplus 1_{\widetilde B}\oplus 1_{\widetilde B}$ in $E$. First observe that
\begin{equation}
1_{\widetilde B}\oplus 1_{\widetilde B} \oplus 0 \oplus 0 = 1_{\widetilde B}\oplus 0 \oplus 0\oplus 0
\end{equation}
in $E$, and that $v= 1_{\widetilde B} \otimes e_{1,3}$ is a well-defined partial isometry in $E$ with
\begin{equation}
vv^\ast = 1_{\widetilde B}\oplus 0 \oplus 0 \oplus 0, \qquad v^\ast v = 0\oplus 0\oplus 1_{\widetilde B} \oplus 0.
\end{equation}
Hence it suffices to show that $0\oplus 0\oplus 1_{\widetilde B}\oplus 0\sim 0\oplus 0 \oplus 1_{\widetilde B} \oplus 1_{\widetilde B}$. 

Note that as in part $(a)$, $0\oplus 0 \oplus 1_{\widetilde B}\oplus 0$ is a properly infinite, full projection, and thus so is $0\oplus 0\oplus 1_{\widetilde B}\oplus 1_{\widetilde B}$. Hence by a result of Cuntz \cite{Cuntz-K-theoryI}, see also \cite[Proposition 4.1.4]{Rordam-book-classification}, it suffices to show that
\begin{equation}
[0\oplus 0\oplus 1_{\widetilde B}\oplus 0]_0 = [0\oplus 0 \oplus 1_{\widetilde B} \oplus 1_{\widetilde B}]_0 \in K_0(E),
\end{equation}
or equivalently, that $[0\oplus 0 \oplus 0 \oplus 1_{\widetilde B}]_0 = 0 \in K_0(E)$. As $\theta$ is $\mathcal O_2$-stable
\begin{equation}
(0\oplus 0 \oplus 0 \oplus 1_{\widetilde B})E (0\oplus 0 \oplus 0 \oplus 1_{\widetilde B}) \stackrel{\textrm{Lem.~\ref{l:relcombasic}}(b)}{\cong} \frac{B_\infty \cap \theta(A)'}{\Ann \theta(A)}
\end{equation}
contains a unital embedding of $\mathcal O_2$ and therefore $[0\oplus 0 \oplus 0 \oplus 1_{\widetilde B}]_0 = 0$.
\end{proof}


\section{Absorbing representations}\label{s:absrep}

In this section some (mostly well-known) results are presented on absorbing representations $A \to \multialg{B}$.

\subsection{Cuntz sums and infinite repeats}

This first subsection only contains well-known information on Cuntz sums and infinite repeats.

\begin{remark}[Cuntz sum]
Suppose $D$ is a unital $C^\ast$-algebra containing a pair $s_1,s_2$ of $\mathcal O_2$-isometries, i.e.~isometries such that $s_1s_1^\ast + s_2 s_2^\ast = 1_D$. The most important example to keep in mind is $D = \multialg{B}$  where $B$ is a stable $C^\ast$-algebra. For any elements $d,e\in D$ one forms the \emph{Cuntz sum} by
\begin{equation}
d\oplus_{s_1,s_2} e := s_1 d s_1^\ast + s_2 e s_2^\ast.
\end{equation}
Note that if $\Phi_{s_1,s_2} \colon M_2(D) \xrightarrow \cong D$ is the isomorphism 
\begin{equation}
\Phi_{s_1,s_2} ((d_{i,j})_{i,j=1,2}) = \sum_{i,j=1}^2 s_i d_{i,j} s_j^\ast
\end{equation}
then 
\begin{equation}
\Phi_{s_1,s_2}(d \oplus e) = d \oplus_{s_1,s_2} e. 
\end{equation}
Hence Cuntz sums are a variation of diagonal sums. Also, Cuntz sums are unique up to unitary equivalence. In fact, if $t_1,t_2$ are also $\mathcal O_2$-isometries in $D$ then $u := s_1 t_1^\ast + s_2 t_2^\ast$ is a unitary satisfying
\begin{equation}
u^\ast (d \oplus_{s_1,s_2} e) u = d \oplus_{t_1,t_2} e.
\end{equation}
If $\phi, \psi \colon A \to D$ are maps then one can form the (point-wise) Cuntz sum $\phi \oplus_{s_1,s_2} \psi \colon A \to D$ which is again unique up to unitary equivalence.
\end{remark}

\begin{remark}[Infinite repeats]\label{r:infrep}
Let $B$ be a stable $C^\ast$-algebra. There are several ``pictures'' of infinite repeats in $\multialg{B}$. These pictures will be recalled below, and it will be explained how they are connected. 

By stability of $B$ there exists a sequence $(t_k)_{k\in \mathbb N}$ of isometries in $\multialg{B}$ with orthogonal range projections, such that $\sum_{k=1}^\infty t_k t_k^\ast = 1_{\multialg{B}}$, with convergence in the strict topology. Fix such $(t_k)_{k\in \mathbb N}$ (this will be used in all pictures).

\emph{First picture}: If $\theta \colon A\to B$ is a map, then an \emph{infinite repeat} of $\theta$ is a map of the form $\theta_\infty := \sum_{k=1}^\infty t_k \theta(-) t_k^\ast$ with $(t_k)_{k\in \mathbb N}$ as above. Any two infinite repeats of $\theta$ are unitarily equivalent. In fact, if $(s_k)_{k\in \mathbb N}$ is another such sequence of isometries, then $u = \sum_{k=1}^\infty s_k t_k^\ast$ (strict convergence) is a unitary for which
\begin{equation}
u (\sum_{k=1}^\infty t_k \theta(-) t_k^\ast ) u^\ast = \sum_{k=1}^\infty s_k \theta(-) s_k^\ast.
\end{equation}

\emph{Second picture}: Consider $B$ as a right Hilbert $B$-module, so $\multialg{B} = \mathcal B(B)$. Let $B^{\oplus \infty} = B \oplus B \oplus \dots$ be the countable infinite direct sum of $B$ with itself and let $\theta^{\oplus \infty} \colon A \to \mathcal B(B^{\oplus \infty})$ denote the diagonal map
\begin{equation}
 \theta^{\oplus \infty} (a)(b_1,b_2,\dots) = (\theta(a) b_1, \theta(a) b_2, \dots), \qquad a\in A, (b_1,b_2,\dots) \in B^{\oplus \infty}.
\end{equation}
Then $u \in \mathcal B(B, B^{\oplus\infty})$ given by $u b = (t_1^\ast b , t_2^\ast b , \dots)$ for $b\in B$ is a unitary satisfying
\begin{equation}
u^\ast \theta^{\oplus\infty}(-) u = \sum_{k=1}^\infty t_k \theta(-) t_k^\ast \colon A \to \mathcal B(B) = \multialg{B}.
\end{equation}

\emph{Third picture}: There is an isomorphism $\Omega \colon B \otimes \mathcal K \xrightarrow \cong B$ given on elementary tensors by $b \otimes e_{i,j} = t_i b t_j^\ast$. Clearly this extends to an isomorphism $\multialg{\Omega} \colon \multialg{B\otimes \mathcal K} \xrightarrow \cong \multialg{B}$. The map $\theta \otimes 1_{\multialg{\mathcal K}} \colon A \to \multialg{B \otimes \mathcal K}$ can also be thought of as an infinite repeat since
\begin{equation}
\multialg{\Omega} \circ (\theta \otimes 1_{\multialg{\mathcal K}}) = \sum_{k=1}^\infty t_k \theta(-) t_k^\ast.
\end{equation}
\end{remark}

\begin{remark}[``$1+\infty= \infty$'']\label{r:infty+1}
Let $B$ be a stable $C^\ast$-algebra and $\phi \colon A \to \multialg{B}$ be a $\ast$-homomorphism. If $\phi_\infty$ is an infinite repeat of $\phi$, and if $s_1,s_2\in \multialg{B}$ are $\mathcal O_2$-isometries, then $\phi \oplus_{s_1,s_2} \phi_\infty$ is unitarily equivalent to $\phi_\infty$. Informally, this says, unsurprisingly, that ``$1+\infty=\infty$''.
\end{remark}

\subsection{General absorption}

\begin{definition}\label{d:abs}
Let $A$ be a separable $C^\ast$-algebra, let $B$ be a $\sigma$-unital, stable $C^\ast$-algebra, and let $\phi, \psi \colon A \to \multialg{B}$ be $\ast$-homomorphisms. Say that $\phi$ \emph{absorbs} $\psi$ if there is a sequence $(u_n)$ of unitaries in $\multialg{B}$ such that
\begin{itemize}
\item[$(a)$] $u_n^\ast (\phi \oplus_{s_1,s_2} \psi)(a) u_n - \phi(a) \in B$ for all $a\in A$ and all $n\in \mathbb N$,
\item[$(b)$] $\| u_n^\ast (\phi \oplus_{s_1,s_2} \psi)(a) u_n - \phi(a)\| \to 0$ for all $a\in A$.
\end{itemize}
Here $s_1,s_2\in \multialg{B}$ are $\mathcal O_2$-isometries.
\end{definition}

Note that absorption does not depend on the choice of $\mathcal O_2$-isometries as Cuntz sums are unique up to unitary equivalence.

By the following remark one may often reduce questions about absorption to the unital case.

\begin{remark}[Unitisations and absorption]\label{r:unitvsnonunitabs}
Let $A^\dagger$ denote the forced unitisation of the $C^\ast$-algebra $A$, i.e.~one adds a unit regardless if $A$ is unital or not.

Let $A$ be a separable $C^\ast$-algebra, let $B$ be a $\sigma$-unital, stable $C^\ast$-algebra and let $\phi, \psi \colon A \to \multialg{B}$ be $\ast$-homomorphisms. Let $\phi^\dagger , \psi^\dagger \colon A^\dagger \to \multialg{B}$ be the unitised $\ast$-homomorphisms of $\phi$ and $\psi$ respectively. If $u\in \multialg{B}$ is a unitary and $s_1,s_2\in \multialg{B}$ are $\mathcal O_2$-isometries, then
\begin{equation}
u^\ast (\phi^\dagger \oplus_{s_1,s_2} \psi^\dagger) (a+\mu 1_{A^\dagger}) u - \phi^\dagger(a+\mu 1_{A^\dagger}) = u^\ast (\phi \oplus_{s_1,s_2} \psi)(a) u - \phi(a)
\end{equation}
for all $a\in A$ and $\mu \in \mathbb C$. Hence $\phi$ absorbs $\psi$ if and only if $\phi^\dagger$ absorbs $\psi^\dagger$.
\end{remark}

The following abstract \emph{unital} absorption result of infinite repeats will be needed to prove a non-unital version.

\begin{proposition}[Cf.~\cite{Kasparov-Stinespring}, \cite{DadarlatEilers-classification}]\label{p:absunital}
Let $A$ be a separable, unital $C^\ast$-algebra, let $B$ be a $\sigma$-unital, stable $C^\ast$-algebra, let $\psi, \theta \colon A \to \multialg{B}$ be unital $\ast$-homo\-morphisms, and let $\theta_\infty$ be an infinite repeat of $\theta$. Suppose that $\theta$ approximately dominates (see Definition \ref{d:approxdom}) the c.p.~map
\begin{equation}
b^\ast \psi(-) b \colon A \to B
\end{equation}
for every $b\in B$. Then $\theta_\infty$ absorbs $\psi$.
\end{proposition}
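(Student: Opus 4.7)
\emph{Plan.} The strategy is a Weyl--von Neumann--Kasparov-type absorption argument, with the approximate domination hypothesis playing the role that non-degeneracy of an ample representation plays in Kasparov's classical theorem. To obtain the sequence $(u_n)$, it suffices to show that for any finite $F \subset A$ and $\varepsilon > 0$ there exists a unitary $u = u_{F,\varepsilon} \in \multialg{B}$ such that $u^\ast(\theta_\infty \oplus_{s_1,s_2}\psi)(a)u - \theta_\infty(a) \in B$ for every $a \in A$, with norm at most $\varepsilon$ for $a \in F$; a standard diagonal argument over an exhaustion of $A$ then delivers $(u_n)$.

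The main computational input is the orthogonality identity $t_j^\ast \theta_\infty(-) t_k = \delta_{jk}\theta$ coming from the definition $\theta_\infty = \sum_k t_k \theta(-) t_k^\ast$. Combined with the approximate domination hypothesis, this yields a \emph{compression lemma}: given a finite $F \subset A$, $\varepsilon > 0$, a positive $b \in B$, and any cofinite $J \subseteq \mathbb{N}$, there are indices $k_1,\ldots,k_n \in J$ and elements $d_1,\ldots,d_n \in B$ such that $V := \sum_{i=1}^n t_{k_i} d_i$ satisfies
\begin{equation*}
\| V^\ast \theta_\infty(a) V - b\psi(a)b \| < \varepsilon, \qquad a \in F,
\end{equation*}
because $V^\ast \theta_\infty(a) V = \sum_i d_i^\ast \theta(a) d_i$ by the orthogonality identity, and the $d_i$ come from applying approximate domination of $b\psi(-)b$ by $\theta$. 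The freedom to choose the indices inside an arbitrary cofinite set is essential: it lets me hide successive compressions in pairwise disjoint corners of $\theta_\infty$ while keeping a cofinite infinite repeat in reserve to supply the "complementary direction" needed to complete $V$ to a unitary.

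To assemble: by Arveson's theorem there is an approximate unit $(e_m)$ of $B$ that is quasi-central for the separable $C^\ast$-subalgebra of $\multialg{B}$ generated by $\psi(A)$, $\theta(A)$, the $s_i$, and the $t_k$; a standard refinement produces positive $f_m \in B$ with $\sum_m f_m = 1$ strictly, $f_m f_n = 0$ for $|m-n|$ large, and with $\|[f_m^{1/2},\psi(a)]\|$ summable in $m$ for each $a$ in a prescribed finite set. Applying the compression lemma to each $f_m^{1/2}\psi(-)f_m^{1/2}$ using pairwise disjoint index sets $I_m \subset \mathbb{N}$ and summable error tolerances yields $V_m \in B$ whose strict sum $V := \sum_m V_m \in \multialg{B}$ satisfies $V^\ast \theta_\infty(a)V - \psi(a) \in B$ for every $a \in A$ (with norm controlled by $\varepsilon$ on $F$) and $V^\ast V - 1 \in B$. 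Choosing $\bigcup_m I_m$ to miss an infinite $J_0 \subseteq \mathbb{N}$ ensures that the projection $1 - VV^\ast \in \corona{B} \cap \theta_\infty(A)'$ dominates the infinite repeat $\sum_{k \in J_0} t_k\theta(-)t_k^\ast$, which by Remark~\ref{r:infty+1} is unitarily equivalent in $\corona{B}$ to $\theta_\infty$ itself. Assembling $V$ and a unitary implementing this latter equivalence into a Cuntz sum decomposition produces the required unitary $u_{F,\varepsilon}$ in $\multialg{B}$.

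The chief technical obstacle is condition $(a)$ of Definition \ref{d:abs} -- that the error genuinely lies in $B$ rather than merely being small in norm. This is exactly the function of the quasi-central approximate unit: quasi-centrality forces the commutators $[f_m^{1/2},\psi(a)]$ to be summably small, which is what makes $\psi(a) - \sum_m f_m^{1/2}\psi(a)f_m^{1/2}$ lie in $B$. A secondary issue is index bookkeeping -- arranging pairwise disjoint cofinite regions of $\mathbb{N}$ for the compressions and for the complementary infinite repeat, and cascading the error tolerances summably -- but this is a routine Kasparov-style argument and requires no new ideas beyond the compression lemma.
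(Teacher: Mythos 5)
Your proposal is mathematically sound in outline, but you are doing considerably more work than the paper does, because you are essentially re-proving the Dadarlat--Eilers absorption theorem from scratch instead of citing it. The paper's proof is short: it first uses Remark~\ref{r:infty+1} to reduce ``$\theta_\infty$ absorbs $\psi$'' to ``$\theta_\infty$ absorbs $\psi_\infty$,'' then invokes \cite[Theorem 2.13]{DadarlatEilers-classification}, which says that $\theta_\infty$ absorbs $\psi_\infty$ provided for each $b\in B$ there is a bounded sequence $(x_n)$ in $B$ with $x_n^\ast\theta_\infty(-)x_n \to b^\ast\psi(-)b$ point-norm and $\|cx_n\|\to 0$ for all $c\in B$. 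The verification of this hypothesis is precisely your compression lemma: by approximate domination choose $d_k^{(n)}$ with $\sum_k d_k^{(n)\ast}\theta(a)d_k^{(n)}\approx b^\ast\psi(a)b$, and set $x_n := \sum_k t_{n+k}d_k^{(n)}$; the orthogonality $t_j^\ast\theta_\infty(-)t_k=\delta_{jk}\theta$ then yields $x_n^\ast\theta_\infty(a)x_n = \sum_k d_k^{(n)\ast}\theta(a)d_k^{(n)}$, and pushing the indices past $n$ gives $\|cx_n\|\to 0$. So the computational core is identical. Everything you are doing in your second and third bullet points --- Arveson's quasi-central approximate unit, cascading the compressions into disjoint corners with summable tolerances, the $C^\ast$-identity that upgrades the compression estimate $V^\ast\theta_\infty(-)V-\psi \in B$ to the intertwiner estimate $\theta_\infty(-)V - V\psi(-) \in B$ (needed to see $1-VV^\ast$ genuinely commutes with $\dot\theta_\infty(A)$ in the corona), and then completing the approximate isometry $V$ to an honest unitary using the saved infinite repeat on $J_0$ --- is exactly the content of the theorem you would be re-proving. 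Your route works, but you have quietly promoted a single citation into a one-page Weyl--von Neumann--Kasparov argument; if you prefer self-containedness that is a defensible choice, but you should at minimum spell out the intertwiner upgrade and the polar-decomposition step, since these are where such arguments most often go wrong.

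One more structural observation: the two proofs also deploy Remark~\ref{r:infty+1} at different places. The paper uses it \emph{before} the absorption argument to swap $\psi$ for $\psi_\infty$; you use it \emph{inside} the assembly step to identify the residual corner $\sum_{k\in J_0}t_k\theta(-)t_k^\ast$ with $\theta_\infty$. These are both legitimate instances of ``$1+\infty=\infty$,'' but the paper's placement is the reason it can cite DE 2.13 directly with a clean hypothesis.
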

\begin{proof}
By Remark \ref{r:infty+1} it suffices to show that $\theta_\infty$ absorbs an infinite repeat of $\psi$. By \cite[Theorem 2.13]{DadarlatEilers-classification} this is the case whenever the following is satisfied: for any $b\in B$ there is a bounded sequence $(x_n)_{n\in \mathbb N}$ in $B$ such that $x_n^\ast \theta_\infty(-) x_n$ converges point-norm to $b^\ast \psi (-) b$, and such that $\| c x_n\| \to 0$ for any $c\in B$. 

To check this, let $b\in B$. Fix $1_A \in \mathcal F_1 \subseteq \mathcal F_2\subseteq \dots$ finite sets such that $\bigcup \mathcal F_n$ is dense in $A$. For each $n\in \mathbb N$, we may find $d_1^{(n)},\dots, d_{m_n}^{(n)} \in B$ such that
\begin{equation}
\| b^\ast \psi(a) b - \sum_{k=1}^{m_n} d_k^{(n)\ast} \theta(a) d_k^{(n)} \| < 1/n , \qquad a\in \mathcal F_n.
\end{equation}
Let $(t_j)_{j\in \mathbb N}$ be a sequence of isometries in $\multialg{B}$ with orthogonal range projections such that $\sum_{j=1}^\infty t_j t_j^\ast = 1_{\multialg{B}}$ strictly. As infinite repeats are unique up to unitary equivalence, we may assume that $\theta_\infty = \sum_{j=1}^\infty t_j \theta(-) t_j^\ast$.

Let $x_n := \sum_{k=1}^{m_n} t_{n + k} d_{k}^{(n)}$. The sequence $(x_n)_{n\in \mathbb N}$ is bounded since
\begin{eqnarray}
\| x_n^\ast x_n \| &=& \| \sum_{k,l=1}^{m_n} d_k^{(n)\ast} t_{n+k}^\ast t_{n+l} d_l^{(n)} \| \nonumber\\
&=& \| \sum_{k=1}^{m_n} d_k^{(n)\ast} d_k^{(n)} \| \nonumber\\
&=& \| \sum_{k=1}^{m_n} d_k^{(n)\ast} \theta(1_A) d_k^{(n)} \| \nonumber\\
&\xrightarrow{n\to \infty}& \| b^\ast \psi(1_A) b\|.
\end{eqnarray}
Moreover, for any $a\in A$ we have
\begin{eqnarray}
x_n^\ast \theta_\infty(a) x_n &=& \sum_{k,l=1}^{m_n} \sum_{j=1}^\infty d_k^{(n)\ast} t_{n+k}^\ast t_j \theta(a) t_j^\ast t_{n+l} d_l^{(n)} \nonumber\\
&=& \sum_{k=1}^{m_n} d_k^{(n)\ast} \theta(a) d_k^{(n)} \nonumber\\
&\xrightarrow{n\to \infty}& b^\ast \psi(a) b.
\end{eqnarray}
Finally, let $c\in B$. Then
\begin{eqnarray}
\| c x_n\| &=& \| c  (1_{\multialg{B}} - \sum_{k=1}^n t_kt_k^\ast)  x_n \| \nonumber\\
& \leq & \| c (1_{\multialg{B}} - \sum_{k=1}^n t_kt_k^\ast)\| \|  x_n \| \nonumber\\
&\xrightarrow{n\to \infty}& 0.
\end{eqnarray}
Here we used that $t_j^\ast x_n = 0$ for $j=1,\dots, n$, the sequence $(x_n)_{n\in \mathbb N}$ is bounded, and and that $\sum_{k=1}^\infty t_k t_k^\ast = 1_{\multialg{B}}$ strictly.
\end{proof}

While the above proposition is proved in the unital case, it is often more desirable to obtain the results in the not necessarily unital case. When reducing the not necessarily unital case to the unital case, the $C^\ast$-algebra $A$ will often be replaced with its forced unitisation $A^\dagger$, see Remark \ref{r:unitvsnonunitabs}. The following is a useful trick for relating a c.p.~map $\rho \colon A^\dagger \to B$ to its restriction to $A$.

\begin{lemma}\label{l:unitisetrick}
Let $\rho \colon A^\dagger \to B$ be a c.p.~map, let $\pi \colon A^\dagger \to \mathbb C$ be the character for which $\pi(A) = \{0\}$, and let $(e_\lambda)_{\lambda \in \Lambda}$ be an approximate identity in $A$. Define the c.p.~maps $\rho_\lambda , \pi_\lambda \colon A^\dagger \to B$ for each $\lambda \in \Lambda$ by
\begin{equation}
\rho_\lambda = \rho|_A (e_\lambda(-)e_\lambda), \qquad \pi_\lambda = \rho(1_{A^\dagger} - e_\lambda^2) \pi(-).
\end{equation}
Then $\rho_\lambda + \pi_\lambda \to \rho$ in point-norm.
\end{lemma}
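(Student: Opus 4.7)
\begin{skproof}
The plan is to verify the convergence by a direct computation on a typical element $x = a + \mu 1_{A^\dagger} \in A^\dagger$, with $a \in A$ and $\mu \in \mathbb C$, exploiting the cancellation between $\rho_\lambda$ and $\pi_\lambda$ on the unit of $A^\dagger$. Evaluating,
\begin{equation}
\rho_\lambda(x) = \rho\bigl(e_\lambda(a + \mu 1_{A^\dagger})e_\lambda\bigr) = \rho(e_\lambda a e_\lambda) + \mu\,\rho(e_\lambda^2),
\end{equation}
while $\pi_\lambda(x) = \rho(1_{A^\dagger}-e_\lambda^2)\pi(x) = \mu\,\rho(1_{A^\dagger}-e_\lambda^2)$. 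Summing, the $e_\lambda^2$ terms telescope, giving
\begin{equation}
(\rho_\lambda + \pi_\lambda)(x) = \rho(e_\lambda a e_\lambda) + \mu\,\rho(1_{A^\dagger}),
\end{equation}
so the discrepancy with $\rho(x) = \rho(a) + \mu \rho(1_{A^\dagger})$ is exactly $\rho(e_\lambda a e_\lambda) - \rho(a)$.

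It therefore remains to verify that $\rho(e_\lambda a e_\lambda) \to \rho(a)$ in $B$ for every $a \in A$. This reduces to the standard approximate identity estimate $\|e_\lambda a e_\lambda - a\| \to 0$ (decompose $e_\lambda a e_\lambda - a = (e_\lambda a - a)e_\lambda + (a e_\lambda - a)$ and use $\|e_\lambda\| \leq 1$), together with the fact that a completely positive map between $C^\ast$-algebras is norm-bounded and hence norm-continuous. There is no real obstacle here; the only mild subtlety is to make sure the scalar-parameter computation is done correctly so that the terms $\mu\rho(e_\lambda^2)$ and $\mu\rho(1_{A^\dagger}-e_\lambda^2)$ combine cleanly, which is precisely the point of defining $\pi_\lambda$ with the weight $\rho(1_{A^\dagger}-e_\lambda^2)$.
\end{skproof}
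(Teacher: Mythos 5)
Your proof is correct and follows essentially the same computation as the paper's: expand $\rho_\lambda(a+\mu 1_{A^\dagger})$ and $\pi_\lambda(a+\mu 1_{A^\dagger})$, observe that the $\mu\rho(e_\lambda^2)$ and $\mu\rho(1_{A^\dagger}-e_\lambda^2)$ terms combine to $\mu\rho(1_{A^\dagger})$, and reduce to the standard fact that $e_\lambda a e_\lambda \to a$ in norm together with boundedness of $\rho$. The only difference is that you spell out the last step in more detail than the paper does.
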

\begin{proof}
For $a\in A$ and $\mu \in \mathbb C$ we have
\begin{eqnarray}
&& \rho_\lambda(a+\mu 1_{A^\dagger}) + \pi_\lambda(a+\mu 1_{A^\dagger}) \nonumber\\
&=& \rho(e_\lambda a e_\lambda) + \mu \rho( e_\lambda^2) + \mu \rho(1_{A^\dagger}  - e_\lambda^2) \nonumber\\
&=& \rho(e_\lambda a e_\lambda + \mu 1_{A^\dagger}) \nonumber\\
&\to& \rho( a + \mu 1_{A^\dagger}),
\end{eqnarray}
which is what we wanted to prove.
\end{proof}

Although the following lemma will not be applied until much later, it easily illustrates how the above trick works.

\begin{lemma}\label{l:unitisednuc}
A c.p.~map $\rho \colon A^\dagger \to B$ is nuclear if and only if the restriction $\rho|_A$ is nuclear.
\end{lemma}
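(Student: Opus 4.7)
The plan is to prove both implications using results already established in the excerpt.

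The forward direction is immediate: the inclusion $\iota \colon A \hookrightarrow A^\dagger$ is a $\ast$-homomorphism, hence completely positive, and $\rho|_A = \rho \circ \iota$. So if $\rho$ is nuclear, then $\rho|_A$ is nuclear by Observation \ref{o:nuccomp}.

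For the reverse direction, the strategy is to apply the decomposition trick of Lemma \ref{l:unitisetrick} and show that each term in the approximating net is nuclear. Fix an approximate identity $(e_\lambda)_{\lambda \in \Lambda}$ in $A$ and let $\pi \colon A^\dagger \to \mathbb{C}$ be the character vanishing on $A$. Lemma \ref{l:unitisetrick} gives a point-norm approximation $\rho_\lambda + \pi_\lambda \to \rho$, where $\rho_\lambda = \rho|_A(e_\lambda(-)e_\lambda)$ and $\pi_\lambda = \rho(1_{A^\dagger} - e_\lambda^2) \pi(-)$. Since nuclear maps form a point-norm closed convex cone, it suffices to show that each $\rho_\lambda$ and each $\pi_\lambda$ is nuclear.

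For $\rho_\lambda$, observe that $a \mapsto e_\lambda a e_\lambda$ defines a c.p.~map $A^\dagger \to A$, so that $\rho_\lambda$ is the composition of this c.p.~map with the nuclear map $\rho|_A$; hence $\rho_\lambda$ is nuclear by Observation \ref{o:nuccomp}. For $\pi_\lambda$, note that $\rho(1_{A^\dagger} - e_\lambda^2)$ is a positive element of $B$ (since $\rho$ is c.p.~and $1_{A^\dagger}-e_\lambda^2 \geq 0$), so $\pi_\lambda$ factors as
\begin{equation}
A^\dagger \xrightarrow{\pi} \mathbb{C} = M_1(\mathbb{C}) \xrightarrow{\sigma} B,
\end{equation}
where $\sigma(z) = z\, \rho(1_{A^\dagger}-e_\lambda^2)$ is a c.p.~map. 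Thus $\pi_\lambda$ factors through a matrix algebra via completely positive maps, and is in particular nuclear. Combining these observations yields that $\rho$ is a point-norm limit of nuclear maps, hence nuclear.

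There is no real obstacle here; the only subtlety is recognising that the unitisation trick in Lemma \ref{l:unitisetrick} decomposes $\rho$ into a piece controlled by $\rho|_A$ and a one-dimensional piece factoring through the character $\pi$, both of which are automatically nuclear.
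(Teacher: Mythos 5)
Your proof is correct and follows essentially the same route as the paper: apply Lemma~\ref{l:unitisetrick} to decompose $\rho$ as a point-norm limit of $\rho_\lambda + \pi_\lambda$ and observe that each summand is nuclear. The paper leaves the nuclearity of $\rho_\lambda$ and $\pi_\lambda$ implicit, while you spell out that $\rho_\lambda$ is a composition of $\rho|_A$ with a c.p.~map $A^\dagger \to A$ and that $\pi_\lambda$ factors through $\mathbb{C}$ — a helpful but not essentially different elaboration.
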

\begin{proof}
``Only if'' is obvious so we prove ``if''. Suppose $\rho|_A$ is nuclear, and define $\rho_\lambda$ and $\pi_\lambda$ as in Lemma \ref{l:unitisetrick}. As the maps $\rho|_A$ and $\pi$ are nuclear, so are $\rho_\lambda$ and $\pi_\lambda$. As sums of nuclear maps are again nuclear, $\rho_\lambda + \pi_\lambda$ is nuclear for each $\lambda$. As the set of nuclear maps is point norm closed, $\rho$ is nuclear by Lemma \ref{l:unitisetrick}.
\end{proof}

As a consequence of Proposition \ref{p:absunital} above one obtains the following non-unital analogue of the result. In this version it is crucial, yet somewhat subtle, that $\theta$ takes values in $B$ (not in $\multialg{B}$) and that $B$ is stable. For instance, if $A$, $B$ and $\theta$ were unital, but $\psi$ was non-unital, the criteria of Proposition \ref{p:absnonunital} could be true but $\theta_\infty$ could never absorb $\psi$ since $\theta_\infty$ would be unital and thus can only absorb unital $\ast$-homomorphisms.

Similar results appear in the literature where both $A$ and $B$ are assumed to be unital, such as \cite[Theorem 2.22]{DadarlatEilers-classification}, but usually such results are not stated in as abstract a form as the following result.

\begin{proposition}\label{p:absnonunital}
Let $A$ be a separable $C^\ast$-algebra, let $B$ be a $\sigma$-unital, stable $C^\ast$-algebra, let $\psi \colon A \to \multialg{B}$ be a $\ast$-homomorpshim, let $\theta \colon A \to B$ be a $\ast$-homomorphism, and let $\theta_\infty \colon A \to \multialg{B}$ be an infinite repeat of $\theta$. Suppose that $\theta$ approximately dominates (see Definition \ref{d:approxdom}) the c.p.~map 
\begin{equation}
b^\ast \psi(-) b \colon A \to B
\end{equation}
for every $b\in B$. Then $\theta_\infty$ absorbs $\psi$.
\end{proposition}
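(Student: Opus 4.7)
The plan is to reduce the proposition to the unital case covered by Proposition \ref{p:absunital} via forced unitisation. By Remark \ref{r:unitvsnonunitabs}, $\theta_\infty$ absorbs $\psi$ if and only if $(\theta_\infty)^\dagger$ absorbs $\psi^\dagger$ as unital $\ast$-homomorphisms $A^\dagger \to \multialg{B}$; and a direct computation using $\sum_k t_k t_k^\ast = 1_{\multialg{B}}$ shows that $(\theta_\infty)^\dagger$ coincides with the infinite repeat $(\theta^\dagger)_\infty$ of the unital $\ast$-homomorphism $\theta^\dagger \colon A^\dagger \to \multialg{B}$. Consequently, it suffices by Proposition \ref{p:absunital} to verify that $\theta^\dagger$ approximately dominates $b^\ast \psi^\dagger(-) b$ for every $b \in B$.

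Fix $b \in B$ and set $\rho = b^\ast \psi^\dagger(-) b$. Applying Lemma \ref{l:unitisetrick} with an approximate identity $(e_\lambda)$ of $A$, one has $\rho = \lim_\lambda (\rho_\lambda + \pi_\lambda)$ in point-norm, where $\rho_\lambda(x) = b^\ast \psi(e_\lambda x e_\lambda) b$ and $\pi_\lambda(a + \mu 1_{A^\dagger}) = \mu c_\lambda$ with $c_\lambda = b^\ast(1_{\multialg{B}} - \psi(e_\lambda^2))b \in B$. Since the c.p.~maps approximately dominated by $\theta^\dagger$ form a point-norm closed convex cone (Remark \ref{r:approxdom}), it is enough to dominate each $\rho_\lambda$ and each $\pi_\lambda$ individually. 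Dominating $\rho_\lambda$ is a direct use of the hypothesis: the identity $\theta(e_\lambda x e_\lambda) = \theta(e_\lambda) \theta^\dagger(x) \theta(e_\lambda)$ for $x \in A^\dagger$ means that, given $d_1,\dots,d_m \in B$ witnessing the approximate domination by $\theta$ of $b^\ast \psi(-) b$ on the finite set $\{e_\lambda x e_\lambda : x \in \mathcal F\} \subset A$, the elements $d_i' := \theta(e_\lambda) d_i \in B$ satisfy $\sum (d_i')^\ast \theta^\dagger(x) d_i' \approx \rho_\lambda(x)$ on $\mathcal F$.

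The main obstacle is the ``constant'' piece $\pi_\lambda$, which factors through the character of $A^\dagger$ killing $A$ and therefore cannot be reached by the hypothesis alone; this is exactly where stability of $B$ plays a crucial role. The idea is to use the isometries $(t_k)$ defining the infinite repeat to produce an element in $B$ whose conjugation ``annihilates'' $\theta(A)$ up to a small error. Because $\sum_k t_k t_k^\ast = 1_{\multialg{B}}$ strictly, an elementary estimate shows that $\|t_k^\ast h\| \to 0$ as $k \to \infty$ for every $h \in B$. Given a finite set $\mathcal F = \{a_j + \mu_j 1_{A^\dagger}\} \subset A^\dagger$ and $\epsilon > 0$, pick $k$ large enough that $\|t_k^\ast \theta(a_j)\| \cdot \|c_\lambda\|$ is negligible for all $j$, and set $e := t_k c_\lambda^{1/2} \in B$. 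Then
\begin{equation}
e^\ast \theta^\dagger(a_j + \mu_j 1_{A^\dagger}) e = c_\lambda^{1/2} t_k^\ast \theta(a_j) t_k c_\lambda^{1/2} + \mu_j c_\lambda \approx_\epsilon \mu_j c_\lambda = \pi_\lambda(a_j + \mu_j 1_{A^\dagger}),
\end{equation}
so a single element of $B$ suffices to approximately dominate $\pi_\lambda$. Combining these two dominations via the cone property, and then passing to the point-norm limit in $\lambda$, yields the hypothesis of Proposition \ref{p:absunital}, which completes the proof.
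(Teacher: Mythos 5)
Your proof is correct and takes essentially the same route as the paper's: the same reduction to the unital case via $(\theta^\dagger)_\infty = (\theta_\infty)^\dagger$, the same decomposition $\rho = \lim(\rho_\lambda + \pi_\lambda)$ from Lemma \ref{l:unitisetrick}, the same trick $\theta(e_\lambda x e_\lambda) = \theta(e_\lambda)\theta^\dagger(x)\theta(e_\lambda)$ for the $\rho_\lambda$ piece, and the same use of $t_k^\ast \theta(a) t_k \to 0$ (with conjugation by $t_k c_\lambda^{1/2}$) for the scalar piece $\pi_\lambda$.
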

\begin{proof}
Let $(\theta_\infty)^\dagger, \psi^\dagger \colon A^\dagger \to \multialg{B}$ be the forced unitised $\ast$-homo\-morphisms. As observed in Remark \ref{r:unitvsnonunitabs}, $\theta_\infty$ absorbs $\psi$ if and only if $(\theta_\infty)^\dagger$ absorbs $\psi^\dagger$. Note that if $\theta^\dagger \colon A^\dagger \to \multialg{B}$ is the unitisation then $(\theta^\dagger)_\infty = (\theta_\infty)^\dagger$ where we use the same sequence of isometries to define the two infinite repeats. Hence it suffices to check that $\theta^\dagger$ and $\psi^\dagger$ these satisfy the condition of Proposition \ref{p:absunital}.

Fix $b\in B$ so that we wish to check that $\theta^\dagger$ approximately dominates $b^\ast \psi^\dagger(-)b$. Let $\pi \colon A^\dagger \to \mathbb C$ be the character vanishing on $A$, and let $(e_n)_{n\in \mathbb N}$ be an approximate identity in $A$. Define the c.p.~maps $A^\dagger \to B$
\begin{equation}
\psi_n := b^\ast \psi(e_n (-) e_n) b, \qquad \pi_n := b^\ast(1_{\multialg{B}} - e_n^2)b \pi(-).
\end{equation}
By Lemma \ref{l:unitisetrick} it follows that
\begin{equation}
\psi_n + \pi_n \to b^\ast \psi^\dagger(-) b
\end{equation}
in point-norm. As the set of c.p.~maps which are approximately dominated by $\phi$ is a point-norm closed, convex cone, see Remark \ref{r:approxdom}, it suffices to show that $\theta^\dagger$ approximately dominates $\psi_n$ and $\pi_n$ for each $n\in \mathbb N$, so fix $n\in \mathbb N$.

Let $\mathcal F\subset A^\dagger$ and $\epsilon>0$. As $\theta$ approximately dominates $b^\ast \psi(-) b$, we may find $d_1,\dots,d_m\in B$ such that
\begin{eqnarray}
&& \| b^\ast \psi(e_n x e_n) b - \sum_{k=1}^m d_k^\ast \theta(e_n x e_n) d_k \| \nonumber\\
& =& \| \psi_n(x) - \sum_{k=1}^m d_k^\ast \theta(e_n) \theta^\dagger(x) \theta(e_n) d_k\| \nonumber\\
& <& \epsilon,
\end{eqnarray}
for $x\in \mathcal F$. Hence $\theta^\dagger$ approximately dominates $\psi_n$.

As $B$ is stable we may pick a sequence $(t_j)_{j\in \mathbb N}$ of isometries in $\multialg{B}$ with orthogonal range projections such that $\sum_{j=1}^\infty t_j t_j^\ast =1_{\multialg{B}}$ in the strict topology. As $\theta(A) \subseteq B$ it follows that
\begin{equation}
t_j^\ast \theta^\dagger(x) t_j \xrightarrow{j\to \infty} 1_{\multialg{B}} \pi(x), \qquad x\in A^\dagger.
\end{equation}
Letting $c:= (b^\ast(1_{\multialg{B}} - e_n^2)b)^{1/2}$ it follows that
\begin{equation}
ct_j^\ast \theta^\dagger(x) t_j c \xrightarrow{j\to \infty}  b^\ast(1_{\multialg{B}} - e_n^2)b \pi(x) = \pi_n(x)
\end{equation}
for all $x\in A^\dagger$. Hence $\theta^\dagger$ approximately dominates $\pi_n$ which finishes the proof.
\end{proof}


\subsection{Nuclear absorption}

\begin{definition}\label{d:weaklynuc}
Let $A$ and $B$ be $C^\ast$-algebras. A c.p.~map $\rho \colon A \to \multialg{B}$ is called \emph{weakly nuclear} if $b^\ast \rho (-) b \colon A \to B$ is nuclear for every $b\in B$.
\end{definition}

In the literature, for instance in \cite{Skandalis-KKnuc} and \cite{DadarlatEilers-classification}, one often considers the notion of \emph{strictly nuclear} contractive c.p.~maps $\rho \colon A \to \multialg{B}$, i.e.~$\rho$ is a point-strict limit of maps factoring via contractive c.p.~maps through matrix algebras.

By the following folklore result, this is equivalent to being weakly nuclear.

\begin{proposition}
Let $A$ and $B$ be $C^\ast$-algebras, and let $\rho \colon A \to \multialg{B}$ be a contractive c.p.~map. The following are equivalent:
\begin{itemize}
\item[$(i)$] $\rho$ is weakly nuclear;
\item[$(ii)$] $\rho$ is strictly nuclear, i.e.~$\rho$ is a point-strict limit of c.p.~maps factoring via contractive c.p.~maps through matrix algebras;
\item[$(iii)$] $\rho$ is a point-strict limit of nuclear maps.
\end{itemize}
\end{proposition}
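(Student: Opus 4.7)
The plan is to prove the implications $(ii) \Rightarrow (iii) \Rightarrow (i) \Rightarrow (ii)$. The first implication is immediate, since a c.p.~map factoring via contractive c.p.~maps through a matrix algebra is automatically nuclear.

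For $(iii) \Rightarrow (i)$, suppose $\rho$ is a point-strict limit of nuclear maps $\rho_\lambda$. Fix $b \in B$. Since point-strict convergence implies that $\rho_\lambda(a) b \to \rho(a) b$ in norm for every $a \in A$, the map $b^\ast \rho(-) b$ is the point-norm limit of the maps $b^\ast \rho_\lambda(-) b$, each of which is nuclear by Observation \ref{o:nuccomp}. Since nuclearity is preserved under point-norm limits, $b^\ast \rho(-) b$ is nuclear, so $\rho$ is weakly nuclear.

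The main content is $(i) \Rightarrow (ii)$. Let $(e_\lambda)_{\lambda \in \Lambda}$ be an approximate unit for $B$ and set $\rho_\lambda := e_\lambda \rho(-) e_\lambda \colon A \to B$. I would first check that $\rho_\lambda \to \rho$ in the point-strict topology on $\multialg{B}$. For $a \in A$ and $c \in B$, write
\begin{equation}
(\rho_\lambda(a) - \rho(a)) c = e_\lambda \rho(a) (e_\lambda c - c) + (e_\lambda \rho(a) c - \rho(a) c),
\end{equation}
and note that both summands tend to $0$ in norm, the first because $e_\lambda c \to c$ with $\|\rho(a)\|$ fixed, and the second because $\rho(a) c \in B$. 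A symmetric computation handles $c(\rho_\lambda(a) - \rho(a))$, using that $c \rho(a) \in B$. Thus point-strict convergence follows. Each $\rho_\lambda$ is contractive, maps into $B$, and is nuclear: indeed $e_\lambda^{1/2} \rho(-) e_\lambda^{1/2}$ is nuclear by weak nuclearity of $\rho$, and $\rho_\lambda$ is the composition of this map with conjugation by $e_\lambda^{1/2}$.

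It remains to approximate each contractive nuclear map $\rho_\lambda$ in point-norm by c.p.~maps factoring via \emph{contractive} c.p.~maps through matrix algebras, which is exactly Lemma \ref{l:nuccontractive}. A standard diagonal argument (indexing by the directed set of pairs consisting of finite subsets of $A$ together with finite subsets of $B$ generating the relevant strict seminorms) combines the point-norm approximation of each $\rho_\lambda$ with the point-strict convergence $\rho_\lambda \to \rho$ to produce a net of c.p.~maps factoring via contractive c.p.~maps through matrix algebras which converges to $\rho$ in point-strict topology. I do not expect any genuine obstacle here; the only care needed is to ensure the factorisations remain contractive, which is supplied by Lemma \ref{l:nuccontractive}.
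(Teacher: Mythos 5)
Your proposal follows the same three-implication cycle as the paper, uses the same net $e_\lambda\rho(-)e_\lambda$ for $(i)\Rightarrow(ii)$, the same point-strict estimates, and the same appeal to Lemma~\ref{l:nuccontractive}; it is essentially the paper's proof with the final diagonalisation spelled out slightly more explicitly. (One small unnecessary detour: $e_\lambda\rho(-)e_\lambda$ is nuclear directly from weak nuclearity with $b=e_\lambda$, so the factorisation through $e_\lambda^{1/2}$ is not needed.)
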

\begin{proof}
$(i)\Rightarrow (ii)$: If $\rho$ is weakly nuclear and if $(e_\lambda)$ is an approximate identity in $B$, then $(e_\lambda \rho(-) e_\lambda)$ is a net of contractive nuclear maps converging point-strictly to $\rho$. In fact, for any $a\in A$ and $b\in B$ we have
\begin{equation}
\| (\rho(a) - e_\lambda \rho(a) e_\lambda) b \| \leq \| \rho(a)b - e_\lambda \rho(a) b \| + \| e_\lambda\| \| \rho(a)b - \rho(a) e_\lambda b \| \to 0.
\end{equation}
Similarly $\| b(\rho(a) - e_\lambda \rho(a) e_\lambda)\| \to 0$. As each $e_\lambda \rho(-) e_\lambda$ is contractive and nuclear, it point-norm approximately factors via contractive c.p.~maps through matrix algebras, see Lemma \ref{l:nuccontractive}, so it easily follows that $\rho$ is the point-strict limit of c.p.~maps factoring via contractive c.p.~maps through matrix algebras.

$(ii) \Rightarrow (iii)$: This is obvious.

$(iii) \Rightarrow (i)$: Suppose $(\rho_\lambda \colon A \to \multialg{B})_\lambda$ is a net of nuclear maps converging point-strictly to $\rho$. For any $a\in A$ and $b\in B$ we have
\begin{equation}
\| b^\ast \rho(a) b - b^\ast \rho_\lambda(a) b \| \leq \| b\| \| (\rho(a)  - \rho_\lambda(a)) b\| \to 0.
\end{equation}
Hence the net $(b^\ast \phi_\lambda(-) b \colon A \to B)_\lambda$ of nuclear maps converges point-norm to $b^\ast \phi(-) b$ which is therefore nuclear.
\end{proof}

\begin{definition}
Let $A$ be a separable $C^\ast$-algebra and let $B$ be a $\sigma$-unital, stable $C^\ast$-algebra. A $\ast$-homomorphism $\phi \colon A \to \multialg{B}$ is called \emph{nuclearly absorbing} if it absorbs any weakly nuclear $\ast$-homomorphism $A \to \multialg{B}$.
\end{definition}

\begin{remark}[Unital nuclear absorption]
It is obvious that a unital $\ast$-homo\-morphism $\phi \colon A \to \multialg{B}$ can never absorb a non-unital $\ast$-homo\-morphism, in particular, a unital $\ast$-homomorphism cannot be nuclearly absorbing as it will not absorb the zero $\ast$-homomorphism (which is obviously weakly nuclear).
One therefore says that a unital $\ast$-homomorphism $\phi \colon A \to \multialg{B}$ is \emph{unitally nuclearly absorbing} if it absorbs any unital, weakly nuclear $\ast$-homo\-morphism $A \to \multialg{B}$. 

Using Remark \ref{r:unitvsnonunitabs} and Lemma \ref{l:unitisednuc} it is not hard to see  that $\phi \colon A \to \multialg{B}$ is nuclearly absorbing if and only if the (forced) unitisation $\phi^\dagger \colon A^\dagger \to \multialg{B}$ is unitally nuclearly absorbing.

Although the unital case is very important for studying non-stable $\Ext$-theory, for instance \cite{ElliottKucerovsky-extensions}, it will not play a significant role in this paper.
\end{remark}

The following main theorem of this section -- which is a non-unital version of \cite[Theorem 2.22]{DadarlatEilers-classification} -- is an easy corollary of previous results. As was also noted before Proposition \ref{p:absnonunital}, it is crucial both that $B$ is stable and that $\theta(A) \subseteq B$ for the following to hold. 

\begin{theorem}\label{t:fullnucabs}
Let $A$ be a separable $C^\ast$-algebra, let $B$ be a $\sigma$-unital, stable $C^\ast$-algebra, and suppose that $\theta \colon A \to B$ is a full $\ast$-homomorphism. Then any infinite repeat $\theta_\infty \colon A \to \multialg{B}$ of $\theta$ is nuclearly absorbing.

If, in addition, $\theta$ is nuclear (in which case $A$ must be exact) then $\theta_\infty$ is weakly nuclear and nuclearly absorbing.
\end{theorem}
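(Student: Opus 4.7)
The plan is to deduce this essentially immediately from Proposition \ref{p:absnonunital} and Proposition \ref{p:fulldom}. Let $\psi \colon A \to \multialg{B}$ be an arbitrary weakly nuclear $\ast$-homomorphism. By Proposition \ref{p:absnonunital}, $\theta_\infty$ absorbs $\psi$ provided $\theta$ approximately dominates the c.p.\ map $b^\ast \psi(-) b \colon A \to B$ for every $b \in B$. Since $\psi$ is weakly nuclear, each of these maps is by definition nuclear. Fullness of $\theta$ together with Proposition \ref{p:fulldom} then gives that $\theta$ approximately dominates any nuclear map $A \to B$, so in particular the maps $b^\ast \psi(-) b$. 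Hence $\theta_\infty$ absorbs $\psi$, i.e.\ $\theta_\infty$ is nuclearly absorbing.

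For the additional statement, the only new content is that $\theta_\infty$ is weakly nuclear when $\theta$ is (the nuclear absorption then follows from the first part). Fix an infinite sequence of isometries $(t_k)_{k \in \mathbb N}$ in $\multialg{B}$ with orthogonal ranges summing strictly to $1_{\multialg{B}}$, and take $\theta_\infty = \sum_{k=1}^\infty t_k \theta(-) t_k^\ast$. For any $b \in B$ and $a \in A$, strict convergence gives
\begin{equation}
b^\ast \theta_\infty(a) b = \lim_{n \to \infty} \sum_{k=1}^n b^\ast t_k \theta(a) t_k^\ast b
\end{equation}
in norm. Each summand $a \mapsto (t_k^\ast b)^\ast \theta(a) (t_k^\ast b)$ is nuclear, being a compression of the nuclear map $\theta$ (see Observation \ref{o:nuccomp}), and finite sums of nuclear maps are nuclear. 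Since the set of nuclear maps is point-norm closed, $b^\ast \theta_\infty(-) b$ is nuclear, so $\theta_\infty$ is weakly nuclear.

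The parenthetical remark that $A$ must be exact when $\theta$ is a nuclear embedding is just Remark \ref{r:nucemb}. There is no real obstacle here; the entire argument consists in assembling two earlier results (full $\Rightarrow$ approximately dominates all nuclear maps, and approximate domination of every $b^\ast \psi(-) b$ $\Rightarrow$ absorption of $\psi$ by an infinite repeat), and checking that $b^\ast \theta_\infty(-) b$ is a point-norm limit of nuclear maps. The closest thing to a subtlety is ensuring strict convergence of $\theta_\infty$ translates into norm convergence after sandwiching by $b$, but this is built into the definition of the strict topology.
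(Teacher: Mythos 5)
Your proof is correct and follows essentially the same route as the paper: the first part by assembling Propositions \ref{p:fulldom} and \ref{p:absnonunital}, and the ``in addition'' part by writing $b^\ast\theta_\infty(-)b$ as a point-norm limit of finite sums $\sum_{k=1}^N b^\ast t_k\theta(-)t_k^\ast b$ of nuclear maps. The one implicit point worth flagging (which you use correctly) is that a full $\ast$-homomorphism is automatically injective, so ``full and nuclear'' does give a nuclear embedding and hence exactness of $A$ via Remark \ref{r:nucemb}.
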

\begin{proof}
The first part follows immediately from Propositions \ref{p:fulldom} and \ref{p:absnonunital}.

For the ``in addition'' part, we may assume that $\theta_\infty = \sum_{j=1}^\infty t_j \theta(-) t_j^\ast$, where $(t_j)_{j\in \mathbb N}$ is a sequence of isometries in $\multialg{B}$ with orthogonal range projections such that $\sum_{j=1}^\infty t_jt_j^\ast = 1_{\multialg{B}}$. Given $b\in B$, the c.p.~map $b^\ast \theta_\infty(-)b$ is the point-norm limit of the sequence $(\sum_{j=1}^N b^\ast t_j \theta(-) t_j^\ast b)_{N\in \mathbb N}$. As $\theta$ is nuclear, each map $\sum_{j=1}^N b^\ast t_j \theta(-) t_j^\ast b$ is nuclear and thus $b^\ast \theta_\infty(-) b$ is nuclear. Hence $\theta_\infty$ is weakly nuclear.
\end{proof}


\section{Asymptotic intertwining}

A celebrated way of classifying $C^\ast$-algebras is by using an intertwining argument a la Elliott \cite{Elliott-AFclass}. This argument is also used to \emph{lift} isomorphisms of a given invariant to isomorphisms of the $C^\ast$-algebras (see Remark \ref{r:apint} for a minor mistake in the literature in this context).

In this section it is shown that if $\phi_0 \colon A \to B$ and $\psi_0 \colon B \to A$ are $\ast$-homomorphisms of separable $C^\ast$-algebras such that $\psi_0 \circ \phi_0 \sim_{\asu} \id_A$ and $\phi_0 \circ \psi_0 \sim_{\asu} \id_B$, then there is an isomorphism $\phi \colon A \xrightarrow \cong B$ which is \emph{homotopic} to $\phi_0$. Moreover, this homotopy may be chosen to be very well-behaved (for instance, it will be ideal-preserving). This is used in Theorems \ref{t:KP} and \ref{t:nonsimpleclass} to show that the (ideal-related) $KK$-equivalence lifts to an isomorphism of $C^\ast$-algebras.

Given a unitary $u$, let $\Ad u = u^\ast (-) u$ be the induced inner automorphism.\footnote{This definition is only recalled to emphasise that $\Ad u$ denotes $u^\ast(-) u$ and \emph{not} $u(-)u^\ast$.}

\begin{remark}[On approximate intertwining]\label{r:approxintertwining}
To set the reader up for the asymptotic intertwining, the following details on the more classical approximate intertwining a la Elliott are recalled.

Suppose that $A$ and $B$ are separable $C^\ast$-algebras, that 
\begin{equation}
\phi_0 \colon A \to B , \qquad \psi_0 \colon B \to A
\end{equation}
are $\ast$-homomorphisms, and that $(u_n)_{n\in \mathbb N}$ and $(v_n)_{n\in \mathbb N}$ are sequences of unitaries in $\multialg{A}$ and $\multialg{B}$ respectively, such that
\begin{equation}
\lim_{n\to \infty} \| u_n^\ast \psi_0(\phi_0(a)) u_n  - a \| = 0, \qquad \lim_{n\to \infty} \| v_n^\ast \phi_0(\psi_0(b)) v_n  - b \| = 0,
\end{equation}
for all $a\in A$ and $b\in B$. We may find subsequences $(u_{r(n)})$ and $(v_{s(n)})$ (defined recursively), such that if we define
\begin{equation}\label{eq:apintunitaries}
U_n = u_{r(n)} u_{r(n-1)} \cdots u_{r(1)}, \qquad V_n = v_{s(n)} \cdots u_{s(1)},
\end{equation}
with $U_0 = 1_{\multialg{A}}$ and $V_0 = 1_{\multialg{B}}$, as well as
\begin{equation}
 \phi_n = \Ad V_n\circ \phi_0 \circ \Ad U_n^\ast, \qquad \psi_n = \Ad U_{n} \circ \psi_0 \circ \Ad V_{n-1}^\ast,
\end{equation}
for $n\in \mathbb N$, we get a diagram
\begin{equation}\label{eq:approxint}
\xymatrix{
A \ar[rr]^{\id_A} \ar[dr]_{\phi_0} && A \ar[rr]^{\id_A} \ar[dr]_{\phi_1} && A \ar[rr]^{\id_A} \ar[dr]_{\phi_2} && \dots \ar[r] & A \ar[d]_{\phi}^\cong  \\
& B \ar[rr]_{\id_B} \ar[ur]^{\psi_1} && B \ar[rr]_{\id_B} \ar[ur]^{\psi_2} && B \ar[r]_{\id_B} & \dots \ar[r] & B
}
\end{equation}
which is an approximate intertwining as in \cite[Definition 2.3.1]{Rordam-book-classification}. Thus $\phi_n$ converges point-norm to an \emph{isomorphism} $\phi$, and $\psi_n$ converges point-norm to $\psi = \phi^{-1}$, i.e.
\begin{eqnarray}
\lim_{n\to \infty} \|\phi(a)  - (\Ad V_n \circ \phi_0 \circ \Ad U_n^\ast)(a)\| &=& 0 , \\
 \lim_{n\to \infty} \| \psi(b) - (\Ad U_{n} \circ \psi_0 \circ \Ad V_{n-1}^\ast )(b) \| &=& 0,
\end{eqnarray}
for all $a\in A$ and $b\in B$. Note that this does \emph{not} (at least a priori) imply that $\phi$ and $\phi_0$ are approximately unitarily equivalent (see Remark \ref{r:apint}).
\end{remark}

\begin{remark}\label{r:apint}
In \cite[Corollary 2.3.3 and 2.3.4]{Rordam-book-classification} on approximate intertwining, it is claimed that $\phi_0$ and $\phi$ in Remark \ref{r:approxintertwining} are approximately unitarily equivalent. This is true if there are unitaries $\widetilde U_n \in \multialg{B}$ such that $\phi_0 \circ \Ad U_n^\ast = \Ad \widetilde U_n^\ast \circ \phi_0$, and is therefore in particular true if all unitaries are in the minimal unitisations, or if $\phi_0$ is non-degenerate. However, the result may fail in general. 

The mistake appears on Rørdam's list of mistakes in the book.\footnote{\verb+http://www.math.ku.dk/~rordam/Encyclopaedia.html+}

This is related to the subtle annoyance of approximate unitary equivalence (with multiplier unitaries), that it is not preserved by composition. In fact, if $\phi_1, \phi_2 \colon A \to B$ and $\psi \colon B \to C$ are $\ast$-homomorphisms for which $\phi_1$ and $\phi_2$ are approximately unitarily equivalent, then it does \emph{not} follow that $\psi \circ \phi_1$ and $\psi \circ \phi_2$ are approximately unitarily equivalent. However, approximate Murray--von Neumann equivalence \emph{is} preserved by compositions.
\end{remark}

In this paper, whenever one obtains uniqueness of $\ast$-homomorphism with unitaries, the unitaries are (a priori) only in the multiplier algebra and not necessarily in the minimal unitisation as above. Hence the following proposition is included in order to remedy this.

\begin{proposition}\label{p:apintMvN}
Let $A$ and $B$ be separable $C^\ast$-algebras, and suppose that $\phi_0 \colon A \to B$ and $\psi_0 \colon B \to A$ are $\ast$-homomorphisms such that $\psi_0 \circ \phi_0 \sim_{\au} \id_A$  and $\phi_0 \circ \psi_0 \sim_{\au} \id_B$ (with multiplier unitaries). Then there is an isomorphism $\phi \colon A \xrightarrow \cong B$ such that $\phi_0 \sim_{\aMvN} \phi$ and $\psi_0 \sim_{\aMvN} \phi^{-1}$.
\end{proposition}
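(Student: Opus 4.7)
The plan is to invoke the approximate intertwining machinery of Remark \ref{r:approxintertwining} to produce the isomorphism $\phi$, and then to exhibit explicit contractions in $B_\infty$ and $A_\infty$ that witness $\phi_0 \sim_\aMvN \phi$ and $\psi_0 \sim_\aMvN \phi^{-1}$. The point is that what the intertwining argument directly outputs is merely a point-norm limit of conjugates $\Ad V_n \circ \phi_0 \circ \Ad U_n^\ast$; since the multiplier unitaries $U_n$ cannot be pulled across $\phi_0$ in general (as $\phi_0$ need not extend to $\multialg A$), one cannot read off asymptotic unitary, or even Murray--von Neumann, equivalence between $\phi_0$ and $\phi$ for free. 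As Remark \ref{r:apint} already flags, approximate unitary equivalence with multiplier unitaries is exactly the thing that is not preserved by composition. The trick is to truncate $U_n$ against an approximate unit of $A$ before feeding it into $\phi_0$.

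Concretely, after running Remark \ref{r:approxintertwining} to obtain unitaries $U_n \in \multialg A$, $V_n \in \multialg B$ together with the point-norm limits $\phi_n \to \phi$ and $\psi_n \to \phi^{-1}$, I would fix countable approximate units $(e_n)_{n\in\mathbb N}$ of $A$ and $(f_n)_{n\in\mathbb N}$ of $B$ (which, by passing to subsequences inside the intertwining, can be assumed compatible with the convergence rates) and define
\begin{equation}
w_n := \phi_0(e_n U_n^\ast) V_n \in B, \qquad w_n' := \psi_0(f_n V_{n-1}^\ast) U_n \in A.
\end{equation}
Crucially $w_n$ and $w_n'$ land in $B$ and $A$ respectively, since $e_n U_n^\ast \in A$ and $f_n V_{n-1}^\ast \in B$. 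A direct calculation using unitarity of $U_n$ and $V_n$ gives $w_n^\ast w_n = V_n^\ast \phi_0(U_n e_n^2 U_n^\ast) V_n$, which is a contraction, together with
\begin{equation}
w_n^\ast \phi_0(a) w_n = V_n^\ast \phi_0(U_n e_n a e_n U_n^\ast) V_n, \qquad w_n \phi_n(a) w_n^\ast = \phi_0(e_n a e_n).
\end{equation}
Combining $\|e_n a e_n - a\| \to 0$ with $\phi_n \to \phi$ point-norm and $\|w_n\| \leq 1$, the two expressions converge to $\phi(a)$ and $\phi_0(a)$ respectively, so $(w_n)_n \in B_\infty$ implements $\phi_0 \sim_\aMvN \phi$ via Remark \ref{r:MvNelement}. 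The symmetric computation with $(w_n')_n \in A_\infty$ yields $\psi_0 \sim_\aMvN \phi^{-1}$.

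No step here is a serious obstacle once the correct ansatz is in hand; the only subtlety is that the intertwining unitaries live in the multiplier algebras, so one must insert the approximate unit $e_n$ on the appropriate side to keep $w_n$ inside the ideal $B$. This is the design principle behind the product $\phi_0(e_n U_n^\ast) V_n$: the $V_n$ coming from $\Ad V_n$ has to sit on the right so that $w_n^\ast \phi_0(a) w_n$ reproduces $\phi_n(a)$ up to a term of order $\|e_n a e_n - a\|$, while the factor $e_n$ on the left of $U_n^\ast$ is what forces $w_n$ into $B$ rather than merely into $\multialg B$.
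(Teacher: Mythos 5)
Your proof is correct and follows essentially the same route as the paper: the paper sets $w_n = V_n^\ast\phi_0(U_n e_n)$, which is the adjoint of your $w_n = \phi_0(e_n U_n^\ast)V_n$, and verifies the same two limits using $\|w_n\|\leq 1$ to pass from $\phi_n$ to $\phi$. The only cosmetic differences are the orientation of the conjugation and the fact that the paper leaves the $\psi_0 \sim_{\aMvN} \phi^{-1}$ direction to a ``similarly'' while you write out $w_n'$ explicitly; the parenthetical about passing to subsequences to make the approximate units compatible with the convergence rates is unnecessary, since one just chooses $N$ large enough that both $\|\phi_n(a)-\phi(a)\|$ and $\|e_n a e_n - a\|$ are small.
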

\begin{proof}
We adopt the notation from Remark \ref{r:approxintertwining}. Let $(e_n)$ be an approximate identity in $A$, and let $w_n = V_n^\ast \phi_0(U_n e_n)$. We claim that $(w_n)$ implements an approximate Murray--von Neumann equivalence of $\phi_0$ and $\phi$. Similarly, one shows that $\psi_0$ and $\phi^{-1}$ are approximate Murray--von Neumann equivalent. 

Clearly one has $w_n \phi_0(a) w_n^\ast \to \phi(a)$ for any $a\in A$, so it remains to check $w_n^\ast \phi(a) w_n \to \phi_0(a)$ for any $a\in A$. Note that
\begin{eqnarray}
w_n^\ast \phi_n(a) w_n &=& \phi_0(e_n U_n^\ast) V_n^\ast V_n \phi_0(U_n a U_n^\ast) V_n^\ast V_n \phi_0(U_n e_n) \\
&=& \phi_0(e_n a e_n).\label{eq:apintMvN}
\end{eqnarray}
Given $a\in A$ and $\epsilon>0$, pick $N\in \mathbb N$ such that $\| \phi_n(a) - \phi(a)\| < \epsilon/2$ and $\|e_n a e_n - a\| < \epsilon /2$ for $n\geq N$. Then, as $\| w_n \| \leq 1$ for all $n$, we have
\begin{eqnarray}
\| w_n^\ast \phi(a) w_n - \phi_0(a) \| &<& \| w_n^\ast \phi_n(a) w_n - \phi_0(a)\| + \epsilon/2 \\
&\stackrel{\eqref{eq:apintMvN}}{=}& \| \phi_0(e_n a e_n -a) \| + \epsilon/2 \\
&<& \epsilon,
\end{eqnarray}
for $n\geq N$. Hence $\phi \sim_{\aMvN} \phi_0$, and similarly $\phi^{-1} \sim_{\aMvN} \psi_0$.
\end{proof}

\begin{remark}
The following asymptotic intertwining -- Lemma \ref{l:asint} -- looks very technical; here is the main idea: If $(u_t)_{t\in \mathbb R_+}$ and $(v_t)_{t\in \mathbb R_+}$ implement asymptotic unitary equivalences $\psi_0 \circ \phi_0 \sim_\asu \id_A$ and $\phi_0 \circ \psi_0 \sim_\asu \id_B$ respectively, then follow the strategy of Remark \ref{r:approxintertwining} and construct $s, r \colon \mathbb N \to \mathbb N$. The goal will be to construct a (well-behaved) homotopy from $\Ad V_{n} \circ \phi_0 \circ \Ad U_{n}^\ast$ (as defined in Remark \ref{r:approxintertwining}) to
\begin{equation}
\Ad V_{n+1} \circ \phi_0 \circ \Ad U_{n+1}^\ast = \Ad V_n \circ \Ad v_{s(n+1)} \circ \phi_0 \circ \Ad u_{r(n+1)}^\ast \circ \Ad U_n^\ast.
\end{equation}
If done in a suitably nice way, this will induce a (well-behaved) homotopy from $\phi_0$ to the isomorphism $\phi = \lim_{n\to \infty} \Ad V_n \circ \phi_0 \Ad U_n^\ast$.

The idea for constructing this homotopy is very easy: a simple observation shows that
\begin{equation}
\lim_{t\to \infty} \| (\Ad v_{s(n) + t} \circ \phi_0 \circ \Ad u_{r(n) + t} ^\ast )(a) - \phi_0(a) \| = 0, \qquad a\in A,
\end{equation}
so $\Phi \colon A \to C([0,\infty], B)$ given by
\begin{equation}
\Phi(-)(t) = \left\{ \begin{array}{ll} \Ad V_n \circ \Ad v_{s(n+1) +t } \circ \phi_0 \circ \Ad u_{r(n+1)+t}^\ast \circ \Ad U_n^\ast, & t\in [0,\infty) \\
\Ad V_n \circ \phi_0 \circ \Ad U_n^\ast , & t=  \infty \end{array} \right.
\end{equation}
is the desired homotopy.
\end{remark}

\begin{lemma}\label{l:asint}
Let $A$ and $B$ be separable $C^\ast$-algebras and suppose that 
\begin{equation}
\phi_0 \colon A \to B, \qquad \psi_0 \colon B \to A
\end{equation}
are $\ast$-homomorphisms such that $\psi_0 \circ \phi_0 \sim_{\asu} \id_A$ and $\phi_0 \circ \psi_0 \sim_{\asu} \id_B$. Then there exist an isomorphism $\phi \colon A \xrightarrow \cong B$ and (\emph{not necessarily continuous!}) families of unitaries $(U_t)_{t\in \mathbb R_+}$ and $(V_t)_{t\in \mathbb R_+}$ in $\multialg{A}$ and $\multialg{B}$ resepctively, with $U_0 = 1_{\multialg{A}}$ and $V_0 = 1_{\multialg{B}}$, such that
\begin{equation}\label{eq:asintermaps}
\mathbb R_+ \ni t\mapsto (\Ad V_t \circ \phi_0 \circ \Ad U_t^\ast)(a), \qquad [1, \infty) \ni t\mapsto (\Ad U_{t} \circ \psi_0 \circ \Ad V_{t-1}^\ast)(b)
\end{equation}
are continuous and converge to $\phi(a)$ and $\phi^{-1}(b)$ respectively for all $a\in A$ and $b\in B$.
\end{lemma}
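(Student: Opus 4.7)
The plan is to upgrade the discrete Elliott--Rørdam approximate intertwining of Remark \ref{r:approxintertwining} to a genuine asymptotic one by interpolating along the continuous unitary paths that implement the two asymptotic unitary equivalences. First I would fix continuous unitary paths $(u_t)_{t\in \mathbb R_+}\subset \multialg{A}$ and $(v_t)_{t\in \mathbb R_+}\subset \multialg{B}$ witnessing $\psi_0\circ\phi_0\sim_\asu \id_A$ and $\phi_0\circ\psi_0\sim_\asu \id_B$, together with an increasing exhausting sequence of finite sets $\mathcal F_1\subseteq \mathcal F_2\subseteq\dots\subset A$ and a parallel sequence $\mathcal G_n\subset B$. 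Recursively I would pick $r(n),s(n)>0$ and set $U_n := u_{r(n)} u_{r(n-1)}\cdots u_{r(1)}$, $V_n := v_{s(n)} v_{s(n-1)}\cdots v_{s(1)}$ (with $U_0 = 1_{\multialg{A}}$, $V_0 = 1_{\multialg{B}}$), arranging the choices so that the resulting diagram is an approximate intertwining in the classical sense. This produces an isomorphism $\phi \colon A\xrightarrow\cong B$ with $\phi = \lim_n \Ad V_n\circ \phi_0\circ \Ad U_n^*$ in point-norm, and symmetrically $\phi^{-1} = \lim_n \Ad U_n\circ \psi_0\circ \Ad V_{n-1}^*$.

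The key observation driving the interpolation is that for every $a\in A$,
\begin{equation*}
\lim_{s\to \infty} \|(\Ad v_s\circ \phi_0\circ \Ad u_s^*)(a) - \phi_0(a)\| = 0.
\end{equation*}
Indeed the ``$\phi_0\psi_0(b)\mapsto b$'' half of $\psi_0\phi_0\sim_\asu \id_A$ yields $u_s a u_s^* \to \psi_0(\phi_0(a))$, and then the ``$v^\ast \phi_0\psi_0(b)v\to b$'' half of $\phi_0\psi_0\sim_\asu \id_B$ applied to $b = \phi_0(a)$ gives $v_s^*\phi_0(\psi_0(\phi_0(a))) v_s \to \phi_0(a)$. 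Using this, I would fix a decreasing homeomorphism $g \colon (0,1]\to [0,\infty)$ with $g(1)=0$, and for each integer $n\geq 0$ and $t \in (n, n+1]$ define
\begin{equation*}
U_t := u_{r(n+1)+g(t-n)}U_n,\qquad V_t := v_{s(n+1)+g(t-n)}V_n,
\end{equation*}
retaining the discrete values $U_n,V_n$ at integer times. A direct computation with the composition rule $\Ad w_1\circ \Ad w_2 = \Ad(w_2w_1)$ gives
\begin{equation*}
\Ad V_t\circ \phi_0\circ\Ad U_t^* = \Ad V_n\circ \Ad v_{s(n+1)+g(t-n)}\circ \phi_0\circ \Ad u_{r(n+1)+g(t-n)}^*\circ \Ad U_n^*
\end{equation*}
for $t\in (n,n+1]$. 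At $t = n+1$ this equals $\Ad V_{n+1}\circ\phi_0\circ\Ad U_{n+1}^*$ (matching the integer value), and by the key observation it tends point-norm to $\Ad V_n\circ\phi_0\circ\Ad U_n^*$ as $t\to n^+$. Thus the compositions in \eqref{eq:asintermaps} extend continuously across every integer, even though $(U_t)$ and $(V_t)$ themselves have jumps at each positive integer because the parameter $r(n+1)+g(t-n)$ blows up as $t$ leaves $n$ from the right.

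The main obstacle I expect is not continuity at isolated integer times, but uniform control over the entire intermediate slice on each $[n,n+1]$, so that the family of compositions actually converges to $\phi$ and $\phi^{-1}$ as $t\to\infty$; the discrete intertwining only controls the endpoints. To resolve this I would strengthen the recursive selection: at stage $n+1$ first pick $r(n+1)$ so large that $\|u_t b u_t^* - \psi_0(\phi_0(b))\| < 2^{-n}$ holds for every $t\geq r(n+1)$ and every $b$ in the finite set $\{\Ad U_n^*(a) : a\in \mathcal F_n\}$, and then pick $s(n+1)$ so large that $\|v_t^*\phi_0(\psi_0(c)) v_t - c\| < 2^{-n}$ for all $t\geq s(n+1)$ and all $c\in \{\phi_0(\Ad U_n^*(a)) : a\in \mathcal F_n\}$, together with the symmetric $\mathcal G_n$-estimates used to pin down $\phi^{-1}$. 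Since these tail estimates hold on all of $[r(n+1),\infty)$ and $[s(n+1),\infty)$, the key observation's proof bounds $\|\Ad V_n\circ \Ad v_{s(n+1)+s}\circ \phi_0\circ \Ad u_{r(n+1)+s}^*\circ \Ad U_n^*(a) - \Ad V_n\circ \phi_0\circ \Ad U_n^*(a)\|$ by $O(2^{-n})$ uniformly in $s\geq 0$ for $a\in \mathcal F_n$. Combined with the Cauchy convergence of the discrete sequence on each $\mathcal F_n$, this gives point-norm convergence of the maps in \eqref{eq:asintermaps} on all of $A$ and $B$ to $\phi$ and $\phi^{-1}$, completing the proof.
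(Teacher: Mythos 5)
Your proposal is correct and follows essentially the same approach as the paper's proof. The paper also constructs orientation-reversing homeomorphisms $h_n\colon(n,n+1]\to[r(n+1),\infty)$ and $k_n\colon(n,n+1]\to[s(n+1),\infty)$ (your $r(n+1)+g(t-n)$ and $s(n+1)+g(t-n)$ are a specific choice), strengthens the recursive selection to demand the intertwining estimates on the whole tails $[r(n),\infty)$ and $[s(n),\infty)$ rather than at isolated times, uses your ``key observation'' to check right-continuity of $t\mapsto\phi_t(a)$ at each integer, and finishes with an $\epsilon/5$ argument to get convergence to $\phi(a)$ uniformly over each parameter interval.
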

\begin{proof}
Let $(u_t)_{t\in \mathbb R_+}$ and $(v_t)_{t\in \mathbb R_+}$ be continuous unitary paths in $\multialg{A}$ and $\multialg{B}$ respectively such that
\begin{equation}
\lim_{t\to \infty} \| u_t^\ast \psi_0(\phi_0(a)) u_t  - a \| = 0, \qquad \lim_{n\to \infty} \| v_t^\ast \phi_0(\psi_0(b)) v_t  - b \| = 0,
\end{equation}
for all $a\in A$ and $b\in B$. The first part of the proof is very much just a usual intertwining argument. The details will be filled in since some of the estimates below will be needed.\footnote{I could had easily waved my hands, and said something like ``the unitaries in the classical intertwining, may be chosen with these additional properties'', but I choose to fill in all the details for completion.}

Let $\mathcal F_1' \subset \mathcal F_2' \subset \dots \subset A$ and $\mathcal G_1' \subset \mathcal G_2' \subset \dots \subset B$ be nested sequences of finite sets such that $\overline{\bigcup \mathcal F_n'} =A$ and $\overline{\bigcup \mathcal G_n'}=B$. We construct $\mathcal F_n$, $\mathcal G_n$, $U_n$, $V_n$, $r(n)$ and $s(n)$ recursively. Let $\mathcal F_1 = \mathcal F_1'$ and pick $1\leq r(1) \in \mathbb R_+$ such that
\begin{equation}
\| u_t^\ast \psi_0(\phi_0(x)) u_t - x \| \leq 2^{-1} , \qquad t\geq r(1), \; x\in \mathcal F_1.
\end{equation}
Let $U_1 := u_{r(1)}$. Let $\mathcal G_1 = \phi_0(\mathcal F_1) \cup \mathcal G_1'$, pick $1 \leq s(1)\in \mathbb R_+$ such that
\begin{equation}
\| v_t^\ast \phi_0(\psi_0(y)) v_t - y \| \leq 2^{-1} , \qquad t\geq s(1), \; y\in \mathcal G_1,
\end{equation}
and define $V_1 := v_{s(1)}$.

Having constructed $\mathcal F_{n-1}$, $\mathcal G_{n-1}$, $U_{n-1}$, $V_{n-1}$, $r(n-1)$ and $s(n-1)$, we construct the next step as follows: Let 
\begin{equation}
\mathcal F_n = u_{n-1} \mathcal F_{n-1} u_{n-1}^\ast \cup  \psi_0(\mathcal G_{n-1}) \cup \mathcal F_n' \cup U_{n-1} \mathcal F_{n-1} U_{n-1}^\ast,
\end{equation}
pick $\max(n, r(n-1)) \leq r(n) \in \mathbb R_+$, such that
\begin{equation}\label{eq:apintF}
\| u_t^\ast \psi_0(\phi_0(a)) u_t - a \| \leq 2^{-n} , \qquad t\geq r(n), \; a\in \mathcal F_n,\footnote{In the classical intertwining, this norm estimate is only chosen for $t=r(n)$. Here it is needed for \emph{all} $t\geq r(n)$, which is of course possible.}
\end{equation}
and let $U_n = u_{r(n)} U_{n-1}$. Similarly, let
\begin{equation}
\mathcal G_n = v_{s(n-1)} \mathcal G_{n-1} v_{s(n-1)}^\ast \cup \phi_0(\mathcal F_n) \cup \mathcal  G_n' \cup V_{n-1} \mathcal G_{n-1} V_{n-1}^\ast,
\end{equation}
pick $\max(n, s(n-1)) \leq s(n) \in \mathbb R_+$, such that
\begin{equation}\label{eq:apintG}
\| v_t^\ast \phi_0(\psi_0(b)) v_t - b \| \leq 2^{-n} , \qquad t\geq s(n), \; b\in \mathcal G_n,
\end{equation}
and let $V_n = v_{s(n)} V_{n-1}$. Note that we picked $r(n),s(n)\geq n$, to ensure that $r(n)$ and $s(n)$ both tend to $\infty$. As in the usual approximate intertwining, let
\begin{equation}
 \phi_n = \Ad V_n\circ \phi_0 \circ \Ad U_n^\ast, \qquad \psi_n = \Ad U_{n} \circ \psi_0 \circ \Ad V_{n-1}^\ast.
\end{equation}
These induce an approximate intertwining as \eqref{eq:approxint}, see \cite[Definition 2.3.1]{Rordam-book-classification},\footnote{With the notation in \cite[Definition 2.3.1]{Rordam-book-classification} one has $\alpha_n = \id_A$ and $\beta_n = \id_B$.} so by \cite[Proposition 2.3.2]{Rordam-book-classification} it follows that $\phi_n$ converges point-norm to an isomorphism $\phi \colon A \xrightarrow \cong B$ and $\psi_n$ converges point-norm to $\phi^{-1}$.

Fix (necessarily orientation reversing) homeomorphisms
\begin{equation}
h_n \colon (n,n+1] \to [r(n+1), \infty), \qquad k_n \colon (n,n+1] \to [s(n+1), \infty),
\end{equation}
for all $n\in \mathbb N_0$, so in particular $h_n(n+1) = r(n+1)$ and $k_n(n+1) = s(n+1)$. We define $U_t$ and $V_t$ for all $t\in \mathbb R_+$, by letting $U_0=1_{\multialg{A}}$, $V_0 = 1_{\multialg{B}}$, and
\begin{equation}
U_{t}  := u_{h_n(t)} U_n, \qquad V_{t} := v_{k_n(t)} V_n
\end{equation}
for all $n\in \mathbb N_0$ and $t\in (n,n+1]$. This is well-defined, since 
\begin{equation}
U_{n+1} = u_{r(n+1)} U_n = u_{h_n(n+1)} U_n
\end{equation}
for all $n\in \mathbb N_0$, and similarly for the $V_n$'s. Let 
\begin{equation}
\phi_t := \Ad V_t \circ \phi_0 \circ \Ad U_t^\ast
\end{equation}
for $t\in \mathbb R_+$, and 
\begin{equation}
\psi_t := \Ad U_t \circ \psi_0 \circ \Ad V_{t-1}^\ast
\end{equation}
for $t\in [1,\infty)$.

Given $a\in A$, we check that $t \mapsto \phi_t(a)$ is continuous on $\mathbb R_+$. A similar argument shows that $t\mapsto \psi_t(b)$ is continuous on $t\in [1,\infty)$, so this is omitted. Clearly $t \mapsto U_t$ and $t\mapsto V_t$ are continuous when restricted to intervals $(n,n+1]$. Hence $t \mapsto \phi_t(a)$ is continuous in any point $\mathbb R_+ \setminus \mathbb N_0$, and is left continuous at all points in $\mathbb N$. Hence, it suffices to show that $t \mapsto \phi_t(a)$ is right continuous at any point $n$ in $\mathbb N_0$. Note that
\begin{eqnarray}
\psi_0(\phi_0(U_n a U_n^\ast) ) &=& \lim_{t\to \infty} u_t U_n a U_n^\ast u_t^\ast \nonumber\\
&=& \lim_{t\to n^+} u_{h_n(t)} U_n a U_n^\ast u_{h_n(t)}^\ast \nonumber\\
&=& \lim_{t\to n^+} U_t a U_t^\ast
\end{eqnarray}
and similarly
\begin{eqnarray}
&& V_n^\ast \phi_0 (U_n a U_n^\ast) V_n \nonumber\\
&=& \lim_{t\to \infty} V_n^\ast v_t^\ast \phi_0(\psi_0( \phi_0(U_n a U_n^\ast))) v_t V_n \nonumber\\
&=& \lim_{t\to n^+} V_n^\ast v_{k_n(t)}^\ast \phi_0(\psi_0(\phi_0( U_n a U_n^\ast )))v_{k_n(t)}^\ast V_n \nonumber\\
&=& \lim_{t \to n^+} V_t^\ast \phi_0 (\psi_0(\phi_0(U_n a U_n^\ast))) V_t.
\end{eqnarray}
It easily follows (since $\Ad V_t \circ \phi_0$ is contractive for each $t$), that
\begin{eqnarray}
\lim_{t\to n^+} \phi_t(a) &=& \lim_{t\to n^+} ( \Ad V_t \circ \phi_0 \circ \Ad U_t^\ast)(a) \nonumber\\
&=& \lim_{t\to n^+} (\Ad V_t \circ \phi_0 \circ \psi_0 \circ \phi_0\circ \Ad U_n^\ast)(a) \nonumber\\
&=& (\Ad V_n \circ \phi_0 \circ \Ad U_n^\ast)(a) \nonumber\\
&=& \phi_n(a).
\end{eqnarray}
Hence $t \mapsto \phi_t(a)$ is right continuous in $n$, and thus continuous on all $\mathbb R_+$. It remains to check that $\phi_t(a) \to \phi(a)$ and $\psi_t(b) \to \psi(b)$ for all $a\in A$ and $b\in B$. Again, we only check the former, as the latter is shown in the exact same way.

Fix $a\in A$ and $\epsilon >0$. Pick $N \in \mathbb N$ with $2^{-N} < \epsilon /5$ such that there is an $a'\in \mathcal F_{{N}-1}$ with $\| a - a'\| < \epsilon/5$, and such that $\| \phi(a') - \phi_n(a') \| < \epsilon/5$ for any $n\geq N$. We will check that $\| \phi_t(a) - \phi(a) \| < \epsilon$ for any $t\geq N$. This will finish the proof. Fix $n$ such that $t\in (n,n+1]$ and note that $n\geq N$. In the following, the notation $c \approx_\delta d$ means $\| c - d\| \leq \delta$. Thus
\begin{eqnarray}
&& \phi_t(a) \nonumber\\
&\approx_{\epsilon/5}& \phi_t(a') \nonumber\\
&=\quad \; &  ( \Ad V_n \circ \Ad v_{k_n(t)} \circ \phi_0\circ \Ad u_{h_n(t)}^\ast \circ \Ad U_n^\ast)( a')\nonumber\\
&\approx_{\epsilon/5}& (\Ad V_n \circ \Ad v_{k_n(t)}  \circ  \phi_0 \circ \psi_0 \circ \phi_0 \circ \Ad U_n^\ast) (a') \label{eq:Vnvkntphi0} \\
&\approx_{\epsilon/5}& ( \Ad V_n \circ \phi_0 \circ \Ad U_n^\ast) (a') \label{eq:Vnphi0Un}\\
&=\quad \;& \phi_n(a') \nonumber\\
& \approx_{\epsilon/5}& \phi(a') \nonumber\\
&\approx_{\epsilon/5}& \phi(a).
\end{eqnarray}
At \eqref{eq:Vnvkntphi0} we used that $a' \in \mathcal F_{N - 1} \subseteq \mathcal F_{n-1}$, so $x = U_n a' U_n^\ast \in \mathcal F_n$ (by construction of $\mathcal F_n$), and thus \eqref{eq:Vnvkntphi0} follows from \eqref{eq:apintF} and the choice of $N$. 

Similarly, at \eqref{eq:Vnphi0Un} we used (since $U_n a' U_n^\ast \in \mathcal F_n$) that $y = \phi_0(U_n a' U_n^\ast) \in \mathcal G_n$, so \eqref{eq:Vnphi0Un} follows from \eqref{eq:apintG} and the choice of $N$. This finishes the proof.
\end{proof}

\begin{proposition}\label{p:asint}
Let $A$ and $B$ be separable $C^\ast$-algebras, and suppose that $\phi_0 \colon A \to B$ and $\psi_0 \colon B \to A$ are $\ast$-homomorphisms such that $\psi_0 \circ \phi_0 \sim_{\asu} \id_A$ and $\phi_0 \circ \psi_0 \sim_{\asu} \id_B$. Then there exist an isomorphism $\phi \colon A \xrightarrow \cong B$, and a homotopy $(\phi_s)_{s\in [0,1]}$ from $\phi_0$ to $\phi$, such that $\phi_s \sim_{\aMvN} \phi_t$ for all $s,t\in [0,1]$.
\end{proposition}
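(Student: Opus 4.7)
The plan is to apply Lemma \ref{l:asint} to extract a continuous path from $\phi_0$ to an isomorphism $\phi$, reparametrize the parameter to $[0,1]$, and then verify that all maps along the path are pairwise approximately Murray--von Neumann equivalent.

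First, Lemma \ref{l:asint} provides families of unitaries $(U_t)_{t \in \mathbb{R}_+}$ in $\multialg{A}$ and $(V_t)_{t \in \mathbb{R}_+}$ in $\multialg{B}$ (with $U_0 = 1$, $V_0 = 1$) such that the $\ast$-homomorphisms $\phi_t := \Ad V_t \circ \phi_0 \circ \Ad U_t^\ast$ vary point-norm continuously in $t$ and converge point-norm to $\phi$. I would then set $\phi_s := \phi_{s/(1-s)}$ for $s \in [0,1)$ and $\phi_1 := \phi$; continuity of the resulting path at $s = 1$ is precisely the point-norm convergence $\phi_t \to \phi$.

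Second -- the algebraic core of the argument. A direct calculation shows that for $s, t \in [0,1)$ there exist unitaries $u \in \multialg{A}$ and $v \in \multialg{B}$ (namely suitable products of the $U$'s and $V$'s at the two parameter values) such that $\phi_s = \Ad v \circ \phi_t \circ \Ad u^\ast$. It therefore suffices to establish two auxiliary facts about any $\ast$-homomorphism $\eta \colon C \to D$: (a) $\eta \sim_{\aMvN} \Ad w \circ \eta$ for any unitary $w \in \multialg{D}$, witnessed by $w$ itself regarded as a contraction in $\multialg{D}$; and (b) $\eta \sim_{\aMvN} \eta \circ \Ad u$ for any unitary $u \in \multialg{C}$, witnessed by the sequence of contractions $\eta(e_n u) \in D$, where $(e_n)$ is an approximate identity in $C$ (the key identity being $\eta(u^\ast e_n a e_n u) \to \eta(u^\ast a u)$ and its symmetric counterpart). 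Chaining (b) and then (a) through the factorization $\phi_s = \Ad v \circ \phi_t \circ \Ad u^\ast$ yields $\phi_s \sim_{\aMvN} \phi_t$.

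Finally, to extend to the endpoint $s = 1$: approximate Murray--von Neumann equivalence is closed under point-norm limits in either argument, by a routine $\epsilon/2$-estimate that uses the contractivity of conjugation by a contraction. Since $\phi_{s'} \to \phi_1$ point-norm as $s' \to 1^-$, this gives $\phi_s \sim_{\aMvN} \phi_1$ for every $s \in [0,1)$, finishing the proof. The only mildly subtle point is step (b): post-composition with an inner automorphism on the codomain side is trivially implemented by the unitary itself, but precomposition on the domain side requires the approximate-identity trick to cut $\eta(u)$ down to an actual element of $D$ (since $\eta$ need not extend to multipliers). All the genuine work has already been absorbed into Lemma \ref{l:asint}, so I do not anticipate any further obstacle.
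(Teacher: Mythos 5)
Your argument is correct and runs parallel to the paper's for the most part. Both proofs start from Lemma \ref{l:asint}, and for finite parameter values the witness $a_n = \phi_0(e_n U_t^\ast)V_t$ that the paper uses to show $\phi_0 \sim_{\aMvN} \phi_t$ is exactly the composition of your two auxiliary witnesses (a) and (b) -- you have just factored that computation into the general observations that post-composing by a multiplier-inner automorphism and pre-composing by one both preserve $\sim_{\aMvN}$, the latter after cutting down by an approximate identity so the witness lands in $D$. The one genuine point of divergence is at the endpoint: the paper invokes the argument from Proposition \ref{p:apintMvN} (again threading through the intertwining data to build an explicit sequence $w_n = V_n^\ast\phi_0(U_n e_n)$), whereas you appeal to the general fact that $\sim_{\aMvN}$ is closed under point-norm limits in either coordinate. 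That closure fact is true -- the $\epsilon/2$ estimate you sketch is exactly right, relying on contractivity of the witnesses -- and since you already know $\phi_s \sim_{\aMvN} \phi_{s'}$ for all $s' < 1$ and $\phi_{s'} \to \phi_1$ point-norm, it immediately gives $\phi_s \sim_{\aMvN} \phi_1$. This is somewhat more economical than the paper's handling of the endpoint and is a reasonable thing to abstract out, though both routes ultimately amount to the same calculation.
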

\begin{proof}
For convenience, we replace $[0,1]$ with $[0,\infty]$. Let $\phi$, $(U_t)_{t\in \mathbb R_+}$ and $(V_t)_{t\in \mathbb R_+}$ be given as in Lemma \ref{l:asint}. Let
\begin{equation}
\phi_t = \left\{ \begin{array}{ll}
\Ad V_t \circ \phi_0 \circ \Ad U_t^\ast & \textrm{ for } t\in [0,\infty),  \\
\phi & \textrm{ for } t=\infty.
\end{array} \right.
\end{equation}
By Lemma \ref{l:asint}, this gives a well-defined homotopy from $\phi_0$ to $\phi$. 

To show $\phi_s \sim_{\aMvN} \phi_t$ for all $s,t\in [0,\infty]$, it is enough to show $\phi_0 \sim_{\aMvN} \phi_t$ for $t\in (0,\infty]$.

If $t\in (0,\infty)$, $(e_n)$ is an approximate identity in $A$, and $a_n = \phi_0(a_n U_t^\ast) V_t$, then 
\begin{equation}
a_n^\ast \phi_0(-) a_n = \phi_t(e_n(-)e_n) \to \phi_t, \qquad a_n \phi_t(-) a_n^\ast = \phi_0(e_n (-) e_n) \to \phi_0,
\end{equation}
point-norm, so $\phi_0 \sim_{\aMvN} \phi_t$.

If $t=\infty$, an argument identical to that in the proof of Proposition \ref{p:apintMvN}, shows that $\phi_0$ and $\phi$ are approximately\footnote{\emph{Not} asymptotically, since the maps $t \mapsto U_t$ and $t\mapsto V_t$ are not necessarily continuous.} Murray--von Neumann equivalent.
\end{proof}

\begin{question}
Is it possible to pick an isomorphism $\phi$ in Proposition \ref{p:asint}, such that $\phi$ and $\phi_0$ are asymptotically Murray--von Neumann equivalent?
\end{question}

\begin{remark}
Note that in Proposition \ref{p:asint}, one does \emph{not} get that $\psi_0$ is homotopic to $\phi^{-1}$. However, one does get that $\Ad U_1 \circ \psi_0$ \emph{is} homotopic to $\phi^{-1}$. So if one can choose a path of unitaries $(u_t)_{t\in \mathbb R_+}$ in $\multialg{A}$ which implements $\psi_0 \circ \phi_0 \sim_\asu \id_A$ and which satisfies $u_0 = 1_{\multialg{A}}$, then $\psi_0$ and $\phi^{-1}$ are also homotopic. This is for instance always the case if $A$ is stable, since the unitary group $\multialg{A}$ is then path-connected by \cite{CuntzHigson-Kuipersthm}.
\end{remark}

\begin{remark}
As observed in \cite[Remark 3.14]{Gabe-O2class}, if one lets $\phi \colon \mathcal O_2 \to \mathcal O_2 \otimes \mathcal K$ and $\psi \colon \mathcal O_2 \otimes \mathcal K \to \mathcal O_2$ be embeddings, then $\psi \circ \phi \sim_{\asMvN} \id_{\mathcal O_2}$ and $\phi \circ \psi \sim_{\asMvN} \id_{\mathcal O_2 \otimes \mathcal K}$. Thus, asymptotic (and approximate) Murray--von Neumann equivalence is not strong enough of an equivalence relation to get classification up to isomorphism. However, by \cite[Corollary 3.13]{Gabe-O2class}, one does obtain classification up to \emph{stable} isomorphism.
\end{remark}


\section{A unitary path and some key lemmas}\label{s:unitary}

This section is dedicated to showing that there is unitary path in the unitisation $(\mathcal O_2 \otimes \mathcal K)^\sim$ with some very desirable properties. Constructing this path is very elementary and it is the new key ingredient in proving the main theorems of this paper. The only properties of $\mathcal O_2$ that are used are that $M_3(\mathcal O_2) \cong \mathcal O_2$, that any two non-trivial projections\footnote{By non-trivial, I mean that they are non-zero and not the unit.} in $\mathcal O_2$ are unitarily equivalent, and that the unitary group of $\mathcal O_2$ is path-connected.

\begin{lemma}\label{l:unitarypath}
There exists a continuous path of unitaries $(u_t)_{t\in \mathbb R_+}$ in $(\mathcal O_2 \otimes \mathcal K)^\sim$ with $u_0 = 1_{(\mathcal O_2 \otimes \mathcal K)^\sim}$, such that the following hold:
\begin{itemize}
\item[$(a)$] $(u_t^\ast (1_{\mathcal O_2}\otimes e_{1,1}) u_t)_{t\in \mathbb R_+}$ is a continuous (not necessarily increasing) approximate identity of projections for $\mathcal O_2\otimes \mathcal K$;
\item[$(b)$] for any $x\in \mathcal O_2\otimes \mathcal K$, $u_t x$ converges in norm to an element in $\mathcal O_2\otimes \mathcal K$ as $t\to \infty$.
\end{itemize}
\end{lemma}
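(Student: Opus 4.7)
The plan is to construct $u_t$ inductively on intervals $[n,n{+}1]$. Write $B := \mathcal O_2\otimes \mathcal K$, $p_0 := 1_{\mathcal O_2}\otimes e_{1,1}$, and $q_n := 1_{\mathcal O_2}\otimes \sum_{i=1}^n e_{i,i}$, so that $(q_n)$ is an increasing approximate identity of projections in $B$. Iterating property (1), every corner $q_m B q_m \cong M_m(\mathcal O_2)$ is itself isomorphic to $\mathcal O_2$. The goal is to produce unitaries $u_n \in (B)^\sim$ with $u_0=1$ satisfying: (i) $u_n-1 \in q_{m_n} B q_{m_n}$ for some $m_n \to \infty$; (ii) $u_n^\ast p_0 u_n = q_{n+1}$; and (iii) $u_{n+1}q_n = u_n q_n$. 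The path $(u_t)_{t \in [n,n+1]}$ will interpolate between $u_n$ and $u_{n+1}$ while preserving $u_t q_n = u_n q_n$.

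For the inductive step, given $u_n$, condition (ii) gives $u_n q_{n+1} u_n^\ast = p_0$, so $P_n := u_n q_n u_n^\ast$ is a proper subprojection of $p_0$ (since $q_n < q_{n+1}$), and $p_0 - P_n$ is a non-trivial projection in $\mathcal O_2 = p_0 B p_0$. By property (1), the corner $(p_0-P_n)\mathcal O_2(p_0-P_n) \cong \mathcal O_2$ admits a splitting $p_0 - P_n = R_1 + R_2$ into two orthogonal non-trivial subprojections. Setting $m_{n+1} := m_n+2$, I construct $u_{n+1}$ in the corner $q_{m_{n+1}} B q_{m_{n+1}} \cong \mathcal O_2$ so that it agrees with $u_n$ on $q_n$, sends $1_{\mathcal O_2}\otimes e_{n+1,n+1}$ to $R_1$ and $1_{\mathcal O_2}\otimes e_{n+2,n+2}$ to $R_2$ (yielding $u_{n+1}q_{n+2} u_{n+1}^\ast = P_n + R_1 + R_2 = p_0$, i.e., (ii) at level $n+1$), and extends to a unitary of the corner by property (2) applied to the (non-zero, hence unitarily equivalent) complementary projections in $M_{m_{n+1}}(\mathcal O_2) \cong \mathcal O_2$. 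To interpolate, observe that $u_n^{-1} u_{n+1}$ fixes $q_n$ and lies in the unitary group of $(q_{m_{n+1}}-q_n)B(q_{m_{n+1}}-q_n) \cong \mathcal O_2$, which is path-connected by property (3); the resulting path, pre-multiplied by $u_n$, gives a continuous family from $u_n$ to $u_{n+1}$ satisfying $u_t q_n = u_n q_n$ throughout.

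For verification: along the path, $u_t q_n = u_n q_n$ together with $u_n^\ast p_0 u_n = q_{n+1} \geq q_n$ implies $p_0 u_t q_n = p_0 u_n q_n = u_n q_n = u_t q_n$, whence $u_t^\ast p_0 u_t \geq q_n$ for $t \in [n,n+1]$; continuity of $u_t$ makes $t \mapsto u_t^\ast p_0 u_t$ a continuous family of projections whose domination by the $q_n$'s forces strict convergence to $1_{\multialg{B}}$, giving (a). For (b), condition (iii) yields $u_t x = u_n x$ for all $t \geq n$ and $x \in q_n B$; density of $\bigcup_n q_n B$ in $B$ together with $\|u_t\| = 1$ gives norm convergence of $(u_t x)$ for every $x \in B$. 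The hardest part is the simultaneous satisfaction of (ii) and (iii) in the inductive step: (iii) constrains $u_{n+1}$ on $q_n$, while (ii) demands conjugation of $q_{n+2}$ onto $p_0$. The resolution rests on the self-similarity encoded in property (1), which allows fresh orthogonal $\mathcal O_2$-pieces to be carved out of $p_0 - P_n$ at each step, while property (2) supplies the conjugating partial isometries inside these pieces.
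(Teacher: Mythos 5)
Your proof is correct and uses essentially the same approach as the paper: an inductive construction of unitaries $u_n$ satisfying $u_n^\ast(1_{\mathcal O_2}\otimes e_{1,1})u_n = \sum_{k=1}^{n+1}1_{\mathcal O_2}\otimes e_{k,k}$, each supported on a finite corner of $\mathcal O_2 \otimes \mathcal K$, interpolated continuously via path-connectedness of the unitary group of $\mathcal O_2$. The paper implements each inductive step inside a fixed-width $M_3(\mathcal O_2)$ corner at positions $n{+}1,n{+}2,n{+}3$, while you extend partial isometries abstractly via equivalence of non-trivial projections on a growing corner $q_{m_{n+1}}$, but the mechanism (and the three facts about $\mathcal O_2$ invoked) are identical.
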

\begin{proof}
We construct unitary paths $v_{n,t}$ for $n\in \mathbb N_0$, $t\in [n,n+1]$ with $v_{n,n}=1_{(\mathcal O_2 \otimes \mathcal K)^\sim}$, and let $u_t = u_n v_{n,t}$ for $t\in [n,n+1]$ (defined recursively). In particular, $u_0 = v_{0,0} = 1_{(\mathcal O_2 \otimes \mathcal K)^\sim}$.

Recall the following facts about $\mathcal O_2$, which may be obtained using results from \cite{Cuntz-K-theoryI}: whenever $p,q\in \mathcal O_2$ are projections such that $0<p,q<1_{\mathcal O_2}$, then $p$ and $q$ are Murray--von Neumann equivalent, and $1_{\mathcal O_2}-p$ and $1_{\mathcal O_2}-q$ are Murray--von Neumann equivalent. Thus there is a unitary $u\in \mathcal O_2$, such that $u^\ast p u = q$. Moreover, as the unitary group of $\mathcal O_2$ is connected we may pick a continuous path $[0,1] \ni t \mapsto r_t\in \mathcal O_2$ of unitaries with $r_0 = 1_{\mathcal O_2}$ and $r_1 = u$. Fix $n\in \mathbb N_0$. As
\begin{equation}
\left( \sum_{k=n+1}^{n+3} 1_{\mathcal O_2} \otimes e_{k,k} \right) \left( \mathcal O_2 \otimes \mathcal K \right) \left( \sum_{k=n+1}^{n+3} 1_{\mathcal O_2} \otimes e_{k,k} \right) \cong M_3(\mathcal O_2) \cong \mathcal O_2,
\end{equation}
we may find a norm continuous path $[n,n+1] \ni t \mapsto r_{n,t} \in \mathcal O_2 \otimes \mathcal K$, such that
\begin{equation}
r_{n,n} = r_{n,t}^\ast r_{n,t} = r_{n,t} r_{n,t}^\ast = \sum_{k=n+1}^{n+3} 1_{\mathcal O_2} \otimes e_{k,k}, \quad \text{ for all }t\in [n,n+1],
\end{equation}
and such that
\begin{equation}\label{eq:O2stuff}
r_{n,n+1}^\ast ( 1_{\mathcal O_2} \otimes e_{n+1,n+1}) r_{n,n+1} = 1_{\mathcal O_2} \otimes (e_{n+1,n+1} + e_{n+2,n+2}).
\end{equation}
Let $v_{n,t} = r_{n,t} + \left( 1_{(\mathcal O_2 \otimes \mathcal K)^\sim} - \sum_{k=n+1}^{n+3} 1_{\mathcal O_2} \otimes e_{k,k} \right)$ for $t\in [n,n+1]$. Clearly $v_{n,t}$ is a unitary for each $n\in \mathbb N_0$, $t\in [n,n+1]$ and $v_{n,n} = 1_{(\mathcal O_2 \otimes \mathcal K)^\sim}$. Thus 
\begin{equation}
u_t := u_n v_{n,t} = v_{0,1} v_{1,2} \cdots v_{n-1,n} v_{n,t}
\end{equation}
for $t\in [n,n+1]$ defines a continuous path of unitaries $(u_t)_{t\in \mathbb R_+}$ in $(\mathcal O_2 \otimes \mathcal K)^\sim$. 

An easy induction argument shows that $u_n^\ast (1_{\mathcal O_2}\otimes e_{1,1}) u_n = \sum_{k=1}^{n+1} 1_{\mathcal O_2} \otimes e_{k,k}$ for $n\in \mathbb N_0$. This is obvious for $n=0$, and by induction
\begin{eqnarray}
u_n^\ast (1_{\mathcal O_2}\otimes e_{1,1}) u_n &=& v_{n-1,n}^\ast u_{n-1}^\ast (1_{\mathcal O_2} \otimes e_{1,1}) u_{n-1} v_{n-1,n} \nonumber\\
&=& v_{n-1,n}^\ast \left( \sum_{k=1}^n 1_{\mathcal O_2} \otimes e_{k,k} \right) v_{n-1,n} \nonumber\\
&=& \sum_{k=1}^{n-1} 1_{\mathcal O_2} \otimes e_{k,k} + r_{n-1,n}^\ast (1_{\mathcal O_2} \otimes e_{n,n}) r_{n-1,n} \nonumber\\
&\stackrel{\eqref{eq:O2stuff}}{=}& \sum_{k=1}^{n+1} 1_{\mathcal O_2} \otimes e_{k,k}.\label{eq:une11un}
\end{eqnarray}

We prove $(a)$ and $(b)$.

$(a)$: Clearly each $u_t^\ast (1_{\mathcal O_2}\otimes e_{1,1}) u_t$ is a projection. For $t\in [n,n+1]$ one has
\begin{eqnarray}
u_t^\ast(1_{\mathcal O_2}\otimes e_{1,1}) u_t &=& v_{n,t}^\ast u_{n}^\ast (1_{\mathcal O_2}\otimes e_{1,1}) u_n v_{n,t} \nonumber\\
&\stackrel{\eqref{eq:une11un}}{=}& v_{n,t}^\ast \left( \sum_{k=1}^{n+1} 1_{\mathcal O_2}\otimes e_{k,k} \right) v_{n,t} \nonumber\\
&= & \sum_{k=1}^n 1_{\mathcal O_2} \otimes e_{k,k} + r_{n,t}^\ast (1_{\mathcal O_2} \otimes e_{n+1,n+1}) r_{n,t} \nonumber\\
& \geq& \sum_{k=1}^n 1_{\mathcal O_2} \otimes e_{k,k}.
\end{eqnarray}
This implies that $u_t^\ast(1_{\mathcal O_2}\otimes e_{1,1} ) u_t$ is a not necessarily increasing approximate identity.

$(b)$: Let $x\in \mathcal O_2\otimes \mathcal K$, and fix $\epsilon >0$. Pick $n\in \mathbb N$ and $x_0\in \mathcal O_2\otimes M_n \subseteq \mathcal O_2 \otimes \mathcal K$ such that $\| x - x_0\| <\epsilon/2$. For any $t\geq n$ we have $v_{\lfloor t \rfloor, t} x_0 = x_0$, and also, $v_{n+j, n+j+1} x_0 = x_0$ for any $j\geq 0$, so
\begin{equation}
u_t x_0 = v_{0,1} \dots v_{n-1,n} v_{n,n+1} \dots v_{\lfloor t \rfloor, t} x_0 = v_{0,1} \dots v_{n-1,n} x_0
\end{equation}
is constant for $t\geq n$. Hence for $s,t \geq n$ we get
\begin{equation}
\| u_s x - u_t x\| < \| u_s x_0 - u_t x_0\| + \epsilon = \epsilon,
\end{equation}
and therefore $(u_t x)_{t\in \mathbb R_+}$ converges to some element. 
\end{proof}

As opposed to \emph{being} an approximate identity in a $C^\ast$-algebra it is convenient to consider nets of multipliers which \emph{act} as approximate identities.

More precisely, let $D$ be a $C^\ast$-algebra. A net $(e_\lambda)_{\lambda \in \Lambda}$ of positive contractions in $\multialg{D}$ is said to \emph{act as an approximate identity} (on $D$) if $\lim_{\lambda} \| e_\lambda d - d\| = 0$ for all $d\in D$. The net is not assumed to be increasing.

\begin{corollary}\label{c:unitarypath}
Let $B$ be a $C^\ast$-algebra such that there is a unital embedding $\iota \colon \mathcal O_2 \hookrightarrow \multialg{B}$ and let $\overline \iota \colon (\mathcal O_2 \otimes \mathcal K)^\sim \hookrightarrow \multialg{B \otimes \mathcal K}$ be the induced unital embedding.  There is a continuous path of unitaries $(v_t)_{t\in \mathbb R_+}$ in $\overline \iota( (\mathcal O_2 \otimes \mathcal K)^\sim)$ with $v_0 = 1_{\multialg{B\otimes \mathcal K}}$, such that the following hold:
\begin{itemize}
\item[$(a)$] $(v_t^\ast ( 1_{\multialg{B}} \otimes e_{1,1}) v_t)_{t\in \mathbb R_+}$ acts as an approximate identity on $B \otimes \mathcal K$;
\item[$(b)$] $v_t b v_t^\ast$ converges in norm to an element in $B\otimes \mathcal K$ as $t\to \infty$, for any $b\in B \otimes \mathcal K$.
\end{itemize}
\end{corollary}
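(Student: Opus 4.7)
The plan is to set $v_t := \overline\iota(u_t)$ where $(u_t)_{t\in\mathbb R_+}$ is the unitary path produced in Lemma~\ref{l:unitarypath}. Since $\overline\iota$ is a unital $\ast$-homomorphism, continuity of $(v_t)$ and the equality $v_0 = 1_{\multialg{B\otimes \mathcal K}}$ are immediate, and each $v_t$ lies in $\overline\iota((\mathcal O_2\otimes\mathcal K)^\sim)$. The two properties $(a)$ and $(b)$ will be extracted not merely from the statement of Lemma~\ref{l:unitarypath}, but from the explicit block structure of the construction used in its proof.

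For $(a)$, I would recall that in the proof of Lemma~\ref{l:unitarypath} one shows that for every $t\in[n,n+1]$ the projection $u_t^\ast (1_{\mathcal O_2}\otimes e_{1,1}) u_t$ decomposes as $\sum_{k=1}^n 1_{\mathcal O_2}\otimes e_{k,k} + r_{n,t}^\ast(1_{\mathcal O_2}\otimes e_{n+1,n+1}) r_{n,t}$, and in particular dominates $\sum_{k=1}^n 1_{\mathcal O_2}\otimes e_{k,k}$. Applying $\overline\iota$, which sends $1_{\mathcal O_2}$ to $1_{\multialg B}$, yields
\[
v_t^\ast (1_{\multialg B}\otimes e_{1,1}) v_t \;\geq\; q_n := \sum_{k=1}^n 1_{\multialg B}\otimes e_{k,k}, \qquad t\in [n,n+1].
\]
The net $(q_n)$ is an increasing approximate identity of projections for $B\otimes\mathcal K$, and the general fact that $\|(1-p)b\|\leq \|(1-q)b\|$ for projections $p\geq q$ then gives $(a)$.

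The crux of $(b)$ is to exploit that the path $u_t$ was built from the local unitaries $v_{n,t}$ which differ from the identity only in the $(n{+}1, n{+}2, n{+}3)$-block of $\mathcal O_2\otimes\mathcal K$. More precisely, $v_{n,t} - 1_{(\mathcal O_2\otimes\mathcal K)^\sim}$ is contained in the corner cut by $\sum_{k=n+1}^{n+3} 1_{\mathcal O_2}\otimes e_{k,k}$. Applying $\overline\iota$, it follows that $\overline\iota(v_{k,k+1})$ acts as the identity by left and right multiplication on $B\otimes M_m$ whenever $k\geq m$, since the relevant blocks are mutually orthogonal. Consequently, for any $b\in B\otimes M_m$ and any $t\geq m$, the product $v_t b v_t^\ast$ is constant, equal to $\overline\iota(u_m) b\, \overline\iota(u_m)^\ast$, which lies in $B\otimes\mathcal K$. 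For a general $b\in B\otimes\mathcal K$, approximate it in norm by some $b_0\in B\otimes M_m$ and use $\|v_t\|=1$ to conclude that $(v_t b v_t^\ast)_t$ is Cauchy; the limit lies in $B\otimes\mathcal K$ because it is a norm limit of elements of the closed ideal $B\otimes\mathcal K\subseteq \multialg{B\otimes\mathcal K}$.

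The only potential obstacle is making sure that the block-disjointness argument for $(b)$ is applied correctly after transporting along $\overline\iota$; the unitality of $\iota$ is essential here, both so that $1_{\multialg B}\otimes e_{k,k}$ arises as the image of $1_{\mathcal O_2}\otimes e_{k,k}$ and so that the complementary projection $1 - \sum_{k=n+1}^{n+3} 1_{\mathcal O_2}\otimes e_{k,k}$ maps to the correct multiplier projection. Beyond this bookkeeping, the argument is a direct transport of Lemma~\ref{l:unitarypath} through $\overline\iota$.
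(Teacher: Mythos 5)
Your proof is correct, but it takes a different route from the paper's in one respect worth noting: where the paper treats Lemma~\ref{l:unitarypath} as a black box and invokes only its stated conclusions, you re-open its proof and extract the explicit block structure of the local unitaries $v_{n,t}$. For $(a)$, the paper simply observes that $\iota(\mathcal O_2)\otimes\mathcal K$ is a non-degenerate $C^\ast$-subalgebra of $\multialg B\otimes\mathcal K$ and uses the general principle that a (not necessarily increasing) approximate identity of projections for a non-degenerate $C^\ast$-subalgebra acts as an approximate identity on the ambient $C^\ast$-algebra $B\otimes\mathcal K$; your lower bound $v_t^\ast(1_{\multialg B}\otimes e_{1,1})v_t\geq q_n := \sum_{k=1}^n 1_{\multialg B}\otimes e_{k,k}$ reaches the same conclusion but requires citing the intermediate computation $u_t^\ast(1_{\mathcal O_2}\otimes e_{1,1})u_t = \sum_{k=1}^n 1_{\mathcal O_2}\otimes e_{k,k} + r_{n,t}^\ast(1_{\mathcal O_2}\otimes e_{n+1,n+1})r_{n,t}$ from inside that proof. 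For $(b)$, the paper factors $b = b'\otimes yz$ as $\overline\iota(1_{\mathcal O_2}\otimes y)(b'\otimes 1_{\multialg{\mathcal K}})\overline\iota(1_{\mathcal O_2}\otimes z)$ and then appeals directly to Lemma~\ref{l:unitarypath}$(b)$ for the convergence of $v_t\overline\iota(1_{\mathcal O_2}\otimes y)$ and $\overline\iota(1_{\mathcal O_2}\otimes z)v_t^\ast$; you instead show that $v_t b v_t^\ast$ is literally eventually constant for $b$ supported in $B\otimes M_m$, a sharper statement visible only from the block-disjointness in the construction of the $v_{n,t}$. Both arguments are sound. Yours gives a slightly stronger output (eventual constancy on matrix corners) at the cost of coupling the corollary to the internals of Lemma~\ref{l:unitarypath}; the paper's version is more modular, using only the statement of that lemma.
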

\begin{proof}
Let $(u_t)_{t\in \mathbb R_+}$ be a unitary path as in Lemma \ref{l:unitarypath}, and let $v_t := \overline \iota(u_t)$. Clearly $t\mapsto v_t$ is continuous and $v_0 = 1_{\multialg{B\otimes \mathcal K}}$, so only $(a)$ and $(b)$ remains to be checked. Since $\iota(\mathcal O_2) \otimes \mathcal K$ is a non-degenerate $C^\ast$-subalgebra of $\multialg{B} \otimes \mathcal K$, part $(a)$ easily follows.

For $(b)$, it suffices to check the condition for $b= b' \otimes yz$ with $b'\in B$ and $y,z\in \mathcal K$. Then
\begin{equation}
b = \overline \iota(1_{\mathcal O_2} \otimes y) (b\otimes 1_{\multialg{\mathcal K}} )\overline{\iota}(1_{\mathcal O_2} \otimes z),
\end{equation}
so $v_t \overline{\iota}(1_{\mathcal O_2}\otimes y) = \overline{\iota}( u_t (1_{\mathcal O_2}\otimes y))$ converges in norm to an element $y' \in \overline{\iota}(\mathcal O_2 \otimes \mathcal K) \subseteq \multialg{B} \otimes \mathcal K$, and similarly $\overline{\iota}(1_{\mathcal O_2}\otimes z) v_t^\ast$ converges to $z' \in  \multialg{B} \otimes \mathcal K$. Thus $v_t b v_t^\ast$ converges in norm to $y' (b'\otimes 1_{\multialg{\mathcal K}}) z' \in B\otimes \mathcal K$.
\end{proof}

The following is the key lemma for lifting a $KK$-element (even in the ideal-related setting) to a $\ast$-homomorphism. Note that if $\phi, \theta \colon A\to B$ are $\ast$-homomorphisms, then $\phi(-) \otimes e_{1,1} + \theta(-) \otimes (1_{\multialg{\mathcal K}} - e_{1,1}) \colon A \to \multialg{B\otimes \mathcal K}$ is the diagonal $\ast$-homomorphism $\phi \oplus \theta \oplus \theta \oplus \cdots$, see Remark \ref{r:infrep}.

\begin{lemma}[Key lemma for existence]\label{l:keyexistence}
Let $A$ and $B$ be $C^\ast$-algebras, let $\theta \colon A \to B$ be a $\ast$-homomorphism, and suppose that there is a unital embedding $\mathcal O_2 \hookrightarrow \multialg{B} \cap \theta(A)'$. Let $\theta_\infty = \theta \otimes 1_{\multialg{\mathcal K}} \colon A \to \multialg{B \otimes \mathcal K}$. Then there is a norm-continuous path $(v_t)_{t\in \mathbb R_+}$ of unitaries in $\multialg{B\otimes \mathcal K} \cap \theta_\infty(A)'$ with $v_0 = 1_{\multialg{B\otimes \mathcal K}}$, which has the following property:

Suppose that $\psi \colon A \to \multialg{B \otimes \mathcal K}$ is a $\ast$-homomorphism such that $\psi(a) - \theta_\infty(a)\in B \otimes \mathcal K$ for all $a\in A$. Then there exists a $\ast$-homomorphism $\phi \colon A \to B$ such that $v_t \psi(-) v_t^\ast$ converges point-norm to the $\ast$-homomorphism 
\begin{equation}
\phi \otimes e_{1,1} + \theta \otimes (1_{\multialg{\mathcal K}} - e_{1,1}) \colon A \to \multialg{B \otimes \mathcal K}.
\end{equation}
\end{lemma}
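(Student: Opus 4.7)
The plan is to obtain the unitary path directly from Corollary \ref{c:unitarypath}, applied to the given unital embedding $\iota \colon \mathcal O_2 \hookrightarrow \multialg{B}$, and then to check that the claimed property follows almost mechanically from its two listed properties together with the commutation relation. Write $\overline\iota \colon (\mathcal O_2 \otimes \mathcal K)^\sim \hookrightarrow \multialg{B\otimes \mathcal K}$ for the induced unital embedding, and let $(v_t)_{t\in \mathbb R_+}$ be the resulting continuous unitary path. First I would check that $v_t \in \multialg{B\otimes \mathcal K} \cap \theta_\infty(A)'$. This is because $\iota(\mathcal O_2)$ commutes with $\theta(A)$ by assumption, and therefore $\iota(\mathcal O_2) \otimes \mathcal K$ commutes elementwise with $\theta(a) \otimes 1_{\multialg{\mathcal K}} = \theta_\infty(a)$, so all of $\overline\iota((\mathcal O_2 \otimes \mathcal K)^\sim)$ (which includes scalars) commutes with $\theta_\infty(A)$.

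Now fix a $\ast$-homomorphism $\psi \colon A \to \multialg{B\otimes \mathcal K}$ with $k(a) := \psi(a) - \theta_\infty(a) \in B \otimes \mathcal K$ for every $a$. Since $v_t$ commutes with $\theta_\infty(A)$,
\begin{equation}
v_t \psi(a) v_t^\ast = \theta_\infty(a) + v_t k(a) v_t^\ast.
\end{equation}
By Corollary \ref{c:unitarypath}(b), the family $v_t k(a) v_t^\ast$ converges in norm as $t\to\infty$ to some element $\psi_\infty(a) \in B \otimes \mathcal K$. Hence $v_t \psi(-) v_t^\ast$ converges point-norm to the map $a \mapsto \theta_\infty(a) + \psi_\infty(a)$.

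The main (and really the only) thing to verify is that the limit $\psi_\infty(a)$ lives in the $(1,1)$-corner $B \otimes e_{1,1}$. For this I would use Corollary \ref{c:unitarypath}(a): the projections $p_t := v_t^\ast(1_{\multialg{B}} \otimes e_{1,1}) v_t$ act as an approximate identity on $B \otimes \mathcal K$, so $p_t k(a) p_t \to k(a)$ in norm, hence
\begin{equation}
(1_{\multialg{B}} \otimes e_{1,1})\, v_t k(a) v_t^\ast\, (1_{\multialg{B}} \otimes e_{1,1}) = v_t\, p_t k(a) p_t\, v_t^\ast
\end{equation}
differs from $v_t k(a) v_t^\ast$ by a norm null sequence. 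Passing to the limit gives $\psi_\infty(a) = (1_{\multialg{B}} \otimes e_{1,1}) \psi_\infty(a) (1_{\multialg{B}} \otimes e_{1,1}) \in B \otimes e_{1,1}$, so I may write $\psi_\infty(a) = \chi(a) \otimes e_{1,1}$ for some (a priori just) set-theoretic map $\chi \colon A \to B$.

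Setting $\phi(a) := \chi(a) + \theta(a)$, the computation $\theta_\infty(a) + \chi(a) \otimes e_{1,1} = \phi(a) \otimes e_{1,1} + \theta(a) \otimes (1_{\multialg{\mathcal K}} - e_{1,1})$ identifies the point-norm limit of the $\ast$-homomorphisms $v_t \psi(-) v_t^\ast$ as the claimed map. Since point-norm limits of $\ast$-homomorphisms are $\ast$-homomorphisms, the corestriction to the $(1,1)$-corner (where the $\theta$-summand vanishes) shows that $a \mapsto \phi(a) \otimes e_{1,1}$ is a $\ast$-homomorphism, and hence so is $\phi \colon A \to B$. I do not anticipate a significant obstacle here — the construction of the path in Lemma \ref{l:unitarypath} has already done the real work; the substance of this lemma is precisely the observation that properties (a) and (b) of Corollary \ref{c:unitarypath} are exactly tailored to force the cutdown of the limit into the $(1,1)$-corner while keeping the $\theta_\infty$-part rigid.
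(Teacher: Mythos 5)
Your proposal is correct and follows the same route as the paper: obtain $(v_t)$ from Corollary \ref{c:unitarypath}, observe it commutes with $\theta_\infty(A)$, decompose $v_t\psi(a)v_t^\ast = \theta_\infty(a) + v_t k(a) v_t^\ast$, use property (b) to get norm convergence, and use property (a) to force the limit of the $k$-part into the $(1,1)$-corner. The only cosmetic difference is in the corner argument: the paper computes $\|(1_{\multialg{B}}\otimes e_{1,1}^\perp) v_t b v_t^\ast\| \to 0$ directly, while you show $(1_{\multialg{B}}\otimes e_{1,1}) v_t k(a) v_t^\ast (1_{\multialg{B}}\otimes e_{1,1}) - v_t k(a) v_t^\ast \to 0$ via $p_t k(a) p_t \to k(a)$ — two presentations of the same observation.
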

\begin{proof}
Fix a unital embedding $\iota \colon \mathcal O_2 \hookrightarrow \multialg{B} \cap \theta(A)'$ and let $\overline \iota \colon (\mathcal O_2 \otimes \mathcal K)^\sim \hookrightarrow \multialg{B \otimes \mathcal K}$ be the induced unital $\ast$-homomorphism. 
Let $(v_t)_{t\in \mathbb R_+}$ be a unitary path in $\overline{\iota}((\mathcal O_2 \otimes \mathcal K)^\sim)$ as in Corollary \ref{c:unitarypath}. As $\theta_\infty = \theta \otimes 1_{\multialg{\mathcal K}}$, it easily follows that
\begin{equation}
\overline \iota ((\mathcal O_2 \otimes \mathcal K)^\sim) \subseteq \multialg{B \otimes \mathcal K} \cap \theta_\infty(A)'.
\end{equation}
For any $a\in A$ we get that
\begin{equation}
v_t \psi(a) v_t^\ast = v_t (\psi(a) - \theta_\infty(a)) v_t^\ast + v_t \theta_\infty(a) v_t^\ast = v_t (\psi(a) - \theta_\infty(a)) v_t^\ast + \theta_\infty(a)
\end{equation}
converges in norm for $t\to \infty$ by Corollary \ref{c:unitarypath}$(b)$. Thus $v_t \psi(-) v_t^\ast$ converges point-norm to a $\ast$-homomorphism $\psi_0 \colon A \to \multialg{B\otimes \mathcal K}$. In particular
\begin{equation}\label{eq:limithom}
\psi_0(a) = \lim_{t\to \infty} v_t \psi(a) v_t^\ast = \lim_{t\to \infty} v_t (\psi(a) - \theta_\infty(a)) v_t^\ast + \theta_\infty(a).
\end{equation}
In the following we let $e_{1,1}^\perp := 1_{\multialg{\mathcal K}} - e_{1,1}$. For any $b\in B\otimes \mathcal K$ we have
\begin{eqnarray}
&& \lim_{t\to \infty} \| (1_{\multialg{B}} \otimes e_{1,1}^\perp) v_t b v_t^\ast \| \nonumber \\
&=& \lim_{t\to \infty} \| v_t^\ast (1_{\multialg{B}}\otimes e_{1,1}^\perp) v_t b\| \nonumber\\
&=& \lim_{t\to \infty} \| b - v_t^\ast (1_{\multialg{B}}\otimes e_{1,1})v_t b \| \nonumber\\
&=& 0,
\end{eqnarray}
where the last equality follows from Corollary \ref{c:unitarypath}$(a)$. Thus
\begin{eqnarray}\label{eq:compcorner}
&& (1_{\multialg{B}} \otimes e_{1,1}^\perp) \psi_0(a) \nonumber\\
&\stackrel{\eqref{eq:limithom}}{=}& \lim_{t\to \infty} (1 _{\multialg{B}}\otimes e_{1,1}^\perp) v_t (\psi(a) - \theta_\infty) v_t^\ast + (1_{\multialg{B}} \otimes e_{1,1}^\perp)\theta_\infty(a) \nonumber\\
&=& \theta(a) \otimes e_{1,1}^\perp.\label{eq:compcorner}
\end{eqnarray}
By symmetry, we have
\begin{equation}
(1_{\multialg{B}} \otimes e_{1,1}^\perp) \psi_0(a) = \theta(a) \otimes e_{1,1}^{\perp} = \psi_0(a) (1_{\multialg{B}}\otimes e_{1,1}^\perp).
\end{equation}
Hence $1_{\multialg{B}}\otimes e_{1,1}$ commutes with $\psi_0(A)$ so we obtain a $\ast$-homomorphism 
\begin{equation}
\phi_0 = (1_{\multialg{B}}\otimes e_{1,1}) \psi_0(-) (1_{\multialg{B}}\otimes e_{1,1}) \colon A \to \multialg{B}\otimes e_{1,1}.
\end{equation}
However, we have
\begin{eqnarray}
&& (1_{\multialg{B}}\otimes e_{1,1}) \psi_0(a) \nonumber\\ 
& \stackrel{\eqref{eq:limithom}}{=} & \lim_{t\to \infty} (1_{\multialg{B}}\otimes e_{1,1}) v_t (\psi(a)- \theta_\infty(a)) v_t^\ast + \theta(a) \otimes e_{1,1} \nonumber\\
& \in& B\otimes \mathcal K,
\end{eqnarray}
so $\phi_0$ factors through $B \otimes e_{1,1}$. Let $\phi\colon A \to B$ be the corestriction of $\phi_0$. Then
\begin{eqnarray}
\lim_{t\to \infty} v_t \psi(a) v_t^\ast &=& \psi_0(a) \nonumber \\
&=& (1_{\multialg{B}}\otimes e_{1,1}) \psi_0(a) + (1_{\multialg{B}} \otimes e_{1,1}^\perp) \psi_0(a) \nonumber\\
&\stackrel{\eqref{eq:compcorner}}{=}& \phi(a) \otimes e_{1,1} + \theta(a) \otimes (1_{\multialg{\mathcal K}}- e_{1,1}), 
\end{eqnarray}
for all $a\in A$, which finishes the proof.
\end{proof}

The following will be the key lemma for proving uniqueness results. It shows how one goes from a stable uniqueness result a la Dadarlat--Eilers \cite[Theorems 3.8 and 3.10]{DadarlatEilers-asymptotic} to a stable uniqueness results where one stabilises with a smaller $\ast$-homomorphism. 
In the presence of strong 
$\mathcal O_\infty$-stability one even obtains a uniqueness result on the nose. 


\begin{lemma}[Key lemma for uniqueness]\label{l:keyuniqueness}
Let $A$ and $B$ be $C^\ast$-algebras with $A$ separable, and let $\phi, \psi, \theta \colon A \to B$ be $\ast$-homomorphisms. Suppose that there is a unital embedding $\iota \colon \mathcal O_2 \hookrightarrow \multialg{B} \cap \theta(A)'$. 
If there is a continuous unitary path $(w_t)_{t\in \mathbb R_+}$ in $(B\otimes \mathcal K)^\sim$ such that
\begin{equation}\label{eq:DElemma}
\lim_{t\to \infty}\| w_t ( \phi(a) \otimes e_{1,1} + \theta(a) \otimes e_{1,1}^\perp) w_t^\ast - \psi(a) \otimes e_{1,1} - \theta(a) \otimes e_{1,1}^\perp \| = 0
\end{equation}
for all $a\in A$, then $\phi \oplus \theta \sim_\asu \psi \oplus \theta$ considered as maps $A \to M_2(B)$. Here $e_{1,1}^\perp = 1_{\multialg{\mathcal K}} - e_{1,1} \in \multialg{\mathcal K}$.

In addition, if $\phi$ and $\psi$ are both strongly $\mathcal O_\infty$-stable and approximately dominate $\theta$ then $\phi \sim_\asMvN \psi$.
%
%
\end{lemma}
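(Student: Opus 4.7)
The plan is to apply Lemma \ref{l:keyexistence} (applied to $\theta$) to ``normalize'' $\Phi := \phi\otimes e_{1,1} + \theta\otimes e_{1,1}^\perp$ and $\Psi := \psi\otimes e_{1,1} + \theta\otimes e_{1,1}^\perp$, and then combine with the given path $(w_t)$ to produce the desired asymptotic unitary equivalence in $M_2(B)$. Let $(v_t)$ be the continuous unitary path in $\multialg{B\otimes\mathcal K}\cap \theta_\infty(A)'$ furnished by Lemma \ref{l:keyexistence} applied to $\theta$. Since $\Phi-\theta_\infty,\Psi-\theta_\infty\in B\otimes\mathcal K$, Lemma \ref{l:keyexistence} provides $\ast$-homomorphisms $\phi_0,\psi_0\colon A\to B$ such that $v_t\Phi v_t^\ast\to \Phi_0:=\phi_0\otimes e_{1,1}+\theta\otimes e_{1,1}^\perp$ and $v_t\Psi v_t^\ast\to \Psi_0:=\psi_0\otimes e_{1,1}+\theta\otimes e_{1,1}^\perp$ point-norm.

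Setting $u_t := v_tw_tv_t^\ast$, the $\theta_\infty$-invariance of $v_t$ yields the identity $u_t(v_t\Phi v_t^\ast)u_t^\ast = v_t(w_t\Phi w_t^\ast) v_t^\ast$; combined with the hypothesis and the convergences above, this shows $u_t\Phi_0u_t^\ast\to \Psi_0$ point-norm, so $\Phi_0\sim_\asu\Psi_0$ in $\multialg{B\otimes\mathcal K}$. The key feature of $\Phi_0$ and $\Psi_0$ is that they commute with the projection $P:=1_{\multialg B}\otimes(e_{1,1}+e_{2,2})\in \multialg{B\otimes\mathcal K}$, and their $P$-compressions are precisely $\phi_0\oplus\theta$ and $\psi_0\oplus\theta$ viewed as maps into $M_2(B)=P(B\otimes\mathcal K)P$; the ambient algebra containing the $u_t$ is $\multialg{B\otimes\mathcal K}$, while the target for the conclusion is $M_2(\multialg B)=P\multialg{B\otimes\mathcal K}P$.

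The main obstacle is to extract from $u_t\Phi_0u_t^\ast\to\Psi_0$ an asymptotic unitary equivalence $\phi_0\oplus\theta\sim_\asu\psi_0\oplus\theta$ in $M_2(B)$, since $u_t$ does not a priori commute with $P$ even asymptotically. The plan is to exploit the $P$-invariance of $\Phi_0,\Psi_0$ together with the Cuntz-sum identification $\theta\oplus\theta\cong\theta$ in $\multialg B$ provided by $\mathcal O_2$-isometries $s_1,s_2\in \multialg B\cap\theta(A)'$: the off-$P$ leakage $Pu_tP^\perp$ acts on $P^\perp\Phi_0 P^\perp=\theta\otimes(1_{\multialg{\mathcal K}}-e_{1,1}-e_{2,2})$, which is an infinite direct sum of copies of $\theta$ that can be collapsed into a single $\theta$ via $s_1,s_2$. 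Combining with $v_t$, this permits replacing $u_t$ by an asymptotically equivalent continuous unitary path living in $M_2(\multialg B)$ and conjugating $\phi_0\oplus\theta$ asymptotically to $\psi_0\oplus\theta$. A parallel application of Lemma \ref{l:keyexistence} yields $\phi\oplus\theta\sim_\asu\phi_0\oplus\theta$ and $\psi\oplus\theta\sim_\asu\psi_0\oplus\theta$ in $M_2(B)$, and chaining the three equivalences gives $\phi\oplus\theta\sim_\asu\psi\oplus\theta$.

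For the additional statement, the unital embedding $\mathcal O_2\hookrightarrow \multialg B\cap\theta(A)'$ makes $\theta$ strongly $\mathcal O_2$-stable: for $s\in\mathcal O_2$ and an approximate unit $(e_\lambda)$ of $A$, the paths $s\,\theta(e_\lambda)$ induce a unital $\ast$-embedding $\mathcal O_2\hookrightarrow B_\as\cap\theta(A)'/\Ann\theta(A)$. Under the additional assumption that $\phi,\psi$ are strongly $\mathcal O_\infty$-stable and approximately dominate $\theta$, Proposition \ref{p:piabsorbing}(b) then gives $\phi\oplus 0\sim_\asMvN\phi\oplus\theta$ and $\psi\oplus 0\sim_\asMvN\psi\oplus\theta$. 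Combining these with $\phi\oplus\theta\sim_\asu\psi\oplus\theta$ from the first part (which implies $\sim_\asMvN$) and Proposition \ref{p:MvNeq} yields $\phi\sim_\asMvN\psi$.
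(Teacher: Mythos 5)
Your proposal has a genuine gap rooted in the choice of unitary path. You take $v_t$ from Lemma~\ref{l:keyexistence}, which lives in $\multialg{B\otimes\mathcal K}\cap\theta_\infty(A)'$. That path commutes with $\theta_\infty$, but \emph{not} with $\Phi = \phi\otimes e_{1,1}+\theta\otimes e_{1,1}^\perp$ (it rotates the $e_{1,1}$-corner into bigger and bigger projections), so conjugation by $v_t$ deforms $\Phi$ to $\Phi_0 = \phi_0\otimes e_{1,1} + \theta\otimes e_{1,1}^\perp$ with $\phi_0\neq\phi$ in general. Your argument then needs $\phi\oplus\theta\sim_\asu\phi_0\oplus\theta$ and $\psi_0\oplus\theta\sim_\asu\psi\oplus\theta$ to close the chain, but Lemma~\ref{l:keyexistence} is an existence statement: it produces the map $\phi_0$ and a convergence inside $\multialg{B\otimes\mathcal K}$, not an asymptotic unitary equivalence of the $p_2$-compressions. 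Those two missing equivalences are of exactly the type you are trying to prove, so as written the argument does not close. A second, smaller, issue is the $P$-compression step: ``the off-$P$ leakage can be collapsed into a single $\theta$'' is a plausible idea but is not carried out, and it is the crux of the lemma since $u_t$ need not asymptotically commute with $P$ without a further argument.

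The paper sidesteps both issues by constructing a different path $v_t$ that commutes \emph{on the nose} with $\Phi$ and $\Psi$. Setting $\mathcal K_1 := e_{1,1}^\perp\mathcal K e_{1,1}^\perp$, one takes $v_t := 1_{\multialg{B}}\otimes e_{1,1} + \overline{\iota}(u_t)$, where $u_t$ is the Lemma~\ref{l:unitarypath} path in $(\mathcal O_2\otimes\mathcal K_1)^\sim$ (with $e_{2,2}$ in the role of $e_{1,1}$). This $v_t$ is constant on the $e_{1,1}$-corner and rotates only the $\theta\otimes 1_{\multialg{\mathcal K_1}}$ part, so $v_t\Phi = \Phi v_t$ and $v_t\Psi = \Psi v_t$. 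With $p_2 := 1_{\multialg{B}}\otimes(e_{1,1}+e_{2,2})$, the projections $v_t p_2 v_t^\ast$ act as an approximate identity on $B\otimes\mathcal K$; after reparametrising so that $[v_t p_2 v_t^\ast, w_t]\to 0$ (possible because $w_t\in(B\otimes\mathcal K)^\sim$), the element $V := p_2 v^\ast w v p_2 \in \multialg{B\otimes\mathcal K}_\as$ is a unitary from $p_2$ to $p_2$ that conjugates $\phi\oplus\theta$ to $\psi\oplus\theta$; the invariance of $\Phi,\Psi$ under $v$ is used crucially in that last computation. No auxiliary maps $\phi_0,\psi_0$ ever appear. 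Your treatment of the ``in addition'' part via strong $\mathcal O_2$-stability of $\theta$ and Propositions~\ref{p:piabsorbing} and~\ref{p:MvNeq} is correct and matches the paper.
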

\begin{proof}
%
Let $\mathcal K_1 := e_{1,1}^\perp \mathcal K  e_{1,1}^\perp$ which is isomorphic to $\mathcal K$. In particular, $1_{\multialg{\mathcal K_1}} = e_{1,1}^\perp$. Let $\overline{\iota} \colon (\mathcal O_2 \otimes \mathcal K_1)^\sim \to \multialg{B \otimes \mathcal K_1} \subseteq \multialg{B\otimes \mathcal K}$ be the $\ast$-homomorphism induced by $\iota$. Let $(u_t)_{t\in \mathbb R_+}$ be a continuous unitary path in $(\mathcal O_2 \otimes \mathcal K_1)^\sim$ satisfying $(a)$ and $(b)$ of Lemma \ref{l:unitarypath} (with $e_{2,2}$ in place of $e_{1,1}$), and let $v_t := 1_{\multialg{B}} \otimes e_{1,1} + \overline{\iota}(u_t)$ be the induced unitary path in $\multialg{B\otimes \mathcal K}$.

Note that
\begin{eqnarray}
&&  (1_{\multialg{B}} \otimes e_{1,1}) (b \otimes e_{1,1} + \theta(a) \otimes 1_{\multialg{\mathcal K_1}}) \nonumber\\
&=& b \otimes e_{1,1} \nonumber\\
&=&  (b \otimes e_{1,1} + \theta(a) \otimes 1_{\multialg{\mathcal K_1}}) (1_{\multialg{B}} \otimes e_{1,1})
\end{eqnarray}
for all $a\in A$ and $b\in B$. Clearly $\overline{\iota}(u_t)$ commutes with $\theta(A) \otimes 1_{\multialg{\mathcal K_1}}$, and $\overline{\iota}(u_t)$ annihilates $B \otimes e_{1,1}$.

As $\overline{\iota}(\mathcal O_2 \otimes \mathcal K_1) = \iota(\mathcal O_2) \otimes \mathcal K_1 \subseteq \multialg{B} \otimes \mathcal K_1$ is a non-degenerate $C^\ast$-algebra, it follows from property $(a)$ of Lemma \ref{l:unitarypath}, that $\overline{\iota}(u_t (1_{\mathcal O_2} \otimes e_{2,2}) u_t^\ast)$ acts as an approximate identity on the corner $B \otimes \mathcal K_1$ of $B \otimes \mathcal K$. To ease notation, let $p_2 := 1_{\multialg{B}} \otimes (e_{1,1} + e_{2,2}) \in \multialg{B\otimes \mathcal K}$. Then
\begin{equation}
v_t p_2 v_t^\ast = 1_{\multialg{B}} \otimes e_{1,1} + \overline{\iota}(u_t (1_{\mathcal O_2} \otimes e_{2,2}) u_t^\ast)
\end{equation}
acts as an approximate identity on $B\otimes \mathcal K$. Thus, as $w_t \in (B\otimes \mathcal K)^\sim$ is continuous we may assume (by possibly reparametrising $v_t$) that 
\begin{equation}\label{eq:DEcommute}
\lim_{t\to \infty} \| [v_t p_2 v_t^\ast, w_t]\| = 0.
\end{equation}
Let $v$ and $w$ in $\multialg{B \otimes \mathcal K}_\as$ be the elements induced by $(v_t)_{t\in \mathbb R_+}$ and $(w_t)_{t\in \mathbb R_+}$ respectively. To see that $\phi \oplus \theta \sim_{\asu} \psi \oplus \theta$ it suffices to find a partial isometry $V$ in $\multialg{B\otimes \mathcal K}_\as$ such that $VV^\ast = V^\ast V = p_2$, and 
\begin{equation}
V (\phi(a) \otimes e_{1,1} + \theta(a) \otimes e_{2,2}) V^\ast =  \psi(a) \otimes e_{1,1} + \theta(a) \otimes e_{2,2}
\end{equation}
for all $a\in A$. We will check that
\begin{equation}
V := p_2 v^\ast w v p_2 \in \multialg{B \otimes \mathcal K}_\as
\end{equation}
does the trick. As $v$ and $w$ are unitaries, we get
\begin{equation}
VV^\ast = p_2 v^\ast w v  p_2 v^\ast w^\ast v  p_2 \stackrel{\eqref{eq:DEcommute}}{=}  p_2 v^\ast w w^\ast v p_2 = p_2,
\end{equation}
and similarly $V^\ast V = p_2$. For any $a\in A$ we have
\begin{eqnarray}
&& V ( \phi (a) \otimes e_{1,1} + \theta(a) \otimes e_{2,2}) V^\ast  \nonumber\\
&=& p_2 v^\ast w v p_2 ( \phi (a) \otimes e_{1,1} + \theta(a) \otimes e_{2,2}) p_2 v^\ast w^\ast v p_2 \nonumber\\
&= & p_2 v^\ast w v ( \phi (a) \otimes e_{1,1} + \theta(a) \otimes 1_{\multialg{\mathcal K_1}}) p_2 v^\ast w^\ast v  p_2 \nonumber\\
&\stackrel{(\ast)}{=}&  p_2 v^\ast w ( \phi (a) \otimes e_{1,1} + \theta(a) \otimes 1_{\multialg{\mathcal K_1}}) v p_2 v^\ast w^\ast v  p_2 \nonumber\\
&\stackrel{\eqref{eq:DEcommute}}{=} & p_2 v^\ast w ( \phi (a) \otimes e_{1,1} + \theta(a) \otimes 1_{\multialg{\mathcal K_1}}) w^\ast v p_2 \nonumber\\
&\stackrel{\eqref{eq:DElemma}}{=}&  p_2 v^\ast ( \psi (a) \otimes e_{1,1} + \theta(a) \otimes 1_{\multialg{\mathcal K_1}}) v  p_2 \nonumber\\
&\stackrel{(\ast)}{=}& p_2 ( \psi (a) \otimes e_{1,1} + \theta(a) \otimes 1_{\multialg{\mathcal K_1}})  p_2 \nonumber\\
&=& \psi(a) \otimes e_{1,1} + \theta(a) \otimes e_{2,2}.
\end{eqnarray}
At the equations labeled with $(\ast)$, it was used that $v$ commutes with $b \otimes e_{1,1} + \theta(a) \otimes 1_{\multialg{\mathcal K_1}}$, as was shown earlier in the proof. Thus $\phi \oplus \theta \sim_{\asu} \psi \oplus \theta$.

For the ``in addition'' part, note that the existence of a unital embedding $\mathcal O_2 \to \multialg{B} \cap \theta(A)'$ implies that $\theta$ is strongly $\mathcal O_2$-stable. Therefore $\phi \sim_\asMvN \psi$ by combining Propositions \ref{p:piabsorbing} and \ref{p:MvNeq}.
\end{proof}


\section{The Kirchberg--Phillips Theorem}\label{s:KP}

This section contains the proofs of Theorems \ref{t:existsimple}, \ref{t:uniquesimple}, and \ref{t:KP}. Note that Theorem \ref{t:KPUCT} -- classification of Kirchberg algebras satisfying the UCT -- is an easy corollary of Theorem \ref{t:KP} as explained in the introduction, so I consider the proof of Theorem \ref{t:KPUCT} as complete.

\subsection{KK-preliminaries}\label{ss:KKprel}

For the construction of groups $KK(A,B)$ by Kasparov and Skandalis' variation $KK_\nuc(A,B)$ the reader is referred to \cite{Blackadar-book-K-theory} and \cite{Skandalis-KKnuc} respectively. The constructions can also be obtained from Section \ref{s:KK} as a special case. Two pictures of $KK_\nuc$ will be emphasised: the Fredholm picture and the Cuntz pair picture. For this, fix a separable $C^\ast$-algebra $A$, and a $\sigma$-unital $C^\ast$-algebra $B$. All Hilbert modules $E$ are assumed to be right modules and countably generated. 

For Hilbert modules $E,F$, let $\mathcal B(E,F)$ be the space of adjointable operators $E\to F$, and let $\mathcal K(E,F)$ be the closed linear span of rank one operators $E\to F$. I will refer to $\mathcal K(E,F)$ as the \emph{``compacts''}.\footnote{I write ``compacts'' instead of compacts to emphasise that ``compact'' operators are not actually compact in the usual sense.}

\begin{remark}[The Fredholm picture]
If $E$ is a Hilbert $B$-module, then a $\ast$-homo\-morphism $\psi \colon A \to \mathcal B(E)$ is called \emph{weakly nuclear} if $\langle \xi, \psi(-) \xi \rangle_E \colon A \to B$ is a nuclear map for any $\xi \in E$. In the case $E=B$, one has $\mathcal B(E) = \multialg{B}$  and so this definition agrees with Definition \ref{d:weaklynuc}.

A triple $(\psi_0, \psi_1, v)$ is called a \emph{weakly nuclear cycle} if $\psi_i \colon A \to \mathcal B(E_i)$ are weakly nuclear $\ast$-homomorphisms with $E_i$ Hilbert $B$-modules, and $v\in \mathcal B(E_0,E_1)$ satisfies that
\begin{equation}
v\psi_0(a) - \psi_1(a) v, \qquad \psi_0(a) (v^\ast v - 1_{\mathcal B(E_0)}), \qquad \psi_1(a) (vv^\ast - 1_{\mathcal B(E_1)}) ,
\end{equation}
are ``compact'' for all $a\in A$. If these are all zero for every $a\in A$ then $(\psi_0, \psi_1 , v)$ is called \emph{degenerate}. One may add two weakly nuclear cycles by taking their direct sum in the obvious way.

If $(\psi_0, \psi_1,v)$ is a weakly nuclear cycle and $u\in \mathcal B(E_0', E_0)$ is a unitary, then $(\Ad u \circ \psi_0, \psi_1, v u)$ is a weakly nuclear cycle, and the two cycles $(\psi_0,\psi_1, v)$ and $(\Ad u \circ \psi_0, \psi_1, vu)$ are said to be \emph{unitarily equivalent}. Similarly, one defines unitary equivalence for a unitary $u \in \mathcal B(E_1, E_1')$.

Say that two weakly nuclear cycles $(\psi_0, \psi_1 , v_0)$ and $(\psi_0, \psi_1, v_1)$ are \emph{operator homotopic} if there is a continuous path $(v_s)_{s\in [0,1]}$ from $v_0$ to $v_1$ such that each $(\psi_0, \psi_1, v_s)$ is weakly nuclear cycle.

Then $KK_\nuc(A,B)$ is (canonically isomorphic to) the set of weakly nuclear cycles modulo the equivalence relation generated by unitary equivalence, addition of degenerate weakly nuclear cycles, and operator homotopy. The isomorphism is given by taking a weakly nuclear cycle $(\psi_0, \psi_1, v)$ to the Kasparov module $\left( E_0 \oplus E_1^\op, \psi_0\oplus \psi_1, \left(\begin{array}{cc} 0 & v^\ast \\ v & 0 \end{array}\right) \right)$.\footnote{Here $E_0$ and $E_1$ have the trivial $\mathbb Z/2\mathbb Z$-grading where every element has degree $0$, whereas $E_1^\op$ is $E_1$ with the opposite grading, i.e.~every element in $E_1^\op$ has degree 1.}

One may construct $KK(A,B)$ in the exact same way by removing the words ``weakly nuclear'' everywhere. Hence it obviously follows that there is a canonical homomorphism
\begin{equation}
KK_\nuc(A,B) \to KK(A,B),
\end{equation}
and that this is an isomorphism if either $A$ or $B$ is nuclear (as any cycle will automatically be weakly nuclear).\footnote{It is a somewhat common misconception that $KK_\nuc(A, B) \to KK(A,B)$ is injective, or equivalently, that $KK_\nuc(A,B)$ is the subgroup of $KK(A,B)$ generated by weakly nuclear cycles. Though I know of no example where this is not true, there is a priori no reason to believe that this is the case.}
\end{remark}

\begin{remark}[The Cuntz pair picture]
A pair $(\psi_0, \psi_1)$ of $\ast$-homomorphism $A \to \multialg{B\otimes \mathcal K}$ is called a \emph{Cuntz pair} if $\psi_0(a) - \psi_1(a)\in B \otimes \mathcal K$ for all $a\in A$. A Cuntz pair $(\psi_0, \psi_1)$ is called \emph{weakly nuclear} if $\psi_0$ and $\psi_1$ are both weakly nuclear.\footnote{Note that this is the case exactly when $(\psi_0, \psi_1, 1_{\multialg{B\otimes \mathcal K}})$ is a weakly nuclear cycle. To make sense of this, use the Hilbert $B$-modules $E_0 = E_1 = \ell^2(\mathbb N) \otimes B$.} Say that two weakly nuclear Cuntz pairs $(\phi_0, \phi_1)$ and $(\psi_0, \psi_1)$ are \emph{homotopic} if there is a family $(\eta_0^{(s)}, \eta_1^{(s)})_{s\in [0,1]}$ of weakly nuclear Cuntz pairs such that 
\begin{equation}
(\eta_0^{(0)}, \eta_1^{(0)}) = (\phi_0, \phi_1), \qquad (\eta_0^{(1)} , \eta_1^{(1)}) = (\psi_0, \psi_1),
\end{equation}
the map $[0,1] \ni s \mapsto \eta_i^{(s)}(a)$ is strictly continuous for $i=0,1$ and $a\in A$, and $[0,1] \ni s\mapsto \eta_0^{(s)}(a) - \eta_1^{(s)}(a)$ is norm-continuous for any $a\in A$. 

One can form sums of weakly nuclear Cuntz pairs by 
\begin{equation}
(\phi_0, \phi_1) \oplus_{s_1,s_2} (\psi_0, \psi_1) = (\phi_0 \oplus_{s_1,s_2} \psi_0, \phi_1 \oplus_{s_1,s_2} \psi_1),
\end{equation}
 where $s_1,s_2\in \multialg{B\otimes \mathcal K}$ are $\mathcal O_2$-isometries. As any two Cuntz sums are unitarily equivalent, and as the unitary group of $\multialg{B\otimes \mathcal K}$ is path-connected by \cite{CuntzHigson-Kuipersthm}, sums of weakly nuclear Cuntz pairs are unique up to homotopy.

The map $(\psi_0, \psi_1) \mapsto (\psi_0, \psi_1, 1_{\multialg{B\otimes \mathcal K}})$ induces an isomorphism of homotopy classes of weakly nuclear Cuntz pairs and $KK_\nuc(A,B)$ (in the Fredholm picture).

A nuclear $\ast$-homomorphism $\phi \colon A \to B \otimes \mathcal K$ induces an element $KK_\nuc(\phi)$ in $KK_\nuc(A,B)$ via the weakly nuclear Cuntz pair $(\phi, 0)$. If $\psi \colon A \to B \otimes \mathcal K$ is another nuclear $\ast$-homo\-morphism then the Cuntz pair $(\phi, \psi)$ induces the element $KK_\nuc(\phi) - KK_\nuc(\psi)$.

Finally, suppose $\theta \colon A \to B \otimes \mathcal K$ is a nuclear $\ast$-homomorphism such that there exist $\mathcal O_2$-isometries $s_1,s_2 \in \multialg{B \otimes \mathcal K} \cap \theta(A)'$. Then $KK_\nuc(\theta) + KK_\nuc(\theta)$ is represented by the weakly nuclear Cuntz pair
\begin{equation}
(\theta, 0) \oplus_{s_1,s_2} (\theta, 0 ) = (s_1 \theta(-) s_1^\ast + s_2 \theta(-)s_2^\ast, 0) = ((s_1s_1^\ast + s_2 s_2^\ast)\theta(-), 0) = (\theta, 0).
\end{equation}
Hence $KK_\nuc(\theta) + KK_\nuc(\theta) = KK_\nuc(\theta)$ which implies that $KK_\nuc(\theta)=0$ since $KK_\nuc(A,B)$ is a group.
\end{remark}

\begin{remark}[The Kasparov product]
For separable $C^\ast$-algebras $A,B$ and $C$ there is a canonical homomorphism
\begin{equation}
\circ \colon KK(B,C) \otimes KK(A,B) \to KK(A,C)
\end{equation}
called the \emph{Kasparov product} which satisfies
\begin{equation}
KK(\psi) \circ KK(\phi) = KK(\psi \circ \phi)
\end{equation}
whenever $\phi \colon A\to B$ and $\psi \colon B \to C$ are $\ast$-homomorphisms.\footnote{It is somewhat unconventional to denote the Kasparov product by the symbol $\circ$, although it appears like this in \cite[Section 18.1]{Blackadar-book-K-theory}. Often one uses $\times$ to denote the Kasparov product, in which case the product is usually reversed, so that $KK(\phi) \times KK(\psi) = KK(\psi \circ \phi)$. I write the Kasparov product $\circ$ in the same order as one composes maps.} Similarly, there are Kasparov products
\begin{eqnarray}
\circ \colon KK_\nuc(B,C) \otimes KK(A,B) &\to& KK_\nuc(A,C), \\
\circ \colon KK(B,C) \otimes KK_\nuc(A,B) &\to& KK_\nuc(A,C),
\end{eqnarray}
which are also well-behaved with respect to composition of morphisms when one is nuclear. Unfortunately, there is (at least to my knowledge) no way of describing the Kasparov product using either the Fredholm picture or the Cuntz pair picture. The reader is referred to \cite{Blackadar-book-K-theory} and \cite{Skandalis-KKnuc} for details. Alternatively, everything will be reproved in a much more general setting in Section \ref{s:KK}.
\end{remark}

The following elementary lemma will be used.

\begin{lemma}\label{l:asMvNKKnuc}
Let $A$ be a separable $C^\ast$-algebra, let $B$ be a $\sigma$-unital $C^\ast$-algebra, and let $\phi,\psi \colon A\to B$ be nuclear $\ast$-homomorphisms. If $\phi \sim_\asMvN \psi$ then $KK_\nuc(\phi) = KK_\nuc(\psi)$.
\end{lemma}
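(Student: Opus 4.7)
The plan is to work in the Cuntz pair picture of $KK_\nuc(A,B)$ and exhibit a homotopy of weakly nuclear Cuntz pairs from $(\phi\otimes e_{1,1}, \psi\otimes e_{1,1})$ to a degenerate one. By Proposition \ref{p:MvNeq}, $\phi\sim_\asMvN \psi$ is equivalent to $\phi\otimes e_{1,1}\sim_\asMvN \psi\otimes e_{1,1}$ as maps into $B\otimes\mathcal K$, and since $B\otimes\mathcal K$ is stable, Proposition \ref{p:MvNvsue}$(b)$ upgrades this to asymptotic unitary equivalence. Hence there is a continuous path of unitaries $(u_t)_{t\in\mathbb R_+}$ in $\multialg{B\otimes\mathcal K}$ satisfying
\[
\lim_{t\to\infty} \bigl\| u_t^\ast (\phi(a)\otimes e_{1,1}) u_t - \psi(a)\otimes e_{1,1}\bigr\| = 0, \qquad a\in A.
\]
The unitary group of $\multialg{B\otimes\mathcal K}$ is path-connected by \cite{CuntzHigson-Kuipersthm}, so after prepending a continuous path I may assume $u_0 = 1_{\multialg{B\otimes\mathcal K}}$.

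Next, fixing a homeomorphism $t\colon [0,1) \to \mathbb R_+$ with $t(0)=0$, I would set $\eta_1^{(s)} := \psi(-)\otimes e_{1,1}$ for all $s\in [0,1]$, and
\[
\eta_0^{(s)} := \begin{cases} u_{t(s)}^\ast (\phi(-)\otimes e_{1,1}) u_{t(s)}, & s\in [0,1), \\ \psi(-)\otimes e_{1,1}, & s = 1. \end{cases}
\]
Each map $\eta_i^{(s)}$ takes values in the ideal $B\otimes\mathcal K\subseteq \multialg{B\otimes\mathcal K}$, and, as a composition of a nuclear $\ast$-homomorphism with a c.p.\ map, is nuclear and therefore weakly nuclear. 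On $[0,1)$, continuity of $s\mapsto \eta_i^{(s)}(a)$ and of $s\mapsto \eta_0^{(s)}(a)-\eta_1^{(s)}(a)$ is immediate from continuity of $t\mapsto u_t$. At $s=1$, both strict continuity of $\eta_0^{(s)}(a)$ and norm continuity of the difference $\eta_0^{(s)}(a)-\eta_1^{(s)}(a)$ follow from the displayed limit above. Thus $(\eta_0^{(s)}, \eta_1^{(s)})_{s\in[0,1]}$ is a homotopy of weakly nuclear Cuntz pairs.

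At $s=0$ this pair equals $(\phi\otimes e_{1,1}, \psi\otimes e_{1,1})$, which represents $KK_\nuc(\phi)-KK_\nuc(\psi)$, while at $s=1$ it is the degenerate pair $(\psi\otimes e_{1,1}, \psi\otimes e_{1,1})$, representing $0$. The desired equality $KK_\nuc(\phi)=KK_\nuc(\psi)$ follows. The only delicate point is the behaviour at the endpoint $s=1$, but this is exactly the content of the asymptotic unitary equivalence.
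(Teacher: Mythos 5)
Your proof is correct and follows essentially the same route as the paper's: use Proposition~\ref{p:MvNeq} (and in your case also Proposition~\ref{p:MvNvsue}$(b)$) to pass from $\sim_\asMvN$ to asymptotic unitary equivalence in a stabilization ($B\otimes\mathcal K$ rather than $M_2(B)$), then build a point-wise nuclear homotopy from the conjugated map to $\psi$ and invoke homotopy invariance of $KK_\nuc$. The only cosmetic difference is that you normalise $u_0 = 1_{\multialg{B\otimes\mathcal K}}$ via Cuntz--Higson's Kuiper theorem, whereas the paper absorbs $\Ad u_0$ into a unitary equivalence of cycles; both dispatch the same endpoint issue.
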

\begin{proof}
As the corner inclusion $\id_B \oplus 0 \colon B \to M_2(B)$ induces an isomorphism $KK_\nuc(A, B) \cong KK_\nuc(A, M_2(B))$, it is enough to show that $KK_\nuc(\phi\oplus 0) = KK_\nuc(\psi \oplus 0)$. By Proposition \ref{p:MvNeq}, $\phi \oplus 0$ and $\psi \oplus 0$ are asymptotically unitarily equvalent, say by a unitary path $(u_s)_{s\in [0,1)} \in \multialg{M_2(B)}$. Letting $\eta_s := \Ad u_s \circ (\phi \oplus 0)$ for $s\in [0,1)$, and $\eta_1 = \psi \oplus 0$, one obtains a point-wise nuclear homotopy from $\Ad u_0 \circ (\phi\oplus 0)$ to $\psi \oplus 0$, and thus $KK_\nuc(\phi \oplus 0) = KK_\nuc(\Ad u_0 \circ (\phi \oplus 0)) = KK_\nuc(\psi \oplus 0)$.
\end{proof}

\subsection{Proofs of Theorems \ref{t:existsimple}, \ref{t:uniquesimple} and \ref{t:KP}}

Before proving the main results, a few lemmas will be required. The first uses Kirchberg's celebrated $\mathcal O_2$-embedding theorem, see \cite{Kirchberg-ICM} or \cite{KirchbergPhillips-embedding}.

\begin{lemma}\label{l:fullO2map}
Let $A$ be a separable, exact $C^\ast$-algebra, and let $B$ be a $\sigma$-unital $C^\ast$-algebra which contains a full, properly infinite projection. Then $B$ contains a $\sigma$-unital, stable, full, hereditary $C^\ast$-subalgebra, and there exists full, nuclear $\ast$-homomorphism $\theta \colon A \to B\otimes \mathcal K$ such that $\mathcal O_2$ embeds unitally in $\multialg{B\otimes \mathcal K} \cap \theta(A)'$.
\end{lemma}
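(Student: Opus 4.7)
My plan is to first produce the stable hereditary subalgebra from the proper infiniteness of $p$, and then to combine Brown's stabilization theorem with the $\mathcal O_2$-embedding theorem to construct $\theta$ so that the commuting copy of $\mathcal O_2$ lives inside $\multialg{B \otimes \mathcal K}$ via the corner structure.

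Since $p$ is properly infinite, there is a unital embedding $\mathcal O_2 \hookrightarrow pBp$. Letting $s_1, s_2$ denote the image of the generators of $\mathcal O_2$, the elements $t_n := s_1^{n-1} s_2$ for $n \in \mathbb N$ form an infinite sequence of isometries in $pBp$ with $t_n^\ast t_n = p$ and pairwise orthogonal range projections $p_n := t_n t_n^\ast$, which in particular realises a unital copy of $\mathcal O_\infty$ inside $pBp$. I would define $D \subseteq B$ to be the closed linear span of $\{t_i b t_j^\ast : b \in pBp,\, i,j \in \mathbb N\}$. The assignment $b \otimes e_{i,j} \mapsto t_i b t_j^\ast$ extends to a $\ast$-isomorphism $pBp \otimes \mathcal K \xrightarrow{\cong} D$ (injectivity using orthogonality of the $t_n$), and using the projections $P_N := \sum_{i \le N} p_i$ as a local approximate identity in $B$ a routine estimate shows $D$ is hereditary. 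Thus $D$ is $\sigma$-unital, stable, and full (it contains $p_1 \sim p$ with $p$ full in $B$), proving the first claim. Brown's stabilization theorem then gives an isomorphism $\Phi \colon B \otimes \mathcal K \xrightarrow{\cong} D \otimes \mathcal K \cong D \cong pBp \otimes \mathcal K$, extending to multipliers to an isomorphism $\multialg{\Phi} \colon \multialg{B \otimes \mathcal K} \xrightarrow{\cong} \multialg{pBp \otimes \mathcal K}$.

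For $\theta$, I fix an embedding $\iota \colon A \hookrightarrow \mathcal O_2$ from Kirchberg's $\mathcal O_2$-embedding theorem, and using $\mathcal O_2 \cong \mathcal O_2 \otimes \mathcal O_2$ together with the unital embedding $\mathcal O_2 \hookrightarrow pBp$ just established, a unital embedding $\kappa \colon \mathcal O_2 \otimes \mathcal O_2 \hookrightarrow pBp$; then I set
\begin{equation}
\theta(a) := \Phi^{-1}\bigl(\kappa(\iota(a) \otimes 1_{\mathcal O_2}) \otimes e_{1,1}\bigr) \in B \otimes \mathcal K.
\end{equation}
Nuclearity is immediate since $\theta$ factors through the nuclear algebra $\mathcal O_2$. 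For fullness: when $a \neq 0$ the element $\iota(a) \otimes 1$ is nonzero in the simple algebra $\mathcal O_2 \otimes \mathcal O_2$, hence full; since $\kappa$ is unital its image contains $p$, so $\kappa(\iota(a) \otimes 1)$ is full in $pBp$, which in turn is full in $B$ (as $p$ is full), and therefore $\kappa(\iota(a) \otimes 1) \otimes e_{1,1}$ is full in $pBp \otimes \mathcal K$. Applying $\Phi^{-1}$ transports fullness to $B \otimes \mathcal K$. For the commutant, the map $y \mapsto \kappa(1_{\mathcal O_2} \otimes y) \otimes 1_{\multialg{\mathcal K}}$ is a unital embedding $\mathcal O_2 \hookrightarrow pBp \otimes \multialg{\mathcal K} \subseteq \multialg{pBp \otimes \mathcal K}$; the key observation is that $p \otimes 1_{\multialg{\mathcal K}}$ acts as the identity on $pBp \otimes \mathcal K$ and therefore equals $1_{\multialg{pBp \otimes \mathcal K}}$. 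Its image commutes with $\kappa(\iota(a) \otimes 1) \otimes e_{1,1}$ because $1 \otimes \mathcal O_2$ and $\iota(A) \otimes 1$ commute inside $\mathcal O_2 \otimes \mathcal O_2$, and pulling back through $\multialg{\Phi}^{-1}$ delivers the desired unital embedding of $\mathcal O_2$ into $\multialg{B \otimes \mathcal K} \cap \theta(A)'$.

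The point requiring most care — and the reason for passing through the corner $pBp$ — is that one cannot put $\mathcal O_2$ directly into $\multialg{B \otimes \mathcal K}$ via, say, $1_{\multialg{B}} \otimes \multialg{\mathcal K}$: such a copy would be unital in $\multialg{B \otimes \mathcal K}$ but would generally fail to commute with $\theta(A)$. Working inside $pBp$ and then transporting through Brown's isomorphism secures unitality and commutativity at the same time. Apart from this conceptual point, the only real technical work is in checking that $D$ is hereditary in $B$ and that the formula $b \otimes e_{i,j} \mapsto t_i b t_j^\ast$ genuinely extends to an isomorphism $pBp \otimes \mathcal K \cong D$.
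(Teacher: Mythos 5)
The opening claim — that proper infiniteness of $p$ gives a \emph{unital} embedding $\mathcal O_2 \hookrightarrow pBp$ — is false, and this is a genuine gap. Proper infiniteness gives a unital copy of $\mathcal O_\infty$ in $pBp$, not of $\mathcal O_2$: since $K_0(\mathcal O_2) = 0$, a unital $\ast$-homomorphism $\mathcal O_2 \to pBp$ would force $[p]_0 = 0$ in $K_0(pBp)$, and the hypotheses give no control on $[p]_0$. For a concrete counterexample take $B = \mathcal O_\infty \otimes \mathcal K$ and $p = 1_{\mathcal O_\infty} \otimes e_{1,1}$; then $pBp \cong \mathcal O_\infty$ and $[1_{\mathcal O_\infty}]_0 \neq 0$ in $K_0(\mathcal O_\infty) \cong \mathbb Z$, so no unital copy of $\mathcal O_2$ exists. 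Your construction of $D$ survives if you take the $t_n$ to be the generating $\mathcal O_\infty$-isometries directly, but the later step — a \emph{unital} $\kappa \colon \mathcal O_2 \otimes \mathcal O_2 \hookrightarrow pBp$ whose second tensor factor lands as a unital copy of $\mathcal O_2$ in $\multialg{pBp \otimes \mathcal K}$ commuting with $\theta(A)$ — cannot be carried out in general, and this is exactly where the strategy of securing unitality inside the corner $pBp$ collapses.

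The paper's proof never demands a unital map into a corner of $B$. It composes a full but not necessarily unital embedding $\mathcal O_2 \hookrightarrow \mathcal O_\infty \hookrightarrow pBp \subseteq B$ with an embedding $\mathcal O_2 \otimes \mathcal O_2 \otimes \mathcal K \hookrightarrow \mathcal O_2$ supplied by Kirchberg's $\mathcal O_2$-embedding theorem, and lets $B_0$ be the hereditary $C^\ast$-subalgebra of $B$ generated by the image of this composition $\eta$; stability of $B_0$ follows from \cite[Proposition 4.4]{HjelmborgRordam-stability}, and fullness from simplicity of $\mathcal O_2 \otimes \mathcal O_2 \otimes \mathcal K$ together with fullness of $\mathcal O_2 \hookrightarrow B$. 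The mechanism replacing your unitality assertion is that $\eta$, regarded as a map \emph{into} $B_0$, is nondegenerate by construction, so $\multialg{\eta} \colon \multialg{\mathcal O_2 \otimes \mathcal O_2 \otimes \mathcal K} \to \multialg{B_0}$ is automatically unital, and $\id_{\mathcal O_2} \otimes 1_{\multialg{\mathcal O_2 \otimes \mathcal K}}$ is sent to a unital copy of $\mathcal O_2$ in $\multialg{B_0}$ commuting with $\theta(A)$. If you prefer to keep your structure, one repair is to first replace $p$ by $q := p - p_1$, where $p_1, p_2, p_3 \le p$ are mutually orthogonal subprojections each equivalent to $p$ (these exist by proper infiniteness); then $q \ge p_2 + p_3$ is full and properly infinite and $[q]_0 = 0$ in $K_0(qBq) \cong K_0(B)$, so Cuntz's results on properly infinite projections do give a unital embedding $\mathcal O_2 \hookrightarrow qBq$, after which your argument with $q$ in place of $p$ goes through. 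As written, however, the proof relies on an assertion that is not true.
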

\begin{proof}
As $B$ contains a full, properly infinite projection $p$, there is a full embedding obtained as a composition $\mathcal O_2 \hookrightarrow \mathcal O_\infty \hookrightarrow pBp \subseteq B$, where we used that $pBp$ contains a unital copy of $\mathcal O_\infty$, see \cite[Proposition 4.2.3]{Rordam-book-classification}. Let $\eta$ denote the composition
\begin{equation}
\mathcal O_2 \otimes \mathcal O_2 \otimes \mathcal K \hookrightarrow \mathcal O_2 \hookrightarrow B
\end{equation}
for some embedding $\mathcal O_2 \otimes \mathcal O_2 \otimes \mathcal K \hookrightarrow \mathcal O_2$ which exists by the $\mathcal O_2$-embedding theorem \cite{KirchbergPhillips-embedding}. Let $B_0$ denote the hereditary $C^\ast$-subalgebra of $B$ generated by the image of $\eta$. Clearly $B_0$ is a $\sigma$-unital, full, hereditary $C^\ast$-subalgebra, and it is also stable (see e.g.~\cite[Proposition 4.4]{HjelmborgRordam-stability}), completing the first part of the proof.

Brown's stable isomorphism theorem \cite{Brown-stableiso} implies that $B_0 \cong B\otimes \mathcal K$. So we may replace $B\otimes \mathcal K$ with $B_0$. Let $\theta$ denote the composition
\begin{equation}
A \xrightarrow j \mathcal O_2 \xrightarrow{1_{\mathcal O_2} \otimes \id_{\mathcal O_2} \otimes e_{1,1}} \mathcal O_2 \otimes \mathcal O_2 \otimes \mathcal K \xrightarrow \eta B_0,
\end{equation}
where $j \colon A \hookrightarrow \mathcal O_2$ is an embedding, whose existence again uses the $\mathcal O_2$-embedding theorem. Clearly $\theta$ is full and nuclear.  The composition
\begin{equation}
\mathcal O_2 \xrightarrow{\id_{\mathcal O_2} \otimes 1_{\multialg{\mathcal O_2 \otimes \mathcal K}}} \multialg{\mathcal O_2 \otimes \mathcal O_2 \otimes \mathcal K} \xrightarrow{\multialg{\eta}} \multialg{B_0}
\end{equation}
gives a unital embedding of $\mathcal O_2$ in $\multialg{B_0} \cap \theta(A)'$.
\end{proof}

\begin{lemma}\label{l:absCuntzpair}
Let $A$ be a separable $C^\ast$-algebra, let $B$ be a $\sigma$-unital $C^\ast$-algebra, and let $\Theta \colon A \to \multialg{B\otimes \mathcal K}$ be a weakly nuclear, nuclearly absorbing representation. Then any element $x\in KK_\nuc(A,B)$ is represented by a weakly nuclear Cuntz pair of the form $(\psi, \Theta)$.
\end{lemma}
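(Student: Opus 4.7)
The plan is to represent $x$ by a weakly nuclear Cuntz pair, use nuclear absorption to conjugate the second coordinate so that it agrees with $\Theta$ modulo $B\otimes\mathcal{K}$, and then argue that the resulting pair $(\psi,\Theta)$ represents $x$.

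First I would represent $x=[(\phi_0,\phi_1)]$ by a weakly nuclear Cuntz pair with $\phi_i\colon A\to\multialg{B\otimes\mathcal{K}}$. Since $B\otimes\mathcal{K}$ is stable, I can fix $\mathcal{O}_2$-isometries $s_1,s_2\in\multialg{B\otimes\mathcal{K}}$. Because $(\Theta,\Theta)$ is a degenerate Cuntz pair, adding it yields the equivalent representative $x=[(\phi_0\oplus_{s_1,s_2}\Theta,\,\phi_1\oplus_{s_1,s_2}\Theta)]$. The map $\phi_1\oplus_{s_1,s_2}\Theta$ is weakly nuclear, so nuclear absorption provides a unitary $u\in\multialg{B\otimes\mathcal{K}}$ with $u^\ast(\phi_1\oplus_{s_1,s_2}\Theta)(a)u-\Theta(a)\in B\otimes\mathcal{K}$ for every $a\in A$. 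Setting $\psi:=u^\ast(\phi_0\oplus_{s_1,s_2}\Theta)u$ and conjugating the Cuntz pair by $u$ gives the unitarily equivalent (hence $x$-representing) weakly nuclear Cuntz pair $(\psi,\,u^\ast(\phi_1\oplus_{s_1,s_2}\Theta)u)$.

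Next I would verify that $(\psi,\Theta)$ is itself a weakly nuclear Cuntz pair. This follows from the decomposition
\[
\psi(a)-\Theta(a)=u^\ast\bigl((\phi_0-\phi_1)\oplus_{s_1,s_2}0\bigr)(a)u+\bigl[u^\ast(\phi_1\oplus_{s_1,s_2}\Theta)(a)u-\Theta(a)\bigr],
\]
where the first summand lies in $B\otimes\mathcal{K}$ because $(\phi_0,\phi_1)$ is a Cuntz pair and the second by the choice of $u$. Weak nuclearity of $\psi$ follows from weak nuclearity of $\phi_0,\Theta$ and the fact that conjugation by a multiplier unitary preserves weak nuclearity.

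The main obstacle will be to show that $(\psi,\Theta)$ represents $x$, equivalently that $[(\psi,\Theta)]=[(\psi,\,u^\ast(\phi_1\oplus_{s_1,s_2}\Theta)u)]$ in $KK_\nuc(A,B)$. To handle this I would pass to the Fredholm picture: the corresponding weakly nuclear Kasparov cycles $(\psi,\Theta,1)$ and $(\psi,\,u^\ast(\phi_1\oplus_{s_1,s_2}\Theta)u,\,1)$ differ only by a $B\otimes\mathcal{K}$-valued perturbation of the target representation. Since $\Theta$ is nuclearly absorbing and both target representations are weakly nuclear lifts of the same Busby invariant in $\corona{B\otimes\mathcal{K}}$, a standard Voiculescu-type absorption argument—amplifying to $M_2(B\otimes\mathcal{K})$, applying a rotation to separate the absorbing parts, and exploiting path-connectedness of the unitary group of $\multialg{B\otimes\mathcal{K}}$ via Cuntz--Higson—produces a continuous path of weakly nuclear Kasparov cycles interpolating between them. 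Translating the conclusion back to the Cuntz pair picture gives the required equality of $KK_\nuc$-classes and completes the proof.
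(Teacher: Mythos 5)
Your construction of $\psi$ and the verification that $(\psi,\Theta)$ is a weakly nuclear Cuntz pair are fine, but the final step --- showing $[(\psi,\Theta)]=[(\psi,\Theta')]$ where $\Theta':=u^\ast(\phi_1\oplus_{s_1,s_2}\Theta)u$ --- is a genuine gap, and it is where your approach departs essentially from the paper's. Two representations that agree modulo $B\otimes\mathcal K$ do \emph{not} in general give the same Cuntz-pair class against a fixed first coordinate: $[(\psi,\Theta)]-[(\psi,\Theta')]=[(\Theta',\Theta)]$, and $(\Theta',\Theta)$ is a perfectly good weakly nuclear Cuntz pair whose class there is no reason to vanish. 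Indeed the very lemma you are proving says that \emph{every} class in $KK_\nuc(A,B)$ arises as $[(\Theta',\Theta)]$ for some $\Theta'$ agreeing with $\Theta$ modulo $B\otimes\mathcal K$, so the claim that sharing a Busby invariant with $\Theta$ forces the class to be $0$ is false. The ``Voiculescu-type'' homotopy you invoke would require a \emph{norm-continuous} path of unitaries $w_t\in\widetilde{B\otimes\mathcal K}$ from $1$ to a $w$ intertwining $\Theta'$ and $\Theta$; such a $w$ has the form $1+k$ with $k\in B\otimes\mathcal K$, and the obstruction to connecting it to $1$ inside $\widetilde{B\otimes\mathcal K}$ is exactly a class in $K_1(B)$, which need not vanish. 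Path-connectedness of $\mathcal U(\multialg{B\otimes\mathcal K})$ via Cuntz--Higson does not help, because conjugating only the second coordinate by a strictly (rather than norm-) continuous unitary path does not produce a legal Cuntz-pair homotopy --- the difference $\psi(a)-w_t\Theta'(a)w_t^\ast$ would not be norm-continuous in $t$.

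The paper avoids this $K_1$-style obstruction by a different route: it first turns the arbitrary pair $(\psi_0,\psi_1)$ into a cycle of the form $(\Psi'',\Psi'',w)$ in the Fredholm picture --- a \emph{single} representation with a unitary operator $w$ --- by adding the degenerate cycle $(\Psi',\Psi',1)$ with $\Psi'=\psi_0\oplus\psi_1\oplus\cdots$ and conjugating by a permutation unitary. Only then does it apply nuclear absorption, obtaining $(\Psi,\Psi,v)$ with $\Psi\equiv\Theta$ modulo $B\otimes\mathcal K$, and uses the $2\times2$ rotation matrices $R_t$ (which have \emph{scalar} entries, hence no $K_1$ obstruction) to give an operator homotopy from $(\Psi\oplus\Theta,\Psi\oplus\Theta,v\oplus1)$ to $(\Psi\oplus\Theta,\Psi\oplus\Theta,1\oplus v)$, exploiting that $R_t$ commutes with $\Psi\oplus\Theta$ modulo ``compacts'' precisely because $\Psi\equiv\Theta$ there. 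After discarding the degenerate $(\Psi,\Psi,1)$ one lands on $(\Theta,\Theta,v)$, which is unitarily equivalent to the Cuntz pair $(\Ad v\circ\Theta,\Theta)$. Your plan to work directly in the two-representation picture and swap the second coordinate cannot be rescued by an absorption argument; you would need to incorporate a device like the rotation trick after first equalising the two coordinates.
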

\begin{proof}
The proof is a chain of standard reductions. In the following,  let $\mathcal H_B := \ell^2(\mathbb N)\otimes B$ and  identify $\multialg{B\otimes \mathcal K}$ with $\mathcal B(\mathcal H_B)$.

Represent $x$ by a weakly nuclear Cuntz pair $(\psi_0,\psi_1)$. Then the weakly nuclear cycle $(\psi_0,\psi_1,1_{\mathcal B(\mathcal H_B)})$ represents $x$. Let 
\begin{equation}
\Psi' = \psi_0 \oplus \psi_1\oplus \psi_0 \oplus \psi_1 \oplus \cdots \colon A \to \mathcal B(E)
\end{equation}
where $E= \bigoplus_{\mathbb N} \mathcal H_B$. Note that $\Psi'$ is weakly nuclear. Then $(\psi_0\oplus \Psi' , \psi_1 \oplus \Psi', 1_{\mathcal B(\mathcal H_B)} \oplus 1_{\mathcal B(E)})$ also represents $x$, as $(\Psi',\Psi', 1_{\mathcal B(E)})$ is degenerate. Let $w$ be a unitary in $\mathcal B(\mathcal H_B \oplus E) = \mathcal B(\mathcal H_B \oplus \bigoplus_{\mathbb N} \mathcal H_B)$ which permutes the direct summands, so that $\Ad w \circ \psi_0 \oplus \Psi' = \psi_1 \oplus \Psi'$. As $(\psi_0 \oplus \Psi', \psi_1 \oplus \Psi', 1_{\mathcal B(\mathcal H_B)}\oplus 1_{\mathcal B(E)})$ and $(\psi_1 \oplus \Psi', \psi_1 \oplus \Psi', w)$ are unitarily equivalent, the latter represents $x$. Let $\Psi'' = \psi_1 \oplus \Psi'$, so that $(\Psi'', \Psi'', w)$ represents $x$. Thus $(\Psi'' \oplus \Theta, \Psi'' \oplus \Theta, w \oplus 1_{\mathcal B(\mathcal H_B)})$ also represents $x$.

As $\Theta$ is weakly nuclear and nuclearly absorbing we may find a unitary $u \in \mathcal B(\mathcal H_B, \mathcal H_B\oplus E \oplus \mathcal H_B)$ such that $\Ad u \circ (\Psi'' \oplus \Theta)(a) - \Theta(a)$ is ``compact'' for all $a\in A$. Let $\Psi = \Ad u \circ (\Psi'' \oplus \Theta)$ and $v = u^\ast (w \oplus 1_{\mathcal B(\mathcal H_B)}) u$. Note that $v$ is a unitary. Since $(\Psi, \Psi, v)$ and $(\Psi'' \oplus \Theta , \Psi'' \oplus \Theta , w \oplus 1_{\mathcal B(\mathcal H_B)})$ are unitarily equivalent, the former represents $x$.

Clearly $(\Psi \oplus \Theta, \Psi \oplus \Theta, v \oplus 1_{\mathcal B(\mathcal H_B)})$ represents $x$, since $(\Theta, \Theta, 1_{\mathcal B(\mathcal H_B)})$ is degenerate. Let $R_t = \left( \begin{array}{cc} \cos(t\pi/2) & \sin(t\pi /2) \\ - \sin(t\pi /2) & \cos(t\pi /2) \end{array}\right) \in M_2(\mathcal B(\mathcal H_B)) = \mathcal B(\mathcal H_B \oplus \mathcal H_B)$ for $t\in [0,1]$ be the usual path of $2\times 2$ unitary rotation matrices. As $\Psi$ and $\Theta$ agree modulo the ``compacts'', $(\Psi \oplus \Theta)(a)$ is of the form $(\Theta \oplus \Theta)(a)$ modulo the ``compacts''. As $R_t$ has scalar entries, it follows that $R_t$ and $(\Psi \oplus \Theta)(a)$ commute modulo the ``compacts''. It follows that $(\Psi \oplus \Theta, \Psi \oplus \Theta, R_t^\ast (v \oplus 1_{\mathcal B(\mathcal H_B)}) R_t)$ defines an operator homotopy from $(\Psi \oplus \Theta, \Psi \oplus \Theta, v \oplus 1_{\mathcal B(\mathcal H_B)})$ to $(\Psi \oplus \Theta, \Psi \oplus \Theta, 1_{\mathcal B(\mathcal H_B)} \oplus v) = (\Psi, \Psi, 1_{\mathcal B(\mathcal H_B)}) \oplus (\Theta, \Theta, v)$, so the latter represents $x$. As $(\Psi, \Psi, 1_{\mathcal B(\mathcal H_B)})$ is degenerate it follows that $(\Theta, \Theta, v)$ represents $x$.

Finally, let $\psi = \Ad v \circ \Theta$, which is a weakly nuclear $\ast$-homomorphism since $v$ is a unitary. Then $(\Theta, \Theta, v)$ and $(\psi, \Theta, 1_{\mathcal B(\mathcal H_B)})$ are unitarily equivalent, so $x$ is represented by the weakly nuclear Cuntz pair $(\psi, \Theta)$.
\end{proof}

With these ingredients, the proof of Theorem \ref{t:existsimple} -- the main existence part in the Kirchberg--Phillips classification theorem -- is ready to be handled.

\begin{proof}[Proof of Theorem \ref{t:existsimple}]
As $B$ contains a full, properly infinite projection, there is a $\sigma$-unital, stable, full, hereditary $C^\ast$-subalgebra $B_0 \subseteq B$ by Lemma \ref{l:fullO2map}. As the inclusion $\iota \colon B_0 \hookrightarrow B$ induces an isomorphism
\begin{equation}
\iota_\ast \colon KK_\nuc(A,B_0 ) \xrightarrow \cong KK_\nuc(A,B),
\end{equation}
there is a unique $y\in KK_\nuc(A,B_0)$ such that $\iota_\ast( y) = x$. Hence we may assume that $B$ is stable.

By Lemma \ref{l:fullO2map}, there is a full, nuclear $\ast$-homomorphism $\theta \colon A \to B$ such that $\mathcal O_2$ embeds unitally in $\multialg{B} \cap \theta(A)'$. Let $s_1,s_2,\dots \in \multialg{B}$ be isometries with mutually orthogonal range projections such that $\sum_{k=1}^\infty s_k s_k^\ast =1_{\multialg{B}}$. By Theorem \ref{t:fullnucabs}, the infinite repeat $\theta_\infty = \sum_{k=1}^\infty s_k \theta(-) s_k^\ast$ is weakly nuclear and nuclearly absorbing. By Lemma \ref{l:absCuntzpair}, $x\in KK_\nuc(A,B)$ is represented by a weakly nuclear Cuntz pair of the form $(\psi , \theta_\infty)$. Let $(u_t)_{t\in \mathbb [0,1)}$ be a unitary path in $\multialg{B} \cap \theta_\infty(A)'$ as given by Lemma \ref{l:keyexistence},\footnote{As in Remark \ref{r:infrep}, we use that $\Omega \colon B \otimes \mathcal K \to B$ given on elementary tensors by $b\otimes e_{i,j} = s_i b s_j^\ast$ (once extended to multiplier algebras) maps $\theta \otimes 1_{\multialg{\mathcal K}}$ to $\theta_\infty$, and $\phi(a) \otimes e_{1,1} + \theta(a) \otimes (1_{\multialg{\mathcal K}} - e_{1,1})$ to $s_1 \phi(a) s_1^\ast + \sum_{k=2}^\infty s_k \theta(a) s_k^\ast$.} and let $\phi \colon A \to B$ be the $\ast$-homomorphism such that $u_t \psi(-) u_t^\ast$ converges point-norm to $\phi_0 := s_1\phi(-)s_1^\ast + \sum_{k=2}^\infty s_k \theta(-) s_k^\ast$. As $\phi_0$ is weakly nuclear it follows that $\phi = s_1^\ast \phi_0(-) s_1$ is weakly nuclear and thus nuclear since it takes values in $B$ (as opposed to $\multialg{B}$). As $B$ is stable and $1_{\multialg{B}} - s_1s_1^\ast \geq s_2s_2^\ast$, we may fix an isometry $s_0 \in \multialg{B}$ with $s_0s_0^\ast = 1_{\multialg{B}} - s_1 s_1^\ast$ so that $s_1,s_0$ are $\mathcal O_2$-isometries.\footnote{Here the following well-known fact is used: if $B$ is stable and $p\in \multialg{B}$ is a projection so that $1_{\multialg{B}}\preceq p$, then $1_{\multialg{B}} \sim p$. To see this, using that $1_{\multialg{B}}$ is properly infinite, one gets $p\oplus p \leq 1_{\multialg{B}} \oplus 1_{\multialg{B}} \preceq 1_{\multialg{B}} \preceq p$. From \cite[Proposition 1.1.2]{Rordam-book-classification} it follows that $p$ is properly infinite, and $p$ is full since $1_{\multialg{B}}\preceq p$. As $K_0(\multialg{B}) = 0$ by \cite[Proposition 12.2.1]{Blackadar-book-K-theory}, it follows from \cite[Proposition 4.1.4]{Rordam-book-classification} that all properly infinite full projections in $\multialg{B}$ are equivalent, and in particular $p\sim 1_{\multialg{B}}$.} Define $\theta_0 := \sum_{k=2}^\infty s_0^\ast s_k \theta(-) s_k^\ast s_0$ (which is also an infinite repeat of $\theta$). Then $\phi_0$ can be expressed as the Cuntz sum $\phi \oplus_{s_1,s_0} \theta_0$ and similarly $\theta_\infty = \theta \oplus_{s_1,s_0} \theta_0$.

We obtain a homotopy of weakly nuclear Cuntz pairs, from $(\psi, \theta_\infty)$ to $(\phi_0 , \theta_\infty) = (\phi, \theta) \oplus_{s_1,s_0} (\theta_0, \theta_0)$, given by
\begin{equation}
(\Ad u_t \circ \psi, \Ad u_t \circ \theta_\infty) = (\Ad u_t \circ \psi, \theta_\infty)
\end{equation}
for $t\in (0,1)$. Thus $x$ is represented by $(\phi, \theta) \oplus_{s_1,s_0} (\theta_0, \theta_0)$. As $(\theta_0, \theta_0)$ is degenerate, $x$ is represented by $(\phi, \theta)$. Hence $x = KK_\nuc(\phi) - KK_\nuc(\theta)$. As $\mathcal O_2$ embeds unitally in $\multialg{B} \cap \theta(A)'$ it follows that $KK_\nuc(\theta) = 0$. Therefore $x = KK_\nuc(\phi)$. Note that any Cuntz sum $\phi \oplus_{s_1,s_0}\theta$ is a \emph{full}, nuclear $\ast$-homomorphism. Since $\theta$ is full and $\phi$ is nuclear, $\theta$ approximately dominates $\phi$ by Proposition \ref{p:fulldom}. As $\theta$ is strongly $\mathcal O_\infty$-stable by Proposition \ref{p:Oinftyfactor}, as it factors through $\mathcal O_2$, it follows from Proposition \ref{p:piabsorbing}$(a)$ that $\phi \oplus_{s_1,s_0} \theta$ is strongly $\mathcal O_\infty$-stable. Replacing $\phi$ with $\phi \oplus_{s_1,s_0} \theta$ we thus have a full, nuclear, strongly $\mathcal O_\infty$-stable $\ast$-homomorphism, such that $KK_\nuc(\phi) = x$.

It remains to prove  the ``moreover'' part, so from now on we assume that $A$ and $B$ are unital.

``Only if'': For any unital $C^\ast$-algebra $C$ the element $[1_C]_0 \in K_0(C) = KK(\mathbb C, C)$ is induced by the unique unital $\ast$-homomorphism $\eta_C \colon \mathbb C \to C$. Hence if $\phi \colon A \to B$ is unital and $KK_\nuc(\phi) = x$ then 
\begin{equation}
\Gamma_0(x)([1_A]_0) = KK_\nuc(\phi) \circ KK(\eta_A) = KK_\nuc(\phi \circ \eta_A) = KK_\nuc(\eta_B) = [1_B]_0.
\end{equation}
``If'': By the not necessarily unital version of the theorem we may find a full, nuclear, strongly $\mathcal O_\infty$-stable $\ast$-homo\-morphism $\phi_0 \colon A \to B$ such that $KK_\nuc(\phi_0) = x$. The strong $\mathcal O_\infty$-stability of $\phi_0$ implies that $\phi_0(1_A)$ is a properly infinite projection, see Remark \ref{r:fullpropinfproj}. As $[\phi_0(1_A)]_0 = \Gamma_0(x)([1_A]_0) = [1_B]_0$ and as both $\phi_0(1_A)$ and $1_B$ are properly infinite,\footnote{If $B$ is unital and contains a full, properly infinite projection, then $1_B$ is properly infinite.} full projections, it follows from a result of Cuntz \cite{Cuntz-K-theoryI} that $\phi_0(1_A)$ and $1_B$ are Murray--von Neumann equivalent. So we may pick $v \in B$ an isometry such that $vv^\ast = \phi_0(1_A)$ and define $\phi := v^\ast \phi_0(-) v \colon A \to B$. Then $\phi$ is a full, nuclear, unital, strongly $\mathcal O_\infty$-stable $\ast$-homomorphism, and as we clearly have $v\phi(-)v^\ast = \phi_0$ it follows that $\phi \sim_\asMvN \phi_0$ (implemented by the constant path $v$). Hence by Lemma \ref{l:asMvNKKnuc}
\begin{equation}
KK_\nuc(\phi) = KK_\nuc(\phi_0) = x.\qedhere
\end{equation}
\end{proof}

In the proof of Theorem \ref{t:uniquesimple} presented below, the following stable uniqueness theorem of Dadarlat and Eilers \cite{DadarlatEilers-asymptotic} will be needed. 

\begin{theorem}[{\cite[Theorem 3.10]{DadarlatEilers-asymptotic}}]\label{t:DE}
Let $A$ be a separable, exact $C^\ast$-algebra, let $B$ be a $\sigma$-unital, stable $C^\ast$-algebra, let $\phi, \psi \colon A \to \multialg{B}$ be weakly nuclear $\ast$-homomorphisms for which $\phi(a)- \psi(a) \in B$ for all $a\in A$, and suppose that $\theta \colon A \to B$ is a full, nuclear $\ast$-homomorphism. If $[\phi, \psi]$ vanishes in $KK_\nuc(A,B)$, then there is a continuous path $(u_t)_{t\in \mathbb R_+}$ of unitaries in the unitisation $\widetilde B$ such that
\begin{equation}\label{eq:DE}
 \lim_{t\to \infty} \| u_t (\phi \oplus_{s_1,s_2} \theta_\infty)(a) u_t^\ast - (\psi \oplus_{s_1,s_2} \theta_\infty)(a) \| =0
\end{equation}
for all $a\in A$. Here $s_1,s_2\in \multialg{B}$ are $\mathcal O_2$-isometries and $\theta_\infty$ is an infinite repeat of $\theta$.
\end{theorem}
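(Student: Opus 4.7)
The plan is to follow the Dadarlat--Eilers strategy: translate the vanishing of the class $[\phi,\psi]$ in $KK_\nuc(A,B)$ into a homotopy of weakly nuclear Cuntz pairs, and then use absorption of the infinite repeat $\theta_\infty$ to implement the homotopy by a norm-continuous path of inner unitaries living in $\widetilde B$. Because $\theta\colon A \to B$ is full and nuclear, Theorem~\ref{t:fullnucabs} ensures that $\theta_\infty$ is simultaneously weakly nuclear and nuclearly absorbing; this is the decisive technical input and will be used at every subsequent step.

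First I would pass to the stabilised Cuntz pair $(\phi \oplus_{s_1,s_2} \theta_\infty,\, \psi \oplus_{s_1,s_2} \theta_\infty)$, which is again a weakly nuclear Cuntz pair and represents the same class $[\phi,\psi]$ in $KK_\nuc(A,B)$ since adding the degenerate pair $(\theta_\infty,\theta_\infty)$ has no effect. The vanishing of this class in the Cuntz pair picture yields a norm-continuous homotopy $(\eta_0^{(s)},\eta_1^{(s)})_{s\in [0,1]}$ of weakly nuclear Cuntz pairs from $(\phi \oplus \theta_\infty, \psi \oplus \theta_\infty)$ at $s=0$ to a degenerate pair at $s=1$ (possibly after adding further degenerate cycles, which $\theta_\infty$ absorbs by Theorem~\ref{t:fullnucabs}). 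On a sufficiently fine partition of $[0,1]$, the nuclear absorption of $\theta_\infty$ then provides piecewise inner implementations: on each subinterval the two components $\eta_0^{(s)}$ and $\eta_1^{(s)}$ differ only by a small compact perturbation of a common map, and absorption yields a unitary close to $1$ conjugating one to the other. Concatenating these local unitaries and reparametrising $[0,1) \to \mathbb R_+$ gives the desired path $(u_t)_{t\in \mathbb R_+}$ satisfying \eqref{eq:DE}.

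The hard part will be ensuring both (i) that the absorbing unitaries vary norm-continuously along the homotopy and (ii) that they lie in $\widetilde B$ rather than merely in $\mathcal M(B)$. Point (i) is what forces the Cuntz pair picture together with the uniform version of absorption encoded in Theorem~\ref{t:fullnucabs}: the rotation homotopy \`a la the Kasparov product must be carried out inside the corner controlled by $\theta_\infty$, using fullness of $\theta$ (via Proposition~\ref{p:fulldom}) to approximately dominate the perturbation and nuclearity of $\theta$ to keep everything in the weakly nuclear world. Point (ii) follows because $\theta$ maps into $B$ itself and $B$ is $\sigma$-unital and stable: each local rotation can be realised by a unitary in $(B\otimes \mathcal K)^\sim$, and since the projections being rotated differ by a compact perturbation and both lie in the corner where $\theta_\infty$ acts non-trivially, the implementing unitary differs from $1$ by an element of $B$. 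Concatenating preserves this property, so the resulting path lies in $\widetilde B$ as required.
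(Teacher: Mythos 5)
The theorem you are asked to prove is cited from Dadarlat--Eilers and is not reproved in the paper; however, the paper \emph{does} give a full proof of the ideal-related analogue (Theorems \ref{t:irDE} and \ref{t:irDE2}), which mirrors the Dadarlat--Eilers argument, so one can compare against that.

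Your proposal takes a genuinely different route, and the key step does not go through. The crux is the sentence ``on each subinterval the two components $\eta_0^{(s)}$ and $\eta_1^{(s)}$ differ only by a small compact perturbation of a common map, and absorption yields a unitary close to $1$ conjugating one to the other.'' This asks absorption to produce, for each $s$, a unitary intertwining $\eta_0^{(s)}$ and $\eta_1^{(s)}$ -- but that is precisely a uniqueness statement of the kind we are trying to establish, and it is not what absorption provides. Absorption in the sense of Definition \ref{d:abs} and Theorem \ref{t:fullnucabs} yields a \emph{sequence} of unitaries $u_n$ with $u_n^\ast(\phi\oplus_{s_1,s_2}\theta_\infty)u_n \to \phi$ for a map $\phi$ that dominates; it says nothing about unitaries close to $1$, it intertwines a representation with its Cuntz sum with a smaller one rather than two a-priori-unrelated representations, and it produces no norm-continuity in any homotopy parameter. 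Moreover, the compact differences $\eta_0^{(s)}(a)-\eta_1^{(s)}(a)$ are not small in norm at $s=0$, so the ``small perturbation'' assumption fails at the starting end of the homotopy. Finally, even granting local unitaries, the concatenation step is not free: nothing in the proposal explains why the pieced-together path is norm-continuous at partition points, nor why each local unitary (and hence the concatenation) lies in $\widetilde B$ rather than merely in $\multialg{B}$.

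The actual argument (see the proof of Theorem \ref{t:irDE}, following Dadarlat--Eilers) avoids the local-to-global difficulty entirely by a different mechanism. One first replaces $\phi,\psi$ by Cuntz sums $\phi_0, \psi_0$ so that $\psi_0 = \Ad u' \circ \phi_0$ for a single permutation unitary $u'$; vanishing in $KK_\nuc$ then says $[\phi_0, \phi_0, u'] = [\phi_0, \phi_0, 1]$ in the Fredholm picture, and after adding a large degenerate cycle (Lemma \ref{l:DEoh}) one extracts a norm-continuous operator homotopy $w_s$ in $D_{\Phi_0}$ from $u$ to $1$. The passage from $D_{\Phi_0}$ to $\widetilde B$ uses two ingredients your plan does not contain: (i) the exact sequence $0\to B\to D_{\Phi_0}\to \corona{B}\cap \dot\Phi_0(A)'\to 0$, which allows $\dot w_s$ to be lifted to a path $u_s$ of unitaries in $D_{\Phi_0}$ with $u_1=u$ and $u_0\in \widetilde B$; and (ii) the automorphism-lifting theorem (Proposition \ref{p:DEaut}, applied via Corollary \ref{c:DEaut} and Lemma \ref{l:propertrick}) which converts a norm-continuous family of automorphisms $\Ad(u_0^\ast u_t)$ of $E_{\Phi_0}$ into a norm-continuous unitary path in the \emph{unitisation} $\widetilde E_{\Phi_0}$, which is then cut down into $\widetilde B$. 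Neither of these is present in your sketch, and I do not see how to supply them within the piecewise-absorption framework you propose.
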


\begin{proof}[Proof of Theorem \ref{t:uniquesimple}]
$(ii)\Rightarrow (i)$ follows from Lemma \ref{l:asMvNKKnuc}, and $(ii)\Leftrightarrow (iii)$ is Proposition \ref{p:MvNvsue}, where one uses Remark \ref{r:fullpropinfproj} to see that $B$ contains a full projection.

$(i)\Rightarrow (ii)$: Suppose $KK_\nuc(\phi) = KK_\nuc(\psi)$. By Proposition \ref{p:MvNeq}, it suffices to show that $\phi \otimes e_{1,1}, \psi \otimes e_{1,1} \colon A \to B\otimes \mathcal K$ are asymptotically Murray--von Neumann equivalent, so we may assume without loss of generality that $B$ is stable. Lemma \ref{l:fullO2map} produces a full, nuclear $\ast$-homo\-morphism $\theta \colon A \to B$ such that $\mathcal O_2$ embeds unitally in $\multialg{B} \cap \theta(A)'$. Proposition \ref{p:fulldom} implies that $\phi$ and $\psi$ approximately dominate $\theta$.  By Theorem \ref{t:fullnucabs} it follows that any infinite repeat of $\theta$ is nuclearly absorbing. Thus by Theorem \ref{t:DE} and Lemma \ref{l:keyuniqueness} -- the key lemma for uniqueness --  it follows that $\phi \sim_\asMvN \psi$.
\end{proof}


As a consequence of Theorems \ref{t:existsimple} and \ref{t:uniquesimple} one obtains the proof of Theorem \ref{t:KP}.

\begin{proof}[Proof of Theorem \ref{t:KP}]
$(a)$: By Theorem \ref{t:existsimple} we find full, nuclear, strongly $\mathcal O_\infty$-stable $\ast$-homo\-morphisms $\phi_0 \colon A \to B$ and $\psi_0 \colon B \to A$ such that $KK(\phi_0) = x$ and $KK(\psi_0) = x^{-1}$. Note that $KK(\psi_0 \circ \phi_0) = KK(\id_A)$ and $KK(\phi_0 \circ \psi_0) = KK(\id_B)$. Hence by Theorem \ref{t:uniquesimple} it follows that $\psi_0 \circ \phi_0 \sim_\asu \id_A$ and $\phi_0 \circ \psi_0 \sim_\asu \id_B$. By Proposition \ref{p:asint}\footnote{Note that an approximate intertwining argument a la Elliott (see \cite[Corollary 2.3.4]{Rordam-book-classification}) implies that there is an isomorphism $\phi \colon A \xrightarrow \cong B$ satisfying $\phi_\ast = (\phi_0)_\ast = \Gamma(x)$. However, this isn't quite good enough since we want $KK(\phi) = x$.} it follows that we may pick an isomorphism $\phi \colon A \xrightarrow \cong B$ which is homotopic to $\phi_0$, and thus $KK(\phi) = KK(\phi_0) = x$.

$(b)$: This is proved exactly as above, but by using the unital versions of Theorems \ref{t:existsimple} and \ref{t:uniquesimple}.
\end{proof}

\subsection{Kirchberg's and Phillips' existence and uniqueness results}\label{ss:classical}

Variations of Theorems \ref{t:existsimple} and \ref{t:uniquesimple} are also the key steps in Phillips' \cite{Phillips-classification} and Kirchberg's \cite{Kirchberg-simple} approaches to classification, see also \cite[Theorems 8.2.1 and 8.3.3]{Rordam-book-classification}. The results presented in this paper are more general than either approach, as will be explained below. 

In Phillips' approach, the domain $A$ is always separable, nuclear, simple and unital, and the target is $B\otimes \mathcal O_\infty \otimes \mathcal K$ for a separable, unital $C^\ast$-algebra $B$. In this case, any $\ast$-homomorphism $\phi \colon A \to B\otimes \mathcal O_\infty \otimes \mathcal K$ is always nuclear and strongly $\mathcal O_\infty$-stable, and $KK(A,B) = KK_\nuc(A,B)$, so Phillips' main results are immediate corollaries of Theorems \ref{t:existsimple} and \ref{t:uniquesimple}. 

Kirchberg's main results are more technical, as well as more general. His domain $A$ is separable, exact, unital (and usually stabilised, i.e.~one considers $A\otimes \mathcal K$ instead),  and his targets are $B\otimes \mathcal K$ where $B$ is unital and properly infinite.\footnote{Kirchberg also assumes that $B$ contains $\mathcal O_2$ unitally, but this is redundant when stabilising due to Brown's stable isomorphism theorem \cite{Brown-stableiso}.} The existence part of Kirchberg's result says that every element in $KK_\nuc(A,B)$ lifts to a nuclear $\ast$-homomorphism $A\otimes \mathcal K \to B\otimes \mathcal K$ and is therefore covered by Theorem \ref{t:existsimple}. 

For uniqueness, Kirchberg uses unitary homotopy as his equivalence relation on $\ast$-homo\-morphisms. Two $\ast$-homo\-morphisms $\phi,\psi \colon A \to B$ are \emph{unitarily homotopic}, written $\phi \sim_\mathrm{uh} \psi$, if there is a strictly continuous path $(u_t)_{t\in \mathbb R_+}$ of unitaries in $\multialg{B}$ such that $\lim_{t\to \infty} u_t \phi(a) u_t^\ast  = \psi(a)$ for all $a\in A$. Clearly $\phi \sim_\asu \psi$ implies $\phi \sim_\mathrm{uh} \psi$, and if $\phi \sim_\mathrm{uh} \psi$ is implemented by $(u_t)_{t\in \mathbb R_+}$, then $\Ad u_0 \circ \phi$ is homotopic to $\psi$ by a path implemented by unitary conjugation. Hence if $\phi,\psi$ are nuclear and $\phi \sim_\mathrm{uh} \psi$, then $KK_\nuc(\phi) = KK_\nuc(\psi)$. In the cases where Theorem \ref{t:uniquesimple} is applicable with stable target, it therefore follows that $\phi \sim_\asu \psi \Leftrightarrow \phi \sim_{\mathrm{uh}} \psi \Leftrightarrow KK_\nuc(\phi) = KK_\nuc(\psi)$.

Kirchberg shows that nuclear $\ast$-homomorphisms $\phi,\psi \colon A\otimes \mathcal K \to B\otimes \mathcal K$ have the same $KK_\nuc$-class exactly when $\phi\oplus \theta \sim_\mathrm{uh} \psi \oplus \theta$, where $\theta$ is a suitably chosen full (necessarily nuclear) $\ast$-homomorphism factoring through $\mathcal O_2$. By Propositions \ref{p:fulldom} and \ref{p:piabsorbing}$(a)$, it follows that $\phi\oplus \theta$ and $\psi \oplus \theta$ are nuclear, strongly $\mathcal O_\infty$-stable, and full, and they have the same $KK_\nuc$-classes as $\phi$ and $\psi$ respectively since $KK_\nuc(\theta) = 0$. Thus Theorem \ref{t:uniquesimple} implies
\begin{equation}
KK_\nuc(\phi) = KK_\nuc(\psi) \quad \Leftrightarrow \quad  \phi\oplus \theta \sim_\asu \psi \oplus \theta \quad  \Leftrightarrow \quad  \phi\oplus \theta \sim_\mathrm{uh} \psi \oplus \theta.
\end{equation}
Kirchberg also shows that if $B$ in addition is simple and purely infinite, and if $\phi,\psi \colon A\otimes \mathcal K \to B\otimes \mathcal K$ are injective, nuclear $\ast$-homomorphisms, then $KK_\nuc(\phi) = KK_\nuc(\psi)$ if and only if $\phi \sim_{\mathrm{uh}} \psi$. Simplicity of $B$ and injectivity of the maps imply that they are full. By Corollary \ref{c:nucintosimplepi} (proved in the following section), it follows that $\phi, \psi$ are strongly $\mathcal O_\infty$-stable, and hence this result is also covered by Theorem \ref{t:uniquesimple}.


\subsection{Approximate equivalence}\label{ss:KLnuc}

\begin{remark}[KL-theory]
In \cite{Dadarlat-KKtop}, Dadarlat equips $KK(A,B)$ (for $A$ and $B$ both separable) with a (not necessarily Hausdorff) topology. By \cite[Theorem 3.5]{Dadarlat-KKtop}, this topology can be described as the unique first countable topology on $KK(A,B)$ such that a sequence $(x_n)_{n\in \mathbb N}$ converges to $x\in KK(A,B)$ if and only if there is an element $y \in KK(A, C(\widetilde{\mathbb N}, B))$ with $(\ev_n)_\ast(y) = x_n$ for $n\in \mathbb N$ and $(\ev_\infty)_\ast(y) = x$. Here $\widetilde{\mathbb N} = \mathbb N \cup \{\infty\}$ denotes the one-point compactification of $\mathbb N$. One defines
\begin{equation}
KL(A,B) := KK(A,B) / \overline{\{0\}}.
\end{equation}
The group $KL(A,B)$ was first defined by Rørdam in \cite{Rordam-classsimple} whenever $A$ satisfies the UCT. Dadarlat's definition above generalises this to the non-UCT case. 

Doing the same in $KK_\nuc$ when $A$ is separable and $B$ is $\sigma$-unital, say that a sequence $(x_n)_{n\in \mathbb N}$ converges to $x$ in $KK_\nuc(A,B)$ if there is an element $y \in KK_\nuc(A, C(\widetilde{\mathbb N}, B))$ with $(\ev_n)_\ast(y) = x_n$ for $n\in \mathbb N$ and $(\ev_\infty)_\ast(y) = x$. Define
\begin{equation}
KL_\nuc(A,B) := KK_\nuc(A,B) / \overline{\{0\}}.
\end{equation}
\end{remark}

\begin{proposition}\label{p:aMvNKL}
Let $A$ be a separable $C^\ast$-algebra, let $B$ be a $\sigma$-unital $C^\ast$-algebra and suppose that $\phi, \psi \colon A \to B$ are nuclear $\ast$-homomorphisms. If $\phi \sim_\aMvN \psi$ then $KL_\nuc(\phi) = KL_\nuc(\psi)$.
\end{proposition}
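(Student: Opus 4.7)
The plan is to produce an element of $KK_\nuc(A, C(\widetilde{\mathbb N}, B))$ whose evaluations on $\mathbb N$ all equal $KK_\nuc(\phi) - KK_\nuc(\psi)$ and whose evaluation at $\infty$ vanishes. Subtracting off the constant lift of $KK_\nuc(\phi) - KK_\nuc(\psi)$ then realises its negative as a limit of the zero sequence, placing $KK_\nuc(\phi) - KK_\nuc(\psi)$ in $\overline{\{0\}}$. To set things up, I would first use Proposition \ref{p:MvNeq}$(ii'')$ (approximate version) to replace $(\phi, \psi)$ by the approximately unitarily equivalent pair $(\phi \otimes e_{1,1}, \psi \otimes e_{1,1})$, with the equivalence implemented by unitaries $(u_n)_{n \in \mathbb N}$ in $\multialg{B \otimes \mathcal K}$. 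These corner embeddings carry the same $KK_\nuc$-classes as $\phi$ and $\psi$; moreover, since $\mathcal U(\multialg{B \otimes \mathcal K})$ is path-connected by \cite{CuntzHigson-Kuipersthm}, each $\Ad u_n \circ (\phi \otimes e_{1,1})$ is homotopic, through a point-norm continuous family of nuclear $\ast$-homomorphisms, to $\phi \otimes e_{1,1}$, and hence has the same $KK_\nuc$-class as $\phi$.

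Next, I would assemble the approximate equivalence into the $\ast$-homomorphism
\[ \Phi \colon A \to C(\widetilde{\mathbb N}, B \otimes \mathcal K), \qquad \Phi(a)(n) = u_n (\phi(a) \otimes e_{1,1}) u_n^\ast, \quad \Phi(a)(\infty) = \psi(a) \otimes e_{1,1}, \]
whose continuity at $\infty$ is exactly the approximate unitary equivalence, paired with the constant $\Psi \colon A \to C(\widetilde{\mathbb N}, B \otimes \mathcal K)$ with $\Psi(a) \equiv \psi(a) \otimes e_{1,1}$. Nuclearity of $\Psi$ is immediate. For $\Phi$ I would verify the tensor product criterion of Proposition \ref{p:nuctensor}: for any $C^\ast$-algebra $C$, each $\ev_n \circ (\Phi \otimes \id_C)$ factors through the spatial tensor product $(B \otimes \mathcal K) \otimes C$ by nuclearity of $\phi \otimes e_{1,1}$ (and of $\psi \otimes e_{1,1}$ at $\infty$), and the resulting family is point-norm continuous on $A \otimes C$ by uniform approximation of arbitrary elements of $A \otimes C$ by finite sums combined with contractivity of the factored c.p. maps; the family therefore assembles into a $\ast$-homomorphism $A \otimes C \to C(\widetilde{\mathbb N}, B \otimes \mathcal K) \otimes C$ witnessing the factorisation.

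Setting $y := KK_\nuc(\Phi) - KK_\nuc(\Psi) \in KK_\nuc(A, C(\widetilde{\mathbb N}, B))$ via the stability isomorphism, functoriality and the observations above give
\[ (\ev_n)_\ast(y) = KK_\nuc(\phi) - KK_\nuc(\psi), \quad n \in \mathbb N, \qquad (\ev_\infty)_\ast(y) = 0. \]
Letting $y_0$ be the image of $KK_\nuc(\phi) - KK_\nuc(\psi)$ under the constant embedding $KK_\nuc(A,B) \to KK_\nuc(A, C(\widetilde{\mathbb N}, B))$, the difference $y - y_0$ vanishes under every $(\ev_n)_\ast$ with $n \in \mathbb N$ and equals $-(KK_\nuc(\phi) - KK_\nuc(\psi))$ under $(\ev_\infty)_\ast$. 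By the definition of the topology, the zero sequence therefore converges to $-(KK_\nuc(\phi) - KK_\nuc(\psi))$, placing this class (and hence its negative) in $\overline{\{0\}}$, which is a subgroup of the topological group $KK_\nuc(A,B)$. Thus $KL_\nuc(\phi) = KL_\nuc(\psi)$. The main technical obstacle is the nuclearity check for $\Phi$: some care is needed to interchange the maximal tensor product with $C(\widetilde{\mathbb N}, -)$ and to assemble the pointwise spatial factorisations into a single norm-continuous family.
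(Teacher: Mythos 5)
Your proposal is correct and follows essentially the same approach as the paper: pass to $B\otimes\mathcal K$ so that approximate Murray--von Neumann equivalence becomes approximate unitary equivalence, assemble the implementing unitaries into a nuclear $\ast$-homomorphism $\Phi\colon A\to C(\widetilde{\mathbb N}, B\otimes\mathcal K)$, and subtract the constant lift to land $KK_\nuc(\phi)-KK_\nuc(\psi)$ in $\overline{\{0\}}$. One simplification worth noting: the nuclearity criterion you re-derive by hand is exactly Lemma \ref{l:XnucC(Y)} of the paper (a c.p.\ map into $C_0(Y,B)$ is nuclear iff each point evaluation is), whose proof uses the natural identification $C(\widetilde{\mathbb N},B\otimes\mathcal K)\otimes_{\max} C \cong C(\widetilde{\mathbb N}, (B\otimes\mathcal K)\otimes_{\max} C)$ so that one only needs to check vanishing pointwise, avoiding the need to ``assemble'' the factored maps.
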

\begin{proof}
As the map $\id_B \otimes e_{1,1} \colon B \to B\otimes \mathcal K$ induces a topological isomorphism $KK_\nuc(A,B) \cong KK_\nuc(A, B \otimes \mathcal K)$, it descends to an isomorphism $KL_\nuc(A,B) \cong KL_\nuc(A,B\otimes \mathcal K)$. Hence we may assume that $B$ is stable. By Proposition \ref{p:MvNvsue} it follows that $\phi \sim_\au \psi$, so pick a sequence $(u_n)_{n\in \mathbb N}$ of unitaries in $\multialg{B}$ such that $u_n^\ast \phi(-) u_n \to \psi$. This gives a $\ast$-homomorphism
\begin{equation}
\Phi \colon A \to C(\widetilde{\mathbb N}, B), \qquad \Phi(a)(n) = \left\{ \begin{array}{ll} u_n^\ast \phi(a) u_n, & n\in \mathbb N \\ \psi(a) , & n = \infty \end{array} \right.
\end{equation}
for $a\in A$. The map $\Phi$ is nuclear by Lemma \ref{l:XnucC(Y)}\footnote{An alternative direct proof can be sketched as follows: one can use that on a finite set $\mathcal F \subseteq A$ and up to a tolerance $\epsilon >0$,  $\Phi(a)(n)$ for $a\in \mathcal F$ is constant up to $\epsilon$ for $n$ sufficiently large. Hence $\Phi$ approximately decomposes as a direct sum of nuclear maps $A \to C(\{1,\dots, n\}, B)$ and $\ev_\infty \circ \Phi \colon A \to B \subseteq C(\{ n+1, \dots, \infty\}, B)$. Nuclearity of $\Phi$ easily follows.} and thus induces an element $KK_\nuc(\Phi) \in KK_\nuc(A, C(\widetilde{\mathbb N}, B))$ such that $(\ev_n)_\ast(KK_\nuc(\Phi)) = KK_\nuc(\phi)$ for $n\in \mathbb N$, and $(\ev_\infty)_\ast(KK_\nuc(\Phi)) = KK_\nuc(\psi)$. Hence we get $KL_\nuc(\phi) = KL_\nuc(\psi)$.
\end{proof}

From Theorems \ref{t:existsimple} and \ref{t:uniquesimple} one gets the following approximate uniqueness theorem. In order to get it for (not necessarily strongly) $\mathcal O_\infty$-stable maps, a McDuff type theorem \cite[Corollary 4.5]{Gabe-O2class} will be applied.

\begin{theorem}[Approximate uniqueness -- full case]\label{t:approxuniquesimple}
Let $A$ be a separable $C^\ast$-algebra, and let $B$ be a $\sigma$-unital $C^\ast$-algebra. Suppose that  $\phi, \psi\colon A \to B$ are nuclear, $\mathcal O_\infty$-stable, full $\ast$-homomorphisms. The following are equivalent:
\begin{itemize}
\item[$(i)$] $KL_\nuc(\phi) = KL_\nuc(\psi)$; 
\item[$(ii)$] $\phi$ and $\psi$ are approximately Murray--von Neumann equivalent.
\end{itemize}
If either $B$ is stable, or if $A,B,\phi$ and $\psi$ are all unital, then $(i)$ and $(ii)$ are equivalent to
\begin{itemize}
\item[$(iii)$] $\phi$ and $\psi$ are approximately unitarily equivalent (with unitaries in the minimal unitisation).
\end{itemize}
\end{theorem}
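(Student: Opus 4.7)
The implications $(ii) \Rightarrow (i)$ and $(ii) \Leftrightarrow (iii)$ are immediate: the first is Proposition \ref{p:aMvNKL}, and the second is Proposition \ref{p:MvNvsue}, where one uses Remark \ref{r:fullpropinfproj} to see that $B$ contains a full projection (since $\phi$ is full and $\mathcal O_\infty$-stable).

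The substance lies in $(i) \Rightarrow (ii)$. Because $\phi$ is full and nuclear it is automatically injective (if $a \neq 0$ then $\phi(a)$ is full and hence nonzero), so $A$ embeds nuclearly into $B$ and is exact by Kirchberg's theorem \cite{Kirchberg-CAR}; this is the only place exactness of $A$ is needed. The plan is to interpolate $\phi$ and $\psi$ through the $C^\ast$-algebra $C(\widetilde{\mathbb N}, B)$, where $\widetilde{\mathbb N} = \mathbb N \cup \{\infty\}$ is the one-point compactification, and to bridge the gap between $\mathcal O_\infty$-stability and strong $\mathcal O_\infty$-stability using the McDuff-type theorem \cite[Corollary 4.5]{Gabe-O2class}. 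Unfolding the hypothesis $KL_\nuc(\phi) = KL_\nuc(\psi)$ through the first-countable topology of \cite{Dadarlat-KKtop} produces an element $y \in KK_\nuc(A, C(\widetilde{\mathbb N}, B))$ with $(\ev_n)_\ast(y) = KK_\nuc(\phi)$ for every $n \in \mathbb N$ and $(\ev_\infty)_\ast(y) = KK_\nuc(\psi)$.

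The target $C(\widetilde{\mathbb N}, B)$ is $\sigma$-unital and contains a full, properly infinite projection (the constant section of such a projection in $B$, which exists by Remark \ref{r:fullpropinfproj}), so Theorem \ref{t:existsimple} supplies a nuclear, strongly $\mathcal O_\infty$-stable, full $\ast$-homomorphism $\Phi \colon A \to C(\widetilde{\mathbb N}, B)$ lifting $y$. Each evaluation $\Phi_n := \ev_n \circ \Phi$ inherits nuclearity, strong $\mathcal O_\infty$-stability (evaluate the strong $\mathcal O_\infty$-stability path witnesses in $C(\widetilde{\mathbb N}, B)$ pointwise at $n$) and fullness, and has $KK_\nuc$-class $KK_\nuc(\phi)$ for $n \in \mathbb N$ and $KK_\nuc(\psi)$ for $n = \infty$. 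Invoking \cite[Corollary 4.5]{Gabe-O2class} produces strongly $\mathcal O_\infty$-stable, nuclear, full replacements $\phi', \psi' \colon A \to B$ that are approximately Murray--von Neumann equivalent to $\phi, \psi$ with matching $KK_\nuc$-classes. Theorem \ref{t:uniquesimple} applied to the pairs $(\Phi_n, \phi')$ and $(\Phi_\infty, \psi')$ then gives $\Phi_n \sim_\asMvN \phi'$ and $\Phi_\infty \sim_\asMvN \psi'$, a fortiori approximate Murray--von Neumann equivalences. Continuity of $\Phi$ at $\infty$ yields the pointwise-norm convergence $\Phi_n(a) \to \Phi_\infty(a)$ for every $a \in A$, so on any finite $\mathcal F \subset A$ and $\varepsilon > 0$, selecting a contraction $w$ from an approximate identity of $B$ that acts as a near-unit on $\Phi_n(\mathcal F) \cup \Phi_\infty(\mathcal F)$ for $n$ sufficiently large implements $\Phi_n \sim_\aMvN \Phi_\infty$ to within $\varepsilon$ on $\mathcal F$. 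Transitivity of $\sim_\aMvN$ (by multiplying the implementing contractions) concatenates the chain
\begin{equation*}
\phi \sim_\aMvN \phi' \sim_\aMvN \Phi_n \sim_\aMvN \Phi_\infty \sim_\aMvN \psi' \sim_\aMvN \psi.
\end{equation*}

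The main obstacle is ensuring that the McDuff-type replacements $\phi'$ and $\psi'$ agree with $\phi, \psi$ in $KK_\nuc$ (not merely in the weaker $KL_\nuc$), as Theorem \ref{t:uniquesimple} consumes the finer invariant; if \cite[Corollary 4.5]{Gabe-O2class} delivers only $KL_\nuc$-agreement, one must iterate the $C(\widetilde{\mathbb N}, -)$ construction once more to land inside the strongly $\mathcal O_\infty$-stable, $KK_\nuc$-matching setting. A secondary bookkeeping point is the passage from $(ii)$ to $(iii)$ in the unital and stable cases, handled uniformly by Proposition \ref{p:MvNvsue}.
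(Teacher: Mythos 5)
Your proof follows the same overall skeleton as the paper's, but the order in which you perform the McDuff replacement creates a genuine gap, one that you yourself flag at the end. You construct $y \in KK_\nuc(A, C(\widetilde{\mathbb N}, B))$ and the lift $\Phi$ relative to the \emph{original} maps $\phi, \psi$, and only afterwards replace $\phi, \psi$ by strongly $\mathcal O_\infty$-stable maps $\phi', \psi'$ via \cite[Corollary 4.5]{Gabe-O2class}. But that corollary yields $\phi' \sim_\aMvN \phi$, and approximate Murray--von Neumann equivalence only preserves $KL_\nuc$, not $KK_\nuc$. Consequently you only know $KL_\nuc(\phi') = KL_\nuc(\phi) = KK_\nuc(\ev_n \circ \Phi)|_{KL}$, whereas Theorem \ref{t:uniquesimple} requires equality at the level of $KK_\nuc$ to compare $\phi'$ with $\ev_n \circ \Phi$. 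Your proposed remedy of ``iterating the $C(\widetilde{\mathbb N}, -)$ construction once more'' does not visibly close this gap: iterating does not upgrade a $KL_\nuc$-agreement to a $KK_\nuc$-agreement.

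The clean fix --- and what the paper does --- is to reverse the order: first reduce to $B$ stable via Proposition \ref{p:MvNeq} (a step your write-up also omits), then apply \cite[Corollary 4.5]{Gabe-O2class} to replace $\phi$ and $\psi$ by strongly $\mathcal O_\infty$-stable maps right away. Proposition \ref{p:aMvNKL} ensures the hypothesis $KL_\nuc(\phi) = KL_\nuc(\psi)$ survives the replacement, and transitivity of $\sim_\aMvN$ ensures the conclusion for the replaced maps implies the conclusion for the originals. Only \emph{then} does one pick $y$ and $\Phi$ for the new (strongly $\mathcal O_\infty$-stable) $\phi, \psi$, so that $\ev_n \circ \Phi$ has exactly $KK_\nuc(\phi)$ as its class and Theorem \ref{t:uniquesimple} applies without friction. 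One secondary point: the statement ``$\Phi_n \sim_\aMvN \Phi_\infty$'' is not quite what you want since $n$ varies with $(\mathcal F, \epsilon)$; the correct formulation is that for each finite $\mathcal F$ and $\epsilon > 0$ one chooses $n$ so that $\ev_n \circ \Phi$ and $\ev_\infty \circ \Phi$ are $\epsilon$-close on $\mathcal F$, and then chains the asymptotic unitary equivalences from Theorem \ref{t:uniquesimple} through this near-identity.
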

\begin{proof}
$(ii)\Rightarrow (i)$ is Proposition \ref{p:aMvNKL}. For proving $(i) \Rightarrow (ii)$, assume that $KL_\nuc(\phi) = KL_\nuc(\psi)$. By Proposition \ref{p:MvNeq}, it suffices to show that $\phi\otimes e_{1,1}, \psi\otimes e_{1,1} \colon A \to B\otimes \mathcal K$ are approximately unitarily equivalent. Note that $KL_\nuc(\phi \otimes e_{1,1}) = KL_{\nuc}(\psi \otimes e_{1,1})$ and that $\phi\otimes e_{1,1}$ and $\psi\otimes e_{1,1}$ are nuclear, $\mathcal O_\infty$-stable and full. Hence we may instead assume that $B$ is stable. By \cite[Corollary 4.5]{Gabe-O2class}, $\phi$ and $\psi$ are approximate Murray--von Neumann equivalent to $\ast$-homomorphisms factoring through $A \otimes \mathcal O_\infty$, so these maps are strongly $\mathcal O_\infty$-stable by Proposition \ref{p:Oinftyfactor}. By replacing $\phi$ and $\psi$ with such maps,  we may assume that $\phi$ and $\psi$ are strongly $\mathcal O_\infty$-stable.

Let $\mathcal F\subset A$ be finite and $\epsilon>0$. Pick $y\in KK_\nuc(A, C(\widetilde{\mathbb N},B))$ such that $(\ev_n)_\ast(y) = KK_\nuc(\phi)$ for $n\in \mathbb N$ and $(\ev_\infty)_\ast(y) = KK_\nuc(\psi)$. Note that the existence a full, nuclear, $\mathcal O_\infty$-stable map implies that $A$ is exact (see Remark \ref{r:nucemb}) and that $B$ contains a full, properly infinite projection (see Remark \ref{r:fullpropinfproj}). Hence $C(\widetilde{\mathbb N}, B)$ also contains a full properly infinite projection, so by Theorem \ref{t:existsimple}, there is a full, nuclear, strongly $\mathcal O_\infty$-stable $\ast$-homomorphism $\Phi \colon A \to C(\widetilde{\mathbb N},B)$ so that $KK_\nuc(\Phi) = y$. Pick $n\in \mathbb N$ such that 
\begin{equation}
\| \ev_n(\Phi(a)) - \ev_{\infty}(\Phi(a))\| < \epsilon, \qquad \textrm{for all }a\in \mathcal F.
\end{equation}
By Theorem \ref{t:uniquesimple}, it follows that $\phi \sim_{\asu} \ev_n \circ \Phi$ and $\psi \sim_{\asu} \ev_\infty\circ \Phi$. It easily follows that we may find a unitary $u\in \multialg{B}$ such that $\| u^\ast \phi(a) u - \psi(a)\| < \epsilon$ for all $a\in \mathcal F$. This completes the proof of $(i) \Rightarrow (ii)$. The part involving $(iii)$ follows immediately from Proposition \ref{p:MvNvsue}.
\end{proof}

If $A$ satisfies the UCT, Dadarlat and Loring proved \cite{DadarlatLoring-UMCT} that there is a universal \emph{multi}coefficient theorem (UMCT), given by the short exact sequence
\begin{equation}
\mathrm{PExt}(K_{1-\ast}(A), K_\ast(B)) \rightarrowtail KK(A,B) \twoheadrightarrow \Hom_\Lambda(\underline{K}(A), \underline{K}(B)).
\end{equation}
Here $\mathrm{PExt}(K_{1-\ast}(A), K_\ast(B))$ is the subgroup of $\Ext(K_{1-\ast}(A), K_\ast(B))$ consisting of equivalence classes of pure extensions, and $\underline{K}(C)$ is the \emph{total $K$-theory}, which is a module over a certain ring $\Lambda$. I refer the reader to \cite{DadarlatLoring-UMCT} for the relevant details.

By \cite[Theorem 4.1]{Dadarlat-KKtop} it follows that if $A$ and $B$ are separable, and $A$ satisfies the UCT, then there is a natural isomorphism of topological groups
\begin{equation}
KL(A,B) \xrightarrow \cong \Hom_\Lambda(\underline{K}(A), \underline{K}(B))
\end{equation}
where the latter group is equipped with the topology of point-wise convergence.


As the topologies on $KK(A,B)$ and $KK_\nuc(A,B)$ only depend on $A$ up to $KK$-equivalence (by the above definition), it follows that
\begin{equation}
KL_\nuc(A,B) \cong KL(A,B)
\end{equation}
whenever $A$ is $KK$-equivalent to a nuclear $C^\ast$-algebra. In particular, as $A$ satisfies the UCT (and is thus $KK$-equivalent to a commutative $C^\ast$-algebra) one obtains natural isomorphisms
\begin{equation}\label{eq:KLnucHom}
KL_\nuc(A,B) \cong KL(A,B) \cong \Hom_\Lambda(\underline{K}(A), \underline{K}(B))
\end{equation}
for $B$ separable by \cite[Theorem 4.1]{Dadarlat-KKtop}. Separability plays no important role here, only the existence of an absorbing representation $A \to \multialg{B\otimes \mathcal K}$, which always exists when $A$ is separable and nuclear and $B$ is $\sigma$-unital by \cite[Theorem 6]{Kasparov-Stinespring}. Hence \eqref{eq:KLnucHom} holds whenever $A$ is separable and satisfies the UCT, and $B$ is $\sigma$-unital.

From Theorems \ref{t:existsimple} and \ref{t:approxuniquesimple}, together with the result \eqref{eq:KLnucHom}, one immediately gets the following classification result for nuclear, $\mathcal O_\infty$-stable, full $\ast$-homo\-morphisms using the $K$-theoretic invariant $\underline K$. This was first proved by Lin in \cite[Theorem 4.10]{Lin-stableapproxuniqueness} in the special case of Kirchberg algebras satisfying the UCT.

\begin{theorem}
Let $A$ be a separable, exact $C^\ast$-algebra satisfying the UCT, and let $B$ be a $\sigma$-unital $C^\ast$-algebra containing a properly infinite, full projection. Then the nuclear, $\mathcal O_\infty$-stable, full $\ast$-homomorphisms $A\to B$ are parametrised up to approximate Murray--von Neumann equivalence by morphisms $\underline K(A) \to \underline K(B)$.
\end{theorem}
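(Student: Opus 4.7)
The plan is to set up a map from approximate Murray--von Neumann equivalence classes of nuclear, $\mathcal O_\infty$-stable, full $\ast$-homomorphisms $A\to B$ to $\Hom_\Lambda(\underline K(A),\underline K(B))$, and then show that it is a bijection by invoking the uniqueness and existence results already proved.

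First, define the map. Given such a $\phi\colon A\to B$, it is in particular nuclear, so it induces $KK_\nuc(\phi)\in KK_\nuc(A,B)$, and hence a class $KL_\nuc(\phi)\in KL_\nuc(A,B)$. Under the natural isomorphism $KL_\nuc(A,B)\cong \Hom_\Lambda(\underline K(A),\underline K(B))$ from \eqref{eq:KLnucHom}, which is available because $A$ is separable and satisfies the UCT and $B$ is $\sigma$-unital, this produces the desired morphism $\Phi(\phi)\colon\underline K(A)\to \underline K(B)$. That $\Phi$ descends to approximate Murray--von Neumann equivalence classes is exactly Proposition \ref{p:aMvNKL}.

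Next, I would check injectivity of $\Phi$. Suppose $\phi,\psi\colon A\to B$ are nuclear, $\mathcal O_\infty$-stable, full $\ast$-homomorphisms with $\Phi(\phi)=\Phi(\psi)$. Then $KL_\nuc(\phi)=KL_\nuc(\psi)$ by \eqref{eq:KLnucHom}, and Theorem \ref{t:approxuniquesimple} -- which applies precisely to nuclear, $\mathcal O_\infty$-stable, full $\ast$-homomorphisms from a separable $C^\ast$-algebra to a $\sigma$-unital $C^\ast$-algebra -- yields $\phi \sim_\aMvN \psi$.

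Finally, for surjectivity, fix $\alpha\in\Hom_\Lambda(\underline K(A),\underline K(B))$ and use \eqref{eq:KLnucHom} to obtain a class in $KL_\nuc(A,B)$ corresponding to $\alpha$. Lift this class to some $x\in KK_\nuc(A,B)$ along the quotient map $KK_\nuc(A,B)\twoheadrightarrow KL_\nuc(A,B)$. Since $A$ is separable and exact and $B$ is $\sigma$-unital and contains a properly infinite, full projection, Theorem \ref{t:existsimple} produces a nuclear, strongly $\mathcal O_\infty$-stable (hence $\mathcal O_\infty$-stable), full $\ast$-homomorphism $\phi\colon A\to B$ with $KK_\nuc(\phi)=x$. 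Then $KL_\nuc(\phi)$ is the image of $x$ in $KL_\nuc(A,B)$, which corresponds to $\alpha$ under \eqref{eq:KLnucHom}, so $\Phi(\phi)=\alpha$. No step here is genuinely hard; all the substance is packaged into Theorems \ref{t:existsimple} and \ref{t:approxuniquesimple} together with the identification \eqref{eq:KLnucHom}, and the only mild point to keep in mind is that the isomorphism $KL_\nuc(A,B)\cong KL(A,B)\cong\Hom_\Lambda(\underline K(A),\underline K(B))$ is available without assuming $A$ is nuclear, exactly because UCT $C^\ast$-algebras are $KK$-equivalent to nuclear ones.
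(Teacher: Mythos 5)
Your proof is correct and takes exactly the route the paper intends: it combines the existence theorem (Theorem \ref{t:existsimple}), the approximate uniqueness theorem (Theorem \ref{t:approxuniquesimple}), and the identification \eqref{eq:KLnucHom}, spelling out the well-definedness via Proposition \ref{p:aMvNKL} and the injectivity/surjectivity arguments that the paper leaves implicit.
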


In the following, let $\underline K_u(A) = (\underline K(A), [1_A]_0)$ for a unital $C^\ast$-algebra $A$, so that a morphism $\alpha \colon \underline K_u(A) \to \underline K_u(B)$ is a $\Lambda$-homomorphism for which the induced map $\alpha_0 \colon K_0(A) \to K_0(B)$ satisfies $\alpha_0([1_A]_0) = [1_B]_0$. Then, exactly as the theorem above, one can classify unital maps up to approximate unitary equivalence.

\begin{theorem}
Let $A$ be a separable, exact, unital $C^\ast$-algebra satisfying the UCT, and let $B$ be a unital, properly infinite $C^\ast$-algebra. Then the nuclear, $\mathcal O_\infty$-stable, full, unital $\ast$-homomorphisms $A\to B$ are parametrised up to approximate unitary equivalence by morphisms $\underline K_u(A) \to \underline K_u(B)$.
\end{theorem}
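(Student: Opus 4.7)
The plan is to deduce the unital theorem from the unital versions of Theorems \ref{t:existsimple} and \ref{t:approxuniquesimple}, together with the natural isomorphism \eqref{eq:KLnucHom} granted by the UCT, in essentially the same way the preceding (non-unital) theorem is obtained, with the extra bookkeeping being the class of the unit in $K_0$. Every unital, nuclear $\ast$-homomorphism $\phi \colon A \to B$ induces a $\Lambda$-linear map $\underline K(\phi) \colon \underline K(A) \to \underline K(B)$ sending $[1_A]_0$ to $[1_B]_0$, i.e.\ a morphism $\underline K_u(\phi) \colon \underline K_u(A) \to \underline K_u(B)$, and this is invariant under approximate unitary equivalence since $K$-theory is. So the assignment $\phi \mapsto \underline K_u(\phi)$ descends to approximate unitary equivalence classes, and it remains to show that this map is both injective and surjective on such classes.

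For injectivity, suppose $\phi, \psi \colon A \to B$ are nuclear, $\mathcal O_\infty$-stable, full, unital $\ast$-homo\-mor\-phisms satisfying $\underline K_u(\phi) = \underline K_u(\psi)$. The UCT hypothesis on $A$ together with the isomorphism \eqref{eq:KLnucHom} yields $KL_\nuc(\phi) = KL_\nuc(\psi)$. Since $A,B,\phi,\psi$ are all unital, the equivalence $(i) \Leftrightarrow (iii)$ in Theorem \ref{t:approxuniquesimple} gives $\phi \sim_\au \psi$, as required.

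For surjectivity, let $\alpha \colon \underline K_u(A) \to \underline K_u(B)$ be a morphism. Forgetting the unit marking, \eqref{eq:KLnucHom} identifies $\alpha$ with a class in $KL_\nuc(A,B)$, which by definition lifts to some $x \in KK_\nuc(A,B)$ under the quotient map $KK_\nuc(A,B) \twoheadrightarrow KL_\nuc(A,B)$. Naturality of \eqref{eq:KLnucHom} with respect to the unital inclusion $\mathbb C \hookrightarrow A$ shows that $\Gamma_0(x)$ agrees on $[1_A]_0$ with the $K_0$-component of $\alpha$, so $\Gamma_0(x)([1_A]_0) = [1_B]_0$. As $B$ is unital and properly infinite, $1_B$ is a full, properly infinite projection, so the unital part of Theorem \ref{t:existsimple} produces a unital, nuclear, strongly $\mathcal O_\infty$-stable (hence in particular $\mathcal O_\infty$-stable), full $\ast$-homomorphism $\phi \colon A \to B$ with $KK_\nuc(\phi) = x$, and therefore $\underline K_u(\phi) = \alpha$.

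I do not expect any serious obstacle: the existence and uniqueness ingredients are already in hand, and the only delicate point is to verify that the naturality of the UCT/UMCT identification is compatible with the unit-preservation condition, which amounts to checking that the $K_0$-component of the $\Lambda$-morphism associated to $x \in KK_\nuc(A,B)$ is nothing but $\Gamma_0(x)$. This is immediate from how \eqref{eq:KLnucHom} was set up, and once noted, the argument is essentially forced.
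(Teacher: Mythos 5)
Your proposal is correct and matches the route the paper itself implicitly indicates (it states the unital theorem without a written proof, noting only that it is proved ``exactly as the theorem above''): well-definedness comes from approximate unitary invariance of $K$-theory, injectivity from the unital clause of Theorem \ref{t:approxuniquesimple} combined with \eqref{eq:KLnucHom}, and surjectivity from the unital clause of Theorem \ref{t:existsimple} applied to any lift $x \in KK_\nuc(A,B)$ of the given $KL$-class, using that $1_B$ is a full, properly infinite projection and that $\Gamma_0(x)$ is the $K_0$-component of the associated $\Lambda$-morphism so that $\Gamma_0(x)([1_A]_0) = [1_B]_0$.
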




\section{Strongly $\mathcal O_\infty$-stable $\ast$-homomorphisms}\label{s:stronglyOinfty}

The following section is a slight detour from the main classification theorem of the paper, and is strictly speaking not needed for proving this result. Corollary \ref{c:nucintosimplepi} was used in Subsection \ref{ss:classical} to show that Theorems \ref{t:existsimple} and \ref{t:uniquesimple} are in fact more general than all the classification results obtained by Kirchberg in \cite{Kirchberg-ICM}.

In this section, sufficient criteria are given for when an $\mathcal O_\infty$-stable map is strongly $\mathcal O_\infty$-stable. 
The main tool is to show that if a map $\phi$ is $\mathcal O_\infty$-stable, and induces a sequential relative commutant which is $K_1$-injective, then $\phi$ is strongly $\mathcal O_\infty$-stable. In particular, a positive solution to the open problem of whether every properly infinite, unital $C^\ast$-algebra is $K_1$-injective, would imply that every $\mathcal O_\infty$-stable map is strongly $\mathcal O_\infty$-stable.
This will be applied, using a results of Kirchberg and Rørdam from \cite{KirchbergRordam-absorbingOinfty} to show that any nuclear $\ast$-homomorphism into a strongly purely infinite $C^\ast$-algebra is strongly $\mathcal O_\infty$-stable.

First some notation and preliminary observations will be set up. The thing to keep in mind in the following construction is that $[0,\infty)$ can be constructed by gluing the sequence intervals $[n,n+1]$ together. This will be used to write path algebras and their relative commutants as pull-backs of certain related sequence algebras. 
Fix a $C^\ast$-algebra $B$, and let $IB := C([0,1],B)$. There are canonical $\ast$-homo\-morphisms
\begin{equation}
\ev_{\mathbb N} \colon B_\as \to B_\infty, \qquad \ev_I \colon  B_\as \to (IB)_\infty
\end{equation}
induced by $f \mapsto (f(n))_{n\in \mathbb N}$ and $f \mapsto (f_n)_{n\in \mathbb N}$ for $f\in C_b(\mathbb R_+, B)$ respectively, where $f_n \in IB$ is given by $f_n(s) = f(s+n)$ for $s\in [0,1]$. Similarly, let 
\begin{equation}
\sigma \colon B_\infty \to B_\infty , \qquad \ev_s \colon (IB)_\infty \to B_\infty
\end{equation}
 be induced by the shift map $(b_n)_{n\in \mathbb N} \mapsto (b_{n+1})_{n\in \mathbb N}$ and the evaluation map $(g_n)_{n\in \mathbb N} \mapsto (g_n(s))_{n\in \mathbb N}$ for $s\in [0,1]$ respectively. Letting $SB:= C((0,1), B)$, it is easy to see that one obtains the following commutative diagram
\begin{equation}\label{eq:asseqpullback}
\xymatrix{
0 \ar[r] & (SB)_\infty \ar@{=}[d] \ar[r]^j & B_\as \ar[d]^{\ev_I} \ar[r]^{\ev_{\mathbb N}} & B_\infty \ar[d]^{\id \oplus \sigma} \ar[r] & 0 \\
0 \ar[r] & (SB)_\infty \ar[r] & (IB)_\infty \ar[r]^{\ev_0 \oplus \ev_1 \quad} & B_\infty \oplus B_\infty \ar[r] & 0
}
\end{equation}
with exact rows, where $j$ is induced by the map $\hat{j} \colon \prod_{\mathbb N} SB \to C_b(\mathbb R_+, B)$ given by $\hat{j}((f_n)_{n\in \mathbb N})(t) = f_n(t-n)$ whenever $t\in [n,n+1]$ for $(f_n)_\in \prod_{\mathbb N}SB$. In particular, the right square above is a pull-back square.

For a $\ast$-homomorphism $\phi \colon A \to B$, one considers $\phi(A)$ as a subalgebra of $B_\as$, $B_\infty$ and $(IB)_\infty$ in the usual way.

\begin{lemma}
With notation as above, the diagram \eqref{eq:asseqpullback} induces the commutative diagram
\begin{equation}\label{eq:relcompullback}
\xymatrix{
0 \ar[r] & \frac{(SB)_\infty \cap \phi(A)'}{(SB)_\infty \cap \Ann \phi(A)} \ar@{=}[d] \ar[r]^{\overline j} & \frac{B_\as \cap \phi(A)'}{\Ann \phi(A)} \ar[d]^{\overline{\ev}_I} \ar[r]^{\overline{\ev}_{\mathbb N}} & \frac{B_\infty \cap \phi(A)'}{\Ann \phi(A)} \ar[d]^{\id \oplus \overline{\sigma}} \ar[r] & 0  \\
0 \ar[r] & \frac{(SB)_\infty \cap \phi(A)'}{(SB)_\infty \cap \Ann \phi(A)} \ar[r] & \frac{(IB)_\infty \cap \phi(A)'}{\Ann \phi(A)} \ar[r]^{\overline{\ev}_0 \oplus \overline{\ev}_1 \qquad} & \frac{B_\infty \cap \phi(A)'}{\Ann \phi(A)} \oplus \frac{B_\infty \cap \phi(A)'}{\Ann \phi(A)} \ar[r] & 0
}
\end{equation}
with exact rows. In particular, the right square above is a pull-back square.
\end{lemma}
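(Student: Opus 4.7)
The plan is to deduce diagram \eqref{eq:relcompullback} from \eqref{eq:asseqpullback} by applying the functor that sends a $C^\ast$-subalgebra $X$ of an ambient algebra $Y$ to $(X\cap\phi(A)')/\mathrm{Ann}_Y\phi(A)$ — with $Y=B_\as$ on the top row, $Y=(IB)_\infty$ on the bottom row, and $Y=B_\infty$ (or $B_\infty\oplus B_\infty$) on the right. Commutativity of \eqref{eq:relcompullback} will then be automatic: every horizontal and vertical map in \eqref{eq:asseqpullback} is a $\ast$-homomorphism fixing the constant copy of $B$, hence of $\phi(A)$, so it restricts to the relative commutants and descends through the annihilator ideals.

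The crux is exactness of the two rows; I will spell out the top row, since the bottom is entirely analogous. Injectivity of $\bar j$ is immediate from $(SB)_\infty\cap\phi(A)'\cap\mathrm{Ann}_{B_\as}\phi(A)=(SB)_\infty\cap\mathrm{Ann}_{B_\as}\phi(A)$. For exactness in the middle, I will take $z\in B_\as\cap\phi(A)'$ with $\ev_{\mathbb N}(z)\in\mathrm{Ann}_{B_\infty}\phi(A)$, fix a bounded continuous representative $\tilde z$, and construct a scalar cutoff $g\colon\mathbb R_+\to[0,1]$ with $g\equiv 1$ on $\mathbb N$ and supported on a union of shrinking neighbourhoods $(n-\epsilon_n,n+\epsilon_n)$ of the integers, where $\epsilon_n\to 0$ fast enough that $\|\tilde z(t)-\tilde z(n)\|\leq 1/n$ on the interval around $n$. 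Then $(1-g)\tilde z$ vanishes on $\mathbb N$ and hence represents an element of $(SB)_\infty\cap\phi(A)'$, while $g\tilde z$ lies in $\mathrm{Ann}_{B_\as}\phi(A)$: for $t$ in the support of $g$ near $n$, one has $\|g(t)\tilde z(t)\phi(a)\|\leq\|\phi(a)\|/n+\|\tilde z(n)\phi(a)\|$, which tends to $0$ with $n$. For surjectivity of $\overline{\ev}_{\mathbb N}$, I will lift $w=(w_n)\in B_\infty\cap\phi(A)'$ to a continuous path $\tilde z\in C_b(\mathbb R_+,B)$ by linear interpolation $\tilde z(n+r):=(1-r)w_n+rw_{n+1}$; the asymptotic commutation $\|w_n\phi(a)-\phi(a)w_n\|\to 0$ then shows $\tilde z\in B_\as\cap\phi(A)'$ with $\overline{\ev}_{\mathbb N}(\tilde z)=w$.

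The bottom row will be handled identically: the cutoff becomes a function $g_n\colon [0,1]\to[0,1]$ supported near $\{0,1\}$ with $g_n(0)=g_n(1)=1$, and surjectivity of $\overline{\ev}_0\oplus\overline{\ev}_1$ is obtained by linearly interpolating each pair of representatives in $B_\infty$ to a path in $IB$. The pull-back claim then follows from commutativity plus exactness of both rows together with the observation that the leftmost vertical map is the identity, by a standard diagram chase: given a compatible pair $(\alpha,\beta)$, I will lift $\alpha$ to some $\gamma_0$ via $\overline{\ev}_{\mathbb N}$ (surjectivity of the top row), observe that $\overline{\ev}_I(\gamma_0)-\beta$ lies in the image of $(SB)_\infty\cap\phi(A)'/\mathrm{Ann}\phi(A)$ by exactness of the bottom row, and correct $\gamma_0$ via $\bar j$ to produce the desired preimage; uniqueness likewise follows from exactness and the identification $\overline{\ev}_I\circ\bar j=\bar\iota$ at the level of underlying algebras.

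The main obstacle I anticipate is the cutoff construction in exactness of the middle term: the numbers $\epsilon_n$ must be chosen independently of $a\in A$, which will be handled by the operator-norm bound $\|z(t)\phi(a)-z(n)\phi(a)\|\leq\|z(t)-z(n)\|\cdot\|\phi(a)\|$, so that a single modulus-of-continuity estimate of $\tilde z$ near each integer serves all $a$ simultaneously.
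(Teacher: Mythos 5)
Your argument is correct, and its main technical ingredient --- surjectivity of $\overline{\ev}_{\mathbb N}$ (resp.\ of $\overline{\ev}_0\oplus\overline{\ev}_1$) obtained by piecewise-linear interpolation of a bounded lift of the target element --- is exactly the paper's. Where you diverge is in the treatment of exactness at the middle term, which the paper dismisses as obvious: you construct a scalar cutoff $g$ supported on shrinking neighbourhoods of $\mathbb N$, with $g\equiv 1$ on $\mathbb N$, and split $\tilde z=(1-g)\tilde z+g\tilde z$. This is correct, including the point that the neighbourhood widths $\epsilon_n$ can be chosen from a modulus-of-continuity estimate for $\tilde z$ alone, uniformly in $a$.

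There is, however, a shorter route which is almost certainly what the paper has in mind, since it reuses the very same interpolation that proves surjectivity: given $z\in B_\as\cap\phi(A)'$ with representative $\tilde z$ and $\ev_{\mathbb N}(z)\in\Ann_{B_\infty}\phi(A)$, let $w$ be the piecewise-linear interpolation of the sequence $(\tilde z(n))_n$. The same convexity estimate used for surjectivity shows $w\in\Ann_{B_\as}\phi(A)$, and then $z-w\in\ker\ev_{\mathbb N}\cap\phi(A)'=j\bigl((SB)_\infty\cap\phi(A)'\bigr)$, so $[z]=[z-w]$ lies in the image of $\overline{j}$. This avoids the cutoff construction entirely and makes visible that middle exactness and surjectivity are the same interpolation argument applied to $\Ann\phi(A)$ and to $\phi(A)'$ respectively. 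Your diagram chase for the pull-back claim is the standard one and is fine; the paper leaves that step to the reader.
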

\begin{proof}
It is obvious that the all maps in \eqref{eq:asseqpullback} preserve commutativity with elements $\phi(a)$ for all $a\in A$ (as these elements are constant sequences/paths), and that they map annihilators of $\phi(A)$ to annihilators of $\phi(A)$. Hence all maps in \eqref{eq:relcompullback} are well-defined. The only thing not obvious about exactness of the rows, is that $\overline{\ev}_{\mathbb N}$ and $\overline{\ev}_0 \oplus \overline{\ev}_1$ are surjective. I present a proof that $\overline{\ev}_{\mathbb N}$ is surjective, the map $\overline{\ev}_0 \oplus \overline{\ev}_1$ is surjective by a similar argument.

Let $x\in \frac{B_\infty \cap \phi(A)'}{\Ann \phi(A)}$ be any element. Let $(b_n)_{n\in \mathbb N} \in \prod_{\mathbb N} B$ be a lift of $x$, and define $f\in C_b(\mathbb R_+, B)$ by interpolation $f(t) = (t + 1 - n) b_n + (t - n) b_{n+1}$ for $t\in [n,n+1]$. As $\lim_{n\to\infty} \|[ b_n , \phi(a) ] \| = 0$ for every $a\in A$, it follows that $\lim_{t\to\infty}\| [ f(t), \phi(a)]\| = 0$ for every $a\in A$. Hence $f$ induces an element $\overline{f}$ in $\frac{B_\as \cap \phi(A)'}{\Ann \phi(A)}$, and $\overline{\ev}_{\mathbb N}(\overline{f}) = x$ by construction.
\end{proof}

Recall that a unital $C^\ast$-algebra $D$ is \emph{$K_1$-injective} if whenever $u\in D$ is a unitary with trivial $K_1$-class then $u$ is homotopic to $1_D$ in the unitary group of $D$. Using results from \cite{BlanchardRohdeRordam-K1inj}, one obtains the following.

\begin{proposition}\label{p:K1injstrongOinfty}
Let $A$ and $B$ be $C^\ast$-algebras with $A$ separable, and let $\phi \colon A \to B$ be an $\mathcal O_\infty$-stable $\ast$-homomorphism. If the sequential relative commutant $\frac{B_\infty \cap \phi(A)'}{\Ann \phi(A)}$ is $K_1$-injective, then $\phi$ is strongly $\mathcal O_\infty$-stable.
\end{proposition}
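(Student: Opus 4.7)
The plan is to exploit the pull-back square on the right of the commutative diagram~\eqref{eq:relcompullback}. Set
\begin{equation*}
D_\infty := \frac{B_\infty \cap \phi(A)'}{\Ann\phi(A)}, \quad D_I := \frac{(IB)_\infty \cap \phi(A)'}{\Ann\phi(A)}, \quad D_\as := \frac{B_\as \cap \phi(A)'}{\Ann\phi(A)},
\end{equation*}
and let $\overline\sigma\colon D_\infty\to D_\infty$ denote the induced shift. Because $D_\as$ is the pull-back of $D_\infty \xrightarrow{\id\oplus\overline\sigma}D_\infty\oplus D_\infty\xleftarrow{\overline\ev_0\oplus\overline\ev_1}D_I$, producing a unital embedding $\mathcal O_\infty \hookrightarrow D_\as$ reduces to producing a pair $(\iota,\pi)$ of unital $\ast$-homomorphisms, $\iota\colon \mathcal O_\infty\to D_\infty$ and $\pi\colon \mathcal O_\infty\to D_I$, satisfying $\overline\ev_0\circ\pi=\iota$ and $\overline\ev_1\circ\pi=\overline\sigma\circ\iota$. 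The $\mathcal O_\infty$-stability hypothesis supplies the embedding $\iota$, so the real task is to manufacture the lift $\pi$---equivalently, to realise a homotopy in $D_I$ between $\iota$ and $\overline\sigma\circ\iota$.

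Next I invoke the $K_1$-injectivity hypothesis. The algebra $D_\infty$ is unital and properly infinite, since it contains the unital copy $\iota(\mathcal O_\infty)$, and by assumption it is $K_1$-injective. The arguments of~\cite{BlanchardRohdeRordam-K1inj} then imply that in any unital, properly infinite, $K_1$-injective $C^\ast$-algebra, any two unital $\ast$-homomorphisms from $\mathcal O_\infty$ are connected by a point-norm continuous path of unital $\ast$-homomorphisms. Applied to $\iota$ and $\overline\sigma\circ\iota$, this produces a point-norm continuous family $(\iota_t)_{t\in [0,1]}$ of unital $\ast$-homomorphisms $\mathcal O_\infty\to D_\infty$ with $\iota_0=\iota$ and $\iota_1=\overline\sigma\circ\iota$.

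The hard part is converting this quotient-level homotopy into a bona fide unital $\ast$-homomorphism $\pi\colon \mathcal O_\infty\to D_I$. Fix generating isometries $s_1,s_2,\ldots$ of $\mathcal O_\infty$ and a dense sequence $(a_j)_j$ in $A$; for each $n$, single out a finite list of $\ast$-polynomial relations among $s_1,\ldots,s_n$ and $s_1^\ast,\ldots, s_n^\ast$ together with the commutation conditions $[s_i,\phi(a_j)]=0$ for $i,j\leq n$. For each $n$ I will subdivide $[0,1]$ finely enough, lift $\iota_t(s_i)$ at the subdivision points to norm-controlled elements of $B_\infty\cap\phi(A)'$ and then to $B$, and interpolate in $B$ between successive lifts, using point-norm continuity of $(\iota_t)$ and separability of $A$ to arrange the chosen relations to hold modulo tolerance $1/n$ uniformly over $[0,1]$. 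The resulting interpolations assemble into paths $f_{n,i}\in IB$, whose classes define elements $S_i=[(f_{n,i})_n]\in (IB)_\infty$. A standard diagonal/reindexing argument (of Kirchberg $\varepsilon$-test flavour) then ensures that the $S_i$ satisfy the full $\mathcal O_\infty$-relations in $D_I$, commute with $\phi(A)$ modulo $\Ann\phi(A)$, and have the prescribed evaluations $\overline\ev_0(S_i)=\iota(s_i)$ and $\overline\ev_1(S_i)=\overline\sigma\circ\iota(s_i)$. Setting $\pi(s_i):=S_i$ yields the desired unital $\ast$-homomorphism, and the pull-back property delivers the unital embedding $\mathcal O_\infty\hookrightarrow D_\as$. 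The principal obstacle is this final lifting step, which demands simultaneous control of the $\mathcal O_\infty$-relations, the endpoints, and commutation with $\phi(A)$ across the whole interval.
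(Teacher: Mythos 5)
Your setup agrees with the paper's: translate strong $\mathcal O_\infty$-stability into proper infiniteness of $D_\as$, and exploit the pull-back square on the right of~\eqref{eq:relcompullback} with $\overline\ev_0\oplus\overline\ev_1$ surjective. But at that point the paper simply invokes \cite[Proposition 2.7]{BlanchardRohdeRordam-K1inj} (together with the footnoted refinement that only one of the two maps need be surjective): a unital pull-back of two properly infinite $C^\ast$-algebras over a $K_1$-injective $C^\ast$-algebra, with one structure map surjective, is itself properly infinite. The hypotheses are then quick to verify --- $D_\infty$ is properly infinite by $\mathcal O_\infty$-stability of $\phi$, $D_I$ because it is the sequential relative commutant of the $\mathcal O_\infty$-stable composition $A\to B\to IB$, and $D_\infty\oplus D_\infty$ is $K_1$-injective since $D_\infty$ is --- and the proof is done.

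You instead try to re-derive proper infiniteness of the pull-back by hand, and the argument as written has two real gaps. First, the assertion that in a unital, properly infinite, $K_1$-injective $C^\ast$-algebra any two unital $\ast$-homomorphisms from $\mathcal O_\infty$ are joined by a point-norm continuous path of unital $\ast$-homomorphisms is a nontrivial claim; it is not among the stated results of \cite{BlanchardRohdeRordam-K1inj}, and you supply no derivation of it. Second, and more seriously, the conversion of the homotopy $(\iota_t)$ into a single unital $\ast$-homomorphism $\pi\colon\mathcal O_\infty\to D_I$ is not a routine $\varepsilon$-test. The natural ``evaluation'' assignment $d\mapsto(t\mapsto\overline\ev_t(d))$ from $D_I$ to families over $[0,1]$ has no reason to land in, let alone surject onto, $C([0,1],D_\infty)$: representing sequences $(f_n)\in\prod_n C([0,1],B)$ carry no equicontinuity uniform in $n$, so passing to the quotient destroys continuity in $t$. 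Moreover linear interpolation between lifts at subdivision points ruins the $\mathcal O_\infty$-relations on the intermediate intervals, and the endpoint identities $\overline\ev_0\circ\pi=\iota$ and $\overline\ev_1\circ\pi=\overline\sigma\circ\iota$ need to hold exactly, not approximately. You flag this as the ``principal obstacle'', but nothing in the sketch actually resolves it --- in effect it asks you to re-prove \cite[Proposition 2.7]{BlanchardRohdeRordam-K1inj} from scratch in this special case. The cleaner route taken in that reference only requires lifting a single unitary in the connected component of the identity through the given surjection, an elementary step, and avoids lifting any homotopy of $\ast$-homomorphisms altogether.
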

\begin{proof}
As observed in Remark \ref{r:Oinftypictures}, strong $\mathcal O_\infty$-stability of $\phi$ means that the asymptotic relative commutant $\frac{B_\as\cap \phi(A)'}{\Ann \phi(A)}$ is properly infinite, so we prove this. Since the right square of \eqref{eq:relcompullback} is a pull-back square with the map $\overline{\ev}_0 \oplus \overline{\ev}_1$ surjective, it follows from \cite[Proposition 2.7]{BlanchardRohdeRordam-K1inj}\footnote{To apply \cite[Proposition 2.7]{BlanchardRohdeRordam-K1inj}, one would a priori need the map $\id \oplus \overline{\sigma}$ to also be surjective. However, the proof only needs surjectivity of one of the maps, see \cite[Lemma 6.2]{GabeRuiz-unitalExt}.} that a sufficient condition is that $\frac{B_\infty \cap \phi(A)'}{\Ann \phi(A)}$ and $\frac{(IB)_\infty \cap \phi(A)'}{\Ann \phi(A)}$ are properly infinite, and $\frac{B_\infty \cap \phi(A)'}{\Ann \phi(A)} \oplus \frac{B_\infty \cap \phi(A)'}{\Ann \phi(A)}$ is $K_1$-injective. The latter condition is one of our hypothesis (as direct sums of $K_1$-injective $C^\ast$-algebras are clearly $K_1$-injective), and $\frac{B_\infty \cap \phi(A)'}{\Ann \phi(A)}$ is properly infinite by $\mathcal O_\infty$-stability of $\phi$. It is easy to see that $\mathcal O_\infty$-stability of $\phi$ implies that $\frac{(IB)_\infty \cap \phi(A)'}{\Ann \phi(A)}$ is properly infinite. Alternatively, one can use that it is the sequential relative commutant of the composition $A\xrightarrow \phi B \xrightarrow{\mathrm{constant}} IB$, and that this composition is $\mathcal O_\infty$-stable by \cite[Lemma 3.20$(i)$]{Gabe-O2class}. 
\end{proof}

\begin{remark}
It is an open problem whether every unital, properly infinite $C^\ast$-algebra is $K_1$-injective. As a $\ast$-homomorphism $\phi \colon A \to B$ is $\mathcal O_\infty$-stable if and only if the unital $C^\ast$-algebra $\frac{B_\infty \cap \phi(A)'}{\Ann \phi(A)}$ is properly infinite, an affirmative answer to this open problem would imply that every $\mathcal O_\infty$-stable $\ast$-homomorphism is strongly $\mathcal O_\infty$-stable.
\end{remark}

Following \cite[Remark 5.10]{KirchbergRordam-absorbingOinfty}, a $C^\ast$-algebra $B$ is \emph{strongly purely infinite} if for all $b_1,b_2\in B$ positive, and $\epsilon >0$, there are $s_1,s_2\in B$ such that
\begin{equation}
\max_{i=1,2}\| s_i^\ast b_i s_i - b_i \| \leq \epsilon, \quad \textrm{and} \quad \| s_2^\ast b_2^{1/2} b_1^{1/2} s_1 \| \leq \epsilon.
\end{equation}

The following was essentially proved by Kirchberg and Rørdam in \cite{KirchbergRordam-absorbingOinfty}, albeit the domain $A$ was assumed to be nuclear as opposed to exact, and it was essentially how they proved that separable, nuclear, strongly purely infinite $C^\ast$-algebras are $\mathcal O_\infty$-stable. The proposition will be improved in Theorem \ref{t:nucintospi} below.

\begin{proposition}[Kirchberg--Rørdam]\label{p:KRspi}
Let $A$ be a separable, exact $C^\ast$-algebra, and let $B$ be a strongly purely infinite $C^\ast$-algebra. Then every nuclear $\ast$-homo\-morphism $\phi \colon A \to B$ is $\mathcal O_\infty$-stable.
\end{proposition}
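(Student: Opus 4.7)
The aim is to show that the sequential relative commutant $\frac{B_\infty \cap \phi(A)'}{\Ann\phi(A)}$ is properly infinite, which by Remark \ref{r:Oinftypictures} is precisely the assertion of $\mathcal O_\infty$-stability of $\phi$. Concretely, for each finite $\mathcal F \subset A$ and each $\epsilon > 0$, one must produce a pair $s_1, s_2 \in B$ satisfying approximate commutation with $\phi(\mathcal F)$, the approximate isometry relations $\|(s_i^\ast s_j - \delta_{i,j})\phi(a)\| < \epsilon$, and a subunital range condition ensuring $s_1 s_1^\ast + s_2 s_2^\ast$ is a near-projection against $\phi(\mathcal F)$.

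The first step exploits nuclearity of $\phi$. By Lemma \ref{l:nuccontractive}, I would choose contractive c.p.~maps $\psi \colon A \to M_n(\mathbb C)$ and $\eta \colon M_n(\mathbb C) \to B$ with $\|\eta \circ \psi(a) - \phi(a)\| < \epsilon/4$ for $a \in \mathcal F$. This reduces the problem to constructing $s_1, s_2 \in B$ with the desired approximate relations against the finite set $\{\eta(e_{i,j})\}_{i,j=1}^n \subset B$, which spans the finite-dimensional image $\eta(M_n(\mathbb C))$. It is precisely this reduction that lets the result hold under the assumption that $A$ is exact rather than nuclear (the hypothesis in Kirchberg and Rørdam's original formulation): the nuclearity of the map $\phi$ alone suffices to pass to a finite-dimensional image, after which the ambient algebra $A$ plays no further role.

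The second and main step invokes strong pure infiniteness of $B$: by the definition recalled immediately before the proposition, for any pair $b_1, b_2 \in B_+$ and any $\delta > 0$ there exist $t_1, t_2 \in B$ with $\|t_i^\ast b_i t_i - b_i\| \leq \delta$ and $\|t_2^\ast b_2^{1/2} b_1^{1/2} t_1\| \leq \delta$. Iterating this over the finitely many positive parts of the $\eta(e_{i,j})$, as in Kirchberg and Rørdam's proof in \cite{KirchbergRordam-absorbingOinfty}, and combining with appropriate spectral cut-downs, produces $s_1, s_2 \in B$ that simultaneously approximately commute with each $\eta(e_{i,j})$, act as near-isometries against $\eta(M_n(\mathbb C))$, and have near-orthogonal range projections. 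Transferring these estimates back through the nuclear factorisation then delivers the required relations relative to $\phi(\mathcal F)$, with error controlled by $\epsilon$.

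The main obstacle is this simultaneous control: strong pure infiniteness is stated for a single pair of positive elements, whereas one must handle an entire finite family at once and maintain near-orthogonality of ranges throughout. Surmounting this requires an inductive construction, reapplying the defining property to successive pairs while tracking norm estimates and perturbations, which is the technical core of the Kirchberg--Rørdam argument. Once the construction is carried out at every scale $(\mathcal F_m, \epsilon_m)$ with $\bigcup_m \mathcal F_m$ dense in $A$ and $\epsilon_m \to 0$, assembling the resulting pairs into sequences yields two isometries with orthogonal range projections inside $\frac{B_\infty \cap \phi(A)'}{\Ann\phi(A)}$, exhibiting it as properly infinite and completing the proof.
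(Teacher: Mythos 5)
Your proposal takes a genuinely different route from the paper, and the central step has a real gap. The paper does not perform any nuclear factorisation through matrices here; instead it builds a point-norm closed operator convex cone $\overline{\mathscr C}\subseteq\CP_\nuc(A,B_\infty)$ generated by c.p.~maps $\rho$ that are approximately dominated by $\phi$ and have commutative range $C^\ast(\rho(A))$, then invokes \cite[Proposition 7.14(i), Lemmas 7.16 and 7.19]{KirchbergRordam-absorbingOinfty} to show every member of $\overline{\mathscr C}$ is approximately \emph{1}-dominated by $\phi$, and uses \cite[Theorem 2.5]{Gabe-cplifting} (a noncommutative Michael selection result, adapted from \cite[Proposition 4.2]{KirchbergRordam-zero} to nuclear maps) together with \cite[Proposition 7.13]{KirchbergRordam-absorbingOinfty} to place $\phi$ itself — and hence the Cuntz sum $\phi\oplus_{s_1,s_2}\phi$ — inside $\overline{\mathscr C}$. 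Approximate 1-domination then produces a single contraction $b\in B_\infty$ with $b^\ast\phi(-)b = \phi\oplus_{s_1,s_2}\phi$, and setting $t_i := bs_i$ and applying Lemma \ref{l:conjhom} yields the two isometries with orthogonal ranges in $\frac{B_\infty\cap\phi(A)'}{\Ann\phi(A)}$.

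Where your argument breaks down is the passage from the definition of strong pure infiniteness to the simultaneous relations. The definition hands you, for a \emph{pair} $b_1,b_2\in B_+$, elements $t_1,t_2$ with $t_i^\ast b_i t_i\approx b_i$ and $t_2^\ast b_2^{1/2}b_1^{1/2}t_1\approx 0$; nothing in that produces elements that approximately \emph{commute} with a given finite family, and the phrase ``iterating this \ldots and combining with appropriate spectral cut-downs'' is not an argument — there is no known direct inductive scheme of that sort. The actual machinery in \cite{KirchbergRordam-absorbingOinfty} is precisely the operator-convex-cone and approximate-1-domination framework cited above; representing it as an iterative construction over finite pieces misattributes their proof. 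Likewise, your claim that the finite-dimensional factorisation of $\phi$ is what lets one relax nuclearity of $A$ to exactness is not how the replacement happens: exactness enters via the nuclear-map version of the selection theorem in \cite{Gabe-cplifting}, not via truncating $\phi$ through matrix algebras (which in fact never happens in this proof). The high-level goal — exhibit the sequential relative commutant as properly infinite — is correct, but the construction you describe is not carried out in either the paper or Kirchberg--Rørdam, and the gap between ``strong pure infiniteness for pairs'' and ``two commuting near-isometries with orthogonal ranges'' is exactly where the whole technical effort resides.
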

\begin{proof}
The proof will first be carried out assuming that $B$ is also stable. In particular, $\mathcal O_2$ embeds unitally into $\multialg{B}$.

Let $\mathscr C_0 \subseteq \CP(A, B_\infty)$ be the set of c.p.~maps $\rho$ which are approximately dominated by $\phi$, and for which $C^\ast(\rho(A))$ is commutative. Let $\mathscr C\subseteq \CP(A,B_\infty)$ be the set of c.p.~maps of the form
\begin{equation}\label{eq:coccmap}
A \ni a \mapsto \sum_{i,j=1}^n y_i^\ast \rho(x_i^\ast a x_j) y_j  \in B_\infty
\end{equation}
for $\rho \in \mathscr C_0$, $n\in \mathbb N$, $x_1,\dots, x_n\in \multialg{A}$ and $y_1,\dots, y_n\in \multialg{B_\infty}$,
and let $\overline{\mathscr C}$ denote the point-norm closure of $\mathscr C$. By \cite[Proposition 7.14$(i)$ and Lemma 7.19]{KirchbergRordam-absorbingOinfty}, every map in $\overline{\mathscr C}$ is approximately 1-dominated by $\phi$. 

By \cite[Lemma 7.16]{KirchbergRordam-absorbingOinfty}, $\mathscr C$ is an operator convex cone in the sense of \cite[Definition 4.1]{KirchbergRordam-zero}, and thus $\overline{\mathscr C}$ is a point-norm closed operator convex cone. Note that every map in $\mathscr C_0$ is nuclear, and thus so is every map of the form \eqref{eq:coccmap}. In particular, $\overline{\mathscr C} \subseteq \CP_\nuc(A, B_\infty)$. By \cite[Proposition 7.13]{KirchbergRordam-absorbingOinfty}, there is for every $a\in A_+$ a map $\rho \in \mathscr C_0$ such that $\phi(a) = \rho(a)$. A minor modification of \cite[Proposition 4.2]{KirchbergRordam-zero},\footnote{Where one moves the nuclearity assumption of $A$ to assuming that all the maps are nuclear.} see \cite[Theorem 2.5]{Gabe-cplifting}, implies that $\phi\in \overline{\mathscr C}$.  

Let $s_1,s_2 \in \multialg{B}$ be $\mathcal O_2$-isometries, and $\phi \oplus_{s_1,s_2} \phi := s_1 \phi(-)s_1^\ast + s_2\phi(-)s_2^\ast$. Then $\phi \oplus_{s_1,s_2} \phi \in \overline{\mathscr C}$ by \cite[Lemma 7.16$(i)$]{KirchbergRordam-absorbingOinfty}, and thus this map is approximately $1$-dominated by $\phi$. Hence \cite[Lemma 7.3]{KirchbergRordam-absorbingOinfty} provides $b\in B_\infty$ (which we may assume to be a contraction, otherwise do a standard cutting down argument with an approximate identity in $A$), such that $b^\ast \phi(-) b = \phi(-) \oplus_{s_1,s_2} \phi(-)$. Let $t_i := b s_i \in B_\infty$. Since $t_1^\ast \phi(-) t_1 = t_2^\ast \phi(-) t_2 = \phi$, it follows from Lemma \ref{l:conjhom} that $t_1,t_2 \in B_\infty \cap \phi(A)'$. Continuing to apply Lemma \ref{l:conjhom}, one obtains
\begin{equation}
t_i^\ast t_j \phi(a) = t_i^\ast \phi(a) t_j = s_i^\ast (s_1 \phi(a) s_1^\ast + s_2 \phi(a) s_2^\ast) s_j = \delta_{i,j} \phi(a)
\end{equation}
for all $a\in A$, and $i,j=1,2$. Hence $t_1$ and $t_2$ induce isometries in $\frac{B_\infty \cap \phi(A)'}{\Ann \phi(A)}$ with orthogonal range projections, so $\frac{B_\infty \cap \phi(A)'}{\Ann \phi(A)}$ is properly infinite, or equivalently, $\phi$ is $\mathcal O_\infty$-stable. This completes the case where $B$ is stable.

Now, if $B$ is not stable, it follows from \cite[Proposition 5.11]{KirchbergRordam-absorbingOinfty} that $B\otimes \mathcal K$ is strongly purely infinite, and clearly $\phi \otimes e_{1,1} \colon A \to B\otimes \mathcal K$ is nuclear. Hence $\phi \otimes e_{1,1}$ is $\mathcal O_\infty$-stable by what was proved above, and thus $\phi$ is $\mathcal O_\infty$-stable by Lemma \ref{l:relcombasic}$(a)$.
\end{proof}

The following lemma provides a way for concluding nuclearity of $\ast$-homo\-morphisms out of tensor products when one of the tensors is nuclear. 

\begin{lemma}\label{l:nucoutoftensor}
Let $A,B$ and $C$ be $C^\ast$-algebras with $C$ nuclear, and let $\psi \colon A \otimes C \to B$ be a $\ast$-homomorphism. If $(e_\lambda)_{\lambda \in \Lambda}$ is an approximate identity in $C$, and if $\psi(- \otimes c_\lambda) \colon A \to B$ is nuclear for each $\lambda \in \Lambda$, then $\psi$ is nuclear.
\end{lemma}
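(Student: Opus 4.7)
The plan is to approximate $\psi$ in point-norm by a cascade of nuclear maps obtained in three steps: cutting down with the approximate identity, passing through matrix algebras via nuclearity of $C$, and analysing the resulting compressed map via a multiplier extension of $\psi$.

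For the reduction, set $f_\lambda(c):=e_\lambda c e_\lambda$ and $\rho_\lambda:=\psi\circ(\id_A\otimes f_\lambda)\colon A\otimes C\to B$. Since $(e_\lambda)$ is an approximate identity, $\id_A\otimes f_\lambda \to \id_{A\otimes C}$ in point-norm (first on elementary tensors, then everywhere by boundedness); hence $\rho_\lambda\to\psi$ in point-norm, and point-norm closure of nuclear maps reduces the problem to showing that each $\rho_\lambda$ is nuclear. Nuclearity of $C$ means $f_\lambda$ is itself a nuclear c.p.~map (being the composition $\id_C \circ (e_\lambda(-)e_\lambda)$), so $f_\lambda = \lim_k h_k\circ g_k$ in point-norm with $g_k \colon C\to M_n(\mathbb C)$ and $h_k \colon M_n(\mathbb C)\to C$ c.p. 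Thus $\rho_\lambda$ is a point-norm limit of $\psi\circ(\id_A\otimes h_k)\circ(\id_A\otimes g_k)$, and since $\id_A\otimes g_k$ is c.p., Observation \ref{o:nuccomp} reduces the problem further to showing that $\xi:=\psi\circ(\id_A\otimes h) \colon A\otimes M_n(\mathbb C)\to B$ is nuclear for any c.p.~map $h \colon M_n(\mathbb C)\to C$.

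For the main step, replacing $B$ by the closed ideal generated by $\psi(A\otimes C)$, we may assume $\psi$ is non-degenerate and extend it to a strictly continuous $\ast$-homomorphism $\bar\psi \colon \multialg{A\otimes C}\to\multialg{B}$. The commuting $\ast$-homomorphisms $\phi := \bar\psi(-\otimes 1) \colon A\to\multialg{B}$ and $\chi := \bar\psi(1\otimes -) \colon C\to\multialg{B}$ satisfy $\psi(a\otimes c)=\phi(a)\chi(c)$. The hypothesis shows $\phi$ is weakly nuclear in the sense of Definition \ref{d:weaklynuc}: for $b\in B$, $b^\ast\phi(-)b$ is a point-norm limit of the nuclear maps $b^\ast\phi_\mu(-)b$, where $\phi_\mu:=\psi(-\otimes e_\mu)$. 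Applying a Choi-type decomposition $h(e_{ij})=\sum_{l=1}^m c_{li}^\ast c_{lj}$ with $c_{li}\in C$, one rewrites
\[
\xi=\sum_{l=1}^m V_l^\ast \phi^{(n)}(-)V_l, \qquad V_l=(\chi(c_{l1}),\ldots,\chi(c_{ln}))^T \in M_{n,1}(\multialg{B}),
\]
with $\phi^{(n)}:=\phi\otimes\id_{M_n(\mathbb C)}$. Approximating $V_l$ by $U_l^{(\nu)}\in M_{n,1}(B)$ with entries $\phi(e_\nu^A)\chi(c_{li})=\psi(e_\nu^A\otimes c_{li})$ for an approximate identity $(e_\nu^A)$ in $A$, one obtains point-norm convergence $(U_l^{(\nu)})^\ast\phi^{(n)}(-)U_l^{(\nu)}\to V_l^\ast\phi^{(n)}(-)V_l$; each approximant is nuclear by weak nuclearity of $\phi$ (combined with a polarisation argument to handle off-diagonal terms), and the desired nuclearity of $\xi$ follows by summing over $l$ and using point-norm closure.

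The technical heart of the argument is the last step: the Choi-theoretic decomposition of $\xi$ involves multiplier-valued vectors $V_l$, so weak nuclearity of $\phi$ cannot be applied directly. The resolution exploits that $V_l^\ast\phi^{(n)}(x)V_l$ genuinely lies in $B$ for each $x\in A\otimes M_n(\mathbb C)$, which permits the approximation by vectors $U_l^{(\nu)}$ with entries in $B$ needed to invoke Definition \ref{d:weaklynuc}.
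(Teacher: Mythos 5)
Your proof is correct, and it takes a genuinely different route from the paper's own argument. The paper's proof is a one-liner: it forms point-ultraweak accumulation points $\psi_A\colon A\to B^{**}$ and $\psi_C\colon C\to B^{**}$ of the nets $\psi(-\otimes e_\lambda)$ and $\psi(e_\mu\otimes-)$, observes that these are commuting $\ast$-homomorphisms with $\psi_A$ weakly nuclear, and cites \cite[Corollary~2.8]{Gabe-qdexact} to conclude. You stay inside the multiplier algebra and avoid both the enveloping von Neumann algebra and the external reference: nuclearity of $C$ is used to factor $e_\lambda(-)e_\lambda$ approximately through matrix algebras, reducing to compressed maps $\xi=\psi\circ(\id_A\otimes h)$ with $h\colon M_n(\mathbb C)\to C$ c.p., and the weakly nuclear factor $\phi=\overline{\psi}(-\otimes 1)$ together with Choi's theorem and a polarisation/hereditary-cone argument then handles $\xi$. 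That polarisation step is exactly the $X=\{\star\}$ case of Lemma~\ref{l:densespan}, applied to the representation $\phi^{(n)}\colon A\otimes M_n(\mathbb C)\to\mathcal B_B(B^n)$, and rests on the hereditariness of the nuclear cone (Lemma~\ref{l:nucsum}); it is the heart of your argument and deserves to be spelled out rather than just gestured at. Two further points worth recording in a full write-up: $\id_A\otimes g_k$ and $\id_A\otimes h_k$ are well-defined c.p.\ maps between \emph{spatial} tensor products because any c.p.\ map into or out of a matrix algebra is automatically nuclear, so Proposition~\ref{p:nuctensor} applies, and uniform boundedness of $\id_A\otimes(h_k\circ g_k)$ can be arranged via Lemma~\ref{l:nuccontractive}; and after corestricting to the ideal $J=\overline{B\psi(A\otimes C)B}$, each $b^\ast\phi_\mu(-)b$ with $b\in J$ is nuclear as a map into $J$ because $b^\ast(-)b\colon B\to J$ is c.p.\ and Observation~\ref{o:nuccomp} applies, so no separate corestriction lemma for nuclear maps is needed. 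Your approach is more elementary and self-contained, at the cost of length; both are valid.
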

\begin{proof}
Let $\psi_A \colon A \to B^{\ast \ast}$ be a point-ultraweak accumulation point of $\psi(- \otimes e_\lambda)$, and similarly define $\psi_C \colon C \to B^{\ast \ast}$. Then $\psi_A$ and $\psi_C$ are $\ast$-homo\-morphisms with commuting images and $\psi_A$ is weakly nuclear as it is a limit of nuclear maps. By \cite[Corollary 2.8]{Gabe-qdexact}, $\psi$ is nuclear.
\end{proof}

Consequently, one obtains the following lemma which seems interesting in its own right.

\begin{lemma}\label{l:sepOinftystable}
Let $A$ be a separable, exact $C^\ast$-algebra, let $B$ be a strongly purely infinite $C^\ast$-algebra, and let $\phi \colon A \to B$ be a nuclear $\ast$-homomorphism. For every separable, nuclear $C^\ast$-subalgebra $C\subseteq \frac{B_\infty \cap \phi(A)'}{\Ann \phi(A)}$, there exists a unital embedding 
\begin{equation}\label{eq:Oinftyindoublecom}
\mathcal O_\infty \hookrightarrow \frac{B_\infty \cap \phi(A)'}{\Ann \phi(A)} \cap C'.
\end{equation}
\end{lemma}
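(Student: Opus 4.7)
The strategy is to apply Proposition \ref{p:KRspi} to a nuclear $\ast$-homomorphism whose domain encodes both $A$ and $C$ simultaneously. Once such a map is produced and shown to be $\mathcal O_\infty$-stable, a reindexing argument will drop the resulting copy of $\mathcal O_\infty$ into the desired sub-relative-commutant $\frac{B_\infty \cap \phi(A)'}{\Ann \phi(A)} \cap C'$.

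The first step is to lift the inclusion $\iota \colon C \hookrightarrow \frac{B_\infty \cap \phi(A)'}{\Ann \phi(A)}$ to an honest $\ast$-homomorphism $\sigma \colon C \to B_\infty$ whose image lies in $B_\infty \cap \phi(A)'$. Since $C$ is separable and nuclear, the Choi--Effros lifting theorem produces a c.p.\ contractive lift $\sigma_0 \colon C \to B_\infty \cap \phi(A)'$. The annihilator $\Ann \phi(A)$ is a $\sigma$-ideal in $B_\infty \cap \phi(A)'$ (in Kirchberg's sense), so a Kirchberg $\epsilon$-test/reindexing argument upgrades $\sigma_0$ to a genuine $\ast$-homomorphism $\sigma$ as required. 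With this in hand, because $C$ is nuclear we may form the spatial tensor product $A \otimes C$, and the commuting $\ast$-homomorphisms $\phi \colon A \to B_\infty$ and $\sigma \colon C \to B_\infty$ induce a $\ast$-homomorphism
\begin{equation}
\psi \colon A \otimes C \to B_\infty, \qquad \psi(a \otimes c) = \phi(a) \sigma(c).
\end{equation}
The domain $A \otimes C$ is exact (since $A$ is exact and $C$ is nuclear), and $\psi$ is nuclear by Lemma \ref{l:nucoutoftensor}: for any positive $c \in C$, $\psi(- \otimes c) = \sigma(c)^{1/2} \phi(-) \sigma(c)^{1/2}$ is a c.p.\ compression of the nuclear map $\phi$, hence nuclear, and we take $c$ running through a positive approximate identity of $C$.

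To apply Proposition \ref{p:KRspi} we observe that $B_\infty$ is strongly purely infinite: strong pure infiniteness is governed by a local norm inequality involving finitely many positive elements, so representing elements of $B_\infty$ by bounded sequences in $B$ and applying spi of $B$ componentwise produces the required witnesses in $B_\infty$. Proposition \ref{p:KRspi} now yields that $\psi$ is $\mathcal O_\infty$-stable, i.e.\ there is a unital embedding
\begin{equation}
\mathcal O_\infty \hookrightarrow \frac{(B_\infty)_\infty \cap \psi(A \otimes C)'}{\Ann_{(B_\infty)_\infty} \psi(A \otimes C)}.
\end{equation}
Finally, a Kirchberg reindexing/diagonal argument collapses the double sequence algebra back to $B_\infty$: since $\psi(A \otimes C)$ is generated by $\phi(A)$ (constant sequences) and $\sigma(C)$, the reindexing produces isometric generators of $\mathcal O_\infty$ sitting inside $B_\infty \cap \phi(A)' \cap \sigma(C)'$, modulo $\Ann \phi(A)$. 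Since $\sigma$ lifts $\iota$, the image in $\frac{B_\infty \cap \phi(A)'}{\Ann \phi(A)}$ commutes with $C$, producing the desired unital embedding \eqref{eq:Oinftyindoublecom}.

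The main obstacle is the first step: promoting the Choi--Effros c.p.\ lift of $C$ to an honest $\ast$-homomorphism $\sigma$ with image in $\phi(A)'$. This rests on the $\sigma$-ideal character of $\Ann \phi(A)$ inside $B_\infty \cap \phi(A)'$ and a Kirchberg $\epsilon$-test; everything downstream (nuclearity of $\psi$, spi of $B_\infty$, and the reindexing descent) is comparatively routine.
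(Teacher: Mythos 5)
Your proposal has the same skeleton as the paper's: build $\psi\colon A\otimes C\to B_\infty$, verify nuclearity via Lemma \ref{l:nucoutoftensor}, use that $B_\infty$ is strongly purely infinite, apply Proposition \ref{p:KRspi}, then diagonalise. But there are two issues, the second of which is a genuine gap.

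On the construction of $\psi$: the paper does \emph{not} lift the inclusion $C\hookrightarrow\frac{B_\infty\cap\phi(A)'}{\Ann\phi(A)}$ to a $\ast$-homomorphism. It merely fixes, for each $c\in C$, an arbitrary set-theoretic lift $\overline c\in B_\infty\cap\phi(A)'$ and sets $\psi(a\otimes c)=\phi(a)\overline c$. This is well-defined because two lifts of $c$ differ by an element of $\Ann\phi(A)$, which kills $\phi(a)$; the same observation shows the formula is multiplicative, linear and $\ast$-preserving on $A\odot C$, and it extends to $A\otimes C$ since $C$ is nuclear. Your detour through Choi--Effros plus a Kirchberg $\epsilon$-test to produce a genuine $\ast$-homomorphism lift $\sigma$ is not only unnecessary but is a nontrivial claim you assert without proof: the $\sigma$-ideal structure of $\Ann\phi(A)$ does give an $\epsilon$-test, but upgrading a c.p.c.\ lift of an arbitrary separable nuclear $C^\ast$-algebra to a $\ast$-homomorphism lift is far from routine and is precisely the kind of difficulty the paper's construction is designed to sidestep.

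The genuine gap is that you never unitise $C$, whereas the paper's first line replaces $C$ by its minimal unitisation so that $C$ is a \emph{unital} subalgebra of the quotient. This is not cosmetic. Proposition \ref{p:KRspi} plus a diagonal argument gives $t_1,t_2\in B_\infty\cap\psi(A\otimes C)'$ with $(t_i^\ast t_j-\delta_{ij})\psi(x)=0$ for all $x\in A\otimes C$, i.e.\ isometry relations modulo $\Ann\psi(A\otimes C)$. To descend to isometries in $\frac{B_\infty\cap\phi(A)'}{\Ann\phi(A)}$ you need $(t_i^\ast t_j-\delta_{ij})\phi(a)=0$ for all $a$, which requires $\phi(A)\subseteq\overline{\psi(A\otimes C)}$. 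When $C$ is unital this holds: $\psi(a\otimes 1_C)=\phi(a)\overline{1_C}=\phi(a)$, since any lift $\overline{1_C}$ of the unit of the quotient satisfies $\overline{1_C}\phi(a)=\phi(a)$. When $C$ is non-unital the approximate identity of $C$ need not converge to the unit of the quotient, $\Ann\psi(A\otimes C)$ is in general strictly larger than $\Ann\phi(A)$, and your final ``modulo $\Ann\phi(A)$'' is unjustified. The fix is exactly the one-line unitisation at the start.
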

\begin{proof}
By replacing $C$ with its minimal unitisation (which is also nuclear), one may assume that $C$ is a unital $C^\ast$-subalgebra of $\frac{B_\infty \cap \phi(A)'}{\Ann \phi(A)}$.
Let $\psi \colon A \otimes C \to B_\infty$ be the $\ast$-homomorphism given on elementary tensors by $\psi(a\otimes c) = \phi(a) \overline{c}$ for $a\in A$ and $c\in C$, where $\overline{c} \in B_\infty \cap \phi(A)'$ is a lift of $c$.
As $\psi(-\otimes 1_C)= \phi$ is nuclear, Lemma \ref{l:nucoutoftensor} implies that $\psi$ is nuclear. As $B_\infty$ is strongly purely infinite by \cite[Proposition 5.12]{KirchbergRordam-absorbingOinfty}, it follows that $\psi$ is $\mathcal O_\infty$-stable by Proposition \ref{p:KRspi}. Hence there exist sequences $t_i^{(1)}, t_i^{(2)}, \dots \in B_\infty$ for $i=1,2$ such that
\begin{equation}
\lim_{n\to \infty} \|[ \psi(x), t_i^{(n)}] \| = 0, \qquad \lim_{n\to \infty} \| ((t_i^{(n)})^\ast t_j^{(n)} - \delta_{i,j}) \psi(x) \| = 0
\end{equation}
for all $x\in A\otimes C$. By a standard diagonal argument for sequence algebras, one finds $t_1,t_2 \in B_\infty \cap \psi(A\otimes C)'$ such that
\begin{equation}
t_i^\ast t_j \psi(x) = \delta_{i,j} \psi(x), \qquad i,j = 1,2,\, x\in A\otimes C.
\end{equation}
Hence $t_1$ and $t_2$ induce isometries in $\frac{B_\infty \cap \phi(A)'}{\Ann \phi(A)} \cap C'$ with mutually orthogonal range projections. Consequently, this $C^\ast$-algebra contains a unital copy of $\mathcal O_\infty$.
\end{proof}

With this, one can improve on Proposition \ref{p:KRspi} as follows.

\begin{theorem}\label{t:nucintospi}
Let $A$ be a separable, exact $C^\ast$-algebra, and let $B$ be a strongly purely infinite $C^\ast$-algebra. Then every nuclear $\ast$-homomorphism $\phi \colon A \to B$ is strongly $\mathcal O_\infty$-stable.
\end{theorem}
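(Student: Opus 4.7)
The plan is to combine Proposition~\ref{p:KRspi}, Lemma~\ref{l:sepOinftystable}, and Proposition~\ref{p:K1injstrongOinfty}. By Proposition~\ref{p:KRspi}, $\phi$ is $\mathcal O_\infty$-stable, so the sequential relative commutant $D := \frac{B_\infty \cap \phi(A)'}{\Ann\phi(A)}$ is properly infinite; by Proposition~\ref{p:K1injstrongOinfty}, it then suffices to verify that $D$ is $K_1$-injective.

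To show $K_1$-injectivity of $D$, I would take a unitary $u\in U(D)$ with $[u]_1 = 0$ in $K_1(D)$ and try to embed $u$ inside a separable, unital, nuclear, $\mathcal O_\infty$-absorbing $C^\ast$-subalgebra $E\subseteq D$ for which $[u]_1$ already vanishes in $K_1(E)$. Since unital $\mathcal O_\infty$-absorbing $C^\ast$-algebras are $K_1$-injective (cf.~\cite{BlanchardRohdeRordam-K1inj}), this would yield $u\sim_h 1_D$ in $U(E)\subseteq U(D)$. The construction of $E$ proceeds by iterating Lemma~\ref{l:sepOinftystable}: starting with a separable nuclear unital $C_0\ni u$ (for instance the commutative algebra $C^\ast(u,1_D)$), inductively define $C_{n+1} := C^\ast(C_n,\mathcal O_\infty)\subseteq D$, where $\mathcal O_\infty\hookrightarrow D\cap C_n'$ is obtained from Lemma~\ref{l:sepOinftystable}. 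Each $C_{n+1}$ is a quotient of $C_n\otimes\mathcal O_\infty$ and hence remains separable and nuclear, and $E := \overline{\bigcup_n C_n}$ satisfies $E\cong E\otimes\mathcal O_\infty$ by a McDuff-type characterisation of $\mathcal O_\infty$-absorption (cf.~\cite[Theorem~7.2.2]{Rordam-book-classification}).

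The hard part will be ensuring that the class $[u]_1$ vanishes already in $K_1(E)$ rather than merely in $K_1(D)$, as the inclusion $E\hookrightarrow D$ need not be $K_1$-injective. A witness to $[u]_1 = 0$ in $K_1(D)$ is a continuous path of unitaries in $U_{n+1}(D)$ from $u\oplus 1_n$ to $1_{n+1}$, whose matrix entries generate a separable but not obviously nuclear subalgebra of $D$, which blocks naive insertion into the chain $(C_n)$. Bridging this gap --- for example by arranging at some stage of the iteration enough $\mathcal O_\infty$-structure inside $C_n$ to manufacture a homotopy witness for $u\oplus 1_n \sim_h 1_{n+1}$ inside $U_{n+1}(E)$ via Cuntz-sum manipulations, or by using a sequence-algebra reindexing argument to bypass the nuclearity assumption in Lemma~\ref{l:sepOinftystable} in the present setting --- is the delicate step. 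With such an $E$ constructed, one obtains $u\sim_h 1_D$ in $U(D)$, giving $K_1$-injectivity of $D$, and Proposition~\ref{p:K1injstrongOinfty} then concludes that $\phi$ is strongly $\mathcal O_\infty$-stable.
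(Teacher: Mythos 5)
Your first two reductions are exactly the paper's: Proposition~\ref{p:KRspi} gives $\mathcal O_\infty$-stability of $\phi$, and Proposition~\ref{p:K1injstrongOinfty} reduces the claim to $K_1$-injectivity of $D := \frac{B_\infty \cap \phi(A)'}{\Ann\phi(A)}$. But from there your route diverges and, as you correctly diagnose, hits a genuine gap that you do not close.

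The problem is precisely the one you flag: building a separable, unital, nuclear, $\mathcal O_\infty$-absorbing $E \subseteq D$ containing $u$ is not enough, because a priori $[u]_1$ need only vanish in $K_1(D)$ and the map $K_1(E)\to K_1(D)$ need not be injective. Your suggestions for bridging this --- ``manufacturing a homotopy witness via Cuntz-sum manipulations'' or ``reindexing to bypass nuclearity'' --- are not arguments; the first gestures at the right phenomenon but is not carried out, and the second does not address the $K$-theory obstruction at all. As written, the proposal does not prove $K_1$-injectivity of $D$.

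The paper avoids this entirely by not trying to localise $[u]_1 = 0$ into a subalgebra. Instead it applies Lemma~\ref{l:sepOinftystable} once, with $C = C^\ast(u)$ (which is separable and nuclear since $u$ is a single unitary), to produce a unital copy of $\mathcal O_\infty$ in $D \cap \{u\}'$. Then it invokes \cite[Lemma~2.4(ii)]{BlanchardRohdeRordam-K1inj}, which says: if $D$ is unital, $u\in U(D)$ satisfies $[u]_1 = 0$ \emph{in $K_1(D)$ itself}, and $\mathcal O_\infty$ embeds unitally into $D\cap\{u\}'$, then $u$ is in the connected component of $1_D$ in $U(D)$. That lemma is precisely the mechanism that turns a commuting $\mathcal O_\infty$ plus vanishing of $[u]_1$ in the ambient algebra into an explicit homotopy to $1_D$, exactly the ``Cuntz-sum manipulation'' you hoped would exist. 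So the missing ingredient in your argument is not a new inductive construction but awareness that the Blanchard--Rohde--R{\o}rdam lemma already handles the step you were trying to force through a nuclear $\mathcal O_\infty$-absorbing subalgebra.

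In short: steps (1)--(2) are fine and match the paper; the iterative construction of $E$ is an unnecessary and, in your version, incomplete detour, and the gap you identify is real. Replace the construction of $E$ by a single application of Lemma~\ref{l:sepOinftystable} to $C^\ast(u)$ followed by \cite[Lemma~2.4(ii)]{BlanchardRohdeRordam-K1inj}, and the proof is complete.
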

\begin{proof}
By Propositions \ref{p:K1injstrongOinfty} and \ref{p:KRspi}, it suffices to prove that $\frac{B_\infty \cap \phi(A)'}{\Ann \phi(A)}$ is $K_1$-injective. Let $u\in \frac{B_\infty \cap \phi(A)'}{\Ann \phi(A)}$ be a unitary with trivial $K_1$-class. As $C^\ast(u)$ is nuclear Lemma \ref{l:sepOinftystable} provides the existence of a unital copy of $\mathcal O_\infty$ in $\frac{B_\infty \cap \phi(A)'}{\Ann \phi(A)}$ which commutes with $u$. By \cite[Lemma 2.4(ii)]{BlanchardRohdeRordam-K1inj}, $u$ is homotopic to the unit in the unitary group of $\frac{B_\infty \cap \phi(A)'}{\Ann \phi(A)}$.
\end{proof}

Using that simple, purely infinite $C^\ast$-algebras are strongly purely infinite by \cite[Corollary 6.9]{KirchbergRordam-absorbingOinfty}, one obtains the following.

\begin{corollary}\label{c:nucintosimplepi}
Let $A$ be a separable, exact $C^\ast$-algebra, and let $B$ be a simple, purely infinite $C^\ast$-algebra. Then every nuclear $\ast$-homomorphism $\phi \colon A \to B$ is strongly $\mathcal O_\infty$-stable.
\end{corollary}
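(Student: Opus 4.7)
The plan is to reduce immediately to Theorem \ref{t:nucintospi}, which is the main technical result of the section. The only missing ingredient is that the hypothesis ``simple, purely infinite'' on $B$ can be strengthened to ``strongly purely infinite'' at no cost.

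First I would invoke \cite[Corollary 6.9]{KirchbergRordam-absorbingOinfty} (as indicated in the remark preceding the corollary), which states that every simple, purely infinite $C^\ast$-algebra is automatically strongly purely infinite. This equivalence between the two notions of pure infiniteness in the simple case is classical: pure infiniteness of $B$ means every non-zero hereditary $C^\ast$-subalgebra contains an infinite projection (or equivalently, every non-zero positive element is properly infinite in the sense of Kirchberg--Rørdam), and the extra diagonalisation property needed for strong pure infiniteness is obtained by exploiting simplicity together with the fact that any two non-zero positive elements generate the same (full) ideal.

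Having upgraded $B$ to a strongly purely infinite $C^\ast$-algebra, I would then directly apply Theorem \ref{t:nucintospi} to the nuclear $\ast$-homomorphism $\phi \colon A \to B$, which, since $A$ is separable and exact, immediately yields that $\phi$ is strongly $\mathcal O_\infty$-stable. There is no real obstacle here — the genuine work has all been carried out in Theorem \ref{t:nucintospi}, whose proof in turn relies on the $K_1$-injectivity argument (Proposition \ref{p:K1injstrongOinfty}) combined with the Kirchberg--Rørdam style absorption (Proposition \ref{p:KRspi}) and the embedding of $\mathcal O_\infty$ into the nuclear-commutant of the sequential relative commutant (Lemma \ref{l:sepOinftystable}). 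So the corollary is essentially just an instantiation of Theorem \ref{t:nucintospi} under the stronger, simpler hypothesis.
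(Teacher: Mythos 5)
Your proposal is correct and matches the paper's argument exactly: the paper likewise invokes \cite[Corollary 6.9]{KirchbergRordam-absorbingOinfty} to pass from simple purely infinite to strongly purely infinite, and then applies Theorem \ref{t:nucintospi}.
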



\section{Ideals and actions of topological spaces}

The rest of this paper is about generalising the Kirchberg--Phillips theorem to the non-simple case. This entails keeping track of the ideal structure of the $C^\ast$-algebras as well as how these interact. 

I should emphasise that all the key methods used in the simple case -- in particular, the methods developed in Sections \ref{s:PIhom} and \ref{s:unitary} -- are also the key ingredients in the non-simple case. 

The situation in the non-simple case is much more complex than the simple case and needs a generalised version of $KK$-theory to work. For instance, one can construct separable, nuclear, $\mathcal O_\infty$-stable $C^\ast$-algebras $A$ and $B$ which both have exactly one non-trivial ideal $I_A$ and $I_B$ respectively, such that $I_A \sim_{KK} I_B$ and $A \sim_{KK} B$, but for which $A$ and $B$ are not stably isomorphic. For instance, the two (non-isomorphic) six-term exact sequences
\begin{equation}
\xymatrix{
\mathbb Z \ar[r]^{\id} & \mathbb Z \ar[r]^0 & \mathbb Z \ar[d]^\id & \mathbb Z \ar[r]^{0} & \mathbb Z \ar[r]^\id & \mathbb Z \ar[d]^0 \\
\mathbb Z \ar[u]^0 & \mathbb Z \ar[l]_{\id} & \mathbb Z \ar[l]_0 & \mathbb Z \ar[u]^\id & \mathbb Z \ar[l]_0 & \mathbb Z \ar[l]_\id
}
\end{equation}
arise as the $K$-theory of separable, nuclear, stable, $\mathcal O_\infty$-stable $C^\ast$-algebras $A$ and $B$ respectively with unique non-trivial ideals $I_A$ and $I_B$ respectively, so that $I_A,I_B,A/I_A$ and $B/I_B$ satisfy the UCT, see \cite[Proposition 5.4]{Rordam-classsixterm}. By the Kirchberg--Phillips Theorem (Theorem \ref{t:KPUCT}) $I_A \cong I_B$ and $A/I_A \cong B/I_B$. Moreover, $A$ and $B$ satisfy the UCT by the 2-out-of-3 property, and thus $A\sim_{KK} B$ since $A$ and $B$ have isomorphic $K$-theory. However, stably isomorphic $C^\ast$-algebras with a unique non-trivial ideal induce isomorphic six-term exact sequences in $K$-theory, and thus $A$ and $B$ are not stably isomorphic.
The generalised version of $KK$-theory needed for classification will be studied in the Section \ref{s:KK}.

In this section the focus is on how to incorporate the ideal structure of $C^\ast$-algebras in a way that makes non-simple $C^\ast$-algebras amenable for classification.

\subsection{Ideal lattices}

\emph{All ideals are assumed to be two-sided and closed.}

Recall that a \emph{complete lattice} is a partially ordered set in which every subset has a supremum and an infimum. Note that any complete lattice $\mathcal L$ has a largest element $\sup \mathcal L = \inf \emptyset$ and a smallest element $\inf \mathcal L = \sup \emptyset$.

\begin{definition}\label{d:lattice}
Let $\mathcal L$ be a complete lattice, and $I,J \in \mathcal L$. Say that $I$ is \emph{compactly contained} in $J$, written $I\Subset J$, if whenever $(I_\lambda)$ is a family in $\mathcal L$ for which $J \leq \sup I_\lambda$, then there are finitely many $I_{\lambda_1}, \dots, I_{\lambda_n}$ such that $I \leq \sup I_{\lambda_k}$.

A map $\Phi \colon \mathcal L \to \mathcal L'$ of complete lattices is called a \emph{$\Cu$-morphism} if it preserves suprema and compact containment.\footnote{See \cite[Section 2.2]{Gabe-O2class} for motivation for why this name makes sense.}

Whenever $\Phi , \Psi \colon \mathcal L \to \mathcal L'$ are maps of partially ordered sets, write $\Phi \leq \Psi$ whenever $\Phi(I) \leq \Psi(I)$ for all $I \in \mathcal L$.
\end{definition}

Recall that the ideal lattice $\mathcal I(A)$ of a $C^\ast$-algebra $A$ -- the set of all two-sided, closed ideals in $A$ -- is a complete lattice with suprema and infima of a non-empty subset $\mathcal S \subseteq \mathcal I(A)$ given by
\begin{equation}
\sup \mathcal S = \overline{\sum_{I\in \mathcal S} I}, \qquad \inf \mathcal S = \bigcap_{I\in \mathcal S} I.
\end{equation}

By convention, $\sup \emptyset = 0$ and $\inf \emptyset = A$ in $\mathcal I(A)$.
As $\mathcal I(A)$ is a complete lattice, there is a notion of compact containment of ideals in $C^\ast$-algebras which played a major role in \cite{Gabe-O2class}.

It is important to consider the ideal lattice as an invariant which is also defined for maps as in the following definition.

\begin{definition}
For any completely positive map $\phi \colon A \to B$, let 
\begin{equation}
\mathcal I(\phi) \colon \mathcal I(A) \to \mathcal I(B), \qquad \mathcal I(\phi)(I) = \overline{B \phi(I) B}
\end{equation}
for $I\in \mathcal I(A)$.
\end{definition}

\begin{remark}
It was shown in \cite[Lemma 2.12]{Gabe-O2class}, that $\mathcal I(\phi)$ always preserves suprema, and that $\mathcal I(\phi)$ is a $\Cu$-morphism provided $\phi$ is a $\ast$-homomorphism.
\end{remark}

\begin{remark}
Suppose that $\phi \colon A \to B$ is a $\ast$-homomorphism. Above one associates a map $\mathcal I(\phi) \colon \mathcal I(A) \to \mathcal I(B)$ in a covariant way. This was done in \cite{Gabe-O2class} so that one could make use of compact containment of ideals.

In the work of Kirchberg \cite{Kirchberg-non-simple} he instead considers the pre-image map $\phi^{-1} \colon \mathcal I(B) \to \mathcal I(A)$ which is a contravariant approach. 

One can show that these two notions are each others duals in the sense that there is a natural one-to-one correspondence between $\Cu$-morphisms $\mathcal I(A) \to \mathcal I(B)$, and maps $\mathcal I(B) \to \mathcal I(A)$ which are monotone continuous, i.e.~maps that preserve infima and increasing suprema. This one-to-one correspondence takes $\mathcal I(\phi)$ to $\phi^{-1}$.

This is because $(\phi^{-1}, \mathcal I(\phi))$ is a \emph{Galois connection}, see \cite[Section O-3]{GHKLMS-book}, i.e.~both $\phi^{-1}$ and $\mathcal I(\phi)$ are order preserving, and whenver $I\in \mathcal I(A)$ and $J \in \mathcal I(B)$ one has 
\begin{equation}
\mathcal I(\phi)(I) \subseteq J  \quad \textrm{if and only if} \quad I \subseteq \phi^{-1}(J).
\end{equation}

As the duality is not needed in this paper, I leave it to the reader to confirm this at their own will.\footnote{It is straight forward to verify; alternatively it can be deduced by combining several results from \cite[Section O-3]{GHKLMS-book}.}
\end{remark}

It was observed \cite[Remark 2.11]{Gabe-O2class}, that $\mathcal I$ is \emph{not} functorial on the category of $C^\ast$-algebras with c.p.~maps as morphisms.\footnote{For instance, if $\phi \colon \mathbb C \to M_2(\mathbb C)$ is the embedding to the $(1,1)$-corner, and $\psi \colon M_2(\mathbb C) \to \mathbb C$ is the compression to the $(2,2)$-corner, then $\mathcal I(\psi \circ \phi) \neq \mathcal I(\psi) \circ \mathcal I(\phi)$.} However, this annoyance goes away if we restrict our attention to $\ast$-homomorphisms.\footnote{Or, more generally, if we only consider c.p.~order zero maps.} The following proposition was proved in \cite{Gabe-O2class}, and as it is so fundamental, it will usually not be mentioned.

\begin{proposition}[Cf.~\cite{Gabe-O2class} Proposition 2.15]
$\mathcal I$ is a covariant functor from the category of $C^\ast$-algebras with $\ast$-homomorphisms to the category of complete lattices with $\Cu$-morphisms.
\end{proposition}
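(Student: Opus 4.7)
The bulk of the work for this proposition is already in hand: the fact that $\mathcal I(A)$ is a complete lattice was recalled at the start of the subsection, and \cite[Lemma 2.12]{Gabe-O2class} (cited in the remark immediately after Definition \ref{d:lattice}) gives exactly that $\mathcal I(\phi) \colon \mathcal I(A) \to \mathcal I(B)$ is a $\Cu$-morphism whenever $\phi$ is a $\ast$-homomorphism. Hence the only content that needs verification is \emph{functoriality}: preservation of identities and of composition. There is no deep obstacle here — the result is essentially bookkeeping.

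For identities, one notes that for any ideal $I\in \mathcal I(A)$ one has $\mathcal I(\id_A)(I) = \overline{A\cdot I\cdot A} = I$ (since $I$ is itself an ideal), so $\mathcal I(\id_A) = \id_{\mathcal I(A)}$.

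For composition, let $\phi \colon A\to B$ and $\psi \colon B\to C$ be $\ast$-homomorphisms and fix $I\in \mathcal I(A)$. Writing $J:= \mathcal I(\phi)(I) = \overline{B\phi(I)B}$, the task is to show
\begin{equation}
\overline{C\,\psi(\phi(I))\,C} \;=\; \overline{C\,\psi(J)\,C}.
\end{equation}
The inclusion ``$\subseteq$'' is immediate from $\psi(\phi(I)) \subseteq \psi(J)$. For ``$\supseteq$'', one uses continuity of $\psi$ to get $\psi(J) \subseteq \overline{\psi(B)\psi(\phi(I))\psi(B)}$, and then $\psi(B)\subseteq C$ collapses the outer factors of $\psi(B)$ into $C$, yielding $C\psi(J)C \subseteq \overline{C\psi(\phi(I))C}$. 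Taking closure completes the argument. This is the only step with any content, and it is a one-line manipulation once the key observation $\psi(B) \subseteq C$ is noted.

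Combining these two verifications with the previously cited facts about $\mathcal I(A)$ and $\mathcal I(\phi)$ gives that $\mathcal I$ is a well-defined covariant functor into complete lattices with $\Cu$-morphisms, as claimed.
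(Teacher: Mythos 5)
The paper itself gives no in-text proof of this proposition—it simply cites \cite[Proposition 2.15]{Gabe-O2class}—so the comparison here is against what that cited argument must contain. Your decomposition is the correct one: the statement that $\mathcal I(\phi)$ is a $\Cu$-morphism is exactly \cite[Lemma 2.12]{Gabe-O2class}, which the paper also invokes in the preceding remark, and the remaining content is functoriality. Your verification of $\mathcal I(\id_A) = \id_{\mathcal I(A)}$ is correct (using that $\overline{AIA} = I$ for a closed two-sided ideal $I$, via an approximate identity), and your composition argument is sound: $\psi(\overline{B\phi(I)B}) \subseteq \overline{\psi(B)\psi(\phi(I))\psi(B)} \subseteq \overline{C\psi(\phi(I))C}$ by continuity and multiplicativity of $\psi$, and then $C\psi(J)C$ lands inside the closed ideal $\overline{C\psi(\phi(I))C}$. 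One could add the one-line observation that $\Cu$-morphisms genuinely form a category (identities preserve suprema and compact containment; so do compositions of maps that do), but that is trivial. Your proof is complete and takes the only sensible route to the result.
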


A special case of Definition \ref{d:lattice} says that for c.p.~maps $\phi, \psi \colon A \to B$ we write $\mathcal I(\phi) \leq \mathcal I(\psi)$ if $\mathcal I(\phi)(I) \subseteq \mathcal I(\psi)(I)$ for all $I \in \mathcal I(A)$.

\begin{theorem}[{\cite[Theorem 3.3]{Gabe-O2class}}]\label{t:approxdom}
Let $A$ and $B$ be $C^\ast$-algebras with $A$ exact, and let $\phi , \rho \colon A \to B$ be nuclear maps with $\phi$ a $\ast$-homomorphism. The following are equivalent:
\begin{itemize}
\item[$(i)$] $\phi$ approximately dominates $\rho$ (see Definition \ref{d:approxdom});
\item[$(ii)$] $\mathcal I(\rho) \leq \mathcal I(\phi)$;
\item[$(iii)$] $\rho(a) \in \overline{B \phi(a) B}$ for any positive $a\in A$.
\end{itemize}
\end{theorem}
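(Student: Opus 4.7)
The easy directions $(i) \Rightarrow (iii) \Rightarrow (ii)$ are routine. For $(i) \Rightarrow (iii)$, each approximant $\sum_i b_i^* \phi(a) b_i$ lies in the norm-closed ideal $\overline{B\phi(a)B}$, and hence so does $\rho(a)$. For $(iii) \Rightarrow (ii)$, given $I \in \mathcal I(A)$, every positive $a \in I$ satisfies $\rho(a) \in \overline{B\phi(a)B} \subseteq \mathcal I(\phi)(I)$, and since $I$ is the closed linear span of its positive elements this yields $\mathcal I(\rho)(I) \subseteq \mathcal I(\phi)(I)$.

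The substance is $(ii) \Rightarrow (i)$, which I would approach as a refinement of the proof of Proposition \ref{p:fulldom}, with the ideal hypothesis replacing the fullness of $\phi^{(n)}$. Using Remark \ref{r:approxdom} (approximately dominated maps form a point-norm closed convex cone) together with Lemma \ref{l:nucsimpleapprox}, it suffices to prove the claim for a single c.p.\ map of the form $\rho'(a) := \sum_{i,j} f(a \otimes e_{i,j}) b_i^* b_j$ coming from a pure positive linear functional $f$ on $A \otimes M_n(\mathbb C)$, under the standing assumption $\mathcal I(\rho') \leq \mathcal I(\phi)$. The passage from $\rho$ to such pure-state summands must be set up with some care, since the individual summands produced by Lemma \ref{l:nucsimpleapprox} need not inherit the ideal bound automatically; this is handled by observing that pure-state summands are dominated pointwise on positives by the total and by invoking condition (iii) applied to $\rho$ itself to bound their ideal support. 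For such $\rho'$, I would excise $f$ via \cite{AkemannAndersonPedersen-excision} to obtain a positive $x \in A \otimes M_n(\mathbb C)$ of norm one satisfying $x(a \otimes e_{i,j}) x \approx f(a \otimes e_{i,j}) x^2$ uniformly on the given finite set, and then, as in the endgame of Proposition \ref{p:fulldom}, define $d_k := \sum_j \phi^{(n)}((1_{\widetilde A} \otimes e_{1,j}) x) c_k (b_j \otimes e_{1,1})$ for suitable $c_k \in B \otimes M_n(\mathbb C)$ to be extracted from Lemma \ref{l:sumidealgen}.

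The principal obstacle is producing the $c_k$'s: one must verify that the target element $b_0 := \sum_{i,j} f(a \otimes e_{i,j})(b_i^* b_j) \otimes e_{1,1}$ lies in the cut-down ideal $\overline{(B \otimes M_n)(\phi^{(n)}(x^2) - \delta)_+(B \otimes M_n)}$ for every $0 < \delta < \|\phi^{(n)}(x^2)\|$, so that Lemma \ref{l:sumidealgen} applies with the required norm control. Here exactness of $A$ enters decisively, ensuring the hypothesis $\mathcal I(\rho) \leq \mathcal I(\phi)$ lifts compatibly to $\mathcal I(\rho^{(n)}) \leq \mathcal I(\phi^{(n)})$ on $A \otimes M_n(\mathbb C)$, and placing $b_0$—which up to multiplication by bounded elements encodes a cut-down of $\rho^{(n)}(x^2)$—inside the ideal of $B \otimes M_n(\mathbb C)$ generated by $\phi^{(n)}(x^2)$. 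The genuinely delicate step is upgrading this containment to the $\delta$-cut-down version uniformly in small $\delta$; I anticipate handling this by replacing $x$ at the excision step by $g(x)$ for a continuous cutoff function $g$ vanishing on a neighbourhood $[0, \eta]$ of $0$ and equal to $1$ on the relevant spectral support, so that $\phi^{(n)}(x^2)$ is uniformly bounded below by $\eta^2$ on its own support and the $\delta$-cut-down becomes harmless for $\delta < \eta^2$.
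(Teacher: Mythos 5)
First, a point of orientation: the paper does not prove Theorem~\ref{t:approxdom} at all. It is quoted verbatim from \cite[Theorem 3.3]{Gabe-O2class} with no argument supplied here, and the author explicitly remarks before Proposition~\ref{p:fulldom} that he chose a ``longer yet more elementary'' route for the full case precisely to avoid invoking that theorem. So there is no in-paper proof to compare against, and the external proof is almost certainly of a different nature (it goes through the Kirchberg--R\o rdam theory of operator convex cones in \cite{KirchbergRordam-zero}, to which the paper alludes in the proof of Proposition~\ref{p:KRspi}), not an excision argument.

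Your easy implications among $(i)$, $(ii)$, $(iii)$ are fine, but the proposed $(ii)\Rightarrow(i)$ has two genuine gaps. First, the reduction to pure-state summands via Lemma~\ref{l:nucsimpleapprox} does not preserve the ideal hypothesis, and the fix you sketch does not repair this: the lemma produces summands $\rho'_l$ whose \emph{sum} approximates $\rho$ \emph{in norm on a finite set}; it is the sum $\sum_l\rho'_l$ that dominates each $\rho'_l$, not $\rho$, and a small-norm perturbation of $\rho(a)$ can lie entirely outside the (possibly proper) ideal $\overline{B\phi(a)B}$, so invoking $(iii)$ for $\rho$ gives you nothing about $\rho'_l(a)$. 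Second, even granting $\mathcal I(\rho'_l)\leq\mathcal I(\phi)$, Lemma~\ref{l:sumidealgen} requires the target to lie in $\overline{(B\otimes M_n)(\phi^{(n)}(x^2)-\delta)_+(B\otimes M_n)}$ for \emph{every} $0<\delta<\|\phi^{(n)}(x^2)\|$, which is strictly stronger than membership in the ideal generated by $\phi^{(n)}(x^2)$. In Proposition~\ref{p:fulldom} this is free because fullness of $\phi^{(n)}$ makes each such cut-down ideal all of $B\otimes M_n$; without fullness the cut-down ideals can shrink below $b_0$. The proposed cutoff $g(x)$ does not obviously resolve this: the excision element is produced by Akemann--Anderson--Pedersen and you cannot freely prescribe its spectral structure while retaining the excision estimate, and what matters is the spectrum of $\phi^{(n)}(x^2)$, which $\phi^{(n)}$ (no longer injective, unlike the full case) can collapse in ways you do not control. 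A smaller point: exactness of $A$ is not what gives $\mathcal I(\rho^{(n)})\leq\mathcal I(\phi^{(n)})$, which is automatic since $\mathcal I(A\otimes M_n)\cong\mathcal I(A)$; exactness enters elsewhere in the cited theorem (nuclearity passing to quotient maps, cf.~Lemma~\ref{l:nucquotient}).
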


The above theorem will be used in place of Proposition \ref{p:fulldom} for full maps, which stated that any full $\ast$-homomorphism approximately dominates any nuclear map.


\subsection{Actions of topological spaces on $C^\ast$-algebras}

\begin{definition}
Let $X$ be a topological space. An \emph{action} of $X$ on a $C^\ast$-algebra $A$ is an order preserving map
\begin{equation}
\Phi_A \colon \mathcal O(X) \to \mathcal I(A)
\end{equation}
where $\mathcal O(X)$ denotes the complete lattice of open subsets of $X$.\footnote{Suprema in $\mathcal O(X)$ is given by taking unions, and infima is given by taking the interior of the intersection, i.e.~if $(U_\lambda)_{\lambda \in \Lambda}$ is a collection in $\mathcal O(X)$ then $\inf_{\lambda \in \Lambda} U_\lambda = (\bigcap_{\lambda \in \Lambda} U_\lambda)^\circ$.} The pair $(A, \Phi_A)$ is called an \emph{$X$-$C^\ast$-algebra}. Often $\Phi_A$ is omitted from the notation by defining $A(U) := \Phi_A(U)$ for all $U\in \mathcal O(X)$.

A map $\phi \colon A \to B$ between $X$-$C^\ast$-algebras is called \emph{$X$-equivariant} (or $\Phi_A$-$\Phi_B$-equivariant) if 
\begin{equation}
\phi(A(U)) \subseteq B(U) \qquad \textrm{for all }U\in \mathcal O(X),
\end{equation}
or equivalently, if $\phi(\Phi_A(U)) \subseteq \Phi_B(U)$ for all $U\in \mathcal O(X)$.
\end{definition}

Clearly a c.p.~map $\phi \colon A \to B$ is $X$-equivariant if and only if $\mathcal I(\phi) \circ \Phi_A \leq \Phi_B$.

The advantage of considering $X$-$C^\ast$-algebras is that one can consider the category of all (separable) $X$-$C^\ast$-algebras with $X$-equivariant $\ast$-homo\-morphisms as morphisms. In this sense, $KK(X)$ -- which will be defined in the Section \ref{s:KK} -- becomes a functor from this category to some target category which turns out to be triangulated, see \cite[Proposition 3.11]{MeyerNest-bootstrap}.

Often it is an advantage only to consider $X$-$C^\ast$-algebras for which the action has certain additional properties, such as actions that preserve (increasing) suprema and/or infima. This motivates the following definition.

\begin{definition}\label{d:Xalg}
Let $X$ be a topological space, let $(A, \Phi_A)$ be an $X$-$C^\ast$-algebra. Then $(A, \Phi_A)$ is said to be
\begin{itemize}
\item \emph{(fintely) lower semicontinuous} if $\Phi_A$ preserves (finite) infima;
\item \emph{(finitely) upper semicontinuous} if $\Phi_A$ preserves (finite) suprema;
\item \emph{monotone upper semicontinuous} if $\Phi_A$ preserves suprema of non-empty, upwards directed sets;
\item \emph{(monotone) continuous} if it is both lower semicontinuous and (monotone) upper semicontinuous;
\item \emph{$X$-compact} if $\Phi_A$ preserves compact containment;
\item \emph{tight} if $\Phi_A$ is an order isomorphism.
\end{itemize}
\end{definition}

Recall that in any complete lattice $\inf \emptyset$ is the largest element and $\sup \emptyset$ is the smallest element. Hence if $A$ is an $X$-$C^\ast$-algebra which is (finitely) lower semicontinuous then $A(X) = A(\inf \emptyset) = \inf \emptyset = A$. Similarly, if $A$ is (finitely) upper semicontinuous then $A(\emptyset) = 0$ (where $\emptyset \in \mathcal O(X)$ is the smallest element).

\begin{remark}
All but one of the above definitions of actions have appeared previously in the literature, in particular in \cite{Kirchberg-non-simple}. The only exception is the notion of an \emph{$X$-compact} action which is a new concept.
\end{remark}

\begin{example}[Canonical tight action]\label{ex:tightaction}
As is customary, let $\Prim A$ denote the \emph{primitive ideal space} of the $C^\ast$-algebra $A$, i.e.~the set of all primitive ideals\footnote{An ideal is \emph{primitive} if it is the kernel of an irreducible representation.} of $A$ equipped with the Jacobsen topology.
Recall -- see for instance \cite[Theorem 4.1.3]{Pedersen-book-automorphism} -- that for a $C^\ast$-algebra $A$ there is an order iso\-morphism $\I_A \colon \mathcal O(\Prim A) \xrightarrow \cong \mathcal I(A)$ given by
\begin{equation}
\I_A(V) = \bigcap_{\mathfrak p \in \Prim A \setminus V} \mathfrak p, \qquad V\in \mathcal O(\Prim A).
\end{equation}
Hence $(A, \I_A)$ is a tight $\Prim A$-$C^\ast$-algebra.
\end{example}

\begin{notation}
The action $\I_A \colon \mathcal O(\Prim A) \to \mathcal I(A)$ from Example \ref{ex:tightaction} will be referred to as the \emph{canonical tight action} of $A$.
\end{notation}

\begin{example}[Ordinary $C^\ast$-algebras]\label{ex:onepointXalg}
Let $\{\star\}$ be a one-point topological space, and let $A$ be a $\{\star\}$-$C^\ast$-algebra. Then $A(\emptyset) = 0$ and $A(\{\star\}) = A$ if and only if $A$ is continuous. In particular, the category of $C^\ast$-algebras is isomorphic to the category of continuous $\{\star\}$-$C^\ast$-algebras.

If $A$ is a continuous $\{\star\}$-$C^\ast$-algebra then
\begin{itemize}
\item $A$ is tight if and only if the $C^\ast$-algebra $A$ is simple;
\item $A$ is $\{\star\}$-compact if and only if the primitive ideal space $\Prim A$ is compact.
\end{itemize}
\end{example}

\begin{example}[$C_0(X)$-algebras]\label{ex:C(X)alg}
Let $X$ be a locally compact Hausdorff space. A \emph{$C_0(X)$-algebra} is a $C^\ast$-algebra $A$ together with a $\ast$-homomorphism $\psi_A \colon C_0(X) \to \mathscr Z\multialg{A}$  (the center of the multiplier algebra)
such that 
\begin{equation}\label{eq:psiC(X)A}
\overline{\psi_A(C_0(X)) A} = A.\footnote{There is some disagreement in the literature whether or not one only wants to consider the case where $\psi_A$ is injective. However, this additional criteria rules out important special cases such as the skyscraper $C_0(X)$-algebras, obtained by letting $\psi_A$ be a composition $C_0(X) \xrightarrow{\ev_x} \mathbb C \to \mathscr Z\multialg{A}$ of a point-evaluation and the canonical unital embedding of $\mathbb C$.}
\end{equation} 
The map $\psi_A$ induces an action of $X$ on $A$, namely $\Phi_A \colon \mathcal O(X) \to \mathcal I(A)$ given by
\begin{equation}
\Phi_A(U) = \overline{\psi_A(C_0(U)) A}, \qquad U\in \mathcal O(X).
\end{equation}
The $C_0(X)$-algebra $A$ is called \emph{continuous} if the map 
\begin{equation}
X \ni x \mapsto \| a + \Phi_A(X \setminus \{x\})\|_{A/\Phi_A(X \setminus \{x\})}
\end{equation}
 is continuous for every $a\in A$.
It turns out that the assignment $\psi_A \mapsto \Phi_A$ described above is a one-to-one correspondence between $\ast$-homomorphisms $\psi_A \colon C_0(X) \to \mathscr Z\multialg{A}$ satisfying \eqref{eq:psiC(X)A}, 
and actions of $X$ on $A$ which are upper semicontinuous and finitely lower semicontinuous, see \cite[Sections 2.1 and 2.2]{MeyerNest-bootstrap}.
In this way, continuous $C_0(X)$-algebras correspond exactly to continuous $X$-$C^\ast$-algebras.
\end{example}

\begin{example}[$C^\ast$-algebras over topological spaces]\label{ex:CoverX}
Let $X$ be a topological space. A \emph{$C^\ast$-algebra over $X$} is a $C^\ast$-algebra $A$ together with a continuous map $\psi_A \colon \Prim A \to X$. This induces an action of $X$ on $A$, namely $\Phi_A \colon \mathcal O(X) \to \mathcal I(A)$ given by
\begin{equation}
\Phi_A (U) = \I_A (\psi_A^{-1}(U)), \qquad U \in \mathcal O(X),
\end{equation}
where $\I_A \colon \mathcal O(\Prim A) \xrightarrow \cong \mathcal I(A)$ is the canonical tight action (Example \ref{ex:tightaction}).

It turns out that $(A, \Phi_A)$ is always upper semicontinuous and fintely lower semicontinuous.
If $X$ is sober,\footnote{A topological space $X$ is called \emph{sober} if the map $X \to \mathcal O(X)$ given by $x\mapsto X \setminus \overline{\{x\}}$ is injective and maps onto the set of all prime open subsets of $X$. An open subset $U \subseteq X$ is \emph{prime} if whenever $V,W \in \mathcal O(X)$ are such that $V \cap W \subseteq U$ then $V \subseteq U$ or $W \subseteq U$. For any topological space $X$ there is a sober space $\hat X$ such that $\mathcal O(X) \cong \mathcal O(\hat X)$, so it is essentially no loss of generality to assume that $X$ is sober, see \cite[Section 2.5]{MeyerNest-bootstrap}.} which is essentially no loss of generality, then the above construction $\psi_A \mapsto \Phi_A$ is a one-to-one correspondence between continuous maps $\Prim A \to X$ and actions $\mathcal O(X) \to \mathcal I(A)$ which are upper semicontinuous and finitely lower semicontinuous, see \cite[Lemma 2.25]{MeyerNest-bootstrap}.
\end{example}

\subsection{$X$-fullness}

Recall that a $\ast$-homomorphism $\phi \colon A \to B$ is full if $\phi(a)$ is full in $B$ for every non-zero $a\in A$. Essentially, this means that $\phi$ is as large as possible in an ideal-related sense. The same phenomena will be studied for $X$-equivariant maps in the sense that $X$-fullness means that $\phi$ is as (ideal-related) large as possible, provided one assumes $\phi$ is $X$-equivariant.

For a subset $Y$ of a topological space $X$, let $Y^\circ$ denote the interior of $Y$.

\begin{lemma}\label{l:dualaction}
Let $(A, \Phi_A)$ be a lower semicontinuous $X$-$C^\ast$-algebra. There is a well-defined order preserving map $\Psi_A \colon \mathcal I(A) \to \mathcal O(X)$ given by
\begin{equation}\label{eq:dualaction}
\Psi_A (I) = \bigg( \bigcap_{\substack{V \in \mathcal O(X) \\ I \subseteq \Phi_A(V)}} V \bigg)^\circ , \qquad I\in \mathcal I(A).
\end{equation}
The maps $\Phi_A$ and $\Psi_A$ satisfy
\begin{equation}\label{eq:dualGalois}
I \subseteq \Phi_A(U) \qquad \textrm{if and only if} \qquad \Psi_A(I) \subseteq U
\end{equation}
for all $I\in \mathcal I(A)$ and $U \in \mathcal O(X)$.\footnote{This means that $(\Phi_A, \Psi_A)$ is a Galois connection, see \cite[Section O-3]{GHKLMS-book}. In particular, it follows from \cite[Corollary O-3.5]{GHKLMS-book} that there exists a map $\Psi_A$ satisfying \eqref{eq:dualGalois} if and only if $(A, \Phi_A)$ is lower semicontinuous.}
 In particular,
\begin{equation}\label{eq:dualeq}
I \subseteq \Phi_A(\Psi_A(I)) \qquad \textrm{and} \qquad \Psi_A (\Phi_A(U)) \subseteq U
\end{equation}
for all $I\in \mathcal I(A)$ and $U \in \mathcal O(X)$.
\end{lemma}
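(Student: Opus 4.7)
My plan is to treat this as a standard Galois connection argument, with the key input being that lower semicontinuity of $\Phi_A$ ensures $\Psi_A(I)$ lies in the family defining it.

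First, well-definedness and monotonicity of $\Psi_A$ are essentially formal. The expression in \eqref{eq:dualaction} takes the interior of a (possibly non-open) intersection of open sets, so $\Psi_A(I) \in \mathcal O(X)$. For monotonicity, if $I \subseteq J$ in $\mathcal I(A)$, then every $V \in \mathcal O(X)$ with $J \subseteq \Phi_A(V)$ also satisfies $I \subseteq \Phi_A(V)$, so the intersection defining $\Psi_A(I)$ is taken over a larger index set than that defining $\Psi_A(J)$, giving $\Psi_A(I) \subseteq \Psi_A(J)$.

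The main step will be to establish the Galois-type equivalence \eqref{eq:dualGalois}. The $(\Rightarrow)$ direction is immediate: if $I \subseteq \Phi_A(U)$ then $U$ appears in the family being intersected in \eqref{eq:dualaction}, so $\Psi_A(I) \subseteq U^\circ = U$. For $(\Leftarrow)$ I plan to first prove the containment $I \subseteq \Phi_A(\Psi_A(I))$ directly; once this is in hand, applying $\Phi_A$ to the assumption $\Psi_A(I) \subseteq U$ (and using that $\Phi_A$ is order preserving) yields $I \subseteq \Phi_A(\Psi_A(I)) \subseteq \Phi_A(U)$.

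The one real ingredient is the containment $I \subseteq \Phi_A(\Psi_A(I))$, and this is precisely where lower semicontinuity enters. Let $\mathcal V := \{V \in \mathcal O(X) : I \subseteq \Phi_A(V)\}$. Recall that in $\mathcal O(X)$ one has $\inf \mathcal V = (\bigcap_{V \in \mathcal V} V)^\circ = \Psi_A(I)$. Since $(A, \Phi_A)$ is lower semicontinuous, $\Phi_A$ preserves infima, so
\begin{equation}
\Phi_A(\Psi_A(I)) = \Phi_A(\inf \mathcal V) = \inf_{V \in \mathcal V} \Phi_A(V) = \bigcap_{V\in \mathcal V} \Phi_A(V) \supseteq I,
\end{equation}
where the last containment is just the defining property of the family $\mathcal V$. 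This is the only non-formal step in the argument. Finally, the two containments in \eqref{eq:dualeq} follow by specialising \eqref{eq:dualGalois}: taking $U = \Psi_A(I)$ in the $(\Leftarrow)$ direction gives $I \subseteq \Phi_A(\Psi_A(I))$, and taking $I = \Phi_A(U)$ in the $(\Rightarrow)$ direction gives $\Psi_A(\Phi_A(U)) \subseteq U$.
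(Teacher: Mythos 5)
Your proof is correct and follows essentially the same route as the paper's: both arguments establish the containment $I \subseteq \Phi_A(\Psi_A(I))$ by using lower semicontinuity to identify $\Phi_A(\Psi_A(I))$ with $\bigcap_{V \in \mathcal V} \Phi_A(V)$, and then derive \eqref{eq:dualGalois} and \eqref{eq:dualeq} exactly as you do. The only cosmetic difference is in well-definedness: the paper explicitly notes that $X$ always belongs to the indexing family (because $\Phi_A(\inf\emptyset) = \Phi_A(X) = A \supseteq I$) so the intersection is never over an empty family, whereas you rely implicitly on the convention that an empty intersection equals $X$; both resolutions are standard.
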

\begin{proof}
Note that $\inf \emptyset = X$ in the complete lattice $\mathcal O(X)$, and that $\inf \emptyset = A$ in the complete lattice $\mathcal I(A)$.
Hence as $\Phi_A$ preserves infima of the empty set, one has $\Phi_A(X) = A$. It easily follows that $\Psi_A$ is well-defined since the index of the intersection in \eqref{eq:dualaction} always contains $X\in \mathcal O(X)$ and is therefore never empty. Clearly $\Psi_A$ is order preserving.

If $I \subseteq \Phi_A(U)$ then $\Psi_A(I) \subseteq U$ by the definition of $\Psi_A$. Conversely, suppose $\Psi_A(I) \subseteq U$. As $\Phi_A$ preserves infima,\footnote{This means that $\Phi_A((\bigcap \mathcal U)^\circ) = \bigcap \Phi_A(\mathcal U)$ for a subset $\mathcal U \subseteq \mathcal O(X)$.} it follows that
\begin{equation}
\Phi_A(\Psi_A(I)) = \bigcap_{\substack{V \in \mathcal O(X) \\ I \subseteq \Phi_A(V)}} \Phi_A(V).
\end{equation}
As the right hand side above contains $I$ by definition, one gets 
\begin{equation}
I \subseteq \Phi_A(\Psi_A(I)) \subseteq \Phi_A(U)
\end{equation}
since $\Phi_A$ is order preserving.

``In particular'' follows from \eqref{eq:dualGalois} by considering the cases $I= \Phi_A(U)$ and $\Psi_A(I) = U$.
\end{proof}

\begin{definition}
Let $(A,\Phi_A)$ be a lower semicontinuous $X$-$C^\ast$-algebra. The \emph{dual action} of $\Phi_A$ (or the dual action of the $X$-$C^\ast$-algebra $(A, \Phi_A)$) is the map $\Psi_A \colon \mathcal I(A) \to \mathcal O(X)$ defined in \eqref{eq:dualaction}.
\end{definition}

\begin{lemma}\label{l:Cuaction}
Let $(A,\Phi_A)$ be a lower semicontinuous $X$-$C^\ast$-algebra with dual action $\Psi_A$, and let $(B, \Phi_B)$ be an $X$-$C^\ast$-algebra.
\begin{itemize}
\item[$(a)$] A c.p.~map $\phi \colon A \to B$ is $X$-equivariant if and only if $\mathcal I(\phi) \leq \Phi_B \circ \Psi_A$.
\item[$(b)$] $\Psi_A$ preserves suprema. In particular, $\Phi_B \circ \Psi_A \colon \mathcal I(A) \to \mathcal I(B)$ preserves suprema whenever $(B, \Phi_B)$ is upper semicontinuous.
\item[$(c)$] If $(A, \Phi_A)$ is monotone continuous then $\Psi_A$ preserves compact containment. In particular, $\Phi_B \circ \Psi_A \colon \mathcal I(A) \to \mathcal I(B)$ is a $\Cu$-morphism whenever $(B, \Phi_B)$ is $X$-compact and upper semicontinuous (in addition to $(A, \Phi_A)$ being monotone continuous).
\end{itemize}
\end{lemma}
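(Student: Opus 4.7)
The plan is to exploit the Galois connection \eqref{eq:dualGalois} between $\Phi_A$ and $\Psi_A$ established in Lemma \ref{l:dualaction}, together with the easy adjunction facts $I \subseteq \Phi_A(\Psi_A(I))$ and $\Psi_A(\Phi_A(U)) \subseteq U$ from \eqref{eq:dualeq}.

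For part $(a)$, first recall that $X$-equivariance of $\phi$ is the condition $\mathcal I(\phi) \circ \Phi_A \leq \Phi_B$ (since $\phi(\Phi_A(U)) \subseteq \Phi_B(U)$ if and only if $\mathcal I(\phi)(\Phi_A(U)) \subseteq \Phi_B(U)$, using that $B$ has an approximate identity so $\phi(\Phi_A(U)) \subseteq \overline{B\phi(\Phi_A(U))B}$). For the forward implication, given $I \in \mathcal I(A)$, apply the order-preserving map $\mathcal I(\phi)$ to $I \subseteq \Phi_A(\Psi_A(I))$ and then use $X$-equivariance on $\Phi_A(\Psi_A(I))$. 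For the reverse implication, given $U \in \mathcal O(X)$, apply the hypothesis to $I = \Phi_A(U)$ and then use $\Psi_A(\Phi_A(U)) \subseteq U$ together with monotonicity of $\Phi_B$.

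For part $(b)$, the suprema-preservation of $\Psi_A$ is essentially automatic from the Galois connection. Given a family $(I_\lambda)$ with supremum $I$, monotonicity gives $\sup \Psi_A(I_\lambda) \subseteq \Psi_A(I)$. For the reverse inclusion, set $U = \sup \Psi_A(I_\lambda)$; each $\Psi_A(I_\lambda) \subseteq U$ translates via \eqref{eq:dualGalois} to $I_\lambda \subseteq \Phi_A(U)$, hence $I \subseteq \Phi_A(U)$, and translating back gives $\Psi_A(I) \subseteq U$. The ``in particular'' claim is immediate since upper semicontinuity of $(B,\Phi_B)$ says $\Phi_B$ preserves suprema, and composition of sup-preserving maps is sup-preserving.

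For part $(c)$, the main step is showing that $\Psi_A$ preserves compact containment when $(A, \Phi_A)$ is monotone continuous. The hard part here is that monotone upper semicontinuity of $\Phi_A$ only gives preservation of suprema along \emph{upwards directed} families, so to cover $\Psi_A(J) \subseteq \sup V_\lambda$ I would replace $(V_\lambda)$ by the upwards directed family $(W_F)_{F}$ of finite unions $W_F = \bigcup_{\lambda \in F} V_\lambda$, which has the same supremum. Then $\Phi_A$-monotone-continuity yields $\Phi_A(\sup V_\lambda) = \sup \Phi_A(W_F)$; translating $\Psi_A(J) \subseteq \sup V_\lambda$ via \eqref{eq:dualGalois}, applying compact containment $I \Subset J$ to the directed family $\Phi_A(W_F)$, and translating back, gives finitely many $W_{F_1}, \dots, W_{F_n}$ (whose union is some single $W_F$, which is itself a finite union of $V_\lambda$'s) with $\Psi_A(I) \subseteq W_F$. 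The ``in particular'' part is then immediate: $\Phi_B$ is a $\Cu$-morphism by hypothesis, $\Psi_A$ preserves suprema by $(b)$ and compact containment by the first half of $(c)$, and $\Cu$-morphisms are closed under composition.
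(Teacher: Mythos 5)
Your proposal is correct and follows the same approach as the paper: all three parts are driven by the Galois connection \eqref{eq:dualGalois} established in Lemma \ref{l:dualaction}. Parts $(a)$ and $(b)$ are argued essentially verbatim as in the paper. The one place you are slightly more explicit than the paper is in part $(c)$: the paper starts directly with an increasing net $(U_\lambda)$ covering $\Psi_A(J)$ (implicitly using the standard lattice-theoretic fact that compact containment need only be tested against increasing nets), while you spell out the reduction from an arbitrary family $(V_\lambda)$ to the directed family of finite unions before invoking monotone upper semicontinuity of $\Phi_A$. Both are valid; your version fills in a routine step the paper leaves to the reader.
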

\begin{proof}
$(a)$: If $\phi$ is $X$-equivariant and $I\in \mathcal I(A)$ then
\begin{equation}
\phi(I) \stackrel{\eqref{eq:dualeq}}{\subseteq} \phi(\Phi_A(\Psi_A(I))) \subseteq \Phi_B(\Psi_A(I)),
\end{equation}
so $\mathcal I(\phi) \leq \Phi_B \circ \Psi_A$. Conversely, if $\mathcal I(\phi) \leq \Phi_B \circ \Psi_A$ and $U\in \mathcal O(X)$ then
\begin{equation}
\phi(\Phi_A(U)) \subseteq \Phi_B (\Psi_A( \Phi_A(U))) \stackrel{\eqref{eq:dualeq}}{\subseteq} \Phi_B(U),
\end{equation}
so $\phi$ is $X$-equivariant.

$(b)$: Let $S \subseteq \mathcal I(A)$ be a (possibly empty) subset, let $I := \sup S$ and $U := \sup \Psi_A(S)$. As $\Psi_A(J) \subseteq \Psi_A(I)$ for all $J  \in S$, one gets $U \subseteq \Psi_A(I)$, and it remains to show the reverse inclusion. Since $\Psi_A(J) \subseteq U$ for all $J \in S$ one gets $J \subseteq \Phi_A(U)$ for all $J\in S$ by \eqref{eq:dualGalois}. Hence $I \subseteq \Phi_A(U)$ so by \eqref{eq:dualGalois} one gets $\Psi_A(I) \subseteq U$.

$(c)$: Suppose $I \Subset J$ in $\mathcal I(A)$ and let $(U_\lambda)_{\lambda \in \Lambda}$ be an increasing net in $\mathcal O(X)$ such that $\Psi_A(J) \subseteq \sup U_\lambda$. By \eqref{eq:dualGalois} one has 
\begin{equation}
J \subseteq \Phi_A(\sup U_\lambda) = \sup \Phi_A(U_\lambda)
\end{equation}
where the second equality follows from monotone upper semicontinuity of $(A, \Phi_A)$. As $I\Subset J$ there is $\lambda\in \Lambda$ such that $I \subseteq \Phi_A(U_\lambda)$. By \eqref{eq:dualGalois} one has $\Psi_A(I) \subseteq U_\lambda$, so $\Psi_A(I) \Subset \Psi_A(J)$. Thus $\Psi_A$ preserves compact containment.
\end{proof}

If $\phi \colon A \to B$ is an $X$-equivariant c.p.~map then the largest $\mathcal I(\phi)$ can possibly be in an ideal-related sense is $\Phi_B \circ \Psi_A$ by Lemma \ref{l:Cuaction}$(a)$. This motivates the following definition.

\begin{definition}
Let $(A, \Phi_A)$ and $(B, \Phi_B)$ be $X$-$C^\ast$-algebras with $(A,\Phi_A)$ lower semicontinuous, and let $\Psi_A$ be the dual action of $\Phi_A$. A c.p.~map $\phi \colon A \to B$ is said to be \emph{$X$-full} if $\mathcal I(\phi) = \Phi_B \circ \Psi_A$.
\end{definition}

\begin{remark}
Suppose $A$ and $B$ are $X$-$C^\ast$-algebras with $A$ lower semicontinuous and dual action $\Psi_A \colon \mathcal I(A) \to \mathcal O(X)$. A c.p.~map $\phi \colon A \to B$ is $X$-full if and only if $\phi(a)$ is full in $B(\Psi_A(\overline{AaA}))$ for any $a\in A$.
This was how $X$-fullness was (essentially) defined in \cite{GabeRuiz-absrep}, and how $X$-fullness was described in the introduction. In fact, by definition of $\Psi_A$, $U=\Psi_A(\overline{AaA})$ is the smallest open subset of $X$ such that $a\in A(U)$.
\end{remark}

\begin{example}
Let $\{\star\}$ be a one-point topological space, and let $A$ and $B$ be continuous $\{ \star\}$-$C^\ast$-algebras, cf.~Example \ref{ex:onepointXalg}. Then a $\ast$-homo\-morphism $\phi \colon A \to B$ is full if and only if it is $\{\star\}$-full.
\end{example}

\begin{example}
Let $(A, \Phi_A)$ be an $X$-$C^\ast$-algebra. Then $(A, \Phi_A)$ is tight if and only if $(A,\Phi_A)$ is lower semicontinuous, $\Phi_A$ is injective and $\id_A$ is $X$-full.
\end{example}

\begin{example}
Let $\phi \colon A \to B$ be a c.p.~map, let $\I_A \colon \mathcal O(\Prim A) \to \mathcal I(A)$ be the canonical tight action, and let $\Phi_B := \mathcal I(\phi) \circ \I_A$. Then $(A, \I_A)$ and $(B, \Phi_B)$ are $\Prim A$-$C^\ast$-algebras, and $\phi$ is $\Prim A$-full.

Similarly, if $\phi$ is a $\ast$-homomorphism, one can let $\I_B \colon \mathcal O(\Prim B) \to \mathcal I(B)$ be the canonical tight action, and $\Phi_A := \phi^{-1} \circ \I_B$. Then $(A, \Phi_A)$ and $(B, \I_B)$ are $\Prim B$-$C^\ast$-algebras and $\phi$ is $\Prim B$-full.
\end{example}

While the following is not needed in this paper, it shows that there often exist $X$-full c.p.~maps. It is an immediate consequence of Lemma \ref{l:Cuaction}$(b)$ and \cite[Proposition 5.5]{Gabe-O2class}. Results of this form were one of the key ingredients in \cite{Gabe-cplifting}.

\begin{proposition}
Let $A$ and $B$ be separable $X$-$C^\ast$-algebras for which $A$ is lower semicontinuous, and $B$ is nuclear and upper semicontinuous. Then there exists an $X$-full c.p.~map $A\to B$.
\end{proposition}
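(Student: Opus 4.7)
The plan is to combine the two ingredients flagged before the statement: Lemma \ref{l:Cuaction}$(b)$ provides a supremum-preserving target map on ideal lattices, and \cite[Proposition 5.5]{Gabe-O2class} lets us realise any such map as $\mathcal I(\phi)$ for a genuine nuclear c.p.~map $\phi$. The key observation is that $X$-fullness is by definition the equation $\mathcal I(\phi) = \Phi_B \circ \Psi_A$, so the problem reduces to prescribing the induced map on ideals.

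Concretely, first I would use the lower semicontinuity of $(A,\Phi_A)$ to invoke Lemma \ref{l:dualaction} and obtain the dual action $\Psi_A \colon \mathcal I(A) \to \mathcal O(X)$. Next, by Lemma \ref{l:Cuaction}$(b)$, $\Psi_A$ preserves suprema, and by upper semicontinuity of $(B,\Phi_B)$ so does $\Phi_B$; hence the composition
\begin{equation}
\alpha := \Phi_B \circ \Psi_A \colon \mathcal I(A) \to \mathcal I(B)
\end{equation}
is an order-preserving map that preserves arbitrary suprema.

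Now I would apply \cite[Proposition 5.5]{Gabe-O2class}, which, given separable $C^\ast$-algebras $A$ and $B$ with $B$ nuclear and an order- and supremum-preserving map $\mathcal I(A) \to \mathcal I(B)$, produces a nuclear c.p.~map $\phi \colon A \to B$ such that $\mathcal I(\phi) = \alpha$. Both separability hypotheses and nuclearity of $B$ are in force, so this immediately yields a c.p.~map $\phi \colon A \to B$ with $\mathcal I(\phi) = \Phi_B \circ \Psi_A$, i.e.~$\phi$ is $X$-full. (As an aside, $\phi$ is automatically $X$-equivariant by Lemma \ref{l:Cuaction}$(a)$, since $\mathcal I(\phi) = \Phi_B \circ \Psi_A \leq \Phi_B \circ \Psi_A$.)

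There is essentially no obstacle, since all the work has already been done: the only thing to verify is that the supremum-preservation hypothesis needed for \cite[Proposition 5.5]{Gabe-O2class} is exactly what Lemma \ref{l:Cuaction}$(b)$ delivers. The conceptual content of the result is that lower semicontinuity of the domain and upper semicontinuity of the target are precisely the structural assumptions that make the ``maximal'' ideal-lattice map $\Phi_B \circ \Psi_A$ compatible with the sup-preservation requirement of the cited lifting theorem.
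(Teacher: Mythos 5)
Your proof is correct and follows exactly the route the paper indicates: the paper gives no written proof beyond noting that the proposition is an immediate consequence of Lemma \ref{l:Cuaction}$(b)$ and \cite[Proposition 5.5]{Gabe-O2class}, and your write-up simply spells out that composition $\Phi_B \circ \Psi_A$ preserves suprema (so the cited lifting result applies) and that the resulting map is $X$-full by definition.
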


The following is an immediate consequence of Theorem \ref{t:approxdom} and Lemma \ref{l:Cuaction} $(a)$.

\begin{corollary}\label{c:Xfulldom}
Let $A$ and $B$ be $X$-$C^\ast$-algebras with $A$ exact and lower semicontinuous, suppose that $\phi \colon A \to B$ is an $X$-full, nuclear $\ast$-homo\-morphism, and that $\rho\colon A \to B$ is an $X$-equivariant, nuclear c.p.~map. Then $\phi$ approximately dominates $\rho$.
\end{corollary}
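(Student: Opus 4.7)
The plan is to combine the two ingredients cited in the hint essentially verbatim. The $X$-fullness hypothesis translates to an equality $\mathcal I(\phi) = \Phi_B \circ \Psi_A$ by definition, and this upper bound for $\mathcal I$-maps is precisely what an $X$-equivariant map must satisfy by Lemma \ref{l:Cuaction}$(a)$. Once both maps are brought onto the common scale provided by the dual action $\Psi_A \colon \mathcal I(A) \to \mathcal O(X)$, Theorem \ref{t:approxdom} will do the rest.

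More explicitly, first I would observe that $(A, \Phi_A)$ being lower semicontinuous means the dual action $\Psi_A$ exists, so ``$X$-fullness'' of $\phi$ is a well-defined condition and yields $\mathcal I(\phi) = \Phi_B \circ \Psi_A$. Then, since $\rho \colon A \to B$ is $X$-equivariant, Lemma \ref{l:Cuaction}$(a)$ gives $\mathcal I(\rho) \leq \Phi_B \circ \Psi_A$, and chaining these two relations yields $\mathcal I(\rho) \leq \mathcal I(\phi)$ as $\Cu$-morphisms from $\mathcal I(A)$ to $\mathcal I(B)$.

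Finally, the hypotheses of Theorem \ref{t:approxdom} are met: $A$ is exact, $\phi$ is a nuclear $\ast$-homomorphism, $\rho$ is nuclear, and we have just verified $\mathcal I(\rho) \leq \mathcal I(\phi)$. Consequently $\phi$ approximately dominates $\rho$, which completes the proof. There is no real obstacle here; the content of the corollary is exactly that $X$-fullness is the correct ideal-related analogue of fullness needed to invoke Theorem \ref{t:approxdom} against any $X$-equivariant nuclear c.p.~map, and the verification is a one-line chase through the Galois connection $(\Phi_A, \Psi_A)$ set up in Lemma \ref{l:dualaction}.
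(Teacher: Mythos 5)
Your proof is correct and follows exactly the paper's intended argument: the paper states the corollary is an immediate consequence of Theorem~\ref{t:approxdom} and Lemma~\ref{l:Cuaction}$(a)$, and you have spelled out precisely that chain, namely that $X$-fullness gives $\mathcal I(\phi) = \Phi_B \circ \Psi_A$, $X$-equivariance gives $\mathcal I(\rho) \leq \Phi_B \circ \Psi_A$, hence $\mathcal I(\rho) \leq \mathcal I(\phi)$, and Theorem~\ref{t:approxdom} concludes. (One negligible imprecision: $\mathcal I(\rho)$ for a c.p.\ map need not be a $\Cu$-morphism, only suprema-preserving, but only the pointwise inequality is used.)
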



\subsection{Tensor products}

Let $X$ be a topological space, let $A$ be an $X$-$C^\ast$-algebra, and let $D$ be a $C^\ast$-algebra. The tensor products $A \otimes D$ (spatial tensor product) and $A \otimes_{\max{}} D$ (maximal tensor product) have canonical actions of $X$ given by
\begin{equation}
(A\otimes D)(U) = A(U) \otimes D, \qquad (A \otimes_{\max{}} D) (U) = A(U) \otimes_{\max{}} D
\end{equation}
for $U\in \mathcal O(X)$.

\begin{remark}
Unless otherwise stated, if $A$ is an $X$-$C^\ast$-algebra and $D$ is a $C^\ast$-algebra, then $A\otimes D$ and $A\otimes_{\max{}} D$ are implicitly assumed to be $X$-$C^\ast$-algebras equipped with the action of $X$ defined above.
\end{remark}

If $B$ is an $X$-$C^\ast$-algebra and $Y$ is a locally compact, Hausdorff space, then $C_0(Y,B)$ is an $X$-$C^\ast$-algebra via the canonical identification $C_0(Y, B) \cong C_0(Y) \otimes B$.

\begin{remark}[Homotopies]
The above construction allows one to consider \emph{homotopies} of $X$-equivariant $\ast$-homomorphisms: Two $X$-equivariant $\ast$-homomorphisms $\phi_0,\phi_1 \colon A \to B$ are \emph{homotopic} if (by definition) there is an $X$-equivariant  $\ast$-homo\-morphism $\Phi \colon A \to C([0,1], B)$ such that $\phi_0 = \ev_0 \circ \Phi$ and $\phi_1 = \ev_1 \circ \Phi$.

In this sense, the cone $C_0((0,1], B)$ of an $X$-$C^\ast$-algebra $B$ is always contractible (i.e.~homotopic to zero) as an $X$-$C^\ast$-algebra. 

However, there might be other actions of $X$ on $C_0((0,1], B)$ such that the corresponding $X$-$C^\ast$-algebra is not contractible. For instance, $C_0((0,1])$ with the canonical tight action of $(0,1]$ is not contractible as $(0,1]$-$C^\ast$-algebras.
\end{remark}


\subsection{$X$-nuclear maps}

Just as in the classical case of absorption from Section \ref{s:absrep}, one needs nuclearity in order to properly study absorption. In this subsection the focus is on this ideal-related version of nuclearity.

\begin{definition}\label{d:Xnucabs}
Let $A$ and $B$ be $X$-$C^\ast$-algebras.
A c.p.~map $\eta\colon A \to B$ is called \emph{$X$-nuclear} (or $X$-residually nuclear, or $\Phi_A$-$\Phi_B$-residually nuclear) if $\eta$ is $X$-equivariant and the induced map
\begin{equation}
[\eta]_U \colon A/A(U) \to B/B(U)
\end{equation}
is nuclear for each $U\in \mathcal O(X)$.
\end{definition}

As quotients of nuclear $C^\ast$-algebras are nuclear \cite[Corollary 9.4.4]{BrownOzawa-book-approx}, it follows that if $A$ or $B$ is a nuclear $X$-$C^\ast$-algebra then any $X$-equivariant map $\phi \colon A \to B$ is $X$-nuclear.

In the case where $A$ is exact, a map $\phi \colon A \to B$ being $X$-nuclear is equivalent (up to minor assumptions on the actions) to being nuclear and $X$-equivariant, as witnessed by the following lemma. 

\begin{lemma}\label{l:nucquotient}
Let $A$ and $B$ be $X$-$C^\ast$-algebras for which $A$ is exact, and suppose that $B(\emptyset) =0$. Then a c.p.~map $\eta \colon A \to B$ is $X$-nuclear if and only if it is nuclear and $X$-equivariant.
\end{lemma}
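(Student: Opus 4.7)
The plan is to split the equivalence into the two obvious directions, with the ``only if'' direction being essentially a triviality and the ``if'' direction being the substantive one that uses exactness of $A$ in a crucial way.

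For the ``only if'' direction, I would take $U = \emptyset$. Since $\eta$ is $X$-equivariant one has $\eta(A(\emptyset)) \subseteq B(\emptyset) = 0$, so $\eta$ factors through the quotient $*$-homomorphism $A \to A/A(\emptyset)$ via $[\eta]_\emptyset$, and since a nuclear map composed with a $*$-homomorphism is nuclear (Observation \ref{o:nuccomp}), $\eta$ itself is nuclear. $X$-equivariance is part of the definition of being $X$-nuclear.

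For the ``if'' direction, fix $U \in \mathcal O(X)$ and use the tensor product characterisation of nuclearity (Proposition \ref{p:nuctensor}): it suffices to show that for every $C^\ast$-algebra $C$, the map $[\eta]_U \otimes \id_C \colon (A/A(U)) \otimes_{\max} C \to (B/B(U)) \otimes_{\max} C$ factors through the spatial tensor product $(A/A(U)) \otimes C$. The starting point is a factorisation $\eta \otimes \id_C = \psi \circ \pi_C$, where $\pi_C \colon A \otimes_{\max} C \to A \otimes C$ is the canonical $*$-epimorphism and $\psi \colon A \otimes C \to B \otimes_{\max} C$ is a c.p.~map, which exists by Proposition \ref{p:nuctensor} applied to the nuclear map $\eta$.

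The main obstacle, and the place where exactness of $A$ is used, is to pass to the quotients on both sides. Exactness of $A$ gives $(A/A(U)) \otimes C \cong (A \otimes C)/(A(U) \otimes C)$. Since $\pi_C$ maps $A(U) \otimes_{\max} C$ onto $A(U) \otimes C$, $X$-equivariance of $\eta$ gives
\[
\psi(A(U) \otimes C) = (\eta \otimes \id_C)(A(U) \otimes_{\max} C) \subseteq B(U) \otimes_{\max} C.
\]
Composing $\psi$ with the quotient $B \otimes_{\max} C \to (B/B(U)) \otimes_{\max} C$ therefore descends to a c.p.~map $\bar\psi \colon (A/A(U)) \otimes C \to (B/B(U)) \otimes_{\max} C$. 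A routine check on elementary tensors, combined with continuity and density, shows that $\bar\psi$ composed with the canonical $*$-epimorphism $(A/A(U)) \otimes_{\max} C \to (A/A(U)) \otimes C$ equals $[\eta]_U \otimes \id_C$, which by Proposition \ref{p:nuctensor} completes the proof that $[\eta]_U$ is nuclear.
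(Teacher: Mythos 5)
Your ``only if'' direction is correct and identical to the paper's. For the ``if'' direction you take a genuinely different route: the paper applies the composition $A \xrightarrow{\eta} B \twoheadrightarrow B/B(U)$ (nuclear, with $A(U)$ in its kernel) and then cites \cite[Proposition 3.2]{Dadarlat-qdmorphisms} to descend nuclearity to $A/A(U)\to B/B(U)$, whereas you re-derive that result from scratch via the tensor-product characterisation of nuclearity (Proposition \ref{p:nuctensor}). Both routes are reasonable and of comparable length; the structure of your factorisation through the spatial tensor product and the descent to quotients is sound.

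However, there is a genuine gap at the one step where exactness actually matters. You assert ``Exactness of $A$ gives $(A/A(U)) \otimes C \cong (A \otimes C)/(A(U) \otimes C)$.'' This is not what the definition of exactness says. Exactness of $A$ (as recalled in Remark \ref{r:nucemb}) means that the functor $A \otimes -$ takes short exact sequences of \emph{other} $C^\ast$-algebras to short exact sequences. What you need is that $- \otimes C$ preserves the short exact sequence $0 \to A(U) \to A \to A/A(U) \to 0$, i.e.\ that the kernel of $A\otimes C \to (A/A(U))\otimes C$ is no larger than $A(U)\otimes C$; after applying the flip, this is exactness of $C$ (which you are not given) applied to an ideal of $A$. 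The identity you want \emph{is} true when $A$ is exact, but this is a nontrivial theorem, proved via Kirchberg's deep result that exact $C^\ast$-algebras are locally reflexive together with the fact that local reflexivity is inherited by quotients (see \cite[Chapter~9]{BrownOzawa-book-approx}) --- precisely the input the paper records in a footnote when it cites \cite[Proposition 3.2]{Dadarlat-qdmorphisms}. As written your proof gives the impression that this step is definitional, when it is in fact where all the depth of the lemma lives. The fix is to cite the relevant result explicitly rather than attributing the isomorphism directly to ``exactness of $A$''.
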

\begin{proof}
If $\eta$ is $X$-nuclear it is $X$-equivariant by definition. Moreover, as $\eta(A(\emptyset)) \subseteq B(\emptyset) = 0$, it follows that $\eta$ is the composition $A \to A/A(\emptyset) \xrightarrow{[\eta]_\emptyset} B/B(\emptyset) = B$.
As $[\eta]_\emptyset$ is nuclear, so is $\eta$.

For the converse, suppose that $\eta$ is nuclear and $X$-equivariant, and let $U\in \mathcal O(X)$. The composition  $A \xrightarrow \eta B \twoheadrightarrow B/B(U)$ is nuclear by nuclearity of $\eta$, and $A(U)$ is contained in its kernel since $\eta$ is $X$-equivariant. Hence by \cite[Proposition 3.2]{Dadarlat-qdmorphisms}\footnote{Which uses the deep result that exact $C^\ast$-algebras are locally reflexive, and that local reflexivity passes to quotients, see \cite[Section 9]{BrownOzawa-book-approx}.} it follows that the induced map $A/A(U) \to B/B(U)$ is nuclear. So $\eta$ is $X$-nuclear.
\end{proof}

The following shows that the criteria $B(\emptyset) = 0$ in the above lemma is always satisfied in the cases we are interested in.

\begin{remark}\label{r:Bempty}
Let $(A,\Phi_A)$ and $(B,\Phi_B)$ be $X$-$C^\ast$-algebras with $(A,\Phi_A)$ lower semicontinuous and suppose that there exists an $X$-full c.p.~map $\phi \colon A \to B$. Then $\Phi_B(\emptyset) = 0$.

In fact, let $\Psi_A\colon \mathcal I(A) \to \mathcal O(X)$ be the dual action of $\Phi_A$. As $0 \subseteq \Phi_A(\emptyset)$ it follows from \eqref{eq:dualGalois} that $\Psi_A(0) = \emptyset$. By $X$-fullness one has $\mathcal I(\phi) = \Phi_B \circ \Psi_A$, so
\begin{equation}
\Phi_B(\emptyset) = \Psi_B(\Psi_A(0)) = \mathcal I(\phi)(0) = 0.
\end{equation}
\end{remark}

The following lemma will be useful.

\begin{lemma}\label{l:XnucC(Y)}
Let $A$ and $B$ be $X$-$C^\ast$-algebras, and let $Y$ be a locally compact, Hausdorff space. Then a c.p.~map $\rho \colon A \to C_0(Y, B)$ is $X$-equivariant (respectively $X$-nuclear) if and only if $\ev_y \circ \rho \colon A \to B$ is $X$-equivariant (respectively $X$-nuclear) for every $y\in Y$.
\end{lemma}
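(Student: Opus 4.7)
The key observation is that $C_0(Y,B)(U)=C_0(Y)\otimes B(U)=C_0(Y,B(U))$ for every $U\in\mathcal O(X)$, and that the quotient map $B\twoheadrightarrow B/B(U)$ induces a short exact sequence
\begin{equation*}
0\to C_0(Y,B(U))\to C_0(Y,B)\to C_0(Y,B/B(U))\to 0,
\end{equation*}
which follows from nuclearity (hence exactness) of $C_0(Y)$ applied to the identification $C_0(Y,B)=C_0(Y)\otimes B$. In particular, there is a canonical identification
\begin{equation*}
[\rho]_U\colon A/A(U)\to C_0(Y,B)/C_0(Y,B(U))\cong C_0(Y,B/B(U)),
\end{equation*}
and for each $y\in Y$ one has $\ev_y\circ[\rho]_U=[\ev_y\circ\rho]_U$.

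The $X$-equivariance statement is now routine: $\rho(A(U))\subseteq C_0(Y,B(U))$ is equivalent to $\rho(a)(y)\in B(U)$ for every $a\in A(U)$ and $y\in Y$, which is the same as $(\ev_y\circ\rho)(A(U))\subseteq B(U)$ for every $y\in Y$. The $X$-nuclearity statement therefore reduces to the following general claim: a c.p.~map $\psi\colon C\to C_0(Y,D)$ is nuclear if and only if $\ev_y\circ\psi$ is nuclear for every $y\in Y$. One applies this claim with $C=A/A(U)$, $D=B/B(U)$ and $\psi=[\rho]_U$.

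The ``only if'' direction of this claim is immediate, since $\ev_y$ is a $\ast$-homomorphism and compositions of c.p.~maps with nuclear maps are nuclear (Observation \ref{o:nuccomp}). For the ``if'' direction I would proceed by a partition of unity argument. Given a finite set $\mathcal F\subset C$ and $\epsilon>0$, first use that each $\psi(c)\in C_0(Y,D)$ vanishes at infinity to find a compact set $K\subseteq Y$ with $\|\psi(c)(y)\|<\epsilon$ for $y\notin K$ and $c\in\mathcal F$. Using that $y\mapsto\psi(c)(y)$ is continuous for each $c\in\mathcal F$, cover $K$ by finitely many open sets $V_1,\dots,V_n$ with points $y_i\in V_i$ such that $\|\psi(c)(y)-\psi(c)(y_i)\|<\epsilon$ for $y\in V_i$ and $c\in\mathcal F$. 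Choose a partition of unity $f_1,\dots,f_n\in C_c(Y)$ subordinate to $\{V_i\}$ with $\sum_i f_i\leq 1$ and $\sum_i f_i=1$ on $K$.

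For each $i$, nuclearity of $\ev_{y_i}\circ\psi$ provides c.p.~maps $\sigma_i\colon C\to M_{k_i}(\mathbb C)$ and $\eta_i\colon M_{k_i}(\mathbb C)\to D$ with $\|\eta_i(\sigma_i(c))-\psi(c)(y_i)\|<\epsilon$ for $c\in\mathcal F$. Define a c.p.~map
\begin{equation*}
\widetilde\psi\colon C\to C_0(Y,D),\qquad\widetilde\psi(c)=\sum_{i=1}^n f_i\otimes\eta_i(\sigma_i(c)),
\end{equation*}
which factors through $M_{k_1}(\mathbb C)\oplus\cdots\oplus M_{k_n}(\mathbb C)$. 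Evaluating at $y\in K$ and using $\sum_i f_i(y)=1$ gives $\|\widetilde\psi(c)(y)-\psi(c)(y)\|<2\epsilon$ for $c\in\mathcal F$, while for $y\notin K$ both $\widetilde\psi(c)(y)$ and $\psi(c)(y)$ have norm $<\epsilon$ (up to a factor bounded independently of $n$). Hence $\widetilde\psi$ approximates $\psi$ point-norm on $\mathcal F$ within a tolerance that can be made arbitrarily small, and $\psi$ is therefore nuclear. The only mildly delicate point is controlling the approximation uniformly in $y$ via the partition of unity; everything else is bookkeeping.
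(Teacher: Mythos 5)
Your proof is correct, but takes a genuinely different route from the paper's. The reduction of the $X$-nuclear case to a statement about plain nuclearity of maps into $C_0(Y,D)$, via the identification $C_0(Y,B)/C_0(Y,B(U))\cong C_0(Y,B/B(U))$ (valid because $C_0(Y)$ is nuclear, hence exact), matches the paper's implicit reduction. Where you diverge is in the proof of the core claim that $\psi\colon C\to C_0(Y,D)$ is nuclear once each $\ev_y\circ\psi$ is: you give an explicit, quantitative argument gluing local matrix factorisations with a partition of unity subordinate to a finite cover of a compact set outside of which $\psi(\mathcal F)$ is uniformly small. The paper instead invokes the tensor-product characterisation of nuclearity (Proposition \ref{p:nuctensor}): fixing $d\in A\otimes_{\max}C$ vanishing in $A\otimes C$, the canonical identification $C_0(Y,B)\otimes_{\max}C\cong C_0(Y,B\otimes_{\max}C)$ (again using nuclearity of $C_0(Y)$) reduces the question of whether $(\rho\otimes_{\max}\id_C)(d)=0$ to a pointwise check in $y$, which is immediate from the pointwise nuclearity. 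Both arguments are valid and rely on the same underlying fact (nuclearity of commutative $C^\ast$-algebras); the paper's is shorter and softer once Proposition \ref{p:nuctensor} is in place, while yours is more hands-on and self-contained, at the price of the bookkeeping needed to make the partition-of-unity estimates uniform near the boundary of $K$. Your handling of that bookkeeping (controlling $\|\eta_i(\sigma_i(c))\|$ for indices whose bump functions overlap the exterior of $K$ via the modulus of continuity on $V_i$) is a step worth writing out in full rather than waving off, but the idea is sound.
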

\begin{proof}
We first show that a map $\rho \colon A \to C_0(Y, B)$ is nuclear if and only if $\ev_y \circ \rho$ is nuclear for every $y\in Y$.
If $\rho$ is nuclear then clearly $\ev_y \circ \rho$ is nuclear for every $y\in Y$ (see Observation \ref{o:nuccomp}). To prove the converse, suppose that each $\ev_y \circ \rho$ is nuclear. We will use the tensor product characterisation of nuclear maps, Proposition \ref{p:nuctensor}, to prove that $\rho$ is nuclear, so fix a $C^\ast$-algebra $C$ and an element $d\in A \otimes_{\max{}} C$ which vanishes in $A\otimes C$. We should check that $(\rho \otimes_{\max{}} \id_C)(d) =0$ in $C_0(Y, B) \otimes_{\max{}} C$. By identifying $C_0(Y, B)\otimes_{\max{}} C$ and $C_0(Y, B\otimes_{\max{}} C)$ in the canonical way, it suffices to check that $\ev_y((\rho \otimes_{\max{}} \id_C)(d)) =0$ for every $y\in Y$. However, we have
\begin{equation}
\ev_y((\rho \otimes_{\max{}} \id_C)(d)) = ((\ev_y \circ \rho) \otimes_{\max{}} \id_C)(d) =0
\end{equation}
for all $y\in Y$ since $\ev_y \circ \rho$ is nuclear (Proposition \ref{p:nuctensor}), and hence $\rho$ is nuclear.

Now, the $X$-equivariant case is obvious, and the $X$-nuclear case is a consequence of what was proved above.
\end{proof}

\begin{remark}
Given Lemma \ref{l:nucquotient}, the reader may be wondering why $X$-nuclearity is not simply defined as an $X$-equivariant map which is nuclear, since this is equivalent when the domain is exact which is all we will eventually care about. The reason is that I do not believe that this is the ``correct'' definition when the domain is non-exact.

In fact, the role of nuclearity and $X$-nuclearity is essentially about determining approximate domination of maps, cf.~Proposition \ref{p:fulldom} and Theorem \ref{t:approxdom}. While Theorem \ref{t:approxdom} determines approximate domination of nuclear maps with exact domain, Proposition \ref{p:fulldom} had the advantage that any (not necessarily nuclear) full $\ast$-homomorphism out of a (not necessarily exact) $C^\ast$-algebra approximately dominates any nuclear map. The same holds in the $X$-equivariant case: any $X$-full $\ast$-homomorphism approximately dominates any $X$-nuclear map. However, my current proof of this is quite involved and I have therefore chosen not to include it in the paper as it is not needed.
\end{remark}


\section{Absorbing representations revisited}

In this section the theory of absorbing representations from Section \ref{s:absrep} will be generalised to $X$-$C^\ast$-algebras. It will be convenient to work with Hilbert $C^\ast$-modules instead of just $C^\ast$-algebras and their multiplier algebras, as Hilbert modules play an important role in $KK$-theory.

Throughout this section all Hilbert modules are assumed to be \emph{right} Hilbert modules, and the inner product is conjugate linear in the first variable and linear in the second variable, in contrast to what is customary for Hilbert spaces.
I refer the reader to \cite{Lance-book-Hilbertmodules} for the basics of Hilbert modules.

For a Hilbert $B$-module $E$, $\mathcal B_B(E)$ denotes the $C^\ast$-algebra of adjointable operators on $E$, and $\mathcal K_B(E)$ denotes the ``compact'' operators on $E$, i.e.~the closed linear span of all rank one operators. Often $\mathcal B(E)$ and $\mathcal K(E)$ will be written instead.

\begin{definition}\label{d:weaklyXnuc}
Let $A$ and $B$ be $X$-$C^\ast$-algebras, and let $E$ be a right Hilbert $B$-module.
A $\ast$-homomorphism $\phi \colon A \to \mathcal B(E)$ is called \emph{weakly $X$-equivariant} (respectively \emph{weakly $X$-nuclear}) if the c.p.~map 
\begin{equation}\label{eq:weaknucHilbert}
A \ni a \mapsto \langle \xi, \phi(a) \xi \rangle_E \in B
\end{equation}
is $X$-equivariant (respectively $X$-nuclear) for every $\xi\in E$.
\end{definition}

\begin{remark}
If $E = B$ then one has $\mathcal B(E) = \multialg{B}$. Hence a $\ast$-homomorphism $\phi \colon A \to \multialg{B}$ is weakly $X$-equivariant (respectively weakly $X$-nuclear) if and only if $b^\ast \phi(-) b \colon A \to B$ is $X$-equivariant (respectively $X$-nuclear) for all $b\in B$.
\end{remark}

\begin{remark}
A $\ast$-homomorphism $\phi \colon A \to \mathcal B(E)$ is weakly $X$-equivariant if and only if $\phi(A(U)) E \subseteq E B(U)$ for all $U\in \mathcal O(X)$. Hence the above definition coincides with what Kirchberg calls ``$X$-equivariant'' in \cite[Definition 4.1]{Kirchberg-non-simple}.

The ``if'' is trivial, and for ``only if'' suppose $\phi$ is weakly $X$-equivariant. Let $a\in A(U)$ and $\xi \in E$. By $X$-equivariance of $\langle\xi, \phi(-)\xi\rangle_E$, 
\begin{equation}
\langle \phi(a) \xi , \phi(a) \xi\rangle_E = \langle \xi, \phi(a^\ast a)\xi\rangle_E \in B(U).
\end{equation}
Let $(e_\lambda)$ be an approximate identity in $B(U)$. Then
\begin{equation}
\| \phi(a) \xi - \phi(a)\xi e_{\lambda}\|^2_E = (1_{\widetilde B} - e_\lambda) \langle \phi(a) \xi, \phi(a) \xi\rangle_E (1_{\widetilde B} - e_\lambda) \to 0
\end{equation}
so $\phi(a)\xi = \lim_{\lambda} \phi(a) \xi e_\lambda \in \overline{EB(U)}$. By Cohen's factoristion theorem (see \cite[Theorem 4.6.4]{BrownOzawa-book-approx}), $\overline{EB(U)} = EB(U)$.
\end{remark}

\begin{remark}\label{r:Skandalis}
In \cite{Skandalis-KKnuc}, Skandalis considers $\ast$-homo\-morphisms $\phi \colon A \to \mathcal B(E)$ such that
\begin{equation}\label{eq:weaknucHilbertS}
A \ni a \mapsto (\langle \xi_i, \phi(a) \xi_j\rangle_E)_{i,j=1}^n \in M_n(B) 
\end{equation}
is nuclear for all $n\in \mathbb N$ and $\xi_1,\dots, \xi_n \in E$. By Lemma \ref{l:nucmatrix} below, this is equivalent to the map \eqref{eq:weaknucHilbert} being nuclear for all $\xi\in E$, i.e.~one only needs to verify the case $n=1$. So Skandalis' definition is a special case of the Definition \ref{d:weaklyXnuc} in the case where $X= \{\star\}$ is a one-point space, and $A$ and $B$ are continuous $\{\star\}$-$C^\ast$-algebras, see Example \ref{ex:onepointXalg}. This implies that $KK_\nuc(\{\star\}; A, B) = KK_\nuc(A,B)$ ($KK_\nuc(X)$ is defined in the following section).
\end{remark}

\begin{lemma}\label{l:nucmatrix}
Let $\rho = (\rho_{i,j})_{i,j=1}^n \colon A \to M_n(B)$ be a c.p.~map. Then $\rho$ is nuclear if and only if $\rho_{i,i}\colon A \to B$ is nuclear for each $i=1,\dots, n$.
\end{lemma}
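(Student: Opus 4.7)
The ``only if'' direction is immediate from Observation \ref{o:nuccomp}: each $\rho_{i,i}$ factors as $\pi_i \circ \rho$, where $\pi_i \colon M_n(B) \to B$, $\pi_i((b_{k,l})_{k,l}) = b_{i,i}$, is completely positive. So if $\rho$ is nuclear, so is each $\rho_{i,i}$.

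For the ``if'' direction, I would use the tensor product characterisation of nuclearity from Proposition \ref{p:nuctensor}. Fix an arbitrary $C^\ast$-algebra $C$; the goal is to show that the c.p.~map
\begin{equation*}
\rho \otimes_{\max{}} \id_C \colon A \otimes_{\max{}} C \to M_n(B) \otimes_{\max{}} C \cong M_n(B \otimes_{\max{}} C)
\end{equation*}
vanishes on $J := \ker(A \otimes_{\max{}} C \twoheadrightarrow A \otimes C)$, which is a closed, two-sided ideal in $A \otimes_{\max{}} C$. The entries of this matrix-valued c.p.~map are $\rho_{i,j} \otimes_{\max{}} \id_C$, and by nuclearity of each diagonal entry $\rho_{i,i}$, another application of Proposition \ref{p:nuctensor} gives $(\rho_{i,i} \otimes_{\max{}} \id_C)(J) = 0$ for every $i$.

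The key step is to promote this vanishing of the diagonal entries to vanishing of the whole matrix. For any positive $d \in J$, the element $(\rho \otimes_{\max{}} \id_C)(d)$ is a positive matrix in $M_n(B \otimes_{\max{}} C)$ whose diagonal entries are $(\rho_{i,i} \otimes_{\max{}} \id_C)(d) = 0$. A positive matrix in $M_n$ of a $C^\ast$-algebra with zero diagonal must be zero (its $(i,j)$-entry $x$ satisfies $x^\ast x \leq \|\text{diag entry}\| \cdot 0 = 0$ by the Cauchy--Schwarz inequality applied to the positive $2\times 2$ submatrix), so $(\rho \otimes_{\max{}} \id_C)(d) = 0$. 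Since $J$ is a closed, two-sided ideal, it is the closed linear span of its positive cone $J_+$, so by linearity and continuity $(\rho \otimes_{\max{}} \id_C)(J) = 0$. Hence $\rho \otimes_{\max{}} \id_C$ factors through $A \otimes C$, and Proposition \ref{p:nuctensor} yields nuclearity of $\rho$.

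There is no serious obstacle here; the only subtlety is the elementary linear algebraic fact that a positive matrix over a $C^\ast$-algebra with zero diagonal vanishes, together with the use of positive elements to span a closed two-sided ideal.
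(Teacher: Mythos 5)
Your proof is correct and takes essentially the same approach as the paper: both directions use Proposition \ref{p:nuctensor}, and the ``if'' direction reduces to the observation that a positive matrix over a $C^\ast$-algebra with vanishing diagonal is zero. The paper simply states ``only if'' is obvious and works directly with a positive element vanishing in $A\otimes C$, while you spell out the compression maps $\pi_i$ and the reduction of the kernel to its positive cone — but these are the same argument.
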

\begin{proof}
``Only if'' is obvious. To prove ``if'', assume $\rho_{i,i}$ is nuclear for $i=1,\dots, n$. To show that $\rho$ is nuclear we will use the tensor product characterisation of nuclear maps, Proposition \ref{p:nuctensor}, so let a $C^\ast$-algebra $C$ be given, and $x\in A\otimes_{\max{}} C$ be a positive element vanishing in $A \otimes C$. We should show that $(\rho \otimes_{\max{}} \id_C)(x) =0$ in $M_n(B) \otimes_{\max{}} C = M_n(B \otimes_{\max{}} C)$. As $(\rho \otimes_{\max{}} \id_C)(x)$ is positive, it is zero if all its diagonal entries are zero (when considered as an $n\times n$-matrix). These entries are exactly $(\rho_{i,i} \otimes_{\max{}} \id_C)(x)$ for $i=1,\dots, n$ and these vanish as each $\rho_{i,i}$ is nuclear (use Proposition \ref{p:nuctensor} again). Hence $\rho$ is nuclear.
\end{proof}

A useful tool for showing that a representation $\phi \colon A \to \mathcal B(E)$ is weakly $X$-nuclear will be Lemma \ref{l:densespan}, which shows that one only has to check that \eqref{eq:weaknucHilbert} is $X$-nuclear for $\xi$ that span a dense subset of $E$. 
This will play an important role when showing that the Kasparov product is well-defined in the $X$-equivariant and $X$-nuclear setting. To prove this, the following lemma is needed.

\begin{lemma}\label{l:nucsum}
Let $\phi, \psi \colon A \to B$ be c.p.~maps. If $\phi + \psi$ is nuclear, then $\phi$ and $\psi$ are nuclear.

In particular, if $\phi+\psi$ is $X$-equivariant (respectively $X$-nuclear), then $\phi$ and $\psi$ are $X$-equivariant (respectively $X$-nuclear).
\end{lemma}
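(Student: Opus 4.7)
My plan is to use the tensor product characterisation of nuclearity from Proposition \ref{p:nuctensor} as the main tool, exploiting the elementary but crucial fact that $0 \leq b \leq c$ with $c$ in a closed two-sided ideal $I$ forces $b \in I$ (via the quotient map $A \to A/I$: $\pi(c)=0$ and $0 \leq \pi(b) \leq \pi(c)=0$).

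For the first assertion, I would fix an arbitrary $C^\ast$-algebra $C$ and let $J := \ker(A \otimes_{\max{}} C \twoheadrightarrow A \otimes C)$. Since $\phi + \psi$ is nuclear, Proposition \ref{p:nuctensor} gives $((\phi + \psi) \otimes_{\max{}} \id_C)(J) = 0$. For any positive $x \in J$ the two positive elements $(\phi \otimes_{\max{}} \id_C)(x)$ and $(\psi \otimes_{\max{}} \id_C)(x)$ sum to zero, hence both vanish. Because $J$ is a (two-sided, closed) ideal of $A \otimes_{\max{}} C$ and therefore spanned by its positive elements (every element being a $\mathbb{C}$-linear combination of four positive elements of $J$), it follows that $(\phi \otimes_{\max{}} \id_C)(J) = 0$, and analogously for $\psi$. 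Applying Proposition \ref{p:nuctensor} in the other direction yields nuclearity of both $\phi$ and $\psi$.

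For the ``in particular'' part, I treat $X$-equivariance and $X$-nuclearity in turn. Suppose $\phi + \psi$ is $X$-equivariant and fix $U \in \mathcal O(X)$. For positive $a \in A(U)$, the positive elements $\phi(a), \psi(a)$ satisfy $0 \leq \phi(a) \leq \phi(a) + \psi(a) = (\phi+\psi)(a) \in B(U)$, so the hereditary fact recalled above forces $\phi(a) \in B(U)$; since $A(U)$ is the linear span of its positive elements, $\phi(A(U)) \subseteq B(U)$, and symmetrically for $\psi$. Now if $\phi + \psi$ is $X$-nuclear, then $\phi$ and $\psi$ are $X$-equivariant by what was just shown, hence induce well-defined c.p.~maps $[\phi]_U, [\psi]_U \colon A/A(U) \to B/B(U)$ whose sum is the nuclear map $[\phi+\psi]_U$. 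Applying the first assertion of the lemma to this sum concludes that each $[\phi]_U$ and $[\psi]_U$ is nuclear, establishing $X$-nuclearity of $\phi$ and $\psi$.

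No step here looks genuinely difficult; the only point one must be a little careful about is ensuring that one really can span the relevant ideal (respectively $A(U)$) by positive elements inside it, so that vanishing/containment on positives transfers to vanishing/containment on the whole ideal.
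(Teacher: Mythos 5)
Your proposal is correct and follows essentially the same route as the paper: the tensor-product characterisation of nuclearity combined with the positivity observation $0\leq (\phi\otimes \mathrm{id})(x) \leq ((\phi+\psi)\otimes\mathrm{id})(x)=0$, then the hereditary-ideal argument for $X$-equivariance and quotient maps for $X$-nuclearity. The only cosmetic difference is that you make explicit the spanning-by-positives step that the paper leaves implicit.
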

\begin{proof}
We use the tensor product characterisation of nuclear maps, Proposition \ref{p:nuctensor}, so let $D$ be any $C^\ast$-algebra, and $y \in A \otimes_{\max{}} D$ be a positive element such that $y$ vanishes in $A \otimes D$. Nuclearity entails $((\phi + \psi)\otimes \id_D) (y) =0$, so 
\begin{equation}
0 \leq (\phi \otimes \id_D) (y) \leq (\phi \otimes \id_D) (y) + ( \psi \otimes \id_D) (y) = ((\phi + \psi)\otimes \id_D )(y) = 0,
\end{equation}
so $(\phi \otimes \id_D)(y)=0$. Hence $\phi$, and similarly $\psi$, is nuclear.

``In particular'': If $\phi+ \psi$ is $X$-equivariant, let $U\in \mathcal O(X)$, and $a\in A(U)$ be positive. We have $0 \leq \phi(a) \leq \phi(a) + \psi(a) \in B(U)$, and since $B(U)$ is hereditary, $\phi(a) \in B(U)$. Hence $\phi$, and similarly $\psi$, is $X$-equivariant.

If $\phi + \psi$ is $X$-nuclear, then $[\phi + \psi]_U = [\phi]_U + [\psi]_U$ is nuclear for each $U\in \mathcal O(X)$. Hence $[\phi]_U$ and $[\psi]_U$ are nuclear by what we proved above, so $\phi$ and $\psi$ are $X$-nuclear.
\end{proof}

\begin{notation}\label{n:Xcone}
Recall that $\CP(A,B)$ denotes the convex cone of completely positive maps from $A$ to $B$ equipped with the point-norm topology. If $A$ and $B$ are $X$-$C^\ast$-algebras we define
\begin{equation}
\CP(X; A, B) := \{ \phi \in \CP(A,B) : \phi \textrm{ is $X$-equivariant}\}
\end{equation}
 and similarly 
\begin{equation}
\CP_\nuc(X;A, B) := \{ \phi \in \CP(A,B) : \phi \textrm{ is $X$-nuclear}\}.
\end{equation}
These are both easily seen to be point-norm closed subcones of $\CP(A,B)$.
\end{notation}

The following observation will help simplify some proofs, as one will only have to prove things once.

\begin{observation}
A $\ast$-homomorphism $A \to \mathcal B(E)$  is weakly $X$-equivariant (respectively weakly $X$-nuclear) exactly when $\langle \xi , \phi(-) \xi \rangle_E \in \mathscr C$ for all  $\xi \in E$, where $\mathscr C = \CP(X; A, B)$ (respectively $\mathscr C = \CP_\nuc(X; A, B)$).
\end{observation}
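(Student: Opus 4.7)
My plan is brief, because this observation is essentially a tautology that serves a notational purpose rather than a substantive claim requiring argument. By Definition \ref{d:weaklyXnuc}, the $\ast$-homomorphism $\phi\colon A\to\mathcal B(E)$ is weakly $X$-equivariant (respectively weakly $X$-nuclear) precisely when the c.p.~map
\begin{equation*}
\rho_\xi\colon A\to B,\qquad \rho_\xi(a):=\langle\xi,\phi(a)\xi\rangle_E,
\end{equation*}
is $X$-equivariant (respectively $X$-nuclear) for every $\xi\in E$. By Notation \ref{n:Xcone}, the sets $\CP(X;A,B)$ and $\CP_\nuc(X;A,B)$ are \emph{defined} to be the subcones of $\CP(A,B)$ consisting, respectively, of the $X$-equivariant and the $X$-nuclear c.p.~maps $A\to B$. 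Therefore the condition that $\rho_\xi\in\CP(X;A,B)$ for all $\xi\in E$ (respectively $\rho_\xi\in\CP_\nuc(X;A,B)$ for all $\xi\in E$) is simply a rewriting of the defining condition, and no further argument is required.

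The step I would emphasise is not a proof step but a motivational one: the value of packaging both cases this way is that subsequent results about weakly $X$-equivariant and weakly $X$-nuclear representations (most notably the forthcoming Lemma \ref{l:densespan}, which lets one test these properties on a set spanning a dense subspace of $E$) can then be stated and proved \emph{uniformly} for any point-norm closed convex subcone $\mathscr C\subseteq\CP(A,B)$. Taking $\mathscr C=\CP(X;A,B)$ or $\mathscr C=\CP_\nuc(X;A,B)$ in such statements simultaneously recovers both the $X$-equivariant and $X$-nuclear versions, avoiding duplicated arguments. The only thing worth verifying for this downstream use is that $\CP(X;A,B)$ and $\CP_\nuc(X;A,B)$ really are point-norm closed convex cones of c.p.~maps, which was already recorded in Notation \ref{n:Xcone}. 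No obstacle arises.
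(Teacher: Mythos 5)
Your proposal is correct and matches the paper's (implicit) reasoning exactly: the Observation is a direct unwinding of Definition \ref{d:weaklyXnuc} together with Notation \ref{n:Xcone}, and the paper accordingly states it without proof. Your remark about its role in enabling uniform treatment of the two cases in Lemma \ref{l:densespan} and its corollaries is also precisely the paper's intent.
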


\begin{remark}\label{r:Xnuccone}
Lemma \ref{l:nucsum} means that $\CP(X;A ,B)$ and $\CP_\nuc(X; A, B)$ are \emph{hereditary} in the sense that if $\phi, \psi \in \CP(A,B)$ satisfy $\phi + \psi \in \mathscr C$, then $\phi, \psi \in \mathscr C$ where $\mathscr C$ denotes either $\CP(X; A, B)$ or $\CP_\nuc(X; A, B)$.\footnote{$\CP(X; A, B)$ and $\CP_\nuc(X;A,B)$ are point-norm closed, operator convex cones in the sense of \cite[Definition 4.1]{KirchbergRordam-zero}. All results and constructions in this -- as well as the following -- section have analogues where $\mathscr C \subseteq \CP(A,B)$ is such a point-norm closed, operator convex cone. A crucial and non-trivial fact is that such cones are hereditary, which can be proved by the same methods as the proof of \cite[Proposition 4.2]{KirchbergRordam-zero}, but as this will not be needed the details are omitted.}
\end{remark}

That the cones $\CP(X;A,B)$ and $\CP_\nuc(X;A,B)$ are hereditary, is the crucial ingredient which implies the following important lemma.

\begin{lemma}\label{l:densespan}
Let $\phi \colon A \to \mathcal B(E)$ be a $\ast$-homomorphism, and let $S \subseteq E$ be a subset such that $\overline{\mathrm{span}}\, S = E$. Then $\phi$ is weakly $X$-equivariant (respectively weakly $X$-nuclear) if and only if $\langle \xi, \phi(-) \xi \rangle_E$ is $X$-equivariant (respectively $X$-nuclear) for all $\xi\in S$.
\end{lemma}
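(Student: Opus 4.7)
The plan is as follows. The ``only if'' direction is immediate from $S \subseteq E$, so I concentrate on ``if''. Let $\mathscr C$ denote either $\CP(X;A,B)$ or $\CP_\nuc(X;A,B)$; by Notation \ref{n:Xcone} and Remark \ref{r:Xnuccone}, $\mathscr C$ is a point-norm closed, hereditary convex cone of c.p.~maps, and this is the feature I intend to exploit. My strategy has two steps: first upgrade the hypothesis from $\xi \in S$ to arbitrary finite linear combinations in $\mathrm{span}\,S$, then pass to the closure $E$ using point-norm closedness.

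The main obstacle is the first step, because for $\eta = \sum_{i=1}^n \lambda_i \xi_i$ the expression $\langle \eta, \phi(-)\eta\rangle_E$ opens up off-diagonal sesquilinear terms $\langle \xi_i, \phi(-)\xi_j\rangle_E$ which are not themselves of the diagonal form covered by the hypothesis. The standard matrix trick handles this: I consider the c.p.~map
\[
\rho \colon A \to M_n(B), \qquad \rho(a) = (\langle \xi_i, \phi(a)\xi_j\rangle_E)_{i,j=1}^n,
\]
and equip $M_n(B)$ with the entrywise action $M_n(B)(U) := M_n(B(U))$. The goal is to show that $\rho$ lies in $\CP(X; A, M_n(B))$ respectively $\CP_\nuc(X; A, M_n(B))$. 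For $X$-equivariance, given $a \in A(U)_+$ the element $\rho(a)$ is positive in $M_n(B)$, and its image in $M_n(B/B(U))$ is positive with diagonal entries $[\langle \xi_i, \phi(a)\xi_i\rangle_E]_U = 0$ by hypothesis; since a positive matrix over a $C^\ast$-algebra with vanishing diagonal is itself zero (write $y = z^\ast z$ and read off $y_{ii} = 0$), one concludes $\rho(a) \in M_n(B(U))$, and linearity yields $X$-equivariance. For the $X$-nuclear case, the induced quotient $[\rho]_U \colon A/A(U) \to M_n(B/B(U))$ is c.p.~with diagonal entries $[\langle \xi_i, \phi(-) \xi_i\rangle_E]_U$ that are nuclear by hypothesis, so $[\rho]_U$ is itself nuclear by Lemma \ref{l:nucmatrix}.

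Given this, if $\eta = \sum_{i=1}^n \lambda_i \xi_i$ and $v := (\lambda_1,\ldots,\lambda_n)^T$, then $\langle \eta, \phi(-)\eta\rangle_E = v^\ast \rho(-) v$, where the compression $v^\ast(-)v \colon M_n(B) \to B$ is c.p.~and $X$-equivariant with respect to the entrywise action. Composing with this compression preserves membership in $\mathscr C$ (for $X$-nuclearity one applies Observation \ref{o:nuccomp} at each quotient level), so $\langle \eta, \phi(-) \eta\rangle_E \in \mathscr C$ for every $\eta \in \mathrm{span}\,S$. Finally, for arbitrary $\eta \in E$ pick a sequence $\eta_k \in \mathrm{span}\,S$ with $\eta_k \to \eta$; the estimate
\[
\|\langle \eta_k, \phi(a)\eta_k\rangle_E - \langle \eta, \phi(a)\eta\rangle_E\| \leq \|\phi(a)\|\,\|\eta_k - \eta\|\,(\|\eta_k\| + \|\eta\|)
\]
shows $\langle \eta_k, \phi(-)\eta_k\rangle_E \to \langle \eta, \phi(-)\eta\rangle_E$ in the point-norm topology, and point-norm closedness of $\mathscr C$ then completes the proof.
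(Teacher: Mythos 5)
Your proof is correct, but it takes a genuinely different route from the paper's. Where you use the matrix amplification $\rho = (\langle\xi_i,\phi(-)\xi_j\rangle_E)_{i,j}$ together with the ``positive matrix with vanishing diagonal is zero'' observation and Lemma~\ref{l:nucmatrix}, the paper's proof instead exploits the parallelogram law
\begin{equation}
\langle \xi_1 + \xi_2, \phi(-) (\xi_1 + \xi_2) \rangle_E + \langle \xi_1 - \xi_2 , \phi(-) (\xi_1 - \xi_2) \rangle_E = 2 \langle \xi_1 , \phi(-) \xi_1\rangle_E + 2\langle \xi_2 , \phi(-) \xi_2 \rangle_E
\end{equation}
in conjunction with the hereditariness of $\mathscr C$ (Lemma~\ref{l:nucsum}/Remark~\ref{r:Xnuccone}): the right side lies in $\mathscr C$ by convexity, hence so does each summand on the left, and one iterates over finite sums and scalar multiples before passing to the closure exactly as you do. Both proofs ultimately rest on the tensor-product characterisation of nuclearity (Proposition~\ref{p:nuctensor}) --- that is the engine behind Lemma~\ref{l:nucmatrix} in your proof and behind Lemma~\ref{l:nucsum} in the paper's --- so the underlying depth is the same. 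What distinguishes them is scope: the paper's parallelogram-plus-hereditariness argument never looks inside $B$ at all and therefore applies verbatim to any point-norm closed, hereditary operator convex cone $\mathscr C \subseteq \CP(A,B)$ (as the paper flags in the footnote to Remark~\ref{r:Xnuccone}), whereas your argument is tied to the specific cones $\CP(X;A,B)$ and $\CP_\nuc(X;A,B)$ because it needs the entrywise $X$-action on $M_n(B)$ and the structural statement of Lemma~\ref{l:nucmatrix}. In exchange your proof makes explicit contact with the off-diagonal entries $\langle\xi_i,\phi(-)\xi_j\rangle_E$, which is closer in spirit to Remark~\ref{r:Skandalis}, where the paper itself invokes Lemma~\ref{l:nucmatrix} to reconcile Skandalis' original $n$-tuple formulation of weak nuclearity with the $n=1$ version used here.
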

\begin{proof}
``Only if'' is trivial, so we only prove ``if''. Let $\mathscr C$ denote either $\CP(X; A, B)$ or $\CP_\nuc(X; A, B)$, depending on whether we are proving the $X$-equivariant or the $X$-nuclear case. Suppose that $\xi_1, \xi_2\in E$ are such that $\langle \xi_i , \phi(-) \xi_i \rangle_E\in \mathscr C$. Then 
\begin{equation}
2 \langle \xi_1 , \phi(-) \xi_1\rangle_E + 2\langle \xi_2 , \phi(-) \xi_2 \rangle_E \in \mathscr C
\end{equation}
since $\mathscr C$ is a convex cone. As
\begin{eqnarray}
&& \langle \xi_1 + \xi_2, \phi(-) (\xi_1 + \xi_2) \rangle_E + \langle \xi_1 - \xi_2 , \phi(-) (\xi_1 - \xi_2) \rangle_E \nonumber\\
&=& 2 \langle \xi_1 , \phi(-) \xi_1\rangle_E + 2\langle \xi_2 , \phi(-) \xi_2 \rangle_E,
\end{eqnarray}
it follows that $\langle \xi_1 + \xi_2, \phi(-) (\xi_1 + \xi_2) \rangle_E  \in \mathscr C$ since $\mathscr C$ is hereditary by Remark \ref{r:Xnuccone}.

In particular, if $\lambda \in \mathbb C$ and $\xi \in S$, then 
\begin{equation}
\langle \lambda \xi, \phi(-) \lambda \xi\rangle_E = |\lambda|^2 \langle \xi, \phi(-) \xi\rangle_E \in \mathscr C,
\end{equation}
so by iterating the above result, it follows that
\begin{equation}
\left\langle \sum_{i=1}^n \lambda_i \xi_i , \phi(-) \sum_{j=1}^n \lambda_j \xi_j \right\rangle_E \in \mathscr C,
\end{equation}
for $n\in \mathbb N$, $\lambda_1, \dots,\lambda_n \in \mathbb C$ and $\xi_1, \dots, \xi_n \in S$. This shows that $\langle \xi, \phi(-) \xi\rangle_E \in \mathscr C$ for all $\xi \in \mathrm{span}\, S$.

Let $\xi \in E$ and $(\xi_n)_{n\in \mathbb N}$ be a sequence in $\mathrm{span}\, S$ converging to $\xi$. Then
\begin{eqnarray}
&& \| \langle \xi_n , \phi(a) \xi_n \rangle_E - \langle \xi, \phi(a) \xi \rangle_E \| \nonumber\\
&\leq & \| \langle \xi_n - \xi, \phi(a) \xi_n \rangle_E\| + \|\langle \xi, \phi(a) (\xi_n - \xi) \rangle_E \| \nonumber\\
&\leq& \| \xi_n - \xi\| \| \xi_n\| \| a \| + \| \xi_n - \xi\| \| \xi\| \| a \| \nonumber\\
&\to & 0
\end{eqnarray}
for all $a\in A$. So the c.p.~maps $\langle \xi_n , \phi(-) \xi_n \rangle_E \in \mathscr C$ converge point-norm to $\langle \xi, \phi(-) \xi \rangle_E$. As $\mathscr C$ is point-norm closed, it follows that $\langle \xi, \phi(-) \xi \rangle_E \in \mathscr C$. 
\end{proof}

The following is an immediate consequence.

\begin{corollary}\label{c:dirsumrep}
If $\gamma_n \colon A \to \mathcal B(E_n)$ are $\ast$-homomorphisms which are weakly $X$-equivariant (respectively weakly $X$-nuclear) for $n\in \mathbb N$, then the diagonal sum
\begin{equation}
\gamma \colon A \to \mathcal B( \bigoplus_{n\in \mathbb N} E_n) , \qquad \gamma(a) ((\xi_n)_{n\in \mathbb N}) = (\gamma_n(a)\xi_n)_{n\in \mathbb N}
\end{equation}
is weakly $X$-equivariant (respectively weakly $X$-nuclear).
\end{corollary}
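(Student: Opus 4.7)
The plan is to reduce directly to Lemma \ref{l:densespan} by exhibiting a convenient spanning subset of $E := \bigoplus_{n\in \mathbb N} E_n$ for which the diagonal representation $\gamma$ decouples into its summands.

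First I would let $\iota_n \colon E_n \to E$ denote the canonical inclusion of the $n$-th summand (i.e.~placing $\xi_n$ in position $n$ and zero elsewhere), and set $S := \bigcup_{n\in \mathbb N} \iota_n(E_n) \subseteq E$. By the very definition of the Hilbert module direct sum as the completion of the algebraic direct sum $\bigoplus_n^{\mathrm{alg}} E_n$, the linear span of $S$ is dense in $E$. So the hypothesis on the dense span in Lemma \ref{l:densespan} is satisfied once we verify the one-variable inner product condition on elements of $S$.

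Next, for an element $\xi = \iota_n(\xi_n) \in S$ with $\xi_n \in E_n$, a direct computation using that $\gamma$ acts diagonally gives
\begin{equation}
\langle \xi , \gamma(a) \xi \rangle_E = \langle \xi_n , \gamma_n(a) \xi_n \rangle_{E_n}, \qquad a \in A.
\end{equation}
By hypothesis, $\gamma_n$ is weakly $X$-equivariant (respectively weakly $X$-nuclear), so the right-hand side defines an element of $\CP(X; A, B)$ (respectively $\CP_\nuc(X; A, B)$) for every $\xi_n \in E_n$. Thus $\langle \xi, \gamma(-) \xi \rangle_E$ lies in the relevant cone $\mathscr C$ for every $\xi \in S$.

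Applying Lemma \ref{l:densespan} with this $S$ then yields that $\gamma \colon A \to \mathcal B(E)$ is weakly $X$-equivariant (respectively weakly $X$-nuclear), completing the proof. There is really no obstacle to overcome here: the hereditary/closedness properties of $\CP(X;A,B)$ and $\CP_\nuc(X;A,B)$ recorded in Remark \ref{r:Xnuccone} are already packaged into Lemma \ref{l:densespan}, so once the dense span $S$ is identified the result is immediate.
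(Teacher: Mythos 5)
Your proof is correct and is exactly the argument the paper has in mind: the corollary is stated as an "immediate consequence" of Lemma \ref{l:densespan}, and your choice of spanning set $S = \bigcup_n \iota_n(E_n)$ together with the computation $\langle \iota_n(\xi_n), \gamma(-)\iota_n(\xi_n)\rangle_E = \langle \xi_n, \gamma_n(-)\xi_n\rangle_{E_n}$ is precisely how one makes that immediate consequence explicit.
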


Recall that $\mathcal K := \mathcal K(\ell^2(\mathbb N))$. The following additional corollary will be recorded for later use. In the following, $\multialg{B\otimes \mathcal K}$ and $\mathcal B_B(\ell^2(\mathbb N) \otimes B)$ will be identified in the canonical way.

\begin{corollary}\label{c:densespan}
Let $A$ and $B$ be $X$-$C^\ast$-algebras, and let 
\begin{equation}
\phi \colon A\to \multialg{B\otimes \mathcal K} = \mathcal B_B(\ell^2(\mathbb N) \otimes B)
\end{equation}
be a $\ast$-homomorphism. Then $\phi$ is weakly $X$-equivariant in the multiplier algebra sense (i.e.~$x^\ast \phi(-) x$ is $X$-equivariant for all $x\in B\otimes \mathcal K$) if and only if $\phi$ is weakly $X$-equivariant  in the Hilbert module sense (i.e.~the c.p.~map $\langle \xi, \phi(-) \xi\rangle_{\ell^2(\mathbb N) \otimes B}$ is $X$-equivariant for every $\xi \in \ell^2(\mathbb N) \otimes B$). The same is true if one replaces ``$X$-equivariant'' with ``$X$-nuclear''.
\end{corollary}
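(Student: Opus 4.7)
My plan is to realize both conditions as instances of Definition \ref{d:weaklyXnuc} applied to two different Hilbert-module presentations of $\multialg{B\otimes \mathcal K}$, and then to identify them via Lemma \ref{l:densespan}. I will first identify $\multialg{B\otimes \mathcal K}$ with $\mathcal B_B(E)$ for the Hilbert $B$-module $E := \ell^2(\mathbb N) \otimes B$; in this guise the Hilbert-module-sense condition is literally Definition \ref{d:weaklyXnuc}. On the other hand, viewing $B\otimes \mathcal K$ as a right Hilbert $(B\otimes \mathcal K)$-module over itself (so that $\mathcal B_{B\otimes \mathcal K}(B\otimes \mathcal K) = \multialg{B\otimes \mathcal K}$ with $\langle x, y\rangle = x^\ast y$), and using that $B\otimes \mathcal K$ is an $X$-$C^\ast$-algebra with action $(B\otimes \mathcal K)(U) = B(U) \otimes \mathcal K$, the multiplier-sense condition is likewise exactly Definition \ref{d:weaklyXnuc} applied to $\phi$ in this alternative Hilbert-module picture.

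The bridge between the two pictures will be the computation
\begin{equation}
(b \otimes e_{i,j})^\ast \, \phi(a) \, (b \otimes e_{i,j}) \;=\; \langle e_i \otimes b,\, \phi(a)(e_i \otimes b)\rangle_E \,\otimes\, e_{j,j},
\end{equation}
valid for $a\in A$, $b\in B$ and $i,j\in \mathbb N$; this is immediate from writing $\phi(a)$ as a ``matrix'' with entries in $\multialg{B}$ relative to the standard decomposition of $E$. Since a corner element $c \otimes e_{j,j}$ lies in $B(U)\otimes \mathcal K$ precisely when $c \in B(U)$, and since the canonical identification $(B\otimes \mathcal K)/(B(U)\otimes \mathcal K) \cong (B/B(U))\otimes \mathcal K$ reduces nuclearity of $[\,\cdot\,]_U \otimes e_{j,j}$ to that of $[\,\cdot\,]_U$, the $X$-equivariance (respectively $X$-nuclearity) of $(b\otimes e_{i,j})^\ast \phi(-)(b\otimes e_{i,j})$ as a map $A\to B\otimes \mathcal K$ will be equivalent to that of $\langle e_i\otimes b, \phi(-)(e_i\otimes b)\rangle_E$ as a map $A\to B$.

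Finally, I will apply Lemma \ref{l:densespan} twice, using the dense spanning sets $\{e_i \otimes b : i\in \mathbb N,\, b\in B\} \subseteq E$ and $\{b\otimes e_{i,j} : b\in B,\, i,j\in \mathbb N\} \subseteq B\otimes \mathcal K$, in both the $X$-equivariant and $X$-nuclear versions. This reduces each of the two original conditions to the corresponding sparse check, and the bridging identity above shows that these two sparse checks coincide. The only potential subtlety is verifying that Lemma \ref{l:densespan} applies with base algebra $B\otimes \mathcal K$ rather than $B$, but this is automatic: $B\otimes \mathcal K$ is itself an $X$-$C^\ast$-algebra, and the hereditary property of the cones $\CP(X;A,-)$ and $\CP_\nuc(X;A,-)$ from Remark \ref{r:Xnuccone}, which drives the proof of Lemma \ref{l:densespan}, holds for any $X$-$C^\ast$-algebra target. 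I do not anticipate any serious obstacle, as the proof is essentially a reorganization of two applications of Lemma \ref{l:densespan} glued together by a one-line computation.
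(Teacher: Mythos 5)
Your proposal is correct and follows essentially the same route as the paper: apply Lemma \ref{l:densespan} to $B\otimes\mathcal K$ viewed as a Hilbert module over itself with spanning set $\{b\otimes e_{i,j}\}$, compute the bridging identity $(b\otimes e_{i,j})^\ast\phi(a)(b\otimes e_{i,j})=\langle\delta_i\otimes b,\phi(a)(\delta_i\otimes b)\rangle_E\otimes e_{j,j}$, and then reduce to the $B$-module picture. The only cosmetic differences are how the stray $e_{j,j}$-corner is handled and which result is cited for the final step: the paper conjugates by $1_{\multialg{B}}\otimes e_{i,j}$ to slide the corner from $(j,j)$ to $(i,i)$ and then identifies $B\otimes e_{i,i}\cong B$, whereas you pass through the quotient identification $(B\otimes\mathcal K)/(B(U)\otimes\mathcal K)\cong(B/B(U))\otimes\mathcal K$; and the paper invokes Corollary \ref{c:dirsumrep} (itself an immediate consequence of Lemma \ref{l:densespan} applied to the spanning set of single-summand vectors in $\bigoplus_{\mathbb N}B$) for the Hilbert-module side, whereas you apply Lemma \ref{l:densespan} a second time directly. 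Both are sound; your direct double application of Lemma \ref{l:densespan} is arguably the cleaner phrasing.
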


At first sight, the conclusion of the above result may seem obvious. The main difficulty is that $x^\ast \phi(-) x$ takes values in $B\otimes \mathcal K$ while $\langle \xi, \phi(-) \xi\rangle_B$ takes values in $B$. The result will be important when considering the Cuntz pair picture of $KK$-theory.

\begin{proof}
We only do the $X$-nuclear case. By applying Lemma \ref{l:densespan} to $B\otimes \mathcal K$ as a Hilbert $(B \otimes \mathcal K)$-module, and the subset $S = \{ b \otimes e_{i,j} : b\in B,  i, j \in \mathbb N\}$, it follows that $\phi$ is weakly $X$-nuclear in the multiplier algebra sense, if and only if $(b^\ast\otimes e_{j,i}) \phi(-) (b\otimes e_{i,j})$ is $X$-nuclear for all $b\in B$ and $i,j\in \mathbb N$. As
\begin{equation}
(b^\ast\otimes e_{j,i}) \phi(-) (b\otimes e_{i,j}) = (1_{\multialg{B}}  \otimes e_{j,i}) (b^\ast \otimes e_{i,i}) \phi(-) (b \otimes e_{i,i})(1_{\multialg{B}}  \otimes e_{i,j}),
\end{equation}
it easily follows that this in turn is equivalent to 
\begin{equation}
(b^\ast \otimes e_{i,i}) \phi(-) (b\otimes e_{i,i}) = \langle (\delta_i\otimes b) , \phi(-)  (\delta_i\otimes b)\rangle_{\ell^2(\mathbb N)\otimes B}
\end{equation}
 being $X$-nuclear for all $b\in B$ and $i\in \mathbb N$, where $\delta_i \in \ell^2(\mathbb N)$ is the characteristic function on $\{i\} \subseteq \mathbb N$. By Corollary \ref{c:dirsumrep} (after identifying $\ell^2(\mathbb N)\otimes B$ and $\bigoplus_{i\in \mathbb N} B$ canonically), this is equivalent to $\phi$ being weakly $X$-nuclear in the Hilbert module sense.
\end{proof}

\begin{definition}
Let $A$ be a separable $X$-$C^\ast$-algebra, and let $B$ be a $\sigma$-unital, stable $X$-$C^\ast$-algebra. A $\ast$-homomorphism $\phi \colon A \to \multialg{B}$ is called \emph{$X$-nuclearly absorbing} if it absorbs (see Definition \ref{d:abs}) any weakly $X$-nuclear $\ast$-homomorphism $A \to \multialg{B}$.
\end{definition}

The following theorem will be used in the same way as Theorem \ref{t:fullnucabs} was used in the case of full maps.

\begin{theorem}\label{t:infrepXabs}
Let $X$ be a topological space, and let $A$ and $B$ be $X$-$C^\ast$-algebras for which $A$ is separable, exact, and lower semicontinuous, and $B$ is $\sigma$-unital and stable. Suppose that $\theta \colon A \to B$ is an $X$-full, nuclear $\ast$-homomorphism. Then any infinite repeat $\theta_\infty \colon A \to \multialg{B}$ of $\theta$ is weakly $X$-nuclear and $X$-nuclearly absorbing.
\end{theorem}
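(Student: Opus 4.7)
The strategy is to mimic the proof of Theorem \ref{t:fullnucabs}, with Corollary \ref{c:Xfulldom} replacing Proposition \ref{p:fulldom} as the approximate domination input. Since any two infinite repeats are unitarily equivalent (see Remark \ref{r:infrep}), it suffices to work with $\theta_\infty = \sum_{j=1}^\infty t_j\theta(-)t_j^\ast$ for a fixed sequence $(t_j)_{j\in\mathbb N}$ of isometries in $\multialg{B}$ with mutually orthogonal range projections summing strictly to $1_{\multialg{B}}$. A preliminary observation: since $\theta$ exists as an $X$-full map, Remark \ref{r:Bempty} gives $B(\emptyset)=0$, which we use freely to invoke Lemma \ref{l:nucquotient} whenever needed.

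For weak $X$-nuclearity, fix $b\in B$. The c.p.\ map $b^\ast\theta_\infty(-)b$ is the point-norm limit of the sequence $\rho_N := \sum_{j=1}^N (t_j^\ast b)^\ast\theta(-)(t_j^\ast b)$. Each summand $(t_j^\ast b)^\ast\theta(-)(t_j^\ast b)\colon A\to B$ is nuclear (as $\theta$ is nuclear, by Observation \ref{o:nuccomp}) and $X$-equivariant (as $\theta$ is $X$-equivariant and the $B(U)$ are ideals), hence $X$-nuclear by Lemma \ref{l:nucquotient} since $A$ is exact and $B(\emptyset)=0$. Because $\CP_\nuc(X;A,B)$ is a point-norm closed convex cone (Notation \ref{n:Xcone}), the maps $\rho_N$ and their limit $b^\ast\theta_\infty(-)b$ all lie in $\CP_\nuc(X;A,B)$. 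Thus $\theta_\infty$ is weakly $X$-nuclear.

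For $X$-nuclear absorption, let $\psi\colon A\to\multialg{B}$ be a weakly $X$-nuclear $\ast$-homomorphism. By Proposition \ref{p:absnonunital}, $\theta_\infty$ absorbs $\psi$ provided that $\theta$ approximately dominates $b^\ast\psi(-)b$ for every $b\in B$. Fix such a $b$. By assumption, $b^\ast\psi(-)b\colon A\to B$ is $X$-nuclear, hence in particular $X$-equivariant and nuclear. Since $A$ is separable, exact, and lower semicontinuous, and $\theta$ is $X$-full and nuclear, Corollary \ref{c:Xfulldom} yields that $\theta$ approximately dominates $b^\ast\psi(-)b$, as required.

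The argument is essentially an assembly of existing tools; the only potential subtlety is ensuring that the $X$-equivariant hypotheses propagate correctly through the absorbing-representation machinery of Section \ref{s:absrep}, but this is handled cleanly because Proposition \ref{p:absnonunital} is stated purely in terms of approximate domination and imposes no equivariance on $\psi$ beyond what is needed to apply Corollary \ref{c:Xfulldom} summand-by-summand.
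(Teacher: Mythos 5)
Your proof is correct and follows essentially the same path as the paper: for weak $X$-nuclearity you reproduce the partial-sum argument from the simple-case Theorem \ref{t:fullnucabs}, whereas the paper invokes Corollary \ref{c:dirsumrep} together with the second picture of infinite repeats in Remark \ref{r:infrep} (these are the same argument packaged differently); for absorption you cite Corollary \ref{c:Xfulldom}, while the paper spells out its two ingredients, Lemma \ref{l:Cuaction}$(a)$ and Theorem \ref{t:approxdom}.
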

\begin{proof}
By Remark \ref{r:Bempty} one has $B(\emptyset) = 0$. Hence Lemma \ref{l:nucquotient} implies that a c.p.~map $\eta \colon A \to B$ is $X$-nuclear if and only if it is $X$-equivariant and nuclear. Thus $\theta_\infty$ is weakly $X$-nuclear by Corollary \ref{c:dirsumrep} and Remark \ref{r:infrep}.

It remains to show that $\theta_\infty$ is $X$-nuclearly absorbing, so fix a weakly $X$-nuclear $\ast$-homomorphism $\psi \colon A \to \multialg{B}$. We wish to show that $\theta_\infty$ absorbs $\psi$. By Proposition \ref{p:absnonunital} it suffices to show that $\theta$ approximately dominates $b^\ast \psi(-) b \colon A \to B$ for every $b\in B$. Fix such $b\in B$.

As $\psi$ is weakly $X$-nuclear the map $b^\ast \psi(-) b$ is $X$-nuclear and thus nuclear and $X$-equivariant by Lemma \ref{l:nucquotient}. As $\theta$ is $X$-full it follows from Lemma \ref{l:Cuaction}$(a)$ that $\mathcal I(b^\ast \psi(-) b) \leq \mathcal I(\phi)$. Finally, as both $\theta$ and $b^\ast \psi(-) b$ are nuclear, it follows from Theorem \ref{t:approxdom} that $\theta$ approximately dominates $b^\ast \psi(-) b$, thus finishing the proof.
\end{proof}


\section{Ideal-related $KK$-theory}\label{s:KK}

In this section, Kirchberg's ideal-related $KK$-theory for $X$-$C^\ast$-algebras from \cite{Kirchberg-non-simple} will be constructed from the bottom up. This will be done both in an $X$-equivariant version which generalises constructions such as Kasparov's $\mathscr RKK(X;A,B)$ \cite{Kasparov-eqKKNovikov} for $C(X)$-algebras, as well as a $X$-nuclear version which generalises Skandalis' $KK_\nuc(A,B)$ \cite{Skandalis-KKnuc}.

The reader is expected to be familiar with the basics of $KK$-theory, see \cite{Blackadar-book-K-theory} and \cite{JensenThomsen-book-KK-theory}.

The pictures of $KK$-theory that will be used for classification -- the Cuntz pair picture and the Fredholm picture -- are considered in detail in Sections \ref{ss:CuntzPairs} and \ref{ss:Fredholm} respectively.

\subsection{The basics}

All Hilbert $C^\ast$-modules are \emph{right} Hilbert $C^\ast$-modules.
Recall that a \emph{($\mathbb Z/2\mathbb Z$-)graded $C^\ast$-algebra} is a $C^\ast$-algebra $A$ together with an automorphism $\beta_A$ such that $\beta_A \circ \beta_A = \id_A$. A graded $C^\ast$-algebra $A$ is \emph{trivially graded} if $\beta_A = \id_A$.
A c.p.~map $\phi \colon A \to B$ between graded $C^\ast$-algebras is \emph{graded} if $\phi \circ \beta_A = \beta_B \circ \phi$.
A (two-sided, closed) ideal $I$ in a graded $C^\ast$-algebra $A$ is called \emph{graded} if $\beta_A(I) = I$.

For graded $C^\ast$-algebras $A$ and $B$, a \emph{Kasparov $A$-$B$-module} is a triple $(E,\phi, F)$ where $E$ is a countably generated, graded Hilbert $B$-module, 
\begin{equation}
\phi \colon A \to \mathcal B(E)
\end{equation}
is a graded $\ast$-homomorphism, and $F\in \mathcal B(E)$ is an element of odd degree, such that 
\begin{equation}
[F, \phi(A)] \subseteq \mathcal K(E),\footnote{Recall that one uses \emph{graded} commutators, so that $[a,b] = ab - (-1)^{\partial a \cdot \partial b} ba$ for homogeneous elements $a,b$.} \quad  (F^2 - 1_{\mathcal B(E)})\phi(A) \subseteq \mathcal K(E), \quad (F-F^\ast) \phi(A) \subseteq \mathcal K(E).
\end{equation}
A Kasparov module $(E,\phi, F)$ is \emph{degenerate} if
\begin{equation}
[F, \phi(A)]=\{0\} , \quad (F^2 - 1_{\mathcal B(E)})\phi(A) = \{0\}  , \quad (F-F^\ast) \phi(A) =\{0\}.
\end{equation}

\begin{definition}
Let $X$ be a topological space. A \emph{graded $X$-$C^\ast$-algebra} is a graded $C^\ast$-algebra $A$ with an action of $X$, such that $A(U)$ is a graded ideal for every $U\in \mathcal O(X)$.
\end{definition}

\emph{For the rest of this section, let $X$ be a topological space, and let $A$, $B$, and $C$ be graded $X$-$C^\ast$-algebras.}

\begin{definition}
Let $A$ and $B$ be graded $X$-$C^\ast$-algebras. A Kasparov $A$-$B$-module $(E, \phi , F)$ is said to be \emph{$X$-equivariant} (respectively \emph{$X$-nuclear}) if $\phi$ is weakly $X$-equivariant (respectively weakly $X$-nuclear), see Definition \ref{d:weaklyXnuc}.
\end{definition}

\begin{remark}[Direct sums]
By Corollary \ref{c:dirsumrep} it follows that (finite) direct sums of $X$-equivariant (respectively $X$-nuclear) Kasparov modules are again $X$-equivariant (respectively $X$-nuclear).

Also, it follows that (countable) infinite direct sums of degenerate $X$-equivariant (respectively $X$-nuclear) Kasparov modules are again (necessarily degenerate) $X$-equivariant (respectively $X$-nuclear) Kasparov modules.
\end{remark}

\begin{notation}
Given a graded $X$-$C^\ast$-algebra $B$, let $IB:= C([0,1], B)$ be the induced graded $X$-$C^\ast$-algebra with grading automorphism $\beta_{IB}(f)(t) = \beta_B(f(t))$ for $f\in IB$ and $t\in [0,1]$, and action given by 
\begin{equation}
IB(U) = C([0,1], B(U)) \qquad \textrm{for } U\in \mathcal O(X).
\end{equation}
For $t\in [0,1]$, let $\ev_t \colon IB \to B$ denote the evaluation map at $t$.
\end{notation}

For any graded Hilbert $IB$-module $E$ and any $t\in [0,1]$, there is an induced graded Hilbert $B$-module $E_t$ given by completing the pre-Hilbert module
\begin{equation}
E/\{ \xi \in E : \ev_t(\langle \xi, \xi \rangle) = 0\},
\end{equation}
with inner product given by $\langle [\xi] , [\eta] \rangle_{E_t} = \ev_t(\langle \xi , \eta\rangle_E)$. 

There is an isomorphism $E\hat \otimes_{\ev_t} B \xrightarrow \cong E_t$ given on elementary tensors by $\xi \hat \otimes b \mapsto [\xi] b$ for $\xi \in E$ and $b\in B$.

There is a $\ast$-homomorphism $(\ev_t)_\ast \colon \mathcal B(E) \to \mathcal B(E_t)$, which takes an operator $T\in \mathcal B(E)$ and maps it to the operator $[\xi] \mapsto [T\xi]$ for $\xi \in E$.

\begin{lemma}\label{l:weakhtpy}
If $E$ is a graded Hilbert $IB$-module, then a $\ast$-homomorphism $\phi \colon A \to \mathcal B(E)$ is weakly $X$-equivariant (respectively weakly $X$-nuclear), if and only if $\phi_t := (\ev_t)_\ast \circ \phi \colon A \to \mathcal B(E_t)$ is weakly $X$-equivariant (respectively weakly $X$-nuclear) for all $t\in [0,1]$.
\end{lemma}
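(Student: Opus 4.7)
The plan is to reduce the statement directly to Lemmas \ref{l:XnucC(Y)} and \ref{l:densespan}. The basic observation is that for any $\xi \in E$ and any $t \in [0,1]$, one has the identity
\begin{equation}
\langle [\xi], \phi_t(a)[\xi]\rangle_{E_t} = \ev_t\bigl(\langle \xi, \phi(a)\xi\rangle_E\bigr), \qquad a\in A,
\end{equation}
which follows straight from the construction of $E_t$ and of $(\ev_t)_\ast$. In other words, the scalar c.p.\ map $\rho_\xi := \langle \xi,\phi(-)\xi\rangle_E \colon A \to IB = C([0,1],B)$ satisfies $\ev_t \circ \rho_\xi = \langle [\xi],\phi_t(-)[\xi]\rangle_{E_t}$ for every $t$. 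This identity is what bridges the two sides of the lemma.

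For the ``only if'' direction I would fix $t\in [0,1]$ and argue that the set $S_t := \{ [\xi] : \xi \in E\}$ spans a dense subspace of $E_t$ by construction. If $\phi$ is weakly $X$-equivariant (respectively weakly $X$-nuclear), then $\rho_\xi$ is $X$-equivariant (respectively $X$-nuclear) for every $\xi \in E$, and hence by Lemma \ref{l:XnucC(Y)} so is $\ev_t\circ \rho_\xi = \langle [\xi],\phi_t(-)[\xi]\rangle_{E_t}$. Thus $\phi_t$ satisfies the required property on the dense spanning subset $S_t$ of $E_t$, and Lemma \ref{l:densespan} upgrades this to weak $X$-equivariance (respectively weak $X$-nuclearity) of $\phi_t$.

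For the ``if'' direction I would fix $\xi \in E$ and observe that by assumption the c.p.\ map $\ev_t \circ \rho_\xi = \langle [\xi],\phi_t(-)[\xi]\rangle_{E_t}$ is $X$-equivariant (respectively $X$-nuclear) for every $t\in [0,1]$. Applying Lemma \ref{l:XnucC(Y)} in the converse direction then yields that $\rho_\xi$ itself is $X$-equivariant (respectively $X$-nuclear). Since $\xi$ was arbitrary, this precisely says that $\phi$ is weakly $X$-equivariant (respectively weakly $X$-nuclear). There is no real obstacle here; the only point requiring care is the density of $\{[\xi]:\xi \in E\}$ in $E_t$, which is built into the definition of $E_t$ as a completion, so Lemma \ref{l:densespan} applies without modification.
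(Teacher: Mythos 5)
Your proposal is correct and follows the same route as the paper: both directions rest on the identity $\langle [\xi],\phi_t(a)[\xi]\rangle_{E_t}=\ev_t(\langle\xi,\phi(a)\xi\rangle_E)$, with Lemma \ref{l:XnucC(Y)} transporting $X$-nuclearity (resp.\ $X$-equivariance) between $\rho_\xi$ and the evaluations $\ev_t\circ\rho_\xi$, and Lemma \ref{l:densespan} handling the density of $\{[\xi]:\xi\in E\}$ in $E_t$.
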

\begin{proof}
We do the $X$-nuclear case, the $X$-equivariant case is identical.
Suppose that $\phi_t$ is weakly $X$-nuclear for each $t\in [0,1]$. For every $\xi \in E$ we get that
\begin{equation}
a\mapsto \ev_t(\langle \xi , \phi(a) \xi \rangle_E) = \langle [\xi] , \phi_t(a) [\xi] \rangle_{E_t}
\end{equation}
is $X$-nuclear for each $t\in [0,1]$. By Lemma \ref{l:XnucC(Y)}, $\langle \xi , \phi(-) \xi \rangle_E$ is $X$-nuclear.

Conversely, suppose $\phi$ is weakly $X$-nuclear and fix $t\in [0,1]$. As $\{ [\xi] : \xi \in E\} \subseteq E_t$ is dense, it suffices by Lemma \ref{l:densespan} to show that $\langle [\xi] , \phi_t(-) [\xi] \rangle_{E_t} = \ev_t(\langle \xi , \phi(-) \xi\rangle_E)$ is $X$-nuclear for all $\xi \in E$. This again follows from Lemma \ref{l:XnucC(Y)} since $\phi$ is weakly $X$-nuclear.
\end{proof}

The usual notions of homotopy, operator homotopy, etc.~also carries over to the $X$-equivariant and $X$-nuclear setting, simply by assuming that all Kasparov modules involved are $X$-equivariant or $X$-nuclear. The details are filled in here.

\begin{definition}
Let $A$ and $B$ be graded $X$-$C^\ast$-algebras, and let $\mathcal E_0 = (E_0,\phi_0, F_0)$ and $\mathcal E_1 = (E_1,\phi_1,F_1)$ be $X$-equivariant (respectively $X$-nuclear) Kasparov $A$-$B$-modules.
\begin{itemize}
\item Say that $\mathcal E_0$ and $\mathcal E_1$ are \emph{unitarily equivalent} if there is a unitary $u\in \mathcal B(E_0,E_1)$ of degree 0, such that $u^\ast \phi_1(-) u = \phi_0$ and $u^\ast F_1 u = F_0$. 

Write $\mathcal E_0 \approx_u \mathcal E_1$ when $\mathcal E_0$ and $\mathcal E_1$ are unitarily equivalent.

\item A \emph{homotopy} from $\mathcal E_0$ to $\mathcal E_1$ is an $X$-equivariant (respectively $X$-nuclear) Kasparov $A$-$IB$-module $(E_I, \phi_I, F_I)$, such that
\begin{equation}
\mathcal E_0 \approx_u ((E_I)_0, (\phi_I)_0, (F_I)_0), \qquad  \mathcal E_1 \approx_u ((E_I)_1, (\phi_I)_1, (F_I)_1).
\end{equation}

Write $\mathcal E_0 \approx_\h \mathcal E_1$ if there is a homotopy from $\mathcal E_0$ to $\mathcal E_1$.

\item Say that $\mathcal E_0$ and $\mathcal E_1$ are \emph{homotopic}, if there are $X$-equivariant (respectively $X$-nuclear) Kasparov modules $\mathcal F_1,\dots, \mathcal F_n$, such that
\begin{equation}
\mathcal E_0 \approx_\h \mathcal F_1 \approx_\h \dots \approx_\h \mathcal F_n \approx_\h \mathcal E_1.
\end{equation}

Write $\mathcal E_0 \sim_{\h} \mathcal E_1$ when $\mathcal E_0$ and $\mathcal E_1$ are homotopic.

\item Say that $\mathcal E_0$ and $\mathcal E_1$ are \emph{operator homotopic} if $E_0= E_1$, $\phi_0 = \phi_1$ and there is a norm-continuous path $[0,1] \ni t \mapsto G_t$ with $G_0=F_0$, $G_1=F_1$ and $(E_0 , \phi_0 , G_t)$ is a Kasparov module for each $t$.

\item Write $\mathcal E_0 \approx_\oh \mathcal E_1$ if there are operator homotopic modules $\mathcal F_0$ and $\mathcal F_1$ such that $\mathcal E_i \approx_u \mathcal F_i$ for $i=0,1$.

\item Write $\mathcal E_0 \sim_\oh \mathcal E_1$ if there are $X$-equivariant (respectively $X$-nuclear), degenerate Kasparov $A$-$B$-modules $\mathcal D_0$ and $\mathcal D_1$ such that
\begin{equation}
\mathcal E_0 \oplus \mathcal D_0 \approx_\oh \mathcal E_1 \oplus \mathcal D_1.
\end{equation}
\end{itemize}
\end{definition}

\begin{remark}
The notations ``$\approx_\h$'', ``$\sim_\h$'', and ``$\sim_\oh$'' do not contain the information that everything is done in the $X$-equivariant (respectively $X$-nuclear) case. This will always be implied.

The relations ``$\approx_u$'', and ``$\approx_\oh$'' do not depend on any $X$-equivariant (respectively $X$-nuclear) structure, as this additional structure is automatically preserved.
\end{remark}

\begin{remark}
One can show that $\approx_\h$ is actually transitive and that therefore $\approx_\h$ and $\sim_\h$ are the same equivalence relation. While this is claimed without proof in \cite{Kasparov-KKExt} and \cite{Blackadar-book-K-theory}, I do not know of any other reference in the classical case other than the very recent \cite[Proposition A.1]{KumjianPaskSims-gradedKtheory}. The same proof carries over to the $X$-equivariant (respectively $X$-nuclear) setting, but in order to keep this as self-contained as possible, the transitive closure $\sim_\h$ will simply be used instead.
\end{remark}

The following is a consequence of Lemma \ref{l:weakhtpy}.

\begin{corollary}\label{c:deghtpy}
Degenerate $X$-equivariant (respectively $X$-nuclear) Kasparov modules are homotopic in the $X$-equivariant (respectively $X$-nuclear) sense to the Kasparov module $(0,0,0)$.
\end{corollary}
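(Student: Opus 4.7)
The plan is to exhibit an explicit homotopy from any degenerate $X$-equivariant (respectively $X$-nuclear) Kasparov $A$-$B$-module $(E,\phi,F)$ to the zero module $(0,0,0)$, using the standard ``cone trick'' and then verifying the required equivariance/nuclearity via Lemma \ref{l:weakhtpy}.

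Concretely, I would take the Hilbert $IB$-module
\[
E_I := \{ f \in C([0,1], E) : f(1) = 0 \},
\]
with pointwise $IB$-action $(f \cdot g)(t) = f(t) g(t)$ and pointwise inner product $\langle f, h\rangle_{E_I}(t) = \langle f(t), h(t)\rangle_E$, graded by pointwise application of the grading of $E$. Define $\phi_I \colon A \to \mathcal B(E_I)$ and $F_I \in \mathcal B(E_I)$ pointwise by $(\phi_I(a) f)(t) = \phi(a) f(t)$ and $(F_I f)(t) = F f(t)$. Since $(E,\phi,F)$ is degenerate, the same identities hold fiberwise, so $(E_I, \phi_I, F_I)$ is itself degenerate, hence tautologically a Kasparov $A$-$IB$-module.

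The first routine check is that the fibers are correct. For $t \in [0,1)$, the map $E_I \to E$, $f \mapsto f(t)$, is surjective (given $\xi \in E$, pick any continuous $g \colon [0,1] \to \mathbb C$ with $g(1) = 0$ and $g(t) = 1$, and set $f(s) = g(s)\xi$) and preserves inner products after quotienting by $\{f : f(t) = 0\}$, so it induces a unitary $(E_I)_t \xrightarrow{\cong} E$ of degree $0$ intertwining $(\phi_I)_t$ with $\phi$ and $(F_I)_t$ with $F$. For $t = 1$, every $f \in E_I$ satisfies $f(1) = 0$, so $\ev_1(\langle f, f\rangle_{E_I}) = 0$ for all $f$, forcing $(E_I)_1 = 0$. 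Thus $(E_I, \phi_I, F_I)$ restricts to $(E, \phi, F)$ at $t=0$ and to $(0,0,0)$ at $t=1$ up to unitary equivalence.

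The key remaining step is verifying that $(E_I, \phi_I, F_I)$ is $X$-equivariant (respectively $X$-nuclear); this is exactly where Lemma \ref{l:weakhtpy} is used. Under the identifications above, $(\phi_I)_t = \phi$ for $t \in [0,1)$ and $(\phi_I)_1 = 0$. Since $\phi$ is weakly $X$-equivariant (respectively weakly $X$-nuclear) by hypothesis, and the zero map trivially is, every fiber representation is weakly $X$-equivariant (respectively weakly $X$-nuclear), and Lemma \ref{l:weakhtpy} then upgrades this to weak $X$-equivariance (respectively $X$-nuclearity) of $\phi_I$ itself. This yields $(E,\phi,F) \approx_\h (0,0,0)$ and hence $(E,\phi,F) \sim_\h (0,0,0)$. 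No real obstacle is anticipated: the construction is direct, and Lemma \ref{l:weakhtpy} does all the work in passing from fiberwise information to global weak $X$-equivariance/nuclearity.
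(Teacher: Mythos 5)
Your proof is correct and follows essentially the same route as the paper: the paper forms the cone module $(C_0(0,1]\hat\otimes E,\, 1\hat\otimes\phi,\, 1\hat\otimes F)$, notes that the fibre representations are $0$ (at $t=0$) and $\phi$ (at $t\in(0,1]$), and applies Lemma~\ref{l:weakhtpy} to conclude weak $X$-equivariance/nuclearity of the homotopy. Your $E_I=\{f\in C([0,1],E):f(1)=0\}$ is just $C_0[0,1)\hat\otimes E$, i.e.\ the same module after the reparametrisation $t\mapsto 1-t$, and your identification of the fibres and appeal to Lemma~\ref{l:weakhtpy} match the paper's argument.
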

\begin{proof}
We only do the $X$-nuclear case. Let $(E,\phi,F)$ be a degenerate $X$-nuclear Kasparov module, and let 
\begin{equation}
(C_0(0,1] \hat \otimes E, 1_{\mathcal B(C_0(0,1])} \hat \otimes \phi, 1_{\mathcal B(C_0(0,1])} \hat \otimes F)
\end{equation}
be the induced (degenerate) $A$-$IB$-module, which we should prove is $X$-nuclear. By Lemma \ref{l:weakhtpy} it suffices to show that $(1_{\mathcal B(C_0(0,1])} \hat \otimes \phi)_t$ is weakly $X$-nuclear for each $t\in [0,1]$. For $t=0$ one has $(1_{\mathcal B(C_0(0,1])} \hat \otimes \phi)_0 = 0$ and if $t\in (0,1]$ then $(1_{\mathcal B(C_0(0,1])} \hat \otimes \phi)_t = \phi$ which in both cases are weakly $X$-nuclear by assumption, so $1_{\mathcal B(C_0(0,1])}\hat \otimes \phi$ is weakly $X$-nuclear.
\end{proof}

\begin{corollary}\label{c:ohstronger}
$\sim_\oh$ is a stronger equivalence relation than $\sim_\h$ on $X$-equivariant (respectively $X$-nuclear) Kasparov modules.
\end{corollary}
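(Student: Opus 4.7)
The plan is to reduce the corollary to three ingredients: (i) an operator homotopy can be promoted to a bona fide $A$-$IB$-homotopy; (ii) unitary equivalence at the endpoints is already absorbed into the definition of $\approx_\h$; and (iii) direct summing with a degenerate module does not change the $\sim_\h$-class, by Corollary \ref{c:deghtpy}. Suppose $\mathcal E_0 \sim_\oh \mathcal E_1$, so that there are degenerate $X$-equivariant (respectively $X$-nuclear) Kasparov modules $\mathcal D_0, \mathcal D_1$ and modules $\mathcal G_0, \mathcal G_1$ with $\mathcal E_i \oplus \mathcal D_i \approx_u \mathcal G_i$ such that $\mathcal G_0$ and $\mathcal G_1$ are operator homotopic through a norm-continuous path $(E, \phi, G_t)_{t\in[0,1]}$.

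First I would promote the operator homotopy to a homotopy in the $\approx_\h$ sense. Form the Hilbert $IB$-module $\widetilde E := C([0,1], E)$ with the pointwise $IB$-structure $(\xi \cdot f)(t) = \xi(t) f(t)$ and inner product $\langle \xi, \eta\rangle_{\widetilde E}(t) = \langle \xi(t), \eta(t)\rangle_E$, define $\tilde\phi(a)(\xi)(t) := \phi(a)\xi(t)$, and define $\widetilde F(\xi)(t) := G_t \xi(t)$. Norm continuity of $t \mapsto G_t$ ensures that $\widetilde F \in \mathcal B(\widetilde E)$, and that the $\mathcal K(E)$-valued quantities $[G_t, \phi(a)]$, $(G_t^2 - 1)\phi(a)$, and $(G_t - G_t^\ast)\phi(a)$ assemble into elements of $\mathcal K(\widetilde E)$, making $(\widetilde E, \tilde\phi, \widetilde F)$ a Kasparov $A$-$IB$-module. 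For the required $X$-structure, note that under the canonical identification $\widetilde E_t \cong E$, one has $\tilde\phi_t = \phi$ for every $t \in [0,1]$; by Lemma \ref{l:weakhtpy}, $\tilde\phi$ is weakly $X$-equivariant (respectively weakly $X$-nuclear) precisely because $\phi$ is. Since evaluation at $t = 0, 1$ recovers $\mathcal G_0$ and $\mathcal G_1$, and since the unitary equivalences $\mathcal E_i \oplus \mathcal D_i \approx_u \mathcal G_i$ are built into the definition of $\approx_\h$, this yields $\mathcal E_0 \oplus \mathcal D_0 \approx_\h \mathcal E_1 \oplus \mathcal D_1$.

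Next I would discard the degenerate summands. By Corollary \ref{c:deghtpy}, each $\mathcal D_i$ is $\sim_\h$-equivalent to $(0,0,0)$; say this is witnessed by a chain of $A$-$IB$-modules $\mathcal H_1^{(i)}, \dots, \mathcal H_{n_i}^{(i)}$. Direct summing each $\mathcal H_j^{(i)}$ with the trivial homotopy $(C([0,1]) \hat\otimes E_i, 1 \hat\otimes \phi_i, 1 \hat\otimes F_i)$ of $\mathcal E_i$ produces a chain of $A$-$IB$-modules whose endpoints are unitarily equivalent to the direct sums of the endpoints of the original chain; that the resulting modules remain $X$-equivariant (respectively $X$-nuclear) is Corollary \ref{c:dirsumrep}. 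Hence $\mathcal E_i \oplus \mathcal D_i \sim_\h \mathcal E_i \oplus (0,0,0) \approx_u \mathcal E_i$. Concatenating, we obtain
\begin{equation}
\mathcal E_0 \sim_\h \mathcal E_0 \oplus \mathcal D_0 \sim_\h \mathcal E_1 \oplus \mathcal D_1 \sim_\h \mathcal E_1,
\end{equation}
which is the required conclusion.

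The only genuinely delicate point is verifying that the $IB$-module $(\widetilde E, \tilde\phi, \widetilde F)$ built from the operator homotopy is itself $X$-equivariant (respectively $X$-nuclear) — but Lemma \ref{l:weakhtpy} reduces this directly to the corresponding property of $\phi$, so no real obstacle arises.
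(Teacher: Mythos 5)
Your proof is correct and follows essentially the same route as the paper, just with the details spelled out: the paper's two-line argument invokes Lemma \ref{l:weakhtpy} for promoting an operator homotopy to a genuine $A$-$IB$-homotopy (your $\widetilde E = C([0,1],E)$ construction) and Corollary \ref{c:deghtpy} for disposing of the degenerate summands, which is precisely the decomposition you carry out.
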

\begin{proof}
By Lemma \ref{l:weakhtpy}, $\approx_\oh$ is stronger than $\sim_\h$ by considering the obvious homotopy. Hence the result follows from Corollary \ref{c:deghtpy}.
\end{proof}

Once it is shown that the Kasparov product is well-defined, it will follow as in the classical case that $\sim_\h$ and $\sim_\oh$ are the same equivalence relation on $X$-equivariant (respectively $X$-nuclear) Kasparov modules, whenever $A$ is separable and $B$ is $\sigma$-unital.

\begin{definition} 
Let $A$ and $B$ be graded $X$-$C^\ast$-algebras. 
\begin{itemize}
\item Let $KK(X; A,B)$ (respectively $KK_\nuc(X; A,B)$) denote the semigroup of $\sim_\h$-equivalence classes of $X$-equivariant (respectively $X$-nuclear) Kasparov $A$-$B$-modules. The semigroup structure comes from direct sums.

For an $X$-equivariant (respectively $X$-nuclear) Kasparov $A$-$B$-module $(E,\phi, F)$, write $[E, \phi, F]$ for its induced equivalence class.

\item If $\phi \colon A \to B$ is a graded, $X$-equivariant $\ast$-homomorphism, let
\begin{equation}
KK(X; \phi) := [ B, \phi, 0] \in KK(X; A, B).
\end{equation}
Similarly, if $\phi \colon A \to B$ is a graded, $X$-nuclear $\ast$-homomorphism, let
\begin{equation}
KK_\nuc(X; \phi) := [B , \phi , 0] \in KK_\nuc(X; A, B).
\end{equation}
\end{itemize}
\end{definition}

\begin{lemma}
$KK(X; A,B)$ and $KK_\nuc(X; A, B)$ are abelian groups.
\end{lemma}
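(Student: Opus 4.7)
The plan is to check the usual abelian group axioms, emphasising at each step that the $X$-equivariance (respectively $X$-nuclearity) is automatically preserved because every construction only manipulates either the operator $F$ (keeping $\phi$ unchanged) or passes through unitary equivalence. Associativity of $\oplus$ is immediate from the obvious isomorphism of direct sums, and the class $[0,0,0]$ is a two-sided identity by Corollary \ref{c:deghtpy} (adding a degenerate module does not change the homotopy class). Commutativity follows from the flip unitary $u\colon E_1\oplus E_2 \to E_2 \oplus E_1$, which is a degree $0$ unitary implementing a unitary equivalence $(E_1,\phi_1,F_1)\oplus(E_2,\phi_2,F_2) \approx_u (E_2,\phi_2,F_2)\oplus(E_1,\phi_1,F_1)$.

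The real content is the existence of inverses. Given an $X$-equivariant (respectively $X$-nuclear) Kasparov module $\mathcal E = (E,\phi,F)$, define its opposite $-\mathcal E := (E^{\op}, \phi^{\op}, -F^{\op})$, where $E^{\op}$ denotes $E$ with the grading reversed (so even and odd parts are swapped) and $\phi^{\op}$, $F^{\op}$ are $\phi$, $F$ viewed as maps with respect to the new grading. Since the underlying maps $A\to \mathcal B(E)$ are literally unchanged, $\phi^{\op}$ is weakly $X$-equivariant (respectively weakly $X$-nuclear) whenever $\phi$ is. I would then claim that $\mathcal E \oplus (-\mathcal E)$ is operator homotopic to a degenerate module, and hence trivial in $KK(X;A,B)$ (respectively $KK_\nuc(X;A,B)$) by Corollaries \ref{c:deghtpy} and \ref{c:ohstronger}.

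To exhibit this operator homotopy, consider $E\oplus E^{\op}$ with representation $\phi\oplus\phi^{\op}$ held fixed throughout, and interpolate the operator by the classical rotation
\begin{equation*}
F_t := R_t^\ast \bigl( F \oplus (-F^{\op})\bigr) R_t, \qquad R_t = \begin{pmatrix} \cos(\tfrac{\pi t}{2}) \cdot 1 & -\sin(\tfrac{\pi t}{2}) \cdot V^\ast \\ \sin(\tfrac{\pi t}{2}) \cdot V & \cos(\tfrac{\pi t}{2}) \cdot 1 \end{pmatrix}, \quad t\in[0,1],
\end{equation*}
where $V\colon E \to E^{\op}$ is the canonical (degree $1$) identification. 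At $t=0$ one recovers $F\oplus(-F^{\op})$; at $t=1$ one arrives at an operator of the form $\begin{pmatrix} 0 & W^\ast \\ W & 0 \end{pmatrix}$ whose commutators with $\phi\oplus \phi^{\op}$ are zero on the nose, yielding a degenerate Kasparov module. Since each $R_t$ has scalar (i.e.\ $\mathbb{C}$-valued) entries, $R_t$ lies in the commutant of $\phi\oplus\phi^{\op}$ modulo $\mathcal K(E\oplus E^{\op})$, so every $F_t$ satisfies the Fredholm conditions, and crucially $\phi\oplus \phi^{\op}$ itself is unchanged along the path, so weak $X$-equivariance (respectively weak $X$-nuclearity) is preserved trivially.

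The main potential obstacle is the bookkeeping of gradings and the verification that the $R_t$-conjugate of $F\oplus(-F^{\op})$ genuinely yields a Kasparov module for every $t$; this is the one step where a small calculation is needed, but it is identical to the classical argument (see \cite[Section 17.3]{Blackadar-book-K-theory}) and, because no $X$-dependent datum is touched, it transports verbatim to both the $X$-equivariant and the $X$-nuclear settings.
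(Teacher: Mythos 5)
Your overall plan -- take the opposite module, use a rotation to deform the direct sum to a degenerate module, and invoke Corollaries \ref{c:deghtpy} and \ref{c:ohstronger} -- is the paper's approach as well, and the observation that the $X$-equivariant/$X$-nuclear data is untouched because $\phi$ is held fixed is exactly the right point. However, the concrete operator homotopy you wrote down does not do what you claim, and there is a separate grading issue that you slide past.

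Concretely: conjugation of $F\oplus(-F^\op)$ by your unitary $R_t$ does \emph{not} land at an off-diagonal operator. Taking $V$ to be the identity (so $F^\op=F$ as an operator) and writing $\theta=\pi t/2$, one computes
\begin{equation*}
R_t^\ast \bigl(F\oplus(-F)\bigr) R_t = \begin{pmatrix} F\cos 2\theta & -F\sin 2\theta \\ -F\sin 2\theta & -F\cos 2\theta\end{pmatrix},
\end{equation*}
so at $t=1$ you obtain $(-F)\oplus F$, a block-diagonal operator that is not degenerate unless $[F,\phi(a)]=0$ for all $a$. The standard rotation is \emph{not} a unitary conjugation: it is the path $t\mapsto\left(\begin{smallmatrix}F\cos t&\ast\sin t\\\ast\sin t&-F\cos t\end{smallmatrix}\right)$. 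In addition, $R_t$ mixes degree-$0$ and degree-$1$ pieces, so it is not itself homogeneous and conjugating by it need not produce an odd operator.

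The second issue is the grading of $A$. In this section $A$ is a graded $X$-$C^\ast$-algebra, and you take $\phi^\op$ to be literally $\phi$. With that choice the off-diagonal entries in the rotation must be the grading operator $\gamma$ of $E$ rather than scalars: the endpoint $\left(\begin{smallmatrix}0&1\\1&0\end{smallmatrix}\right)$ graded-anticommutes with $\phi(a)\oplus\phi(a)$ only when $a$ is even, whereas $\left(\begin{smallmatrix}0&\gamma\\\gamma&0\end{smallmatrix}\right)$ works for all $a$. The paper instead takes the opposite module to be $(E^\op,\phi\circ\beta_A,-F)$, which makes the scalar rotation $t\mapsto\left(\begin{smallmatrix}F\cos t&\sin t\\\sin t&-F\cos t\end{smallmatrix}\right)$ end at a degenerate module; it then notes that $\phi\circ\beta_A$ is still weakly $X$-equivariant (respectively $X$-nuclear) because $\beta_A(A(U))=A(U)$. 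Either fix (introduce $\gamma$, or twist by $\beta_A$) is fine, but as written your homotopy both lands in the wrong place and fails degeneracy on the odd part of $A$.
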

\begin{proof}
Only $KK(X; A, B)$ will be shown to be an abelian group, the proof $KK_\nuc(X; A, B)$ is obtained by replacing the word ``$X$-equivariant'' with ``$X$-nuclear''.

$KK(X; A, B)$ is clearly an abelian semigroup, as $\mathcal E \oplus \mathcal E' \approx_u \mathcal E' \oplus \mathcal E$, and a monoid with identity element $[0,0,0]$. If $(E,\phi, F)$ is an $X$-equivariant Kasparov module then $(E^\op, \phi \circ \beta_A, -F)$\footnote{Recall that $E^\op$ is equal to $E$ as Hilbert modules but with the opposite grading, i.e.~if $\xi\in E$ is homogeneous of degree $i$ for $i\in \{0,1\}$, then $\xi$ has degree $1-i$ in $E^\op$.} is also an $X$-equivariant Kasparov module since $A(U) = \beta_A(A(U))$ for all $U\in \mathcal O(X)$. Thus
\begin{equation}
(E, \phi, F) \oplus (E^\op, \phi \circ \beta_A, -F) \approx_\oh (E\oplus E^\op , \phi \oplus (\phi \circ \beta_A), \left( \begin{array}{cc} 0 & 1_{E} \\ 1_{E} & 0 \end{array} \right) ),
\end{equation}
via the operator homotopy induced by $t \mapsto \left( \begin{array}{cc} F \cos t & \sin t \\ \sin t & - F\cos t \end{array} \right)$. The latter $X$-equivariant Kasparov module above is degenerate, so $[E^\op , \phi \circ \beta_A, -F]$ is the inverse of $[E, \phi, F]$ by Lemma \ref{c:deghtpy}.
\end{proof}

\begin{example}[The classical cases]
Considering a one-point topological space $X= \{\star\}$, and continuous $\{\star\}$-$C^\ast$-algebras $A$ and $B$, one may simply think of $A$ and $B$ as $C^\ast$-algebras without an action, see Example \ref{ex:onepointXalg}. One clearly has $KK(\{\star\}; A, B) = KK(A,B)$.

Moreover, by Remark \ref{r:Skandalis} one has $KK_\nuc(\{\star\}; A, B) = KK_\nuc(A,B)$ as defined by Skandalis in \cite{Skandalis-KKnuc}.
\end{example}

\begin{example}[$C_0(X)$-algebras]
If $X$ is a locally compact Hausdorff space, and $A$ and $B$ are continuous $X$-$C^\ast$-algebras, then one may think of $A$ and $B$ as continuous $C_0(X)$-algebras, see Example \ref{ex:C(X)alg}. It can easily be checked that $KK(X; A, B) = \mathscr RKK(X; A, B)$ as defined by Kasparov in \cite{Kasparov-eqKKNovikov}.
\end{example}

\begin{observation}\label{o:KK(X)hom}
There are canonical homomorphisms 
\begin{equation}
KK_\nuc(X; A, B) \to KK(X; A, B) \to KK(A,B)
\end{equation}
given by forgetting that the modules are $X$-nuclear or $X$-equivariant.
In particular, any element $x\in KK_\nuc(X; A, B)$ induces a homomorphism $\Gamma_0(x) \colon K_0(A) \to K_0(B)$ such that if $\phi \colon A \to B$ is an $X$-nuclear $\ast$-homo\-morphism then
\begin{equation}
\Gamma_0(KK_\nuc(X;\phi)) = \phi_0 \colon K_0(A) \to K_0(B).
\end{equation}
\end{observation}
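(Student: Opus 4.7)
The plan is straightforward: construct the two forgetful maps at the level of Kasparov cycles, check that the $X$-structures are successively weaker so that the assignments descend to well-defined group homomorphisms, and then define $\Gamma_0$ via the classical Kasparov product.

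First I would observe that an $X$-nuclear Kasparov $A$-$B$-module $(E, \phi, F)$ is automatically $X$-equivariant: weak $X$-nuclearity of $\phi$ means every compression $\langle \xi, \phi(-)\xi\rangle_E$ is $X$-nuclear, and by Definition \ref{d:Xnucabs} this already requires $X$-equivariance of the compression; hence $\phi$ is weakly $X$-equivariant in the sense of Definition \ref{d:weaklyXnuc}. Exactly the same applies to $X$-nuclear $A$-$IB$-modules, so any $X$-nuclear homotopy is in particular an $X$-equivariant one, and unitary equivalences preserve both structures. The tautological assignment $[E, \phi, F] \mapsto [E, \phi, F]$ therefore descends to a well-defined map $KK_\nuc(X; A, B) \to KK(X; A, B)$, which is a group homomorphism since direct sums and the inverse construction $(E, \phi, F) \mapsto (E^\op, \phi \circ \beta_A, -F)$ clearly preserve both the $X$-nuclear and the $X$-equivariant structures. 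The subsequent forgetful map $KK(X; A, B) \to KK(A, B)$ is obtained in the same way by forgetting the action of $X$ altogether, and is likewise a group homomorphism.

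For $\Gamma_0$, let $\bar x \in KK(A, B)$ denote the image of $x \in KK_\nuc(X; A, B)$ under the composition of the two forgetful maps. Under the canonical identification $K_0(C) \cong KK(\mathbb C, C)$, define
\begin{equation}
\Gamma_0(x)(\alpha) := \bar x \circ \alpha, \qquad \alpha \in K_0(A),
\end{equation}
using the classical Kasparov product $KK(\mathbb C, A) \otimes KK(A, B) \to KK(\mathbb C, B)$. Bilinearity of the product shows $\Gamma_0(x)$ is a group homomorphism. When $x = KK_\nuc(X; \phi)$ for an $X$-nuclear $\ast$-homomorphism $\phi \colon A \to B$, unwinding the definitions gives $\bar x = KK(\phi)$, and the standard identity $KK(\phi) \circ - = \phi_0$ on $K_0$ yields $\Gamma_0(KK_\nuc(X;\phi)) = \phi_0$.

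None of these steps presents a serious obstacle; the only place where one must be slightly careful is in noting that $X$-nuclearity subsumes $X$-equivariance both at the level of c.p.\ maps and of Hilbert module representations, which is precisely how Definitions \ref{d:Xnucabs} and \ref{d:weaklyXnuc} are set up. Everything else is bookkeeping following from standard properties of the classical Kasparov product.
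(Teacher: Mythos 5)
Your proposal is correct. The paper states this as an \emph{Observation} and provides no proof, treating the forgetful maps and the induced $\Gamma_0$ as immediate from the definitions; your argument fills in exactly the bookkeeping one would expect, and the key points you identify — that $X$-nuclearity subsumes $X$-equivariance both for c.p.\ maps (built into the definition of $X$-nuclear) and hence for weakly $X$-nuclear representations, so that $X$-nuclear homotopies are $X$-equivariant homotopies — are precisely what makes the assignments well-defined. The definition of $\Gamma_0$ via the classical Kasparov product after forgetting matches the convention set up in the introduction (there $\Gamma_0$ is phrased via $KK_\nuc(\mathbb C, B)$ rather than $KK(\mathbb C, B)$, but since $\mathbb C$ is nuclear these agree), and the compatibility with $\ast$-homomorphisms is the standard functoriality identity. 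No gaps.
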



\subsection{The Kasparov product}

The \emph{Kasparov product} of a Kasparov $A$-$B$-module $(E_1, \phi_1, F_1)$ and a $B$-$C$-module $(E_2, \phi_2, F_2)$ is a Kasparov $A$-$C$-module $(E_1 \hat \otimes_{\phi_2} E_2, \phi_1 \hat \otimes 1_{\mathcal B(E_2)} , F)$ where $F$ satisfies 
\begin{itemize}
\item[(a)] $(\phi_1(a) \hat \otimes 1_{\mathcal B(E_2)})[F_1 \otimes 1_{\mathcal B(E_2)}, F] (\phi_1(a) \hat \otimes 1_{\mathcal B(E_2)})^\ast$ is positive modulo the ``compacts'' 
$\mathcal K(E_1 \hat \otimes_{\phi_2} E_2)$ for every $a\in A$; and
\item[(b)] $[\widetilde T_\xi, F_2\oplus F] \in \mathcal K(E_2 \oplus (E_1 \hat \otimes_{\phi_2} E_2))$ for every $\xi \in E_1$ where  $\widetilde T_\xi = \left( \begin{array}{cc} 0 & (\xi \hat \otimes -)^\ast \\ \xi \hat \otimes  - & 0 \end{array}\right) \in \mathcal B(E_2 \oplus (E_1 \hat \otimes_{\phi_2} E_2))$.
\end{itemize}
Such an $F$ exists and is unique up to operator homotopy, provided $A$ is separable, and $B$ and $C$ are $\sigma$-unital, see \cite[Section 18.4]{Blackadar-book-K-theory}. Hence in the $X$-equivariant and $X$-nuclear case, all one has to do is check that $\phi_1 \hat \otimes 1_{\mathcal B(E_2)}$ is in fact weakly $X$-equivariant or weakly $X$-nuclear in order to get a well-defined Kasparov product.

\begin{lemma}\label{l:tensormodule}
Let $A$, $B$, and $C$ be graded $X$-$C^\ast$-algebras, let $E_1$ and $E_2$ be graded Hilbert $B$- and $C$-modules respectively, and let $\phi_1 \colon A \to \mathcal B(E_1)$ and $\phi_2 \colon B \to \mathcal B(E_2)$ be graded, weakly $X$-equivariant representations. Then
\begin{equation}
\phi_1 \hat \otimes 1_{\mathcal B(E_2)} \colon A \to \mathcal B(E_1 \hat \otimes_{\phi_2} E_2)
\end{equation}
is weakly $X$-equivariant. Moreover, if one of $\phi_1$ and $\phi_2$ is weakly $X$-nuclear, then $\phi_1 \hat \otimes 1_{\mathcal B(E_2)}$ is weakly $X$-nuclear.
\end{lemma}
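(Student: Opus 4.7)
The strategy is to invoke Lemma \ref{l:densespan} so as to reduce the verification to elementary tensors, and then to recognize the resulting c.p.~map as a composition of $X$-equivariant (respectively $X$-nuclear) c.p.~maps. First I would let $S = \{ \xi_1 \hat \otimes \xi_2 : \xi_1 \in E_1, \; \xi_2 \in E_2 \}$, which has dense linear span in $E_1 \hat \otimes_{\phi_2} E_2$ by construction of the interior tensor product. By Lemma \ref{l:densespan} it therefore suffices to show that for every $\xi_1 \in E_1$ and $\xi_2 \in E_2$ the c.p.~map
\begin{equation}
A \ni a \mapsto \langle \xi_1 \hat \otimes \xi_2, (\phi_1(a) \hat \otimes 1_{\mathcal B(E_2)})(\xi_1 \hat \otimes \xi_2) \rangle_{E_1 \hat \otimes_{\phi_2} E_2} \in C
\end{equation}
is $X$-equivariant (respectively $X$-nuclear).

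The key computation, built into the definition of the interior tensor product, is that this map factors as
\begin{equation}
A \xrightarrow{\;\rho_1\;} B \xrightarrow{\;\rho_2\;} C, \qquad \rho_1 := \langle \xi_1, \phi_1(-) \xi_1 \rangle_{E_1}, \quad \rho_2 := \langle \xi_2, \phi_2(-) \xi_2 \rangle_{E_2}.
\end{equation}
By assumption $\rho_1$ and $\rho_2$ are $X$-equivariant c.p.~maps, and if one of $\phi_1, \phi_2$ is weakly $X$-nuclear then the corresponding $\rho_i$ is $X$-nuclear.

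The remaining task is purely formal: compositions behave well with respect to both properties. If $\rho_1$ and $\rho_2$ are $X$-equivariant then for each $U \in \mathcal O(X)$ one has $(\rho_2 \circ \rho_1)(A(U)) \subseteq \rho_2(B(U)) \subseteq C(U)$, so $\rho_2 \circ \rho_1$ is $X$-equivariant. If additionally one of them is $X$-nuclear, then the induced map on quotients satisfies $[\rho_2 \circ \rho_1]_U = [\rho_2]_U \circ [\rho_1]_U$ for every $U \in \mathcal O(X)$, and this is nuclear because it is the composition of a nuclear c.p.~map with a c.p.~map (Observation \ref{o:nuccomp}); hence $\rho_2 \circ \rho_1$ is $X$-nuclear. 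Applying this to $\rho_1$ and $\rho_2$ above finishes both parts of the lemma.

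I do not anticipate any significant obstacle here: the main point is simply that once Lemma \ref{l:densespan} is available, the computation collapses to a composition, and the stability of $X$-equivariance and $X$-nuclearity under composition is routine.
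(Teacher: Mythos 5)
Your proof is correct and takes essentially the same route as the paper: reduce to elementary tensors via Lemma \ref{l:densespan}, observe that the inner product factors as $\langle \xi_2, \phi_2(\langle \xi_1, \phi_1(-) \xi_1\rangle_{E_1}) \xi_2\rangle_{E_2}$, and conclude using stability of $X$-equivariance and $X$-nuclearity under composition. No substantive difference.
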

\begin{proof}
As $\overline{\mathrm{span}} \{ \xi_1 \otimes \xi_2 : \xi_1\in E_1, \xi_2 \in E_2\} = E_1 \hat \otimes_{\phi_2} E_2$,  Lemma \ref{l:densespan} implies that it suffices to show that the map
\begin{equation}
a \mapsto \langle \xi_1 \otimes \xi_2 , (\phi_1 \hat \otimes 1_{\mathcal B(E_2)})(a) (\xi_1 \otimes \xi_2) \rangle_{E_1 \hat \otimes_{\phi_2} E_2}
\end{equation}
is $X$-equivariant, or $X$-nuclear if one of $\phi_1$ and $\phi_2$ is weakly $X$-nuclear, for all $\xi_1 \in E_1$ and $\xi_2 \in E_2$. Fix $\xi_i \in E_i$ and let $\psi_i = \langle \xi_i, \phi_i(-) \xi_i\rangle_{E_i}$ for $i=1,2$. Then $\psi_i$ is $X$-equivariant by weak $X$-equivariance of $\phi_i$, and $\psi_i$ is $X$-nuclear if $\phi_i$ is weakly $X$-nuclear. One has
\begin{eqnarray}
&& \langle \xi_1 \otimes \xi_2 , (\phi_1 \hat \otimes 1_{\mathcal B(E_2)})(a) (\xi_1 \otimes \xi_2) \rangle_{E_1 \hat \otimes_{\phi_2} E_2} \nonumber\\
&=& \langle \xi_2, \phi_2(\langle \xi_1, \phi_1(a) \xi_1\rangle_{E_1}) \xi_2 \rangle_{E_2} \nonumber\\
&=& \psi_2 (\psi_1 ( a)).
\end{eqnarray}
As the composition of two $X$-equivariant maps is clearly $X$-equivariant, and the composition of an $X$-equivariant map and an $X$-nuclear map is $X$-nuclear, see Observation \ref{o:nuccomp}, the result follows.
\end{proof}

In the proof of the above lemma, it was important that one could check weak $X$-nuclearity only on elementary tensors. This relied on the non-trivial fact Lemma \ref{l:nucsum}; that the set of $X$-nuclear maps is hereditary, see also Remark \ref{r:Xnuccone}.


\begin{proposition}\label{p:Kasparovprod}
Let $\mathcal E_1 = (E_1, \phi_1, F_1)$ be an $X$-equivariant Kasparov $A$-$B$-module, and let $\mathcal E_2 = (E_2,\phi_2,F_2)$ be an $X$-equivariant Kasparov $B$-$C$-module. If $A$ is separable, and $B$ and $C$ are $\sigma$-unital, then there is a Kasparov product $\mathcal E_{12} = (E_1 \hat \otimes_{\phi_2} E_2, \phi_1 \hat \otimes 1_{\mathcal B(E_2)}, F)$ of $\mathcal E_1$ and $\mathcal E_2$, unique up to operator homotopy, and every such Kasparov module is $X$-equivariant.

Moreover, if one of $\mathcal E_1$ and $\mathcal E_2$ is $X$-nuclear, then so is $\mathcal E_{12}$.
\end{proposition}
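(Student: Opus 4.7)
The plan is to reduce this to the classical Kasparov product construction together with Lemma \ref{l:tensormodule}. The point is that $X$-equivariance (respectively $X$-nuclearity) of a Kasparov module is a condition on the representation only, not on the Fredholm-type operator $F$, and the representation of the Kasparov product is prescribed to be $\phi_1 \hat\otimes 1_{\mathcal B(E_2)}$.

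First I would invoke the classical Kasparov technical theorem as formulated in \cite[Section 18.4]{Blackadar-book-K-theory}: under the separability/$\sigma$-unitality assumptions, there exists an operator $F \in \mathcal B(E_1 \hat\otimes_{\phi_2} E_2)$ of odd degree satisfying conditions (a) and (b) above and producing a Kasparov $A$-$C$-module $\mathcal E_{12} = (E_1 \hat\otimes_{\phi_2} E_2, \phi_1 \hat\otimes 1_{\mathcal B(E_2)}, F)$, and moreover any two such $F$'s are connected by an operator homotopy through Kasparov modules with the same representation. Nothing in this step depends on the $X$-structure.

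Next I would verify the $X$-structure. By Lemma \ref{l:tensormodule}, since $\phi_1$ and $\phi_2$ are weakly $X$-equivariant, the representation $\phi_1 \hat\otimes 1_{\mathcal B(E_2)}$ is weakly $X$-equivariant, so $\mathcal E_{12}$ is an $X$-equivariant Kasparov module. Moreover, the ``moreover'' clause of Lemma \ref{l:tensormodule} says that if either $\phi_1$ or $\phi_2$ is weakly $X$-nuclear, then $\phi_1 \hat\otimes 1_{\mathcal B(E_2)}$ is weakly $X$-nuclear, so $\mathcal E_{12}$ is $X$-nuclear in that case. The uniqueness statement carries over verbatim: any two choices of $F$ yield Kasparov modules with the same (weakly $X$-equivariant, respectively weakly $X$-nuclear) representation, hence the classical operator homotopy automatically stays within the class of $X$-equivariant (respectively $X$-nuclear) Kasparov modules.

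I do not anticipate any serious obstacle here, since all the genuine analytic work has been isolated either in the classical construction of $F$ or in Lemma \ref{l:tensormodule}. The only subtlety worth flagging is that Lemma \ref{l:tensormodule} is applied to the \emph{whole} representation $\phi_1 \hat\otimes 1_{\mathcal B(E_2)}$ (not just its restriction to some dense subset), and this is exactly what Lemma \ref{l:densespan} was set up to deliver through the hereditarity of the cones $\CP(X; A,C)$ and $\CP_\nuc(X;A,C)$ established in Lemma \ref{l:nucsum} and Remark \ref{r:Xnuccone}. So the proof reduces to citing these two preparatory results and the classical existence/uniqueness theorem.
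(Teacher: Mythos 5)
Your proof is correct and follows exactly the paper's own approach: the classical construction of $F$ (the paper cites \cite[Theorem 2.2.8]{JensenThomsen-book-KK-theory} where you cite \cite[Section 18.4]{Blackadar-book-K-theory}, but these give the same result), combined with Lemma \ref{l:tensormodule} to verify that the representation $\phi_1 \hat\otimes 1_{\mathcal B(E_2)}$ retains the $X$-structure. Your additional commentary correctly identifies that the $X$-condition is on the representation alone and traces the supporting chain through Lemma \ref{l:densespan} and Lemma \ref{l:nucsum}.
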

\begin{proof}
This follows immediately from Lemma \ref{l:tensormodule} and \cite[Theorem 2.2.8]{JensenThomsen-book-KK-theory}.
\end{proof}

\begin{lemma}\label{l:prodoh}
Let $A,B$, and $C$ be graded $X$-$C^\ast$-algebras for which $A$ is separable, and $B$ and $C$ are $\sigma$-unital, let $\mathcal E_1$ and $\mathcal E_1'$ be $X$-equivariant Kasparov $A$-$B$-modules, and let $\mathcal E_2$ and $\mathcal E_2'$ be $X$-equivariant Kasparov $B$-$C$-modules. Let $\mathcal E_{12}$ be a Kasparov product of $\mathcal E_1$ and $\mathcal E_2$, and let $\mathcal E_{12}'$ be a Kasparov product of $\mathcal E_1'$ and $\mathcal E_2'$. Suppose that $\mathcal E_1 \sim_\oh \mathcal E_1'$ and $\mathcal E_2 \sim_\oh \mathcal E_2'$ (both in the $X$-equivariant sense). The following hold.
\begin{itemize}
\item[(1)] $\mathcal E_{12} \sim_\oh \mathcal E_{12}'$ (in the $X$-equivariant sense).
\item[(2)] If $i\in \{1,2\}$ is such that $\mathcal E_i$ and $\mathcal E_i'$ are $X$-nuclear and $\mathcal E_i \sim_\oh \mathcal E_i'$ (in the $X$-nuclear sense), then $\mathcal E_{12} \sim_\oh \mathcal E_{12}'$ (in the $X$-nuclear sense).
\end{itemize}
\end{lemma}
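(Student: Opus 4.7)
The plan is to mirror the classical proof of well-definedness of the Kasparov product on $KK$-groups (as found in, say, Chapter 2 of \cite{JensenThomsen-book-KK-theory} or Section 18 of \cite{Blackadar-book-K-theory}). The essential reason the classical argument transfers to the $X$-equivariant and $X$-nuclear settings is that weak $X$-equivariance and weak $X$-nuclearity of a Kasparov cycle $(E,\phi,F)$ depend only on the representation $\phi \colon A \to \mathcal B(E)$ and not on the operator $F$ (Definition \ref{d:weaklyXnuc}). Combined with Proposition \ref{p:Kasparovprod}, which already produces a Kasparov product whose representation lies in the correct subcategory, every classical manipulation---direct sums (Corollary \ref{c:dirsumrep}), unitary equivalence, and operator homotopy---automatically preserves the extra structure on the representation side. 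What remains is to organise the classical argument and verify that the usual constructions stay inside our chosen subcategory at each step.

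By transitivity of $\sim_\oh$ and the symmetry of the two sides, I would first reduce to proving the one-sided statement: if $\mathcal F \sim_\oh \mathcal F'$ as $X$-equivariant (respectively $X$-nuclear) Kasparov $A$-$B$-modules, then $\mathcal F \#_B \mathcal E_2 \sim_\oh \mathcal F' \#_B \mathcal E_2$. Unravelling $\sim_\oh$ supplies degenerate modules $\mathcal D, \mathcal D'$ in the chosen subcategory with $\mathcal F \oplus \mathcal D \approx_\oh \mathcal F' \oplus \mathcal D'$, and a Kasparov product of a direct sum can be chosen as the direct sum of the individual Kasparov products. The argument then splits into (i) showing $\mathcal D \#_B \mathcal E_2 \sim_\h 0$ for every degenerate $\mathcal D$ in the chosen subcategory, and (ii) showing that $\mathcal F \approx_\oh \mathcal F'$ implies the respective Kasparov products with $\mathcal E_2$ are $\approx_\oh$.

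For (i), after normalising $\mathcal D = (E_1,\phi_1,F_1)$ so that $F_1 = F_1^*$, $F_1^2 = 1_{\mathcal B(E_1)}$ and $[F_1, \phi_1(A)] = 0$, the cycle $(E_1 \hat\otimes_{\phi_2} E_2,\, \phi_1 \hat\otimes 1,\, F_1 \hat\otimes 1)$ is itself degenerate, and a direct calculation with graded commutators and Skandalis' connection operators $\widetilde T_\xi$ shows it satisfies the conditions defining a Kasparov product operator; hence it is a legitimate representative of $\mathcal D \#_B \mathcal E_2$, with $X$-equivariant (respectively $X$-nuclear) representation by Lemma \ref{l:tensormodule}, and Corollary \ref{c:deghtpy} then gives $\sim_\h 0$. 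Uniqueness of the Kasparov product up to operator homotopy upgrades this from a specific choice of product to any Kasparov product $\mathcal D \#_B \mathcal E_2$. For (ii), after absorbing unitaries I may assume that $\mathcal F$ and $\mathcal F'$ share a common Hilbert module $E_1$ and representation $\phi_1$, connected by a norm-continuous path $(G_t)_{t\in[0,1]}$ of Kasparov operators. The plan is to form the Kasparov $A$-$IB$-module $(E_1 \otimes C[0,1],\, \phi_1 \otimes 1,\, (G_t)_t)$, which is weakly $X$-equivariant (respectively weakly $X$-nuclear) by Lemma \ref{l:weakhtpy}, take its Kasparov product with $\mathcal E_2$ viewed as a constant $IB$-$IC$-module via Proposition \ref{p:Kasparovprod}, and extract the desired operator homotopy by evaluating at $t=0,1$ and invoking uniqueness.

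Part (2) is then a bookkeeping addendum: the degenerate modules witnessing $X$-nuclear $\sim_\oh$-equivalence are themselves $X$-nuclear, and the ``moreover'' clause of Proposition \ref{p:Kasparovprod} guarantees that $X$-nuclearity propagates through each Kasparov product appearing in the argument whenever at least one of the two inputs is $X$-nuclear, even if the other is merely $X$-equivariant. The main obstacle I anticipate is the explicit verification in (i) that $F_1 \hat\otimes 1$ satisfies the Kasparov product conditions; this is a routine graded-commutator calculation but is the single point at which one engages with the analytical core of the Kasparov product theory rather than with its uniqueness, and it must be followed through carefully to confirm that no step secretly uses a feature that interacts badly with the $X$-equivariant or $X$-nuclear structure.
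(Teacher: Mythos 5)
Your high-level plan is exactly the same as the paper's: Proposition \ref{p:Kasparovprod} handles the existence and $X$-structure of the product, and the rest of the well-definedness argument is the classical bookkeeping (sums, degenerates, unitaries, operator homotopies), which the paper simply cites as \cite[Lemmas 2.2.9--14]{JensenThomsen-book-KK-theory}. The governing observation you make---that weak $X$-equivariance and weak $X$-nuclearity are properties of the representation $\phi$ alone, not of the operator $F$, so the classical manipulations automatically preserve the extra structure---is precisely why that citation suffices, and you have identified the right reductions (one-sidedness, direct sums, degenerates, operator homotopy via an $A$-$IB$-module).

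However, there is a genuine gap in your step (i). After normalising $\mathcal D=(E_1,\phi_1,F_1)$ so that $F_1=F_1^\ast$, $F_1^2=1$ and $[F_1,\phi_1(A)]=0$, the module $(E_1\hat\otimes_{\phi_2}E_2,\phi_1\hat\otimes 1,F_1\hat\otimes 1)$ \emph{is} a degenerate Kasparov $A$-$C$-module, but it is \emph{not} a Kasparov product of $\mathcal D$ and $\mathcal E_2$, because $F_1\hat\otimes 1$ fails the $F_2$-connection condition (condition (b) in the definition just before Lemma \ref{l:tensormodule}). A short computation shows why: for homogeneous $\xi\in E_1$, $(F_1\hat\otimes 1)T_\xi=T_{F_1\xi}$, so the connection condition would require $T_\xi F_2-(-1)^{\partial\xi}T_{F_1\xi}$ to be ``compact'', and these two operators map into $\langle\xi\rangle$- and $\langle F_1\xi\rangle$-components of $E_1\hat\otimes_{\phi_2}E_2$, which are orthogonal when $\xi\perp F_1\xi$; hence their difference is ``compact'' only if both $T_\xi F_2$ and $T_{F_1\xi}$ are separately ``compact'', which fails whenever $\phi_2$ is not a ``compact'' representation (e.g.~$\phi_2$ non-degenerate into $\mathcal B(\ell^2\otimes C)$). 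So you cannot invoke uniqueness of the Kasparov product starting from this cycle. The correct treatment of the degenerate case---which is what Jensen--Thomsen's Lemma 2.2.10 actually does---starts from a genuine $F_2$-connection $F$ on $E_1\hat\otimes_{\phi_2}E_2$ and then produces an \emph{operator homotopy} from $(E_1\hat\otimes_{\phi_2}E_2,\phi_1\hat\otimes 1,F)$ to a degenerate module, using the exact commutation $[F_1,\phi_1(A)]=0$ to modify $F$ (e.g.~replacing $F$ by a connection that genuinely anticommutes with $F_1\hat\otimes 1$ and then rotating). That extra homotopy step is unavoidable, but---as with every other step in your outline---it is an operator homotopy with fixed representation $\phi_1\hat\otimes 1$, so it stays inside the $X$-equivariant (respectively $X$-nuclear) subcategory by Lemma \ref{l:weakhtpy}, and the rest of your argument goes through once (i) is repaired.
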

\begin{proof}
This follows immediately from Proposition \ref{p:Kasparovprod} and \cite[Lemmas 2.2.9--14]{JensenThomsen-book-KK-theory}.
\end{proof}

Recall that we may form a \emph{graded spatial (a.k.a.~minimal) tensor product} $A \hat \otimes C$ of graded $C^\ast$-algebras, which has grading operator $\beta_A \hat \otimes \beta_C$. Also, if $E_A$ and $E_C$ are graded Hilbert $A$- and $C$-modules respectively, we may form their \emph{exterior tensor product} $E_A \hat \otimes E_C$ which is a graded Hilbert $A \hat \otimes C$-module.

If $A$ is a graded $C^\ast$-algebra (with no given action of $X$) and $C$ is a graded $X$-$C^\ast$-algebras then $A\hat \otimes C$ is a graded $X$-$C^\ast$-algebras with the action
\begin{equation}
(A\hat \otimes C)(U) = A \hat \otimes C(U), \qquad U \in \mathcal O(X).
\end{equation}

When $A,B$, and $C$ are graded $C^\ast$-algebras, and $\mathcal E = (E,\phi, F)$ is a Kasparov $A$-$B$-module, one may form the exterior tensor product
\begin{equation}
\mathcal E  \otimes C := ( E \hat \otimes C , \phi  \hat \otimes \id_C \colon A\hat \otimes C \to \mathcal B(E \hat \otimes C), F\hat \otimes 1_{\mathcal B(C)}),
\end{equation}
which is a Kasparov $A\hat \otimes C$-$B\hat \otimes C$-module.

Clearly the external tensor product $- \otimes C$ of Kasparov modules preserves direct sums (up to unitary equivalence), operator homotopies, and degeneracy. Note that none of these conditions rely on $X$-equivariance or $X$-nuclearity.

\begin{lemma}\label{l:exttensorbasic}
Let $A$ and $B$ be graded $C^\ast$-algebras (with no given action of $X$), let $C$ be a graded $X$-$C^\ast$-algebra, and let $\mathcal E = (E, \phi, F)$ be a Kasparov $A$-$B$-module. Then $\mathcal E  \otimes C$ is an $X$-equivariant Kasparov $(A\hat \otimes C)$-$(B\hat \otimes C)$-module. In particular, if $\mathcal E_0\sim_{\oh} \mathcal E_1$ (as ordinary Kasparov $A$-$B$-modules), then $\mathcal E_0  \otimes C \sim_{\oh} \mathcal E_1  \otimes C$ in the $X$-equivariant sense.
\end{lemma}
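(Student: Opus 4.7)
The plan is to verify the weak $X$-equivariance of the representation $\phi \hat\otimes \id_C \colon A \hat\otimes C \to \mathcal{B}(E \hat\otimes C)$, since everything else required for $\mathcal{E}\otimes C$ to be a Kasparov module is the classical statement that external tensor product preserves the Kasparov axioms. Once weak $X$-equivariance is established, the ``in particular'' statement follows formally from general properties of $-\otimes C$ listed in the paragraph preceding the lemma.

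For the $X$-equivariance, I would invoke Lemma \ref{l:densespan} with the spanning set $S = \{\xi \hat\otimes c : \xi \in E,\, c\in C\} \subseteq E \hat\otimes C$, so the task reduces to showing that for each such elementary tensor the c.p.~map
\begin{equation}
\psi_{\xi,c} := \langle \xi \hat\otimes c,\, (\phi \hat\otimes \id_C)(-)(\xi \hat\otimes c)\rangle_{E \hat\otimes C} \colon A \hat\otimes C \to B \hat\otimes C
\end{equation}
is $X$-equivariant. A direct computation on elementary tensors $a \hat\otimes c'$ yields (up to the usual grading sign)
\begin{equation}
\psi_{\xi,c}(a \hat\otimes c') = \langle \xi, \phi(a)\xi\rangle_E \hat\otimes c^{\ast} c' c,
\end{equation}
so that $\psi_{\xi,c}$ is the spatial tensor product of the c.p.~map $\rho := \langle \xi,\phi(-)\xi\rangle_E \colon A \to B$ with the c.p.~map $\mathrm{Ad}_c \colon C \to C$.

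Now for $U \in \mathcal{O}(X)$, the ideal $C(U)$ is invariant under $\mathrm{Ad}_c$, hence $(\rho \hat\otimes \mathrm{Ad}_c)$ maps $A \hat\otimes C(U) = (A\hat\otimes C)(U)$ into $B \hat\otimes C(U) = (B\hat\otimes C)(U)$; this is obvious on elementary tensors and then extends to all of the ideal by linearity, continuity and the fact that $B \hat\otimes C(U)$ is closed. Thus $\psi_{\xi,c}$ is $X$-equivariant, which by Lemma \ref{l:densespan} gives that $\phi \hat\otimes \id_C$ is weakly $X$-equivariant, completing the first statement.

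For the ``in particular'' part, assume $\mathcal{E}_0 \sim_\oh \mathcal{E}_1$ as ordinary Kasparov modules, so there exist degenerate Kasparov modules $\mathcal{D}_0, \mathcal{D}_1$ with $\mathcal{E}_0 \oplus \mathcal{D}_0 \approx_\oh \mathcal{E}_1 \oplus \mathcal{D}_1$. As noted before the lemma, external tensoring with $C$ preserves direct sums (up to unitary equivalence), degeneracy, and operator homotopies, so
\begin{equation}
(\mathcal{E}_0 \otimes C) \oplus (\mathcal{D}_0 \otimes C) \approx_\oh (\mathcal{E}_1 \otimes C) \oplus (\mathcal{D}_1 \otimes C).
\end{equation}
By the first part, all four modules appearing are $X$-equivariant Kasparov modules, and $\mathcal{D}_i \otimes C$ remain degenerate. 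Hence $\mathcal{E}_0 \otimes C \sim_\oh \mathcal{E}_1 \otimes C$ in the $X$-equivariant sense. The main (minor) obstacle is purely bookkeeping: passing the $X$-action, which lives only on $C$, through the graded tensor product; this is handled by the reduction to elementary tensors via Lemma \ref{l:densespan} and the fact that $(A \hat\otimes C)(U) = A \hat\otimes C(U)$ by definition.
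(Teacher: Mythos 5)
Your proof is correct and follows essentially the same route as the paper: reduce to elementary tensors via Lemma \ref{l:densespan}, compute the compression $\psi_{\xi,c}$ on an elementary tensor $a \hat\otimes c'$ to get $\pm\langle\xi,\phi(a)\xi\rangle_E \hat\otimes c^\ast c' c$, and conclude $X$-equivariance; the ``in particular'' then follows from the tensor-compatibility facts stated just before the lemma. The only small point worth tightening is that the sign computation should be done with \emph{homogeneous} $\xi$, $c$, $a$, $c'$ (as the paper does) -- the spanning set of homogeneous elementary tensors suffices for Lemma \ref{l:densespan}, and without homogeneity one must split into homogeneous parts before the sign is a single scalar.
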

\begin{proof}
As $\overline{\mathrm{span}} \{ \xi \otimes c : \xi \in E , c \in C \textrm{ are homogeneous}\} = E  \hat \otimes C$, Lemma \ref{l:densespan} implies that it suffices to show that
\begin{equation}
A \hat \otimes C \ni x \mapsto \langle \xi \otimes c , (\phi \hat \otimes \id_C)(x) (\xi \otimes c)\rangle_{E \hat \otimes C} \in B \hat \otimes C
\end{equation}
is $X$-equivariant for all homogeneous $\xi \in E$ and $c\in C$. Denote this map by $\phi_0$ for a given $\xi$ and $c$. Hence, given $U\in \mathcal O(X)$ we should show that $\phi_0(A \hat \otimes C(U)) \subseteq B \hat \otimes C(U)$. Let $a\in A$ and $d\in C(U)$ be homogeneous. Then
\begin{equation}
\phi_0(a \otimes d) = \langle \xi \otimes c , (\phi(a) \otimes d) (\xi \otimes c)\rangle_{E \hat \otimes C} = \pm \langle \xi , \phi(a) \xi\rangle_E  \otimes c^\ast dc
\end{equation}
where the sign depends on the degrees of $\xi, a,c$, and $d$ (see \cite[14.4.4]{Blackadar-book-K-theory}, although the exact computation is not needed). No matter the sign we have $\phi_0(a\otimes d) \in B\hat \otimes C(U)$. As $A \hat \otimes C(U)$ is densely spanned by its elementary tensors of homogeneous elements, it follows that $\phi_0(A \hat \otimes C(U)) \subseteq B \hat \otimes C(U)$. Hence $\mathcal E  \otimes C$ is $X$-equivariant.

The ``in particular'' part follows immediately since $- \otimes C$ preserves direct sums, operator homotopies, and takes degenerate modules to degenerate modules which are $X$-equivariant by what we already proved.
\end{proof}

The following holds as in the classical case.

\begin{proposition}\label{p:ohvsh}
Let $A$ and $B$ be graded $X$-$C^\ast$-algebras with $A$ separable and $B$ $\sigma$-unital. Then the equivalence relations $\sim_\h$ and $\sim_\oh$ agree on $X$-equivariant (respectively $X$-nuclear) Kasparov $A$-$B$-modules.
\end{proposition}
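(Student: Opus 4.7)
The plan is to follow the classical approach due to Kasparov, with additional bookkeeping to preserve the $X$-equivariant (resp.\ $X$-nuclear) structure at each step. By Corollary \ref{c:ohstronger}, $\sim_\oh$ is already stronger than $\sim_\h$, so only the converse implication requires proof. Since $\sim_\h$ is the transitive closure of single homotopies allowing the addition of degenerate modules, and $\sim_\oh$ already incorporates such degenerates, it suffices to show that whenever $\mathcal E_0 \approx_\h \mathcal E_1$ is implemented by an $X$-equivariant (resp.\ $X$-nuclear) Kasparov $A$-$IB$-module $\mathcal F = (E_I, \phi_I, F_I)$, one has $\mathcal E_0 \sim_\oh \mathcal E_1$.

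The strategy is to realize $\mathcal F$ as producing a continuous family of Kasparov $A$-$B$-modules interpolating between $\mathcal E_0$ and $\mathcal E_1$, and then to trivialize this family into a genuine operator homotopy. For each $t \in [0,1]$ the evaluation $\ev_t \colon IB \to B$ is a graded, $X$-equivariant $\ast$-homomorphism, giving the $X$-equivariant Kasparov $IB$-$B$-module $(B, \ev_t, 0)$. By Proposition \ref{p:Kasparovprod} (noting that $\mathcal F$ being $X$-nuclear forces the product to be $X$-nuclear), the Kasparov product $[\mathcal F] \otimes_{IB} [B, \ev_t, 0] \in KK(X; A, B)$ exists and may be represented by the fiber $\mathcal E_t := ((E_I)_t, (\phi_I)_t, (F_I)_t)$, which is $X$-equivariant (resp.\ $X$-nuclear) by Lemma \ref{l:weakhtpy} and agrees with $\mathcal E_i$ at $t = i$ up to unitary equivalence.

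To upgrade the parametrized family $(\mathcal E_t)_{t \in [0,1]}$ to a genuine operator homotopy, in which the Hilbert module and representation are fixed and only the Fredholm operator varies, one stabilizes $\mathcal F$ so that its module and representation become constant in $t$. After adding a suitable degenerate $X$-equivariant (resp.\ $X$-nuclear) Kasparov $A$-$IB$-module $\mathcal D$ to $\mathcal F$, an $X$-equivariant version of Kasparov's absorption theorem allows one to assume $(E_I, \phi_I) \approx_u (\mathcal H_B \hat\otimes IB, \pi \hat\otimes 1_{\mathcal B(IB)})$ for a fixed weakly $X$-equivariant (resp.\ weakly $X$-nuclear) representation $\pi \colon A \to \mathcal B(\mathcal H_B)$, where $\mathcal H_B = \ell^2(\mathbb N) \hat\otimes B$ is the standard module. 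In this trivialized form, $F_I$ corresponds to a norm-continuous path $(F_I)_t \in \mathcal B(\mathcal H_B)$ with $(\mathcal H_B, \pi, (F_I)_t)$ an $X$-equivariant (resp.\ $X$-nuclear) Kasparov module for every $t$, yielding the required operator homotopy between the stabilizations of $\mathcal E_0$ and $\mathcal E_1$.

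The main technical point is the absorption step used in the third paragraph, which must be executed within the $X$-equivariant (resp.\ $X$-nuclear) category. In the $X$-nuclear case this follows from Theorem \ref{t:infrepXabs} applied to the pair of $IB$-$C^\ast$-algebras $A$ and $IB$, using an infinite repeat of an $X$-full, nuclear representation to absorb $\phi_I$ (the $X$-fullness being arrangeable by first adding a standard degenerate module). The $X$-equivariant-only case requires a parallel absorption result obtained by the same argument as Proposition \ref{p:absnonunital}, but with $X$-nuclearity replaced throughout by mere $X$-equivariance; this is the chief divergence from the classical proof and the principal technical obstacle, though it introduces no essential new difficulty.
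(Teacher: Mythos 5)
Your approach is genuinely different from the paper's, and it has a real gap. The paper proves this by mimicking Blackadar's \cite[Section 18.5]{Blackadar-book-K-theory} argument: it starts from the classical operator homotopy $(C[0,1], \ev_0, 0) \sim_\oh (C[0,1], \ev_1, 0)$, tensors it up to an $X$-equivariant operator homotopy $(IB, \ev_0, 0) \sim_\oh (IB, \ev_1, 0)$ via Lemma \ref{l:exttensorbasic}, and then applies Lemma \ref{l:prodoh} (the Kasparov product respects $\sim_\oh$) to the homotopy module $\mathcal E$ to conclude $\mathcal E_0 \sim_\oh \mathcal E_1$. No absorption is used at any point.

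Your stabilization/absorption approach runs into two concrete problems. First, Theorem \ref{t:infrepXabs} has hypotheses that the present proposition lacks: $A$ must be exact and lower semicontinuous, and one needs an $X$-full nuclear $\ast$-homomorphism $A \to IB$ to exist in the first place. The proposition only assumes $A$ separable and $B$ $\sigma$-unital. Second, and more seriously, your claim that the $X$-equivariant (non-nuclear) case follows ``by the same argument as Proposition \ref{p:absnonunital}, but with $X$-nuclearity replaced throughout by mere $X$-equivariance'' and ``introduces no essential new difficulty'' is not correct. The absorption machinery hinges on approximate domination results (Theorem \ref{t:approxdom}, Proposition \ref{p:fulldom}), and these fundamentally require the maps being dominated to be \emph{nuclear}. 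An $X$-full $\ast$-homomorphism approximately dominates $X$-nuclear maps, but there is no analogous statement for arbitrary $X$-equivariant c.p.~maps, and the paper explicitly remarks (end of the section on $X$-nuclear maps) that even the appropriate extension to non-exact domains is quite involved. So the ``principal technical obstacle'' you flag is a genuine obstruction, not a routine modification. The paper's Kasparov-product argument sidesteps all of this and works uniformly in both the $X$-equivariant and $X$-nuclear cases with no hypotheses beyond separability of $A$ and $\sigma$-unitality of $B$.
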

\begin{proof}
As $\sim_\oh$ is stronger than $\sim_\h$ by Corollary \ref{c:ohstronger} we only need to prove the converse. We only do the $X$-nuclear case.
Let $\mathcal E_0$ and $\mathcal E_1$ be $X$-nuclear Kasparov modules such that $\mathcal E_0 \sim_\h \mathcal E_1$. We should prove that $\mathcal E_0 \sim_\oh \mathcal E_1$. As $\sim_\h$ is the transitive closure of $\approx_\h$, and as $\sim_\oh$ is an equivalence relation, we may assume without loss of generality that $\mathcal E_0 \approx_\h \mathcal E_1$. Fix $\mathcal E$ an $X$-nuclear Kasparov module which is a homotopy from $\mathcal E_0$ to $\mathcal E_1$.

By \cite[Lemma 18.5.1]{Blackadar-book-K-theory}, $(C[0,1], \ev_0, 0) \sim_\oh (C[0,1], \ev_1, 0)$ in the classical sense as Kasparov $C[0,1]$-$\mathbb C$-modules. As $(C[0,1], \ev_i , 0) \otimes B$ and $(IB, \ev_i, 0)$ are unitarily equivalent for $i=0,1$, Lemma \ref{l:exttensorbasic} implies that $(IB, \ev_0,0) \sim_\oh (IB, \ev_1, 0)$ in the $X$-equivariant sense. As $\mathcal E_i$ is a Kasparov product of $\mathcal E$ and $(IB, \ev_i, 0)$ for $i=0,1$, it follows from Lemma \ref{l:prodoh} that $\mathcal E_0 \sim_{\oh} \mathcal E_1$ in the $X$-nuclear sense.
\end{proof}

\begin{theorem}[The Kasparov product]\label{t:Kaspprod}
Let $X$ be a topological space,  and let $A$, $B$, and $C$ be separable, graded $X$-$C^\ast$-algebras. The Kasparov product induces bilinear products
\begin{equation}
\begin{array}{rrclcl}
\circ \colon & KK(X; B, C) &\otimes& KK(X; A, B) & \to & KK(X; A, C) \\
\circ \colon & KK_\nuc(X; B, C) &\otimes& KK(X; A, B) & \to & KK_\nuc(X; A, C) \\
\circ \colon & KK(X; B, C) &\otimes& KK_\nuc (X; A, B) & \to & KK_\nuc(X; A, C) \\
\circ \colon & KK_\nuc(X; B, C) &\otimes& KK_\nuc(X; A, B) & \to & KK_\nuc(X; A, C).
\end{array}
\end{equation}
These bilinear products are all associative in the obvious sense.
\end{theorem}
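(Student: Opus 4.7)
The plan is to define the product on equivalence classes by choosing representatives, forming the classical Kasparov product, and verifying that (i) such a product exists with the correct $X$-equivariance / $X$-nuclearity properties, (ii) the outcome depends only on the $\sim_\h$-classes of the inputs, and (iii) the resulting operation is bilinear and associative. Concretely, given classes $x \in KK(X;A,B)$ (or $KK_\nuc(X;A,B)$) and $y \in KK(X;B,C)$ (or $KK_\nuc(X;B,C)$), I would pick representatives $\mathcal E_1$ and $\mathcal E_2$ and appeal to Proposition \ref{p:Kasparovprod} to produce a Kasparov product $\mathcal E_{12}$; that proposition already guarantees $\mathcal E_{12}$ is $X$-equivariant, and $X$-nuclear as soon as one of the factors is. I would then define $y \circ x := [\mathcal E_{12}]$ in the appropriate group.

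For well-definedness, if $\mathcal E_1 \sim_\h \mathcal E_1'$ and $\mathcal E_2 \sim_\h \mathcal E_2'$ in the relevant ($X$-equivariant, or $X$-nuclear) sense, Proposition \ref{p:ohvsh} upgrades these homotopies to $\sim_\oh$-equivalences, and Lemma \ref{l:prodoh} then gives $\mathcal E_{12} \sim_\oh \mathcal E_{12}'$ in the matching sense (part (1) for the purely equivariant product, part (2) whenever one side is $X$-nuclear). Applying Corollary \ref{c:ohstronger} in reverse, this descends to $\sim_\h$, so $[\mathcal E_{12}] = [\mathcal E_{12}']$ in the target group. The compatibility with the forgetful maps of Observation \ref{o:KK(X)hom} also falls out automatically, since the underlying classical Kasparov product is the same in all cases.

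Bilinearity is then essentially free: direct sums of $X$-equivariant (respectively $X$-nuclear) Kasparov modules are again of the same type by Corollary \ref{c:dirsumrep}, and the classical Kasparov product distributes over direct sums up to operator homotopy (a representative of a product of $\mathcal E_1 \oplus \mathcal E_1'$ with $\mathcal E_2$ is given by the direct sum of product representatives for the summands; this uses only the characterising conditions (a)–(b) of the Kasparov product, which are preserved by direct sums). Combined with well-definedness, this yields bilinearity on equivalence classes in all four variants.

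For associativity, the classical Kasparov product is associative up to operator homotopy (see, e.g., \cite[Section 2.2]{JensenThomsen-book-KK-theory}), so given representatives of $x\in KK(X;A,B)$, $y \in KK(X;B,C)$, $z \in KK(X;C,D)$ one obtains an operator homotopy between triple products $(z \circ y)\circ x$ and $z \circ (y \circ x)$. The only point to check is that this operator homotopy lives inside the appropriate class: by iterated application of Lemma \ref{l:tensormodule}, every intermediate module inherits weak $X$-equivariance, and inherits weak $X$-nuclearity as soon as at least one of the three inputs is $X$-nuclear. Thus the homotopy witnesses $\sim_\oh$-equivalence in the correct subclass, which by Proposition \ref{p:ohvsh} gives $\sim_\h$-equivalence and hence equality in the relevant group. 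I expect no serious obstacle here; the only subtle point is keeping track, in the $X$-nuclear case, of which factor carries nuclearity through each tensor step, and this is handled cleanly by Lemma \ref{l:tensormodule}.
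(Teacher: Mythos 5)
The proposal is correct and takes essentially the same approach as the paper's proof, which simply invokes Lemma \ref{l:prodoh}, Proposition \ref{p:ohvsh}, and the classical associativity of the Kasparov product up to operator homotopy (citing Blackadar where you cite Jensen--Thomsen); you spell out bilinearity and the tracking of $X$-nuclearity through Lemma \ref{l:tensormodule} in slightly more detail than the paper's terse one-line proof, but the structure is identical.
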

\begin{proof}
This is an immediate consequence of Lemma \ref{l:prodoh}, Proposition \ref{p:ohvsh}, and \cite[Theorem 18.6.1]{Blackadar-book-K-theory}\footnote{The proof of this result is exactly showing associativity of the Kasparov product up to operator homotopy.} for associativity.
\end{proof}

At first it might seem weird that taking the product of a $KK(X)$-element and a $KK_\nuc(X)$-element produces a $KK_\nuc(X)$-element. This is essentially because the composition of two c.p.~maps, one of which is nuclear, is again a nuclear c.p.~map.

\begin{remark}
As in the classical case it follows that
\begin{equation}
KK(X; \psi) \circ KK(X; \phi) = KK(X; \psi \circ \phi)
\end{equation}
for graded, $X$-equivariant $\ast$-homomorphisms. If one of these $\ast$-homo\-morph\-isms is $X$-nuclear, say $\phi$, then
\begin{equation}
KK(X; \psi) \circ KK_\nuc(X; \phi) = KK_\nuc(X; \psi \circ \phi),
\end{equation}
and similarly if $\psi$ is $X$-nuclear.
\end{remark}

\begin{lemma}\label{l:productid}
Let $A$ and $B$ be graded $X$-$C^\ast$-algebras for which $A$ is separable and $B$ is $\sigma$-unital. Let $\mathcal E$ be an $X$-equivariant (respectively $X$-nuclear) Kasparov $A$-$B$-module. Then the Kasparov product of $\mathcal E$ with either the $X$-equivariant $A$-$A$-module $(A,\id_A,0)$ or the $X$-equivariant $B$-$B$-module $(B,\id_B,0)$, is $X$-equivariantly (respectively $X$-nuclearly) homotopic to $\mathcal E$.
\end{lemma}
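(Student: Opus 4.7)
Write $\mathcal E = (E, \phi, F)$. The two cases are handled separately, and in each case the key is to identify a specific choice of Kasparov product operator so that the product module becomes (after a harmless modification) unitarily equivalent to $\mathcal E$, while verifying that all intervening constructions preserve $X$-equivariance and $X$-nuclearity.

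First I would handle the product with the right-unit $(B, \id_B, 0)$. The product module $E \hat\otimes_{\id_B} B$ carries the canonical unitary isomorphism $E \hat\otimes_{\id_B} B \to E$ given by $\xi \hat\otimes b \mapsto \xi b$, intertwining $\phi \hat\otimes 1_{\mathcal B(B)}$ with $\phi$. Since the operator of $(B, \id_B, 0)$ is zero, conditions (a) and (b) characterising the Kasparov product operator are trivially satisfied by $F' := F \hat\otimes 1_{\mathcal B(B)}$, which corresponds to $F$ under the isomorphism. The product is therefore unitarily equivalent to $\mathcal E$, yielding an $X$-equivariant (resp.\ $X$-nuclear) homotopy by the obvious degenerate-constant homotopy.

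Next I would handle the product with the left-unit $(A, \id_A, 0)$. The product module is $A \hat\otimes_\phi E$ carrying the representation $\id_A \hat\otimes 1_{\mathcal B(E)}$, which is weakly $X$-equivariant (resp.\ weakly $X$-nuclear) by Lemma \ref{l:tensormodule}, since $\id_A$ is trivially weakly $X$-equivariant and $X$-nuclear. The adjointable isometry $V\colon A \hat\otimes_\phi E \to E$, $V(a \hat\otimes \xi) = \phi(a)\xi$, has range $\overline{\phi(A)E}$, and under $V$ the representation $\id_A \hat\otimes 1$ is intertwined with $\phi$ restricted to $\overline{\phi(A)E}$. The reduction I would use: for an approximate unit $(u_\lambda)$ of $A$, the bounded net $\phi(u_\lambda) \in \mathcal B(E)$ converges strictly to an adjointable projection $P$ onto $\overline{\phi(A)E}$ (after first, if necessary, augmenting $\mathcal E$ with a degenerate summand to ensure nondegeneracy of the modified representation); $P$ commutes with $\phi$ and satisfies $P = VV^*$. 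The operator $V^*FV$ then satisfies the Kasparov product axioms (a) and (b), so it is a valid choice for $F'$, and $(A \hat\otimes_\phi E, \id_A \hat\otimes 1, V^*FV)$ is unitarily equivalent to $(PE, \phi|_{PE}, PFP)$.

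It remains to connect this last module back to $\mathcal E$. I would use the linear operator homotopy $F_t := PFP + (1-P)F(1-P) + t \bigl(PF(1-P) + (1-P)FP\bigr)$ for $t \in [0,1]$, which keeps the Hilbert module and representation $\phi$ fixed and is therefore $X$-equivariant (resp.\ $X$-nuclear) for every $t$. Each $F_t$ is a valid Kasparov operator because $P$ commutes with $\phi$ and the off-diagonal terms are compact modulo $\phi(A)$. At $t=1$ one recovers $(E,\phi,F) = \mathcal E$; at $t=0$ one has the direct sum $(PE, \phi|_{PE}, PFP) \oplus ((1-P)E, 0, (1-P)F(1-P))$, whose second summand is degenerate and thus homotopic to $(0,0,0)$ by Corollary \ref{c:deghtpy}. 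Chaining these homotopies proves the lemma.

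The main obstacle is verifying that $\overline{\phi(A)E}$ is orthogonally complemented in $E$, i.e.\ that the strict limit $P$ of $\phi(u_\lambda)$ exists as an adjointable projection in $\mathcal B(E)$. This is standard once $\phi$ is arranged to be essential (which can be done within the $X$-equivariant, resp.\ $X$-nuclear, homotopy class by adding a degenerate summand), and it is also the only point in the argument where one is working with something other than the original representation $\phi$ itself — which is precisely why $X$-equivariance and $X$-nuclearity propagate through the whole argument without further bookkeeping.
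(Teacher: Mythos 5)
Your treatment of the right unit $(B,\id_B,0)$ is correct and matches the paper's (one-line) argument. The left-unit case has a genuine gap, and your suggested remedy does not close it.

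The gap: your argument requires the closed submodule $\overline{\phi(A)E} \subseteq E$ to be orthogonally complemented, i.e.\ requires an adjointable projection $P\in\mathcal B(E)$ onto it. This is not automatic for Hilbert $C^\ast$-modules: closed submodules need not be complemented, and the bounded net $\phi(u_\lambda)$ need not converge strictly to a projection unless $\phi$ already extends strictly to $\multialg{A}$. You flag this yourself as ``the main obstacle'' and propose to fix it by adding a degenerate Kasparov summand $\mathcal D = (E',\phi',F')$ to make the representation essential. But this cannot work: if $\mathcal D$ is degenerate then
\begin{equation}
\overline{(\phi\oplus\phi')(A)(E\oplus E')} \;=\; \overline{\phi(A)E}\,\oplus\,\overline{\phi'(A)E'},
\end{equation}
so $\phi\oplus\phi'$ is essential if and only if both $\phi$ and $\phi'$ already are. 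Adding a degenerate summand can only make the representation \emph{less} degenerate if $\phi$ was already nondegenerate, so it buys you nothing. Once $P$ fails to exist, your operator homotopy $F_t = PFP + (1-P)F(1-P) + t\big(PF(1-P) + (1-P)FP\big)$ is simply not defined, and the rest of the argument collapses.

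The paper avoids the complementation issue entirely by using the homotopy trick from \cite[Proposition 18.3.6]{Blackadar-book-K-theory}: one forms the Hilbert $IB$-module $\overline{E_0} = \{ f\in C([0,1], E) : f(1) \in \overline{\phi(A)E}\}$ with pointwise representation $\psi$, which is weakly $X$-equivariant (resp.\ $X$-nuclear) by Lemma \ref{l:weakhtpy} because each fibre of $\psi$ is. This gives an $X$-equivariant (resp.\ $X$-nuclear) homotopy from $\mathcal E$ to a module whose representation acts nondegenerately on $\overline{\phi(A)E}$, so one may assume $\overline{\phi(A)E} = E$ without ever needing to split $E$ as a direct sum. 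In that nondegenerate case $\mathcal E$ is itself a Kasparov product of $\mathcal E$ and $(A,\id_A,0)$, and you are done. Your argument would become correct if you replaced the ``add a degenerate summand'' step with this $C([0,1],E)$-module homotopy; the rest of your reasoning (e.g.\ the appeal to Lemma \ref{l:tensormodule} for weak $X$-equivariance/nuclearity of $\id_A\hat\otimes 1$) is sound.
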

\begin{proof}
The case with $(B,\id_B,0)$ is trivial since $\mathcal E$ is itself a Kasparov product of $(B,\id_B,0)$ and $\mathcal E$. For the other product, let $\mathcal E = (E, \phi, F)$. In (the proof of) \cite[Proposition 18.3.6]{Blackadar-book-K-theory}, a Kasparov $A$-$IB$-module is constructed of the form $\overline{\mathcal E} = (\overline{E_0}, \psi, G)$ such that $\overline{\mathcal E}_0$ is operator homotopic to $\mathcal E$ where $\overline{E_0} = \{ f\in C([0,1], E) : f(1) \in \overline{\phi(A)E}\}$ and $\psi$ is the map which is point-wise $\phi$. By Lemma \ref{l:weakhtpy} it follows that $\overline{\mathcal E}$ is $X$-equivariant (respectively $X$-nuclear), and thus by Lemma \ref{l:prodoh} we may replace $\mathcal E$ with $\overline{\mathcal E}_1$ and therefore assume without loss of generality that $\overline{\phi(A)E} = E$. But in this case $\mathcal E$ is itself a Kasparov product of $\mathcal E$ and $(A,\id_A,0)$.
\end{proof}

As a technical device which will be used later, the following result about full, hereditary $C^\ast$-subalgebras is proved.

\begin{proposition}\label{p:KKXfullher}
Let $A$ be a separable $X$-$C^\ast$-algebra, and let $B_0$ and $B$ be $\sigma$-unital $X$-$C^\ast$-algebras for which $B_0 \subseteq B$ is a full, hereditary $C^\ast$-subalgebra. If $B_0(U)$ is full in $B(U)$ for all $U\in \mathcal O(X)$ then the inclusion $\iota \colon B_0 \hookrightarrow B$ induces isomorphisms
\begin{equation}
KK(X; A, B_0) \xrightarrow \cong KK(X; A, B), \quad KK_\nuc(X; A, B_0) \xrightarrow \cong KK_\nuc(X; A, B).
\end{equation}
\end{proposition}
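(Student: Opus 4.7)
The plan is to show that $KK(X; \iota) \in KK(X; B_0, B)$ is invertible under the Kasparov product. Once established, Theorem \ref{t:Kaspprod} together with Lemma \ref{l:productid} immediately implies that $\iota_* = - \circ KK(X; \iota)$ is an isomorphism on both $KK(X; A, -)$ and $KK_\nuc(X; A, -)$: the inverse is composition with the Kasparov inverse of $KK(X; \iota)$, and nuclearity is preserved since the Kasparov product of an $X$-nuclear class with an $X$-equivariant class remains $X$-nuclear.

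To construct the inverse, I would define the Hilbert $B_0$-module $M := \overline{BB_0}$ with $B_0$-valued inner product $\langle x, y\rangle = x^*y$, which lies in $B_0$ because $B_0$ is hereditary in $B$, and let $\eta \colon B \to \mathcal{B}_{B_0}(M)$ be left multiplication. Then $\eta$ takes values in $\mathcal{K}_{B_0}(M)$: the rank-one operators $\theta_{x,y}(z) = x\langle y, z\rangle_{B_0}$ realize $\eta(xy^*)$ for $x,y \in M$, and $\overline{MM^*} = \overline{BB_0 \cdot B_0 B} = \overline{BB_0 B} = B$ by fullness of $B_0$ in $B$. Hence $(M, \eta, 0)$ is a Kasparov $B$-$B_0$-module, and it is $X$-equivariant because for $b \in B(U)$ and $x \in M$ one has $\langle x, \eta(b)x\rangle_{B_0} = x^*bx \in \overline{B_0 B \cdot B(U) \cdot B B_0} \subseteq B(U) \cap B_0 \subseteq B_0(U)$, using that $B(U)$ is an ideal and $B_0$ is hereditary.

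Set $y := [M, \eta, 0] \in KK(X; B, B_0)$. The Kasparov product $y \circ KK(X; \iota)$ is represented by $(M \otimes_\iota B, \eta \otimes 1, 0)$, and the map $x \otimes b \mapsto xb$ is an isometry onto $\overline{MB} = \overline{BB_0 B} = B$ (using fullness of $B_0$), intertwining $\eta \otimes 1$ with $\id_B$; hence this product equals $KK(X; \id_B)$. Symmetrically, $KK(X; \iota) \circ y$ is represented by $(B \otimes_\eta M, \iota \otimes 1, 0)$, which via $b \otimes x \mapsto bx$ identifies with $(M, \eta|_{B_0}, 0)$. To identify this final class with $KK(X; \id_{B_0})$ is the key step: $M$ is a Morita-equivalence $B_0$-$B$-bimodule with $\mathcal{K}_{B_0}(M) \cong B$ via $\eta$, and Kasparov stabilization applied to $M$ and $B_0$ produces, after adding a common degenerate module, an $X$-equivariant unitary equivalence to $(B_0, \id_{B_0}, 0)$.

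The main obstacle is the $X$-equivariant verification of $[M, \eta|_{B_0}, 0] = KK(X; \id_{B_0})$. Classically this is a standard consequence of Morita equivalence, but here the homotopy must respect the ideal filtration induced by $X$. This is precisely where the hypothesis that $B_0(U)$ is full in $B(U)$ for every $U \in \mathcal{O}(X)$ enters: it ensures that $M$ restricts fibrewise to a Morita equivalence between $B_0(U)$ and $B(U)$, so that the classical Morita-trivialisation can be transported $X$-equivariantly to yield the required identity in $KK(X; B_0, B_0)$.
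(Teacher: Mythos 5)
Your approach is essentially the same as the paper's: you build the candidate inverse class out of the Hilbert $B_0$-module $M = \overline{BB_0}$ with the left-multiplication representation $B\to \mathcal K_{B_0}(M) = B$, check weak $X$-equivariance using the hypothesis that $B_0(U)$ is full in $B(U)$ (which, as you essentially observe, forces $B(U)\cap B_0 = B_0(U)$ via the bijection between ideals of a $C^\ast$-algebra and those of a full hereditary subalgebra), and reduce to identifying two Kasparov products with identity classes. The product that lands in $KK(X;B,B)$ is straightforward, and you handle it correctly. (A minor slip: with the paper's convention $\beta\circ\alpha$ for $\alpha\colon A\to B$, $\beta\colon B\to C$, you have the two compositions labelled backwards — $y\circ KK(X;\iota)$ lives in $KK(X;B_0,B_0)$, not $KK(X;B,B)$ — but this is cosmetic.)

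The gap is in the final step, which you yourself flag as "the main obstacle." You need to show that $[\overline{BB_0},\,\iota\colon B_0\to\mathcal K_{B_0}(\overline{BB_0}),\,0] = KK(X;\id_{B_0})$ in $KK(X;B_0,B_0)$, and you only assert this via an appeal to fibrewise Morita equivalence and Kasparov stabilisation yielding an $X$-equivariant unitary equivalence after adding a degenerate. That claim does not obviously hold: the representation $\iota$ of $B_0$ on $\overline{BB_0}$ is \emph{degenerate} as a representation — its essential subspace $\overline{\iota(B_0)\cdot\overline{BB_0}} = \overline{B_0BB_0} = B_0$ is generally a proper submodule of $\overline{BB_0}$ — so you cannot hope for a unitary equivalence with $(B_0,\id_{B_0},0)$ even after stabilising, and "transporting the classical Morita trivialisation $X$-equivariantly" is precisely the thing that needs an argument. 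The paper closes this gap concretely: it constructs the Hilbert $IB_0$-module $E = \{f\in C([0,1],\overline{BB_0}) : f(0)\in B_0\}$, on which $B_0$ acts by left multiplication inside $\mathcal K_{IB_0}(E)$, and this gives an explicit $X$-equivariant homotopy (not a unitary equivalence) from $(\overline{BB_0},\iota,0)$ at $t=1$ to $(B_0,\id_{B_0},0)$ at $t=0$. You should supply something of this kind in place of the hand-wave.
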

\begin{proof}
We do the $X$-nuclear case; the $X$-equivariant case is identical. Let $\mathcal E = (\overline{B B_0}, \id_B \colon B \to B = \mathcal K_{B_0}(\overline{B B_0}), 0)$ be the induced Kasparov $B$-$B_0$-module. Taking the Kasparov product with $\mathcal E$ induces a homomorphism 
\begin{equation}
[\mathcal E] \circ - \colon KK_\nuc(X; A ,B) \to KK_\nuc(X; A , B_0).
\end{equation}
As the Kasparov product is associative up to $\approx_\oh$, even when only the first $C^\ast$-algebra is separable assuming the relevant Kasparov products exist (see \cite[Lemma 22]{Skandalis-RemarksKK}), it suffices by Lemma \ref{l:productid} to show that $(B_0,\id_{B_0},0)$ and $(B,\id_B,0)$ are Kasparov products of $\mathcal E$ and $(\iota) = (B, \iota \colon B_0 \to B = \mathcal K_B(B), 0)$ up to $X$-equivariant homotopy. Clearly $(B,\id_B,0)$ is a Kasparov product $(\iota) \circ \mathcal E$. The other Kasparov product, $\mathcal E \circ (\iota)$, also exists and the canonical choice is unitarily equivalent to $(\overline{B B_0} , \iota \colon B_0 \to B = \mathcal K_{B_0}(\overline{B B_0}), 0)$. Let $E= \{ f\in C([0,1], \overline{BB_0}) : f(0) \in B_0\}$ as a Hilbert $IB_0$-module. Then the Kasparov $B_0$-$IB_0$-module $(E, B_0 \to \mathcal K(E), 0)$ defines an $X$-equivariant homotopy from $(\overline{B B_0} , \iota \colon B_0 \to B = \mathcal K_{B_0}(\overline{B B_0}), 0)$ to $(B_0, \id_{B_0}, 0)$.
\end{proof}

\begin{corollary}\label{c:KKXstable}
If $p\in \mathcal K$ is a rank one projection then the embedding $\id_B \otimes p \colon B \hookrightarrow B\otimes \mathcal K$ induces an isomorphism
\begin{equation}
KK(X; A, B) \xrightarrow \cong KK(X; A, B\otimes \mathcal K),
\end{equation}
and similarly for $KK_\nuc(X)$.
\end{corollary}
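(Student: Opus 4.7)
The plan is to deduce this corollary directly from Proposition \ref{p:KKXfullher}. Write $B_0 := B \otimes p$ and view $\iota := \id_B \otimes p$ as the composition $B \xrightarrow{\cong} B_0 \hookrightarrow B \otimes \mathcal K$. My first task is to verify that the $X$-action on $B_0$ induced by the inclusion into $B \otimes \mathcal K$ (whose action is $(B \otimes \mathcal K)(U) = B(U) \otimes \mathcal K$) agrees under $\iota$ with the given $X$-action on $B$. This reduces to showing $(B \otimes p) \cap (B(U) \otimes \mathcal K) = B(U) \otimes p$, which holds because $p$ has rank one; hence $B_0(U) = \iota(B(U))$ and $\iota$ is an isomorphism of $X$-$C^\ast$-algebras.

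The remaining hypotheses of Proposition \ref{p:KKXfullher} are elementary to verify. First I would observe that $B_0$ is a hereditary $C^\ast$-subalgebra of $B \otimes \mathcal K$, since $p \mathcal K p = \mathbb C p$ gives
\begin{equation}
(1_{\multialg{B}} \otimes p)(B \otimes \mathcal K)(1_{\multialg{B}} \otimes p) = B \otimes p \mathcal K p = B \otimes p.
\end{equation}
Fullness of $B_0(U) = B(U) \otimes p$ inside $(B \otimes \mathcal K)(U) = B(U) \otimes \mathcal K$ for every $U \in \mathcal O(X)$ then follows from the fact that the closed two-sided ideal generated by a rank-one projection in $\mathcal K$ is all of $\mathcal K$; tensoring with $B(U)$ gives the desired fullness. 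Taking $U = X$ shows in particular that $B_0$ is full in $B \otimes \mathcal K$, and both $B$ and $B \otimes \mathcal K$ are $\sigma$-unital since $B$ is.

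Invoking Proposition \ref{p:KKXfullher} then yields isomorphisms $KK(X; A, B_0) \xrightarrow{\cong} KK(X; A, B \otimes \mathcal K)$ and $KK_\nuc(X; A, B_0) \xrightarrow{\cong} KK_\nuc(X; A, B \otimes \mathcal K)$ induced by inclusion, and composing with the $X$-equivariant isomorphism $B \cong B_0$ gives the claim. There is essentially no serious obstacle in this argument; the only minor subtlety worth spelling out explicitly is that the $X$-$C^\ast$-algebra structure on $B_0$ induced from $B \otimes \mathcal K$ coincides with the one transported from $B$, which is exactly where the rank-one property of $p$ enters.
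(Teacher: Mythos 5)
Your proof is correct and takes the same route the paper intends: Corollary~\ref{c:KKXstable} is stated as an immediate consequence of Proposition~\ref{p:KKXfullher}, and you have filled in precisely the right verifications (that $B\otimes p$ is a full, hereditary $C^\ast$-subalgebra of $B\otimes\mathcal K$ whose induced $X$-action matches that of $B$ under $\id_B\otimes p$, and that $B(U)\otimes p$ is full in $B(U)\otimes\mathcal K$ for every $U$). The observation that the $X$-action on $B_0$ transported from $B$ agrees with the one restricted from $B\otimes\mathcal K$ is exactly the point that makes the appeal to the proposition legitimate.
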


\begin{corollary}\label{c:KKXasMvN}
If $\phi, \psi \colon A \to B$ are $X$-equivariant (respectively $X$-nuc\-lear) $\ast$-homo\-morphisms and $\phi \sim_\asMvN \psi$, then
\begin{equation}
KK(X; \phi ) = KK(X; \psi) \qquad (\textrm{respectively } KK_\nuc(X; \phi) = KK_\nuc(X; \psi)).
\end{equation}
\end{corollary}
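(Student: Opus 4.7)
My plan is to mimic the proof of Lemma \ref{l:asMvNKKnuc}, lifted to the $X$-equivariant (and $X$-nuclear) setting. First, using Corollary \ref{c:KKXstable} applied to the corner inclusion $\id_B \otimes e_{1,1}\colon B \hookrightarrow B \otimes \mathcal K$, it suffices to prove the statement for $\phi \otimes e_{1,1}$ and $\psi \otimes e_{1,1}$ viewed as maps $A \to B\otimes \mathcal K$. Since $B \otimes \mathcal K$ is stable, Proposition \ref{p:MvNeq} (specifically $(i)\Leftrightarrow (ii'')$) upgrades $\phi \sim_\asMvN \psi$ to $\phi \otimes e_{1,1} \sim_\asu \psi \otimes e_{1,1}$ via a norm-continuous unitary path $(u_t)_{t\in \mathbb R_+}$ in $\multialg{B\otimes \mathcal K}$.

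After reparametrizing the path to $[0,1)$, I will define $\Phi\colon A \to C([0,1], B\otimes \mathcal K)$ by setting $\Phi(a)(t) = u_t(\phi(a)\otimes e_{1,1}) u_t^\ast$ for $t\in [0,1)$ and $\Phi(a)(1) = \psi(a)\otimes e_{1,1}$. Continuity at $t=1$ is exactly the asymptotic unitary equivalence. For $X$-equivariance, observe that if $a\in A(U)$ then $\phi(a)\otimes e_{1,1} \in B(U) \otimes \mathcal K$; since $B(U)\otimes \mathcal K$ is an ideal of $B\otimes \mathcal K$ preserved under conjugation by multiplier unitaries, each $\Phi(a)(t)$ lies in $B(U)\otimes \mathcal K$, so $\Phi(A(U)) \subseteq C([0,1], B(U) \otimes \mathcal K) = (I(B\otimes \mathcal K))(U)$.

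The Kasparov $A$-$(I(B\otimes \mathcal K))$-module $(I(B\otimes \mathcal K),\Phi,0)$ is then an $X$-equivariant homotopy from $(B\otimes \mathcal K, \Ad u_0 \circ (\phi\otimes e_{1,1}),0)$ to $(B\otimes \mathcal K, \psi\otimes e_{1,1},0)$. The first of these is unitarily equivalent as a Kasparov module to $(B\otimes \mathcal K, \phi\otimes e_{1,1}, 0)$ via the unitary $u_0 \in \mathcal B_{B\otimes \mathcal K}(B\otimes \mathcal K) = \multialg{B\otimes \mathcal K}$. Combining, $KK(X;\phi\otimes e_{1,1}) = KK(X;\psi\otimes e_{1,1})$, and the stability isomorphism of Corollary \ref{c:KKXstable} yields $KK(X;\phi) = KK(X;\psi)$.

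For the $X$-nuclear case, the identical argument works once I verify that $\Phi$ is $X$-nuclear. By Lemma \ref{l:XnucC(Y)} this reduces to checking each $\ev_t\circ \Phi$. For $t\in[0,1)$, $\ev_t\circ \Phi = (\Ad u_t)\circ (\phi\otimes e_{1,1})$; since $\phi$ is $X$-nuclear so is $\phi\otimes e_{1,1}$ (each quotient $[\phi]_U$ is nuclear and tensoring with $e_{1,1}$ preserves this), and $\Ad u_t$ descends to an automorphism of each quotient $(B\otimes \mathcal K)/(B(U)\otimes \mathcal K)$, so the composition remains nuclear on every quotient. At $t=1$, $\ev_1\circ \Phi = \psi\otimes e_{1,1}$ is $X$-nuclear by hypothesis. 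The only real subtlety is ensuring the passage from asymptotic Murray--von Neumann equivalence to genuine asymptotic unitary equivalence after stabilization, which is precisely Proposition \ref{p:MvNeq}; all remaining verifications are routine preservation of the ideal structure.
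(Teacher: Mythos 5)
Your proof is correct and follows essentially the same route as the paper: stabilise via Corollary~\ref{c:KKXstable}, upgrade to asymptotic unitary equivalence via Proposition~\ref{p:MvNeq}, build the obvious path $\Phi \colon A \to C([0,1], B\otimes\mathcal K)$, and check it is $X$-nuclear with Lemma~\ref{l:XnucC(Y)}. The one (minor) deviation is in handling $u_0$: the paper normalises to $u_0 = 1_{\multialg{B\otimes\mathcal K}}$ by invoking connectedness of the unitary group \cite{CuntzHigson-Kuipersthm}, whereas you instead treat $u_0$ as a unitary equivalence between the $t=0$ endpoint and $(B\otimes\mathcal K, \phi\otimes e_{1,1}, 0)$ -- a slightly more elementary alternative that avoids the Cuntz--Higson input.
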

\begin{proof}
We only do the $X$-nuclear version. By Proposition \ref{p:MvNeq}, there is a norm-continuous unitary path $(u_t)_{t\in [0,1)}$ in $\multialg{B\otimes \mathcal K}$ such that $\Ad u_t \circ (\phi\otimes e_{1,1}) \xrightarrow{t\to 1} \psi \otimes e_{1,1}$ in the point-norm topology. As the unitary group of $\multialg{B\otimes \mathcal K}$ is connected by \cite{CuntzHigson-Kuipersthm}, we may assume that $u_0 = 1$. Let $\Phi \colon A \to C([0,1], B \otimes \mathcal K)$ be given by $\Phi(a)(t) = \Ad u_t \circ (\phi \otimes e_{1,1})$ for $t\in [0,1)$ and $\Phi(a)(1) = \psi \otimes e_{1,1}$. By Lemma \ref{l:XnucC(Y)}, $\Phi$ is $X$-nuclear and thus $KK_\nuc(X; \phi\otimes e_{1,1}) = KK_\nuc(X; \psi \otimes e_{1,1})$. By Corollary \ref{c:KKXstable}, it follows that $KK_\nuc(X; \phi) = KK_\nuc(X; \psi)$.
\end{proof}


\subsection{The Cuntz pair picture}\label{ss:CuntzPairs}

As in Subsection \ref{ss:KKprel}, two pictures of $KK$-theory will be considered: the Cuntz pair picture and the Fredholm picture. For these pictures to make sense, the $C^\ast$-algebras $A$ and $B$ will always be assumed to be trivially graded.

Since $\multialg{B \otimes \mathcal K} \cong \mathcal B_B(\ell^2(\mathbb N)\otimes B)$ canonically, the two will be identified without mentioning.

Recall that an \emph{($A$-$B$-)Cuntz pair} $(\psi_0, \psi_1)$ consists of a pair of $\ast$-homo\-morphisms $\psi_0,\psi_1 \colon A \to \multialg{B\otimes \mathcal K}$ for which $\psi_0(a) - \psi_1(a) \in B\otimes \mathcal K$ for all $a\in A$. 

A \emph{homotopy} of two $A$-$B$-Cuntz pairs $(\phi_0, \phi_1)$ and $(\psi_0, \psi_1)$, is a family $(\eta_0^{(s)}, \eta_1^{(s)})_{s\in [0,1]}$ of Cuntz pairs, such that 
\begin{equation}\label{eq:CPhtpy}
(\eta_0^{(0)}, \eta_1^{(0)}) = (\phi_0, \phi_1), \qquad (\eta_0^{(1)} , \eta_1^{(1)}) = (\psi_0, \psi_1),
\end{equation}
$s \mapsto \eta_i^{(s)}(a)$ is strictly continuous for $i=0,1$ and $a\in A$, and $s\mapsto \eta_0^{(s)}(a) - \eta_1^{(s)}(a)$ is norm-continuous for any $a\in A$.\footnote{The point is that $\multialg{I B \otimes \mathcal K}$ is canonically isomorphic to the set of strictly continuous, bounded functions from $[0,1]$ to $\multialg{B\otimes \mathcal K}$, and hence a homotopy of $A$-$B$-Cuntz pairs is the same as an $A$-$IB$-Cuntz pair. Here, as usual, $IB = C([0,1], B)$.}

\begin{definition}
The Cuntz pair $(\psi_0,\psi_1)$ is called \emph{weakly $X$-equivariant} (respectively weakly $X$-nuclear), if both $\ast$-homomorphisms $\psi_0$ and $\psi_1$ are weakly $X$-equivariant (respectively weakly $X$-nuclear).

Weakly $X$-equivariant (respectively weakly $X$-nuclear) Cuntz pairs $(\phi_0, \phi_1)$ and $(\psi_0, \psi_1)$ are said to be \emph{homotopic} if there is a homotopy $(\eta_0^{(s)}, \eta_1^{(s)})_{s\in [0,1]}$ of Cuntz pairs satisfying \eqref{eq:CPhtpy}, and for which $\eta_i^{(s)}$ is weakly $X$-equivariant (resp. weakly $X$-nuclear) for $i=0,1$ and $s\in [0,1]$.
\end{definition}

One can form sums of weakly $X$-equivariant/nuclear Cuntz pairs by 
\begin{equation}
(\phi_0, \phi_1) \oplus_{s_1,s_2} (\psi_0, \psi_1) = (\phi_0 \oplus_{s_1,s_2} \psi_0, \phi_1 \oplus_{s_1,s_2} \psi_1),
\end{equation}
 where $s_1,s_2\in \multialg{B\otimes \mathcal K}$ are $\mathcal O_2$-isometries. As any two Cuntz sums are unitarily equivalent, and as the unitary group of $\multialg{B\otimes \mathcal K}$ is path-connected, sums of weakly nuclear Cuntz pairs are unique up to homotopy.

The set of homotopy classes of weakly $X$-equivariant (respectively $X$-nuclear) $A$-$B$-Cuntz pairs is an abelian group with the addition defined by Cuntz sums as above. Denote the equivalence class of $(\phi_0,\phi_1)$ by $[\phi_0, \phi_1]$.

Let $\mathcal H_B := \ell^2(\mathbb N) \otimes B$ and identify $\multialg{B\otimes \mathcal K}$ with $\mathcal B(\mathcal H_B)$ in the standard way. Moreover, recall that $\mathcal H_B$ has the grading where every element has degree $0$, and $\mathcal H_B^\op$ is equal to $\mathcal H_B$ as Hilbert $B$-modules, but every element in $\mathcal H_B^\op$ has degree $1$.

\begin{proposition}[The Cuntz pair picture]\label{p:KKCP}
Let $X$ be a topological space, and let $A$ and $B$ be $X$-$C^\ast$-algebras for which $A$ is separable and $B$ is $\sigma$-unital. The assignment
\begin{equation}\label{eq:CPmodule}
(\phi_0,\phi_1) \mapsto \mathcal E_{(\phi_0,\phi_1)} := \left( \mathcal H_B \oplus \mathcal H_B^\op, \phi_0 \oplus \phi_1, \left( \begin{array}{cc} 0 & 1_{\mathcal H_B} \\ 1_{\mathcal H_B} & 0 \end{array} \right) \right)
\end{equation}
from the collection of $A$-$B$-Cuntz pairs to Kasparov $A$-$B$-modules
induces an isomorphism between the group of homotopy classes of weakly $X$-equivariant (respectively weakly $X$-nuclear) $A$-$B$-Cuntz pairs, and the Kasparov group $KK(X; A, B)$ (respectively $KK_\nuc(X; A, B)$).
\end{proposition}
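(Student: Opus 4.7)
The plan is to check that the assignment $(\phi_0,\phi_1) \mapsto \mathcal E_{(\phi_0,\phi_1)}$ is a well-defined, additive bijection between homotopy classes. For well-definedness, the operator $F = \left(\begin{smallmatrix} 0 & 1 \\ 1 & 0 \end{smallmatrix}\right)$ is odd, self-adjoint, and satisfies $F^2 = 1$ on the nose, while the graded commutator $[F, (\phi_0 \oplus \phi_1)(a)]$ has off-diagonal entries $\pm(\phi_0(a)-\phi_1(a))$; these lie in $\mathcal K_B(\mathcal H_B) \cong B \otimes \mathcal K$ exactly because $(\phi_0,\phi_1)$ is a Cuntz pair. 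Weak $X$-equivariance (respectively $X$-nuclearity) of $\phi_0 \oplus \phi_1$ follows from Corollary \ref{c:dirsumrep}. A Cuntz pair homotopy $(\eta_0^{(s)}, \eta_1^{(s)})_{s\in[0,1]}$ is by definition a weakly $X$-equivariant (respectively $X$-nuclear) $A$-$IB$-Cuntz pair, whose associated Kasparov module furnishes a homotopy of Kasparov modules in the appropriate class. Additivity follows because the unitary $\mathcal H_B \oplus \mathcal H_B \to \mathcal H_B$ provided by the two $\mathcal O_2$-isometries $s_1,s_2$, applied independently to the even and odd summands, gives a unitary equivalence $\mathcal E_{(\phi_0 \oplus_{s_1,s_2} \psi_0,\, \phi_1 \oplus_{s_1,s_2} \psi_1)} \approx_u \mathcal E_{(\phi_0,\phi_1)} \oplus \mathcal E_{(\psi_0,\psi_1)}$.

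For surjectivity, start with an $X$-equivariant (respectively $X$-nuclear) Kasparov $A$-$B$-module $(E,\phi,F)$. Absorbing the degenerate weakly $X$-equivariant module $(\mathcal H_B \oplus \mathcal H_B^\op, 0, \left(\begin{smallmatrix} 0 & 1 \\ 1 & 0 \end{smallmatrix}\right))$ and applying Kasparov's stabilisation theorem in its graded form, we may assume $E = \mathcal H_B \oplus \mathcal H_B^\op$, and thus write $\phi = \phi_0 \oplus \phi_1$ with $\phi_0,\phi_1 \colon A \to \multialg{B \otimes \mathcal K}$. Both $\phi_i$ are weakly $X$-equivariant (respectively $X$-nuclear) by Corollary \ref{c:densespan} together with Lemma \ref{l:densespan} applied to the inclusions of each summand. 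Standard functional-calculus arguments (see \cite[Section 17.4]{Blackadar-book-K-theory}) provide an operator homotopy of $F$ to a self-adjoint element satisfying $F^2 = 1_{\mathcal B(E)}$ exactly. Being odd, such $F$ has the form $\left(\begin{smallmatrix} 0 & v^* \\ v & 0 \end{smallmatrix}\right)$ for a unitary $v \in \mathcal B(\mathcal H_B)$; conjugating by the degree-zero unitary $1_{\mathcal H_B} \oplus v \in \mathcal B(\mathcal H_B \oplus \mathcal H_B^\op)$ replaces $F$ by $\left(\begin{smallmatrix} 0 & 1 \\ 1 & 0 \end{smallmatrix}\right)$ and $\phi_1$ by $v^*\phi_1(-)v$, which remains weakly $X$-equivariant (respectively $X$-nuclear) since conjugation by a fixed unitary preserves these properties. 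The resulting module is precisely $\mathcal E_{(\phi_0,\, v^*\phi_1(-)v)}$, and the Kasparov condition translates into the Cuntz-pair property for $(\phi_0, v^*\phi_1(-)v)$.

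For injectivity, suppose $\mathcal E_{(\phi_0,\phi_1)}$ and $\mathcal E_{(\psi_0,\psi_1)}$ are homotopic in the appropriate class. By Proposition \ref{p:ohvsh} the equivalence can be realised as $\sim_\oh$, so there exist degenerate weakly $X$-equivariant (respectively $X$-nuclear) Kasparov modules $\mathcal D_0,\mathcal D_1$ and an operator homotopy $\mathcal E_{(\phi_0,\phi_1)} \oplus \mathcal D_0 \approx_\oh \mathcal E_{(\psi_0,\psi_1)} \oplus \mathcal D_1$. Each $\mathcal D_i$ can, after further stabilisation by degenerate modules of the form $(\mathcal H_B \oplus \mathcal H_B^\op,\, \theta \oplus \theta,\, \left(\begin{smallmatrix} 0 & 1 \\ 1 & 0 \end{smallmatrix}\right))$ corresponding to the Cuntz pair $(\theta,\theta)$ (which is null in the Cuntz-pair group), be realised as $\mathcal E_{(\tilde\eta,\tilde\eta)}$ for some weakly $X$-equivariant (respectively $X$-nuclear) $\ast$-homomorphism $\tilde\eta$. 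Absorbing these diagonal Cuntz pairs via the additivity already established reduces us to an operator homotopy between modules already in standard Cuntz-pair form. Running the surjectivity argument parametrically in the homotopy parameter $t \in [0,1]$ then produces a continuous path of Cuntz pairs realising the original Kasparov operator homotopy, which is precisely a Cuntz pair homotopy in the appropriate class.

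The main obstacle is the parametric version of the surjectivity argument in the last step: one must verify that the functional-calculus reduction of $F_t$ to an element squaring to $1_{\mathcal B(E)}$ and the subsequent extraction of the unitary $v_t$ from the off-diagonal entry can be carried out continuously in $t$, while preserving weak $X$-equivariance (respectively $X$-nuclearity) at every intermediate stage. The saving grace is that continuous functional calculus on a norm-continuous path of self-adjoint elements is norm continuous, polar decompositions of paths of invertibles are norm continuous, and none of these operations touches the representation $\phi_0 \oplus \phi_1$; thus Corollary \ref{c:dirsumrep}, Corollary \ref{c:densespan}, and Lemma \ref{l:densespan} apply uniformly across the parameter to yield the desired Cuntz-pair homotopy.
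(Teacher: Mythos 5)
Your proposal is correct and tracks the paper's proof closely: the paper also verifies well-definedness via Corollary \ref{c:densespan} (which you should cite alongside Corollary \ref{c:dirsumrep}, since the latter applies only after translating from the multiplier-algebra picture of weak $X$-nuclearity to the Hilbert-module picture), declares additivity routine, and defers surjectivity to the classical argument in Blackadar, Section 17.6, exactly as you sketch. The one place you diverge in packaging is injectivity. The paper does not pass through Proposition \ref{p:ohvsh} and a parametric argument; instead it applies the surjectivity reduction once, directly to the $X$-nuclear $A$-$IB$-homotopy module connecting $\mathcal E_{(\phi_0,\phi_1)}$ and $\mathcal E_{(\psi_0,\psi_1)}$, and observes that the resulting $A$-$IB$-Cuntz pair evaluates at the endpoints to stabilisations $(\phi_0,\phi_1)\oplus(0,0)$ and $(\psi_0,\psi_1)\oplus(0,0)$. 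This is the cleaner route: it avoids the continuity bookkeeping that your parametric version has to check (and which you flag yourself as the main obstacle), since all the operations are performed in $\mathcal B$ of a Hilbert $IB$-module and hence are automatically norm-continuous fibrewise. You should also be slightly careful with the clause that ``standard functional-calculus arguments provide an operator homotopy of $F$ to a self-adjoint element satisfying $F^2 = 1$ exactly'': functional calculus alone yields $F$ self-adjoint with $\|F\|\le 1$ and $F^2 - 1 \in \mathcal K$, and making $F^2 = 1$ on the nose requires absorbing a further degenerate module and a connectedness argument for the unitary group of $\multialg{B\otimes\mathcal K}$ — but this is all contained in the Blackadar reference you and the paper both cite, so it is not a genuine gap.
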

\begin{proof}
As the proof very close to the similar proof in the classical case, the proof will only be sketched, and only in the $X$-nuclear case.

By Corollary \ref{c:densespan},  $\mathcal E_{(\phi_0,\phi_1)}$ is $X$-nuclear whenever $(\phi_0,\phi_1)$ is weakly $X$-nuclear. A homotopy of weakly $X$-nuclear $A$-$B$-Cuntz pairs induces an $A$-$IB$-Cuntz pair which is weakly $X$-nuclear by Lemma \ref{l:XnucC(Y)}. So by applying the map \eqref{eq:CPmodule} one obtains a homotopy of $X$-nuclear Kasparov $A$-$B$-modules, which implies that the map $[\phi_0, \phi_1] \mapsto [\mathcal E_{(\phi_0,\phi_1)}]$ is well-defined. It is routine to show that it takes Cuntz sums to diagonal sums, and clearly $[0,0] \mapsto [0]$, so $[\phi_0,\phi_1] \mapsto [\mathcal E_{(\phi_0,\phi_1)}]$ is a well-defined group homomorphism into $KK_\nuc(X; A, B)$.

Arguing exactly as \cite[Section 17.6]{Blackadar-book-K-theory} it follows that the map $[\phi_0, \phi_1] \mapsto [\mathcal E_{(\phi_0,\phi_1)}]$ is surjective. Similarly, by applying these arguments to an $X$-nuclear homotopy from $\mathcal E_{(\phi_0, \phi_1)}$ to $\mathcal E_{(\psi_0,\psi_1)}$, one may lift such a homotopy of $X$-nuclear Kasparov modules, to a homotopy of weakly $X$-nuclear Cuntz pairs. The endpoints of this homotopy are unitarily equivalent to the weakly $X$-nuclear Cuntz pairs $(\phi_0, \phi_1) \oplus_{s_1,s_2} (0,0)$ and $(\psi_0,\psi_1)\oplus_{s_1,s_2} (0,0)$ respectively. As these are homotopic to $(\phi_0,\phi_1)$ and $(\psi_0,\psi_1)$ respectively, it follows that $[\phi_0,\phi_1] \mapsto [\mathcal E_{(\phi_0,\phi_1)}]$ is injective and thus induces a group isomorphism.
\end{proof}


\subsection{The Fredholm picture}\label{ss:Fredholm}

In this  subsection, a Fredholm type picture of $KK(X)$ and $KK_\nuc(X)$ is obtained, cf.~\cite[Section 2.1]{Higson-charKK}.

Let $A$ be a separable $X$-$C^\ast$-algebra, $B$ be a $\sigma$-unital $X$-$C^\ast$-algebra (these are considered as trivially graded). An \emph{($A$-$B$-)cycle} is a triple 
\begin{equation}
(\phi_0 \colon A \to \mathcal B(E_0), \phi_1 \colon A \to \mathcal B(E_1), u),
\end{equation}
usually just written $(\phi_0, \phi_1, u)$, where $E_0$ and $E_1$ are countably generated Hilbert $B$-modules, $\phi_0$ and $\phi_1$ are $\ast$-homomorphisms, and $u\in \mathcal B(E_0,E_1)$ satisfies
\begin{equation}\label{eq:Frintertwiner}
u \phi_0(a) - \phi_1(a) u \in \mathcal K(E_0, E_1)
\end{equation}
\begin{equation}\label{eq:Fredholmunitary}
(u^\ast u - 1_{\mathcal B(E_0)})\phi_0(a) \in \mathcal K(E_0) , \quad (uu^\ast - 1_{\mathcal B(E_1)}) \phi_1(a) \in \mathcal K(E_1)
\end{equation}
for all $a\in A$. A cycle $(\phi_0,\phi_1,u)$ is called \emph{degenerate} if
\begin{equation}\label{eq:Fredholmdeg}
u\phi_0(a) - \phi_1(a) u = 0, \quad (u^\ast u-1_{\mathcal B(E_0)}) \phi_0(a) =0, \quad (uu^\ast - 1_{\mathcal B(E_1)}) \phi_1(a) =0,
\end{equation}
for all $a\in A$. Two cycles
\begin{equation}\label{eq:twoCcycles}
(\phi_0 \colon A \to \mathcal B(E_0), \phi_1 \colon A \to \mathcal B(E_1), u), \; \; (\phi_0' \colon A \to \mathcal B(E_0'), \phi_1' \colon A \to \mathcal B(E_1'), u'),
\end{equation}
are \emph{unitarily equivalent} if there are unitaries $v_i \in \mathcal B(E_i, E_i')$ for $i=0,1$, such that
\begin{equation}
\phi_0(a) v_0 = v_0 \phi_0'(a), \quad \phi_1(a) v_1 = v_1 \phi_1'(a) , \quad v_1 u = u' v_0.
\end{equation}
for all $a\in A$. 

Say that two cycles as in \eqref{eq:twoCcycles} are \emph{operator homotopic} if $\phi_i = \phi_i'$ (and thus also $E_i = E_i'$), for $i=0,1$, and if there is a norm continuous path $[0,1] \ni t \mapsto v_t \in \mathcal B(E_0,E_1)$ such that $v_0 = v$, $v_1 = v'$, and each $(\phi_0,\phi_1, v_t)$ is a cycle.

Addition of two cycles as in \eqref{eq:twoCcycles} is defined by direct sums as
\begin{equation}\label{eq:dirsumcycles}
(\phi_0 \oplus \phi_0' \colon A \to \mathcal B(E_0\oplus E_0'), \phi_1 \oplus \phi_1' \colon A \to \mathcal B(E_1\oplus E_1'), u \oplus u').
\end{equation}

\begin{definition}
Say that an $A$-$B$-cycle $(\phi_0,\phi_1,v)$ is \emph{weakly $X$-equivariant} (respectively \emph{weakly $X$-nuclear}) if $\phi_0$ and $\phi_1$ are weakly $X$-equivariant (respectively weakly $X$-nuclear).

Let $\sim_\oh$ denote the equivalence relation on weakly $X$-equivariant (resp. weakly $X$-nuclear) $A$-$B$-cycles generated by unitary equivalence, operator homotopy, and addition of degenerate, weakly $X$-equivariant (respectively weakly $X$-nuclear) $A$-$B$-cycles.
\end{definition}

The set of $\sim_\oh$-equivalence classes of weakly $X$-equivariant (respectively $X$-nuclear) $A$-$B$-cycles is an abelian group where addition is as in \eqref{eq:dirsumcycles}.

\begin{proposition}[The Fredholm picture]\label{p:KKFredholm}
Let $X$ be a topological space, and let $A$ and $B$ be $X$-$C^\ast$-algebras for which $A$ is separable and $B$ is $\sigma$-unital. The assignment
\begin{equation}\label{eq:Fredholmmodule}
(\phi_0,\phi_1,u) \mapsto \mathcal E_{(\phi_0,\phi_1,u)} := \left(E_0 \oplus E_1^{\op} , \phi_0 \oplus \phi_1 , \left( \begin{array}{cc} 0 & u^\ast \\ u & 0 \end{array} \right) \right)
\end{equation}
between the collections of $A$-$B$-cycle and Kasparov $A$-$B$-modules
induces an isomorphism between the group of $\sim_\oh$-equivalence classes of weakly $X$-equi\-variant (respectively weakly $X$-nuclear) $A$-$B$-cycles, and the Kasparov group $KK(X; A, B)$ (respectively $KK_\nuc(X; A, B)$).
\end{proposition}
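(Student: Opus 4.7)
I will focus on the $X$-nuclear case, as the $X$-equivariant case follows by the identical argument with ``$X$-nuclear'' replaced by ``$X$-equivariant'' throughout. Denote by $\mathscr F(X;A,B)$ the group of $\sim_\oh$-classes of weakly $X$-nuclear $A$-$B$-cycles, and let $\Phi \colon \mathscr F(X;A,B) \to KK_\nuc(X;A,B)$ denote the map induced by \eqref{eq:Fredholmmodule}. The plan is first to verify $\Phi$ is a well-defined homomorphism, and then to establish surjectivity and injectivity using the agreement of $\sim_\h$ and $\sim_\oh$ on $KK_\nuc(X)$-classes given by Proposition \ref{p:ohvsh}.

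First, I would check that $\mathcal E_{(\phi_0,\phi_1,u)}$ is an $X$-nuclear Kasparov module. The operator $\left( \begin{smallmatrix} 0 & u^\ast \\ u & 0 \end{smallmatrix}\right)$ is self-adjoint of odd degree, and conditions \eqref{eq:Frintertwiner}--\eqref{eq:Fredholmunitary} translate exactly into the Kasparov module axioms. That $\phi_0 \oplus \phi_1 \colon A \to \mathcal B(E_0 \oplus E_1^\op)$ is weakly $X$-nuclear follows from Corollary \ref{c:dirsumrep}. Degenerate cycles in the sense of \eqref{eq:Fredholmdeg} give rise to degenerate Kasparov modules, which are trivial in $KK_\nuc(X;A,B)$ by Corollary \ref{c:deghtpy}; unitary equivalences of cycles induce unitary equivalences of Kasparov modules; and operator homotopies of cycles yield operator homotopies of Kasparov modules. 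Thus $\Phi$ is well-defined, and it is clearly additive since $\mathcal E$ carries direct sums of cycles to direct sums of Kasparov modules.

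For surjectivity, let $(E,\phi, F)$ be an $X$-nuclear Kasparov $A$-$B$-module. Since $E = E^{(0)} \oplus E^{(1)}$ splits into degree-0 and degree-1 parts, $\phi = \phi^{(0)} \oplus \phi^{(1)}$, and $F$ has odd degree, one may apply the standard reductions (as in \cite[Section 17.4]{Blackadar-book-K-theory}): up to adding a degenerate module and performing an operator homotopy along the straight line (whose intermediate operators continue to satisfy the Kasparov conditions), we may assume $F$ is self-adjoint of the form $\left( \begin{smallmatrix} 0 & u^\ast \\ u & 0 \end{smallmatrix}\right)$ for some $u \in \mathcal B(E^{(0)}, E^{(1)})$. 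Setting $E_0 := E^{(0)}$, $E_1$ equal to $E^{(1)}$ with the opposite grading, $\phi_0 := \phi^{(0)}$, and $\phi_1 := \phi^{(1)}$, the triple $(\phi_0,\phi_1,u)$ is an $A$-$B$-cycle with $\mathcal E_{(\phi_0,\phi_1,u)}$ unitarily equivalent to $(E,\phi,F)$. Weak $X$-nuclearity of $\phi_0$ and $\phi_1$ is inherited from that of $\phi$: for any $\xi \in E_i$, the map $\langle \xi, \phi_i(-)\xi\rangle_{E_i}$ equals $\langle \xi, \phi(-)\xi\rangle_E$, which is $X$-nuclear. Hence $(\phi_0,\phi_1,u)$ is weakly $X$-nuclear, and $[\Phi(\phi_0,\phi_1,u)] = [E,\phi,F]$ in $KK_\nuc(X;A,B)$.

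For injectivity, suppose $(\phi_0,\phi_1,u)$ and $(\phi_0',\phi_1',u')$ are weakly $X$-nuclear cycles with $\mathcal E_{(\phi_0,\phi_1,u)} \sim_\h \mathcal E_{(\phi_0',\phi_1',u')}$ in $KK_\nuc(X;A,B)$. By Proposition \ref{p:ohvsh}, we may in fact find weakly $X$-nuclear degenerate Kasparov modules $\mathcal D, \mathcal D'$ and a unitary equivalence followed by an operator homotopy connecting $\mathcal E_{(\phi_0,\phi_1,u)} \oplus \mathcal D$ to $\mathcal E_{(\phi_0',\phi_1',u')} \oplus \mathcal D'$. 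Applying the surjectivity argument above to $\mathcal D$ and $\mathcal D'$ rewrites each as coming from a degenerate cycle, and a further operator homotopy through matrix rotations of the form $\left(\begin{smallmatrix} 0 & v_t^\ast \\ v_t & 0 \end{smallmatrix}\right)$ translates the operator homotopy of Kasparov modules back into an operator homotopy of cycles. This shows $(\phi_0,\phi_1,u) \oplus (\text{degenerate}) \sim_\oh (\phi_0',\phi_1',u') \oplus (\text{degenerate})$, so these define the same element of $\mathscr F(X;A,B)$.

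The main obstacle is the last step: carefully tracking that all intermediate objects in lifting a Kasparov-module homotopy back to a cycle-level operator homotopy remain weakly $X$-nuclear. Here Lemma \ref{l:densespan}, Corollary \ref{c:dirsumrep}, Corollary \ref{c:densespan}, and Lemma \ref{l:weakhtpy} do the heavy lifting, allowing one to verify $X$-nuclearity on a convenient dense set of vectors and to pass freely between point-wise $X$-nuclearity and $X$-nuclearity of the associated $IB$-valued representation.
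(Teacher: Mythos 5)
Your argument is correct, but your surjectivity step takes a genuinely different route from the paper. The paper derives surjectivity as a one-liner from the Cuntz pair picture (Proposition \ref{p:KKCP}, already established): every class is represented by a weakly $X$-nuclear Cuntz pair $(\phi_0,\phi_1)$, and by Corollary \ref{c:densespan} the triple $(\phi_0,\phi_1,1_{\mathcal B(\mathcal H_B)})$ is a weakly $X$-nuclear cycle with $\mathcal E_{(\phi_0,\phi_1)} = \mathcal E_{(\phi_0,\phi_1,1)}$. You instead argue directly on an arbitrary $X$-nuclear Kasparov module by splitting $E = E^{(0)} \oplus E^{(1)}$, making $F$ self-adjoint by a straight-line operator homotopy (which leaves $\phi$ fixed, so $X$-nuclearity is clearly preserved), and reading off the cycle. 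That is closer to Blackadar's classical reduction and is perfectly sound here precisely because the paper's Fredholm picture does \emph{not} insist that $E_0 = E_1 = \mathcal H_B$, so no absorption by Kasparov stabilisation is needed and your ``adding a degenerate module'' hedge is actually unnecessary. The paper's route is shorter because it reuses the Cuntz pair work; yours is more self-contained. On injectivity you and the paper both use Proposition \ref{p:ohvsh} and then the Blackadar 17.5 degenerate-module trick without spelling out all details; your remark about replacing intermediate operators $G_t$ by their self-adjoint parts is the right way to lift the operator homotopy of Kasparov modules to one of cycles (the paper calls this ``easy to see''), though it would be worth being careful that the degenerate pieces you add on either side can be taken consistently so that the $\approx_\oh$-chain assembles, which the paper arranges via the auxiliary modules $\mathcal D_i'$.
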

\begin{proof}
As with the proof of the Cuntz pair picture, this is very close to the similar proof in the classical case, so it will only be sketched, and only in the $X$-nuclear case.

By definition, $\mathcal E_{(\phi_0,\phi_1,u)}$ is $X$-nuclear whenever $(\phi_0,\phi_1, u)$ is weakly $X$-nuclear. As the assignment \eqref{eq:Fredholmmodule} preserves direct sums (up to unitary equivalence), unitary equivalence, operator homotopy, and degeneracy, it follows that it induces a well-defined homomorphism into $KK_\nuc(X; A, B)$.

For surjectivity, the Cuntz pair picture (Proposition \ref{p:KKCP}) implies that for every element in $KK_\nuc(X; A, B)$ there is a weakly $X$-nuclear Cuntz pair $(\phi_0,\phi_1)$ such that $\mathcal E_{(\phi_0,\phi_1)}$ represents the element. We consider $\phi_0$ and $\phi_1$ as maps to $\mathcal B(\mathcal H_B)$ where $\mathcal H_B = \ell^2(\mathbb N) \otimes B$. By Corollary \ref{c:densespan}, $(\phi_0,\phi_1,1_{\mathcal B(\mathcal H_B)})$ is a weakly $X$-nuclear cycle and $\mathcal E_{(\phi_0,\phi_1)} = \mathcal E_{(\phi_0,\phi_1,1_{\mathcal B(\mathcal H_B)})}$, so our map is surjective.

For injectivity, let $(\phi_0,\phi_1,u) $ and $(\psi_0, \psi_1, v)$ be weakly $X$-nuclear cycles for which $\mathcal E_0 := \mathcal E_{(\phi_0,\phi_1,u)}$ and $\mathcal E_1 := \mathcal E_{(\psi_0,\psi_1,v)}$ have the same class in $KK_\nuc(X; A, B)$. By Proposition \ref{p:ohvsh}, $\mathcal E_0 \sim_\oh \mathcal E_1$, so there are degenerate, $X$-nuclear Kasparov modules $\mathcal D_0$ and $\mathcal D_1$ such that $\mathcal E_0\oplus \mathcal D_0 \approx_\oh \mathcal E_1 \oplus \mathcal D_1$. Arguing as in \cite[Section 17.5]{Blackadar-book-K-theory},\footnote{The Fredholm picture in \cite[Section 17.5]{Blackadar-book-K-theory} is slightly different from the one given here. While it requires that $u$ in the cycle $(\phi_0,\phi_1, u)$ is a unitary, it also requires that $A$ is unital. The arguments given in \cite[Section 17.5]{Blackadar-book-K-theory} still carry through word for word, but produce an operator $u$ satisfying \eqref{eq:Fredholmunitary} instead of a unitary.} there are degenerate, $X$-nuclear Kasparov modules $\mathcal D_i'$ for $i=0,1$, such that $\mathcal D_i \oplus \mathcal D_i'$ is unitarily equivalent to a module of the form $\mathcal E_{(\eta_0^{(i)} , \eta_1^{(i)} , w^{(i)})}$ for a degenerate, weakly $X$-nuclear cycle $(\eta_0^{(i)} , \eta_1^{(i)} , w^{(i)})$. It is easy to see that the operator homotopy $\mathcal E_0\oplus \mathcal D_0 \oplus \mathcal D_0' \approx_\oh \mathcal E_1 \oplus \mathcal D_1 \oplus \mathcal D_1'$ induces an operator homotopy between
\begin{equation}
(\phi_0,\phi_1, u) \oplus (\eta_0^{(0)} , \eta_1^{(0)} , w^{(0)}) \quad \textrm{and} \quad (\psi_0,\psi_1, v) \oplus (\eta_0^{(1)} , \eta_1^{(1)} , w^{(1)}).
\end{equation}
Hence the assignment \eqref{eq:Fredholmmodule} induces an isomorphism as desired.
\end{proof}

If $(\phi_0,\phi_1)$ is an $A$-$B$-Cuntz pair, then $(\phi_0,\phi_1, 1_{\ell^2(\mathbb N)\otimes B})$ is an $A$-$B$-cycle. By Corollary \ref{c:densespan}, if $(\phi_0, \phi_1)$ is weakly $X$-equivariant or weakly $X$-nuclear, then so is $(\phi_0, \phi_1, 1_{\ell^2(\mathbb N) \otimes B})$. However, it is a very non-trivial fact (relying on homotopy and operator homotopy agreeing for Kasparov modules) that this assignment induces a well-defined map between homotopy class of Cuntz pairs and $\sim_\oh$-classes of cycles. It follows immediately from Propositions \ref{p:KKCP} and \ref{p:KKFredholm} that it is actually well-defined, and, moreover, an isomorphism.

\begin{corollary}\label{c:CPvsFredholm}
Let $X$ be a topological space, and let $A$ and $B$ be $X$-$C^\ast$-algebras for which $A$ is separable and $B$ is $\sigma$-unital. The assignment
\begin{equation}
(\phi_0, \phi_1) \mapsto (\phi_0,\phi_1, 1_{\ell^2(\mathbb N)\otimes B})
\end{equation}
between the collections of $A$-$B$-Cuntz pairs and $A$-$B$-cycles
induces an isomorphism between the group of homotopy classes of weakly $X$-equi\-variant (respectively weakly $X$-nuclear) $A$-$B$-Cuntz pairs, and the group of $\sim_\oh$-equivalence classes of weakly $X$-equi\-variant (respectively weakly $X$-nuclear) $A$-$B$-cycles.
\end{corollary}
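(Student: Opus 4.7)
The plan is to observe that this corollary is essentially a diagram chase: both the group of homotopy classes of weakly $X$-equivariant (resp.~$X$-nuclear) $A$-$B$-Cuntz pairs and the group of $\sim_\oh$-equivalence classes of weakly $X$-equivariant (resp.~$X$-nuclear) $A$-$B$-cycles have already been identified with the same Kasparov group $KK(X;A,B)$ (resp.~$KK_\nuc(X;A,B)$) via Propositions \ref{p:KKCP} and \ref{p:KKFredholm}. The assignment $(\phi_0, \phi_1) \mapsto (\phi_0, \phi_1, 1_{\ell^2(\mathbb N)\otimes B})$ is precisely the map that intertwines these two identifications.

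The first step is to note that if $(\phi_0,\phi_1)$ is a weakly $X$-equivariant (resp.~weakly $X$-nuclear) Cuntz pair, then $(\phi_0, \phi_1, 1_{\ell^2(\mathbb N) \otimes B})$ is a weakly $X$-equivariant (resp.~weakly $X$-nuclear) cycle: conditions \eqref{eq:Frintertwiner} and \eqref{eq:Fredholmunitary} are automatic since $\phi_0(a) - \phi_1(a) \in B \otimes \mathcal K$ and $u = 1$ satisfies $u^\ast u - 1 = uu^\ast - 1 = 0$, while weak $X$-equivariance (resp.~weak $X$-nuclearity) of the cycle is just weak $X$-equivariance (resp.~weak $X$-nuclearity) of $\phi_0$ and $\phi_1$.

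The second and crucial step is to verify that the following diagram commutes, with vertical arrows the isomorphisms of Propositions \ref{p:KKCP} and \ref{p:KKFredholm}:
\begin{equation}
\xymatrix{
\{[\phi_0,\phi_1]\} \ar[r] \ar[dr]_{\cong} & \{[\phi_0,\phi_1,1]\}_{\sim_\oh} \ar[d]^{\cong} \\
& KK_{(\nuc)}(X;A,B)
}
\end{equation}
This is immediate by inspecting the two formulas: the Cuntz pair picture sends $(\phi_0,\phi_1)$ to the Kasparov module
\begin{equation}
\left( \mathcal H_B \oplus \mathcal H_B^\op, \phi_0 \oplus \phi_1, \left(\begin{array}{cc} 0 & 1_{\mathcal H_B} \\ 1_{\mathcal H_B} & 0 \end{array}\right) \right),
\end{equation}
while the Fredholm picture sends $(\phi_0, \phi_1, 1_{\ell^2(\mathbb N) \otimes B})$ to the same module (taking $u = u^\ast = 1_{\mathcal H_B}$). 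Hence the composition $(\phi_0,\phi_1) \mapsto (\phi_0,\phi_1,1) \mapsto \mathcal E_{(\phi_0,\phi_1,1)}$ equals $(\phi_0,\phi_1) \mapsto \mathcal E_{(\phi_0,\phi_1)}$.

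The third step is the formal conclusion. Since the two diagonal/vertical maps are group isomorphisms by Propositions \ref{p:KKCP} and \ref{p:KKFredholm}, commutativity of the diagram forces the top map to be well-defined (homotopic Cuntz pairs go to $\sim_\oh$-equivalent cycles, since they go to the same element of $KK_{(\nuc)}(X;A,B)$) and to be a group isomorphism. There are no substantial obstacles here; the content is entirely packaged in the two earlier pictures, and the only thing to check is the trivial compatibility of the two assignments at the level of Kasparov modules. In particular, one never has to directly exhibit an operator homotopy between $(\phi_0,\phi_1,1)$ and $(\psi_0,\psi_1,1)$ starting from a Cuntz pair homotopy between $(\phi_0,\phi_1)$ and $(\psi_0,\psi_1)$—this comes for free from the isomorphism with $KK_{(\nuc)}(X;A,B)$.
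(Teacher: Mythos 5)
Your proposal is correct and matches the paper's own (implicit) argument exactly: both the Cuntz pair picture and the Fredholm picture are identified with $KK_{(\nuc)}(X;A,B)$, and the assignment $(\phi_0,\phi_1)\mapsto(\phi_0,\phi_1,1)$ is readily checked to intertwine these identifications at the level of Kasparov modules, so well-definedness and bijectivity follow for free. The one small thing you gloss over is that the Cuntz pair picture defines weak $X$-nuclearity of $\psi_i\colon A\to\multialg{B\otimes\mathcal K}$ in the multiplier algebra sense while the cycle picture uses the Hilbert module sense; their agreement is precisely Corollary \ref{c:densespan}, which the paper cites immediately before the statement and which your step one quietly absorbs.
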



\section{A stable uniqueness theorem}

The new proof of the Kirchberg--Phillips theorem presented in Section \ref{s:KP} required a stable uniqueness theorem due to Dadarlat and Eilers \cite[Theorem 3.10]{DadarlatEilers-asymptotic}. A similar stable uniqueness theorem for $KK_\nuc(X)$ will be used to prove the non-simple classification theorem. The goal of this section is to prove this stable uniqueness theorem. 

The proof presented here mimics the original proof, but one runs into trouble due to the original proof using several instances of unitisations. Resolving these unitisation issues becomes one of the main complications when compared to the original proof.

Consider the automorphism group of a $C^\ast$-algebra as equipped with the uniform topology.\footnote{In particular, a \emph{norm}-continuous unitary path $(u_t)$ in $\multialg{B}$ gives rise to a continuous path $(\Ad u_t)_{t\in \mathbb R_+}$ in $\Aut(B)$, whereas for a \emph{strictly} continuous unitary path $(v_t)$ in $\multialg{B}$, the path $(\Ad v_t)_{t\in \mathbb R_+}$ is not necessarily continuous! However, $(\Ad v_t)_{t\in \mathbb R_+}$ is continuous if one instead equips $\Aut(B)$ with the point-norm topology, but this will not be relevant here.} 

\begin{proposition}[{\cite[Proposition 2.15]{DadarlatEilers-asymptotic}}]\label{p:DEaut}
Let $D$ be a unital, separable $C^\ast$-algebra, and let $(\alpha_t)_{t\in \mathbb R_+}$ be a continuous path of automorphisms on $D$ with $\alpha_0 = \id_D$. Then there exists a continuous path $(v_t)_{t\in \mathbb R_+}$ of unitaries in $D$ with $v_0 = 1_D$ such that
\begin{equation}
\lim_{t\to \infty} \| \alpha_t(d) - \Ad v_t (d) \| = 0 , \qquad d\in D.
\end{equation}
\end{proposition}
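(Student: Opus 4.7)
The strategy is to construct $(v_t)_{t\in \mathbb R_+}$ inductively on integer intervals $[n,n+1]$, refining a finite-set approximation at each stage. Fix a dense sequence $(d_j)_{j\in\mathbb N}$ in $D$. The aim is to arrange that at stage $n$ one has
\begin{equation}
\| \alpha_t(d_j) - \Ad v_t(d_j) \| < 1/n \qquad \text{for all } t\in [n,n+1] \text{ and all } j\leq n,
\end{equation}
together with continuity of $(v_t)$ on $[0,n+1]$ and the matching condition $v_n = (\text{endpoint of stage } n-1)$. The required pointwise convergence $\alpha_t(d)\to \Ad v_t(d)$ as $t\to\infty$ is then immediate by density of $(d_j)$.

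The key local ingredient is a Kadison-type approximation lemma: for every $\epsilon>0$ and every finite set $\mathcal F\subset D$, there exists $\delta>0$ such that any $\beta\in \Aut(D)$ with $\|\beta - \id_D\| < \delta$ admits a norm-continuous unitary path $(w_s)_{s\in [0,1]}$ in $D$ with $w_0 = 1_D$ and $\|\Ad w_1(d) - \beta(d)\| < \epsilon$ for $d\in \mathcal F$. For $\beta$ sufficiently close to $\id_D$ this is a standard consequence of (approximate) innerness for derivations together with functional-calculus arguments in the unitary group near $1_D$: one linearises $\beta$ via a Cayley-type transform and integrates the resulting approximately inner derivation on the finite set $\mathcal F$ to obtain the continuous unitary path.

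For the inductive step from stage $n-1$ to stage $n$, use uniform continuity of $t\mapsto \alpha_t$ on the compact interval $[n,n+1]$ to subdivide it as $n=s_0<s_1<\dots<s_K=n+1$ so that, for the $\delta$ furnished by the local lemma with finite set $\alpha_{s_k}(\{d_1,\dots,d_n\})$ and tolerance $\epsilon \ll 1/(nK)$, one has $\|\alpha_{s_{k+1}}\circ \alpha_{s_k}^{-1} - \id_D\| < \delta$ for each $k$. On each subinterval $[s_k,s_{k+1}]$ apply the local lemma to the path $t\mapsto \alpha_t\circ \alpha_{s_k}^{-1}$ to obtain a continuous unitary path $(w_t^{(k)})_{t\in [s_k,s_{k+1}]}$ with $w_{s_k}^{(k)} = 1_D$. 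Then extend $v_t$ across $[s_k,s_{k+1}]$ by $v_t := w_t^{(k)} v_{s_k}$, which preserves continuity at the subdivision points by construction and matches the endpoint from stage $n-1$ at $t=n$.

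The main obstacle is bookkeeping the accumulation of errors across the subdivisions within a single stage: the composition $\Ad v_t = \Ad w_t^{(k)}\circ \Ad v_{s_k}$ propagates the approximation error additively on the finite set $\{d_1,\dots,d_n\}$, so one must choose the per-step tolerance $\epsilon$ small enough relative to both $1/n$ and the number of subdivisions $K$ required at stage $n$. This is arranged by first fixing the subdivision (depending only on the modulus of continuity of $t\mapsto \alpha_t$ on $[n,n+1]$ and on $\delta$), and then selecting the tolerance last. Continuity of the global path $(v_t)_{t\in\mathbb R_+}$ follows from the continuity of each local piece and endpoint matching at all $s_k$ and all integers; this completes the construction.
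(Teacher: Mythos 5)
The paper does not prove this proposition itself; it is cited from Dadarlat--Eilers, so your proposal is being judged on its own merits rather than compared to a proof in the paper.

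Your overall shape --- an inductive, stage-by-stage construction over $[n,n+1]$ with accumulating finite sets and shrinking tolerances --- is the right one. But the local ingredient you lean on is misquantified in a way that makes the entire inductive step collapse.

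Look at the ``Kadison-type approximation lemma'' you state: \emph{for every $\epsilon>0$ and finite $\mathcal F\subset D$ there exists $\delta>0$ such that any $\beta$ with $\|\beta-\id_D\|<\delta$ admits a unitary path $(w_s)$ with $w_0=1_D$ and $\|\Ad w_1(d)-\beta(d)\|<\epsilon$ for $d\in\mathcal F$.} This is trivially true: just take $w_s\equiv 1_D$ and $\delta=\epsilon/\max_{d\in\mathcal F}\|d\|$, since then $\|\Ad w_1(d)-\beta(d)\|=\|d-\beta(d)\|\leq\|\id-\beta\|\,\|d\|<\epsilon$. Being trivially satisfiable, the lemma carries no information. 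When you feed it into the inductive step, the legal choice $w_t^{(k)}\equiv 1_D$ makes $v_t$ \emph{constant} on all of $[n,n+1]$, and then the error $\|\alpha_{n+1}(d_j)-\Ad v_{n+1}(d_j)\|=\|\alpha_{n+1}(d_j)-\Ad v_n(d_j)\|$ picks up an additional $\sup_{t\in[n,n+1]}\|\alpha_t-\alpha_n\|\cdot\|d_j\|$, which has no reason to go to zero --- the path $\alpha_t$ can wander arbitrarily far on each integer interval. So the induction does not close. The circular bookkeeping you flag (``first fix the subdivision depending on $\delta$, then select the tolerance last,'' even though $\delta$ was defined from the tolerance) is a symptom of exactly this misquantification.

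What you actually need is the statement with the quantifiers reversed: there is a fixed $r\in(0,2)$ such that \emph{for every} $\beta\in\Aut(D)$ with $\|\beta-\id_D\|<r$, for \emph{every} $\epsilon>0$ and finite $\mathcal F$, there is a norm-continuous unitary path $(w_s)_{s\in[0,1]}$ in $D$ with $w_0=1_D$, $\|\Ad w_1(d)-\beta(d)\|<\epsilon$ for $d\in\mathcal F$, and $\|w_s-1_D\|\leq C\|\beta-\id_D\|$ uniformly in $s$. That is: the approximation error is made arbitrarily small while $\|\beta-\id_D\|$ stays fixed. With this version your subdivision-and-recompose scheme would go through, including the continuity bookkeeping (the last clause is what keeps the global path continuous at integer junctions). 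But this is genuinely nontrivial. It is essentially the assertion that every automorphism of a separable $C^\ast$-algebra which is norm-close to the identity is asymptotically implementable on finite sets with arbitrary precision, and --- via Kadison--Ringrose, $\beta=\exp(\delta')$ for a $\ast$-derivation $\delta'$ --- it reduces to a form of approximate innerness for derivations of separable $C^\ast$-algebras. That is the real content of Dadarlat--Eilers's Proposition 2.15. Your parenthetical remark that this follows from ``(approximate) innerness for derivations together with functional-calculus arguments'' identifies the correct theme, but it is the entire difficulty, not a routine corollary, and without a precise citation or proof your argument has a hole exactly where the work lives.
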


The following is an extension of the above proposition, and the (unital version of the) proof is essentially contained in the proof of \cite[Proposition 3.6]{DadarlatEilers-asymptotic}. This version is easier to apply, when working with not necessarily separable and unital $C^\ast$-algebras.

\begin{corollary}\label{c:DEaut}
Let $E$ be a $C^\ast$-algebra, $A\subseteq E$ be a separable $C^\ast$-sub\-algebra, and $(\alpha_t)_{t\in \mathbb R_+}$ be a continuous path of automorphisms on $E$ with $\alpha_0 = \id_E$. Then there is a continuous path $(v_t)_{t\in \mathbb R_+}$ of unitaries in $\widetilde E$ with $v_0 = 1_{\widetilde E}$ such that
\begin{equation}
\lim_{t\to \infty} \| \alpha_t(a) - \Ad v_t (a) \| = 0 , \qquad \textrm{ for all }a\in A.
\end{equation}
\end{corollary}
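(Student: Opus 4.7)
The plan is to reduce to Proposition \ref{p:DEaut} by constructing a separable, unital $C^\ast$-subalgebra of $\widetilde{E}$ that contains $A$ and is invariant under an extended path of automorphisms on $\widetilde{E}$.

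First, I would extend each $\alpha_t$ to an automorphism $\widetilde{\alpha}_t$ of $\widetilde{E}$ by setting $\widetilde{\alpha}_t(e + \lambda 1_{\widetilde{E}}) = \alpha_t(e) + \lambda 1_{\widetilde{E}}$ for $e \in E$ and $\lambda \in \mathbb{C}$. Since $\widetilde{\alpha}_s - \widetilde{\alpha}_t$ vanishes on the scalar part of $\widetilde{E}$, one has $\|\widetilde{\alpha}_s - \widetilde{\alpha}_t\| = \|\alpha_s - \alpha_t\|$, so $(\widetilde{\alpha}_t)_{t\in\mathbb{R}_+}$ is a uniformly continuous path in $\Aut(\widetilde{E})$ with $\widetilde{\alpha}_0 = \id_{\widetilde{E}}$. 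In particular, for every $d \in \widetilde{E}$, the assignment $t \mapsto \widetilde{\alpha}_t(d)$ is norm continuous.

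Next, I would build a separable, unital $C^\ast$-subalgebra $D \subseteq \widetilde{E}$ containing $A \cup \{1_{\widetilde{E}}\}$ which is invariant under $\widetilde{\alpha}_t$ for every $t \in \mathbb{R}_+$. Fix a countable dense subset $T \subseteq \mathbb{R}_+$, and define recursively
\begin{equation}
D_0 := C^\ast(A \cup \{1_{\widetilde{E}}\}), \qquad D_{n+1} := C^\ast\bigl(D_n \cup \{\widetilde{\alpha}_t(d) : d \in D_n,\, t \in T\}\bigr).
\end{equation}
Each $D_n$ is separable, so $D := \overline{\bigcup_{n\in\mathbb{N}} D_n}$ is a separable, unital $C^\ast$-subalgebra of $\widetilde{E}$, and by construction $\widetilde{\alpha}_t(D) \subseteq D$ for every $t \in T$. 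Since $T$ is dense in $\mathbb{R}_+$ and the maps $t \mapsto \widetilde{\alpha}_t(d)$ are norm continuous for each $d \in D$, it follows that $\widetilde{\alpha}_t(D) \subseteq D$ for every $t \in \mathbb{R}_+$. Thus $(\widetilde{\alpha}_t|_D)_{t\in\mathbb{R}_+}$ is a continuous path of automorphisms of $D$ with $\widetilde{\alpha}_0|_D = \id_D$.

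Finally, applying Proposition \ref{p:DEaut} to the separable, unital $C^\ast$-algebra $D$ and the path $(\widetilde{\alpha}_t|_D)_{t\in\mathbb{R}_+}$ produces a norm-continuous path of unitaries $(v_t)_{t\in\mathbb{R}_+}$ in $D \subseteq \widetilde{E}$ with $v_0 = 1_D = 1_{\widetilde{E}}$ such that
\begin{equation}
\lim_{t\to\infty} \|\widetilde{\alpha}_t(d) - \Ad v_t(d)\| = 0 \qquad \text{for all } d \in D.
\end{equation}
Specialising to $a \in A \subseteq D$ and observing that $\widetilde{\alpha}_t|_A = \alpha_t|_A$ yields the desired conclusion. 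The only subtle point is ensuring that the separable subalgebra extracted is stable under the \emph{entire} (generally uncountable) family $(\widetilde{\alpha}_t)_{t\in\mathbb{R}_+}$, which is handled by combining the countable inductive construction with the density of $T$ and the pointwise norm continuity of the path.
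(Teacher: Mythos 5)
Your overall strategy is exactly the paper's: extend the path to the unitisation, use a countable-orbit construction to manufacture a separable, unital, invariant $C^\ast$-subalgebra $D$ containing $A$, and then apply Proposition~\ref{p:DEaut} to $D$. The inductive ``layering'' construction is a standard variant of the paper's explicit generating-set description, and the density argument for extending invariance from $T$ to all of $\mathbb R_+$ is fine.

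However, there is a genuine gap. Your $D$ is only \emph{forward}-invariant: you arrange $\widetilde\alpha_t(D)\subseteq D$, but nothing in the recursion forces $\widetilde\alpha_t^{-1}(D)\subseteq D$, so $\widetilde\alpha_t|_D$ is in general only an injective, unital $\ast$-endomorphism of $D$ rather than an automorphism. This matters because Proposition~\ref{p:DEaut} requires a path of \emph{automorphisms} of the separable algebra. A concrete illustration: if $E=C_0(\mathbb R)$ and each $\alpha_t$ is translation by $t$, then the orbit of a subalgebra of functions supported on $[0,\infty)$ under the forward translations alone never picks up functions supported near $-\infty$. The paper avoids this by generating $D$ from the subalgebras $(\alpha_{t_1}^{j_1}\circ\cdots\circ\alpha_{t_n}^{j_n})(A)$ with $j_i\in\mathbb Z$, so inverses are built in. The fix to your proof is one line: in the recursive step, throw in $\widetilde\alpha_t^{-1}(d)$ (equivalently $\widetilde\alpha_t^{\pm 1}(d)$) for $d\in D_n$, $t\in T$; one then also checks that $t\mapsto\widetilde\alpha_t^{-1}(d)$ is norm-continuous for each fixed $d$ (which follows from uniform continuity of $t\mapsto\widetilde\alpha_t$ and a standard Neumann series argument, or simply from $\|\widetilde\alpha_s^{-1}-\widetilde\alpha_t^{-1}\|\le\|\widetilde\alpha_s^{-1}\|\,\|\widetilde\alpha_t-\widetilde\alpha_s\|\,\|\widetilde\alpha_t^{-1}\|$), so that the density argument still gives $\widetilde\alpha_t^{-1}(D)\subseteq D$ for all $t$.

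A smaller imprecision: your claim $\|\widetilde\alpha_s-\widetilde\alpha_t\|=\|\alpha_s-\alpha_t\|$ is an overstatement. For $\|x+\lambda 1\|\le 1$ in $\widetilde E$ one only gets $\|x\|\le 2$, so the correct estimate (as in the paper) is $\|\widetilde\alpha_s-\widetilde\alpha_t\|\le 2\|\alpha_s-\alpha_t\|$. This does not affect your argument since all you need is uniform continuity of the extended path, but the equality as written is not justified.
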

\begin{proof}
If $E$ is non-unital then each $\alpha_t$ extends canonically to an automorphism $\widetilde \alpha_t$ on $\widetilde E$. For any $x\in E$ and $\mu \in \mathbb C$ we have 
\begin{eqnarray}
\| \widetilde \alpha_t(x + \mu1_{\widetilde E}) - \widetilde \alpha_s(x+\mu 1_{\widetilde E}) \| &=& \| \alpha_t(x) + \mu1_{\widetilde E} - \alpha_s(x)- \mu 1_{\widetilde E} \| \nonumber \\
&=&  \| \alpha_t(x) - \alpha_s(x)\|.
\end{eqnarray}
Moreover, if $\| x + \mu 1_{\widetilde E} \| \leq 1$, then $\| x\| \leq 2$ and $|\mu | \leq 1$. Thus it follows that $\| \widetilde \alpha_t - \widetilde \alpha_s\|_{\mathcal B(\widetilde E)} \leq 2 \| \alpha_t - \alpha_s\|_{\mathcal B(E)}$,\footnote{Here $\mathcal B(E)$ denotes the Banach algebra of bounded operators on $E$, not to be confused with the $C^\ast$-algebra of adjointable operators when $E$ is a Hilbert $C^\ast$-module. I hope this overlap of notation does not confuse the reader.} so $(\widetilde \alpha_t)_{t\in \mathbb R_+}$ is also continuous. Hence we may assume that $E$ is unital.

Let $D$ denote the unital $C^\ast$-subalgebra of $E$ generated by all subalgebras of the form
\begin{equation}
(\alpha_{t_1}^{j_1} \circ \dots \circ \alpha_{t_n}^{j_n} )(A)
\end{equation}
for all $n\in \mathbb N$, $j_i \in \mathbb Z$ and $t_i \in \mathbb R_+ \cap \mathbb Q$. Then $D$ is separable and unital, contains $A$, and each $\alpha_t$ restricts to an automorphism on $D$. Thus $(\alpha_{t}|_D)_{t\in \mathbb R_+}$ and $D$ satisfy the condition of Proposition \ref{p:DEaut}, so there exists a continuous path of unitaries $(v_t)_{t\in \mathbb R_+}$ in $D$, which are also unitaries in $E$, such that
\begin{equation}
\lim_{t\to \infty} \| \alpha_t(d) - \Ad v_t (d) \| = 0 , \qquad \textrm{ for all }d\in D.
\end{equation}
As $A\subseteq D$, this finishes the proof.
\end{proof}

Given $C^\ast$-algebras $A$ and $B$ and a $\ast$-homomorphism $\phi \colon A \to \multialg{B}$,  let $\dot \phi \colon A \to \corona{B}$ denote the induced $\ast$-homomorphism, and
\begin{equation}
D_\phi := \{ x\in \multialg{B} : [x, \phi(A)] \subseteq B\}, \qquad E_\phi := \phi(A) + B.
\end{equation}
Note that if $u\in D_\phi$ is a unitary then $\Ad u$ induces a (not necessarily inner) automorphism on $E_\phi$.
Also, one obtains a short exact sequence
\begin{equation}\label{eq:sesgamma}
0 \to B \to D_\phi \to \corona{B} \cap \dot \phi(A)' \to 0.
\end{equation}

If $\psi\colon A \to \multialg{B}$ is such that $\dot \psi = \dot \phi$, i.e.~$\phi(a) - \psi(a) \in B$ for all $a\in A$, then $D_\phi = D_{\psi}$, and $E_\phi = E_{\psi}$, so in particular, $\psi$ induces the exact same exact sequence as above.

\begin{definition}
Two $\ast$-homomorphisms $\phi, \psi \colon A \to \multialg{B}$ are \emph{properly asymptotically unitarily equivalent}, written $\phi \approxeq \psi$, if there is a continuous unitary path $(u_t)_{t\in \mathbb R_+}$ in $\widetilde B$ such that
\begin{equation}
\lim_{t\to \infty} \| u_t^\ast \phi(a) u_t - \psi(a) \| = 0, \qquad a\in A.
\end{equation}
Write $\phi \approxeq_0 \psi$ if it is possible to choose $u_0 = 1_{\widetilde B}$ above.
\end{definition}

Obviously $\phi(a) - \psi(a) \in B$ for all $a\in A$ provided $\phi \approxeq \psi$. The following is essentially proved in \cite{DadarlatEilers-asymptotic} and is how one obtains proper asymptotic unitary equivalence.

\begin{lemma}\label{l:propertrick}
Let $A$ be a separable $C^\ast$-algebra, and let $\phi, \psi \colon A \to \multialg{B}$ be $\ast$-homo\-morphisms such that $(\phi(A) + \mathbb C 1_{\multialg{B}}) \cap B = \{0\}$ and for which $\phi(a) - \psi(a) \in B$ for all $a\in A$. Suppose that $(u_t)_{t\in \mathbb R_+}$ is a norm-continuous unitary path in $D_\phi$ for which 
\begin{equation}
\lim_{t\to \infty} \| u_t^\ast \phi(a) u_t - \psi(a) \| = 0, \qquad a\in A.
\end{equation}
Then $\Ad u_0 \circ \phi \approxeq_0 \psi$. 

In particular, if $u_0 \in \widetilde B$ then $\phi \approxeq \psi$.
\end{lemma}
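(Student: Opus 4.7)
\begin{skproof}
My plan is, after reducing to $u_0 = 1_{\multialg B}$, to apply Corollary \ref{c:DEaut} inside a suitable $C^\ast$-subalgebra $F$ of $\multialg B$ to produce approximating unitaries, and then to use the hypothesis $(\phi(A) + \mathbb C 1_{\multialg B}) \cap B = \{0\}$, which provides a canonical $C^\ast$-algebra section of the quotient $F \to F/B$, to modify the resulting unitaries so that they lie in $\widetilde B$.

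First, replacing $\phi$ with $\Ad u_0 \circ \phi$ and $u_t$ with $u_0^\ast u_t$ reduces the claim to the case $u_0 = 1_{\multialg B}$, in which case one needs to show $\phi \approxeq_0 \psi$; note that the hypothesis is preserved under inner conjugation by a unitary. Set $F := \phi(A) + B + \mathbb C 1_{\multialg B} \subseteq \multialg B$, which is a unital $C^\ast$-subalgebra since $B$ is an ideal in $\multialg B$. The hypothesis forces the restriction of the quotient $Q \colon F \to F/B$ to $\phi(A) + \mathbb C 1_{\multialg B}$ to be an isomorphism of $C^\ast$-algebras, producing a canonical section $\iota \colon F/B \xrightarrow{\cong} \phi(A) + \mathbb C 1_{\multialg B} \hookrightarrow F$ with $Q \circ \iota = \id_{F/B}$. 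Since $u_t \in D_\phi$ normalises $\phi(A)$ modulo $B$, preserves the ideal $B$, and fixes $1_{\multialg B}$, the conjugations $\alpha_t := \Ad u_t$ form a norm-continuous path of automorphisms of $F$ with $\alpha_0 = \id_F$. Applying Corollary \ref{c:DEaut} to $E = F$ (which is unital, so $\widetilde F = F$) and the separable subalgebra $\phi(A)$ then furnishes a continuous unitary path $(v_t)_{t \in \mathbb R_+}$ in $F$ with $v_0 = 1_{\multialg B}$ and $\|v_t^\ast \phi(a) v_t - \alpha_t(\phi(a))\| \to 0$ for all $a \in A$; combined with $\alpha_t(\phi(a)) = u_t^\ast \phi(a) u_t \to \psi(a)$, this gives $v_t^\ast \phi(a) v_t \to \psi(a)$.

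The main step, which I expect to be the most subtle, is passing from the path $(v_t)$ in $F$ to a unitary path inside $\widetilde B$. Define $y_t := \iota(Q(v_t))$; this is a continuous unitary path in $\phi(A) + \mathbb C 1_{\multialg B} \subseteq F$ with $y_0 = 1$. Set $w_t := y_t^\ast v_t$. As a product of two unitaries, $w_t$ is unitary, with $w_0 = 1$ and depending norm-continuously on $t$, and it satisfies $Q(w_t) = Q(y_t)^\ast Q(v_t) = 1$, so $w_t \in 1 + \ker Q = 1 + B \subseteq \widetilde B$. It remains to verify $w_t^\ast \phi(a) w_t \to \psi(a)$, which reduces to showing $y_t \phi(a) y_t^\ast \to \phi(a)$. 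Because $\psi(a) \equiv \phi(a) \pmod B$, pushing $v_t^\ast \phi(a) v_t \to \psi(a)$ through the quotient $Q$ yields $[Q(v_t), Q(\phi(a))] \to 0$ in $F/B$; applying the contractive $\ast$-homomorphism $\iota$, and using $\iota(Q(\phi(a))) = \phi(a)$ (since $\phi(a)$ lies in the image of $\iota$), transports this asymptotic commutativity to $[y_t, \phi(a)] \to 0$ in $F$.

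With this asymptotic commutativity in hand, boundedness of $(y_t)$ and $(v_t)$ gives
$$w_t^\ast \phi(a) w_t - v_t^\ast \phi(a) v_t = v_t^\ast \bigl(y_t \phi(a) y_t^\ast - \phi(a)\bigr) v_t \to 0,$$
and therefore $w_t^\ast \phi(a) w_t \to \psi(a)$. This establishes $\phi \approxeq_0 \psi$ for the reduced $\phi$, that is, $\Ad u_0 \circ \phi \approxeq_0 \psi$ for the original $\phi$. The ``in particular'' assertion is then immediate: when $u_0 \in \widetilde B$, the path $(u_0 w_t)_{t \in \mathbb R_+}$ is a continuous unitary path in $\widetilde B$ realising $\phi \approxeq \psi$.
\end{skproof}
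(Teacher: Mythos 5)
Your proof is correct and follows essentially the same route as the paper: applying Corollary \ref{c:DEaut} inside $\widetilde{E_\phi}$ (your $F$), then using the isomorphism $\phi(A) + \mathbb C 1_{\multialg B} \cong F/B$ furnished by the hypothesis to correct the resulting unitaries into $\widetilde B$. The only difference is presentational — you reduce to $u_0 = 1_{\multialg B}$ at the outset, which slightly streamlines the bookkeeping that the paper carries through via $\alpha_t = \Ad(u_0^\ast u_t)$.
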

\begin{proof}
Let $\alpha_t = \Ad (u_0^\ast u_t) \in \Aut(E_\phi)$, and note that $\alpha_t \circ \phi$ converges point-norm to $\Ad u_0^\ast \circ \psi$. Clearly $(\alpha_t)_{t\in \mathbb R_+}$ is a continuous path in $\Aut (E_\phi)$ (with the uniform topology) with $\alpha_0 = \id_{E_\phi}$, so by Corollary \ref{c:DEaut} there is a continuous unitary path $(v_t)_{t\in \mathbb R_+}$ in $\widetilde E_\phi = E_\phi + \mathbb C 1_{\multialg{B}}$ with $v_0=1_{\multialg{B}}$ such that
\begin{equation}
\lim_{t\to \infty} \| \alpha_t(\phi(a)) - \Ad v_t(\phi(a)) \| = 0, \qquad a\in A.
\end{equation}
In particular, $\Ad v_t \circ \phi$ converges point-norm to $\Ad u_0^\ast \circ \psi$. Since $(\phi(A) + \mathbb C 1_{\multialg{B}}) \cap B = \{0\}$, there is a canonical identification $\phi(A) + \mathbb C 1_{\multialg{B}} \cong (E_\phi + \mathbb C1_{\multialg{B}}) /B$, so let $(x_t)_{t\in \mathbb R_+}$ be the unitary path in $\phi(A) + \mathbb C 1_{\multialg{B}}$ corresponding to $(v_t + B)_{t\in \mathbb R_+}$ in $(E_\phi + \mathbb C1_{\multialg{B}}) /B$. For $y_t := v_t - x_t \in B$, the path $(y_t)_{t\in \mathbb R_+}$ is clearly continuous and bounded. 
Note that
\begin{eqnarray}
\lim_{t\to \infty} \| x_t \phi(a) x_t^\ast - \phi(a) \| &=& \lim_{t\to \infty} \| v_t u_0 \psi (a) u_0^\ast v_t^\ast - \phi(a) + B\|_{(E_\phi + \mathbb C1_{\multialg{B}}) /B} \nonumber \\
&\leq& \lim_{t\to \infty} \| v_t^\ast \phi(a) v_t - \Ad u_0^\ast \circ \psi(a)\| \nonumber \\
&=& 0.
\end{eqnarray}
Letting $w_t =  x_t^\ast v_t  = 1_{\multialg{B}} +  x_t^\ast y_t$ produces a continuous unitary path $(w_t)_{t\in \mathbb R_+}$ in $\widetilde B$ for which $w_0 = 1_{\widetilde B}$. It follows that
\begin{eqnarray}
&& \| w_t^\ast \phi(a) w_t - \Ad u_0^\ast \circ \psi(a) \| \nonumber\\
 &\leq& \| v_t^\ast (x_t \phi(a) x_t^\ast - \phi(a)) v_t\| + \| v_t^\ast \phi(a) v_t - \Ad u_0^\ast \circ \psi(a)\| \nonumber\\
&\to& 0,
\end{eqnarray}
for all $a\in A$, so $\phi \approxeq_0 \Ad u_0^\ast \circ \psi$, or equivalently, $\Ad u_0 \circ \phi \approxeq_0 \psi$.
\end{proof}

It is finally time to take care of the unitisation issues mentioned at the beginning of the section.
For a topological space $X$, let $X^\dagger$ denote the (always non-Hausdorff) topological space $X \sqcup \{ \star\}$ with open sets $\mathcal O(X^\dagger) = \mathcal O(X) \sqcup \{ X^\dagger\}$. If $A$ is an $X$-$C^\ast$-algebra, then the forced unitisation $A^\dagger$ has a canonical action of $X^\dagger$ given by 
\begin{equation}
A^\dagger(U) = \left\{ \begin{array}{ll} 
A(U), & \textrm{ if }U\in \mathcal O(X) \; \; (\subseteq \mathcal O(X^\dagger)) \\
A^\dagger , & \textrm{ if } U = X^\dagger. \end{array} \right.
\end{equation}
Similarly, if $B$ is an $X$-$C^\ast$-algebra then there is a canonical action of $X^\dagger$ on $B$ which is the ordinary action for open subsets in $X \subseteq X^\dagger$, and $B(X^\dagger) = B$.

\begin{lemma}\label{l:Xnucunitise}
Let $A$ and $B$ be $X$-$C^\ast$-algebras and let $\phi \colon A^\dagger \to B$ be a c.p.~map. Then $\phi$ is $X^\dagger$-equivariant (respectively $X^\dagger$-nuclear) if and only if $\phi|_A$ is $X$-equivariant (respectively $X$-nuclear). 
\end{lemma}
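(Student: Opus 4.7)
The strategy is simply to compare the defining conditions open-set by open-set, exploiting the very simple structure $\mathcal O(X^\dagger) = \mathcal O(X) \sqcup \{X^\dagger\}$.

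First I would dispatch the equivariance case. When $U = X^\dagger$, the requirement $\phi(A^\dagger(U)) \subseteq B(U)$ reads $\phi(A^\dagger) \subseteq B$, which is automatic since $B(X^\dagger) = B$. When $U \in \mathcal O(X)$, we have $A^\dagger(U) = A(U)$ and the action of $X^\dagger$ on $B$ restricts to the action of $X$, so the condition collapses to $\phi(A(U)) \subseteq B(U)$, which is precisely $X$-equivariance of $\phi|_A$. Combining the two cases yields the equivariance equivalence.

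For the nuclearity case one assumes $X^\dagger$-equivariance and needs in addition that $[\phi]_U \colon A^\dagger/A^\dagger(U) \to B/B(U)$ is nuclear for each $U \in \mathcal O(X^\dagger)$. For $U = X^\dagger$ both the domain and codomain are zero, so this is vacuous. For $U \in \mathcal O(X)$ one observes that $A(U)$ is an ideal of $A^\dagger$ sitting inside $A$, so forced unitisation commutes with the quotient, giving a canonical identification $A^\dagger/A(U) \cong (A/A(U))^\dagger$. Under this identification, $[\phi]_U$ becomes a c.p.~map out of a forced unitisation, whose restriction to $A/A(U)$ is $[\phi|_A]_U$. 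Applying Lemma \ref{l:unitisednuc} to this map shows that $[\phi]_U$ is nuclear if and only if $[\phi|_A]_U$ is nuclear, and together with the equivariance equivalence this completes the proof.

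The argument is essentially bookkeeping; the only non-trivial ingredient is Lemma \ref{l:unitisednuc}, which bridges nuclearity of a c.p.~map on a forced unitisation and that of its restriction to the original algebra, and I do not anticipate a real obstacle.
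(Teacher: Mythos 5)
Your proof is correct and follows essentially the same path as the paper's: dispatching the $U = X^\dagger$ case trivially, observing $A^\dagger/A^\dagger(U) \cong (A/A(U))^\dagger$ for $U \in \mathcal O(X)$, and invoking Lemma \ref{l:unitisednuc} to transfer nuclearity between $[\phi]_U$ and $[\phi|_A]_U$. The only cosmetic difference is that you explicitly note the $U = X^\dagger$ quotients are zero, which the paper leaves implicit.
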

\begin{proof}
Since $\phi(A^\dagger(X^\dagger)) = \phi(A^\dagger) \subseteq B = B(X^\dagger)$, it easily follows that $\phi$ is $X^\dagger$-equivariant if and only if $\phi|_A$ is $X$-equivariant. If $U\in \mathcal O(X)$, then
\begin{equation}
[\phi]_U \colon A^\dagger/A^\dagger(U) \to B/B(U), \qquad [\phi|_A]_U \colon A/A(U) \to B/B(U),
\end{equation}
the latter map being the restriction of $[\phi]_U$ to $A^\dagger/A^\dagger(U) \cong (A/A(U))^\dagger$. By Lemma \ref{l:unitisednuc} it follows that $[\phi]_U$ is nuclear if and only if $[\phi|_A]_U$ is nuclear, so $\phi$ is $X^\dagger$-nuclear if and only if $\phi|_A$ is $X$-nuclear. 
\end{proof}

\begin{proposition}\label{p:KKunitise}
Let $A$ be a separable $X$-$C^\ast$-algebra and let $B$ be a $\sigma$-unital $X$-$C^\ast$-algebra. Then the maps 
\begin{equation}
KK(X; A,B) \to KK(X^\dagger; A^\dagger, B), \quad KK_\nuc(X; A, B) \to KK_\nuc(X^\dagger ; A^\dagger , B) 
\end{equation}
given in the Cuntz pair picture by $[\phi, \psi] \mapsto [\phi^\dagger, \psi^\dagger]$ are well-defined, injective homomorphisms.
\end{proposition}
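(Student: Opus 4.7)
The plan is to verify well-definedness, the homomorphism property, and injectivity separately, each time reducing to Lemma \ref{l:Xnucunitise} applied to concrete c.p.~maps of the form $b^\ast \phi^\dagger(-) b$. All computations take place in the Cuntz pair picture from Proposition \ref{p:KKCP}.

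For well-definedness, starting from a weakly $X$-equivariant (respectively $X$-nuclear) $A$-$B$-Cuntz pair $(\phi, \psi)$, I would first note that $(\phi^\dagger, \psi^\dagger)$ is again a Cuntz pair, since $\phi^\dagger(a + \mu 1_{A^\dagger}) - \psi^\dagger(a + \mu 1_{A^\dagger}) = \phi(a) - \psi(a) \in B \otimes \mathcal K$. To check weak $X^\dagger$-equivariance (respectively weak $X^\dagger$-nuclearity) of $\phi^\dagger$, fix $b \in B \otimes \mathcal K$ and observe that the restriction of the c.p.~map $b^\ast \phi^\dagger(-) b \colon A^\dagger \to B \otimes \mathcal K$ to $A$ equals $b^\ast \phi(-) b$, which is $X$-equivariant (respectively $X$-nuclear) by hypothesis; Lemma \ref{l:Xnucunitise} then upgrades this to $X^\dagger$-equivariance (respectively $X^\dagger$-nuclearity) of the unitised map. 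The same applies to $\psi^\dagger$. A homotopy $(\phi_s, \psi_s)_{s \in [0,1]}$ of weakly $X$-equivariant/nuclear $A$-$B$-Cuntz pairs then unitises to a homotopy $(\phi_s^\dagger, \psi_s^\dagger)_{s\in [0,1]}$ of weakly $X^\dagger$-equivariant/nuclear $A^\dagger$-$B$-Cuntz pairs, the strict continuity at $1_{A^\dagger}$ being automatic since $\phi_s^\dagger(1_{A^\dagger}) = 1_{\multialg{B \otimes \mathcal K}}$ is constant in $s$.

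For the homomorphism property, I would verify the identity $(\phi_1 \oplus_{s_1,s_2} \phi_2)^\dagger = \phi_1^\dagger \oplus_{s_1,s_2} \phi_2^\dagger$ for any $\mathcal O_2$-isometries $s_1, s_2 \in \multialg{B \otimes \mathcal K}$: this is trivial on $A$, and on $1_{A^\dagger}$ both sides equal $s_1 s_1^\ast + s_2 s_2^\ast = 1_{\multialg{B \otimes \mathcal K}}$. Combined with the obvious fact that the zero pair unitises to the zero pair, this makes the assignment a group homomorphism.

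Injectivity is the most delicate step, but still elementary. Suppose $[\phi^\dagger, \psi^\dagger] = 0$, and let $(\alpha_s, \beta_s)_{s \in [0,1]}$ be a homotopy of weakly $X^\dagger$-equivariant (respectively $X^\dagger$-nuclear) $A^\dagger$-$B$-Cuntz pairs joining $(\phi^\dagger, \psi^\dagger)$ to $(0, 0)$. I claim the restricted family $(\alpha_s|_A, \beta_s|_A)_{s \in [0,1]}$ is a homotopy of weakly $X$-equivariant (respectively $X$-nuclear) $A$-$B$-Cuntz pairs from $(\phi, \psi)$ to $(0,0)$: the Cuntz pair condition, strict continuity of $s \mapsto \alpha_s|_A(a)$, and norm continuity of $s \mapsto (\alpha_s - \beta_s)|_A(a)$ all restrict trivially, while weak $X$-equivariance/nuclearity of $\alpha_s|_A$ and $\beta_s|_A$ follows from Lemma \ref{l:Xnucunitise} applied to $b^\ast \alpha_s(-) b$ and $b^\ast \beta_s(-) b$ for each $b \in B \otimes \mathcal K$. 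Thus $[\phi, \psi] = 0$ in $KK(X; A, B)$ (respectively $KK_\nuc(X; A, B)$), completing the argument. The only genuine content is wrapped inside Lemma \ref{l:Xnucunitise}, which the restriction argument invokes repeatedly; beyond that, the proposition reduces to bookkeeping with unitisations.
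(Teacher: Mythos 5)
Your proof is correct and follows essentially the same route as the paper: the key ingredient is Lemma \ref{l:Xnucunitise} (a c.p.~map out of $A^\dagger$ is $X^\dagger$-equivariant/nuclear iff its restriction to $A$ is $X$-equivariant/nuclear), applied through the weakly-nuclear/equivariant characterisations $b^\ast \phi^\dagger(-) b$ for well-definedness, and to the restriction of a homotopy to $A$ for injectivity. The paper's own proof is terser but invokes exactly the same two observations; you have merely filled in the bookkeeping (Cuntz pair condition passes to unitisations, strict/norm continuity passes to unitisations and restrictions, Cuntz sums commute with $(-)^\dagger$), all of which is accurate.
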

\begin{proof}
We only do the $X$-nuclear version. If $\phi$ is weakly $X$-nuclear then $\phi^\dagger$ is weakly $X^\dagger$-nuclear by Lemma \ref{l:Xnucunitise}. By unitising an $X$-nuclear homotopy between Cuntz pairs $(\phi_0,\psi_0)$ and $(\phi_1 , \psi_1)$, one thus obtains an $X^\dagger$-nuclear homotopy between $(\phi_0^\dagger, \psi_0^\dagger)$ and $(\phi_1^\dagger, \psi_1^\dagger)$. Hence the map is well-defined, and is clearly homomorphism.

As any $X^\dagger$-nuclear homotopy from $(\phi^\dagger, \psi^\dagger)$ to $(0,0)$ restricts to a $X$-nuclear homotopy from $(\phi, \psi)$ to $(0,0)$, the homomorphism is injective.
\end{proof}

If $(\phi_0,\phi_1, u)$ is an $A$-$B$-cycle then $(\phi_0^\dagger, \phi_1^\dagger, u)$ is an $A^\dagger$-$B$-cycle if and only if $u$ is a unitary modulo ``compacts''.\footnote{Since one adds the relations $u^\ast u -1_{\mathcal B(E_0)} \in \mathcal K(E_0)$ and $uu^\ast - 1_{\mathcal B(E_1)} \in \mathcal K(E_1)$ to \eqref{eq:Fredholmunitary}.} 
In particular, if $\gamma \colon A \to \multialg{B\otimes \mathcal K}$ is a $\ast$-homo\-morphism and $w\in D_\gamma$ is a unitary, then $(\gamma^\dagger, \gamma^\dagger, w)$ is an $A^\dagger$-$B$-cycle. By Corollary \ref{c:densespan} and Lemma \ref{l:Xnucunitise}, if $\gamma$ is weakly $X$-nuclear, then $(\gamma^\dagger, \gamma^\dagger , w) $ is a weakly $X^\dagger$-nuclear cycle.

\begin{lemma}\label{l:unitiseunitaries}
Suppose $\gamma \colon A \to \multialg{B\otimes \mathcal K}$ is a weakly $X$-nuclear $\ast$-homo\-morphism and that $w_1,w_2 \in D_\gamma$ are unitaries. If $[\gamma, \gamma, w_1] = [\gamma , \gamma , w_2]$ in $KK_\nuc(X; A, B)$ then $[\gamma^\dagger, \gamma^\dagger , w_1] = [\gamma^\dagger, \gamma^\dagger, w_2]$ in $KK_\nuc(X^\dagger, A^\dagger, B)$ (using the Fredholm picture, Proposition \ref{p:KKFredholm}).
\end{lemma}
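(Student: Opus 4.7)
\begin{skproof}
The idea is to pass through the Cuntz pair picture of Corollary \ref{c:CPvsFredholm}, apply the unitisation homomorphism from Proposition \ref{p:KKunitise}, and return to the Fredholm picture.

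First I would observe the unitary equivalence
\[
(\gamma, \gamma, w_i) \approx_u (\gamma, \Ad w_i^\ast \circ \gamma, 1_{\ell^2(\mathbb N) \otimes B})
\]
of weakly $X$-nuclear $A$-$B$-cycles, implemented by the unitaries $v_0 = 1$ and $v_1 = w_i^\ast$ on $\ell^2(\mathbb N) \otimes B$. Since $w_i \in D_\gamma$, the element $\gamma(a) - w_i \gamma(a) w_i^\ast = -w_i [w_i^\ast, \gamma(a)]$ lies in $B \otimes \mathcal K$ for every $a \in A$, so $(\gamma, \Ad w_i^\ast \circ \gamma)$ is a weakly $X$-nuclear Cuntz pair. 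Under the isomorphism of Corollary \ref{c:CPvsFredholm}, the class $[\gamma, \gamma, w_i] \in KK_\nuc(X; A, B)$ corresponds to the homotopy class of this Cuntz pair.

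Next, the hypothesis $[\gamma, \gamma, w_1] = [\gamma, \gamma, w_2]$ translates into the equality $[\gamma, \Ad w_1^\ast \circ \gamma] = [\gamma, \Ad w_2^\ast \circ \gamma]$ as homotopy classes of weakly $X$-nuclear Cuntz pairs. Applying the unitisation homomorphism of Proposition \ref{p:KKunitise} yields
\[
[\gamma^\dagger, (\Ad w_1^\ast \circ \gamma)^\dagger] = [\gamma^\dagger, (\Ad w_2^\ast \circ \gamma)^\dagger] \quad \text{in } KK_\nuc(X^\dagger; A^\dagger, B).
\]
A direct computation gives $(\Ad w_i^\ast \circ \gamma)^\dagger = \Ad w_i^\ast \circ \gamma^\dagger$, since $w_i^\ast 1_{\multialg{B \otimes \mathcal K}} w_i = 1_{\multialg{B \otimes \mathcal K}}$.

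Finally I would apply the same unitary equivalence in reverse, now at the level of $A^\dagger$-$B$-cycles, to obtain $(\gamma^\dagger, \Ad w_i^\ast \circ \gamma^\dagger, 1) \approx_u (\gamma^\dagger, \gamma^\dagger, w_i)$. This requires that $(\gamma^\dagger, \gamma^\dagger, w_i)$ be a legitimate weakly $X^\dagger$-nuclear cycle: the element $w_i$ is a genuine unitary, $[w_i, \gamma^\dagger(a + \mu 1_{A^\dagger})] = [w_i, \gamma(a)] \in B \otimes \mathcal K$ since $w_i \in D_\gamma$, and $\gamma^\dagger$ is weakly $X^\dagger$-nuclear by Lemma \ref{l:Xnucunitise} combined with Corollary \ref{c:densespan}. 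Combining the three steps yields the desired equality $[\gamma^\dagger, \gamma^\dagger, w_1] = [\gamma^\dagger, \gamma^\dagger, w_2]$ in $KK_\nuc(X^\dagger; A^\dagger, B)$. No serious obstacle arises here; the argument is essentially a matter of tracking the Cuntz pair/Fredholm dictionary through unitisation.
\end{skproof}
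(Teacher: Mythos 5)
Your proof follows exactly the same route as the paper: pass to the Cuntz pair picture (Corollary \ref{c:CPvsFredholm}), unitise via Proposition \ref{p:KKunitise}, commute the unitisation with $\Ad$, and pass back. The structure is sound and the key observation ($(\Ad w_i^{\pm}\circ\gamma)^\dagger = \Ad w_i^{\pm}\circ\gamma^\dagger$) is correct.

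One small slip on the direction of the conjugation: with the paper's convention $\Ad u = u^\ast(-)u$, the Kasparov module of the cycle $(\gamma,\gamma,w_i)$ is
\[
\Big(\mathcal H_B\oplus\mathcal H_B^{\op},\ \gamma\oplus\gamma,\ \begin{pmatrix}0 & w_i^\ast\\ w_i & 0\end{pmatrix}\Big),
\]
and conjugating by the degree-zero unitary $1\oplus w_i$ transforms this to
$\big(\gamma\oplus(w_i^\ast\gamma(-)w_i),\ \begin{pmatrix}0&1\\ 1&0\end{pmatrix}\big)$,
so the identification is $[\gamma,\gamma,w_i] = [\gamma, \Ad w_i\circ\gamma]$, not $[\gamma, \Ad w_i^\ast\circ\gamma]$ as you wrote. (Your $v_0=1,\ v_1=w_i^\ast$ formally match the unitary-equivalence relations for cycles as printed in the paper, but those relations as literally typeset are a bit off; the module-level computation is what pins down the sign.) Since you apply the same identification when going back from the $A^\dagger$-$B$-Cuntz pairs to $A^\dagger$-$B$-cycles, the slip is self-cancelling and the argument still yields the stated conclusion (equivalently, your version proves the lemma after the substitution $w_i\mapsto w_i^\ast$, which is harmless because $w_i\in D_\gamma$ iff $w_i^\ast\in D_\gamma$); nonetheless it should be corrected to $\Ad w_i$.
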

\begin{proof}
The cycle $(\gamma, \gamma, w_i)$ is unitarily equivalent to $(\gamma, \Ad w_i \circ \gamma, 1_{\multialg{B\otimes \mathcal K}})$. Using Corollary \ref{c:CPvsFredholm} to view everything in the Cuntz pair picture (Proposition \ref{p:KKCP}), $[\gamma, \Ad w_1 \circ \gamma] = [\gamma , \Ad w_2 \circ \gamma]$ in $KK_\nuc(X; A, B)$. By Proposition \ref{p:KKunitise}, $[\gamma^\dagger, (\Ad w_1 \circ \gamma)^\dagger] = [\gamma^\dagger, (\Ad w_2 \circ \gamma)^\dagger]$ in $KK_\nuc(X^\dagger; A^\dagger, B)$. As $(\Ad w_i \circ \gamma)^\dagger = \Ad w_i \circ \gamma^\dagger$, and as the cycles $(\gamma^\dagger, \Ad w_i \circ \gamma^\dagger, 1_{\multialg{B\otimes \mathcal K}})$ and $(\gamma^\dagger, \gamma^\dagger, w_i)$ are unitarily equivalent, it follows from Corollary \ref{c:CPvsFredholm} that $[\gamma^\dagger, \gamma^\dagger, w_1] = [\gamma^\dagger, \gamma^\dagger, w_2]$ in $KK_\nuc(X^\dagger; A^\dagger, B)$. 
\end{proof}

The following lemma, which is an ideal-related version of \cite[Lemma 3.5]{DadarlatEilers-asymptotic}, will be needed. 

\begin{lemma}\label{l:DEoh}
Let $A$ be a separable $X$-$C^\ast$-algebra and let $B$ a $\sigma$-unital $X$-$C^\ast$-algebra. Let $\gamma \colon A \to \multialg{B\otimes \mathcal K}$ be a weakly $X$-nuclear $\ast$-homomorphism and let $w_1,w_2\in D_\gamma$ be unitaries. If $[\gamma, \gamma, w_1] = [\gamma, \gamma , w_2]$ in $KK_\nuc(X; A,B)$ then there is a unital, weakly $X^\dagger$-nuclear $\ast$-homomorphism $\Theta \colon A^\dagger \to \multialg{B\otimes \mathcal K}$ such that
\begin{equation}
(\gamma^\dagger \oplus \Theta, \gamma^\dagger \oplus \Theta , w_1 \oplus 1_{\multialg{B\otimes \mathcal K}}) \quad \textrm{and} \quad (\gamma^\dagger \oplus \Theta, \gamma^\dagger \oplus \Theta , w_2 \oplus 1_{\multialg{B\otimes \mathcal K}})
\end{equation}
are operator homotopic.
\end{lemma}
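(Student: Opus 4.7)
The plan is to upgrade the $KK_\nuc$-equality to a literal operator homotopy in three stages: unitisation, producing a common degenerate summand by infinite repeats, and converting that summand to the standard form $(\Theta,\Theta,1)$ via a swap-plus-Hadamard trick. First I would apply Lemma \ref{l:unitiseunitaries} to pass to $[\gamma^\dagger, \gamma^\dagger, w_1] = [\gamma^\dagger, \gamma^\dagger, w_2]$ in $KK_\nuc(X^\dagger; A^\dagger, B)$, and then invoke Proposition \ref{p:ohvsh} to produce degenerate weakly $X^\dagger$-nuclear cycles $\mathcal{D}_i = (\phi_0^{(i)}, \phi_1^{(i)}, v_i)$ for $i=1,2$ such that
\[
(\gamma^\dagger, \gamma^\dagger, w_1) \oplus \mathcal{D}_1 \approx_\oh (\gamma^\dagger, \gamma^\dagger, w_2) \oplus \mathcal{D}_2.
\]
Setting $\mathcal{D}_i^- := (\phi_1^{(i)}, \phi_0^{(i)}, v_i^*)$, I form the degenerate weakly $X^\dagger$-nuclear cycle $\mathcal{G} := (\mathcal{D}_1 \oplus \mathcal{D}_1^-)^\infty \oplus (\mathcal{D}_2 \oplus \mathcal{D}_2^-)^\infty$, realised inside $\multialg{B \otimes \mathcal K}$ via Kasparov's stabilisation theorem; weak $X^\dagger$-nuclearity is preserved by Corollary \ref{c:dirsumrep}. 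The $1+\infty=\infty$ absorption $\mathcal{D}_j \oplus \mathcal{G} \approx_u \mathcal{G}$ then rewrites the displayed relation as $(\gamma^\dagger, \gamma^\dagger, w_1) \oplus \mathcal{G} \approx_\oh (\gamma^\dagger, \gamma^\dagger, w_2) \oplus \mathcal{G}$.

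Next I would convert $\mathcal{G}$ to the form $(\Theta,\Theta,1_{\multialg{B \otimes \mathcal{K}}})$. Each building block $\mathcal{D}_i \oplus \mathcal{D}_i^-$ is unitarily equivalent via the swap on the second summand to $(\phi_0^{(i)} \oplus \phi_1^{(i)}, \phi_0^{(i)} \oplus \phi_1^{(i)}, \tilde v_i)$ with $\tilde v_i = \bigl(\begin{smallmatrix} 0 & v_i^* \\ v_i & 0 \end{smallmatrix}\bigr)$. Padding each $\phi_j^{(i)}$ by an infinite amplification of the character $\pi\colon A^\dagger \to \mathbb C$ (which is weakly $X^\dagger$-nuclear because it factors through $\mathbb C$) makes the representations unital and $\tilde v_i$ a self-adjoint unitary; the Hadamard-type unitary $U_i = \tfrac{1}{\sqrt 2}\bigl(\begin{smallmatrix} 1 & v_i^* \\ v_i & -1 \end{smallmatrix}\bigr)$ then commutes with the padded $\phi_0^{(i)} \oplus \phi_1^{(i)}$ by the degeneracy intertwining $v_i \phi_0^{(i)}(a) = \phi_1^{(i)}(a) v_i$ and its adjoint, conjugates $\tilde v_i$ to $\mathrm{diag}(1,-1)$, and the scalar path $\mathrm{diag}(1,e^{i\pi t})$ supplies a literal operator homotopy to $1$. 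Applying these moves componentwise yields an operator homotopy from the unitalised $\mathcal{G}$ to a cycle of the form $(\Theta,\Theta,1)$ with $\Theta$ unital and weakly $X^\dagger$-nuclear.

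The principal obstacle is then to promote the resulting relation
\[
(\gamma^\dagger \oplus \Theta, \gamma^\dagger \oplus \Theta, w_1 \oplus 1) \approx_\oh (\gamma^\dagger \oplus \Theta, \gamma^\dagger \oplus \Theta, w_2 \oplus 1)
\]
into a \emph{literal} operator homotopy, which requires absorbing the two unitary equivalences flanking the operator-homotopy datum in any $\approx_\oh$ relation. The infinite-repeat structure built into $\mathcal{G}$ is engineered for exactly this: a Kuiper-type argument inside $\multialg{B \otimes \mathcal K}$ connects any unitary that intertwines $\gamma^\dagger \oplus \Theta$ with itself modulo compacts to the identity through such unitaries, turning each endpoint unitary equivalence into its own operator homotopy. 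Concatenating these three operator homotopies then yields the conclusion. The recurring technical burden is tracking unitality and weak $X^\dagger$-nuclearity at every stabilisation and padding step.
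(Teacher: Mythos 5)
Your overall strategy matches the paper's: pass to $KK_\nuc(X^\dagger; A^\dagger, B)$ via Lemma~\ref{l:unitiseunitaries}, produce degenerate weakly $X^\dagger$-nuclear cycles via Proposition~\ref{p:ohvsh}, absorb them into a large degenerate ``infinite repeat'' cycle, and convert that degenerate summand into the standard form $(\Theta,\Theta,1)$ with $\Theta$ unital and defined on $\multialg{B\otimes\mathcal K}$ via Kasparov stabilisation. The main ingredients and the overall skeleton are the same as the paper's.

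There are two concrete problems in the middle of your argument, though. First, padding each $\phi_j^{(i)}$ by $\pi^{\oplus\infty}$ does \emph{not} make the representations unital nor does it turn $v_i$ (or $\tilde v_i$) into a unitary: adding a summand only enlarges the module, and $(\phi_j^{(i)}\oplus\pi^{\oplus\infty})(1_{A^\dagger})=\phi_j^{(i)}(1_{A^\dagger})\oplus 1$, which equals the identity only if $\phi_j^{(i)}$ was already unital. What is needed is the opposite operation: cut the degenerate cycle down by the support projections $\theta_j^{(i)}(1_{A^\dagger})$, i.e.\ restrict to $\theta_j^{(i)}(1_{A^\dagger})E_j^{(i)}$. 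Because the cycle is degenerate, the degeneracy relations at $a=1_{A^\dagger}$ force $v^{(i)}$ to become a unitary between the cut-down modules, and the restricted $\theta_j^{(i)}$ are unital. This is exactly what the paper does, and without it your Hadamard unitary $U_i=\tfrac{1}{\sqrt2}\bigl(\begin{smallmatrix}1&v_i^*\\v_i&-1\end{smallmatrix}\bigr)$ is not even a unitary, so the subsequent steps do not make sense.

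Second, once the cut-down is performed, the whole $\mathcal D_i^-$ doubling and Hadamard rotation is unnecessary. Degeneracy with a unitary intertwiner gives $\theta_0^{(i)}=\Ad v^{(i)}\circ\theta_1^{(i)}$, and therefore $(\Theta_0,\Theta_1,v)\approx_u(\Theta_0,\Theta_0,1)$ via the degree-zero unitary $(1,v)$, after which one only needs to pad by the character $\pi$ and apply Kasparov stabilisation to land in $\multialg{B\otimes\mathcal K}$. The paper uses this direct observation. Your Hadamard argument is a correct (if more circuitous) way to reach the same conclusion, but it re-derives something that is immediate from the degeneracy condition.

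Finally, you are right to flag the promotion of $\sim_\oh$ to a genuine operator homotopy as the delicate step: since Proposition~\ref{p:ohvsh} yields the displayed cycles unitarily equivalent to operator homotopic ones, the two unitary equivalences flanking the operator homotopy must still be absorbed. The paper's own proof is also terse here, and the justification is the same rotation/absorption argument that underlies the analogous step in Dadarlat and Eilers; your Kuiper-type sketch is headed in the right direction, but Kuiper's theorem in $\multialg{B\otimes\mathcal K}$ does not directly apply since the required path must stay in $D_{\gamma^\dagger\oplus\Theta}$, so the precise implementation needs to exploit the infinite-repeat structure rather than connectedness of the full unitary group.
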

\begin{proof}
By Lemma \ref{l:unitiseunitaries}, $[\gamma^\dagger, \gamma^\dagger, w_1] = [\gamma^\dagger, \gamma^\dagger, w_2]$ in $KK_\nuc(X^\dagger; A^\dagger, B)$ (using the Fredholm picture). Hence there are degenerate, weakly $X^\dagger$-nuclear cycles 
\begin{equation}
(\theta_0^{(i)} \colon A^\dagger \to \mathcal B(E_0^{(i)}), \theta_1^{(i)} \colon A^\dagger \to \mathcal B(E_1^{(i)}), v^{(i)})
\end{equation}
 for $i=1,2$ such that
\begin{equation}\label{eq:Frohlem}
(\gamma^\dagger, \gamma^\dagger, w_1) \oplus (\theta_0^{(1)}, \theta_1^{(1)}, v^{(1)}) \quad \textrm{and} \quad (\gamma^\dagger, \gamma^\dagger, w_2) \oplus (\theta_0^{(2)}, \theta_1^{(2)}, v^{(2)})
\end{equation}
are unitarily equivalent to operator homotopic cycles. By otherwise cutting down with $\theta_j^{(i)}(1_{A^\dagger})$ all over, we may assume without loss of generality that each $\theta_j^{(i)}$ is a unital map and that each $v^{(i)}$ is a unitary for $i=1,2$ and $j=0,1$.\footnote{This only works because $(\theta_0^{(i)}, \theta_1^{(i)}, v^{(i)})$ is degenerate for $i=1,2$.}
 Define $E_j := \bigoplus_{\mathbb N} (E_j^{(1)} \oplus E_j^{(2)})$, $\Theta_j := \bigoplus_{\mathbb N} (\theta_j^{(1)} \oplus \theta_j^{(2)}) \colon A^\dagger \to \mathcal B(E_j)$, and $v = \bigoplus_{\mathbb N} (v^{(1)} \oplus v^{(2)}) \in \mathcal B(E_0,E_1)$, the latter being a unitary. As
\begin{equation}
(\theta_0^{(i)}, \theta_1^{(i)}, v^{(i)})\oplus (\Theta_0, \Theta_1 , v)  \quad \textrm{and} \quad (\Theta_0, \Theta_1 , v)
\end{equation}
are unitarily equivalent (by unitaries that permute the indices of the direct sums), it follows that
\begin{equation}
(\gamma^\dagger, \gamma^\dagger, w_1) \oplus (\Theta_0, \Theta_1 , v) \quad \textrm{and} \quad (\gamma^\dagger, \gamma^\dagger, w_2) \oplus (\Theta_0, \Theta_1 , v)
\end{equation}
are operator homotopic. Since $v$ is a unitary and $(\Theta_0, \Theta_1 , v)$ is degenerate, it follows that $\Theta_0 = \Ad v \circ \Theta_1$, and thus $(\Theta_0, \Theta_1 , v)$ is unitarily equivalent to $(\Theta_0, \Theta_0 , 1_{\mathcal B(E_0)})$. Let $\pi \colon A^\dagger \to \mathcal B(\ell^2(\mathbb N)\otimes B)$ be the unique unital $\ast$-homomorphism such that $\pi|_A = 0$. By Lemma \ref{l:Xnucunitise}, $\pi$ is weakly $X^\dagger$-nuclear. Using that $E_0 \oplus (\ell^2(\mathbb N)\otimes B) \cong \ell^2(\mathbb N)\otimes B$ by Kasparov's stabilisation theorem, let $\Theta \colon A^\dagger \to \mathcal B(\ell^2(\mathbb N)\otimes B) = \multialg{B\otimes \mathcal K}$ be a map unitarily equivalent to $\Theta_0 \oplus \pi$. As $(\Theta, \Theta, 1_{\multialg{B\otimes \mathcal K}})$ is unitarily equivalent to $(\Theta_0, \Theta_1, v)\oplus (\pi, \pi, 1_{\multialg{B\otimes \mathcal K}})$, it follows that
\begin{equation}
(\gamma^\dagger \oplus \Theta, \gamma^\dagger \oplus \Theta, w_1\oplus 1_{\multialg{B\otimes \mathcal K}}) \quad \textrm{and} \quad (\gamma^\dagger \oplus \Theta, \gamma^\dagger \oplus \Theta, w_2\oplus 1_{\multialg{B\otimes \mathcal K}})
\end{equation}
are operator homotopic.
\end{proof}

With the above ingredients, one can now prove an ideal-related version of the Dadarlat--Eilers stable uniqueness theorem from \cite{DadarlatEilers-asymptotic}.

\begin{theorem}\label{t:irDE}
Let $X$ be a topological space, let $A$ be a separable $X$-$C^\ast$-algebra, and let $B$ be a $\sigma$-unital $X$-$C^\ast$-algebra. Let $\phi, \psi \colon A \to \multialg{B \otimes \mathcal K}$ be weakly $X$-nuclear $\ast$-homo\-morphisms such that $\phi(a) - \psi(a) \in B \otimes \mathcal K$ for all $a\in A$. If $[\phi,\psi] = 0$ in $KK_\nuc(X; A, B)$ (using the Cuntz pair picture, Proposition \ref{p:KKCP}) then there exist a weakly $X$-nuclear $\ast$-homomorphism $\Theta\colon A \to \multialg{B\otimes \mathcal K}$ and a continuous path $(v_t)_{t\in \mathbb R_+}$ of unitaries in $(B \otimes \mathcal K)^\sim$ such that
\begin{equation}
\| v_t (\phi \oplus_{s_1,s_2} \Theta)(a) v_t^\ast - (\psi \oplus_{s_1,s_2} \Theta)(a) \| \to 0,
\end{equation}
for all $a\in A$. Here $s_1,s_2\in \multialg{B \otimes \mathcal K}$ are $\mathcal O_2$-isometries.
\end{theorem}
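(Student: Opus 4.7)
\begin{skproof}
My plan is to mimic the proof of the classical Dadarlat--Eilers stable uniqueness theorem (Theorem \ref{t:DE}), using Lemma \ref{l:DEoh} as the ideal-related replacement for \cite[Lemma 3.5]{DadarlatEilers-asymptotic}. The first move is to encode the hypothesis $[\phi,\psi]=0$ as a comparison of two Fredholm cycles of the form required by Lemma \ref{l:DEoh}. To this end, fix $\mathcal O_2$-isometries $s_1,s_2\in \multialg{B\otimes\mathcal K}$, set $\gamma := \phi\oplus_{s_1,s_2}\psi$, and consider the flip unitary $u := s_1 s_2^\ast + s_2 s_1^\ast\in \multialg{B\otimes\mathcal K}$. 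Since $\phi(a)-\psi(a)\in B\otimes\mathcal K$, one has $u\gamma(a)u^\ast - \gamma(a) = (\psi\oplus_{s_1,s_2}\phi)(a) - (\phi\oplus_{s_1,s_2}\psi)(a)\in B\otimes\mathcal K$, so $u\in D_\gamma$. Both cycles $(\gamma,\gamma,1)$ and $(\gamma,\gamma,u)$ vanish in $KK_\nuc(X;A,B)$: the first is degenerate, while the second is unitarily equivalent (via $u$ on the second coordinate) to the cycle attached by Corollary \ref{c:CPvsFredholm} to the Cuntz pair $(\phi\oplus\psi,\psi\oplus\phi)$, whose class is $[\phi,\psi]+[\psi,\phi]=0$. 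Lemma \ref{l:DEoh} then produces a unital, weakly $X^\dagger$-nuclear $\Theta'\colon A^\dagger \to \multialg{B\otimes\mathcal K}$ and an operator homotopy of cycles from $(\gamma^\dagger\oplus\Theta',\gamma^\dagger\oplus\Theta',1)$ to $(\gamma^\dagger\oplus\Theta',\gamma^\dagger\oplus\Theta',u\oplus 1)$.

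The next phase converts this operator homotopy into a continuous unitary path, following the strategy of \cite[Section 3]{DadarlatEilers-asymptotic}. Since $\Theta'$ is unital, the operators along the homotopy are quasi-unitaries of index zero, so by polar decomposition and a compact perturbation one obtains a norm-continuous path of genuine unitaries $(V_t)_{t\in[0,1]}$ in $D_{\gamma^\dagger\oplus\Theta'}$ with $V_0=1$ and $V_1 = u\oplus 1$. The induced continuous path of inner automorphisms $\Ad V_t$ on the separable $C^\ast$-subalgebra $E_{\gamma^\dagger\oplus\Theta'}$ starts at the identity, so Corollary \ref{c:DEaut} (after a suitable reparametrisation onto $\mathbb R_+$) yields a norm-continuous unitary path $(W_t)_{t\in\mathbb R_+}$ in $D_{\gamma^\dagger\oplus\Theta'}$ with $W_0 = 1$ and $\|\Ad W_t (\gamma^\dagger\oplus\Theta')(a) - \Ad(u\oplus 1)(\gamma^\dagger\oplus\Theta')(a)\|\to 0$ for all $a\in A^\dagger$. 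Lemma \ref{l:propertrick} then upgrades these multiplier-unitaries to unitaries in $(M_2(B\otimes\mathcal K))^\sim$; its intersection hypothesis $((\gamma^\dagger\oplus\Theta')(A^\dagger)+\mathbb C\cdot 1)\cap M_2(B\otimes\mathcal K)=\{0\}$ holds because $\Theta'$ is unital with image outside the stabilisation, so the unit is witnessed as a non-compact element. Identifying $M_2(B\otimes\mathcal K)\cong B\otimes\mathcal K$ and restricting from $A^\dagger$ to $A$ (using Lemma \ref{l:Xnucunitise} to see that $\Theta'|_A$ is weakly $X$-nuclear), the result is a continuous unitary path in $(B\otimes\mathcal K)^\sim$ implementing $\phi\oplus_{s_1,s_2}\psi\oplus_{t_1,t_2}\Theta'|_A \approxeq \psi\oplus_{s_1,s_2}\phi\oplus_{t_1,t_2}\Theta'|_A$ for new $\mathcal O_2$-isometries $t_1,t_2$.

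The main remaining obstacle is to repackage this ``swap'' form into the symmetric form $\phi\oplus_{s_1,s_2}\Theta\approxeq\psi\oplus_{s_1,s_2}\Theta$ demanded by the theorem, while keeping the unitaries in $(B\otimes\mathcal K)^\sim$ rather than merely in $\multialg{B\otimes\mathcal K}$. The natural candidate is to let $\Theta$ be an infinite repeat of $(\phi\oplus_{s_1,s_2}\psi)\oplus\Theta'|_A$, assembled using an isomorphism $B\otimes\mathcal K\otimes\mathcal K\cong B\otimes\mathcal K$ and weakly $X$-nuclear by Corollary \ref{c:dirsumrep}. The difficulty is that the usual ``$\infty+1=\infty$'' absorption is implemented only by unitaries in $\multialg{B\otimes\mathcal K}$ (not in the unitisation), so a naive shift argument fails. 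Instead, I would build the required asymptotic equivalence by an Elliott-style intertwining: at stage $n$ of the path, apply the block-swap constructed above to the $n$th internal copy of $\phi\oplus\psi$ inside $\Theta$, with each block-swap implemented by a unitary that is a finite perturbation of the identity and hence lies in $(B\otimes\mathcal K)^\sim$, and concatenate these continuously. The correctness of the limit relies on the fact that, for each fixed $a\in A$, the contributions to the norm estimate come only from the finitely many blocks already swapped by time $t$, so the tail contributions cancel exactly between $\phi\oplus\Theta$ and $\psi\oplus\Theta$ by construction of $\Theta$.
\end{skproof}
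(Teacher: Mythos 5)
The preliminary phases of your argument are reasonable: applying Lemma \ref{l:DEoh} to the flip unitary on $\gamma=\phi\oplus_{s_1,s_2}\psi$ does produce an operator homotopy, and the conversion of that homotopy to a continuous unitary path is plausible (though the paper proceeds more cleanly by passing to the corona: since $\Phi_0$ is unital, relations \eqref{eq:Frintertwiner} and \eqref{eq:Fredholmunitary} show that $(\dot w_s)$ is already a norm-continuous \emph{unitary} path in $\corona{B\otimes\mathcal K}\cap\dot\Phi_0(A^\dagger)'$, which one lifts along \eqref{eq:sesgamma} with the endpoint $u_1:=u$ fixed; $\dot u_0=1$ then forces $u_0\in 1+B\otimes\mathcal K\subseteq(B\otimes\mathcal K)^\sim$ automatically, and the separate appeal to Corollary \ref{c:DEaut} is redundant as that is already inside Lemma \ref{l:propertrick}). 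However, the final ``repackaging'' step contains a genuine gap. The intertwining you sketch swaps the \emph{internal} copies of $\phi\oplus\psi$ inside $\Theta$, but $\phi\oplus\Theta$ and $\psi\oplus\Theta$ coincide on all of those internal slots and differ only in the \emph{leading} $\phi$-versus-$\psi$ summand, so those block-swaps do not touch the actual difference. If instead one tries to cascade the DE swap so as to push the errant leading $\phi$ down the tail of $\Theta$, then after $n$ steps the discrepancy sits in the $n$-th slot with contribution $\|r_n(\phi(a)-\psi(a))r_n^\ast\|=\|\phi(a)-\psi(a)\|$ (the isometries $r_n$ implementing the infinite direct sum do not shrink norms), so the error stays bounded away from zero and no asymptotic equivalence results; the true ``$1+\infty=\infty$'' shift would of course absorb it, but as you correctly note that unitary is not in $(B\otimes\mathcal K)^\sim$.

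The paper sidesteps the whole issue by choosing the data fed into Lemma \ref{l:DEoh} differently: set $\sigma:=\phi_\infty\oplus\psi_\infty$, $\phi_0:=\phi\oplus\sigma$, $\psi_0:=\psi\oplus\sigma$, so that $\psi_0=\Ad u'\circ\phi_0$ for a single shuffle unitary $u'$ in $D_{\phi_0}$, and $[\phi_0,\phi_0,u']=[\phi_0,\psi_0,1]=[\phi,\psi]+[\sigma,\sigma,1]=0$. Apply Lemma \ref{l:DEoh} to the unitaries $1$ and $u'$ on $\gamma:=\phi_0$, obtaining a unital weakly $X^\dagger$-nuclear $\Theta_0$ (which, as the paper notes, one may and does replace by its infinite repeat). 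With $\Theta:=(\sigma\oplus\Theta_0)|_A$, one has $\phi\oplus\Theta=\Phi_0|_A$ and $\psi\oplus\Theta=(\Ad u\circ\Phi_0)|_A$ for $u=u'\oplus 1$, so the lifted path $(u_s)_{s\in[0,1)}$, with $u_0\in(B\otimes\mathcal K)^\sim$ and $u_s\to u$ in norm, implements $\lim_{s\to 1}\|u_s^\ast(\psi\oplus\Theta)(a)u_s-(\phi\oplus\Theta)(a)\|=0$ on the nose, and Lemma \ref{l:propertrick} (applicable because $\psi\oplus\Theta$ is unitarily equivalent to the infinite repeat $(\phi\oplus\psi)_\infty\oplus\Theta_0|_A$, whence $((\psi\oplus\Theta)(A)+\mathbb C1)\cap(B\otimes\mathcal K)=\{0\}$) finishes the proof without any repackaging step. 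The moral is that the ``swap'' form $\phi\oplus\psi\oplus\Theta'\approxeq\psi\oplus\phi\oplus\Theta'$ you obtain is strictly weaker than the asymmetric form required; to get the latter directly, build the infinite repeats of both $\phi$ and $\psi$ into the initial $\ast$-homomorphism before invoking Lemma \ref{l:DEoh}.
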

\begin{proof}
Let $\phi_\infty$ and $\psi_\infty$ be infinite repeats of $\phi$ and $\psi$, see Remark \ref{r:infrep}, and let $\sigma = \phi_\infty \oplus \psi_\infty$ be a Cuntz sum. Define $\phi_0 = \phi \oplus \sigma$ and $\psi_0 = \psi \oplus \sigma$ by Cuntz sums. Then $\psi_0 = \Ad u' \circ \phi_0$ for some unitary $u'$ which permutes the direct summands. Thus
\begin{eqnarray}
[\phi_0 , \phi_0, u'] &=& [\phi_0 , \psi_0 , 1_{\multialg{B\otimes \mathcal K}}] \nonumber\\
&=& [\phi, \psi, 1_{\multialg{B\otimes \mathcal K}}]\oplus [\sigma, \sigma ,1_{\multialg{B\otimes \mathcal K}}]  \nonumber \\
&=& 0 \nonumber\\
&=& [\phi_0, \phi_0, 1_{\multialg{B\otimes \mathcal K}}]
\end{eqnarray}
in $KK_\nuc(X; A, B)$ using the Fredholm picture, Proposition \ref{p:KKFredholm}, as well as using Corollary \ref{c:CPvsFredholm} to conclude that $[\phi,\psi,1_{\multialg{B\otimes \mathcal K}}]=0$. As $(\phi_0, \phi_0 , u')$ is a cycle, it follows that $[u', \phi_0(A)] \subseteq B\otimes \mathcal K$ by \eqref{eq:Frintertwiner}, so $u'$ is a unitary in $D_{\phi_0}$. By Lemma \ref{l:DEoh}, there is a unital, weakly $X^\dagger$-nuclear $\ast$-homomorphism $\Theta_0 \colon A^\dagger \to \multialg{B \otimes \mathcal K}$ such that
\begin{equation}
(\phi_0^\dagger \oplus \Theta_0, \phi_0^\dagger \oplus \Theta_0, u' \oplus 1_{\multialg{B\otimes \mathcal K}}) \quad \textrm{and} \quad (\phi_0^\dagger \oplus \Theta_0, \phi_0^\dagger \oplus \Theta_0, 1_{\multialg{B\otimes \mathcal K}} \oplus 1_{\multialg{B\otimes \mathcal K}})
\end{equation}
are operator homotopic. By replacing $\Theta_0$ with its infinite repeat, we may obtain the above with $\Theta_0$ being an infinite repeat. We make sense of the direct sum $\oplus$ above by a fixed Cuntz sum. Let $\Phi_0 := \phi_0^\dagger \oplus \Theta_0$ and $u= u' \oplus 1_{\multialg{B\otimes \mathcal K}}$. Let $(w_s)_{s\in [0,1]}$ be a norm-continuous path in $\multialg{B\otimes \mathcal K}$ which implements the above operator homotopy from $w_0 =1_{\multialg{B\otimes \mathcal K}}$ to $w_1 = u$. As $(\Phi_0, \Phi_0, w_s)$ is a cycle for every $s\in [0,1]$, and as $\Phi_0$ is unital, it follows from \eqref{eq:Frintertwiner} and \eqref{eq:Fredholmunitary} that the image in $\corona{B\otimes \mathcal K}$, $(\dot w_s)_{s\in [0,1]}$, is a continuous unitary path in $\corona{B\otimes \mathcal K} \cap \dot \Phi_0(A^\dagger)'$ which connects $1_{\corona{B\otimes \mathcal K}}$ and $\dot u$.\footnote{This is the subtle part where the unitality we fought for earlier in the section plays a crucial role. The arguments could had been run without any unitality assumptions, but then the unitary path would be in $\corona{B \otimes \mathcal K} \cap \dot \Phi_0(A)' / \Ann \dot \Phi_0(A)$. Considering this relative commutant, the final part of the proof would not be enough to obtain a unitary path in $(B \otimes \mathcal K)^\sim$.} Using the exact sequence \eqref{eq:sesgamma}, we may lift $(\dot w_s)_{s\in [0,1]}$ to a continuous unitary path $(u_s)_{s\in[0,1]}$ in $D_{\Phi_0}$ for which $u_1 = u$ and $u_0 \in (B\otimes \mathcal K)^\sim$.

Let $\Theta := (\sigma \oplus \Theta_0)|_A$, so that $\phi \oplus \Theta = \Phi_0|_A$. As $\psi \oplus \sigma = \psi_0 = \Ad u' \circ \phi \oplus \sigma$, it follows that $\Ad u \circ \psi \oplus \Theta = \phi \oplus \Theta$ (with respect to the same Cuntz sum $\oplus = \oplus_{s_1,s_2}$). Hence $(u_s)_{s\in [0,1)}$ is a continuous path of unitaries in $D_{\Phi_0}$ with $u_0 \in (B\otimes \mathcal K)^\sim$ such that
\begin{equation}
\lim_{s\to 1} \| u_s^\ast (\psi \oplus \Theta)(a) u_s - (\phi \oplus \Theta)(a)\| = 0.
\end{equation}
As $\phi$ and $\psi$ agree modulo $B \otimes \mathcal K$, and as $\Phi_0$ is the forced unitisation of $\phi \oplus \Theta$,  it follows that $D_{\psi \oplus \Theta} = D_{\phi \oplus \Theta} = D_{\Phi_0}$. As $\psi\oplus \Theta$ is unitarily equivalent to the infinite repeat $(\phi \oplus \psi)_\infty \oplus \Theta_0|_A$, it follows that $((\psi \oplus \Theta)(A) + 1_{\multialg{B\otimes \mathcal K}}\mathbb C ) \cap (B\otimes \mathcal K) = \{0\}$. Hence Lemma \ref{l:propertrick} implies that $\psi \oplus \Theta \approxeq \phi \oplus \Theta$.
\end{proof}

Using that infinite repeats of nuclear $X$-full $\ast$-homomorphisms are weakly $X$-nuclear and $X$-nuclearly absorbing by Theorem \ref{t:infrepXabs}, the following is a consequence of the above theorem.

\begin{theorem}\label{t:irDE2}
Let $X$ be a topological space, let $A$ be a separable, exact, lower semicontinuous $X$-$C^\ast$-algebra, and  let $B$ be a $\sigma$-unital, stable $X$-$C^\ast$-algebra. Let $\phi,\psi,\theta \colon A \to B$ be nuclear, $X$-equivariant $\ast$-homomorphisms and suppose that $\theta$ is $X$-full. If $KK_\nuc(X; \phi) = KK_\nuc(X; \psi)$ then there is a continuous unitary path $(v_t)_{t\in \mathbb R_+}$ in $\widetilde B$ such that
\begin{equation}
\lim_{t\to \infty} \| v_t ( \phi \oplus_{s_1, s_2} \theta_\infty)(a) v_t^\ast - (\psi \oplus_{s_1,s_2} \theta_\infty) (a) \| = 0
\end{equation}
for all $a\in A$. Here $s_1,s_2 \in \multialg{B}$ are $\mathcal O_2$-isometries, and $\theta_\infty$ is an infinite repeat of $\theta$.
\end{theorem}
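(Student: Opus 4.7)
The plan is to apply Theorem \ref{t:irDE} to the Cuntz pair $(\phi \oplus_{s_1,s_2} \theta_\infty, \psi \oplus_{s_1,s_2} \theta_\infty)$, use Theorem \ref{t:infrepXabs} to absorb the auxiliary representation into $\theta_\infty$, and then combine the unitary paths algebraically so that the result lands in $\widetilde{B}$. Set $\Phi := \phi \oplus_{s_1,s_2} \theta_\infty$ and $\Psi := \psi \oplus_{s_1,s_2} \theta_\infty$, viewed as maps into $\multialg{B} \cong \multialg{B \otimes \mathcal{K}}$ via stability. Both are weakly $X$-nuclear --- $\theta_\infty$ by Theorem \ref{t:infrepXabs}, with weak $X$-nuclearity stable under corner embeddings and Cuntz sums, and $\phi,\psi$ by Lemma \ref{l:nucquotient} --- and they agree modulo $B$ since $\phi(a),\psi(a)\in B$. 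In the Cuntz pair picture $[\Phi,\Psi] = [\phi,\psi] + [\theta_\infty,\theta_\infty] = KK_\nuc(X;\phi) - KK_\nuc(X;\psi)$ vanishes by hypothesis, so Theorem \ref{t:irDE} produces a weakly $X$-nuclear $\Theta \colon A \to \multialg{B}$ and a continuous unitary path $(w_t)_{t\in\mathbb{R}_+}$ in $\widetilde{B} = (B\otimes\mathcal{K})^\sim$ with
\begin{equation}
\|w_t (\Phi \oplus_{s_1',s_2'} \Theta)(a) w_t^\ast - (\Psi \oplus_{s_1',s_2'} \Theta)(a)\| \to 0, \qquad a\in A.
\end{equation}

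Next, by Theorem \ref{t:infrepXabs}, $\theta_\infty$ is $X$-nuclearly absorbing and hence absorbs $\Theta$: there are unitaries $u_n \in \multialg{B}$ with $u_n^\ast(\theta_\infty \oplus_{s_1',s_2'} \Theta)(a) u_n - \theta_\infty(a) \in B$ and converging to zero in norm for every $a\in A$. Since $B$ is $\sigma$-unital and stable, the unitary group $\mathcal{U}(\multialg{B})$ is path-connected by the Cuntz--Higson theorem \cite{CuntzHigson-Kuipersthm}, and a staircase interpolation --- resting near each $u_n$ long enough for the approximation to hold uniformly on larger finite subsets of $A$, and transitioning quickly through continuous paths in $\mathcal{U}(\multialg{B})$ --- promotes $(u_n)$ to a norm-continuous unitary path $(y_t)_{t\in\mathbb{R}_+}$ in $\multialg{B}$ with $y_0 = 1$ and $\|y_t^\ast(\theta_\infty \oplus_{s_1',s_2'} \Theta)(a) y_t - \theta_\infty(a)\| \to 0$ pointwise. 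A standard rearrangement of Cuntz sums --- embedding $y_t$ as a unitary in the corner of $\multialg{B}$ supporting the ``$\theta_\infty \oplus \Theta$'' part of $\Phi \oplus \Theta$ --- then yields a continuous path $U_t \in \multialg{B}$ with $U_0 = 1$ satisfying $\|U_t^\ast (\Phi \oplus_{s_1',s_2'}\Theta)(a) U_t - \Phi(a)\| \to 0$ and likewise for $\Psi$.

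Finally, set $V_t := U_t^\ast w_t U_t$, a continuous path of unitaries in $\multialg{B}$. Since the image of $w_t$ in the corona $\corona{B} = \multialg{B}/B$ is a scalar $\lambda_t \in \mathbb{C}$, the image of $V_t$ is $U_t^\ast \lambda_t U_t = \lambda_t$, so $V_t \in \widetilde{B}$. Chaining the three asymptotic conjugations --- $U_t \Phi U_t^\ast \to \Phi \oplus \Theta$, then $w_t (\Phi \oplus \Theta) w_t^\ast \to \Psi \oplus \Theta$, then $U_t^\ast (\Psi \oplus \Theta) U_t \to \Psi$ --- gives $\|V_t \Phi(a) V_t^\ast - \Psi(a)\| \to 0$ for every $a \in A$, producing the desired continuous unitary path in $\widetilde{B}$. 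The only nonroutine ingredient is the staircase interpolation in the second step, where the absorption approximation must be preserved as $t \to \infty$ even across the interpolated portions of the path; this is handled by a diagonal argument using the path-connectedness of $\mathcal{U}(\multialg{B})$.
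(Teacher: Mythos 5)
The overall strategy is close to the paper's, but the execution has a real gap in the second step, and the route is also slightly less efficient than what the paper does.

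\textbf{The gap.} Your second step asks for a norm-continuous unitary path $(y_t)_{t\in\mathbb R_+}$ with $\|y_t^\ast(\theta_\infty\oplus\Theta)(a)y_t-\theta_\infty(a)\|\to 0$, and you try to obtain it by a ``staircase interpolation'' of the absorption sequence $(u_n)$. This does not work as described. The absorption definition (Definition \ref{d:abs}) gives you control only at the discrete times $u_n$; during any continuous path from $u_n$ to $u_{n+1}$ --- no matter how short a time interval you squeeze it into --- there is no estimate on $\|v^\ast(\theta_\infty\oplus\Theta)(a)v-\theta_\infty(a)\|$ for the intermediate unitaries $v$. The quantity you need to tend to zero is the $\limsup_{t\to\infty}$ of the conjugation error, and that $\limsup$ sees every intermediate unitary, transitions included; transitioning ``quickly'' shrinks the \emph{duration} of the bad set, not its \emph{height}. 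Path-connectedness of $\mathcal U(\multialg{B})$ only says that some connecting paths exist, it gives no control on the conjugation error along them, so ``a diagonal argument using path-connectedness'' does not close this.

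\textbf{What actually makes it work.} The correct ingredient is that $\theta_\infty$ is an \emph{infinite repeat} and hence is unitarily equivalent to an infinite repeat of itself $(\theta_\infty)_\infty$. This is precisely what lets one upgrade the sequence of absorbing unitaries to a continuous path landing in $\widetilde B$: one folds the approximating unitaries into successive corners of the infinite repeat and rotates continuously between them, keeping the conjugation error controlled at all times. This is the content of \cite[Lemmas 2.4 and 3.4]{DadarlatEilers-asymptotic}, and it is exactly what the paper cites. It is also what Lemma \ref{l:propertrick} in this paper is doing in a related setting --- proper asymptotic unitary equivalence in $\widetilde B$ is not a free upgrade from a sequence of multiplier unitaries. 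Without the infinite-repeat structure the upgrade is simply false.

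\textbf{Comparison of routes.} Your plan applies Theorem \ref{t:irDE} to the Cuntz pair $(\phi\oplus\theta_\infty,\,\psi\oplus\theta_\infty)$ and then has to rearrange Cuntz sums so that the absorbing unitaries act only on the $\theta_\infty\oplus\Theta$ corner, before finally conjugating by $U_t$ to land in $\widetilde B$. The paper instead applies Theorem \ref{t:irDE} to $(\phi,\psi)$ directly to get $\phi\oplus\Theta\approxeq\psi\oplus\Theta$, adds $\theta_\infty$ on both sides, and then invokes the Dadarlat--Eilers lemmas to remove $\Theta$. This is cleaner because $\approxeq$ (proper asymptotic unitary equivalence, with unitaries in $\widetilde B$) is already the right relation and all the compatibility with Cuntz sums and absorption is delegated to those lemmas. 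Your algebraic manipulation at the end --- conjugating by $U_t$ and observing that the scalar part in the corona is unchanged so $V_t\in\widetilde B$ --- is correct, but is extra work that the paper's route avoids. If you want to keep your ordering of steps, replace the staircase interpolation with a citation of the Dadarlat--Eilers lemmas (and say explicitly that they apply because $\theta_\infty\approx_u(\theta_\infty)_\infty$); then your proof goes through.
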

\begin{proof}
By Theorem \ref{t:irDE} there is a weakly $X$-nuclear $\Theta \colon A \to \multialg{B}$ such that $\phi \oplus \Theta \approxeq \psi \oplus \Theta$. Hence $\phi \oplus \Theta \oplus \theta_\infty \approxeq \psi \oplus \Theta \oplus \theta_\infty$. By Theorem \ref{t:infrepXabs}, $\theta_\infty$ is weakly $X$-nuclear and $X$-nuclearly absorbing, and in particular it absorbs $\Theta$. As $\theta_\infty$ is an infinite repeat, and is thus unitarily equvalent to an infinite repeat of itself $(\theta_\infty)_\infty$, it follows from \cite[Lemmas 2.4 and 3.4]{DadarlatEilers-asymptotic} that $\phi \oplus \theta_\infty \approxeq \psi \oplus \theta_\infty$.
\end{proof}


\section{An ideal-related $\mathcal O_2$-embedding theorem}

The iconic $\mathcal O_2$-embedding theorem of Kirchberg \cite{Kirchberg-ICM} (see also \cite{KirchbergPhillips-embedding}) asserts that any separable, exact $C^\ast$-algebra admits an embedding into $\mathcal O_2$. This played a major role in the proofs of Theorems \ref{t:existsimple}, \ref{t:uniquesimple}, \ref{t:KP}, and \ref{t:KPUCT}. Similarly, an ideal-related version of this theorem will be used to prove Theorems \ref{t:irexistence}, \ref{t:iruniqueness}, and \ref{t:nonsimpleclass}.

Such an ideal-related $\mathcal O_2$-embedding theorem was also a main ingredient in Kirchberg's approach to Theorem \ref{t:nonsimpleclass}, see \cite[Hauptsatz 2.15]{Kirchberg-non-simple}, and a similar result has appeared in \cite[Theorem 6.1]{Gabe-O2class}.  The latter of these results assumed that the target $C^\ast$-algebra $B$ was separable, nuclear, and $\mathcal O_\infty$-stable, so that one could combine an application of Michael's selection theorem with a deep result of Kirchberg and Rørdam from \cite{KirchbergRordam-zero} to produce the desired map. While \cite[Theorem 6.1]{Gabe-O2class} would be strong enough to prove the main classification result, Theorem \ref{t:nonsimpleclass}, one would only be able to prove the existence result Theorem \ref{t:irexistence} under the additional assumptions that the target $B$ be separable and nuclear. To remove these assumptions, I present Theorem \ref{t:irO2}; a generalised version of the ideal-related $\mathcal O_2$-embedding theorem from \cite{Gabe-O2class}.

The proof does not require the use of Michael's selection theorem or the results of Kirchberg and Rørdam. Instead it uses a recent construction from \cite{BGSW-nucdim}, which despite its technical proof is a fairly elementary application of Voiculescu's quasidiagonality theorem \cite{Voiculescu-qdhtpy}. With that at hand, the proof from \cite{Gabe-O2class} carries over almost verbatim.

\begin{theorem}\label{t:irO2}
Let $A$ be a separable, exact $C^\ast$-algebra, let $B$ be an $\mathcal O_\infty$-stable $C^\ast$-algebra, and let $\Phi \colon \mathcal I(A) \to \mathcal I(B)$ be a $\Cu$-morphism (see Definition \ref{d:lattice}). Then there exists a nuclear, strongly $\mathcal O_2$-stable $\ast$-homomorphism $\phi \colon A \to B$ such that $\mathcal I(\phi) = \Phi$.
\end{theorem}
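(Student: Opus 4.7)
The plan is to adapt the proof of \cite[Theorem 6.1]{Gabe-O2class}, with the only substantive change being the replacement of its Michael-selection step, which required $B$ to be separable and nuclear, by a direct construction from \cite{BGSW-nucdim} based on Voiculescu's quasidiagonality theorem.

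The first step I would take is a reduction to the case where the domain is $\mathcal O_2$-stable. Fixing a non-zero projection $p \in \mathcal O_2$, the nuclear $\ast$-homomorphism $\iota \colon A \to A \otimes \mathcal O_2$, $a \mapsto a \otimes p$, satisfies $\mathcal I(\iota)(I) = I \otimes \mathcal O_2$ by simplicity of $\mathcal O_2$, and thus induces the canonical order isomorphism $\mathcal I(A) \cong \mathcal I(A \otimes \mathcal O_2)$. Since $A \otimes \mathcal O_2$ is separable, exact, and $\mathcal O_2$-stable, it suffices to produce a nuclear $\ast$-homomorphism $\psi \colon A \otimes \mathcal O_2 \to B$ with $\mathcal I(\psi)(I \otimes \mathcal O_2) = \Phi(I)$ for every $I \in \mathcal I(A)$; the composition $\phi := \psi \circ \iota$ then satisfies $\mathcal I(\phi) = \Phi$ and is automatically strongly $\mathcal O_2$-stable by Proposition \ref{p:Oinftyfactor}, since it factors through the $\mathcal O_2$-stable algebra $A \otimes \mathcal O_2$.

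Having thus reduced to the case where the domain (which I will continue to call $A$) is itself $\mathcal O_2$-stable, every non-zero positive element of any $\ast$-homomorphic image of $A$ is properly infinite (cf.~Remark \ref{r:fullpropinfproj}). This is the structural input the construction of \cite{BGSW-nucdim} exploits: through a Voiculescu quasidiagonality argument, it produces an asymptotically multiplicative sequence of nuclear c.p.~maps $A \to B$ whose ideal-theoretic behaviour matches $\Phi$ on ever-larger finite sets, and the ambient isometries supplied by the $\mathcal O_\infty$-stability of $B$ let one extract from these a genuine nuclear $\ast$-homomorphism $\psi \colon A \to B$ with $\mathcal I(\psi) = \Phi$.

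The hardest point will be matching the ideal map $\Phi$ on the nose: the $\Cu$-morphism hypothesis (preservation of suprema and compact containment) is what makes this possible, since compact containment lets one exhaust each $\Phi(J)$ from below by ideals generated by finitely many elements and so control the ideal-theoretic constraints on finite subsets at each stage of the quasidiagonal approximation. This is exactly where the construction of \cite{BGSW-nucdim} substitutes for the Michael-selection globalisation of \cite{Gabe-O2class}, and is the only place in the proof where separability or nuclearity of $B$ were previously required.
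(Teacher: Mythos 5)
There is a genuine gap in your sketch, concentrated in the sentence that claims the ``ambient isometries supplied by the $\mathcal O_\infty$-stability of $B$ let one extract'' an honest $\ast$-homomorphism from the output of the construction in \cite{BGSW-nucdim}. That lemma does not produce an asymptotically multiplicative sequence of c.p.~maps $A\to B$; it produces a $\ast$-homomorphism $\rho \colon C_0(0,1]\otimes A \to B_\infty$, equivalently a nuclear \emph{order zero} map $A\to B_\infty$ with the prescribed ideal-theoretic behaviour. Upgrading an order zero map to a genuine $\ast$-homomorphism is precisely the hard part, and $\mathcal O_\infty$-stability of $B$ does not do it for you in one step. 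The paper's route is: restrict $\rho$ to $C_0(0,1)\otimes A$, use a full-shift automorphism $\alpha$ of $C_0(0,1)$ with $C_0(0,1)\rtimes_\alpha\mathbb Z\cong C(\mathbb T)\otimes\mathcal K$, and then build a $\ast$-homomorphism out of $(C_0(0,1)\otimes A)\rtimes_{\alpha\otimes\id_A}\mathbb Z$ by conjugating $\Psi$ across $\alpha\otimes\id_A$. Constructing that crossed-product map requires knowing that $\Psi$ and $\Psi\circ(\alpha\otimes\id_A)$ are approximately Murray--von Neumann equivalent, which is an application of the \emph{uniqueness} theorem \cite[Theorem 3.23]{Gabe-O2class} for $\mathcal O_2$-stable maps (together with a reindexing argument in $B_\infty$). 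None of this is in your sketch.

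There is a second, independent gap: even after the crossed-product step one only has a $\ast$-homomorphism $\psi\colon A\to B_\infty$. To bring it down to $B$ one needs a McDuff-type argument --- one shows $\psi^\dagger$ is approximately unitarily equivalent to $\eta^\ast\circ\psi^\dagger$ for suitable reparametrisations $\eta$, again via \cite[Theorem 3.23]{Gabe-O2class}, and then invokes \cite[Theorem 4.3]{Gabe-O2class} to replace $\psi$ by a unitarily equivalent map landing in $B$. Your sketch treats the passage from $B_\infty$ back to $B$ as automatic, but it is not; it is a second substantive use of the classification machinery.

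Your opening reduction to an $\mathcal O_2$-stable domain by precomposing with $a\mapsto a\otimes p$ is correct as stated, and it does give strong $\mathcal O_2$-stability of the resulting $\phi$ for free via Proposition \ref{p:Oinftyfactor}. But it does not simplify the remaining problem: the task of producing a nuclear $\ast$-homomorphism $A\otimes\mathcal O_2\to B$ realising a given $\Cu$-morphism of ideal lattices still runs into exactly the same order-zero-to-homomorphism bottleneck, and $\mathcal O_2$-stability of the domain offers no obvious shortcut there. The paper instead gets strong $\mathcal O_2$-stability from the target side by reducing (via an embedding $B\otimes\mathcal O_2\otimes\mathcal K\hookrightarrow B$) to the case that $B$ is stable and $\mathcal O_2$-stable, which is also exactly what is needed for the crossed-product and McDuff arguments to run; so the reduction you propose, while harmless, does not buy you the right kind of traction.
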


\begin{proof}
Arguing exactly as in the beginning of the proof of \cite[Theorem 6.1]{Gabe-O2class}, we may assume that $B$ is stable and $\mathcal O_2$-stable.\footnote{Essentially, by embedding $B \otimes \mathcal O_2 \otimes \mathcal K \hookrightarrow B\otimes \mathcal O_\infty \cong B$ and making sure that these maps do the right thing on ideals.} By \cite[Lemma 3.5]{BGSW-nucdim}, we find a $\ast$-homomorphism $\rho \colon C_0(0,1] \otimes A \to B_\infty$ such that
\begin{equation}\label{eq:IIJBinfty}
\mathcal I(\rho)(I \otimes J) = \overline{B_\infty \Phi(J) B_\infty}, \qquad I \in \mathcal I(C_0(0,1]) \setminus \{0\}, \quad J \in \mathcal I(A),
\end{equation}
and such that the completely positive order zero map $\rho(\id_{(0,1]} \otimes -)$ can be represented as a sequence of completely positive contractive maps (which would be $(\eta_n \circ \psi_n(\id_{(0,1]} \otimes -))_{n\in \mathbb N}$ in that lemma), each of which is nuclear as it factors through a finite dimensional $C^\ast$-algebra. By exactness of $A$, $\rho(\id_{(0,1]} \otimes -)$ is nuclear by \cite[Proposition 3.3]{Dadarlat-qdmorphisms}, and so $\rho$ is nuclear by \cite[Theorem 2.9]{Gabe-qdexact}. By \cite[Lemma 6.7]{Gabe-O2class}, $\rho$ is $\mathcal O_2$-stable, since we assumed $B$ is $\mathcal O_2$-stable.

Now the proof follows that of \cite[Theorem 6.1]{Gabe-O2class} almost word by word. Let $\Psi \colon C_0(0,1)\otimes A \to B_\infty$ be the restriction of $\rho$, which is nuclear, $\mathcal O_2$-stable, and for which $\mathcal I(\Psi)(I\otimes J) = \overline{B_\infty \Phi(J) B_\infty}$ for $I\neq 0$. Let $\alpha$ be an automorphism on $C_0(0,1)$ such that $C_0(0,1)\rtimes_\alpha \mathbb Z \cong C(\mathbb T)\otimes \mathcal K$, and let $\beta := \alpha \otimes \id_A$. Then $\Psi \circ \beta$ is also nuclear, $\mathcal O_2$-stable, and $\mathcal I(\Psi \circ \beta) = \mathcal I(\Psi)$ by \eqref{eq:IIJBinfty}. By \cite[Theorem 3.23]{Gabe-O2class}, $\Psi \circ \beta$ and $\Psi$ are approximately Murray--von Neumann equivalent. By a standard reindexing argument (cf.~\cite[Lemma 4.1]{Gabe-O2class}) there is a contraction $u\in B_\infty$ such that $u\Psi(-) u^\ast = \Psi \circ \beta$ and $u^\ast \Psi(\beta(-)) u = \Psi$. Hence there exists a $\ast$-homomorphism $\psi_0 \colon (C_0(0,1)\otimes A)\rtimes_\beta \mathbb Z \to B_\infty$ such that $\psi_0(x v^n) = \Psi(x) u^n$ for $x\in C_0(0,1)\otimes A$ and $n\in \mathbb N_0$ and $\psi_0(xv^{-n}) = \Psi(x) (u^\ast)^n$ if $n\in \mathbb N$.\footnote{Dilate $u$ to a unitary $w = \left( \begin{array}{cc} u & (1_{\widetilde B}-uu^\ast)^{1/2} \\ (1_{\widetilde B}-u^\ast u)^{1/2} & - u^\ast \end{array} \right) \in M_2(\widetilde B_\infty)$. This satisfies $w (\Psi \oplus 0) w^\ast = (\Psi \circ \beta)\oplus 0$ (one can use Lemma \ref{l:conjhom} to see this) so there is an induced $\ast$-homomorphism $(C_0(0,1)\otimes A)\rtimes_\beta \mathbb Z \to M_2(\widetilde B_\infty)$. Using Lemma \ref{l:conjhom} it is easy to check that it factors through the $(1,1)$-corner and induces a $\ast$-homomorphism $\psi_0$ as described.} Here $v$ denotes the canonical unitary in the crossed product. By \cite[Lemma 6.10]{Gabe-O2class},\footnote{This is proved in the unital case. One obtains the non-unital case by unitising everything, and using that the unitisation of nuclear maps are nuclear.} nuclearity of $\Psi$ entails nuclearity of $\psi_0$.

As $(C_0(0,1) \otimes A)\rtimes_\beta \mathbb Z \cong (C_0(0,1)\rtimes_\alpha \mathbb Z) \otimes A$ canonically, we identify these. Since $C_0(0,1)\rtimes_\alpha \mathbb Z \cong C(\mathbb T)\otimes \mathcal K$ contains a full projection $p$, let $\psi := \psi_0(p\otimes -) \colon A \to B_\infty$. Using \eqref{eq:IIJBinfty}, it is not hard to see\footnote{The proof of \cite[Theorem 6.1, bottom of page 39]{Gabe-O2class} contains the exact details for this proof, word by word.} that 
\begin{equation}\label{eq:IpsiJBinfty}
\mathcal I(\psi)(J) = \overline{B_\infty \Phi(J) B_\infty}, \qquad J\in \mathcal I(A).
\end{equation}
Consider the unitisation $\psi^\dagger \colon A^\dagger \to \multialg{B}_\infty$ of $\psi$ which is nuclear by Lemma \ref{l:unitisednuc}. 
 Let $\eta\colon \mathbb N\to \mathbb N$ be a map such that $\lim_{n\to \infty} \eta(n) = \infty$ and denote by $\eta^\ast \colon \multialg{B}_\infty \to \multialg{B}_\infty$ the induced $\ast$-homomorphism. As every ideal in the image of $\mathcal I(\psi)$ is generated by constant elements, \cite[Lemmas 6.6 and 6.7]{Gabe-O2class} implies that the maps $\psi^\dagger$ and $\eta^\ast \circ \psi^\dagger$ are nuclear, $\mathcal O_2$-stable, and agree when applying $\mathcal I$. Hence by \cite[Theorem 3.23]{Gabe-O2class}, $\psi^\dagger$ and $\eta^\ast \circ \psi^\dagger$ are approximately unitarily equivalent. By \cite[Theorem 4.3]{Gabe-O2class}, $\psi$ is approximately unitarily equivalent to a $\ast$-homomorphism $\phi \colon A \to B \subseteq B_\infty$. As $\mathcal I(\psi) = \mathcal I(\phi)$ (when $\phi$ is considered as a map into $B_\infty$), it follows from \eqref{eq:IpsiJBinfty} that $\mathcal I(\phi) = \Phi$ when $\phi$ is considered as a $\ast$-homomorphism into $B$. Moreover, $\phi$ is nuclear by nuclearity of $\psi$, and strongly $\mathcal O_2$-stable since it was assumed that $B$ is $\mathcal O_2$-stable.
\end{proof}

The following is an immediate consequence of the above theorem and Lemma \ref{l:Cuaction}$(c)$.

\begin{corollary}\label{c:irO2X}
Let $X$ be a topological space, let $A$ be a separable, exact, monotone continuous $X$-$C^\ast$-algebra, and let $B$ be an $\mathcal O_\infty$-stable, $X$-compact, upper semicontinuous $X$-$C^\ast$-algebra. Then there exists a nuclear, strongly $\mathcal O_2$-stable, $X$-full $\ast$-homomorphism $\phi \colon A \to B$. 
\end{corollary}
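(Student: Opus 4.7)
The plan is to combine Theorem \ref{t:irO2} with the structural observation in Lemma \ref{l:Cuaction}$(c)$ that, under the monotone continuity hypothesis on $A$ and the $X$-compactness and upper semicontinuity hypotheses on $B$, the composite $\Phi_B\circ\Psi_A$ is itself a $\Cu$-morphism from $\mathcal I(A)$ to $\mathcal I(B)$, where $\Psi_A\colon \mathcal I(A)\to \mathcal O(X)$ is the dual action of $\Phi_A$. The monotone continuity of $A$ (which entails lower semicontinuity) guarantees that $\Psi_A$ is well-defined and in fact preserves suprema and compact containment (by parts $(b)$ and $(c)$ of Lemma \ref{l:Cuaction}), while upper semicontinuity of $B$ makes $\Phi_B$ suprema-preserving, and $X$-compactness makes it preserve compact containment. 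Hence $\Phi_B\circ\Psi_A$ is a $\Cu$-morphism.

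With this $\Cu$-morphism in hand, I apply Theorem \ref{t:irO2} with $\Phi := \Phi_B\circ \Psi_A$, taking into account that $A$ is separable and exact and that $B$ is $\mathcal O_\infty$-stable. This produces a nuclear, strongly $\mathcal O_2$-stable $\ast$-homomorphism $\phi\colon A\to B$ satisfying
\begin{equation}
\mathcal I(\phi) \;=\; \Phi_B\circ \Psi_A.
\end{equation}

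It then remains to verify that this $\phi$ is $X$-equivariant and $X$-full. Both follow immediately from Lemma \ref{l:Cuaction}$(a)$: the identity $\mathcal I(\phi) = \Phi_B\circ\Psi_A$ trivially yields $\mathcal I(\phi)\leq \Phi_B\circ\Psi_A$, which is equivalent to $X$-equivariance of $\phi$, and the stronger equality $\mathcal I(\phi) = \Phi_B\circ\Psi_A$ is by definition $X$-fullness of $\phi$.

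There is no real obstacle to overcome beyond correctly assembling the two ingredients; all the substantive work has already been carried out in the construction underlying Theorem \ref{t:irO2} (which in turn relies on the construction of \cite[Lemma 3.5]{BGSW-nucdim} and the $\mathcal O_2$-stable lifting machinery of \cite{Gabe-O2class}) and in the lattice-theoretic bookkeeping of Lemma \ref{l:Cuaction}. The only point that warrants care is that the hypotheses in the statement of Corollary \ref{c:irO2X} are precisely tailored so that $\Phi_B\circ\Psi_A$ lies in the domain of Theorem \ref{t:irO2}: monotone continuity of $A$ is exactly what is needed to pass compact containment through $\Psi_A$, and $X$-compactness together with upper semicontinuity of $B$ is exactly what is needed to pass it further through $\Phi_B$.
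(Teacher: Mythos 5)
Your proposal is correct and follows exactly the same route as the paper: apply Theorem \ref{t:irO2} with $\Phi := \Phi_B \circ \Psi_A$, which is a $\Cu$-morphism by Lemma \ref{l:Cuaction}$(c)$ under the stated hypotheses, and observe that $\mathcal I(\phi) = \Phi_B \circ \Psi_A$ is the definition of $X$-fullness (and in particular implies $X$-equivariance via Lemma \ref{l:Cuaction}$(a)$). The paper records this as an immediate consequence of those two results, so your detailed walkthrough is a faithful expansion of the intended argument.
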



\section{The main theorems}

The end is nigh, and the classification of separable, nuclear, $\mathcal O_\infty$-stable $C^\ast$-algebras -- which are not necessarily simple -- is finally within reach. The proof will follow the same strategy as in Section \ref{s:KP} for the simple case.

\subsection{Existence of $\theta$}

In Theorems \ref{t:existsimple} and \ref{t:uniquesimple}, the target $C^\ast$-algebra $B$ was always assumed to contain a properly infinite, full projection. Such a projection ensures the existence of a full embedding $\mathcal O_2 \hookrightarrow B$, and combined with Kirchberg's $\mathcal O_2$-embedding theorem one could produce a $\ast$-homomorphism as the composition $A \hookrightarrow \mathcal O_2 \hookrightarrow B$. In Lemma \ref{l:fullO2map}, this idea was slightly modified to construct a full, nuclear $\ast$-homomorphism $\theta \colon A \to B \otimes \mathcal K$ such that $\mathcal O_2$-embeds unitally in $\multialg{B\otimes \mathcal K}\cap \theta(A)'$. The existence of such a map $\theta$ allows one to apply the results from Section \ref{s:unitary}. This was a key technical ingredient in the proofs of Theorems \ref{t:existsimple} and \ref{t:uniquesimple}. 

In this section some similar results are presented on how to produce $X$-full, nuclear $\ast$-homomorphisms $\theta \colon A \to \multialg{B\otimes \mathcal K}$ such that $\mathcal O_2$ embeds unitally in $\multialg{B\otimes \mathcal K}\cap \theta(A)'$. The main one of these constructions comes from the existence of an $\mathcal O_\infty$-stable $C^\ast$-subalgebra $D \subseteq B$ which induces the action of $X$ on $B$, see Lemma \ref{l:XO2map} for the precise statement. The existence of such a $C^\ast$-subalgebra $D\subseteq B$ is an ideal-related analogue of $B$ containing a properly infinite, full projection.

First some ideal-related versions of standard results will be proved. Kasparov's stabilisation theorem \cite[Theorem 2]{Kasparov-Stinespring} implies that if $B$ is a $\sigma$-unital, stable $C^\ast$-algebra, and if $B_0 \subseteq B$ is a $\sigma$-unital, hereditary $C^\ast$-subalgebra, then $B_0$ is isomorphic to a corner in $B$. Part $(a)$ below is an ideal-related version of a technical extension of this result. Part $(b)$ is an ideal-related version of Brown's stable isomorphism theorem \cite{Brown-stableiso}.

 Recall that if $\phi \colon A \to B$ is a $\ast$-homomorphism, then $\mathcal I(\phi) \colon \mathcal I(A) \to \mathcal I(B)$ is the map $I \mapsto \overline{B\phi(I)B}$.
Also, a $\ast$-homomorphism $\phi \colon D \to B$ is called \emph{extendible} if the hereditary $C^\ast$-subalgebra $\overline{\phi(D) B \phi(D)}$ is a corner $PBP$ for a (unique) projection $P\in \multialg{B}$. Note that $\phi$ extends canonically to a $\ast$-homomorphism $\multialg{\phi} \colon \multialg{D} \to \multialg{B}$ satisfying $\multialg{\phi}(1_{\multialg{D}}) = P$.

\begin{proposition}\label{p:Brown}
Let $B$ be a $\sigma$-unital, stable $C^\ast$-algebra. 
\begin{itemize}
\item[$(a)$] Suppose that $D$ is a $\sigma$-unital $C^\ast$-algebra and that $\eta \colon D \to B$ is a $\ast$-homo\-morphism. Then there exists an extendible $\ast$-homomomorphism $\kappa \colon D \to B$ such that $\mathcal I(\kappa) = \mathcal I(\eta)$ and for which $1_{\multialg{B}} - \multialg{\kappa}(1_{\multialg{D}})$ is Murray--von Neumann equivalent to $1_{\multialg{B}}$ in $\multialg{B}$. Also, if $\eta$ is nuclear, then $\kappa$ may be chosen nuclear.
\item[$(b)$] Suppose $B_0$ is a $\sigma$-unital, full, hereditary, stable $C^\ast$-subalgebra of $B$, and let $j \colon B_0 \hookrightarrow B$ denote the inclusion. Then there exists an isomorphism $\Omega \colon B_0 \xrightarrow \cong B$, such that $\mathcal I(\Omega) = \mathcal I(j)$.
\end{itemize}
\end{proposition}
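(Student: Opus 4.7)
\emph{Plan.} For part (a), the idea is to realize $\kappa$ as a Hilbert module compression of $\eta$ using Kasparov's stabilization theorem. Set $E := \overline{\eta(D) B}$, a countably generated Hilbert $B$-module (countable generation from $\sigma$-unitality of $D$). The left-multiplication action $d \mapsto \eta(d)$ lands in $\mathcal K(E) = \overline{\eta(D) B \eta(D)}$ since $\eta(d) = \lim_n \eta(d e_n) \in \overline{\eta(D)^2}$ for any approximate identity $(e_n)$ of $D$. Let $I := \overline{B \eta(D) B}$; this is a $\sigma$-unital stable ideal of $B$ (ideals of a stable $C^\ast$-algebra are stable), and $E$ is a Hilbert $I$-submodule of $I$. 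By Kasparov's stabilization applied within $I$, $E \oplus I \cong I$, giving a Hilbert module isometry $V \colon E \to I$ with $V^\ast V = 1_E$ and $VV^\ast$ a projection $P \in \multialg{I}$. This $P$ canonically extends to a projection in $\multialg{B}$ acting on $B$ through its action on $I$ and annihilating the annihilator of $I$, which we again denote by $P$. Choosing the stabilization isomorphism so that $VE$ is contained in a proper sub-corner of $I$, which is possible by stability of $I$, one arranges that $(1_{\multialg B} - P) B$ absorbs a copy of $B$; together with proper infiniteness of $1_{\multialg B}$ and the vanishing of $K_0(\multialg B)$, this forces $1_{\multialg B} - P \sim 1_{\multialg B}$ in $\multialg B$.

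Set $\kappa(d) := V \eta(d) V^\ast \in PBP \subseteq B$. This is a $\ast$-homomorphism because $V^\ast V = 1_E$ acts as the identity on $\mathcal K(E)$, and it is extendible with $\multialg \kappa(1_{\multialg D}) = P$: for $b \in B$ the calculation $\kappa(e_n) b = V \eta(e_n) V^\ast b \to V V^\ast b = P b$ (using that $\eta(e_n)$ acts as an approximate identity on $E$) gives strict convergence. Nuclearity is preserved since $\kappa$ factors as $D \xrightarrow \eta \mathcal K(E) \xrightarrow{\mathrm{Ad}\, V} PBP \hookrightarrow B$, where the first map is the corestriction of $\eta$ to a hereditary subalgebra and the second is a $\ast$-isomorphism. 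For the ideal map, for any $J \in \mathcal I(D)$ both $\kappa(J)$ and $\eta(J)$ generate the same ideal $\overline{B \eta(J) B}$ of $B$: the $\ast$-isomorphism $\mathrm{Ad}\,V \colon \mathcal K(E) \xrightarrow \cong PBP$ sends the hereditary subalgebra $\overline{\eta(J) B \eta(J)}$ to $\overline{\kappa(J) B \kappa(J)}$, and both are full hereditary subalgebras of the ideal $\overline{B \eta(J) B}$.

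For part (b), the plan is to combine Kasparov stabilization with Brown's stable isomorphism theorem. Fix a strictly positive element $h \in B_0$, so that $\mathcal K(hB) = \overline{hBh} = B_0$ as a subalgebra of $B$. Kasparov stabilization and stability of $B$ yield a Hilbert module isomorphism $hB \cong PB$ for a projection $P \in \multialg B$; fullness of $B_0$ in $B$ forces $P$ to be a full projection. This produces a $\ast$-isomorphism $B_0 = \mathcal K(hB) \cong \mathcal K(PB) = PBP$ whose induced ideal map is $I_0 \mapsto \mathcal I(j)(I_0) \cap PBP$. Composing with an isomorphism $PBP \cong B$, which exists via $PBP \cong PBP \otimes \mathcal K \cong B \otimes \mathcal K \cong B$ using stability of $B_0$ and $B$ together with Brown's theorem for the middle step (exploiting that $P$ is a full projection in $\multialg B$), produces $\Omega$. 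Each step preserves the canonical ideal correspondence between full hereditary subalgebras and their ambient $C^\ast$-algebras, so $\mathcal I(\Omega) = \mathcal I(j)$.

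The principal technical obstacle for (a) is engineering the Kasparov stabilization inside $I$ so that the resulting projection extends to $\multialg B$ and satisfies $1_{\multialg B} - P \sim 1_{\multialg B}$; this requires using stability of $I$ to confine $VE$ within a proper sub-corner. For (b), the delicate point is tracking the canonical ideal bijection through Brown's isomorphism, which ultimately rests on the fact that Brown's construction is implemented by a partial isometry in $\multialg{B \otimes \mathcal K}$ arising from the inclusion $B_0 \hookrightarrow B$.
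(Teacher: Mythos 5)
Your overall plan — realise $\kappa$ as a Kasparov-stabilization compression $V\eta(-)V^\ast$ and build the linking-algebra picture — is the same idea underlying the paper's proof, but there are two concrete gaps in part $(a)$ as written.

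\textbf{First gap: $P$ does not live in $\multialg{B}$.} You apply Kasparov's stabilization theorem \emph{inside $I := \overline{B\eta(D)B}$}, obtaining $E\oplus I\cong I$ as Hilbert $I$-modules and hence a projection $P=VV^\ast\in\mathcal B_I(I)=\multialg{I}$. You then claim $P$ ``canonically extends to a projection in $\multialg{B}$ acting on $B$ through its action on $I$ and annihilating the annihilator of $I$.'' There is no such extension: $B$ is not $I\oplus\Ann(I)$ in general (already for $B=C_0((0,2))\otimes\mathcal K$ and $I=C_0((0,1))\otimes\mathcal K$), so specifying the action of $P$ on $I$ and on $\Ann(I)$ does not determine a multiplier of $B$. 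Without $P\in\multialg{B}$ the entire extendibility claim, the formula $\multialg{\kappa}(1_{\multialg D})=P$, and the comparison $1_{\multialg B}-P\sim 1_{\multialg B}$ all fail to make sense. The fix is to run Kasparov over $B$ itself: $E=\overline{\eta(D)B}$ is already a countably generated Hilbert $B$-module, so $E\oplus\mathcal H_B\cong\mathcal H_B\cong B$ gives $V\in\mathcal B_B(E,B)$ and $P=VV^\ast\in\mathcal B_B(B)=\multialg{B}$ directly; then $W|_B$ is an isometry in $\multialg B$ with range projection $1_{\multialg B}-P$, giving $1_{\multialg B}-P\sim 1_{\multialg B}$ with no auxiliary argument about ``proper sub-corners.''

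\textbf{Second gap: the ideal computation is asserted, not proved.} To check $\mathcal I(\kappa)=\mathcal I(\eta)$, you write that $\overline{\eta(J)B\eta(J)}$ and $\overline{\kappa(J)B\kappa(J)}$ are ``both full hereditary subalgebras of the ideal $\overline{B\eta(J)B}$.'' For $\overline{\kappa(J)B\kappa(J)}$ this containment and fullness \emph{is} the assertion $\mathcal I(\kappa)(J)=\mathcal I(\eta)(J)$ — you have to prove that $V\eta(j)V^\ast$ lands in $\overline{B\eta(j)B}$ and generates it, and that does not follow just from $\Ad V$ being a $\ast$-isomorphism onto $PBP$ (an abstract $\ast$-isomorphism between hereditary subalgebras of $B$ need not preserve the trace-of-ideal map). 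The paper fills precisely this in: it forms the linking algebra $C=\left(\begin{smallmatrix}B_0 & \overline{B_0B}\\ \overline{BB_0} & B\end{smallmatrix}\right)\subseteq M_2(B)$, identifies $C\cong\mathcal K_B(\overline{B_0B}\oplus B)\cong B$ via the same Kasparov stabilization you use, and then computes $\mathcal I(j)=\mathcal I(j_{2,2})^{-1}\circ\mathcal I(j_{1,1})$ by comparing the corner embeddings $j_{1,1},j_{2,2},\id_B\otimes e_{k,k}$ and the inclusion $C\hookrightarrow M_2(B)$, using that full hereditary embeddings give isomorphisms of ideal lattices. The final ingredient making the identification $B\cong C$ actually respect $\mathcal I$ is \cite[Theorem 4.23]{Brown-semicontmultipliers}, producing an isometry $v\in\multialg{C}$ with $vv^\ast=p_{2,2}$, so that $\Omega_{2,2}=\Ad v\circ j_{2,2}$ has $\mathcal I(\Omega_{2,2})=\mathcal I(j_{2,2})$. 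The same remark applies to your part $(b)$: the chain $B_0\cong PBP\cong PBP\otimes\mathcal K\cong B\otimes\mathcal K\cong B$ is fine at the level of $C^\ast$-algebras, but ``each step preserves the canonical ideal correspondence'' again needs a multiplier isometry (as Brown's theorem supplies) to justify the middle identification on the level of $\mathcal I$.
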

\begin{proof}
The proofs of $(a)$ and $(b)$ only differ at the very end. Let $B_0 := \overline{\eta(D) B \eta(D)}$ (which we only assume is full and stable when considering part $(b)$ at the end of the proof). Consider the $C^\ast$-algebra $C= \left( \begin{array}{cc} B_0 & \overline{B_0 B} \\ \overline{BB_0} & B \end{array}\right)$ which is a subalgebra of $M_2(B)$. As $\overline{B_0 B} \oplus B \cong B$ as Hilbert $B$-modules by Kasparov's stabilisation theorem \cite[Theorem 2]{Kasparov-Stinespring}, it follows that $C \cong \mathcal K_B(\overline{B_0 B} \oplus B) \cong B$ is $\sigma$-unital and stable.

Let $\eta_0 \colon D \to B_0$ be the corestriction of $\eta$, which is non-degenerate and nuclear if $\eta$ is nuclear, and let $j\colon B_0 \to B$ be the inclusion, so that $\eta = j \circ \eta_0$. Consider the obvious embedding $j_{1,1} \colon B_0 \to C$ which is extendible, as well as $j_{2,2} \colon B \to C$, $i \colon C \to M_2(B)$, and $\id_B \otimes e_{k,k} \colon B \to M_2(B)$ for $k=1,2$ which are all embeddings of full, hereditary $C^\ast$-subalgebras, and are thus invertible once applying $\mathcal I$ (cf.~\cite[Proposition 4.1.10]{Pedersen-book-automorphism}). Moreover, as $\mathcal I(\id_B \otimes e_{1,1}) = \mathcal I(\id_B \otimes e_{2,2})$, $(\id_B \otimes e_{1,1}) \circ j = i \circ j_{1,1}$ and $i \circ j_{2,2} = \id_{B} \otimes e_{2,2}$, it follows that
\begin{eqnarray}
\mathcal I(j) &=& \mathcal I(\id_B \otimes e_{1,1})^{-1} \circ \mathcal I(i \circ j_{1,1}) \nonumber\\
&=& \mathcal I(j_{2,2})^{-1} \circ \mathcal I(i)^{-1} \circ \mathcal I(i) \circ \mathcal I(j_{1,1}) \nonumber\\
&=& \mathcal I(j_{2,2})^{-1} \circ \mathcal I(j_{1,1}). \label{eq:j11j22}
\end{eqnarray}

Note that $j_{2,2} \colon B \hookrightarrow C$ is the embedding of $B$ as a full corner in $C$, and let $p_{2,2} = \multialg{j_{2,2}} (1_{\multialg{B}})$. As both $B$ and $C$ are $\sigma$-unital and stable, it follows from \cite[Theorem 4.23]{Brown-semicontmultipliers} that there is an isometry $v$ in $\multialg{C}$ with $vv^\ast = p_{2,2}$. Clearly $\Omega_{2,2} := \Ad v \circ j_{2,2}$ is an isomorphism $B \xrightarrow \cong C$ such that $\mathcal I(\Omega_{2,2}) = \mathcal I(j_{2,2})$, so $\kappa := \Omega_{2,2}^{-1} \circ j_{1,1} \circ \eta_0$ is extendible, nuclear if $\eta$ is nuclear, and satisfies 
\begin{equation}
\mathcal I(\kappa)  = \mathcal I(j_{2,2})^{-1} \circ \mathcal I(j_{1,1}) \circ \mathcal I(\eta_0) \stackrel{\eqref{eq:j11j22}}{=} \mathcal I(j \circ \eta_0) = \mathcal I(\eta). 
\end{equation}
Moreover, as $\multialg{\Omega_{2,2}}(1_{\multialg{B}}- \multialg{\kappa}(1_{\multialg{D}})) = p_{2,2}$, which is equivalent to $1_{\multialg{C}}$ in $\multialg{C}$, it follows that $1_{\multialg{B}}-\multialg{\kappa}(1_{\multialg{D}})$ is equivalent to $1_{\multialg{B}}$ in $\multialg{B}$. This completes part $(a)$.

For part $(b)$, $B_0$ is assumed to be stable and full in $B$, so arguing as for $\Omega_{2,2}$ above, one constructs an isomorphism $\Omega_{1,1} \colon B_0 \xrightarrow{\cong} C$ such that $\mathcal I(\Omega_{1,1}) = \mathcal I(j_{1,1})$. Hence $\Omega := \Omega_{2,2}^{-1} \circ \Omega_{1,1}$ satisfies the conditions of $(b)$ by \eqref{eq:j11j22}.
\end{proof}

\begin{lemma}\label{l:genO2emb}
Let $D$ be a $\sigma$-unital, $\mathcal O_\infty$-stable $C^\ast$-algebra, let $B$ be a $\sigma$-unital, stable $C^\ast$-algebra, and let $\eta \colon D \to B$ be a $\ast$-homomorphism. Then there exists a $\ast$-homomorphism $\kappa \colon D \to B$ such that $\mathcal I(\kappa) = \mathcal I(\eta)$, and such that $\mathcal O_2$ embeds unitally into $\multialg{B} \cap \kappa(D)'$. Moreover, if $\eta$ is nuclear then $\kappa$ may be chosen nuclear.
\end{lemma}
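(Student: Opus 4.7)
The plan is to construct $\kappa$ by routing $\eta$ through $D\otimes\mathcal O_\infty$ so as to manufacture room for a commuting copy of $\mathcal O_2$ in $\multialg B$. The three phases are: (a) produce an extendible $\ast$-homomorphism $\kappa'\colon D\otimes\mathcal O_\infty\to B$ with the correct ideal data and with $P:=\multialg{\kappa'}(1_{\multialg{D\otimes\mathcal O_\infty}})$ satisfying $1_{\multialg B}-P\sim 1_{\multialg B}$; (b) define $\kappa$ as the composition of $\kappa'$ with the ``corner embedding'' $\iota(d)=d\otimes p$ for a chosen proper projection $p\in\mathcal O_\infty$; (c) exhibit the unital $\mathcal O_2\hookrightarrow \multialg B\cap\kappa(D)'$ by adding contributions from the two complementary corners $P\multialg B P$ (transported through $\multialg{\kappa'}$ from $\mathcal O_\infty$-internal generators commuting with $p$) and $(1-P)\multialg B(1-P)$ (using its proper infiniteness).

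For (a), by strong self-absorption of $\mathcal O_\infty$ I fix an isomorphism $\beta\colon D\xrightarrow{\cong}D\otimes\mathcal O_\infty$ that is approximately unitarily equivalent to the canonical embedding $\id_D\otimes 1_{\multialg{\mathcal O_\infty}}$; this forces $\beta$ to be ideal-preserving, meaning $\beta(J)=J\otimes\mathcal O_\infty$ for every $J\in\mathcal I(D)$. Applying Proposition \ref{p:Brown}(a) to $\eta\circ\beta^{-1}\colon D\otimes\mathcal O_\infty\to B$ yields an extendible $\ast$-homomorphism $\kappa'\colon D\otimes\mathcal O_\infty\to B$, nuclear when $\eta$ is, with $\mathcal I(\kappa')=\mathcal I(\eta\circ\beta^{-1})$ and $1_{\multialg B}-P\sim 1_{\multialg B}$. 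For (b), pick any proper non-zero projection $p\in\mathcal O_\infty$; since $\mathcal O_\infty$ is purely infinite simple, both corners $p\mathcal O_\infty p$ and $(1-p)\mathcal O_\infty(1-p)$ are unital properly infinite and hence contain $\mathcal O_2$-isometries summing to their respective units, which I concatenate to obtain $s_1,s_2\in\mathcal O_\infty$ with $s_1s_1^*+s_2s_2^*=1_{\mathcal O_\infty}$ and $[s_i,p]=0$. Setting $\iota(d):=d\otimes p$ and $\kappa:=\kappa'\circ\iota$, a short calculation
\begin{equation*}
\mathcal I(\kappa)(J)=\mathcal I(\kappa')(J\otimes\mathcal O_\infty)=\mathcal I(\eta)(\beta^{-1}(J\otimes\mathcal O_\infty))=\mathcal I(\eta)(J)
\end{equation*}
(using that $\iota$ is ideal-preserving because $p$ is full in the simple $\mathcal O_\infty$, together with ideal-preservation of $\beta$) gives $\mathcal I(\kappa)=\mathcal I(\eta)$. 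Nuclearity of $\kappa$ when $\eta$ is nuclear is immediate from nuclearity of $\kappa'$ and the fact that nuclearity is preserved under composition with $\ast$-homomorphisms on the right.

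For (c), put $S_i:=\multialg{\kappa'}(1_{\multialg D}\otimes s_i)\in P\multialg B P$; these satisfy $S_i^*S_j=\delta_{i,j}P$ and $\sum_i S_iS_i^*=P$, and they commute with $\kappa(D)=\kappa'(D\otimes p)$ because $1\otimes s_i$ commutes with $D\otimes p$ inside $\multialg{D\otimes\mathcal O_\infty}$. Since $1_{\multialg B}-P\sim 1_{\multialg B}$, there is an isometry $w\in\multialg B$ with $ww^*=1-P$; conjugation by $w$ is an isomorphism $\multialg B\xrightarrow{\cong}(1-P)\multialg B(1-P)$, and the proper infiniteness of $\multialg B$ (from stability of $B$) transports $\mathcal O_2$-isometries to $T_1,T_2\in(1-P)\multialg B(1-P)$ with $T_i^*T_j=\delta_{i,j}(1-P)$ and $\sum_i T_iT_i^*=1-P$. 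The orthogonality $P(1-P)=0$ annihilates every cross term $S_i^*T_j$, $T_i^*S_j$, $T_i\kappa(d)$, $\kappa(d)T_i$, so $R_i:=S_i+T_i$ are $\mathcal O_2$-isometries in $\multialg B$ commuting with $\kappa(D)$ and generating the required unital $\mathcal O_2\hookrightarrow\multialg B\cap\kappa(D)'$. The main technical subtlety I anticipate is arranging $\beta$ to be ideal-preserving — any isomorphism $D\to D\otimes\mathcal O_\infty$ induces some permutation of $\mathcal I(D)$, and forcing this permutation to be the identity is precisely what the strong self-absorption of $\mathcal O_\infty$ supplies via approximate unitary equivalence with $\id_D\otimes 1$.
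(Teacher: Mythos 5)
Your overall strategy — route $\eta$ through $D\otimes\mathcal O_\infty$, apply Proposition \ref{p:Brown}$(a)$, cut down by a projection in the $\mathcal O_\infty$-factor, and assemble $\mathcal O_2$-isometries from two complementary corners — is exactly the paper's, and phases (a) and (b) of your argument (ideal-preservation of $\beta$, extendibility via Brown, the computation $\mathcal I(\kappa)=\mathcal I(\eta)$) are correct. However, phase (c) contains a genuine gap. You claim that for an arbitrary proper non-zero projection $p\in\mathcal O_\infty$, both corners $p\mathcal O_\infty p$ and $(1-p)\mathcal O_\infty(1-p)$ ``are unital properly infinite and hence contain $\mathcal O_2$-isometries summing to their respective units.'' The implication ``unital properly infinite $\Rightarrow$ unital copy of $\mathcal O_2$'' is false: proper infiniteness gives a unital copy of $\mathcal O_\infty$ (\cite[Proposition 4.2.3]{Rordam-book-classification}), not $\mathcal O_2$, and $\mathcal O_\infty$ itself is the standard counterexample. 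A unital embedding $\mathcal O_2\hookrightarrow p\mathcal O_\infty p$ forces $[p]_0=0$ in $K_0(\mathcal O_\infty)\cong\mathbb Z$, and similarly for $1-p$; but $[p]_0+[1-p]_0=[1_{\mathcal O_\infty}]_0=1\neq 0$, so no choice of $p$ makes \emph{both} corners admit unital $\mathcal O_2$. Equivalently (and directly): any $\mathcal O_2$-isometries $s_1,s_2\in\mathcal O_\infty$ commuting with $p$ would split as $s_i=ps_ip+(1-p)s_i(1-p)$ with both corner pieces forming $\mathcal O_2$-isometries in their respective corners, hitting the same $K_0$ obstruction. Hence the $s_i$ you need for $\sum_iS_iS_i^*=P$ do not exist.

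The paper sidesteps this by fixing $p\in\mathcal O_\infty$ with $[p]_0=0$, taking $\mathcal O_2$-isometries $s_1,s_2$ \emph{inside} $\mathcal O_\infty^{\mathrm{st}}:=p\mathcal O_\infty p$ only (so $\sum_is_is_i^*=p$, not $1_{\mathcal O_\infty}$), and — crucially — applying Proposition \ref{p:Brown}$(a)$ to the \emph{smaller} algebra $D_1\otimes\mathcal O_\infty^{\mathrm{st}}$, so that the Brown projection coincides with $\multialg{\kappa}(1_{\multialg D})$ and the whole complement $1-\multialg{\kappa}(1_{\multialg D})$ is already equivalent to $1_{\multialg B}$. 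Your version can be repaired along the same lines: choose $p$ with $[p]_0=0$, take $a_1,a_2\in p\mathcal O_\infty p$ with $\sum_ia_ia_i^*=p$, set $S_i:=\multialg{\kappa'}(1\otimes a_i)$ so that $\sum_iS_iS_i^*=P':=\multialg{\kappa'}(1\otimes p)=\multialg{\kappa}(1_{\multialg D})\leq P$, and then observe that $1-P'\geq 1-P\sim 1_{\multialg B}$ together with $K_0(\multialg B)=0$ forces $1-P'\sim 1_{\multialg B}$ (as in the footnote of the proof of Theorem \ref{t:existsimple}), supplying the $T_i$ in $(1-P')\multialg B(1-P')$. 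As written, though, the claim about $\mathcal O_2$-isometries in both corners of $\mathcal O_\infty$ does not hold and the decomposition $R_i=S_i+T_i$ fails to produce $\mathcal O_2$-isometries of $\multialg B$.
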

\begin{proof}
As $D$ is $\mathcal O_\infty$-stable, and as $\mathcal O_\infty$ is strongly self-absorbing (see \cite{TomsWinter-ssa}), we may assume that $D = D_1 \otimes \mathcal O_\infty$ and that there exists an isomorphism $\delta \colon D \xrightarrow \cong D_1$ such that $\id_D$ and $\delta \otimes 1_{\mathcal O_\infty}$ are approximately unitarily equivalent.\footnote{In fact, we may assume $D=D_2 \otimes \mathcal O_\infty \otimes \mathcal O_\infty$. As $\mathcal O_\infty$ is strongly self-absorbing, there is an isomorphism $\phi \colon \mathcal O_\infty \xrightarrow \cong \mathcal O_\infty \otimes \mathcal O_\infty$ which is approximately unitarily equivalent to $\id_{\mathcal O_\infty} \otimes 1_{\mathcal O_\infty}$. Letting $D_1 = D_2 \otimes \mathcal O_\infty$, and $\delta = \id_{D_2} \otimes \phi^{-1}$ does the trick.} Hence $\mathcal I(\delta \otimes 1_{\mathcal O_\infty}) = \id_{\mathcal I(D)}$.
Fix a non-zero projection $p\in \mathcal O_\infty$ with $[p]_0 =0$ in $K_0(\mathcal O_\infty)$, and let $\mathcal O_\infty^\st := p \mathcal O_\infty p$. As $\mathcal O_\infty$ is simple, it follows that $\delta \otimes p \colon D \to D$ agrees with $\delta \otimes 1_{\mathcal O_\infty}$ when applying $\mathcal I$. Consider the $\ast$-homomorphism $\delta \otimes 1_{\mathcal O_\infty^\st} \colon D \to D_1 \otimes \mathcal O_\infty^\st$, the inclusion $\iota \colon D_1 \otimes \mathcal O_\infty^\st \hookrightarrow D_1 \otimes \mathcal O_\infty = D$, and note that $\delta \otimes p = \iota \circ (\delta \otimes 1_{\mathcal O_\infty^\st})$. Hence
\begin{equation}\label{eq:Iiotadelta}
\mathcal I(\iota) \circ \mathcal I(\delta \otimes 1_{\mathcal O_\infty^\st}) = \mathcal I(\delta \otimes p) = \mathcal I(\delta \otimes 1_{\mathcal O_\infty}) = \id_{\mathcal I(D)}.
\end{equation}

Apply Proposition \ref{p:Brown}$(a)$ to the $\ast$-homomorphism $\eta \circ \iota \colon D_1 \otimes \mathcal O_\infty^\st \to B$, and obtain an extendible $\ast$-homomorphism $\kappa_0 \colon D_1 \otimes \mathcal O_\infty^\st \to B$ such that $\mathcal I(\kappa_0) = \mathcal I(\eta) \circ \mathcal I(\iota)$ and  $1_{\multialg{B}} - \multialg{\kappa_0}(1_{\multialg{D_1\otimes \mathcal O_\infty^\st}}) \sim 1_{\multialg{B}}$ in $\multialg{B}$. If $\eta$ is nuclear then so is $\eta \circ \iota$, so that $\kappa_0$ could be chosen nuclear.
Define $\kappa := \kappa_0 \circ (\delta \otimes 1_{\mathcal O_\infty^\st}) \colon D \to B$ which is nuclear if $\kappa_0$ is nuclear. Then
\begin{equation}
\mathcal I(\kappa) = \mathcal I(\kappa_0) \circ \mathcal I(\delta \otimes 1_{\mathcal O_\infty^\st}) = \mathcal I(\eta) \circ \mathcal I(\iota) \circ \mathcal I(\delta \otimes 1_{\mathcal O_\infty^\st}) \stackrel{\eqref{eq:Iiotadelta}}{=} \mathcal I(\eta). 
\end{equation}
Fix $\mathcal O_2$-isometries $s_1,s_2\in \mathcal O_\infty^\st$. As $B$ is stable and $1_{\multialg{B}} - \multialg{\kappa}(1_{\multialg{D}}) \sim 1_{\multialg{B}}$, we may pick $t_1,t_2 \in \multialg{B}$ such that
\begin{equation}
t_1^\ast t_1 = t_2^\ast t_2 = t_1t_1^\ast + t_2 t_2^\ast = 1_{\multialg{B}}-\multialg{\kappa}(1_{\multialg{D}}).
\end{equation} 
Then $S_i := \multialg{\kappa_0}(1_{\multialg{D_1}} \otimes s_i) + t_i \in \multialg{B}$ for $i=1,2$ are $\mathcal O_2$-isometries that commute with the image of $\kappa$, so $\mathcal O_2$ embeds unitally in $\multialg{B} \cap \kappa(D)'$.
\end{proof}

\begin{lemma}\label{l:theta}
Let $X$ be a topological space, let $A$ be a separable, exact, lower semicontinuous $X$-$C^\ast$-algebra, and let $B$ be a $\sigma$-unital $X$-$C^\ast$-algebra. Suppose that there exists an $X$-full, nuclear, $\mathcal O_\infty$-stable $\ast$-homomorphism $\phi \colon A \to B$. Then there exists an $X$-full, nuclear $\ast$-homomorphism $\theta \colon A \to B\otimes \mathcal K$ such that $\mathcal O_2$ embeds unitally in $\multialg{B\otimes \mathcal K} \cap \theta(A)'$.
\end{lemma}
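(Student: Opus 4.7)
The idea is to convert the map-level $\mathcal O_\infty$-stability of $\phi$ into an algebra-level $\mathcal O_\infty$-stable domain, so that Lemma \ref{l:genO2emb} becomes directly applicable. I would first replace $\phi$ by $\phi \otimes e_{1,1} \colon A \to B \otimes \mathcal K$, which is still $X$-full, nuclear, and $\mathcal O_\infty$-stable ($\mathcal O_\infty$-stability transfers through the hereditary inclusion $B \otimes e_{1,1} \hookrightarrow B \otimes \mathcal K$ via Lemma \ref{l:relcombasic}$(a)$). By the McDuff type characterisation of $\mathcal O_\infty$-stable $\ast$-homo\-morphisms \cite[Corollary 4.5]{Gabe-O2class}, there exists a $\ast$-homomorphism $\rho \colon A \otimes \mathcal O_\infty \to B \otimes \mathcal K$ such that $\phi_0 := \rho \circ (\id_A \otimes 1_{\mathcal O_\infty})$ is approximately Murray--von Neumann equivalent to $\phi \otimes e_{1,1}$. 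As $\phi_0$ is a point-norm limit of maps of the form $w^\ast (\phi \otimes e_{1,1})(-) w$ with $w \in \multialg{B \otimes \mathcal K}$, it is nuclear and satisfies $\mathcal I(\phi_0) = \mathcal I(\phi \otimes e_{1,1})$. Lemma \ref{l:nucoutoftensor} applied with $C = \mathcal O_\infty$ and the trivial approximate identity $(1_{\mathcal O_\infty})$ then forces $\rho$ itself to be nuclear, as $\rho(- \otimes 1_{\mathcal O_\infty}) = \phi_0$.

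With $\rho$ in hand, I would apply Lemma \ref{l:genO2emb} to the nuclear $\ast$-homomorphism $\rho \colon A \otimes \mathcal O_\infty \to B \otimes \mathcal K$: since $A \otimes \mathcal O_\infty$ is $\sigma$-unital and $\mathcal O_\infty$-stable and $B \otimes \mathcal K$ is $\sigma$-unital and stable, this produces a nuclear $\ast$-homomorphism $\kappa \colon A \otimes \mathcal O_\infty \to B \otimes \mathcal K$ with $\mathcal I(\kappa) = \mathcal I(\rho)$ and a unital embedding $\mathcal O_2 \hookrightarrow \multialg{B \otimes \mathcal K} \cap \kappa(A \otimes \mathcal O_\infty)'$. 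I would then define
\begin{equation*}
\theta := \kappa \circ (\id_A \otimes 1_{\mathcal O_\infty}) \colon A \to B \otimes \mathcal K.
\end{equation*}

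It remains to verify the three required properties of $\theta$. Nuclearity is immediate from Observation \ref{o:nuccomp}. Since $\theta(A) \subseteq \kappa(A \otimes \mathcal O_\infty)$, we have the inclusion
\begin{equation*}
\multialg{B \otimes \mathcal K} \cap \kappa(A \otimes \mathcal O_\infty)' \subseteq \multialg{B \otimes \mathcal K} \cap \theta(A)',
\end{equation*}
so the unital embedding of $\mathcal O_2$ into the former descends to a unital embedding into the latter. For $X$-fullness, simplicity of $\mathcal O_\infty$ forces every ideal of $A \otimes \mathcal O_\infty$ to be of the form $I \otimes \mathcal O_\infty$ for some $I \in \mathcal I(A)$, and hence
\begin{equation*}
\mathcal I(\theta)(I) = \mathcal I(\kappa)(I \otimes \mathcal O_\infty) = \mathcal I(\rho)(I \otimes \mathcal O_\infty) = \mathcal I(\phi_0)(I) = \mathcal I(\phi \otimes e_{1,1})(I)
\end{equation*}
for every $I \in \mathcal I(A)$. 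As $\phi$, and therefore $\phi \otimes e_{1,1}$, is $X$-full, so is $\theta$. The main subtlety is transferring $\mathcal O_\infty$-stability from the map $\phi$ to an algebra structure usable by Lemma \ref{l:genO2emb}; the McDuff type step achieves this via the factorisation through $A \otimes \mathcal O_\infty$, while Lemma \ref{l:nucoutoftensor} ensures that nuclearity survives the detour.
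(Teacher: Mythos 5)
Your proposal is correct and takes essentially the same route as the paper: apply the McDuff-type characterisation \cite[Corollary 4.5]{Gabe-O2class} to factor $\phi \otimes e_{1,1}$ (up to approximate Murray--von Neumann equivalence) through $A \otimes \mathcal O_\infty$, invoke Lemma \ref{l:nucoutoftensor} to obtain nuclearity of the factorisation, apply Lemma \ref{l:genO2emb} to replace it by a map $\kappa$ admitting a unital commuting $\mathcal O_2$, and set $\theta = \kappa \circ (\id_A \otimes 1_{\mathcal O_\infty})$. Your verification of $X$-fullness routes through the structure of ideals in $A \otimes \mathcal O_\infty$, whereas the paper just uses functoriality of $\mathcal I$ together with the identity $\mathcal I(\kappa) = \mathcal I(\psi)$ and the equivalence $\psi \circ (\id_A\otimes 1_{\mathcal O_\infty}) \sim_\aMvN \phi\otimes e_{1,1}$; both are fine.
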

\begin{proof}
By the McDuff type property for $\mathcal O_\infty$-stable maps \cite[Corollary 4.5]{Gabe-O2class}, there exists a $\ast$-homomorphism $\psi \colon A \otimes \mathcal O_\infty \to B\otimes \mathcal K$ such that $\psi\circ(\id_A \otimes 1_{\mathcal O_\infty})$ and $\phi \otimes e_{1,1}$ are approximately Murray--von Neumann equivalent. As $\phi\otimes e_{1,1}$ is nuclear, so is $\psi\circ(\id_A \otimes 1_{\mathcal O_\infty})$ and thus by Lemma \ref{l:nucoutoftensor}, $\psi$ is nuclear. By Lemma \ref{l:genO2emb}, there is a nuclear $\ast$-homomorphism $\kappa \colon A \otimes \mathcal O_\infty \to B$ such that $\mathcal I(\kappa ) = \mathcal I(\psi)$ and for which $\mathcal O_2$ embeds unitally in $\multialg{B\otimes \mathcal K} \cap \kappa(A \otimes \mathcal O_\infty)'$. Letting $\theta = \kappa  \circ(\id_A \otimes 1_{\mathcal O_\infty})$ one has
\begin{equation}
\mathcal I(\theta) = \mathcal I(\psi) \circ \mathcal I(\id_A \otimes 1_{\mathcal O_\infty}) = \mathcal I(\phi \otimes e_{1,1})
\end{equation}
and therefore $\theta$ is $X$-full since $\phi\otimes e_{1,1}$ is clearly $X$-full.
\end{proof}

Consequently, and by using the ideal-related $\mathcal O_2$-embedding theorem from the previous section, one obtains the following $X$-equivariant version of Lemma \ref{l:fullO2map}. The condition of containing an $\mathcal O_\infty$-stable subalgebra as in the lemma should be considered as an ideal-related version of containing a properly infinite, full projection, as was assumed in Theorem \ref{t:existsimple}.

\begin{lemma}\label{l:XO2map}
Let $X$ be a topological space, let $A$ be a separable, exact, monotone continuous $X$-$C^\ast$-algebra, and let $B$ be a $\sigma$-unital $C^\ast$-algebra, containing an $\mathcal O_\infty$-stable $C^\ast$-subalgebra $D \subseteq B$. Suppose $D$ is equipped with an upper semicontinuous, $X$-compact action of $X$, and equip $B$ with the action of $X$ given by $B(U) := \overline{B D(U) B}$ for $U\in \mathcal O(X)$.
Then there exists a nuclear, $X$-full $\ast$-homomorphism $\theta \colon A \to B \otimes \mathcal K$ such that $\mathcal O_2$ embeds unitally in $\multialg{B \otimes \mathcal K} \cap \theta(A)'$.
\end{lemma}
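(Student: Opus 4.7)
The plan is to construct the desired $\theta$ by applying Lemma \ref{l:theta}, after first producing an $X$-full, nuclear, $\mathcal O_\infty$-stable $\ast$-homomorphism $A \to B$ by using Corollary \ref{c:irO2X} with target $D$ and then composing with the inclusion $D \hookrightarrow B$. The role of the hypothesis that $D$ is an $\mathcal O_\infty$-stable subalgebra of $B$ (equipped with an upper semicontinuous and $X$-compact action inducing the action on $B$) is precisely to feed $D$, rather than $B$ itself, into the ideal-related $\mathcal O_2$-embedding theorem.

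First, I apply Corollary \ref{c:irO2X} with target algebra $D$: the hypotheses are in place because $A$ is separable, exact and monotone continuous, while $D$ is $\mathcal O_\infty$-stable, upper semicontinuous and $X$-compact. This yields a nuclear, strongly $\mathcal O_2$-stable, $X$-full $\ast$-homomorphism $\phi_0 \colon A \to D$. The inclusion $\iota \colon D \hookrightarrow B$ is $X$-equivariant since $D(U) \subseteq \overline{BD(U)B} = B(U)$ for every $U \in \mathcal O(X)$. Setting $\phi := \iota \circ \phi_0$, the map $\phi$ is nuclear because $\phi_0$ is nuclear, and it is strongly $\mathcal O_\infty$-stable by Proposition \ref{p:Oinftyfactor}, as it factors through the $\mathcal O_\infty$-stable $C^\ast$-algebra $D$.

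Next I verify that $\phi$ is $X$-full. Since $A$ is lower semicontinuous, the dual action $\Psi_A \colon \mathcal I(A) \to \mathcal O(X)$ of $\Phi_A$ is defined, and $X$-fullness of $\phi_0$ means that $\mathcal I(\phi_0) = \Phi_D \circ \Psi_A$. For every $I \in \mathcal I(A)$ one computes
\[ \mathcal I(\phi)(I) = \overline{B\,\phi_0(I)\,B} = \overline{B\,\overline{D\phi_0(I)D}\,B} = \overline{B\,\Phi_D(\Psi_A(I))\,B} = \Phi_B(\Psi_A(I)), \]
where the last equality is the definition of the action on $B$. Hence $\mathcal I(\phi) = \Phi_B \circ \Psi_A$, i.e.\ $\phi$ is $X$-full.

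Finally, I invoke Lemma \ref{l:theta} on $\phi$: this is legitimate because $A$ is separable, exact and lower semicontinuous, $B$ is $\sigma$-unital, and $\phi$ is $X$-full, nuclear, and $\mathcal O_\infty$-stable. Its conclusion delivers the required $X$-full, nuclear $\ast$-homomorphism $\theta \colon A \to B \otimes \mathcal K$ with a unital embedding $\mathcal O_2 \hookrightarrow \multialg{B \otimes \mathcal K} \cap \theta(A)'$. The only point that requires care is the transport of the ideal-theoretic data of $\phi_0$ through $\iota$; this is made transparent by the very definition $B(U) = \overline{BD(U)B}$, so no genuine obstacle arises and the proof reduces to stringing together the two lemmas.
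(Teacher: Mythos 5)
Your proof is correct and follows essentially the same route as the paper: apply Corollary~\ref{c:irO2X} to produce a nuclear, $X$-full map $A \to D$, observe that composing with the inclusion $D \hookrightarrow B$ yields a nuclear, $X$-full, $\mathcal O_\infty$-stable map $A \to B$, and then apply Lemma~\ref{l:theta}. The only addition is your explicit verification of $X$-fullness of the composite, which is indeed correct (your identity $\overline{B\phi_0(I)B} = \overline{B\,\overline{D\phi_0(I)D}\,B}$ holds because $\phi_0(I)$ is a $C^\ast$-subalgebra of $D$; alternatively one can just use functoriality of $\mathcal I$ to write $\mathcal I(\phi) = \mathcal I(\iota) \circ \mathcal I(\phi_0) = \mathcal I(\iota) \circ \Phi_D \circ \Psi_A = \Phi_B \circ \Psi_A$).
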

\begin{proof}
By Corollary \ref{c:irO2X}, there is a nuclear $X$-full $\ast$-homomorphism $\phi \colon A \to D$. The composition $\phi \colon A \to D \subseteq B$ is a nuclear, $X$-full, $\mathcal O_\infty$-stable $\ast$-homo\-morphism, so $\theta$ as in the statement of the lemma exists by Lemma \ref{l:theta}.
\end{proof}

\subsection{Proving Theorems \ref{t:irexistence}, \ref{t:iruniqueness} and \ref{t:nonsimpleclass}}

The following is an $X$-equivariant analogue of Lemma \ref{l:absCuntzpair}, and the proof is identical.

\begin{lemma}\label{l:absCuntzpair2}
Let $A$ be a separable $X$-$C^\ast$-algebra, $B$ be a $\sigma$-unital, stable $X$-$C^\ast$-algebra, and suppose that $\Theta \colon A \to \multialg{B}$ is a weakly $X$-nuclear, $X$-nuclearly absorbing $\ast$-homomorphism. Any element $x\in KK_\nuc(X; A ,B)$ is represented by a weakly $X$-nuclear Cuntz pair of the form $(\psi, \Theta)$.
\end{lemma}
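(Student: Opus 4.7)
The plan is to mimic the chain of standard reductions in the proof of Lemma \ref{l:absCuntzpair}, working throughout in the Fredholm picture of $KK_\nuc(X;A,B)$ (Proposition \ref{p:KKFredholm}) and verifying at each step that every $\ast$-homomorphism encountered is weakly $X$-nuclear. By Corollary \ref{c:densespan} it suffices to work with the identifications $\multialg{B} = \mathcal B_B(\mathcal H_B)$ and $\mathcal H_B := \ell^2(\mathbb N) \otimes B$, so that weak $X$-nuclearity in the Hilbert-module sense matches the multiplier-algebra definition used in $X$-nuclear absorption.

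I would first represent $x$ by a weakly $X$-nuclear Cuntz pair $(\psi_0,\psi_1)$, rephrased as the weakly $X$-nuclear cycle $(\psi_0,\psi_1,1)$. Setting $\Psi' = \psi_0 \oplus \psi_1 \oplus \psi_0 \oplus \psi_1 \oplus \cdots$ on the Hilbert module $\bigoplus_{\mathbb N} \mathcal H_B$, Corollary \ref{c:dirsumrep} yields that $\Psi'$ is weakly $X$-nuclear, and adding the degenerate cycle $(\Psi',\Psi',1)$ to the original cycle does not change the class of $x$. A direct-sum permutation unitary $w$ identifies $\Ad w \circ (\psi_0 \oplus \Psi')$ with $\psi_1 \oplus \Psi'$, so that $(\Psi'',\Psi'',w)$ represents $x$, where $\Psi'' := \psi_1 \oplus \Psi'$ is weakly $X$-nuclear. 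Adding the degenerate cycle $(\Theta,\Theta,1)$ gives $(\Psi'' \oplus \Theta, \Psi'' \oplus \Theta, w \oplus 1)$ as a representative of $x$; here $\Psi'' \oplus \Theta$ is weakly $X$-nuclear by Corollary \ref{c:dirsumrep}.

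Now the key ingredient enters: the $X$-nuclear absorption property of $\Theta$ provides a unitary $u$ from $\mathcal H_B$ onto the sum of Hilbert modules such that $\Psi := \Ad u \circ (\Psi'' \oplus \Theta)$ agrees with $\Theta$ modulo the ``compacts,'' and $\Psi$ is weakly $X$-nuclear because it is unitarily conjugate to a weakly $X$-nuclear representation. Setting $v = u^\ast(w \oplus 1)u$, the cycle $(\Psi,\Psi,v)$ represents $x$, and adding the degenerate cycle $(\Theta,\Theta,1)$ gives the cycle $(\Psi \oplus \Theta, \Psi \oplus \Theta, v \oplus 1)$. Applying the standard rotation operator homotopy $R_t = \bigl(\begin{smallmatrix} \cos(t\pi/2) & \sin(t\pi/2) \\ -\sin(t\pi/2) & \cos(t\pi/2) \end{smallmatrix}\bigr)$, which has scalar entries and therefore commutes modulo $\mathcal K$ with $\Psi \oplus \Theta$ (since $\Psi \equiv \Theta$ mod compacts), one obtains an operator homotopy to $(\Psi,\Psi,1) \oplus (\Theta,\Theta,v)$, in which every representation is weakly $X$-nuclear. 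The degenerate summand $(\Psi,\Psi,1)$ drops out in $KK_\nuc(X;A,B)$, so $(\Theta,\Theta,v)$ represents $x$. Finally, setting $\psi := \Ad v \circ \Theta$, which is weakly $X$-nuclear as a unitary conjugate of $\Theta$, the cycle $(\Theta,\Theta,v)$ is unitarily equivalent to $(\psi,\Theta,1)$, which in the Cuntz pair picture (Corollary \ref{c:CPvsFredholm}) corresponds to the weakly $X$-nuclear Cuntz pair $(\psi,\Theta)$ representing $x$.

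The only real subtlety, and hence the one step worth double-checking, is preservation of weak $X$-nuclearity after the absorption step: the absorbing unitary $u$ is a Hilbert-module unitary and conjugation by such a unitary automatically preserves weak $X$-nuclearity because $\langle \xi, \Ad u \circ (\Psi'' \oplus \Theta)(a)\xi\rangle = \langle u\xi, (\Psi'' \oplus \Theta)(a)\,u\xi\rangle$. All other reductions (direct sums, permutation unitaries, operator rotation, passage between the cycle and Cuntz pair pictures) are either trivial or already encoded in Corollaries \ref{c:dirsumrep} and \ref{c:CPvsFredholm}, so no new work is required beyond transposing the classical argument.
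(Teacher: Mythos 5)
Your proposal is correct and follows essentially the same approach as the paper, which simply remarks that the proof of Lemma~\ref{l:absCuntzpair} carries over verbatim with ``nuclear'' replaced by ``$X$-nuclear''; you have spelled out the details of that transfer, correctly identifying the one point that deserves checking (that conjugation by the absorbing unitary preserves weak $X$-nuclearity) and verifying it via the Hilbert-module inner-product computation, with the remaining steps handled by Corollaries~\ref{c:dirsumrep}, \ref{c:densespan}, and \ref{c:CPvsFredholm} exactly as the paper's argument requires.
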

\begin{proof}
The proof of Lemma \ref{l:absCuntzpair} carries over verbatim, by replacing the word ``nuclear'' with the word ``$X$-nuclear''.
\end{proof}

The following is the main existence result. It is stated in a somewhat abstract way, but I emphasise that Lemma \ref{l:XO2map} can be used to produce a nuclear, $X$-full, $\mathcal O_\infty$-stable map $A \to B$ as in the statement of the proposition.

\begin{proposition}\label{p:existence}
Let $X$ be a topological space, let $A$ be a separable, exact, lower semicontinuous $X$-$C^\ast$-algebra, and let $B$ be a $\sigma$-unital $X$-$C^\ast$-algebra containing a $\sigma$-unital, stable, full, hereditary $C^\ast$-subalgebra. Suppose that there exists a nuclear, $X$-full, $\mathcal O_\infty$-stable $\ast$-homomorphism $A \to B$.

For any $x\in KK_\nuc(X; A,B)$ there exists a nuclear, strongly $\mathcal O_\infty$-stable, $X$-full $\ast$-homomorphism $\phi \colon A \to B$ such that $KK_\nuc(X; \phi) = x$.

Moreover, if $A$ and $B$ are unital then $\phi$ may be picked to also be unital if and only if the following conditions hold:
\begin{itemize}
\item[(1)] $B(U) = B$ for every $U\in \mathcal O(X)$ satisfying $A(U) = A$; and 
\item[(2)] $\Gamma_0(x)([1_A]_0) = [1_B]_0$ in $K_0(B)$.
\end{itemize}
\end{proposition}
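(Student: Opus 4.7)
The plan is to mimic the proof of Theorem \ref{t:existsimple} in the $X$-equivariant setting, using the ideal-related machinery developed in the preceding sections.

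First I would reduce to the case that $B$ is stable. Let $B_0 \subseteq B$ be the given $\sigma$-unital, stable, full, hereditary $C^\ast$-subalgebra, equipped with the induced action $B_0(U) := B_0 \cap B(U)$. Using Proposition \ref{p:Brown}(b), together with the fact that $B_0(U)$ is full in $B(U)$ whenever $B(U)$ is generated by its intersection with $B_0$, I would verify (or arrange by replacing $B_0$ with a slightly larger full hereditary $C^\ast$-subalgebra) that the hypotheses of Proposition \ref{p:KKXfullher} apply, so that the inclusion induces an isomorphism $KK_\nuc(X; A, B_0) \xrightarrow{\cong} KK_\nuc(X; A, B)$. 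Any $X$-full, nuclear, strongly $\mathcal O_\infty$-stable lift into $B_0$ remains $X$-full and nuclear after composing with $B_0 \hookrightarrow B$, so it suffices to solve the problem with $B$ stable.

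Assuming $B$ stable, Lemma \ref{l:theta} applied to the given $X$-full, nuclear, $\mathcal O_\infty$-stable map $A \to B$ produces an $X$-full, nuclear $\theta \colon A \to B$ such that $\mathcal O_2$ embeds unitally into $\multialg{B} \cap \theta(A)'$. By Theorem \ref{t:infrepXabs} any infinite repeat $\theta_\infty \colon A \to \multialg{B}$ of $\theta$ is weakly $X$-nuclear and $X$-nuclearly absorbing. Hence by Lemma \ref{l:absCuntzpair2}, the class $x \in KK_\nuc(X; A, B)$ is represented by a weakly $X$-nuclear Cuntz pair of the form $(\psi, \theta_\infty)$. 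I would then apply the key existence lemma (Lemma \ref{l:keyexistence}), which uses the unital embedding $\mathcal O_2 \hookrightarrow \multialg{B} \cap \theta(A)'$ (via the identification $\theta_\infty = \theta \otimes 1_{\multialg{\mathcal K}}$ under the standard isomorphism $B \otimes \mathcal K \cong B$ given by the repeat's isometries $s_1, s_2, \dots$) to produce a norm-continuous unitary path $(u_t)$ in $\multialg{B} \cap \theta_\infty(A)'$ with $u_0 = 1$, and a $\ast$-homomorphism $\phi \colon A \to B$ such that $u_t \psi(-) u_t^\ast$ converges point-norm to the Cuntz sum $\phi \oplus_{s_1,s_0} \theta_0'$ where $\theta_0'$ is another infinite repeat of $\theta$. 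Nuclearity of $\phi$ follows from weak nuclearity of the limit together with the fact that it takes values in $B$. Running the conjugation by $(u_t)_{t \in [0,1)}$ as a homotopy of weakly $X$-nuclear Cuntz pairs shows $x$ is represented by $(\phi, \theta) \oplus_{s_1,s_0} (\theta_0', \theta_0')$, and the second summand is degenerate; hence $x = KK_\nuc(X; \phi) - KK_\nuc(X; \theta)$. Since $\mathcal O_2$ embeds unitally in $\multialg{B} \cap \theta(A)'$, the standard $\mathcal O_2$-argument (two $\mathcal O_2$-isometries commuting with $\theta(A)$ give $KK_\nuc(X; \theta) + KK_\nuc(X; \theta) = KK_\nuc(X; \theta)$) yields $KK_\nuc(X; \theta) = 0$, so $x = KK_\nuc(X; \phi)$. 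Replacing $\phi$ by the Cuntz sum $\phi \oplus \theta$ preserves its class (as $KK_\nuc(X; \theta) = 0$), makes it $X$-full (since $\theta$ is $X$-full), keeps it nuclear, and makes it strongly $\mathcal O_\infty$-stable by Propositions \ref{p:Oinftyfactor} (applied to $\theta$, which factors through $\mathcal O_\infty$-stable intermediate algebras by Lemma \ref{l:theta}) and \ref{p:piabsorbing}(a), using Corollary \ref{c:Xfulldom} to guarantee that $\theta$ approximately dominates $\phi$.

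For the moreover part, the ``only if'' direction is immediate: a unital lift $\phi$ forces $\phi_0([1_A]_0) = [1_B]_0$, and $X$-equivariance plus unitality forces $B(U) = \overline{B\phi(A(U))B} = B$ whenever $A(U) = A$. For ``if'', I would apply the non-unital version to obtain $\phi_0 \colon A \to B$ which is $X$-full, nuclear, strongly $\mathcal O_\infty$-stable, with $KK_\nuc(X; \phi_0) = x$. Strong $\mathcal O_\infty$-stability makes $\phi_0(1_A)$ a properly infinite projection (Remark \ref{r:fullpropinfproj}), and the hypothesis (1) guarantees $\phi_0(1_A)$ is full in $B$, while $1_B$ is also properly infinite and full (by existence of a strongly $\mathcal O_\infty$-stable full map $A \to B$ applied at $1_A$). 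Condition (2) gives $[\phi_0(1_A)]_0 = [1_B]_0$ in $K_0(B)$, so by Cuntz's theorem (\cite{Cuntz-K-theoryI}, \cite[Proposition 4.1.4]{Rordam-book-classification}) the two projections are Murray--von Neumann equivalent. Conjugating $\phi_0$ by an isometry $v \in B$ with $vv^\ast = \phi_0(1_A)$ produces a unital $\phi := v^\ast \phi_0(-) v$ which is $\sim_\asMvN \phi_0$, hence has $KK_\nuc(X;\phi) = KK_\nuc(X;\phi_0) = x$ by Corollary \ref{c:KKXasMvN}, and inherits all the required properties. The main technical obstacle is arranging the reduction to stable $B$ in a fashion compatible with the $X$-action so that Proposition \ref{p:KKXfullher} applies; once that is in hand, the rest is a direct translation of the simple-case argument using the $X$-equivariant analogues of absorption (Theorem \ref{t:infrepXabs}) and Cuntz-pair representation (Lemma \ref{l:absCuntzpair2}).
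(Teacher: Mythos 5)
Your proposal is correct and follows essentially the same route as the paper's proof: reduce to the stable case via Proposition \ref{p:KKXfullher}, produce $\theta$ by Lemma \ref{l:theta}, combine Theorem \ref{t:infrepXabs} and Lemma \ref{l:absCuntzpair2} to represent $x$ by a Cuntz pair $(\psi,\theta_\infty)$, apply Lemma \ref{l:keyexistence}, then Cuntz-sum with $\theta$ to secure $X$-fullness and strong $\mathcal O_\infty$-stability, and handle the unital case via fullness of $\phi_0(1_A)$ and Cuntz's result on properly infinite projections. One remark: your hesitation about whether $B_0(U)$ is full in $B(U)$ is unnecessary --- for a full hereditary $C^\ast$-subalgebra $B_0 \subseteq B$, the Rieffel correspondence gives an order isomorphism $\mathcal I(B_0) \cong \mathcal I(B)$ (so $B_0 \cap B(U)$ is automatically full in $B(U)$), and no enlargement of $B_0$ is needed.
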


\begin{proof}
Let $B_0\subseteq B$ be a $\sigma$-unital, stable, full, hereditary $C^\ast$-subalgebra of $B$. The inclusion $\iota \colon B_0 \hookrightarrow B$ induces an isomorphism $\mathcal I(\iota) \colon \mathcal I(B_0) \xrightarrow \cong \mathcal I(B)$, and thus induces an action of $X$ on $B_0$ which satisfies that $B_0(U)$ is full in $B(U)$ for all $U\in \mathcal O(X)$. By Proposition \ref{p:KKXfullher}, the inclusion $\iota$ induces an isomorphism $KK_\nuc(X; A , B_0) \cong KK_\nuc(X; A, B)$. Let $x_0\in KK_\nuc(X; A , B_0)$ be the element mapped to $x$ by this isomorphism. The strategy will be to find a nuclear, strongly $\mathcal O_\infty$-stable, $X$-full $\ast$-homomorphism $\phi_0 \colon A \to B_0$ such that $KK_\nuc(X; \phi_0) = x_0$. Then the composition $\phi := \iota \circ \phi_0 \colon A \to B$ is nuclear, strongly $\mathcal O_\infty$-stable (by Lemma \ref{l:relcombasic}$(c)$), $X$-full, and satisfies $KK_\nuc(X; \phi) = x$. 

The ideal-related version of Brown's stable isomorphism theorem, Proposition \ref{p:Brown}$(b)$, implies that $B_0= B_0 \otimes e_{1,1}$ and $B\otimes \mathcal K$ are isomorphic as $X$-$C^\ast$-algebras, so by Lemma \ref{l:theta} there exists a nuclear, $X$-full $\ast$-homomorphism $\theta \colon A \to B_0$ such that $\mathcal O_2$ embeds unitally into $\multialg{B_0}\cap \theta(A)'$.

By Theorem \ref{t:infrepXabs}, any infinite repeat $\theta_\infty = \sum_{i=1}^\infty s_i \theta(-) s_i^\ast$ of $\theta$ is weakly $X$-nuclear, and $X$-nuclearly absorbing. Let $s_0\in \multialg{B_0}$ be an isometry satisfying $s_0s_0^\ast = 1_{\multialg{B_0}}-s_1 s_1^\ast$ so that $s_1,s_0$ are $\mathcal O_2$-isometries. Lemma \ref{l:absCuntzpair2} provides the existence of a weakly $X$-nuclear Cuntz pair of the form $(\psi, \theta_\infty)$ which induces $x_0$. By Lemma \ref{l:keyexistence}, there is a continuous unitary path $(v_t)_{t\in \mathbb R_+}$ in $\multialg{B_0}\cap \theta_\infty(A)'$ with $v_0=1_{\multialg{B_0}}$, and a $\ast$-homomorphism $\phi_1 \colon A \to B_0$ such that $v_t \psi(-) v_t^\ast$ converges point-norm to $\phi_0 := s_1 \phi_1(-)s_1^\ast + \sum_{i=2}^\infty s_i \theta(-) s_i^\ast$. Let $\theta_0 := \sum_{i=2}^\infty s_0^\ast s_i \theta(-) s_i^\ast s_0$ so that $\phi_0 = \phi_1 \oplus_{s_1,s_0} \theta_0$ and $\theta_\infty = \theta\oplus_{s_1,s_0} \theta_0$. There is a homotopy of weakly $X$-nuclear Cuntz pairs from $(\psi , \theta_\infty)$ to $(\phi_1, \theta) \oplus_{s_1,s_0} (\theta_0, \theta_0)$ given via $(\Ad v_t\circ  \psi, \theta_\infty)$. As $(\theta_0, \theta_0)$ is zero homotopic, it follows that $(\phi_1, \theta)$ is a weakly $X$-nuclear Cuntz pair inducing $x_0$. As $\phi_1 = s_1^\ast \phi_0 (-) s_1$ is weakly $X$-nuclear and takes values in $B_0$, it follows that it is $X$-nuclear. Thus
\begin{equation}
x_0 = KK_\nuc(X; \phi_1) - KK_\nuc(X; \theta) = KK_\nuc(X; \phi_1).
\end{equation}
Here we used that $\mathcal O_2$ embeds unitally in $\multialg{B_0} \cap \theta(A)'$, which by the same argument as for ordinary $KK$-theory implies that $KK_\nuc(X; \theta) = 0$.

Let $\phi_2 = \phi_1 \oplus_{s_1,s_0} \theta$. As both $\phi_1$ and $\theta$ are $X$-nuclear, so is $\phi_2$. As $\theta$ is nuclear and $X$-full, and $\phi_1$ is nuclear and $X$-equivariant, it follows from Corollary \ref{c:Xfulldom} that $\theta$ approximately dominates $\phi_2$.  As $\theta$ is strongly $\mathcal O_\infty$-stable, Proposition \ref{p:piabsorbing}$(a)$ implies that $\phi_2$ is strongly $\mathcal O_\infty$-stable. As $\theta$ is nuclear and $X$-full, and as $\phi_1$ is nuclear and $X$-equivariant, it follows that $\phi_2$ is nuclear and $X$-full. Hence
\begin{equation}
KK_\nuc(X; \phi_2) = KK_\nuc(X; \phi_1) + KK_\nuc(X; \theta) = KK_\nuc(X; \phi_1 ) = x_0.
\end{equation}
Letting $\phi := \iota \circ \phi_2 \colon A \to B$, we have obtained a nuclear, strongly $\mathcal O_\infty$-stable, $X$-full $\ast$-homomorphism for which $KK_\nuc(X; \phi) = x$ as desired. This finishes the proof in the not necessarily unital case.

``Moreover'': Now suppose that $A$ and $B$ are unital. For ``only if'', suppose $\phi \colon A \to B$ is an $X$-full, unital $\ast$-homomorphism. If $U\in \mathcal O(X)$ is such that $A(U) = A$, then
\begin{equation}
1_B = \phi(1_A) \in \phi(A(U)) \subseteq B(U)
\end{equation}
which implies that $B(U) = B$, so $(1)$ holds. That $(2)$ holds follows from Observation \ref{o:KK(X)hom}.

For ``if'', suppose that $(1)$ and $(2)$ hold. Use the not necessarily unital part of the proposition to lift $x$ to a nuclear, strongly $\mathcal O_\infty$-stable, $X$-full $\ast$-homomorphism $\phi_0 \colon A \to B$. Let $\Phi_B$ denote the action of $B$, and let $\Psi_A$ denote the dual action of $A$. As $\phi_0$ is $X$-full, it follows that $\mathcal I(\phi_0) = \Phi_B \circ \Psi_A$.  By Lemma \ref{l:dualaction}, it follows that $U = \Psi_A(A) \in \mathcal O(X)$ satisfies $A(U) = A$. So $(1)$ implies that $\Phi_B \circ \Psi_A(A) = B$.   Hence $\overline{B \phi_0(A) B} = B$, and thus $\phi_0(1_A)$ is a full projection in $B$. As $\phi_0$ is strongly $\mathcal O_\infty$-stable, it follows that $\phi_0(1_A)$ is properly infinite by Remark \ref{r:fullpropinfproj}, and hence $1_B$ is also a full, properly infinite projection. By $(2)$ it follows that $[1_B]_0 = [\phi_0(1_A)]_0$ in $K_0(B)$, so by \cite{Cuntz-K-theoryI} there is an isometry $v\in B$ for which $vv^\ast = \phi_0(1_A)$. Now $\phi := v^\ast \phi_0(-) v \colon A \to B$ is a nuclear, $X$-full $\ast$-homomorphism with the same $KK_\nuc(X)$-class as $\phi_0$. Finally $\phi$ is strongly $\mathcal O_\infty$-stable by Lemma \ref{l:relcombasic}$(c)$. 
\end{proof}

Theorem \ref{t:irexistence} is an immediate corollary.

\begin{proof}[Proof of Theorem \ref{t:irexistence}]
Combine Proposition \ref{p:existence} with Lemma \ref{l:XO2map} in the special case $D=B$.
\end{proof}

\begin{proof}[Proof of Theorem \ref{t:iruniqueness}]
$(ii) \Rightarrow (i)$ is Corollary \ref{c:KKXasMvN} (using Lemma \ref{l:nucquotient} and Remark \ref{r:Bempty} to see that being $X$-nuclear is the same as being $X$-equivariant and nuclear). The equivalence $(ii) \Leftrightarrow (iii)$ is Proposition \ref{p:MvNvsue}. Only $(i) \Rightarrow (ii)$ remains to be proved, so assume that $KK_\nuc(X; \phi) = KK_\nuc(X; \psi)$. 

By Proposition \ref{p:MvNeq}, it suffices to prove that the maps $\phi\otimes e_{1,1}, \psi \otimes e_{1,1} \colon A \to B\otimes \mathcal K$ are asymptotically Murray--von Neumann equivalent. Let $\theta\colon A \to B\otimes \mathcal K$ be as in Lemma \ref{l:theta}. Since $\phi \otimes e_{1,1},\psi \otimes e_{1,1}$ and $\theta$ are nuclear and $X$-full, they approximately dominate each other by Corollary \ref{c:Xfulldom}. Thus, combining the stable uniqueness theorem, Theorem \ref{t:irDE2}, with the key lemma for uniqueness, Lemma \ref{l:keyuniqueness},
it follows that $\phi \otimes e_{1,1} \sim_\asMvN \psi \otimes e_{1,1}$.
\end{proof}

The following is a consequence of the uniqueness result above which does not initially use actions of topological spaces. Recall that for any $C^\ast$-algebra $A$, there is a canonical action $\I_A \colon \mathcal O(\Prim A) \to \mathcal I(A)$ which is an order isomorphism.

\begin{corollary}
Let $A$ be a separable, exact $C^\ast$-algebra, let $B$ be a $\sigma$-unital $C^\ast$-algebra, and let $\phi, \psi \colon A \to B$ be nuclear, strongly $\mathcal O_\infty$-stable $\ast$-homomorphisms. Then $\phi$ and $\psi$ are asymptotically Murray--von Neumann equivalent if and only if $\mathcal I(\phi) = \mathcal I(\psi)$ and for $X:= \Prim A$ the induced action $\Phi_B = \mathcal I(\phi) \circ \I_A \colon \mathcal O(X)  \to \mathcal I(B)$ one has 
\begin{equation}
KK_\nuc(X; \phi) = KK_\nuc(X; \psi) \textrm{ in } KK_\nuc(X; (A, \I_A), (B, \Phi_B)).
\end{equation}
\end{corollary}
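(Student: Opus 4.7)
The plan is to deduce this corollary by applying Theorem \ref{t:iruniqueness} to the $\Prim A$-$C^\ast$-algebras $(A, \I_A)$ and $(B, \Phi_B)$, which requires verifying the hypotheses on both sides of the equivalence.

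For the ``only if'' direction, suppose $\phi \sim_{\asMvN} \psi$ and let $(v_t)_{t \in \mathbb R_+}$ be a path of contractions in $\multialg{B}$ implementing this. First, I would show $\mathcal I(\phi) = \mathcal I(\psi)$: for any positive $a \in A$, the identity $v_t^\ast \phi(a) v_t \to \psi(a)$ gives $\psi(a) \in \overline{B\phi(a)B}$, and by symmetry $\phi(a) \in \overline{B\psi(a)B}$. Thus $\phi(a)$ and $\psi(a)$ generate the same ideal, so $\mathcal I(\phi) = \mathcal I(\psi)$. With $X = \Prim A$ and $\Phi_B = \mathcal I(\phi) \circ \I_A = \mathcal I(\psi) \circ \I_A$, both $\phi$ and $\psi$ are $X$-equivariant $\ast$-homomorphisms $(A, \I_A) \to (B, \Phi_B)$ (since $\phi(\I_A(U)) \subseteq \overline{B\phi(\I_A(U))B} = \Phi_B(U)$, and similarly for $\psi$), and they are nuclear, hence $X$-nuclear by Lemma \ref{l:nucquotient} together with Remark \ref{r:Bempty} applied to any $X$-full map from $A$ (which we shall construct momentarily). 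The equality $KK_\nuc(X; \phi) = KK_\nuc(X; \psi)$ then follows directly from Corollary \ref{c:KKXasMvN}.

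For the ``if'' direction, I would verify the hypotheses of Theorem \ref{t:iruniqueness} for the $X$-$C^\ast$-algebras $(A, \I_A)$ and $(B, \Phi_B)$. The $X$-$C^\ast$-algebra $(A, \I_A)$ is tight, hence in particular lower semicontinuous, and $A$ remains separable and exact. The $C^\ast$-algebra $B$ remains $\sigma$-unital. The maps $\phi$ and $\psi$ are given to be nuclear and strongly $\mathcal O_\infty$-stable, so it remains only to verify that they are $X$-equivariant and $X$-full. Both $X$-equivariance statements were explained above using $\Phi_B = \mathcal I(\phi) \circ \I_A = \mathcal I(\psi) \circ \I_A$. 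For $X$-fullness, observe that the tightness of $(A, \I_A)$ means the dual action $\Psi_A$ of Lemma \ref{l:dualaction} is simply the inverse order isomorphism $\I_A^{-1}$. Consequently
\begin{equation}
\Phi_B \circ \Psi_A = \mathcal I(\phi) \circ \I_A \circ \I_A^{-1} = \mathcal I(\phi) = \mathcal I(\psi),
\end{equation}
which shows that $\phi$ and $\psi$ are both $X$-full. Being nuclear and $X$-equivariant with $B(\emptyset) = \Phi_B(\emptyset) = \mathcal I(\phi)(0) = 0$, they are $X$-nuclear by Lemma \ref{l:nucquotient}. Theorem \ref{t:iruniqueness} now applies and yields $\phi \sim_{\asMvN} \psi$ from the hypothesis $KK_\nuc(X; \phi) = KK_\nuc(X; \psi)$.

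There is no real obstacle here; the only delicate observation is that tightness of the canonical action on $A$ makes the dual action simply the inverse isomorphism, so that $X$-fullness collapses to the condition $\mathcal I(\psi) = \mathcal I(\phi)$, which is precisely one of the hypotheses. The argument is essentially a bookkeeping translation between the ``ordinary'' formulation and the $X$-equivariant formulation of Theorem \ref{t:iruniqueness}.
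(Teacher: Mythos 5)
Your proof is correct, and it is essentially the argument the paper has in mind when it states the corollary as a consequence of Theorem \ref{t:iruniqueness}: equip $B$ with the action $\Phi_B = \mathcal I(\phi)\circ\I_A$, observe that tightness of $\I_A$ forces the dual action to be $\I_A^{-1}$ so that $X$-fullness of $\phi$ (and, given $\mathcal I(\phi)=\mathcal I(\psi)$, of $\psi$) collapses to the identity $\Phi_B\circ\Psi_A = \mathcal I(\phi)$, and then invoke the equivalence $(i)\Leftrightarrow(ii)$. One small inefficiency: in the ``only if'' direction you invoke Remark \ref{r:Bempty} via an $X$-full map ``to be constructed momentarily'' in order to obtain $\Phi_B(\emptyset)=0$, but this follows immediately from the definition since $\Phi_B(\emptyset)=\mathcal I(\phi)(\I_A(\emptyset))=\mathcal I(\phi)(0)=0$; the direct computation (which you do use in the ``if'' direction) avoids the circular-sounding phrasing.
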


The main classification theorem, Theorem \ref{t:nonsimpleclass}, is an easy consequence of Theorems \ref{t:irexistence} and \ref{t:iruniqueness}, using the asymptotic intertwining (Proposition \ref{p:asint}) to see that one may lift the ideal-related $KK$-equivalence to an isomorphism of the $C^\ast$-algebras. The details are presented below.

\begin{proof}[Proof of Theorem \ref{t:nonsimpleclass}]
$(a)$: By Theorem \ref{t:irexistence} it is possible to find $X$-full $\ast$-homo\-morphisms $\phi_0 \colon A \to B$ and $\psi_0 \colon B \to A$ such that $KK(X; \phi_0) = x$ and $KK(X; \psi_0) = x^{-1}$. As $\phi_0$ and $\psi_0$ are $X$-full so is $\psi_0 \circ \phi_0$, and as $A$ is tight, it follows that $\psi_0(\phi_0(A))$ is full in $A$. Hence both $\psi_0 \circ \phi_0$ and $\id_A$ are $X$-full, nuclear, strongly $\mathcal O_\infty$-stable $\ast$-homomorphisms with full images for which
\begin{equation}
KK(X; \psi_0 \circ \phi_0) = x^{-1} \circ x = KK(X; \id_A).
\end{equation}
By Theorem \ref{t:iruniqueness} $(i) \Rightarrow (iii)$ it follows that $\psi_0 \circ \phi_0 \sim_\asu \id_A$. Similarly, one obtains $\phi_0 \circ \psi_0 \sim_\asu \id_B$. By Proposition \ref{p:asint}, there exists an isomorphism $\phi \colon A \xrightarrow \cong B$ and a homotopy $(\phi_s)_{s\in [0,1]}$ from $\phi_0$ to $\phi$, such that $\phi_s \sim_\aMvN \phi_t$ for all $s,t\in [0,1]$. As approximate Murray--von Neumann equivalence preserves $X$-equivariance of $\ast$-homomorphisms, it follows that each $\phi_t$ is $X$-equivariant. By Lemma \ref{l:XnucC(Y)}, it follows that $\phi_0$ and $\phi$ are homotopic in the $X$-equivariant sense, and thus $KK(X; \phi_0) = KK(X; \phi)$. As $A$ and $B$ are both tight, any $X$-equivariant $\ast$-isomorphism is automatically an isomorphism of $X$-$C^\ast$-algebras, thus completing the proof of part $(a)$.

$(b)$: This is proved exactly as above but using the unital versions of Theorems \ref{t:irexistence} and \ref{t:iruniqueness}.
\end{proof}

\subsection{Approximate equivalence}

As when defining $KL_\nuc$ (see Section \ref{ss:KLnuc}) one can do the same for $KK_\nuc(X)$. I will avoid addressing whether the approach described below defines a topology as done in \cite{Dadarlat-KKtop}, and take a shortcut by simply defining $\overline{\{0\}}$.

 If $A$ is a separable $X$-$C^\ast$-algebra, and $B$ is a $\sigma$-unital $X$-$C^\ast$-algebra, let $\overline{\{0\}} \subseteq KK_\nuc(X; A,B)$ be the set of elements $x\in KK_\nuc(X; A, B)$, for which there exists $y\in KK_\nuc(X; A, C(\widetilde{\mathbb N}, B))$ such that $(\ev_n)_\ast(y) = 0$ for $n\in \mathbb N$, and $(\ev_\infty)_\ast(y) = x$. Clearly $\overline{\{0\}}$ is a subgroup of $KK_\nuc(X; A, B)$, so one may define
\begin{equation}
KL_\nuc(X; A, B) := KK(X; A, B)/\overline{\{0\}}.
\end{equation}

If $\phi \colon A \to B$ is an $X$-nuclear $\ast$-homomorphism, the induced element in $KL_\nuc(X)$ is denote by $KL_\nuc(X; \phi)$. It is easy to see that $KL_\nuc(X; \phi) = KL_\nuc(X; \psi)$ if and only if there exists $y\in KK_\nuc(X; A, C(\widetilde{\mathbb N}, B))$ such that $(\ev_n)_\ast(y) = KK_\nuc(X; \phi)$ for all $n\in \mathbb N$, and $(\ev_\infty)_\ast(y) = KK_\nuc(X; \psi)$. 

\begin{proposition}
Let $A$ be a separable $X$-$C^\ast$-algebra, let $B$ be a $\sigma$-unital $X$-$C^\ast$-algebra and suppose that $\phi, \psi \colon A \to B$ are $X$-nuclear $\ast$-homo\-morphisms. If $\phi \sim_\aMvN \psi$ then $KL_\nuc(X; \phi) = KL_\nuc(X; \psi)$. 
\end{proposition}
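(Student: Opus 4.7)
The plan is to adapt the proof of Proposition \ref{p:aMvNKL} to the ideal-related setting. First I would reduce to the case where the target is stable. The embedding $\iota \colon B \hookrightarrow B \otimes \mathcal K$, $b \mapsto b \otimes e_{1,1}$, is $X$-equivariant and nuclear, hence $X$-nuclear, and by Corollary \ref{c:KKXstable} it induces an isomorphism $KK_\nuc(X; A, B) \cong KK_\nuc(X; A, B \otimes \mathcal K)$, as well as the analogous isomorphism after applying $C(\widetilde{\mathbb N}, -)$ to the target. These isomorphisms intertwine the evaluation maps $(\ev_n)_\ast$ that enter the definition of $\overline{\{0\}}$, so it suffices to prove $KL_\nuc(X; \phi \otimes e_{1,1}) = KL_\nuc(X; \psi \otimes e_{1,1})$.

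Since $\phi \sim_\aMvN \psi$, Proposition \ref{p:MvNeq} (equivalence of $(i)$ and $(ii'')$) and Proposition \ref{p:MvNvsue}$(b)$ give $\phi \otimes e_{1,1} \sim_\au \psi \otimes e_{1,1}$ with multiplier unitaries. I fix a sequence $(u_n)_{n \in \mathbb N}$ of unitaries in $\multialg{B \otimes \mathcal K}$ such that $u_n^\ast (\phi(a) \otimes e_{1,1}) u_n \to \psi(a) \otimes e_{1,1}$ in norm for every $a \in A$, and define $\Phi \colon A \to C(\widetilde{\mathbb N}, B \otimes \mathcal K)$ by
\begin{equation*}
\Phi(a)(n) = u_n^\ast (\phi(a) \otimes e_{1,1}) u_n \; \text{ for } n \in \mathbb N, \qquad \Phi(a)(\infty) = \psi(a) \otimes e_{1,1}.
\end{equation*}

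The main step is to verify that $\Phi$ is $X$-nuclear, for which by Lemma \ref{l:XnucC(Y)} it suffices to check this on each evaluation. For $n \in \mathbb N$, the map $\ev_n \circ \Phi = \Ad u_n \circ (\phi \otimes e_{1,1})$ is $X$-equivariant because the closed two-sided ideal $(B \otimes \mathcal K)(U) = B(U) \otimes \mathcal K$ is invariant under conjugation by any multiplier of $B \otimes \mathcal K$; hence $\Ad u_n$ descends to a $\ast$-automorphism of each quotient $(B \otimes \mathcal K)/(B \otimes \mathcal K)(U)$, and the induced map factors as this automorphism post-composed with the nuclear map $[\phi \otimes e_{1,1}]_U$, yielding nuclearity of $[\ev_n \circ \Phi]_U$. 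The endpoint $\ev_\infty \circ \Phi = \psi \otimes e_{1,1}$ is $X$-nuclear by hypothesis on $\psi$.

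Finally, set $y := KK_\nuc(X; \Phi) \in KK_\nuc(X; A, C(\widetilde{\mathbb N}, B \otimes \mathcal K))$. For each $n \in \mathbb N$, the constant path $v_t \equiv u_n$ implements an (exact) asymptotic Murray–von Neumann equivalence between $\phi \otimes e_{1,1}$ and $\ev_n \circ \Phi$, so by Corollary \ref{c:KKXasMvN} one has $(\ev_n)_\ast(y) = KK_\nuc(X; \phi \otimes e_{1,1})$, while directly $(\ev_\infty)_\ast(y) = KK_\nuc(X; \psi \otimes e_{1,1})$. Subtracting the pullback of $KK_\nuc(X; \phi \otimes e_{1,1})$ along the constant embedding $B \otimes \mathcal K \hookrightarrow C(\widetilde{\mathbb N}, B \otimes \mathcal K)$ places $KK_\nuc(X; \psi \otimes e_{1,1}) - KK_\nuc(X; \phi \otimes e_{1,1})$ in $\overline{\{0\}}$, yielding the desired $KL_\nuc$-equality, which transfers back to $B$ via the reduction step. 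The only real subtlety is the $X$-nuclearity of $\Phi$, which hinges on Lemma \ref{l:XnucC(Y)} together with the elementary observation that multiplier conjugation preserves each ideal $(B \otimes \mathcal K)(U)$.
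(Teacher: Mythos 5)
Your proof is correct and follows essentially the same route as the paper's, which simply asserts that the argument for Proposition \ref{p:aMvNKL} carries over; you have supplied the details that the paper omits (reduction to the stable case via Corollary \ref{c:KKXstable}, verification that $\Ad u_n \circ (\phi\otimes e_{1,1})$ is $X$-nuclear because multiplier conjugation preserves each ideal $(B\otimes\mathcal K)(U)$ and hence descends to a $\ast$-automorphism of each quotient, and the application of Lemma \ref{l:XnucC(Y)}). Two small stylistic remarks: the invocation of Corollary \ref{c:KKXasMvN} via a constant unitary path is a bit heavier than necessary — unitary equivalence of weakly $X$-nuclear Cuntz pairs already gives $KK_\nuc(X;\Ad u_n\circ(\phi\otimes e_{1,1})) = KK_\nuc(X;\phi\otimes e_{1,1})$ directly — and the concluding subtraction is a mild detour, since the paper's remark following the definition of $KL_\nuc(X)$ already records that the existence of such a $y$ with the given evaluations is equivalent to the $KL_\nuc(X)$-equality.
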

\begin{proof}
The proof is essentially identical to that of the non-$X$-equivariant version, Proposition \ref{p:aMvNKL}, and using Corollary \ref{c:KKXstable} to get $KK_\nuc(X; A, B) \cong KK_\nuc(X; A, B\otimes \mathcal K)$. The rest of the proof is omitted.
\end{proof}

The invariant $KL_\nuc(X)$ can be used to characterise approximate equivalence. As in Theorem \ref{t:approxuniquesimple}, this is done for $\mathcal O_\infty$-stable maps instead of strongly $\mathcal O_\infty$-stable maps.

\begin{theorem}[Approximate uniqueness]\label{t:approxuniqueness}
Let $X$ be a topological space, let $A$ be a separable, exact, lower semicontinuous $X$-$C^\ast$-algebra, and let $B$ be a $\sigma$-unital $X$-$C^\ast$-algebra. Suppose that $\phi, \psi \colon A \to B$ are $X$-full, nuclear,  $\mathcal O_\infty$-stable $\ast$-homo\-morphisms. The following are equivalent:
\begin{itemize}
\item[$(i)$] $KL_\nuc(X; \phi) = KL_\nuc(X; \psi)$;
\item[$(ii)$] $\phi$ and $\psi$ are approximately Murray--von Neumann equivalent.
\end{itemize}
Additionally, if either $B$ is stable, or if $A,B,\phi$ and $\psi$ are all unital, then $(i)$ and $(ii)$ are equivalent to
\begin{itemize}
\item[$(iii)$] $\phi$ and $\psi$ are approximately unitarily equivalent (with multiplier unitaries in the stable case).
\end{itemize} 
\end{theorem}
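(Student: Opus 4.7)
The plan is to mirror the strategy of Theorem \ref{t:approxuniquesimple}, using $X$-equivariant ingredients throughout. The implication $(ii) \Rightarrow (i)$ is exactly the proposition immediately preceding this theorem, and $(ii) \Leftrightarrow (iii)$ follows from Proposition \ref{p:MvNvsue}, whose hypothesis on the existence of a full projection (in the stable case) is guaranteed by Remark \ref{r:fullpropinfproj} applied to the $X$-full, $\mathcal O_\infty$-stable map $\phi$. So the heart of the matter is $(i) \Rightarrow (ii)$.

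First I would reduce to $B$ stable: by Proposition \ref{p:MvNeq} it suffices to show $\phi \otimes e_{1,1} \sim_\aMvN \psi \otimes e_{1,1}$, and these maps inherit all hypotheses in the $X$-$C^\ast$-algebra $B \otimes \mathcal K$, with the $KL_\nuc(X)$-classes transported correctly by the stabilisation isomorphism of Corollary \ref{c:KKXstable}. Next, applying the McDuff-type absorption result \cite[Corollary 4.5]{Gabe-O2class}, I replace $\phi$ and $\psi$ by approximately Murray--von Neumann equivalent $\ast$-homomorphisms that factor through $A \otimes \mathcal O_\infty$; these are automatically strongly $\mathcal O_\infty$-stable by Proposition \ref{p:Oinftyfactor}, and the replacements preserve $X$-fullness, nuclearity, and (by the proposition preceding the theorem) the $KL_\nuc(X)$-class. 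So from now on $\phi$ and $\psi$ may be assumed strongly $\mathcal O_\infty$-stable.

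Fix a finite set $\mathcal F \subset A$ and $\epsilon > 0$, and use the $KL_\nuc(X)$-equality to pick $y \in KK_\nuc(X; A, C(\widetilde{\mathbb N}, B))$ satisfying $(\ev_n)_\ast y = KK_\nuc(X; \phi)$ for $n \in \mathbb N$ and $(\ev_\infty)_\ast y = KK_\nuc(X; \psi)$. I view $C(\widetilde{\mathbb N}, B)$ as an $X$-$C^\ast$-algebra via $U \mapsto C(\widetilde{\mathbb N}, B(U))$. The constant embedding $A \to C(\widetilde{\mathbb N}, B)$ induced by $\phi$ is nuclear (Lemma \ref{l:XnucC(Y)}), $X$-full (as $X$-fullness is checked fiberwise), and $\mathcal O_\infty$-stable; moreover $C(\widetilde{\mathbb N}, B)$ is $\sigma$-unital and stable, the latter since $B$ is. Hence Proposition \ref{p:existence} produces a nuclear, strongly $\mathcal O_\infty$-stable, $X$-full $\ast$-homomorphism $\Phi \colon A \to C(\widetilde{\mathbb N}, B)$ with $KK_\nuc(X; \Phi) = y$. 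Choosing $n \in \mathbb N$ with $\|\ev_n \Phi(a) - \ev_\infty \Phi(a)\| < \epsilon$ for all $a \in \mathcal F$, the uniqueness Theorem \ref{t:iruniqueness} gives $\phi \sim_\asu \ev_n \circ \Phi$ and $\psi \sim_\asu \ev_\infty \circ \Phi$ with multiplier unitaries; a short triangle estimate then produces a single multiplier unitary $u$ with $\|u^\ast \phi(a) u - \psi(a)\| < 3\epsilon$ on $\mathcal F$, establishing $\phi \sim_\au \psi$ (with multiplier unitaries) and a fortiori $(ii)$.

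The main obstacle is organising the ingredients so that Proposition \ref{p:existence} actually applies to $C(\widetilde{\mathbb N}, B)$: one needs both the stabilisation reduction (to ensure $C(\widetilde{\mathbb N}, B)$ is itself stable, hence trivially contains the required full, hereditary, stable $C^\ast$-subalgebra) and an explicit $X$-full, nuclear, $\mathcal O_\infty$-stable map into $C(\widetilde{\mathbb N}, B)$, for which the constant embedding coming from $\phi$ is the natural candidate. Once these are in place, the existence and uniqueness theorems combine almost verbatim as in the simple case to yield the desired approximate Murray--von Neumann equivalence.
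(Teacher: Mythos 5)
Your proof is correct and follows exactly the route the paper prescribes for Theorem \ref{t:approxuniqueness}: it transplants the proof of Theorem \ref{t:approxuniquesimple} into the $X$-equivariant setting, substituting Proposition \ref{p:existence} and Theorem \ref{t:iruniqueness} for Theorems \ref{t:existsimple} and \ref{t:uniquesimple}, with the constant embedding induced by $\phi$ supplying the nuclear, $X$-full, $\mathcal O_\infty$-stable map into $C(\widetilde{\mathbb N},B)$ needed for the existence result. (Your invocation of Remark \ref{r:fullpropinfproj} for $(ii)\Leftrightarrow(iii)$ is unnecessary, since the multiplier-unitary version of Proposition \ref{p:MvNvsue}$(b)$ needs no full projection, but this is harmless.)
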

\begin{proof}
The proof is identical to that of Theorem \ref{t:approxuniquesimple}, but using Proposition \ref{p:existence} and Theorem \ref{t:iruniqueness} instead of Theorems \ref{t:existsimple} and \ref{t:uniquesimple}. The details are omitted.
\end{proof}

In the non-$X$-equivariant case, $KL_\nuc$ could be computed using the $K$-theoretic invariant $\underline K$ (assuming the UCT). Unfortunately there is no known $K$-theoretic invariant that computes $KL_\nuc(X)$ under UCT assumptions. This makes Theorem \ref{t:approxuniqueness} harder to apply than the non-$X$-equivariant analogue.

\newcommand{\etalchar}[1]{$^{#1}$}


\begin{thebibliography}{BGSW19}

\bibitem[AAP86]{AkemannAndersonPedersen-excision}
Charles Akemann, Joel Anderson, and Gert Pedersen.
\newblock Excising states of {$C^\ast$}-algebras.
\newblock {\em Canad. J. Math.}, 38(5):1239--1260, 1986.

\bibitem[AD97]{AD-purelyinfdyn}
Claire Anantharaman-Delaroche.
\newblock Purely infinite {$C^\ast$}-algebras arising from dynamical systems.
\newblock {\em Bull. Soc. Math. France}, 125(2):199--225, 1997.

\bibitem[ARR16]{ArklintRestorffRuiz-classrr0}
Sara Arklint, Gunnar Restorff, and Efren Ruiz.
\newblock Classification of real rank zero, purely infinite {$C^\ast$}-algebras
  with at most four primitive ideals.
\newblock {\em J. Funct. Anal.}, 271(7):1921--1947, 2016.

\bibitem[Ben13]{Bentmann-vanishingbdry}
Rasmus Bentmann.
\newblock Kirchberg {X}-algebras with real rank zero and intermediate
  cancellation.
\newblock {\em J. Noncommut. Geom.}, 8(4):1061--1081, 2014.

\bibitem[BD15]{BentmannDadarlat-oneparKirchberg}
Rasmus Bentmann and Marius Dadarlat.
\newblock One-parameter continuous fields of {K}irchberg algebras with rational
  {K}-theory.
\newblock {\em Proc. Amer. Math. Soc.}, 143(8):3455--3463, 2015.

\bibitem[BK11]{BentmannKohler-UCTfinite}
Rasmus Bentmann and Manuel Köhler.
\newblock Universal coefficient theorems for {$C^\ast$}-algebras over finite
  topological spaces.
  \newblock {\em Preprint, arXiv:1101.5702v3}, 2011.

\bibitem[BM17]{BentmannMeyer-generalclass}
Rasmus Bentmann and Ralf Meyer.
\newblock A more general method to classify up to equivariant {KK}-equivalence.
\newblock {\em Doc. Math.}, 22:423--454, 2017.

\bibitem[Bla98]{Blackadar-book-K-theory}
Bruce Blackadar.
\newblock {\em {$K$}-theory for operator algebras}, volume~5 of {\em
  Mathematical Sciences Research Institute Publications}.
\newblock Cambridge University Press, Cambridge, second edition, 1998.

\bibitem[Bla06]{Blackadar-book-opalg}
Bruce Blackadar.
\newblock {\em Operator algebras}, volume 122 of {\em Encyclopaedia of
  Mathematical Sciences}.
\newblock Springer-Verlag, Berlin, 2006.
\newblock Theory of $C{^{*}}$-algebras and von Neumann algebras, Operator
  Algebras and Non-commutative Geometry, III.


\bibitem[BC82]{BlackadarCuntz-stablealgsimple}
Bruce Blackadar and Joachim Cuntz.
\newblock The structure of stable algebraically simple {$C^{\ast} $}-algebras.
\newblock {\em Amer. J. Math.}, 104(4):813--822, 1982.

\bibitem[BK04]{BlanchardKirchberg-Hausdorff}
Etienne Blanchard and Eberhard Kirchberg.
\newblock Non-simple purely infinite {$C^\ast$}-algebras: the {H}ausdorff case.
\newblock {\em J. Funct. Anal.}, 207(2):461--513, 2004.

\bibitem[BRR08]{BlanchardRohdeRordam-K1inj}
Etienne Blanchard, Randi Rohde, and Mikael R{\o}rdam.
\newblock Properly infinite {$C(X)$}-algebras and {$K_1$}-injectivity.
\newblock {\em J. Noncommut. Geom.}, 2(3):263--282, 2008.

\bibitem[BL18]{BonickeLi-purelyinfample}
Christian Bönicke and Kang Li.
\newblock Ideal structure and pure infiniteness of ample groupoid {$C^\ast$}-algebras.
\newblock {\em Ergodic Theory and Dynamical Systems}, page 1--30, 2018.

\bibitem[BGSW19]{BGSW-nucdim}
Joan Bosa, James Gabe, Aidan Sims, and Stuart White.
\newblock The nuclear dimension of {$\mathcal O_\infty$}-stable
  {$C^\ast$}-algebras.
\newblock {\em Preprint, arXiv:1906.02066}, 2019.

\bibitem[Bro77]{Brown-stableiso}
Lawrence Brown.
\newblock Stable isomorphism of hereditary subalgebras of {$C^\ast$}-algebras.
\newblock {\em Pacific J. Math.}, 71(2):335--348, 1977.

\bibitem[Bro88]{Brown-semicontmultipliers}
Lawrence Brown.
\newblock Semicontinuity and multipliers of {$C^\ast$}-algebras.
\newblock {\em Canad. J. Math.}, 40(4):865--988, 1988.

\bibitem[BCS15]{BrownClarkSierakowski-purelyinfgroupoids}
Jonathan Brown, Lisa~Orloff Clark, and Adam Sierakowski.
\newblock Purely infinite {$C^\ast$}-algebras associated to \'etale groupoids.
\newblock {\em Ergodic Theory Dynam. Systems}, 35(8):2397--2411, 2015.

\bibitem[BO08]{BrownOzawa-book-approx}
Nathanial Brown and Narutaka Ozawa.
\newblock {\em {$C^\ast$}-algebras and finite-dimensional approximations},
  volume~88 of {\em Graduate Studies in Mathematics}.
\newblock American Mathematical Society, Providence, RI, 2008.

\bibitem[CET{\etalchar{+}}19]{CETWW-nucdim}
Jorge Castillejos, Samuel Evington, Aaron Tikuisis, Stuart White, and Wilhelm
  Winter.
\newblock Nuclear dimension of simple {$C^\ast$}-algebras.
\newblock {\em Invent. Math.} (to appear).

\bibitem[CE78]{ChoiEffros-CPAP}
Man~Duen Choi and Edward Effros.
\newblock Nuclear {$C^\ast$}-algebras and the approximation property.
\newblock {\em Amer. J. Math.}, 100(1):61--79, 1978.

\bibitem[CH90]{ConnesHigson-deformations}
Alain Connes and Nigel Higson.
\newblock D\'eformations, morphismes asymptotiques et {$K$}-th\'eorie
  bivariante.
\newblock {\em C. R. Acad. Sci. Paris S\'er. I Math.}, 311(2):101--106, 1990.

\bibitem[Cun77]{Cuntz-simple}
Joachim Cuntz.
\newblock Simple {$C^\ast$}-algebras generated by isometries.
\newblock {\em Comm. Math. Phys.}, 57(2):173--185, 1977.

\bibitem[Cun81]{Cuntz-K-theoryI}
Joachim Cuntz.
\newblock {$K$}-theory for certain {$C^{\ast} $}-algebras.
\newblock {\em Ann. of Math. (2)}, 113(1):181--197, 1981.

\bibitem[CH87]{CuntzHigson-Kuipersthm}
Joachim Cuntz and Nigel Higson.
\newblock Kuiper's theorem for {H}ilbert modules.
\newblock In {\em Operator algebras and mathematical physics ({I}owa {C}ity,
  {I}owa, 1985)}, volume~62 of {\em Contemp. Math.}, pages 429--435. Amer.
  Math. Soc., Providence, RI, 1987.

\bibitem[CK80]{CuntzKrieger-MarkovI}
Joachim Cuntz and Wolfgang Krieger.
\newblock A class of {$C^{\ast} $}-algebras and topological {M}arkov chains.
\newblock {\em Invent. Math.}, 56(3):251--268, 1980.

\bibitem[Dad97]{Dadarlat-qdmorphisms}
Marius Dadarlat.
\newblock Quasidiagonal morphisms and homotopy.
\newblock {\em J. Funct. Anal.}, 151(1):213--233, 1997.

\bibitem[Dad05]{Dadarlat-KKtop}
Marius Dadarlat.
\newblock On the topology of the {K}asparov groups and its applications.
\newblock {\em J. Funct. Anal.}, 228(2):394--418, 2005.

\bibitem[Dad07]{Dadarlat-htpygrpsKirchbergalg}
Marius Dadarlat.
\newblock The homotopy groups of the automorphism group of {K}irchberg
  algebras.
\newblock {\em J. Noncommut. Geom.}, 1(1):113--139, 2007.

\bibitem[Dad09]{Dadarlat-findim}
Marius Dadarlat.
\newblock Continuous fields of {$C^\ast$}-algebras over finite dimensional spaces.
\newblock {\em Adv. Math.}, 222(5):1850--1881, 2009.

\bibitem[DE01]{DadarlatEilers-asymptotic}
Marius Dadarlat and S{\o}ren Eilers.
\newblock Asymptotic unitary equivalence in {$KK$}-theory.
\newblock {\em $K$-Theory}, 23(4):305--322, 2001.

\bibitem[DE02]{DadarlatEilers-classification}
Marius Dadarlat and S{\o}ren Eilers.
\newblock On the classification of nuclear {$C^\ast$}-algebras.
\newblock {\em Proc. London Math. Soc. (3)}, 85(1):168--210, 2002.

\bibitem[DL96]{DadarlatLoring-UMCT}
Marius Dadarlat and Terry A. Loring.
\newblock A universal multicoefficient theorem for the {K}asparov groups.
\newblock {\em Duke Math. J.}, 84(2):355--377, 1996.

\bibitem[DM12]{DadarlatMeyer-E-theory}
Marius Dadarlat and Ralf Meyer.
\newblock E-theory for {$C^\ast$}-algebras over topological spaces.
\newblock {\em J. Funct. Anal.}, 263(1):216--247, 2012.

\bibitem[DP15]{DadarlatPennig-DixmierDouadyII}
Marius Dadarlat and Ulrich Pennig.
\newblock A {D}ixmier-{D}ouady theory for strongly self-absorbing
  {$C^\ast$}-algebras {II}: the {B}rauer group.
\newblock {\em J. Noncommut. Geom.}, 9(4):1137--1154, 2015.

\bibitem[DP16]{DadarlatPennig-DixmierDouady}
Marius Dadarlat and Ulrich Pennig.
\newblock A {D}ixmier-{D}ouady theory for strongly self-absorbing
  {$C^\ast$}-algebras.
\newblock {\em J. Reine Angew. Math.}, 718:153--181, 2016.

\bibitem[ERRS16]{ERRS-classunitalgraph}
S{\o}ren Eilers, Gunnar Restorff, Efren Ruiz, and Adam S{\o}rensen.
\newblock The complete classification of unital graph {$C^\ast$}-algebras:
  {G}eometric and strong.
\newblock {\em Duke Math. J.} (to appear).

\bibitem[Ell76]{Elliott-AFclass}
George Elliott.
\newblock On the classification of inductive limits of sequences of semisimple
  finite-dimensional algebras.
\newblock {\em J. Algebra}, 38(1):29--44, 1976.

\bibitem[Ell93]{Elliott-classrr0}
George Elliott.
\newblock On the classification of {$C^\ast$}-algebras of real rank zero.
\newblock {\em J. Reine Angew. Math.}, 443:179--219, 1993.

\bibitem[EGLN15]{ElliottGongLinNiu-classfindec}
George Elliott, Guihua Gong, Huaxin Lin, and Zhuang Niu.
\newblock On the classification of simple {$C^\ast$}-algebras with finite
  decomposition rank, {II}.
\newblock {\em Preprint, arXiv:1507.03437v3}, 2015.

\bibitem[EK01]{ElliottKucerovsky-extensions}
George Elliott and Dan Kucerovsky.
\newblock An abstract {V}oiculescu-{B}rown-{D}ouglas-{F}illmore absorption
  theorem.
\newblock {\em Pacific J. Math.}, 198(2):385--409, 2001.

\bibitem[Gab16]{Gabe-cplifting}
James Gabe.
\newblock Lifting theorems for completely positive maps.
\newblock {\em Preprint, arXiv:1508.00389v3}, 2016.

\bibitem[Gab17]{Gabe-qdexact}
James Gabe.
\newblock Quasidiagonal traces on exact {$C^\ast$}-algebras.
\newblock {\em J. Funct. Anal.}, 272(3):1104--1120, 2017.

\bibitem[Gab18a]{Gabe-O2class}
James Gabe.
\newblock A new proof of {K}irchberg's {$\mathcal O_2$}-stable classification.
\newblock {\em J. Reine Angew. Math. (Crelle),} to appear, 2018.

\bibitem[Gab18b]{Gabe-AFemb}
James Gabe.
\newblock Traceless {AF} embeddings and unsuspended {$E$}-theory.
\newblock {\em Geom. Funct. Anal.}, 30(2):323-333, 2020.

\bibitem[GR15]{GabeRuiz-absrep}
James Gabe and Efren Ruiz.
\newblock Absorbing representations with respect to operator convex cones.
\newblock {\em Preprint, arXiv:1503.07799}, 2015.

\bibitem[GR19]{GabeRuiz-unitalExt}
James Gabe and Efren Ruiz.
\newblock The unital {$\Ext$}-groups and classification of {$C^\ast$}-algebras.
\newblock {\em Glasgow Math. J.,} to appear, 2019.

\bibitem[GHK{\etalchar{+}}03]{GHKLMS-book}
G.~Gierz, K.~H. Hofmann, K.~Keimel, J.~D. Lawson, M.~Mislove, and D.~S. Scott.
\newblock {\em Continuous lattices and domains}, volume~93 of {\em Encyclopedia
  of Mathematics and its Applications}.
\newblock Cambridge University Press, Cambridge, 2003.

\bibitem[GLN18]{GongLinNiu-classZ-stable}
Guihua Gong, Huaxin Lin, and Zhuang Niu.
\newblock A classification of finite simple amenable {$\mathcal Z$}-stable {$C^\ast$}-algebras, {I}: {$C^\ast$}-algebras with generalized tracial rank one.
\newblock {\em C. R. Math. Acad. Sci. Soc. R. Canada} (to appear).

\bibitem[GLN19]{GongLinNiu-classZ-stable2}
Guihua Gong, Huaxin Lin, and Zhuang Niu.
\newblock Classification of finite simple amenable {$\mathcal Z$}-stable
  {$C^\ast$}-algebras, {II}: {$C^\ast$}-algebras with rational generalized tracial rank one.
\newblock {\em C. R. Math. Acad. Sci. Soc. R. Canada} (to appear).

\bibitem[Hig87]{Higson-charKK}
Nigel Higson.
\newblock A characterization of {$KK$}-theory.
\newblock {\em Pacific J. Math.}, 126(2):253--276, 1987.

\bibitem[HR98]{HjelmborgRordam-stability}
Jacob Hjelmborg and Mikael R{\o}rdam.
\newblock On stability of {$C^\ast$}-algebras.
\newblock {\em J. Funct. Anal.}, 155(1):153--170, 1998.

\bibitem[HS03]{HongSzymanski-purelyinfgraph}
Jeong~Hee Hong and Wojciech Szyma\'{n}ski.
\newblock Purely infinite {C}untz-{K}rieger algebras of directed graphs.
\newblock {\em Bull. London Math. Soc.}, 35(5):689--696, 2003.

\bibitem[JT91]{JensenThomsen-book-KK-theory}
Kjeld~Knudsen Jensen and Klaus Thomsen.
\newblock {\em Elements of {$KK$}-theory}.
\newblock Mathematics: Theory \& Applications. Birkh\"auser Boston Inc.,
  Boston, MA, 1991.

\bibitem[Kas80a]{Kasparov-Stinespring}
Gennadi Kasparov.
\newblock Hilbert {$C^{\ast} $}-modules: theorems of {S}tinespring and
  {V}oiculescu.
\newblock {\em J. Operator Theory}, 4(1):133--150, 1980.

\bibitem[Kas80b]{Kasparov-KKExt}
Gennadi Kasparov.
\newblock The operator {$K$}-functor and extensions of {$C^{\ast} $}-algebras.
\newblock {\em Izv. Akad. Nauk SSSR Ser. Mat.}, 44(3):571--636, 719, 1980.

\bibitem[Kas88]{Kasparov-eqKKNovikov}
Gennadi Kasparov.
\newblock Equivariant {$KK$}-theory and the {N}ovikov conjecture.
\newblock {\em Invent. Math.}, 91(1):147--201, 1988.

\bibitem[Kir]{Kirchberg-book}
Eberhard Kirchberg.
\newblock {\em The {C}lassification of {P}urely {I}nfinite {$C^\ast$}-{A}lgebras
  {U}sing {K}asparov’s {T}heory}.
\newblock Book in preparation.

\bibitem[Kir77]{Kirchberg-CPAP}
Eberhard Kirchberg.
\newblock {$C^\ast$}-nuclearity implies {CPAP}.
\newblock {\em Math. Nachr.}, 76:203--212, 1977.

\bibitem[Kir94]{Kirchberg-simple}
Eberhard Kirchberg.
\newblock The classification of purely infinite {$C^\ast$}-algebras using
  {K}asparov's theory.
\newblock {\em Preprint}, 1994.

\bibitem[Kir95a]{Kirchberg-ICM}
Eberhard Kirchberg.
\newblock Exact {$C^\ast$}-algebras, tensor products, and the classification
  of purely infinite algebras.
\newblock In {\em Proceedings of the {I}nternational {C}ongress of
  {M}athematicians, {V}ol.\ 1, 2 ({Z}\"urich, 1994)}, pages 943--954.
  Birkh\"auser, Basel, 1995.

\bibitem[Kir95b]{Kirchberg-CAR}
Eberhard Kirchberg.
\newblock On subalgebras of the {CAR}-algebra.
\newblock {\em J. Funct. Anal.}, 129(1):35--63, 1995.

\bibitem[Kir00]{Kirchberg-non-simple}
Eberhard Kirchberg.
\newblock Das nicht-kommutative {M}ichael-{A}uswahlprinzip und die
  {K}lassifikation nicht-einfacher {A}lgebren.
\newblock In {\em {$C^\ast$}-algebras ({M}\"unster, 1999)}, pages 92--141.
  Springer, Berlin, 2000.

\bibitem[Kir06]{Kirchberg-Abel}
Eberhard Kirchberg.
\newblock Central sequences in {$C^\ast$}-algebras and strongly purely infinite
  algebras.
\newblock In {\em Operator {A}lgebras: {T}he {A}bel {S}ymposium 2004}, volume~1
  of {\em Abel Symp.}, pages 175--231. Springer, Berlin, 2006.

\bibitem[KP00]{KirchbergPhillips-embedding}
Eberhard Kirchberg and N.~Christopher Phillips.
\newblock Embedding of exact {$C^\ast$}-algebras in the {C}untz algebra {$\mathcal O_2$}.
\newblock {\em J. Reine Angew. Math.}, 525:17--53, 2000.

\bibitem[KR00]{KirchbergRordam-purelyinf}
Eberhard Kirchberg and Mikael R{\o}rdam.
\newblock Non-simple purely infinite {$C^\ast$}-algebras.
\newblock {\em Amer. J. Math.}, 122(3):637--666, 2000.

\bibitem[KR02]{KirchbergRordam-absorbingOinfty}
Eberhard Kirchberg and Mikael R{\o}rdam.
\newblock Infinite non-simple {$C^\ast$}-algebras: absorbing the {C}untz algebras
  {$\mathcal{O}_\infty$}.
\newblock {\em Adv. Math.}, 167(2):195--264, 2002.

\bibitem[KR05]{KirchbergRordam-zero}
Eberhard Kirchberg and Mikael R{\o}rdam.
\newblock Purely infinite {$C^\ast$}-algebras: ideal-preserving zero homotopies.
\newblock {\em Geom. Funct. Anal.}, 15(2):377--415, 2005.

\bibitem[KS16]{KirchbergSierakowski-filling}
Eberhard Kirchberg and Adam Sierakowski.
\newblock Filling families and strong pure infiniteness.
\newblock {\em Preprint, arXiv:1503.08519}, 2016.

\bibitem[KS17a]{KirchbergSierakowski-spicrossed}
Eberhard Kirchberg and Adam Sierakowski.
\newblock Strong pure infiniteness of crossed products.
\newblock {\em Ergodic Theory and Dynamical Systems}, page 1–24, 2017.

\bibitem[KPS18]{KumjianPaskSims-gradedKtheory}
Alex Kumjian, David Pask, and Aidan Sims.
\newblock Graded {$C^\ast$}-algebras, graded {$K$}-theory, and twisted
  {$P$}-graph {$C^\ast$}-algebras.
\newblock {\em J. Operator Theory}, 80(2):295--348, 2018.

\bibitem[Kwa16]{Kwasniewski-crossedC_0}
Bartosz Kwa\'sniewski.
\newblock Crossed products by endomorphisms of {$C_0(X)$}-algebras.
\newblock {\em J. Funct. Anal.}, 270(6):2268--2335, 2016.

\bibitem[KS17b]{KwasniewskiSzymanski-pureinffell}
Bartosz Kwa\'sniewski and Wojciech Szyma\'nski.
\newblock Pure infiniteness and ideal structure of {$C^\ast$}-algebras associated
  to {F}ell bundles.
\newblock {\em J. Math. Anal. Appl.}, 445(1):898--943, 2017.

\bibitem[Lan95]{Lance-book-Hilbertmodules}
E.~Christopher Lance.
\newblock {\em Hilbert {$C^\ast$}-modules}, volume 210 of {\em London Mathematical
  Society Lecture Note Series}.
\newblock Cambridge University Press, Cambridge, 1995.
\newblock A toolkit for operator algebraists.

\bibitem[Lin02]{Lin-stableapproxuniqueness}
Huaxin Lin.
\newblock Stable approximate unitary equivalence of homomorphisms.
\newblock {\em J. Operator Theory}, 47(2):343--378, 2002.

\bibitem[McD70]{McDuff-centralseq}
Dusa McDuff.
\newblock Central sequences and the hyperfinite factor.
\newblock {\em Proc. London Math. Soc. (3)}, 21:443--461, 1970.

\bibitem[Mey08]{Meyer-homalg2}
Ralf Meyer.
\newblock Homological algebra in bivariant {$K$}-theory and other triangulated
  categories. {II}.
\newblock {\em Tbil. Math. J.}, 1:165--210, 2008.

\bibitem[Mey19]{Meyer-generalclassII}
Ralf Meyer.
\newblock A more general method to classify up to equivariant
  {$KK$}-equivalence {II}: {C}omputing obstruction classes.
\newblock {\em Contemp. Math.}, 749:237--277, 2020.

\bibitem[MN09]{MeyerNest-bootstrap}
Ralf Meyer and Ryszard Nest.
\newblock {$C^\ast$}-algebras over topological spaces: the bootstrap class.
\newblock {\em M\"unster J. Math.}, 2:215--252, 2009.

\bibitem[MN10]{MeyerNest-homalg}
Ralf Meyer and Ryszard Nest.
\newblock Homological algebra in bivariant {$K$}-theory and other triangulated
  categories. {I}.
\newblock In {\em Triangulated categories}, volume 375 of {\em London Math.
  Soc. Lecture Note Ser.}, pages 236--289. Cambridge Univ. Press, Cambridge,
  2010.

\bibitem[MN12]{MeyerNest-filtrated}
Ralf Meyer and Ryszard Nest.
\newblock {$C^\ast$}-algebras over topological spaces: filtrated {K}-theory.
\newblock {\em Canad. J. Math.}, 64(2):368--408, 2012.

\bibitem[PR07]{PasnicuRordam-purelyinfrr0}
Cornel Pasnicu and Mikael R{\o}rdam.
\newblock Purely infinite {$C^\ast$}-algebras of real rank zero.
\newblock {\em J. Reine Angew. Math.}, 613:51--73, 2007.

\bibitem[Ped79]{Pedersen-book-automorphism}
Gert Pedersen.
\newblock {\em {$C^{\ast} $}-algebras and their automorphism groups}, volume~14
  of {\em London Mathematical Society Monographs}.
\newblock Academic Press Inc. [Harcourt Brace Jovanovich Publishers], London,
  1979.

\bibitem[Phi00]{Phillips-classification}
N.~Christopher Phillips.
\newblock A classification theorem for nuclear purely infinite simple
  {$C^\ast$}-algebras.
\newblock {\em Doc. Math.}, 5:49--114 (electronic), 2000.

\bibitem[Res06]{Restorff-classCK}
Gunnar Restorff.
\newblock Classification of {C}untz-{K}rieger algebras up to stable
  isomorphism.
\newblock {\em J. Reine Angew. Math.}, 598:185--210, 2006.

\bibitem[RT17]{RobertTikuisis-nucdimnonsimple}
Leonel Robert and Aaron Tikuisis.
\newblock Nuclear dimension and {$\mathcal{Z}$}-stability of non-simple
  {$C^\ast$}-algebras.
\newblock {\em Trans. Amer. Math. Soc.}, 369(7):4631--4670, 2017.

\bibitem[R{\o}r95]{Rordam-classsimple}
Mikael R{\o}rdam.
\newblock Classification of certain infinite simple {$C^*$}-algebras.
\newblock {\em J. Funct. Anal.}, 131(2):415--458, 1995.

\bibitem[R{\o}r97]{Rordam-classsixterm}
Mikael R{\o}rdam.
\newblock Classification of extensions of certain {$C^\ast$}-algebras by their six
  term exact sequences in {$K$}-theory.
\newblock {\em Math. Ann.}, 308(1):93--117, 1997.

\bibitem[R{\o}r02]{Rordam-book-classification}
Mikael R{\o}rdam.
\newblock Classification of nuclear, simple {$C^\ast$}-algebras.
\newblock In {\em Classification of nuclear {$C^\ast$}-algebras. {E}ntropy in
  operator algebras}, volume 126 of {\em Encyclopaedia Math. Sci.}, pages
  1--145. Springer, Berlin, 2002.

\bibitem[RS87]{RosenbergSchochet-UCT}
Jonathan Rosenberg and Claude Schochet.
\newblock The {K}\"unneth theorem and the universal coefficient theorem for
  {K}asparov's generalized {$K$}-functor.
\newblock {\em Duke Math. J.}, 55(2):431--474, 1987.

\bibitem[Ska84]{Skandalis-RemarksKK}
Georges Skandalis.
\newblock Some remarks on {K}asparov theory.
\newblock {\em J. Funct. Anal.}, 56(3):337--347, 1984.

\bibitem[Ska88]{Skandalis-KKnuc}
Georges Skandalis.
\newblock Une notion de nucl\'earit\'e en {$K$}-th\'eorie (d'apr\`es {J}.\
  {C}untz).
\newblock {\em $K$-Theory}, 1(6):549--573, 1988.

\bibitem[Tu99]{Tu-BaumConnes}
Jean-Louis Tu.
\newblock La conjecture de {B}aum-{C}onnes pour les feuilletages moyennables.
\newblock {\em $K$-Theory}, 17(3):215--264, 1999.

\bibitem[TWW17]{TikuisisWhiteWinter-qdnuc}
Aaron Tikuisis, Stuart White, and Wilhelm Winter.
\newblock Quasidiagonality of nuclear {$C^\ast$}-algebras.
\newblock {\em Ann. of Math. (2)}, 185(1):229--284, 2017.

\bibitem[TW07]{TomsWinter-ssa}
Andrew Toms and Wilhelm Winter.
\newblock Strongly self-absorbing {$C^\ast$}-algebras.
\newblock {\em Trans. Amer. Math. Soc.}, 359(8):3999--4029, 2007.

\bibitem[Voi91]{Voiculescu-qdhtpy}
Dan Voiculescu.
\newblock A note on quasi-diagonal {$C^\ast$}-algebras and homotopy.
\newblock {\em Duke Math. J.}, 62(2):267--271, 1991.

\bibitem[WZ10]{WinterZacharias-nucdim}
Wilhelm Winter and Joachim Zacharias.
\newblock The nuclear dimension of {$C^\ast$}-algebras.
\newblock {\em Adv. Math.}, 224(2):461--498, 2010.

\bibitem[Zha92]{Zhang-dichotomy}
Shuang Zhang.
\newblock Certain {$C^\ast$}-algebras with real rank zero and their corona and
  multiplier algebras. {I}.
\newblock {\em Pacific J. Math.}, 155(1):169--197, 1992.

\end{thebibliography}
\end{document}